\newtheorem{thm}{Theorem}[section]
\newtheorem{prop}[thm]{Proposition}
\newtheorem{lem}[thm]{Lemma}
\newtheorem{cor}[thm]{Corollary}
\theoremstyle{definition}
\newtheorem{defn}[thm]{Definition}
\theoremstyle{remark}
\newtheorem{remk}[thm]{Remark}
\newtheorem{remks}[thm]{Remarks}
\newtheorem{exm}[thm]{Example}
\newtheorem{exms}[thm]{Examples}
\newtheorem{notat}[thm]{Notation}
\numberwithin{equation}{section}
\newcommand{\thmref}{Theorem~\ref}
\newcommand{\propref}{Proposition~\ref}
\newcommand{\corref}{Corollary~\ref}
\newcommand{\defref}{Definition~\ref}
\newcommand{\lemref}{Lemma~\ref}
\newcommand{\remref}{Remark~\ref}
\newcommand{\sD}{{\mathcal D}}
\newcommand{\sE}{{\mathcal E}}
\newcommand{\sF}{{\mathcal F}}
\newcommand{\sG}{{\mathcal G}}
\newcommand{\sH}{{\mathcal H}}
\newcommand{\sI}{{\mathcal I}}
\newcommand{\sK}{{\mathcal K}}
\newcommand{\sL}{{\mathcal L}}
\newcommand{\sM}{{\mathcal M}}
\newcommand{\sN}{{\mathcal N}}
\newcommand{\sO}{{\mathcal O}}
\newcommand{\sR}{{\mathcal R}}
\newcommand{\A}{{\mathbb A}}
\newcommand{\F}{{\mathbb F}}
\renewcommand{\H}{{\mathbb H}}
\newcommand{\N}{{\mathbb N}}
\renewcommand{\P}{{\mathbb P}}
\newcommand{\Q}{{\mathbb Q}}
\newcommand{\Z}{{\mathbb Z}}
\newcommand{\fm}{{\mathfrak m}}
\newcommand{\fM}{{\mathfrak M}}
\newcommand{\ff}{{\mathfrak f}}
\newcommand{\Ker}{{\rm Ker}}
\newcommand{\gr}{{\rm gr}}
\newcommand{\surj}{\twoheadrightarrow}
\newcommand{\inj}{\hookrightarrow}
\newcommand{\red}{{\rm red}}
\newcommand{\rank}{{\rm rank}}
\newcommand{\Pic}{{\rm Pic}}
\newcommand{\Div}{{\rm Div}}
\newcommand{\Hom}{{\rm Hom}}
\newcommand{\Spec}{{\rm Spec \,}}
\newcommand{\sing}{{\rm sing}}
\newcommand{\Char}{{\rm char}}
\newcommand{\bk}{{\rm bk}}
\newcommand{\Tr}{{\rm Tr}}
\newcommand{\ab}{\rm ab}
\newcommand{\sHom}{{\mathcal{H}{om}}}
\newcommand{\Et}{{\rm {\bf{Et}}}}
\newcommand{\id}{{\operatorname{id}}}
\newcommand{\Sch}{{\operatorname{\mathbf{Sch}}}}
\newcommand{\<}{\langle}
\renewcommand{\>}{\rangle}
\newcommand{\Sm}{{\mathbf{Sm}}}
\newcommand{\can}{{\operatorname{\rm can}}}
\newcommand{\Wedge}{{\Lambda}}
\newcommand{\et}{{\textnormal{\'et}}}
\newcommand{\ds}{{/\kern-3pt/}}
\newcommand{\res}{{\operatorname{res}}}
\renewcommand{\log}{{\operatorname{log}}}
\newcommand{\Tor}{{\operatorname{Tor}}}
\newcommand{\Br}{{\operatorname{Br}}}
\newcommand{\un}{\underline}
\newcommand{\ov}{\overline}
\renewcommand{\dim}{\text{\rm dim}}
\newcommand{\tuborg}{\left\{\begin{array}{ll}}
\newcommand{\sluttuborg}{\end{array}\right.}
\newcommand{\zar}{{\rm zar}}
\newcommand{\nis}{{\rm nis}}
\newcommand{\reg}{{\rm reg}}
\newcommand{\dlog}{{\rm dlog}}
\newcommand{\dR}{{\rm dR}}
\newcommand{\divv}{{\rm div}}
\newcommand{\filt}{{\rm Fil}}
\newcommand{\rsw}{{\rm rsw}}
\newcommand{\Rsw}{{\rm Rsw}}
\newcommand{\Sw}{{\rm sw}}
\newcommand{\wt}{\widetilde}
\newcommand{\wh}{\widehat}
\newcommand{\cont}{{\rm cont}}
\newcommand{\coker}{{\rm Coker}}
\newcommand{\Fil}{{\rm fil}}
\newcommand{\n}{{\un{n}}}
\newcommand{\etl}{{\acute{e}t}}
\newcounter{elno}
\newcounter{elno-abc}   
\newcounter{elno-abc-prime}   
\begin{document}
\title[Ramification filtration via de Rham-Witt complex]{Kato's Ramification filtration
via de Rham-Witt complex and applications}
\author{Amalendu Krishna, Subhadip Majumder}
\address{Department of Mathematics, South Hall, Room 6607, University of California
  Santa Barbara, CA, 93106-3080, USA.}
\email{amalenduk@math.ucsb.edu}
\address{Institute of Mathematical Sciences, A CI of Homi Bhabha National Institute,
  4th Cross St., CIT Campus, Tharamani, Chennai,
  600113, India.} 
\email{subhamajumdar.sxc@gmail.com}


\keywords{de Rham-Witt complex, Milnor K-theory, Brauer group}        

\subjclass[2020]{Primary 14G17, Secondary 19F15}

\maketitle
\vskip .4cm

\begin{quote}\emph{Abstract.}
 Given an $F$-finite regular scheme $X$ of positive characteristic and a
 simple normal crossing divisor $E$ on $X$,
 we introduce a filtration on the de Rham-Witt
  complex $W_m\Omega^\bullet_{X\setminus E}$. When $X$ is the spectrum of a
  henselian discrete valuation ring $A$ with quotient field $K$,
  this extends the classical filtration on
  $W_m(K)$ due to Brylinski.  We show that Kato's ramification filtration on
$H^q_\et(X \setminus E, {\Q}/{\Z}(q-1))$ for $q \ge 1$ admits an explicit
  description in terms of the above filtration of the de Rham-Witt complex
  of $X \setminus E$. When $q =1$, this specializes to the results of Kato and
  Kerz-Saito.

As applications, we prove refinements of the  
  duality theorem of Jannsen-Saito-Zhao for smooth projective schemes over finite
  fields and the duality theorem of Zhao for semi-stable schemes over 
  henselian discrete valuation rings of positive characteristic with finite
  residue fields. We also prove a modulus version of the duality theorem of
  Ekedahl. As another
  application, we prove Lefschetz theorems for Kato's ramification filtrations
  for smooth projective varieties over $F$-finite fields. This extends a
  result of Kerz-Saito for $H^1$ to higher cohomology. Similar results are
  proven for the Brauer group.
\end{quote}

\setcounter{tocdepth}{1}
\tableofcontents

\vskip .4cm

\section{Introduction}\label{sec:Intro}
\subsection{Background}\label{sec:Background}
To study the wild ramification in the $p$-adic {\'e}tale motivic cohomology classes
of a henselian
discrete valuation field $K$ with imperfect residue field, Kato \cite{Kato-89}
introduced a ramification filtration
$\{\Fil_nH^{q}_\et(K, {\Q_p}/{\Z_p}(q-1))\}_{n \ge 0}$
on the cohomology groups $H^{q}_\et(K, {\Q_p}/{\Z_p}(q-1))$ for $q \ge 1$.
This generalizes the classical upper numbering filtration of the
character groups of the Galois groups of complete discrete valuation fields with
perfect residue fields (cf. \cite[Chap.~IV, \S~3, Chap.~XV, \S~2]{Serre-LF}).

Kato showed that when $K$ has characteristic
$p > 0$, his ramification filtration on $H^1_{\et}(K, {\Q_p}/{\Z_p})$
has a simple description as the direct limit (over $m$) of the images of
a filtration on $W_m(K)$ defined earlier by Brylinski \cite[\S~1]{Brylinski}
under the canonical maps $\partial_{K,m} \colon W_m(K) \to
H^1_{\et}(K, {\Q_p}/{\Z_p})$ induced by the Artin-Schreier-Witt
exact sequence
\begin{equation}\label{eqn:AS-0}
  0 \to {\Z}/{p^m} \to W_m(K) \xrightarrow{1-\ov{F}} W_m(K)
  \xrightarrow{\partial_{K,m}} H^1_{et}(K, {\Q_p}/{\Z_p}).
  \end{equation}
The description of Kato's ramification filtration on $H^1_{et}(K, {\Q_p}/{\Z_p})$
in terms of the Brylinski's filtration on $W_m(K)$ has many consequences
(e.g., see \cite{Daichi}, \cite{Kato-89}, \cite{Kerz-Saito-Duke}, \cite{Leal},
\cite{GK-Adv}, \cite{GK-Nis}).

If $X$ is a Noetherian normal and integral scheme of characteristic $p > 0$
and $D \subset X$ is an effective Weil divisor with complement $U$, Kato
(cf. \cite[\S~7,8]{Kato-89}) defined a ramification filtration on
$H^{q}_{\et}(U, {\Q_p}/{\Z_p}(q-1))$ (see \S~\ref{sec:Kato-comp} for the
  definition of this group) for all $q \ge 1$ by 
\[
\Fil_D^\log H^{q}_{\et}(U, {\Q_p}/{\Z_p}(q-1)) =
  \{\chi \in H^{q}_\et(U, {\Q_p}/{\Z_p}(q-1))| \chi_i \in
  \Fil_{n_i} H^{q}_\et(K_i, {\Q_p}/{\Z_p}(q-1)) \ \forall i\},
  \]
where $\{x_1, \cdots , x_r\}$ is the set of generic points of $D$,
  $n_i$ is the multiplicity of $D$ at $x_i$, $K_i$ is the henselization of
  the function field of $X$ at $x_i$ and $\chi_i$ is the image of $\chi$
  under the canonical pull-back map $H^{q}_\et(U, {\Q_p}/{\Z_p}(q-1)) \to
  H^{q}_\et(K_i, {\Q_p}/{\Z_p}(q-1))$. The group
  $\Fil_D^\log H^{q}_{\et}(U, {\Z}/{p^m}(q-1))$ is defined analogously
  (cf. \S~\ref{sec:Kato-global}). 

Using Kato's description of his ramification filtration on
$H^1_{\et}(K, {\Q_p}/{\Z_p})$ in terms of Brylinski's filtration on $W_m(K)$ for
henselian field $K$,
Kerz-Saito \cite{Kerz-Saito-ANT} showed that if $X$ is a smooth projective variety
over a perfect field of characteristic $p > 0$ and $D_\red \subset X$ is a normal
crossing divisor with complement $U$, then $\Fil_D H^1_{\et}(U, {\Q_p}/{\Z_p})$
can be described as an {\'e}tale cohomology group (see \S~\ref{sec:GKF} for
details). As a major application of this cohomological description, they
\cite{Kerz-Saito-Duke} solved the class
field theory problem for the abelianized {\'e}tale fundamental group with modulus
over finite fields (see also \cite{BKS}).

The principal objective of this paper is to
extend Brylinski's filtration on $W_m(K)$ to a filtration on the full de Rham-Witt
complex $W_m\Omega^\bullet_K$ and prove that Kato's ramification filtration
on $H^{q}_{\et}(K, {\Q_p}/{\Z_p}(q-1))$ can be described in terms of this new
filtration on $W_m\Omega^\bullet_K$. This 
extends \cite[Thm.~3.2]{Kato-89} to all integers $q \ge 1$.
Using this extension and its generalization to schemes, we then show that
Kato's global filtration $\Fil_D^\log H^{q}_{\et}(U, {\Q_p}/{\Z_p}(q-1))$ recalled above
can be described as an {\'e}tale cohomology group,
extending the result of Kerz-Saito for $H^1$ to $H^q$ for all $q \ge 1$.
We derive several applications of these generalizations some of which are included in
this paper while the remaining ones are given in \cite{KM-1} and \cite{KM-2}.

{\bf Standing assumption:}
We fix once and for all a prime $p > 0$ and assume all fields to be
of characteristic $p$ and all schemes to be $\F_p$-schemes. If 
a different assumption is made in a specific context, we shall spell that out.

\subsection{Extensions of the filtrations of Brylinski and Kato} 
\label{sec:BKF}
Let $X$ be a equidimensional
Noetherian regular $F$-finite $\F_p$-scheme and let $E \subset X$ be a simple
normal crossing divisor with irreducible components $E_1, \ldots , E_r$.
We shall refer to the pair $(X,E)$ as an snc-pair.
Let $j \colon U = X \setminus E \inj X$ be the inclusion of the complement of $E$.
Let $\Div_E(X)$ denote the subgroup of the Weil divisor group $\Div(X)$ consisting
of divisors supported on $E$. Recall that $\Div_E(X)$ is a partially
ordered group where $D \le D'$ if $D'-D$ is effective.
We refer to Definitions~\ref{defn:Filtered-Witt-complex} and
~\ref{defn:Filtered-Witt-complex-0} for the definition of good filtered
Witt-complexes. 

Our first result is the following extension of Brylinski's filtration on
the rings of Witt vectors of discrete valuation fields.

\begin{thm}\label{thm:Main-0}
  There is a $\Div_E(X)$-filtered Witt-complex
  $\{\Fil_D W_m\Omega^\bullet_U\}$ together with an inclusion of
  filtered Witt-complexes $\{\Fil_D W_m\Omega^\bullet_U\} \inj
  \{j_* W_m\Omega^\bullet_U\}$ over $X$ such that the following hold.
  \begin{enumerate}
  \item
    $\varinjlim_D \Fil_D W_m\Omega^\bullet_U \xrightarrow{\cong}
    j_* W_m\Omega^\bullet_U$.
    \item
    $\Fil_D W_m\sO_U$ coincides with Brylinski's filtration on $W_m(K)$
    when $X = \Spec(A)$,
    where $A$ is a discrete valuation ring with quotient field $K$,
    residue field $\ff$ and $E = \Spec(\ff)$.
  \item
    $\{\Fil_D W_m\Omega^\bullet_U\}$ is good.
  \item
    $\{\Fil_D W_m\Omega^\bullet_U\}$ is contravariantly functorial in the pair
    $(X,E)$.
  \end{enumerate}
  \end{thm}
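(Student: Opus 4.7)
The plan is to construct the filtration first at the generic points of the components of $E$, where the local ring is a discrete valuation ring, and then globalize by imposing a stalk condition along codimension-one points.

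The first step is to extend Brylinski's filtration from $W_m(K)$ to the full de Rham-Witt complex $W_m\Omega^\bullet_K$, where $K$ is the quotient field of a DVR $A$ with uniformizer $\pi$. Using the Cartier-Dieudonn\'e-Raynaud-Illusie presentation, every element of $W_m\Omega^q_K$ is a finite sum of terms of the form $V^s(\alpha\,d\log\beta_1\wedge\cdots\wedge d\log\beta_q)$ and $dV^s(\gamma\,d\log\delta_1\wedge\cdots\wedge d\log\delta_{q-1})$, with $\alpha,\gamma\in W_{m-s}(K)$ and $\beta_j,\delta_j\in K^\times$. I would define $\Fil_n W_m\Omega^q_K$ as the abelian subgroup generated by such terms in which $\alpha$ and $\gamma$ satisfy Brylinski's pole-order condition at level $n$, with the appropriate $p^s$-scaling imposed by the $V^s$. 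One then checks that the resulting groups are closed under $d$, $F$, $V$ and multiplication by $W_m\Omega^\bullet_A$, producing a filtered Witt-complex on $W_m\Omega^\bullet_K$. Specializing to $q=0$ recovers Brylinski's original filtration, yielding property (2).

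For the globalization, let $\eta_i$ be the generic point of $E_i$, so $\sO_{X,\eta_i}$ is a DVR with quotient field $K_i$. For a divisor $D=\sum_i n_i E_i\in\Div_E(X)$, I would define $\Fil_D W_m\Omega^q_U\subset j_*W_m\Omega^q_U$ as the subsheaf of sections $\omega$ whose germ $\omega_{\eta_i}$ lies in $\Fil_{n_i}W_m\Omega^q_{K_i}$ for every $i$. Because $X$ is regular and Noetherian, each section of $j_*W_m\Omega^q_U$ has finite pole order at every codimension-one point of $E$, giving property (1); property (2) is built into the definition; and contravariant functoriality in $(X,E)$ (property (4)) reduces at generic points to the well-known pullback behavior of Brylinski's filtration under an extension of DVRs $A\to A'$ sending $\pi\mapsto u\pi'^e$, combined with the standard pullback on the de Rham-Witt complex in the $F$-finite setting.

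The main obstacle I anticipate is property (3), namely that the resulting filtration is \emph{good} in the sense of Definitions~\ref{defn:Filtered-Witt-complex} and \ref{defn:Filtered-Witt-complex-0}. Goodness requires precise compatibility formulas prescribing how $F$, $V$, $d$ and the product interact with filtration levels, including the $p$-adic shifts in the $V$-direction and the subtle extra pole-order contribution from $d\log\pi$. To verify these, I would work stalk-locally at each $\eta_i$, reduce to the DVR case, and check the required identities on the explicit generating set above. The two-layer presentation (keeping the $dV^s$ summands separate from the $V^s$ summands) appears essential here, since the naive $d$-closure of Brylinski's filtration overshoots the correct indexing by one unit in the $d\log\pi$-component, and the good-filtration axioms are sensitive to precisely this discrepancy.
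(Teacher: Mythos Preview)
Your construction differs from the paper's in a way that matters. The paper does not globalize by a stalk condition at the generic points $\eta_i$; it defines $\Fil_D W_m\Omega^q_U$ \emph{explicitly} as
\[
\Fil_D W_m\sO_U \cdot W_m\Omega^q_X(\log E) \;+\; d(\Fil_D W_m\sO_U)\cdot W_m\Omega^{q-1}_X(\log E)
\]
inside $j_*W_m\Omega^q_U$, where $W_m\Omega^\bullet_X(\log E)$ is the log de~Rham--Witt complex. That your codimension-one description agrees with this is not a definition but a theorem (\thmref{thm:Global-version}(1), proved via \corref{cor:Complete-2}), and it is established only \emph{after} goodness is known. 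With the explicit product definition, the filtered Witt-complex axioms (closure under $d,F,V$, products) are a direct computation (\lemref{lem:Log-fil-Wcom}); with your stalk definition they are also easy, but you would then be unable to prove goodness.

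This brings up the main gap: you have misread what ``good'' means. It is not the package of compatibility formulas for $F,V,d$ and the product --- that is \defref{defn:Filtered-Witt-complex}. Goodness (\defref{defn:Filtered-Witt-complex-0}) is the exactness of
\[
0 \to V^{m-1}(\Fil_D\Omega^q_U)+dV^{m-1}(\Fil_D\Omega^{q-1}_U) \to \Fil_D W_m\Omega^q_U \xrightarrow{R} \Fil_{D/p}W_{m-1}\Omega^q_U \to 0,
\]
and the hard part is exactness in the middle: if $\omega$ lies in the filtration and $R(\omega)=0$, one must exhibit $a,b$ \emph{in the filtration} with $\omega=V^{m-1}(a)+dV^{m-1}(b)$. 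Your plan to ``work stalk-locally at each $\eta_i$ and reduce to the DVR case'' cannot succeed: exactness of a sequence of sheaves must be checked at \emph{every} stalk, and at a point $x$ lying on several components of $E$ the local ring $\sO_{X,x}$ is a regular local ring of dimension $\ge 2$, not a DVR. Even in the DVR case your generating-set description does not give this exactness for free; the paper devotes all of \S\ref{sec:FPSR} to it, relying essentially on the Geisser--Hesselholt product decomposition of $W_m\Omega^q_{S[[\pi]]}$ (\thmref{thm:GH-Top}, \propref{prop:Fil-decom}). The higher-dimensional case (\S\ref{sec:FDW-Reg}) then passes to the completion $k[[x_1,\dots,x_d]]$, uses a multivariate analogue of the Geisser--Hesselholt decomposition together with the DVR case applied to each factor $Q(B)[[x_i]]$, and finally descends back to the non-complete local ring via density arguments (\lemref{lem:Non-complete-2}, \lemref{lem:Non-complete-3*}). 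None of this machinery is visible in your outline, and without it property~(3) remains out of reach.
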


\vskip.2cm

\begin{remk}\label{remk:Comparison}
  We make some remarks about \thmref{thm:Main-0}.
  \begin{enumerate}
  \item
    In \cite{JSZ}, Jannsen-Saito-Zhao defined a filtration
    $\Fil^{\rm JSZ}_D W_m\Omega^q_U$ of $j_* W_m\Omega^q_U$.
    Their filtration is related to the one in \thmref{thm:Main-0}
    via the isomorphism
    $\Fil^{\rm JSZ}_D W_m\Omega^q_U \cong \Fil_{p^{m-1}D} W_m\Omega^q_U$
    (cf. \lemref{lem:Log-fil-4}(1)). The filtration of $j_* W_m\Omega^q_U$ defined
    by Jannsen-Saito-Zhao is not fine enough to prove their main duality theorem 
    for a fixed modulus. Another issue with $\Fil^{\rm JSZ}_D W_m\Omega^q_U$
    is that it does not yield the logarithmic Hodge-Witt sheaf
    $W_m\Omega^q_{X|{D}, \log}$ of op. cit. when $D$ is effective and
    one intersects the classical logarithmic Hodge-Witt sheaf $W_m\Omega^q_{X, \log}$ with
    $\Fil^{\rm JSZ}_{-D} W_m\Omega^q_U$. The filtration $\{\Fil_D W_m\Omega^\bullet_U\}$
    rectifies these defects (cf. \thmref{thm:Main-2}, \lemref{lem:Complex-6}).
    \item
      In \cite{Ren-Rulling}, Ren-R{\"u}lling introduced subsheaves
      $W_m\Omega^q_{(X, \pm D)}$
      of $j_* W_m\Omega^q_U$ when $X$ is a smooth variety over a perfect field and
      $D$ is effective.
      Their subsheaf $W_m\Omega^q_{(X, D)}$ (resp. $W_m\Omega^q_{(X, -D)}$)
      coincides with the subsheaf $\Fil_{D} W_m\Omega^q_U$
      (resp. $\Fil_{-D} W_m\Omega^q_U$) given by \thmref{thm:Main-1}
    when $D =0$ (resp. $D = E$), see \lemref{lem:Log-fil-4} and \corref{cor:EK-Main-4}.
    In other cases, (for $D$ effective) $W_m\Omega^q_{(X, D)}$ and $\Fil_{D} W_m\Omega^q_U$
    (resp. $W_m\Omega^q_{(X, -D)}$ and $\Fil_{-D} W_m\Omega^q_U$) are 
    different from each other, even when $m =1$ and $q \ge 1$
    (or $q = 0$ and $m \ge 2$), as is evident
    from \lemref{lem:Log-fil-4} and \cite[(6.2.4), Lem.~8.4]{Ren-Rulling}
    (see also Remark~\ref{remk:Log-fil-4-ext}).
\end{enumerate}
\end{remk}

\vskip.2cm

We now proceed to give applications of \thmref{thm:Main-0}. 

\vskip.2cm

\subsection{Ramification subgroups as sheaf cohomology}\label{sec:Ramif-sheaf}
Let $X$ be as in \thmref{thm:Main-0}.
For the first application, we assume that $D \in \Div_E(X)$ is effective.
For $q \ge 0$, we let $W_m\sF^{q, \bullet}_{X|D}$ denote the two term complex
$[Z_1\Fil_D W_m\Omega^q_U \xrightarrow{1-C} \Fil_D W_m\Omega^q_U]$ on the
{\'e}tale site of $X$, where
$Z_1\Fil_D W_m\Omega^q_U = \Fil_D W_m\Omega^q_U  \bigcap j_* Z_1W_m\Omega^q_U$ and
$C$ is the Cartier operator.
We refer to \S~\ref{sec:Global} for the proof that
$W_m\sF^{q, \bullet}_{X|D}$ is well-defined. We now have
the following extension of the theorems of Kato (cf. ~\eqref{eqn:AS-0}) and
Kerz-Saito \cite{Kerz-Saito-ANT}.

\begin{thm}\label{thm:Main-1}
  For $m, q \ge 1$, there is a canonical isomorphism
  \[
  \Fil_D^\log H^{q}_\et(U, {\Z}/{p^m}(q-1)) \xrightarrow{\cong}
  \H^1_\et(X, W_m\sF^{q-1, \bullet}_{X|D}).
  \]
\end{thm}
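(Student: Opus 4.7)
The plan is to realize both sides as subgroups of $H^1_\et(U, W_m\Omega^{q-1}_{U,\log})$ via the Geisser--Levine identification, and then match the two descriptions by reducing to the henselian local case at each generic point of $D$.

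First, Geisser--Levine identifies $\Z/p^m(q-1)$ with $W_m\Omega^{q-1}_{U,\log}[-(q-1)]$ on $U_\et$, yielding $H^q_\et(U, \Z/p^m(q-1)) \cong H^1_\et(U, W_m\Omega^{q-1}_{U,\log})$. The defining short exact sequence
\[
0 \to W_m\Omega^{q-1}_{U,\log} \to Z_1 W_m\Omega^{q-1}_U \xrightarrow{1-C} W_m\Omega^{q-1}_U \to 0
\]
on $U_\et$, combined with the surjectivity of $1-C$ on étale stalks (via Artin--Schreier--Witt covers), gives a quasi-isomorphism $Rj_*W_m\Omega^{q-1}_{U,\log} \simeq j_*[Z_1 W_m\Omega^{q-1}_U \xrightarrow{1-C} W_m\Omega^{q-1}_U]$ on $X_\et$. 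By \thmref{thm:Main-0}(1), the right side is $\varinjlim_D W_m\sF^{q-1,\bullet}_{X|D}$, and each $W_m\sF^{q-1,\bullet}_{X|D}$ is a genuine subcomplex. This produces canonical maps $\H^1_\et(X, W_m\sF^{q-1,\bullet}_{X|D}) \to H^q_\et(U, \Z/p^m(q-1))$, and the task becomes showing that each such map is injective with image $\Fil_D^\log H^q_\et(U, \Z/p^m(q-1))$.

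For the containment of the image in $\Fil_D^\log$, I would restrict to the henselization $\sO^h_{X,x_i}$ at each generic point $x_i$ of $D$ (with multiplicity $n_i$). By \thmref{thm:Main-0}(2) and functoriality (part (4)), the pullback of the complex $W_m\sF^{q-1,\bullet}_{X|D}$ to this henselian local scheme is built from the extension of Brylinski's filtration indexed by $n_i$. Invoking the henselian-local version of the theorem, which extends \cite[Thm.~3.2]{Kato-89} from $q=1$ to all $q\ge 1$ and is presumably established earlier in the paper, one obtains that any class in the image restricts into $\Fil_{n_i} H^q_\et(K_i, \Z/p^m(q-1))$ for every $i$, hence lies in $\Fil_D^\log$. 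Conversely, given $\chi \in \Fil_D^\log$, one uses the same henselian description to produce, étale-locally near each generic point of $D$, representatives of $\chi$ by sections of $\Fil_D W_m\Omega^{q-1}_U$; these local representatives must then be patched into a global class in $\H^1_\et(X, W_m\sF^{q-1,\bullet}_{X|D})$ by means of the good Witt-complex structure (\thmref{thm:Main-0}(3)).

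The main obstacle will be this last patching step: passing from pointwise control at codimension-one generic points of $D$ to a global Čech representative with denominators bounded by $D$ on all of $X$. Because $E$ has normal crossings, the filtration must behave correctly along higher-codimension strata where several components of $E$ meet, and the local filtered lifts at different generic points need to agree modulo boundaries on overlaps. Resolving this likely requires a purity/vanishing statement for the quotient sheaves $j_* W_m\Omega^{q-1}_U / \Fil_D W_m\Omega^{q-1}_U$ (so that obstructions to gluing live on $E$ and vanish after restriction to a generic open), together with the transfer and closure properties built into the definition of a good filtered Witt complex. The functoriality in \thmref{thm:Main-0}(4) under étale localization on $X$ should then convert the pointwise local data into a global hypercohomology class.
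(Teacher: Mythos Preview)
Your overall architecture matches the paper's: both sides are realized inside $H^1_\et(U, W_m\Omega^{q-1}_{U,\log})$, and the henselian local case at the generic points of $E$ (Theorem~\ref{thm:Kato-fil-10}) is the essential input. You also correctly flag the codimension-$\ge 2$ behavior as the main obstacle. However, your treatment has two genuine gaps.

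First, you assert but never argue injectivity of $\H^1_\et(X, W_m\sF^{q-1,\bullet}_{X|D}) \to H^1_\et(U, W_m\Omega^{q-1}_{U,\log})$. Being a subcomplex is not enough; one needs $\H^1_E(X, W_m\sF^{q-1,\bullet}_{X|D})=0$. The paper gets this (Lemma~\ref{lem:H^1-fil-1}) from the coniveau spectral sequence plus the henselian local computation at points of $X^{(1)}\cap E$.

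Second, and more seriously, your ``patching'' step remains a wish rather than an argument. The paper avoids patching entirely: it uses the localization sequence with support in $E$ and the coniveau spectral sequence to reduce the identification of the image to the injectivity of $\H^2_E(X,\sF_m) \to \bigoplus_{x\in X^{(1)}\cap E}\H^2_x(X,\sF_m)$ (Lemma~\ref{lem:H^1-fil-2}). This in turn reduces to the vanishing $\H^2_x(X,\sF_m)=0$ for $x\in X^{(2)}\cap E$, which via the distinguished triangle of Theorem~\ref{thm:Global-version}(9) reduces to $m=1$, and then to the concrete statement that $(1-C)^*\colon H^2_\fm(Z_1\Fil_{(n_1,n_2)}\Omega^{q-1}_K)\to H^2_\fm(\Fil_{(n_1,n_2)}\Omega^{q-1}_K)$ is injective for $R=\ff[[Y_1,Y_2]]$ (Lemma~\ref{lem:1-C-inj}). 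That lemma is proved by an explicit power-series computation using the decomposition of $\Fil_{\un{n}}\Omega^\bullet_K$ from \S\ref{sec:GH-multi-var}. Your proposed ``purity/vanishing for the quotient sheaves'' does not capture this; the actual mechanism is a hands-on analysis of how the Cartier operator interacts with the monomial filtration at a two-dimensional normal-crossing point.
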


\vskip .3cm

One immediate corollary of \thmref{thm:Main-1} is the following functoriality
property of Kato's ramification filtration (cf. \corref{cor:Fil-functor}) which
can not be deduced from its definition.
We refer the reader to \S~\ref{sec:de-log-0}
for the definition of morphisms between snc-pairs.

\begin{cor}\label{cor:Main-1-0}
  For $m, q \ge 1$, the ramification filtration
  $\Fil_D^\log H^{q}_\et(U, {\Z}/{p^m}(q-1))$
  is functorial with respect to morphisms of snc-pairs.
  \end{cor}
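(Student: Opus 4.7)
The plan is to read off the corollary from \thmref{thm:Main-1} combined with the contravariant functoriality of the filtered de Rham-Witt complex asserted in \thmref{thm:Main-0}(4). Given a morphism of snc-pairs $f \colon (Y, E') \to (X, E)$ and an effective $D \in \Div_E(X)$, one first associates to $D$ an effective divisor $f^*D \in \Div_{E'}(Y)$ (with multiplicities read off from the multiplicities of the components of $E'$ over those of $E$); the definition of morphism of snc-pairs in \S~\ref{sec:de-log-0} is arranged precisely so that this pullback of divisors makes sense and behaves well.

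First I would use \thmref{thm:Main-0}(4) to obtain a morphism $f^{-1}\Fil_D W_m\Omega^\bullet_U \to \Fil_{f^*D} W_m\Omega^\bullet_{U'}$ of filtered Witt-complexes, where $U' = Y \setminus E'$. Since $1 - C$ is part of the intrinsic structure of a Witt-complex, this restricts to a morphism of two-term complexes $f^{-1} W_m\sF^{q-1,\bullet}_{X|D} \to W_m\sF^{q-1,\bullet}_{Y|f^*D}$ on the \'etale site, inducing
\[
f^* \colon \H^1_\et(X, W_m\sF^{q-1,\bullet}_{X|D}) \to \H^1_\et(Y, W_m\sF^{q-1,\bullet}_{Y|f^*D}).
\]

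Next I would verify that the comparison isomorphism of \thmref{thm:Main-1} is natural in the snc-pair. This isomorphism is ultimately built from the Artin-Schreier-Witt sequence \eqref{eqn:AS-0} and the Kato symbol map out of Milnor $K$-theory that identifies $H^q_\et(U, {\Z}/{p^m}(q-1))$ with the cohomology of a two-term complex on $U$ involving $W_m\Omega^{q-1}_U$. Both ingredients are manifestly natural in $(X, E)$, so one obtains a commutative square
\[
\begin{CD}
\H^1_\et(X, W_m\sF^{q-1,\bullet}_{X|D}) @>{\cong}>> \Fil^\log_D H^q_\et(U, {\Z}/{p^m}(q-1)) \\
@V{f^*}VV @VV{f^*}V \\
\H^1_\et(Y, W_m\sF^{q-1,\bullet}_{Y|f^*D}) @>{\cong}>> \Fil^\log_{f^*D} H^q_\et(U', {\Z}/{p^m}(q-1))
\end{CD}
\]
whose right vertical map is the restriction of the usual cohomological pullback. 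This square delivers the asserted functoriality of $\Fil^\log_D$.

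The principal obstacle is really the second step: one must pin down the correct naturality statement for the isomorphism in \thmref{thm:Main-1} at the level of the explicit constructions in \S~\ref{sec:Global}, and in particular show that the subsheaf $Z_1 \Fil_D W_m\Omega^{q-1}_U$ and the Cartier operator pull back compatibly along morphisms of snc-pairs to land inside the filtration indexed by $f^*D$ rather than merely some divisor majorising $f^*D$. The standard assumptions on morphisms of snc-pairs (flatness over the strata, transversality to $E$) should be set up exactly so that this happens; once this compatibility is installed, no further argument is needed.
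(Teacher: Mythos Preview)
Your approach matches the paper's (see the proof of \corref{cor:Fil-functor}): use the functoriality from \thmref{thm:Main-0}(4) to get a map of two-term complexes $W_m\sF^{q-1,\bullet}_{X|D} \to f_*W_m\sF^{q-1,\bullet}_{Y|D'}$, then apply \thmref{thm:Main-1}. A couple of misconceptions should be corrected, however.

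First, $1-C$ is \emph{not} part of the intrinsic Witt-complex structure; only $d,F,V,R$ are. The Cartier operator is defined separately via the factorization $R = C \circ F$ on $Z_1 = F(W_{m+1}\Omega^q)$ (cf.\ \S\ref{sec:Cartier}). The reason $f^*$ commutes with $1-C$ is therefore that $f^*$ commutes with $F$ and $R$ (this is the content of \thmref{thm:Main-0}(4)), and $Z_1\Fil_D W_m\Omega^{q-1}_U$ is defined as $\Ker(F^{m-1}d)$ on the filtered piece (\thmref{thm:Global-version}(3)), hence is automatically preserved by $f^*$. Once you phrase it this way, your ``principal obstacle'' dissolves: no delicate check is needed, and the paper indeed dispatches this in one sentence.

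Second, a morphism of snc-pairs in this paper is simply a morphism of the associated log schemes (see \S\ref{sec:de-log-0}); there is no flatness or transversality hypothesis. The only consequence used is that $f^*(E_i) \in \Div_{E'}(Y)$ for each $i$ and $f(U') \subset U$, which suffices for $f^*\Fil_D \subset \Fil_{D'}$ whenever $D' \ge f^*D$. Finally, the naturality of the isomorphism in \thmref{thm:Main-1} is immediate from its construction: the map $\H^1_\et(X, W_m\sF^{q-1,\bullet}_{X|D}) \hookrightarrow H^{q}_{p^m}(U)$ is just restriction along $j \colon U \hookrightarrow X$ (cf.\ \eqref{eqn:H^1-fil-0}), which is manifestly compatible with $f^*$.
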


\vskip.2cm

\subsection{Global refined Swan conductor}\label{sec:GRSC}
Let $X$ be as in \thmref{thm:Main-0} and let $D \ge D' \ge D/p \ge 0$ be two divisors
in $\Div_E(X)$ (see \S~\ref{sec:FDRWC} for the definition of $D/p$).
If $X = \Spec(A)$, where $A$ is an $F$-finite henselian discrete
valuation ring with maximal ideal $(\pi)$, quotient field $K$ and residue field $\ff$,
then one of the main results of
Kato in \cite{Kato-89} is the construction of an injective refined Swan
conductor map
\begin{equation}\label{eqn:Kato-SC-0}
  {\rsw}^q_{K,n} \colon
  \frac{\Fil^{\log}_nH^{q}_\et(K, {\Q_p}/{\Z_p}(q-1))}
       {\Fil^{\log}_{n-1}H^{q}_\et(K, {\Q_p}/{\Z_p}(q-1))} \inj 
  \pi^{-n} \Omega^{q}_A(\log \pi)\otimes_A \ff
  \end{equation}
for $n,q \ge 1$.
For $q =1$, Kato-Leal-Saito \cite[\S~4]{KLS} showed that Kato's Swan conductor at the
generic points of $E$ glue together to give rise to an injective
refined Swan conductor map
${\Rsw} \colon \frac{\Fil^{\log}_{D} H^{1}_\et(U, {\Q_p}/{\Z_p})}
{\Fil^{\log}_{D/p}H^{1}_\et(U,{\Q_p}/{\Z_p} )} \to
H^0_\zar(X, \Omega^1_X(\log E) \otimes_{\sO_X} \frac{\sO_X(D)}{\sO_{X}(D/p)})$.

As the first application of \thmref{thm:Main-1}, we prove the following 
generalization of these results.

\begin{thm}\label{thm:Main-9}
Let $X$ be as in \thmref{thm:Main-0} and let $D \ge D' \ge D/p \ge 0$ be two divisors
in $\Div_E(X)$. Let $q \ge 1$ be an integer and let $F=D-D'$.
Then there exists a monomorphism of ind-abelian groups 
  \begin{equation}\label{eqn:Main-9-0}
    \Rsw^{\bullet,q}_{X|(D,D')} \colon
    \left\{ \frac{\Fil^\log_DH^{q}_\et(U, {\Z}/{p^m}(q-1))}
            {\Fil^\log_{D'}H^{q}_\et(U, {\Z}/{p^m}(q-1))}\right\}_{m \ge 1}
      \longrightarrow
H^0_\zar\left(F, \Omega^{q}_X(\log E)(D) \otimes_{\sO_X} \sO_F\right).
  \end{equation}
  Moreover, $\Rsw^{\bullet,q}_{X|(D,D')}$ is functorial with respect to morphisms of
  snc-pairs.
\end{thm}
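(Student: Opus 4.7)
The plan is to construct $\Rsw^{\bullet,q}_{X|(D,D')}$ at the level of the filtered de Rham--Witt sheaves using the differential $d$ followed by restriction to $W_1$, then pass to \'etale hypercohomology via \thmref{thm:Main-1}, and reduce injectivity to Kato's local result~\eqref{eqn:Kato-SC-0}.

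The first step is to invoke \thmref{thm:Main-1} to identify $\Fil^\log_D H^q_\et(U, \Z/p^m(q-1))$ with $\H^1_\et(X, W_m\sF^{q-1,\bullet}_{X|D})$ and similarly for $D'$. The short exact sequence of two-term complexes on $X_\et$
\[ 0 \longrightarrow W_m\sF^{q-1,\bullet}_{X|D'} \longrightarrow W_m\sF^{q-1,\bullet}_{X|D} \longrightarrow \sC^\bullet_m \longrightarrow 0, \]
whose quotient $\sC^\bullet_m$ consists of quasi-coherent $\sO_X$-modules supported on $F$, together with the fact that $\Fil^\log_{D'} \hookrightarrow \Fil^\log_D$ is an inclusion, yields via the hypercohomology long exact sequence an injection
\[ \frac{\Fil^\log_D H^q_\et(U, \Z/p^m(q-1))}{\Fil^\log_{D'} H^q_\et(U, \Z/p^m(q-1))} \hookrightarrow \H^1_\et(X, \sC^\bullet_m). \]

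The second step is to produce a morphism $\sC^\bullet_m \to \Omega^q_X(\log E)(D) \otimes_{\sO_X} \sO_F[-1]$ in the derived category. Starting from $\Fil_D W_m\Omega^{q-1}_U$ (viewed as a subsheaf of $j_*W_m\Omega^{q-1}_U$ on $X$), I would compose the differential $d$, the iterated restriction $R^{m-1} \colon W_m\Omega^q_U \to \Omega^q_U$, the good-filtration identification $\Fil_D \Omega^q_U \cong \Omega^q_X(\log E)(D)$ supplied by \thmref{thm:Main-0}, and reduction modulo $\sO_X(-F)$. This yields
\[ \psi_m \colon \Fil_D W_m\Omega^{q-1}_U \longrightarrow \Omega^q_X(\log E)(D) \otimes_{\sO_X} \sO_F, \]
which vanishes on $\Fil_{D'} W_m\Omega^{q-1}_U$. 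The essential check is that $\psi_m$ also vanishes on the image of $1 - C \colon Z_1\Fil_D W_m\Omega^{q-1}_U \to \Fil_D W_m\Omega^{q-1}_U$: for $\xi \in Z_1$ one has $d\xi = 0$, hence $d(1 - C)\xi = -dC\xi$, and because the Cartier operator divides filtration weights by $p$ this lies in $\Fil_{D/p} W_m\Omega^q_U \subseteq \Fil_{D'} W_m\Omega^q_U$ by the hypothesis $D' \ge D/p$. Thus $\psi_m$ descends to the desired morphism, and applying $\H^1_\et$ and composing with the injection of the first step produces $\Rsw^{m,q}_{X|(D,D')}$. Compatibility with the de Rham--Witt restrictions $R \colon W_{m+1}\Omega^\bullet \to W_m\Omega^\bullet$ assembles these into an ind-morphism.

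For injectivity, I would localize at the generic point $\eta_j$ of each irreducible component of $F$: the henselization of $(X,E)$ there is the spectrum of an $F$-finite henselian DVR, \thmref{thm:Main-0}(2) recovers Brylinski's filtration, and the local incarnation of $\psi_m$ is precisely Kato's refined Swan conductor~\eqref{eqn:Kato-SC-0}, which is injective by the main local theorem of \cite{Kato-89}. Since $\Omega^q_X(\log E)(D) \otimes_{\sO_X} \sO_F$ is locally free over $\sO_F$ and the generic points of components of $F$ belong to the associated primes of $\sO_F$, local injectivity at every $\eta_j$ propagates to the global injectivity of $\Rsw^{m,q}_{X|(D,D')}$. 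Functoriality under morphisms of snc-pairs is transparent from the contravariance in \thmref{thm:Main-0}(4), \corref{cor:Main-1-0}, and the naturality of $d$ and $R^{m-1}$. I expect the principal obstacle to be verifying $dC(Z_1 \Fil_D W_m\Omega^{q-1}_U) \subseteq \Fil_{D/p} W_m\Omega^q_U$ cleanly; this is what makes the hypothesis $D' \ge D/p$ both necessary and sufficient, and it requires an explicit understanding of how $C$ interacts with the new filtration, which should be encoded in the good-filtered-Witt-complex structure of \thmref{thm:Main-0}.
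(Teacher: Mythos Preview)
Your construction in Step~2 fails for $m \ge 2$: the map $\psi_m$ you define is identically zero. The filtered Witt-complex axioms (cf.\ \lemref{lem:Log-fil-Wcom}) give $R(\Fil_D W_m\Omega^\bullet_U) \subset \Fil_{D/p} W_{m-1}\Omega^\bullet_U$, so $R^{m-1} \circ d$ maps $\Fil_D W_m\Omega^{q-1}_U$ into $\Fil_{D/p^{m-1}} \Omega^q_U \subset \Fil_{D'} \Omega^q_U$ (using $D' \ge D/p \ge D/p^{m-1}$), which vanishes after reduction modulo $\sO_X(-F)$. The underlying confusion is that $Z_1\Fil_D W_m\Omega^{q-1}_U$ is \emph{not} the kernel of $d$: in the de Rham--Witt setting, $Z_1 W_m\Omega^q_X := \Ker(F^{m-1}d) = F(W_{m+1}\Omega^q_X)$ (cf.\ \thmref{thm:Global-version}(3)), so your claim ``for $\xi \in Z_1$ one has $d\xi = 0$'' is incorrect as soon as $m \ge 2$.

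The paper's argument (\S\ref{sec:Comp-H1}) replaces $R^{m-1} \circ d$ by $F^{m-1}d$, which preserves the filtration level $D$ rather than dividing it. The key observation is that under the hypothesis $D' \ge D/p$, the induced Cartier map $\ov{C}$ on the quotient complex $\sC^\bullet_m$ is zero (since $C(Z_1\Fil_D) \subset \Fil_{D/p} \subset \Fil_{D'}$), so $\sC^\bullet_m$ is quasi-isomorphic to $\bigl(\Fil_D W_m\Omega^{q-1}_U / (\Fil_{D'} W_m\Omega^{q-1}_U + Z_1\Fil_D W_m\Omega^{q-1}_U)\bigr)[-1]$. The paper then shows (\lemref{lem:Comp*-7}) that $F^{m-1}d$ embeds this sheaf into $\Fil_D\Omega^q_U / \Fil_{D'}\Omega^q_U$ by a direct filtered-de-Rham--Witt argument, yielding injectivity on $H^0$ without any appeal to Kato's local theorem. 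Your proposed reduction to the local injectivity of \eqref{eqn:Kato-SC-0} at generic points of $F$ is also problematic as stated: the domain is a quotient of global hypercohomology groups, not sections of a sheaf, so localizing an element at $\eta_j$ requires more care than the associated-primes remark suggests.
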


\thmref{thm:Main-9} has several applications. For instance, Kato's integrality theorem
(cf. \cite[Thm.~7.1]{Kato-89}) follows immediately from it.
Using \thmref{thm:Main-9}, we can define Matsuda's non-log version of Kato's
filtration (cf. \cite{Matsuda}) for $H^q_\et(X, {\Q}/{\Z}(q-1))$ for all $q \ge 1$, and
show that Matsuda's filtration is also functorial with respect to morphisms of
  snc-pairs (cf. \cite{KM-2}).
 \thmref{thm:Main-9}  also plays the central role in the proofs of the main
  results of \cite{KM-2}.

\vskip .3cm
\subsection{Extension of Ekedahl's duality}\label{sec:Duality-EK}
Let $k$ be a perfect field and let $X$ be a regular and proper $k$-scheme
which is connected of dimension $d$.
Let $i, q \ge 0, m \ge 1$ be integers. Then Ekedahl's 
duality theorem \cite[Thm.~II.2.2]{Ekedahl} for the Hodge-Witt cohomology states that
there is a canonical isomorphism of finitely generated $W_m(k)$-modules
\[
H^i_\zar(X, W_m\Omega^q_X) \xrightarrow{\cong}
\Hom_{W_m(k)}(H^{d-i}_\zar(X, W_m\Omega^{d-q}_X), W_m(k)).
\]

We now let $X$ be a connected, regular and proper $k$-scheme of dimension $d$
and let $E \subset X$ be a simple normal crossing divisor with complement $U$.
In this paper, we extend Ekedahl's duality theorem to the filtered
Hodge-Witt cohomology as follows.

\begin{thm}\label{thm:Main-10}
Let $i, q \ge 0, m \ge 1$ be integers and $D \in \Div_E(X)$.
Then the multiplication operation of
$W_m\Omega^\bullet_U$ induces a canonical isomorphism of finitely generated
$W_m(k)$-modules
\[
H^i_\zar(X, \Fil_D W_m\Omega^q_U) \xrightarrow{\cong}
\Hom_{W_m(k)}(H^{d-i}_\zar(X, \Fil_{-D-E} W_m\Omega^{d-q}_U), W_m(k)).
\]
\end{thm}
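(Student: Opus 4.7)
The plan is to deduce \thmref{thm:Main-10} from Grothendieck--Serre duality on the smooth projective variety $X$ by an induction on the Witt length $m$, with the good filtered Witt-complex structure of \thmref{thm:Main-0} supplying the pairing and its filtration compatibilities. The starting point is that, by \thmref{thm:Main-0}(3), the multiplicative structure of $W_m\Omega^\bullet_U$ satisfies $\Fil_{D_1}W_m\Omega^q_U \cdot \Fil_{D_2}W_m\Omega^{q'}_U \subseteq \Fil_{D_1+D_2}W_m\Omega^{q+q'}_U$. Choosing $D_1 = D$ and $D_2 = -D-E$ yields a pairing
\[
\mu \colon \Fil_D W_m\Omega^q_U \otimes_{W_m\sO_X} \Fil_{-D-E} W_m\Omega^{d-q}_U \longrightarrow \Fil_{-E} W_m\Omega^d_U \subseteq W_m\Omega^d_X,
\]
where the inclusion uses that $-E \le 0$ and that $\Fil_\bullet$ filters $j_* W_m\Omega^d_U$. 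Composing the induced cup product on hypercohomology with Ekedahl's trace $H^d_\zar(X, W_m\Omega^d_X) \to W_m(k)$ produces the candidate duality map
\[
\Phi \colon H^i_\zar(X, \Fil_D W_m\Omega^q_U) \longrightarrow \Hom_{W_m(k)}\!\bigl(H^{d-i}_\zar(X, \Fil_{-D-E}W_m\Omega^{d-q}_U),\, W_m(k)\bigr),
\]
and the task is to prove that $\Phi$ is an isomorphism.

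For the base case $m = 1$, the results of Ren--R\"ulling mentioned in \remref{remk:Comparison} together with \lemref{lem:Log-fil-4} should identify $\Fil_D \Omega^q_U$ with a locally free coherent $\sO_X$-module of the expected ``twisted log differentials'' form $\Omega^q_X(\log E)(D)$. Under this identification, $\mu$ becomes the usual wedge pairing
\[
\Omega^q_X(\log E)(D) \otimes_{\sO_X} \Omega^{d-q}_X(\log E)(-D-E) \longrightarrow \Omega^d_X(\log E)(-E) \cong \Omega^d_X,
\]
which is a perfect pairing of locally free $\sO_X$-modules of complementary rank. Classical Serre duality for the smooth projective $k$-scheme $X$, with dualizing sheaf $\Omega^d_X$, then gives \thmref{thm:Main-10} at $m = 1$.

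For the inductive step I would exploit the Verschiebung tower on the filtered de Rham--Witt complex. One expects a short exact sequence of sheaves on $X$
\[
0 \to \Fil_{\lceil D/p\rceil} W_{m-1}\Omega^q_U \xrightarrow{\ V\ } \Fil_D W_m\Omega^q_U \xrightarrow{\ R^{m-1}\ } \Fil_D \Omega^q_U \to 0,
\]
together with an analogous sequence for $-D-E$, and these two filtrations are compatible with the pairing $\mu$ via the standard Witt-complex identity $V(x)\cdot y = V(x \cdot Fy)$. Passing to Zariski hypercohomology yields a morphism of distinguished triangles in which the duality statement is already known at length $m-1$ (by the inductive hypothesis) and at length one (by the base case). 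The five lemma then forces $\Phi$ to be an isomorphism at length $m$, completing the induction.

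The principal technical obstacle will be establishing the precise compatibility of the filtration $\Fil_\bullet$ with the Verschiebung operator in the form displayed above: namely, that $V$ carries $\Fil_{\lceil D/p\rceil}W_{m-1}\Omega^q_U$ isomorphically onto the kernel of the restriction $\Fil_D W_m\Omega^q_U \twoheadrightarrow \Fil_D\Omega^q_U$, and that the rescaling $D \mapsto \lceil D/p\rceil$ on the first factor pairs correctly with the reciprocal rescaling $-D-E \mapsto \lceil(-D-E)/p\rceil$ on the second, so that $\mu$ restricts to the expected Ekedahl-type pairings on kernels and on cokernels. This is essentially a structural property of the good filtered Witt-complex produced by \thmref{thm:Main-0}; once it is in hand, both the formation of $\mu$ and the five-lemma induction proceed cleanly, while the residual identification in the base case is simply classical Serre duality.
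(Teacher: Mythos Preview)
Your base case $m=1$ is correct and matches the paper: by \lemref{lem:Log-fil-4}(3) one has $\Fil_D\Omega^q_U=\Omega^q_X(\log E)(D)$, the wedge pairing $\Omega^q_X(\log E)(D)\otimes\Omega^{d-q}_X(\log E)(-D-E)\to\Omega^d_X$ is perfect between locally free sheaves, and Serre duality finishes the job.

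The inductive step, however, has a genuine gap. The short exact sequence you write down,
\[
0 \to \Fil_{\lceil D/p\rceil} W_{m-1}\Omega^q_U \xrightarrow{\ V\ } \Fil_D W_m\Omega^q_U \xrightarrow{\ R^{m-1}\ } \Fil_D \Omega^q_U \to 0,
\]
does not exist for $q>0$. First, $R^{m-1}$ carries $\Fil_D W_m\Omega^q_U$ onto $\Fil_{D/p^{m-1}}\Omega^q_U$, not $\Fil_D\Omega^q_U$ (see \thmref{thm:Global-version}(2) iterated). Second, and more seriously, the kernel of $R^{m-1}$ is $V(\Fil_DW_{m-1}\Omega^q_U)+dV(\Fil_DW_{m-1}\Omega^{q-1}_U)$, not the image of $V$ alone; this is the content of the $V$-filtration in \cite[Chap.~I, \S~3]{Illusie} and its filtered analogue in \propref{prop:Complete-0}. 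The extra $dV$-terms make the kernel genuinely two-step, and no single operator identifies it with a lower-length filtered piece. Moreover, the identity $V(x)\cdot y=V(x\cdot Fy)$ pairs $V$ with $F$, not with $R^{m-1}$, and $V$ preserves $\Fil_D$ without any $\lceil D/p\rceil$ shift (\lemref{lem:Log-fil-Wcom}); so your claimed filtration rescaling is not what these operators produce.

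The paper's induction runs along a different pair of maps. One uses $\ov{p}\colon\Fil_{D/p}W_{m-1}\Omega^q_U\hookrightarrow\Fil_DW_m\Omega^q_U$ on one side and $R\colon\Fil_{D'}W_m\Omega^{q'}_U\twoheadrightarrow\Fil_{D'/p}W_{m-1}\Omega^{q'}_U$ on the other, with the compatibility $\ov{p}(\alpha)\wedge\beta=\ov{p}(\alpha\wedge R(\beta))$ of \lemref{lem:Ext-5}. A direct check shows $(-D-E)/p=-(D/p)-E$, so the inductive hypothesis at level $m-1$ applies to the outer terms. The real work is then to show that the induced pairing $\coker(\ov{p})\times\Ker(R)\to W_m\Omega^d_X$ is perfect; this requires decomposing both pieces via \lemref{lem:Can-filt-3} and \lemref{lem:Can-filt-4} into $B_n$ and $Z_n$ graded pieces of the filtered de Rham complex, proving these are locally free (\lemref{lem:Can-filt-5}), and then checking the perfectness of the wedge pairings $\ov{\phi}^n_*B_n\Fil_D\Omega^{q+1}_U\times\ov{\phi}^n_*\frac{\Fil_{D'}\Omega^{q'-1}_U}{Z_n\Fil_{D'}\Omega^{q'-1}_U}\to\Omega^d_X$ and the $Z_n$-analogue (\lemref{lem:Ext-9}). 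This last step is an induction on $n$ that crucially uses the freedom to swap $D$ with $D'=-D-E$, which is why the paper works with arbitrary (not just effective) $D$ throughout. None of this structure is visible from the $V/R^{m-1}$ framework you propose.
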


\vskip.2cm

\begin{remk}\label{remk:Main-10-0}
  In \cite[Cor.~9.4]{Ren-Rulling}, Ren-R{\"u}lling proved a
duality theorem similar to \thmref{thm:Main-10}.
However, the cohomology groups used in their result are different from the ones used
in \thmref{thm:Main-10} except when $D = 0$ (cf. \corref{cor:EK-Main-4}).
In the latter case, \thmref{thm:Main-10} also recovers 
the duality theorems of
  Nakkajima \cite[Thm.~5.3(1)]{Nakkajima} and Hyodo \cite[(3.3.1)]{Hyodo}.
  One notable difference between \cite[Cor.~9.4]{Ren-Rulling} and
  \thmref{thm:Main-10} is that  while the former result requires
$D$ to be effective, no such condition is imposed in the latter.
\end{remk}

\subsection{Refinement of Jannsen-Saito-Zhao duality}\label{sec:Duality}
Assume now that $k$ is a finite field and $X$ is a projective scheme of dimension
$d$ over $k$ which is regular and connected.
The famous duality theorem of Milne (cf. \cite{Milne-Zeta}) for the $p$-adic {\'e}tale
motivic cohomology says that there is a perfect pairing of finite groups
  \begin{equation}\label{eqn:Milne-D}
   H^i_\et(X, W_m\Omega^q_{X, \log}) \times H^{d+1-i}_\et(X, W_m\Omega^{d-q}_{X, \log}) \to
     {\Z}/{p^m},
  \end{equation}
  where $W_m\Omega^\bullet_{X, \log}$ are the logarithmic Hodge-Witt sheaves
  {\`a} la Bloch, Deligne and Illusie. 
Ever since the discovery of motives and motivic cohomology with modulus, it has been
an open question if there is an extension of Milne's duality if one imposes a modulus
condition. There have been attempts by several authors to resolve this (cf.
\cite{JSZ}, \cite{Zhao}, \cite{GK-Duality}) but the complete
answer remained elusive until recently. 
As an application of Theorems~\ref{thm:Main-0} and ~\ref{thm:Main-1}, we
obtain a solution of this problem as follows.
In \cite[Thm.11.15]{Ren-Rulling}, Ren-R{\"u}lling have independently obtained another
duality theorem for a fixed modulus over finite fields. But the two duality
theorems are different except when $D =0$.

Let $E \subset X$ be a simple normal crossing divisor as in \thmref{thm:Main-0} and
assume that $D \in \Div_E(X)$ is effective. For $q, m \ge 1$, we let
$W_m\Omega^q_{X|D,\log}$ be the image of the map of {\'e}tale sheaves
$\dlog \colon \Ker(\sK^M_{1,X} \surj \sK^M_{1,D}) \otimes j_* (\sK^M_{q-1,U})
\to W_m\Omega^q_X$ (cf. \cite[Defn.~1.1.1, Prop.~1.1.3]{JSZ}), where
$\sK^M_{\star, (-)}$ is the improved Milnor $K$-theory sheaf {\`a} la Gabber-Kerz
(cf. \cite{Kerz-JAG}).
We let $W_m\Omega^0_{X|D,\log} = \Ker({\sK^M_{0,X}}/{p^m} \to {\sK^M_{0,D}}/{p^m})$.
We now have the following.

\begin{thm}\label{thm:Main-2}
 For $q, i \ge 0$ and $m \ge 1$,  there is a perfect pairing of finite groups
 \[
 \H^i_\et(X, W_m\sF^{q, \bullet}_{X|D}) \times H^{d+1-i}_\et(X, W_m\Omega^{d-q}_{X|D+E, \log}) \to
   {\Z}/{p^m}.
   \]
\end{thm}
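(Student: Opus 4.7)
The plan is to construct an explicit pairing of complexes of \'etale sheaves on $X$ that induces the desired cohomological pairing, and then to prove perfectness by d\'evissage on the divisor $D$: the base case $D=0$ will reduce to Milne's duality~\eqref{eqn:Milne-D}, while the inductive step will be controlled by the refined Swan conductor (\thmref{thm:Main-9}) and the filtered Ekedahl duality (\thmref{thm:Main-10}).

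To build the pairing I would start from the wedge product on the de Rham--Witt complex. The goodness of the filtration from \thmref{thm:Main-0}(3) yields
\[
\Fil_D W_m\Omega^q_U \otimes \Fil_{-D-E} W_m\Omega^{d-q}_U \longrightarrow \Fil_{-E} W_m\Omega^d_U \subset W_m\Omega^d_X.
\]
Using the $\dlog$ description of $W_m\Omega^{d-q}_{X|D+E,\log}$ via $\Ker(\sK^M_{1,X} \surj \sK^M_{1,D+E})$ tensored with $j_*\sK^M_{d-q-1,U}$, together with \remref{remk:Comparison}(1), I would first verify that $W_m\Omega^{d-q}_{X|D+E,\log}$ is a subsheaf of $\Fil_{-D-E} W_m\Omega^{d-q}_U$. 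Since wedging with a logarithmic form commutes with the Cartier operator, the above pairing promotes to a pairing of two-term complexes
\[
W_m\sF^{q,\bullet}_{X|D} \otimes^L W_m\Omega^{d-q}_{X|D+E,\log}[0] \longrightarrow W_m\sF^{d,\bullet}_X,
\]
and the right-hand complex is quasi-isomorphic (in the \'etale topology) to a shift of $W_m\Omega^d_{X,\log}$. Composing with Milne's trace then produces the pairing into $\Z/p^m$ claimed in the theorem.

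For perfectness, I would induct on the multiplicities of $D$ along the irreducible components of $E$. In the base case $D=0$, \thmref{thm:Main-1} identifies the left cohomology with $H^i_\et(X, W_m\Omega^q_{X,\log})$, while the right-hand sheaf $W_m\Omega^{d-q}_{X|E,\log}$ is the logarithmic Hodge--Witt sheaf with a simple pole along $E$; perfectness in this case then follows from Milne's duality~\eqref{eqn:Milne-D} on $X$ together with the standard localization sequence separating the contributions of $U$ and $E$. For the inductive step with $D' = D + E_i$ for a component $E_i \subset E$, the short exact sequences from the good filtration (\thmref{thm:Main-0}(3))---and their analogues for the Kato complex $W_m\sF^{q,\bullet}$ and for the log sheaf $W_m\Omega^{d-q}_{X|D+E,\log}$---reduce perfectness via a five-lemma to a duality between the graded pieces. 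These graded pieces are coherent sheaves supported on thickenings of $E_i$ which, under the refined Swan conductor of \thmref{thm:Main-9}, match up with twisted logarithmic differentials whose perfect pairing is precisely the content of the filtered Ekedahl duality \thmref{thm:Main-10}.

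The main obstacles I expect are twofold. First, well-definedness of the pairing of complexes is delicate: the compatibility of $(1-C)$ on the left with the $\dlog$-description of the right-hand sheaf under the modulus conditions is what forces the modulus on the log side to be $D+E$ rather than $D$, and a direct check on generators of $\sK^M_\bullet$ will be required. Second, in the inductive step one must match the graded pieces of $W_m\Omega^{d-q}_{X|D+E,\log}$ produced by $\dlog$ with the duals---under the refined-conductor target of \thmref{thm:Main-9}---of the graded pieces of $W_m\sF^{q,\bullet}_{X|D}$, so that the Ekedahl pairing of \thmref{thm:Main-10} implements the graded dual and the five-lemma closes. This last matching, bridging the \'etale pairing on the logarithmic side and the coherent pairing of \thmref{thm:Main-10}, will be the most technical point of the argument.
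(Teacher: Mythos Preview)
Your construction of the pairing is essentially correct and matches the paper's (\lemref{lem:Complex-5}, \lemref{lem:Pair-3}, \corref{cor:Pair-4}): multiplicativity of the filtration lands $\Fil_D\otimes\Fil_{-D-E}$ in $\Fil_{-E}W_m\Omega^d_U=W_m\Omega^d_X$ (this is \thmref{thm:Global-version}(14), not goodness), and $\dlog$-classes are $C$-fixed.

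Your perfectness argument by induction on $D$, however, has genuine gaps at both ends. For the base case $D=0$: by \lemref{lem:Complex-6}(4) one has $W_m\sF^{q,\bullet}_{X|0}\simeq j_*W_m\Omega^q_{U,\log}$, \emph{not} $W_m\Omega^q_{X,\log}$ (and \thmref{thm:Main-1} only concerns $\H^1$, where it gives $\Fil_0^{\log}H^{q+1}_{p^m}(U)$, not $H^{q+1}_{p^m}(X)$). So the $D=0$ case is precisely the log-scheme duality of \corref{cor:Main-4}, which the paper obtains as a \emph{corollary} of the general theorem; there is no ``standard localization sequence'' that reduces it to Milne's pairing on $X$. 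For the inductive step $D\rightsquigarrow D+E_i$: the graded piece of $W_m\sF^{q,\bullet}$ is indeed (via \eqref{eqn:Comp*-4}) a shifted coherent sheaf, and \thmref{thm:Main-10} identifies its coherent dual with a graded piece of $\Fil_{-D-E}W_m\Omega^{d-q}_U$. But what you need on the other side is the graded piece of the \emph{log} sheaf $W_m\Omega^{d-q}_{X|D+E,\log}$, which is only a strict subsheaf of $\Fil_{-D-E}W_m\Omega^{d-q}_U$ (its $(1-\ov F)$-kernel, cf.\ \lemref{lem:Complex-6}(1)); for $m\ge 2$ these graded pieces have no coherent description, so \thmref{thm:Main-10} does not apply and the five-lemma does not close. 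The refined Swan conductor of \thmref{thm:Main-9} does not help either: it is an \emph{injection} into a coherent target, not an identification.

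The paper avoids both problems by inducting on $m$ rather than on $D$. The key point (\remref{remk:Pair-6}) is that for $m=1$ the log sheaf $\Omega^{d-q}_{X|D+E,\log}$ admits a two-term coherent resolution $W_1\sG^{d-q,\bullet}_{\un{n}}$ via $1-\ov F$ (\lemref{lem:Complex-6}), so perfectness for \emph{every} $D$ at once reduces to Grothendieck duality between the coherent terms of $W_1\sF^{q,\bullet}_{\un{n}}$ and $W_1\sG^{d-q,\bullet}_{\un{n}}$ (\lemref{lem:Ext-9}, \lemref{lem:Perfect-0}). The passage $m-1\Rightarrow m$ then uses the goodness triangle \thmref{thm:Global-version}(9) on the $\sF$-side and \corref{cor:Complex-7} on the log side, together with the identity $\lceil(\un{n}+1)/p\rceil=\lfloor\un{n}/p\rfloor+1$.
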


We show in this paper that this pairing is compatible with respect to
the partial order of $\Div_E(X)$. In particular, we can take the limit of
the pairing in \thmref{thm:Main-2} over $\Div_E(X)$ which allows us to deduce the
following duality theorem of Jannsen-Saito-Zhao \cite{JSZ}
(which coincides with the duality theorem of Gupta-Krishna \cite{GK-Duality}) as
a corollary.

\begin{cor}$($Jannsen-Saito-Zhao$)$\label{cor:Main-3}
  For $q, i \ge 0$ and $m \ge 1$,  there is a perfect pairing of topological
  abelian groups
  \[
  H^i_\et(U, W_m\Omega^q_{U, \log}) \times {\varprojlim}_D
  H^{d+1-i}_\et(X, W_m\Omega^{d-q}_{X|D, \log}) \to
   {\Z}/{p^m},
   \]
   where the first group on the left has discrete topology and the second
   has the profinite topology.
\end{cor}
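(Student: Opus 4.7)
The plan is to deduce Corollary~\ref{cor:Main-3} by taking the appropriate limit of the family of perfect pairings of finite groups supplied by \thmref{thm:Main-2}, indexed over effective $D \in \Div_E(X)$.

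First I would verify that this family of pairings is compatible with the partial order on $\Div_E(X)$. For $D \le D'$ effective divisors supported on $E$, the natural inclusion $\Fil_D W_m\Omega^\bullet_U \inj \Fil_{D'} W_m\Omega^\bullet_U$ coming from \thmref{thm:Main-0} induces a transition map on the left, $\H^i_\et(X, W_m\sF^{q,\bullet}_{X|D}) \to \H^i_\et(X, W_m\sF^{q,\bullet}_{X|D'})$, while the inclusion of log sheaves $W_m\Omega^{d-q}_{X|D'+E,\log} \inj W_m\Omega^{d-q}_{X|D+E,\log}$ induces a restriction on the right. I would check that the pairing of \thmref{thm:Main-2} intertwines the two: tracing its construction as a trace of a cup product of the shape $\Fil_D W_m\Omega^q_U \otimes W_m\Omega^{d-q}_{X|D+E,\log} \to W_m\Omega^d_{X|E,\log}$, the simultaneous enlargement of the filtered factor from $D$ to $D'$ and restriction of the log factor from $D'+E$ to $D+E$ preserves the cup because both operations only change the factors inside a fixed ambient sheaf. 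This naturality, asserted just after \thmref{thm:Main-2}, is the principal obstacle of the argument.

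Given compatibility, I would pass to $\varinjlim_D$ on the left and $\varprojlim_D$ on the right. \thmref{thm:Main-0}(1) gives $\varinjlim_D \Fil_D W_m\Omega^q_U = j_* W_m\Omega^q_U$, and since $Z_1\Fil_D W_m\Omega^q_U = \Fil_D W_m\Omega^q_U \cap j_* Z_1 W_m\Omega^q_U$ commutes with filtered colimits of subsheaves, the colimit of the two-term complexes reads
\[
\varinjlim_D W_m\sF^{q,\bullet}_{X|D} \;=\; \bigl[\, j_* Z_1 W_m\Omega^q_U \xrightarrow{1-C} j_* W_m\Omega^q_U\,\bigr].
\]
Using that étale cohomology on the Noetherian scheme $X$ commutes with filtered colimits, together with the Artin--Schreier--Witt short exact sequence $0 \to W_m\Omega^q_{U,\log} \to Z_1 W_m\Omega^q_U \xrightarrow{1-C} W_m\Omega^q_U \to 0$ on $U_{\et}$ and the identification $R\Gamma_\et(X, Rj_*(-)) = R\Gamma_\et(U,-)$, one concludes $\varinjlim_D \H^i_\et(X, W_m\sF^{q,\bullet}_{X|D}) \cong H^i_\et(U, W_m\Omega^q_{U,\log})$. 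On the right, divisors of the form $D+E$ are cofinal in the poset of effective divisors supported on $E$, so the shift by $E$ is immaterial under the inverse limit and one recovers $\varprojlim_D H^{d+1-i}_\et(X, W_m\Omega^{d-q}_{X|D,\log})$.

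Finally, a compatible system of perfect pairings of finite abelian groups passes, via Pontryagin duality, to a perfect pairing between the direct limit (a discrete torsion group) and the inverse limit (a profinite group), which is exactly the topological statement in \corref{cor:Main-3}. Once the naturality check of the first paragraph is in place, both the limit identification with $R\Gamma_\et(U, W_m\Omega^q_{U,\log})$ and the Pontryagin upgrade are formal.
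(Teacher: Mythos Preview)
Your proposal is correct and follows essentially the same route as the paper's own proof: deduce the perfect pairing at each level $D$ from \thmref{thm:Main-2}, check compatibility with the partial order (the paper asserts this and relies on the evident inclusions of complexes, just as you do), pass to the colimit on the left using \thmref{thm:Global-version}(10) to identify $\varinjlim_D W_m\sF^{q,\bullet}_{X|D}$ with ${\bf R}j_*(W_m\Omega^q_{U,\log})$, absorb the shift by $E$ on the right by cofinality, and conclude via Pontryagin duality for compatible systems of finite groups. The only point you leave implicit that the paper makes explicit is the continuity of $\langle \alpha, - \rangle$ for fixed $\alpha$, but this is immediate from the finite-level perfectness and is subsumed in your final Pontryagin step.
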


\begin{remk}\label{remk:GK-duality}
  In \cite{GK-Duality}, an analogue of \corref{cor:Main-3} was proven without
  assuming that $E$ is a simple normal crossing divisor. In light of
  \thmref{thm:Main-2}, an interesting open question
  is whether the methods of this paper allows one to prove a refinement of the
  duality theorem of op. cit. for a fixed modulus.
  \end{remk}

\vskip.2cm

\subsection{Duality for log schemes over finite fields}\label{sec:Log-sch}
Let $(X,E)$ be as in \S~\ref{sec:Duality}. Assume that $X$
endowed with the log structure given by the normal crossing
divisor $E$ and we let $X_\log$ denote the resulting log scheme.
We let $W_m\Omega^{\bullet}_{X_{\log}, \log}$ denote the logarithmic de Rham-Witt complex of
the log scheme $X_\log$ (cf. \cite[\S~2]{Lorenzon}).
This is the Frobenius fixed point of the de Rham-Witt complex
$W_m\Omega^{\bullet}_{X_{\log}}$ of $X_\log$ 
(cf. \cite[\S~4]{Hyodo-Kato}, \cite[\S~1.2]{Geisser-Hesselholt-JAMS}).
As another consequence of \thmref{thm:Main-2}, we get the following extension of
Milne's duality to log schemes.

\begin{cor}\label{cor:Main-4}
  For $q, i \ge 0$ and $m \ge 1$,  there is a
  canonical isomorphism $W_m\Omega^{q}_{X_{\log}, \log}
  \xrightarrow{\cong}  j_* W_m\Omega^q_{U, \log}$. In particular,
  there is a perfect pairing of finite groups
  \[
  H^i_\et(X, W_m\Omega^{q}_{X_{\log}, \log}) \times 
  H^{d+1-i}_\et(X, W_m\Omega^{d-q}_{X|E, {\log}}) \to
   {\Z}/{p^m}.
   \]
    \end{cor}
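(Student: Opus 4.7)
The plan is twofold: first, I will establish the canonical isomorphism $W_m\Omega^q_{X_{\log}, \log} \cong j_* W_m\Omega^q_{U, \log}$ by an {\'e}tale-local calculation; then I will apply \thmref{thm:Main-2} in the special case $D = 0$ (so that $D + E = E$) and identify the left-hand hypercohomology with $H^i_\et(X, W_m\Omega^q_{X_{\log}, \log})$.

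For the sheaf isomorphism, the restriction of the log de Rham-Witt complex of $X_{\log}$ to $U$ equals $W_m\Omega^\bullet_U$ (the log structure being trivial there), so there is a natural map $W_m\Omega^q_{X_{\log}, \log} \to j_* W_m\Omega^q_{U, \log}$. It is injective because $X$ is connected regular, so both sheaves embed into the stalk at the generic point of $X$, where they coincide. For surjectivity I pass to an {\'e}tale stalk $R = \sO^{sh}_{X, x}$ at a point $x \in E$. If the components of $E$ through $x$ are cut out by $t_1, \ldots, t_r$ (part of a regular system of parameters of $R$), then every unit of $R[\tfrac{1}{t_1 \cdots t_r}]$ takes the form $u \prod t_i^{n_i}$ with $u \in R^\times$. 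Since $W_m\Omega^q_{U, \log}$ is {\'e}tale-locally generated by $\dlog$ symbols of units on $U$, the identity $\dlog[v] = \dlog[u] + \sum n_i \dlog[t_i]$ (extended multilinearly in all $q$ slots) shows every local generator of $j_* W_m\Omega^q_{U,\log}$ is in fact a sum of symbols whose arguments lie in the log monoid $M_X = \sO_X^\times \cdot \langle t_1, \ldots, t_r \rangle$, hence belongs to $W_m\Omega^q_{X_{\log}, \log}$.

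For the duality pairing, I apply \thmref{thm:Main-2} with $D = 0$, which yields a perfect pairing between $\H^i_\et(X, W_m\sF^{q, \bullet}_{X|0})$ and $H^{d+1-i}_\et(X, W_m\Omega^{d-q}_{X|E, \log})$. It remains to identify the left factor with $H^i_\et(X, W_m\Omega^q_{X_{\log}, \log})$. By \thmref{thm:Main-0}(2), combined with the good filtration property (which globalizes Brylinski's identification from generic points of $E$ to the full de Rham-Witt complex), the subsheaf $\Fil_0 W_m\Omega^q_U$ coincides with the log de Rham-Witt sheaf $W_m\Omega^q_{X_{\log}}$. Thus $W_m\sF^{q, \bullet}_{X|0}$ is identified with the two-term complex $[Z_1 W_m\Omega^q_{X_{\log}} \xrightarrow{1-C} W_m\Omega^q_{X_{\log}}]$, which by the logarithmic Artin-Schreier-Witt exact sequence is quasi-isomorphic on the {\'e}tale site to its kernel; that kernel equals $W_m\Omega^q_{X_{\log}, \log}$ by part~(1).

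The main obstacle will be the identification $\Fil_0 W_m\Omega^q_U = W_m\Omega^q_{X_{\log}}$ for $q \ge 1$ together with the {\'e}tale-local surjectivity of $1 - C$ on this subsheaf, the latter being the logarithmic analog of the classical Artin-Schreier-Witt surjectivity. Both reduce to a careful analysis at {\'e}tale stalks on $E$, using the explicit description of the filtration in terms of the log parameters $t_1, \ldots, t_r$ and the compatibility of the Cartier and Frobenius operators with the log structure. Once these are in place, finiteness of the pairing in the statement is immediate from the finiteness already established in \thmref{thm:Main-2}.
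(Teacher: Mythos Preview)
Your overall strategy coincides with the paper's: apply \thmref{thm:Main-2} with $D=0$ and identify $W_m\sF^{q,\bullet}_{X|0}$ with $j_*W_m\Omega^q_{U,\log}\cong W_m\Omega^q_{X_{\log},\log}$. The paper does exactly this (\corref{cor:Duality-log} combines \lemref{lem:Complex-6} with \thmref{thm:Perfect-finite}), and the identification $\Fil_0 W_m\Omega^q_U = W_m\Omega^q_{X_{\log}}$ you invoke is \lemref{lem:Log-fil-4}(5) together with \thmref{thm:Log-DRW}, not \thmref{thm:Main-0}(2).

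There is one genuine gap in your direct argument for the sheaf isomorphism. You assert that a section of $j_*W_m\Omega^q_{U,\log}$ at an \'etale stalk on $E$ is a sum of $\dlog$ symbols of units of $R[\tfrac{1}{t_1\cdots t_r}]$, after which your factorization $u\prod t_i^{n_i}$ finishes. But the stalk of $j_*W_m\Omega^q_{U,\log}$ at $x\in E$ is $H^0_{\et}(U_x, W_m\Omega^q_{U,\log})$ with $U_x$ the \emph{punctured} strictly local scheme, and $W_m\Omega^q_{U,\log}$ is only \'etale-locally generated by $\dlog$ symbols; it is not automatic that every global section on $U_x$ is a global sum of such. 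What is needed is surjectivity of $j_*\sK^M_{q,U}\to j_*({\sK^M_{q,U}}/{p^m})$, which the paper obtains from the Gersten-type vanishing $H^1_{\zar}(U_x,\sK^M_{q,U_x})=0$ (\lemref{lem:Gersten-0}); this same vanishing is also what underlies the ``main obstacle'' you flag, namely the \'etale-local surjectivity of $1-C$ on $\Fil_0$ (\lemref{lem:Complex-6}(4),(5)). The paper in fact uses this single ingredient to do both jobs at once: it characterizes $W_m\Omega^q_{X_{\log},\log}$ as the Frobenius-fixed subsheaf of $W_m\Omega^q_{X_{\log}}=\Fil_0 W_m\Omega^q_U$ (via \cite{Lorenzon}) and then invokes \lemref{lem:Complex-6}(5) to identify that kernel with $j_*W_m\Omega^q_{U,\log}$, bypassing your direct $\dlog$ computation entirely. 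Once you supply the Gersten vanishing, either route works.
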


\vskip .3cm

\subsection{Refinement of Zhao's duality}\label{sec:D-SS}
We let $R$ be an equicharacteristic henselian discrete valuation ring with finite
residue field.  Let $f \colon X \to \Spec(R)$ be a strictly semi-stable connected
scheme of relative dimension $d$ (cf. \cite[Defn.~1.4.2]{Zhao}).
Let $X_s$ denote the closed fiber of $f$.
We let $E \subset X$ be a simple normal crossing divisor with 
irreducible components $E_1, \ldots , E_r$. For an $r$-tuple of
non-negative integers $\un{n} = (n_1, \ldots , n_r)$, we let
$D_{\un{n}} = \sum_i n_i E_i$ and $D_{\un{n}+1} = \sum_i (n_i+1) E_i$.
For an {\'e}tale sheaf $\sF$ on $X$, let $H^*_{\et, X_s}(X,\sF)$ denote the
{\'e}tale cohomology of $\sF$ with support in $X_s$.
An yet another application of Theorems~\ref{thm:Main-0} and ~\ref{thm:Main-1} is the
following refinement of Zhao's duality theorem \cite{Zhao}.

\begin{thm}\label{thm:Main-8}
  For $q, i \ge 0$ and $m \ge 1$, there is a canonical isomorphism of
  (not necessarily finite) ${\Z}/{p^m}$-modules
  \begin{equation}\label{eqn:Perfect-dvr-0}
    \H^i_\et(X, W_{m}\sF^{q, \bullet}_{X|D_{\un{n}}}) \xrightarrow{\cong}
    \Hom_{{\Z}/{p^m}}(H^{d+2-i}_{\et, X_s}(X, W_{m}\Omega^{d+1-q}_{X|D_{\un{n}+1}, \log}), {\Z}/{p^m}).
  \end{equation}
  \end{thm}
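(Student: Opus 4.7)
\emph{Strategy.} The plan is to adapt the proof of Theorem~\ref{thm:Main-2} (duality for smooth projective schemes over a finite field) to the relative semi-stable setting over the henselian discrete valuation ring $R$. The essential new ingredient is local duality at $\Spec R$ with finite residue field $k$: composing it with the Ekedahl-type trace of Theorem~\ref{thm:Main-10} will produce the $\Z/p^m$-valued pairing on cohomology with support in the closed fiber $X_s$. Note that the shift $D_{\un{n}} \mapsto D_{\un{n}+1} = D_{\un{n}} + E$ on the dual side is precisely the ``$-E$'' modulus shift appearing in Theorem~\ref{thm:Main-10}.

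\emph{Step 1: Artin-Schreier-Witt reduction.} I would first reformulate both sides via the Artin-Schreier-Witt distinguished triangle. On $X_\et$, the operator $1-C$ acting on the relevant filtered subsheaves of the de Rham-Witt complex is surjective, and its kernel on $Z_1 \Fil_{D_{\un{n}}} W_m\Omega^q_U$ recovers the logarithmic filtered Hodge-Witt sheaf $W_m\Omega^q_{X|D_{\un{n}}, \log}$ (the sheaf defined just before Theorem~\ref{thm:Main-2}). Thus the two-term complex $W_m\sF^{q, \bullet}_{X|D_{\un{n}}}$ fits in a distinguished triangle whose coherent pieces are the filtered sheaves $Z_1\Fil_{D_{\un{n}}} W_m\Omega^q_U$ and $\Fil_{D_{\un{n}}} W_m\Omega^q_U$, and whose logarithmic piece is $W_m\Omega^q_{X|D_{\un{n}}, \log}$. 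An analogous triangle on the dual side expresses $W_m\Omega^{d+1-q}_{X|D_{\un{n}+1}, \log}$ in terms of the dually-filtered coherent sheaves $\Fil_{-D_{\un{n}+1}-E} W_m\Omega^{d+1-q}_U$, so that the claimed isomorphism decomposes into a ``coherent'' and a ``logarithmic'' duality along the triangle.

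\emph{Step 2: Coherent duality and global trace.} For the coherent part, Theorem~\ref{thm:Main-10} applied to the regular, $F$-finite, $(d+1)$-dimensional scheme $X$ gives a perfect pairing between $H^i_\zar(X, \Fil_{D_{\un{n}}} W_m\Omega^q_U)$ and $H^{d+1-i}_\zar(X, \Fil_{-D_{\un{n}}-E} W_m\Omega^{d+1-q}_U)$ valued in $W_m(k)$. Because $X$ is only proper over $R$ and not over $k$, this step must be executed either after passing to a formal/henselian completion along $X_s$ or by invoking proper base change for cohomology with support in $X_s$. Composing Ekedahl's trace with the local duality trace on $R$ then produces a $\Z/p^m$-valued pairing on $X_s$-supported cohomology. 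A five-lemma chase on the Artin-Schreier-Witt triangles of Step~1 finally upgrades the coherent duality to the claimed étale duality \eqref{eqn:Perfect-dvr-0}, using also that the dual side $H^{d+2-i}_{\et, X_s}(X, W_m\Omega^{d+1-q}_{X|D_{\un{n}+1}, \log})$ is expressible in terms of the same filtered Hodge-Witt data through the Artin-Schreier-Witt triangle.

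\emph{Main obstacle.} The central difficulty is the compatibility of trace maps: one must verify that the composite of the filtered Ekedahl trace and the local residue on $\Spec R$ coincides with the étale cycle/trace map $H^{d+2}_{\et, X_s}(X, W_m\Omega^{d+1}_{X|E, \log}) \to \Z/p^m$ underlying the right-hand side of \eqref{eqn:Perfect-dvr-0}. This requires a careful translation between Zariski-level Ekedahl duality on the non-proper $X$ and étale cohomology with support in the proper closed fiber, leveraging the henselian nature of $R$. A secondary subtlety is that, unlike in Theorem~\ref{thm:Main-2}, the groups in \eqref{eqn:Perfect-dvr-0} are not claimed to be finite, so the statement must be obtained as an abstract isomorphism of $\Z/p^m$-modules without recourse to Pontryagin duality of finite groups; one must therefore exhibit the pairing explicitly and establish non-degeneracy on both sides directly via the coherent duality of Theorem~\ref{thm:Main-10}.
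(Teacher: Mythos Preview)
Your Step~1 contains a genuine error. For an \emph{effective} divisor $D_{\un{n}}$ (i.e.\ $\un{n} \ge \un{0}$), the kernel of $1-C$ on $Z_1\Fil_{D_{\un{n}}}W_m\Omega^q_U$ is not the modulus sheaf $W_m\Omega^q_{X|D_{\un{n}}, \log}$; it is $j_*W_m\Omega^q_{U,\log}$, independently of $\un{n}$ (see \lemref{lem:Complex-6}(4) for $\un{n}=\un{0}$ and the proof of \lemref{lem:hyp}(2) in general). The modulus sheaf $W_m\Omega^q_{X|D_{\un{n}}, \log}$ is a subsheaf of $W_m\Omega^q_{X,\log}$ and arises, per \lemref{lem:Complex-6}(1)--(3), as the kernel of $1-\ov{F}$ on $\Fil_{-\un{n}}W_m\Omega^q_U$, i.e.\ on the \emph{negative} side of the filtration. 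You have confused the two sides of the duality: the positive-filtration complex $W_m\sF^{q,\bullet}_{X|D_{\un{n}}}$ is genuinely a two-term complex whose hypercohomology is $\Fil^\log_{D_{\un{n}}}H^{q+1}_{p^m}(U)$ (\thmref{thm:Main-1}), not the cohomology of a single log sheaf, so your proposed Artin--Schreier--Witt splitting does not exist.

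Your Step~2 also fails as stated: \thmref{thm:Main-10} requires $X$ proper over a perfect field, whereas here $X$ is only proper over the DVR $R$. The formal-completion workaround you gesture at is not carried out anywhere in the paper and would need substantial new input.

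The paper's argument is orthogonal to yours and much shorter. It first constructs the pairing directly at the level of complexes (\lemref{lem:Pair-3} and the support-in-$X_s$ variant \eqref{eqn:Pair-3-1-dvr}--\eqref{eqn:Pair-4-2-dvr}), composed with Zhao's trace isomorphism $H^{d+2}_{\et,X_s}(X,W_m\Omega^{d+1}_{X,\log})\cong\Z/p^m$. It then reduces to $m=1$ by induction on $m$: the distinguished triangle of \thmref{thm:Global-version}(9) for $W_m\sF^{q,\bullet}_{\un{n}}$ is matched, via \corref{cor:Complex-7} and \lemref{lem:Complex-8}, with the corresponding triangle for $W_m\Omega^{d+1-q}_{X|D_{\un{n}+1},\log}$, exactly as in the diagram~\eqref{eqn:Perfect-finite-0}. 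The base case $m=1$ is not reproved but quoted from \cite[Prop.~3.4.5]{Zhao}.
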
  

\vskip .3cm

\subsection{Lefschetz theorem for ramification filtration and Brauer group}
\label{sec:Lef**}
Let us now assume that $X \subset \P^n_k$ is a smooth projective connected scheme
of dimension $d$ over an $F$-finite field $k$ and
$E \subset X$ is a simple normal crossing divisor with complement $U$.
We let $D \in \Div_E(X)$ be effective and let $m,q \ge 1$. As an application of
Theorem~\ref{thm:Main-1}, we prove the following
Lefschetz hyperplane theorem for the logarithmic Hodge-Witt cohomology with
modulus and the ramification subgroups $\Fil_D^\log H^{q}_\et(U, {\Z}/{p^m}(q-1))$.
When $q = 1$, part (1) of this theorem was shown earlier by
Kerz-Saito (cf. \cite[Thm.~3.8]{Kerz-Saito-ANT}).
We refer to Definition~\ref{def:ample**} for the definition of sufficiently ample
hypersurface sections.

\begin{thm}\label{thm:Main-5}
  Let $X'$ be a smooth hypersurface section of $X$ in $\P^n_k$
  which intersects $E$ transversely. Let $D' = D \times_X X'$ and
  $U' = X' \setminus D'$. Assume also that $X'$ is sufficiently ample
  (relative to $D$ in $(1)$ and $(2)$). Then the pull-back maps
 \begin{enumerate}
 \item  \hspace{.5cm} $\Fil_D^\log H^{q}_\et(U, {\Z}/{p^m}(q-1)) \to
   \Fil^\log_{D'} H^{q}_\et(U', {\Z}/{p^m}(q-1))$
    \item \hspace{.5cm}
    $H^i_\et(X,W_m\Omega^{q-1}_{X|D,\log}) \to H^i_\et(X', W_m\Omega^{q-1}_{X'|D',\log})$
    \item \hspace{.5cm}
    $H^i_\et(X,W_m\Omega^{q-1}_{X,\log}) \to H^i_\et(X',W_m\Omega^{q-1}_{X',\log})$
   \end{enumerate}
   are isomorphisms if $i+q \le d-1$ and injective if $i+q=d$.
\end{thm}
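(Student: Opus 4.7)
The plan is to reduce all three assertions to a single coherent--cohomology Lefschetz statement for the filtered de Rham--Witt sheaves $\Fil_D W_m\Omega^{q-1}_U$ on $X$, which are coherent $\sO_X$-modules by \thmref{thm:Main-0}(3). For (3), Illusie's étale Artin--Schreier--Witt sequence
\[
0 \to W_m\Omega^{q-1}_{X,\log} \to W_m\Omega^{q-1}_X \xrightarrow{1-C} W_m\Omega^{q-1}_X \to 0
\]
together with the vanishing of the higher direct images identifies $H^i_\et(X,W_m\Omega^{q-1}_{X,\log})$ with the hypercohomology of a two--term complex of coherent sheaves. For (2) the same mechanism works with $W_m\Omega^{q-1}_{X|D,\log}$ in place of $W_m\Omega^{q-1}_{X,\log}$, yielding an exact triangle whose two outer terms are the coherent sheaves $Z_1\Fil_D W_m\Omega^{q-1}_U$ and $\Fil_D W_m\Omega^{q-1}_U$ and whose middle cohomology is exactly $W_m\sF^{q-1,\bullet}_{X|D}$. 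For (1), \thmref{thm:Main-1} gives the identification $\Fil_D^\log H^{q}_\et(U,{\Z}/{p^m}(q-1)) \cong \H^1_\et(X, W_m\sF^{q-1, \bullet}_{X|D})$, and \corref{cor:Main-1-0} ensures that pull-back along $X' \inj X$ corresponds to the pull-back on hypercohomology; thus (1) and (2) become the same statement for the hypercohomology of a two--term complex of coherent sheaves on $X$.

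Using the hypercohomology spectral sequence, it suffices to prove that the restriction map
\[
H^i_\zar(X,\Fil_D W_m\Omega^{q-1}_U) \longrightarrow H^i_\zar(X',\Fil_{D'} W_m\Omega^{q-1}_{U'})
\]
(and the analogous map for $Z_1\Fil_D W_m\Omega^{q-1}_U$) is an isomorphism for $i+q \le d-1$ and injective for $i+q=d$; here I use that $\dim X' = d-1$ and that for quasi-coherent sheaves on a Noetherian scheme Zariski and étale cohomology agree. I would then induct on $m$ via the Verschiebung short exact sequence
\[
0 \to \Fil_{D/p} W_{m-1}\Omega^{q-1}_U \xrightarrow{\un{V}} \Fil_D W_m\Omega^{q-1}_U \to \Fil_D \Omega^{q-1}_U \to 0,
\]
whose quotient $\Fil_D \Omega^{q-1}_U$ is the twisted log differential sheaf $\Omega^{q-1}_X(\log E)(D)$. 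The base case $m=1$ is then a Lefschetz statement for a single coherent sheaf on the projective $k$-scheme $X$, to which the classical Grothendieck--Lefschetz theorem (SGA~2) applies: one needs the Serre vanishing $H^i(X,\Omega^{q-1}_X(\log E)(D) \otimes \sO_X(-nX'))=0$ for $n \ge 1$ in the appropriate range of $i$, which holds once $X'$ is chosen sufficiently ample in the sense of \defref{def:ample**}. The inductive step propagates the Lefschetz isomorphism/injectivity from one $m$ to the next via the five lemma applied to the long exact cohomology sequence.

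The main technical obstacle is twofold. First, one must promote the Artin--Schreier--Witt sequence to the filtered setting, i.e., establish that on $X_\et$ the map $1-C \colon \Fil_D W_m\Omega^{q-1}_U \to \Fil_D W_m\Omega^{q-1}_U$ restricted to the subsheaf $Z_1\Fil_D W_m\Omega^{q-1}_U$ is surjective with kernel $W_m\Omega^{q-1}_{X|D,\log}$; this is anticipated in the statement of \thmref{thm:Main-2} and \lemref{lem:Complex-6}, and it allows the passage between the log Hodge--Witt cohomology and the hypercohomology of $W_m\sF^{q-1,\bullet}_{X|D}$. Second, one must ensure that the ampleness threshold chosen to produce the required Serre vanishing can be taken \emph{uniformly in $m$}; because the filtration on $W_m$ is built from $\Fil_{D/p}W_{m-1}$ via $V$, only finitely many Serre-vanishing conditions appear at each inductive step, so a single choice of sufficiently ample $X'$ suffices for every $m$. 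Once these two points are settled, (3) is classical Lefschetz for Hodge--Witt cohomology (already known in the spirit of Kerz--Saito), (2) follows by the same argument with the filtration, and (1) is deduced from (2) through \thmref{thm:Main-1} and \corref{cor:Main-1-0}.
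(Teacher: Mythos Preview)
Your overall strategy is sound and matches the paper: reduce to the hypercohomology of two-term complexes of coherent sheaves, induct on $m$, and invoke Serre vanishing for sufficiently ample $X'$. However, there are two genuine errors in the execution.

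First, you conflate items (1) and (2). The complex $W_m\sF^{q-1,\bullet}_{X|D} = [Z_1\Fil_D W_m\Omega^{q-1}_U \xrightarrow{1-C} \Fil_D W_m\Omega^{q-1}_U]$ with \emph{positive} $D$ computes $\Fil^\log_D H^q_\et(U,\Z/p^m(q-1))$ via \thmref{thm:Main-1}, and this settles (1). But the logarithmic Hodge--Witt sheaf with modulus $W_m\Omega^{q-1}_{X|D,\log}$ lives inside the \emph{negative} filtration $\Fil_{-D}W_m\Omega^{q-1}_U$ (see \lemref{lem:Complex-5} and \lemref{lem:Complex-6}); it is \emph{not} the kernel of $1-C$ on $Z_1\Fil_D$. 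Moreover, for $m>1$ there is no simple two-term coherent resolution of $W_m\Omega^{q-1}_{X|D,\log}$. The paper handles (2) by first reducing to $m=1$ via the $\ov{p}/R$ exact sequence for the log sheaves themselves (\corref{cor:Complex-7}), and only then using the complex $\sN^{q-1,\bullet}_{X|D} = [Z_1\Fil_{-D}\Omega^{q-1}_U \xrightarrow{1-C} \Fil_{(-D)/p}\Omega^{q-1}_U]$ (\lemref{lem:Complex-6}(6)). So (1) and (2) run in parallel but involve different coherent sheaves and hence different Serre-vanishing hypotheses; they do not become ``the same statement''.

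Second, your ``Verschiebung short exact sequence'' $0 \to \Fil_{D/p}W_{m-1}\Omega^{q-1}_U \xrightarrow{V} \Fil_D W_m\Omega^{q-1}_U \to \Fil_D\Omega^{q-1}_U \to 0$ is false. The map $V$ has nonzero kernel $dV^{m-2}(\Fil_{D/p}\Omega^{q-2}_U)$ whenever $q \ge 2$ (\thmref{thm:Global-version}(7)), and its cokernel is $Z_{m-1}\Fil_D\Omega^{q-1}_U$, not $\Fil_D\Omega^{q-1}_U$ (\thmref{thm:Global-version}(5)). The correct device for induction on $m$ is the $R$-based distinguished triangle $W_1\sF^{q-1,\bullet}_{X|D} \xrightarrow{V^{m-1}} W_m\sF^{q-1,\bullet}_{X|D} \xrightarrow{R} W_{m-1}\sF^{q-1,\bullet}_{X|D/p}$ of \thmref{thm:Global-version}(9), which is what the paper uses; since $D \ge 0$ the divisors $D/p^j$ stabilize, so (as you correctly observe) only finitely many Serre-vanishing conditions arise and a single sufficiently ample $X'$ works for all $m$.
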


\vskip .3cm

The Lefschetz theorem for the {\'e}tale fundamental group of
smooth projective schemes over a field is a fundamental result of
Grothendieck (cf. \cite[Exp.~XII, Cor.~3.5]{SGA2}).
To our knowledge, similar results for the Brauer group are not known.
As a consequence of \thmref{thm:Main-5}, we prove the following.

\begin{thm}\label{thm:Main-6}
  Assume in the set-up of \thmref{thm:Main-5} that $d \ge 4$.
  Then, for sufficiently ample $X'$, the pull-back map $\Br(X)\{p\} \to \Br(X')\{p\}$
  is an isomorphism.
  If $k$ is either finite, local or separably closed, then the pull-back map
  $\Br(X) \to \Br(X')$ is an isomorphism.
  \end{thm}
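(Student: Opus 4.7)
The plan is to deduce both assertions from \thmref{thm:Main-5}(3) together with classical Lefschetz theorems for Picard and \'etale cohomology. The key input is that, for any smooth $\F_p$-scheme $Y$, the Kato-Milne-Bloch-Gabber quasi-isomorphism $\Z/p^m(1) \simeq W_m\Omega^1_{Y,\log}[-1]$ on the small \'etale site, combined with the long exact sequence induced by $\G_m \xrightarrow{p^m} \G_m$, produces a natural short exact sequence
\[
0 \to \Pic(Y)/p^m \to H^1_\et(Y, W_m\Omega^1_{Y,\log}) \to \Br(Y)[p^m] \to 0.
\]
Applying this functorially to $X$ and $X'$ yields a commutative ladder joined by the pullback along $i \colon X' \inj X$.

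For the first assertion, I apply \thmref{thm:Main-5}(3) with $(q,i) = (2,1)$. The constraint $i+q \le d-1$ reads $3 \le d-1$, which holds since $d \ge 4$; hence the middle arrow of the ladder is an isomorphism. The left arrow $\Pic(X)/p^m \to \Pic(X')/p^m$ is an isomorphism by the Grothendieck-Lefschetz theorem for Picard groups (SGA2, Exp.~XII), valid for $d \ge 4$ and $X'$ sufficiently ample. The five lemma then yields $\Br(X)[p^m] \xrightarrow{\cong} \Br(X')[p^m]$, and passage to the direct limit in $m$ gives $\Br(X)\{p\} \xrightarrow{\cong} \Br(X')\{p\}$.

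For the second assertion, since $X$ is regular, $\Br(X)$ is torsion and decomposes as $\bigoplus_\ell \Br(X)\{\ell\}$, so it remains to treat each prime $\ell \ne p$. For such $\ell$, the Kummer sequence gives an analogous short exact sequence $0 \to \Pic(-)/\ell^m \to H^2_\et(-, \mu_{\ell^m}) \to \Br(-)[\ell^m] \to 0$, and the classical weak Lefschetz theorem for $\ell$-adic \'etale cohomology implies $H^2_\et(X, \mu_{\ell^m}) \xrightarrow{\cong} H^2_\et(X', \mu_{\ell^m})$ since $d \ge 4$. Combined with Pic-Lefschetz and the five lemma, this yields $\Br(X)[\ell^m] \cong \Br(X')[\ell^m]$. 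The hypothesis that $k$ be finite, local, or separably closed is used to ensure that the $\ell$-adic Lefschetz statement applies in the form required and that no uncontrolled base-field contributions appear.

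The main obstacle is to pin down the compatible notion of \emph{sufficiently ample} $X'$ making both \thmref{thm:Main-5}(3) and Grothendieck-Lefschetz apply simultaneously, and to verify the naturality of the Picard-Brauer short exact sequence under pullback by $i$; once these are available, the rest is a diagram chase via the five lemma and passage to the limit.
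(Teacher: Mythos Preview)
Your overall strategy matches the paper's: both use the ladder
\[
\xymatrix@C.8pc{
0 \ar[r] & \Pic(X)/p^m \ar[r] \ar[d]_{u^*} & H^1_\et(X,W_m\Omega^1_{X,\log}) \ar[r] \ar[d]^{u^*} & \Br(X)[p^m] \ar[r] \ar[d]^{u^*} & 0 \\
0 \ar[r] & \Pic(X')/p^m \ar[r] & H^1_\et(X',W_m\Omega^1_{X',\log}) \ar[r] & \Br(X')[p^m] \ar[r] & 0}
\]
with the middle arrow an isomorphism by \thmref{thm:Main-5}(3), and the $\ell\neq p$ part is handled exactly as you say.

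There is, however, a genuine gap in your treatment of the left arrow. You invoke SGA2 to claim $\Pic(X)/p^m \to \Pic(X')/p^m$ is an isomorphism, but over an $F$-finite field $k$ of characteristic $p$ that is not perfect this is \emph{not} what Grothendieck--Lefschetz gives. The paper establishes only the weaker statement (\propref{prop:N-lef}) that $u^*\colon \Pic(X)\to\Pic(X')$ is injective with $p$-primary torsion cokernel $C$; the obstruction comes from the purely inseparable part of the passage from $k$ to $\ov{k}$. Consequently $\Pic(X)/p^m \to \Pic(X')/p^m$ has cokernel $C/p^m$, which need not vanish, and the five lemma at finite level does not apply.

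The paper repairs this by passing to the direct limit over $m$ first: the snake lemma then yields an exact sequence
\[
0 \to C \otimes \Q_p/\Z_p \to \Br(X)\{p\} \to \Br(X')\{p\} \to 0,
\]
and since $C$ is $p$-primary torsion while $\Q_p/\Z_p$ is $p$-divisible, one has $C\otimes\Q_p/\Z_p=0$. This is the step your argument is missing. Once you replace the appeal to SGA2 by \propref{prop:N-lef} and insert this limit argument, your proof becomes the paper's.
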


We show that the second part of \thmref{thm:Main-6} is valid even when
$k$ is a local or an algebraically closed field of characteristic zero.
As a corollary of \thmref{thm:Main-6},
we get the following result whose prime-to-$\Char(k)$ part
(in particular, characteristic 0 analogue) was earlier shown in \cite[Cor.~5.5.4]{CTS}.
This can be considered as a version of Grothendieck-Lefschetz theorem for the
Brauer group.

\begin{cor}\label{cor:Main-7}
  Let $k$ be an arbitrary field and let $X$ be a smooth complete
  intersection in $\P^n_k$ of dimension at least three.
  Then the pull-back map $\Br(k) \to \Br(X)$ is an isomorphism.
\end{cor}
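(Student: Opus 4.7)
The plan is to prove the corollary by induction on the codimension $c$ of $X$ inside $\P^n_k$, applying \thmref{thm:Main-6} as the inductive step and using the classical identification $\Br(\P^n_k) \cong \Br(k)$ as the base case.

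First I would separate the prime-to-$\Char(k)$ part from the $p$-primary part. The former (which includes the entire statement when $\Char(k) = 0$) is already \cite[Cor.~5.5.4]{CTS}, so we may assume $\Char(k) = p > 0$ and focus on $\Br(k)\{p\} \to \Br(X)\{p\}$. Next I would reduce to the case where $k$ is $F$-finite. Since $X$ descends to a smooth complete intersection $X_0 \subset \P^n_{k_0}$ over some subfield $k_0 \subset k$ that is finitely generated over $\F_p$, and every subfield of $k$ containing $k_0$ that is finitely generated over $\F_p$ is automatically $F$-finite, one may write $k = \varinjlim_\alpha k_\alpha$ as a filtered colimit of $F$-finite subfields $k_\alpha \supset k_0$. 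Continuity of \'etale cohomology under cofiltered limits of qcqs schemes with affine transition maps then gives $\Br(k)\{p\} = \varinjlim_\alpha \Br(k_\alpha)\{p\}$ and $\Br(X)\{p\} = \varinjlim_\alpha \Br(X_0 \times_{k_0} k_\alpha)\{p\}$, so it suffices to treat the $F$-finite case.

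With $k$ now $F$-finite of characteristic $p$, I would induct on $c = \codim_{\P^n_k}(X)$. The base case $c = 0$ is $X = \P^n_k$ with $n \ge 3$, where $\Br(\P^n_k) \cong \Br(k)$ is well known. For the inductive step with $c \ge 1$, I would construct a smooth complete intersection $Y \subset \P^n_k$ of codimension $c-1$ (so $\dim Y = \dim X + 1 \ge 4$), containing $X$ as a smooth hypersurface section $Y \cap H$ for some hypersurface $H \subset \P^n_k$, in such a way that the resulting divisor on $Y$ satisfies the sufficient-ampleness hypothesis of \thmref{thm:Main-6}. The first part of that theorem, applied to the pair $(Y, X)$ with $E = D = 0$, then gives $\Br(Y)\{p\} \xrightarrow{\cong} \Br(X)\{p\}$, while the inductive hypothesis applied to $Y$ gives $\Br(k)\{p\} \xrightarrow{\cong} \Br(Y)\{p\}$; composing these yields the claim.

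To construct $Y$, I would write $X = V(f_1, \ldots, f_c)$ with $f_i$ homogeneous and look for $Y = V(g_1, \ldots, g_{c-1})$ where each $g_i$ is a general homogeneous element of the ideal $(f_1, \ldots, f_c)$ of sufficiently large prescribed degree. A standard Bertini argument when $k$ is infinite, or Poonen's density-Bertini theorem when $k$ is finite, ensures that for a suitably general such choice, $Y$ is a smooth complete intersection of the expected codimension containing $X$, and by construction $X = Y \cap V(f_c)$ is a hypersurface section of $Y$. The degree of the cutting hypersurface can be inflated by pre-composing with a high-degree Veronese re-embedding of $\P^n_k$, thereby ensuring the required ampleness. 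The main obstacle will be verifying the precise sufficient-ampleness condition of \thmref{thm:Main-6} through this construction, and, over finite residue fields, the smoothness step requires invoking a non-trivial Bertini-type density result; once these ingredients are in place, the induction is routine.
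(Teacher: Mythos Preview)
Your inductive scheme (peel off one hypersurface at a time and apply \thmref{thm:Main-6}) matches the paper's, but two steps in your execution are problematic. First, your Bertini construction of $Y$ does not yield $X = Y \cap V(f_c)$: if the $g_i$ are general high-degree elements of $(f_1, \ldots, f_c)$, then the ideal $(g_1, \ldots, g_{c-1}, f_c)$ is typically strictly smaller than $(f_1, \ldots, f_c)$, so $Y \cap V(f_c)$ is scheme-theoretically larger than $X$ and you have not exhibited $X$ as a hypersurface section of $Y$. Second, a Veronese re-embedding of $\P^n_k$ does not preserve the complete-intersection property, so it destroys the inductive hypothesis; in any case degree inflation turns out to be unnecessary here.

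The paper handles both points differently. For sufficient ampleness, it observes (\lemref{lem:Kodaira}, via \cite[Cor.~5.8]{ABM}) that for \emph{any} complete intersection $Y \subset \P^N_k$ and \emph{any} hypersurface section $X' \subset Y$, the pair $(Y,\sO_Y(X'))$ is already a Kodaira pair; combined with \lemref{lem:KP-F} this makes $X'$ automatically $(\dim Y - 1)$-sufficiently ample, so no degree control is needed at all. For the intermediate variety, the paper simply takes $X_1 = V(f_1,\ldots,f_{c-1})$ directly. Rather than your colimit reduction to $F$-finite subfields (which is fine in itself), the paper first reduces to algebraically closed $k$ and carries out the induction there, then passes to separably closed $k$ using $H^1(X,\sO_X)=0$ together with \cite[Thm.~5.2.5]{CTS}, and finally deduces the case of arbitrary $k$ from \cite[Cor.~5.5.4]{CTS}.
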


\vskip .3cm

\subsection{Other applications}\label{sec:FAapp}
There are further applications of Theorems~\ref{thm:Main-0} and ~\ref{thm:Main-1}
besides those mentioned above. In \cite{KM-1}, we prove a duality theorem for
the {\'e}tale cohomology of the logarithmic Hodge-Witt sheaves with modulus
on smooth projective curves over local fields. This is first result of its kind and
was not known before. In op. cit., we apply this duality theorem to establish a
class field theory for open curves over local fields (see also \cite{KM-0}).  
Several other applications of Theorems~\ref{thm:Main-0} and ~\ref{thm:Main-1} will be
proven in \cite{KM-2}. The goal of the latter work is to extend the results
of Bright-Newton \cite{Bright-Newton}, Saito-Sato \cite{Saito-Sato-ENS} and
Kai \cite{Kai} to positive characteristic.

\subsection{Layout of the paper}\label{sec:Layout}
The proofs of Theorems~\ref{thm:Main-0} and ~\ref{thm:Main-1} constitute the heart of
this paper. They span over the first eight sections. In \S~\ref{sec:de-log}, we
recall the de Rham-Witt complex with log poles and identify it with the
de Rham-Witt complex of a log scheme arising from a normal crossing divisor on
a regular $\F_p$-scheme. We use this identification in the study of our main
object of interest: the filtered de Rham-Witt complex.

In \S~\ref{sec:FDRWC}, we introduce the filtered de Rham-Witt complex and
prove its basic properties except its goodness.
The proof of the latter is the most challenging part of the proofs of 
Theorems~\ref{thm:Main-0} and ~\ref{thm:Main-1}. We divide the proof of goodness
into several steps.
In \S~\ref{sec:FPSR}, we do this for a power series algebra in one variable. Here,
our main tool is
a presentation of the usual de Rham-Witt complex of such algebras by
Geisser-Hesselholt \cite{Geisser-Hesselholt-Top}. In \S~\ref{sec:FDW-Reg}, we
extend this to all regular local $F$-finite $\F_p$-algebras. We achieve this by
reducing to the case of complete algebras and then to
power series algebras in several variables which we prove
separately using the ideas of \S~\ref{sec:FPSR}.

In \S~\ref{sec:CFDRW}, we introduce Cartier operator on our filtered de Rham-Witt
complex. This plays a key role in our investigation. We prove several properties
of the Cartier and Frobenius operators on filtered de Rham-Witt
complex which are classically known for the ordinary de Rham-Witt complex.
We then extend the proof of the goodness of filtered de Rham-Witt
complex to all $F$-finite regular $\F_p$-schemes. This completes the proof
of Theorem~\ref{thm:Main-0}.

In \S~\ref{sec:Kato-comp}, we prove Theorem~\ref{thm:Main-1} for henselian
discrete valuation fields which are $F$-finite. We use this special case to
finish the proof of this theorem in \S~\ref{sec:Kato-global}.
In \S~\ref{sec:RSC}, we apply Theorem~\ref{thm:Main-1} to construct a very
general version of Kato's refined Swan conductor for all regular and $F$-finite
$\F_p$-schemes. This generalized  refined Swan conductor map has many applications.
These applications will be given in \cite{KM-2}. Kato's integrality theorem
\cite{Kato-89} is an immediate consequence of the generalized  refined Swan
conductor. In \S~\ref{sec:HW-Duality}, we combine \thmref{thm:Global-version} with
Ekedahl's duality to prove \thmref{thm:Main-10}.
In \S~\ref{sec:Duality**}, we apply Theorem~\ref{thm:Main-1} to prove
Theorems~\ref{thm:Main-2} and ~\ref{thm:Main-8} and its corollaries.
In \S~\ref{sec:Lef}, we apply Theorem~\ref{thm:Main-1} to prove Lefschetz
theorems for Kato's ramification subgroups and Brauer groups.

\subsection{Common notations}\label{sec:Notn}
For a commutative ring $A$, an $A$-scheme will mean a scheme over $\Spec(A)$. 
We fix a prime $p$ for all of this paper and shall assume all schemes to be Noetherian
and separated $\F_p$-schemes. If we need to talk about a scheme of other
type (e.g., in \S~\ref{sec:Lef}), we shall mention separately.
If $A$ is a commutative ring,
we let $\Sch_A$ denote the category of separated Noetherian $A$-schemes. The product
$X \times_{\Spec(A)} Y$ will be written as $X \times_A Y$. 
For a scheme $X$, we let $X^{(q)}$ (resp. $X_{(q)}$) denote the set of points on $X$
having codimension (resp. dimension) $q$. We let $X_\reg$ (resp. $X_\sing$)
denote the regular (resp. singular) locus of $X$.
We let $\Sm_X$ denote the category of smooth schemes over $X$.

Given a commutative ring $A$, we let $Q(A)$ denote the total ring of fractions of
$A$. We let $\Sch_{A/\zar}$ (resp. $\Sch_{A/\nis}$, resp. $\Sch_{A/ \etl})$
denote the Zariski (resp. Nisnevich, resp. {\'e}tale) site of $\Sch_{A}$.
We let $\epsilon \colon
\Sch_{A/ \et} \to \Sch_{A/ \nis}$ denote the canonical morphism of sites.
If $\sF$ is a sheaf on $\Sch_{A/\nis}$, 
we shall write $\epsilon^* \sF$ also as $\sF$ as long as the usage of
the {\'e}tale topology is clear in a context.
For an $\F_p$-scheme $X$, $\psi \colon X \to X$ will denote the
absolute Frobenius morphism.

For an abelian group $A$, we shall write $\Tor^1_{\Z}(A, {\Z}/n)$ as
$A[n]$ and $A/{nA}$ as $A/n$.
The tensor product $A \otimes_{\Z} B$ will be written as $A \otimes B$.
We shall let $A\{p\}$ (resp. $A\{p'\}$) denote the
subgroup of elements of $A$ which are annihilated by a power of (resp.
an integer prime to) $p$. If $\sF$ is a sheaf on some site of a scheme, we shall write
the kernel (resp. cokernel) of the map $\sF \xrightarrow{n} \sF$ as $_n\sF$
(resp. ${\sF}/n$).
Given a surjective ring homomorphism $A \surj A/I$, we shall denote the image of
an element $a \in A$ in $A/I$ typically by $\ov{a}$ if no confusion arises.
We shall let $\N_0$ denote the set of non-negative integers.

\section{de Rham-Witt complex with log poles}\label{sec:de-log}
In this section, we shall recall the de Rham-Witt complex with log poles on which
our construction of the filtered de Rham-Witt complex will be based.
We shall identify this complex with the de Rham-Witt
complex of a log scheme ({\`a} la Hyodo-Kato) obtained from a normal crossing divisor
on a regular scheme. We shall use this identification to derive some key
properties of the de Rham-Witt complex with log poles which will subsequently be used
in our study of the filtered de Rham-Witt complex.
We refer the reader to \cite{Kato-log} and \cite{Ogus} for the
basic results that we shall use about log schemes in this paper.

\subsection{Definition and Witt complex structure}\label{sec:de-log-0}
Let $X$ be a connected regular Noetherian $\F_p$-scheme
and let $E \subset X$ be a simple normal crossing divisor (sncd) with irreducible
components $E_1, \ldots , E_r$. Let $j \colon U \to X$ be the inclusion of the
complement of $E$ and let $K$ denote the function field of $X$.
We shall refer $(X,E)$ as an {\sl snc-pair} in the sequel.
We let $\Div(X)$ denote the group of Weil divisors on $X$ and let $\Div_Z(X)$
denote the subgroup of $\Div(X)$ consisting of Weil divisors which are supported on
a closed subset $Z \subset X$.
The divisor $E$ endows $X$ with a unique log structure $(X, \sM_X, \alpha_X)$, where
$\alpha_X \colon \sM_X = \sO_X \cap j_*\sO^\times_U \inj \sO_X$ is the usual
inclusion (cf. \cite{Kato-log}). 
We shall denote by the resulting log scheme by $X_\log$.
A morphism $f \colon (X',E') \to (X,E)$ of snc-pairs is, by definition, a morphism of
log schemes $f \colon X'_\log \to X_\log$. Under such a morphism, one checks that
$f^*(E_i) \in \Div_{E'}(X')$ for every $i$ and $f(U') \subset U$.

We let $W_m\Omega^{\bullet}_X$ denote the de Rham-Witt complex. We shall denote the
pro-de Rham-Witt complex $\{W_m\Omega^{\bullet}_X\}_{m \ge 1}$
by $W_\star\Omega^{\bullet}_X$. A similar notation will be used for any variant of the
pro-de Rham-Witt complex. We let $[\cdot]_m \colon \sO_X \to W_m\sO_X$ be the
Teichm{\"u}ller map and recall that this is a morphism of monoids under the product
operations on its two sides.
We also recall that the pull-back map $W_m\Omega^q_X \to j_* W_m\Omega^q_U$ is injective
for $q \ge 0$
(cf. \cite[Prop.~2.8]{KP-Comp}), a fact which will be implicit throughout our
discussion.

To define the de Rham-Witt complex with log poles, we suppose first that
$X = \Spec(A)$, where $A$ is a regular $\F_p$-algebra and
$E_i = \Spec({A}/{(x_i)})$, we let $\pi = \stackrel{r}{\underset{i =1}\prod} x_i$
and let $A_\pi = A[\pi^{-1}]$.
We let $S_0 = \{0\}$ and for $1 \le j \le r$, let $S_j$ be the collection of ordered
subsets $J = \{i_1 < \cdots < i_j\}$ of $\{1, \ldots , r\}$.
For $J = \{i_1 < \cdots < i_j\} \in S_j$, we let $W_m\Omega^q_A(\log\pi)_J =
W_m\Omega^{q-j}_A\dlog[x_{i_1}]_m \wedge \cdots \wedge \dlog[x_{i_j}]_m$
as a $W_m(A)$-submodule of $W_m\Omega^q_{A_\pi}$. For $J = S_0$, we let
$W_m\Omega^q_A(\log\pi)_J = W_m\Omega^q_A$. We let $W_m\Omega^q_{X}(\log E)$ be the
image $W_m\Omega^q_A(\log \pi)$ of the canonical map
$\stackrel{{min\{q,r\}}}{\underset{j=0}\bigoplus}
{\underset{J \in S_j}\bigoplus} W_m\Omega^q_A(\log\pi)_J$ $\to W_m\Omega^q_{A_\pi}$.
It is easy to check that the map $d \colon W_m\Omega^q_{A_\pi} \to W_m\Omega^{q+1}_{A_\pi}$
takes $W_m\Omega^q_A(\log \pi)$ to $W_m\Omega^{q+1}_A(\log \pi)$ which yields a
subcomplex $W_m\Omega^\bullet_A(\log \pi) \subset W_m\Omega^\bullet_{A_\pi}$.

 Let now $(X,E)$ be an arbitrary snc-pair.
Since the assignment $X' \mapsto W_m\Omega^\bullet_{X'}(\log E')$ is functorial 
on the category of affine snc-pairs $(X',E')$, it defines a unique subcomplex 
whose term at the $q$-th level is a Zariski subsheaf $j_*W_m\Omega^q_U$, denoted by
$W_m\Omega^q_X(\log E)$. The stalk of this subsheaf at a point $x \in X$ is
  $W_m\Omega^q_{X_x}(\log  E_x)$, where $X_x = \Spec(\sO_{X,x})$ and
$E_x= X_x \times_X E$. The complex $W_m\Omega^\bullet_{X}(\log E)$ is the desired
de Rham-Witt complex of $X$ with log poles along $E$.
There are functorial inclusions $W_*\Omega^\bullet_X \inj W_*\Omega^\bullet_X(\log E)
\inj j_* W_\star \Omega^\bullet_U$ of pro-Witt complexes on $X_\zar$.
Furthermore, $W_1\Omega^\bullet_X(\log E)$ coincides with the classical de Rham
complex with log poles $\Omega^\bullet_X(\log E)$.

We let $W_\star\Omega^\bullet_{X_\log}$ denote the pro-de Rham-Witt complex of the
log scheme $X_\log$, associated to the snc-pair $(X,E)$ (cf. \cite[\S~4]{Hyodo-Kato},
\cite[\S~1.2]{Geisser-Hesselholt-JAMS}, \cite[\S~3.4]{Matsue}).
Note that $W_1\Omega^\bullet_{X_\log}$ coincides with the de Rham complex
$\Omega^\bullet_{X_\log}$ of the log scheme $X_\log$. 
For $m \ge 1$, the map $\alpha_m = [\cdot]_m \circ \alpha_X \colon \sM_X \to W_m\sO_X$
makes the latter into a sheaf of log rings on $X$. This log structure
on $W_m\sO_X$ allows one to define the notion of Witt complexes and the
universal Witt complex on $X_\log$.  
We refer the reader to \cite[\S~1.2]{Geisser-Hesselholt-JAMS} (see also
\cite[\S~3]{HM-Annals}) for the definitions of Witt complexes and the
universal Witt complex on $X_\log$. We only note that the pro-de Rham-Witt
complex $W_\star\Omega^\bullet_{X_\log}$ is the universal Witt complex on $X_\log$. 
In this paper, a log Witt-complex on a scheme with a log structure (e.g., $X$)
will be another name of a Witt-complex on the resulting log scheme
(e.g., $X_\log$). 

\begin{lem}\label{lem:LWC-1}
  $W_\star\Omega^\bullet_X(\log E)$ is a Witt complex on $X_\log$. In particular,
  there is a canonical (strict) map of Witt complexes
  $\theta_X \colon W_\star\Omega^\bullet_{X_\log} \to W_\star\Omega^\bullet_X(\log E)$.
  \end{lem}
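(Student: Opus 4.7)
The plan is to equip $W_\star\Omega^\bullet_X(\log E)$, viewed as a sub pro-complex of $j_* W_\star\Omega^\bullet_U$, with the structure of a log Witt complex on $X_\log$; the map $\theta_X$ is then obtained from the universal property of $W_\star\Omega^\bullet_{X_\log}$. The verification is local on $X$, so I would reduce immediately to the affine case $X = \Spec(A)$ with $E = V(x_1\cdots x_r)$ and $\pi = x_1\cdots x_r$, so that $j_*W_\star\Omega^\bullet_U = W_\star\Omega^\bullet_{A_\pi}$ carries all the Witt-complex structure that we need to transport to the subsheaf $W_m\Omega^\bullet_A(\log\pi)$.

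The first task is to show that each Witt-complex operation on $j_* W_\star\Omega^\bullet_U$ stabilizes this subsheaf. Closure under $R$ is immediate, and closure under $d$ follows from the Leibniz rule applied to the generators $\omega\wedge\dlog[x_{i_1}]_m\wedge\cdots\wedge\dlog[x_{i_j}]_m$ with $\omega\in W_m\Omega^{q-j}_A$, since each $\dlog[x_i]_m$ is closed. For the Frobenius I would use the identity $F\dlog[x]_{m+1}=\dlog[x]_m$, valid for any unit of $A_\pi$, combined with the multiplicativity of $F$, to move it through wedge products. The main subtle point is $V$: I would invoke the Witt-complex projection formula $V(\xi\cdot F\eta)=V\xi\cdot\eta$ and rewrite $\dlog[x_J]_m=F\dlog[x_J]_{m+1}$, so that
\[
V\bigl(\omega\wedge\dlog[x_J]_m\bigr)=V\bigl(\omega\cdot F\dlog[x_J]_{m+1}\bigr)=V\omega\wedge\dlog[x_J]_{m+1},
\]
which is again a generator of $W_{m+1}\Omega^\bullet_A(\log\pi)$.

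The second step is to install the log structure. The Teichmüller lift gives $\alpha_m=[\cdot]_m\circ\alpha_X\colon\sM_X\to W_m\sO_X$, and I define $\dlog\colon\sM_X\to W_m\Omega^1_X(\log E)$ by $x\mapsto\dlog[x]_m$. Containment in the log-pole subsheaf follows from the local factorization $x=u\cdot x_1^{n_1}\cdots x_r^{n_r}$ with $u\in\sO_X^\times$ and $n_i\ge 0$, since $\dlog[x]_m=\dlog[u]_m+\sum_i n_i\dlog[x_i]_m$ is a sum of permitted elements. The remaining log Witt-complex axioms (additivity of $\dlog$, its compatibility with $R,F,V,d$ and with multiplication) are inherited directly from those of the ambient Witt complex $j_* W_\star\Omega^\bullet_U$ together with the two identities just used. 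The universal property of $W_\star\Omega^\bullet_{X_\log}$ as the universal log Witt complex on $X_\log$ finally produces the strict map $\theta_X$, which is unique because any such map is determined on Teichmüller symbols and on $\dlog$ of elements of $\sM_X$.

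The principal obstacle I anticipate is the $V$-stability: $V$ is the only operation that is not diagonal with respect to the defining generators, so one cannot dispense with the projection formula in that step. Everything else reduces to formal Witt-complex identities and to the local structure of $\sM_X$ at a point of the support of $E$.
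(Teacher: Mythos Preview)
Your proposal is correct and follows essentially the same approach as the paper: both show that $W_\star\Omega^\bullet_X(\log E)$ is a sub-Witt-complex of $j_*W_\star\Omega^\bullet_U$ by checking that $F$ and $V$ preserve it (using $F\dlog[x]_{m+1}=\dlog[x]_m$ and the projection formula $V(\omega F\eta)=V(\omega)\eta$ respectively), define the $\dlog$ map from $\sM_X$, and obtain $\theta_X$ from universality. Your write-up is slightly more explicit about $R$, $d$, and why $\dlog$ lands in the subsheaf, but these points are immediate and the paper leaves them implicit.
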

\begin{proof}
  To endow  $W_\star\Omega^\bullet_X(\log E)$ with the structure of a Witt complex
 on $X_\log$, we let $\dlog \circ [\cdot]_m \colon \sM_X \to W_m\Omega^1_X(\log E)$ be
  the map locally defined by $a \mapsto \dlog[a]_m$.
  We take $\lambda \colon (W_\star \sO_X, \sM_X) \to
  (W_\star\Omega^0_X(\log E), \sM_X)$ to
be the identity map. Since $j_* W_\star\Omega^\bullet_U$ is a Witt complex over
$X$ and $W_\star\Omega^\bullet_{X}(\log E)$ is its subcomplex, the lemma will follow
if we verify
that the latter is preserved by the Frobenius and Verchiebung operators of
$j_*W_\star\Omega^\bullet_U$ (cf. \cite[\S~1]{Geisser-Hesselholt-JAMS}).
Since $F$ is multiplicative and $F(\dlog[a]_m) = \dlog[a]_{m-1}$,
the claim is clear for $F$.
Since $V(w\dlog[a]_m) = V(wF(\dlog[a]_{m+1})) = V(w)\dlog[a]_{m+1}$, we 
see that $V$ also preserves $W_\star\Omega^\bullet_{X}(\log E)$.
\end{proof}

\subsection{Cartier isomorphism for $\Omega^\bullet_{X_{\log}}$}\label{sec:C-log}
Before we proceed further, we need to recall the notion of $F$-finiteness.
An $\F_p$-scheme $Y$ is called $F$-finite if the absolute Frobenius map
$\psi \colon Y \to Y$ is a finite morphism. The following result about $F$-finite
rings and schemes is well known. For the definitions of excellent and quasi-excellent
rings, we refer to \cite[Chap.~32. p.~260]{Matsumura}.

\begin{prop}\label{prop:F-fin}
  The following are true for all $q \ge 0, m \ge 1$.
  \begin{enumerate}
  \item
    If $A$ is an $F$-finite $\F_p$-algebra, then every essentially of finite type
    $A$-scheme is $F$-finite. The henselization and completion of $A$ along any ideal
    are $F$-finite. The strict henselization of $A$ at any maximal ideal is $F$-finite.
    \item
      An $F$-finite Noetherian ring is excellent. 
    \item
      If $A$ is a quasi-excellent Noetherian local $\F_p$-algebra, then it is
      $F$-finite if and only if its residue field is $F$-finite.
    \item
      If $A$ is a complete Noetherian local $\F_p$-algebra, then it is
      $F$-finite if and only if its residue field is $F$-finite.
    \item
      If $A$ is Noetherian and $F$-finite, then so is $W_m(A)$ and
      $W_m\Omega^q_A$ is a finitely generated $W_m(A)$-module.
    \item
      If $A$ is Noetherian local $F$-finite $\F_p$-algebra with completion
      $\wh{A}$, then the canonical map $W_m\Omega^q_A \otimes_{W_m(A)} W_m(\wh{A}) \to
      W_m\Omega^q_{\wh{A}}$ is an isomorphism.
      \item
      If $A$ is a regular local $F$-finite $\F_p$-algebra with maximal ideal
      $\fm = (x_1, \ldots , x_n)$ and residue field $\ff$, then $\Omega^1_A$ is a free
      $A$-module of finite rank. If $y_1, \ldots , y_s \in A$ are such that
      $\{d\ov{y}_1, \ldots , d\ov{y}_s\}$ forms a basis of $\Omega^1_{\ff}$
      as an $\ff$-vector space, then $\{dy_1, \ldots , dy_s, dx_1, \ldots , dx_n\}$ is
      a basis for $\Omega^1_A$ as an $A$-module.
\end{enumerate}
\end{prop}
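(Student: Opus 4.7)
The plan is that this proposition is a compendium of well-known facts about $F$-finite rings, so the proof largely consists of reductions to standard theorems (Kunz, Cohen's structure theorem, Geisser--Hesselholt, Illusie--Raynaud) with some direct verifications. I would treat the seven items as follows.

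For $(1)$--$(4)$, which are purely commutative-algebraic, I would proceed as follows. For $(1)$, I would first check the essentially finite type case by noting that if $a_1,\ldots,a_n$ generate $A$ as a $\psi(A)$-module, then for the polynomial ring $A[t_1,\ldots,t_r]$ the finite set $\{a_i\, t_1^{e_1}\cdots t_r^{e_r} : 0\le e_j<p\}$ generates it over its Frobenius image; quotients and localizations pass obviously. The henselization, completion, and strict henselization cases reduce to flat base change together with the observation that the cokernel $A/\psi(A)A$ is finitely generated, so its image after tensoring remains finitely generated. Part $(2)$ is Kunz's theorem that $F$-finite Noetherian rings are excellent. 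For $(4)$, I use Cohen's structure theorem to write $A$ as a quotient of $\ff[[x_1,\ldots,x_n]]$, so the claim reduces to showing the latter is $F$-finite when $\ff$ is; this is verified directly by exhibiting the finite set $\{c_\alpha\, x^\beta : c_\alpha \text{ a $\ff^p$-basis of }\ff,\; 0\le\beta_i<p\}$ as generators over the Frobenius image. Part $(3)$ then follows by invoking $(4)$ for $\hat A$ and using faithful flatness of the completion together with quasi-excellence to descend $F$-finiteness from $\hat A$ to $A$.

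For $(5)$, I would first show that $W_m(A)$ is $F$-finite, using the $V$-filtration short exact sequence $0\to A\xrightarrow{V^{m-1}} W_m(A)\to W_{m-1}(A)\to 0$ and induction on $m$, combined with the explicit compatibility of Frobenius with $V$. To see that $W_m\Omega^q_A$ is a finitely generated $W_m(A)$-module, I would cite the presentation of $W_m\Omega^\bullet_A$ by Witt-differentials due to Geisser--Hesselholt and Langer--Zink; the point is that $W_m\Omega^q_A$ is generated, as a $W_m(A)$-module, by elements of the form $V^{i_0}([a_0])\,dV^{i_1}([a_1])\wedge\cdots\wedge dV^{i_q}([a_q])$, and $F$-finiteness of $A$ plus Kunz's result $(7)$ provide finitely many such generators.

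Part $(6)$ is the main technical content and, I expect, the chief obstacle. The argument I would give uses finite generation from $(5)$ essentially. Since $W_m\Omega^q_A$ is finitely generated over $W_m(A)$, the $\fm$-adic topology on it coincides with the one it inherits from $W_m(A)$, and one may identify $W_m\Omega^q_A\otimes_{W_m(A)} W_m(\hat A)$ with the $W_m(\fm)$-adic completion of $W_m\Omega^q_A$. The target $W_m\Omega^q_{\hat A}$ is then shown to coincide with this completion by verifying the universal property of the de Rham--Witt complex on both sides, crucially using that $W_m(\hat A)$ is the completion of $W_m(A)$ (proved by induction on $m$ via the Verschiebung sequence) and that Frobenius and $d$ are continuous for these adic topologies when $A$ is $F$-finite.

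Finally, for $(7)$, I would invoke Kunz's structure theorem: for a regular local $F$-finite $\F_p$-algebra, $\Omega^1_A$ is free of finite rank and its rank equals $\dim A + \log_p[\ff:\ff^p]$. To verify the specific basis, I would show first that $\{dy_1,\ldots,dy_s\}$ are linearly independent modulo $\fm\,\Omega^1_A$ since $\{d\bar y_i\}$ is a basis of $\Omega^1_\ff$; then apply the conormal sequence of $\Spec(\ff)\hookrightarrow \Spec(A)$, which identifies the quotient of $\Omega^1_A\otimes_A \ff$ by these with $\fm/\fm^2$, and observe that $\{dx_1,\ldots,dx_n\}$ lift a basis of $\fm/\fm^2$. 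Nakayama then gives the claimed basis.
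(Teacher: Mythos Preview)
Your proposal is correct and in spirit matches the paper's proof, which simply cites standard references: Dundas--Morrow for (1) and (5), Kunz for (2), Cohen for (4), Hashimoto for (3), Morrow for (6), and Kato for (7). Your direct verifications for (1), (3), (4), (7) recover what those references do.

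The one place where the paper's route is noticeably simpler than yours is (5). Rather than invoking the Geisser--Hesselholt/Langer--Zink presentation by Witt-differentials, the paper observes that once $W_m(A)$ is known to be Noetherian and $F$-finite, the module $\Omega^q_{W_m(A)}$ is finitely generated over $W_m(A)$ (since $\Omega^1_B$ is finitely generated whenever $B$ is $F$-finite Noetherian), and then uses the canonical surjection $\Omega^q_{W_m(A)} \twoheadrightarrow W_m\Omega^q_A$ to conclude. This avoids any explicit generator calculation. Your approach works but is heavier machinery than needed. For (6), your sketch via completion and continuity is along the right lines, though the paper dispatches it by citing \cite[Lem.~2.11]{Morrow-ENS}, which packages exactly that argument.
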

\begin{proof}
Items (1) and (5) follow from \cite[Thm.~3.6, Lem.~3.8, 3.10]{Dundas-Morrow}
  using the surjection $\Omega^q_{W_m(A)} \surj W_m\Omega^q_A$. Item (2) follows
  from \cite[Thm.~2.5]{Kunz-AJM} and (4) follows from the Cohen structure theorem.
  Item (3) follows from (4) and \cite[Cor.~22]{Hashimoto}. Item (6) follows from
  \cite[Lem.~2.11]{Morrow-ENS} and item (7) follows from \cite[Lem.~7.2]{Kato-89}. 
\end{proof}

We now return to the set-up of ~\ref{sec:de-log-0}.
We assume that the snc-pair $(X,E)$ is local in that 
$X = \Spec(A)$, where $A$ is a
regular local $F$-finite $\F_p$-algebra of Krull dimension $n$. Let $\fm =
(x_1, \ldots , x_n)$ denote the maximal ideal of $A$. We let
$E_i = \Spec({A}/{(x_i)})$ and let $\pi = \stackrel{r}{\underset{i =1}\prod} x_i$,
where $1 \le r \le n$. We let $A_\log$ denote the log ring structure on $A$
corresponding to the log scheme $X_\log$.

\begin{lem}\label{lem:LWC-2}
The canonical maps of log differential graded $A$-algebras
$\Omega^\bullet_{A_\log} \xrightarrow{\theta_A}\Omega^\bullet_A(\log \pi)$ \
$\xleftarrow{\phi_A}\Wedge^\bullet_A (\Omega^1_A(\log\pi))$
are isomorphisms.  Moreover, each $\Omega^q_A(\log\pi)$ is a free $A$-module of
finite rank.
\end{lem}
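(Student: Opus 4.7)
The plan is to produce a common explicit free $A$-basis of $\Omega^1_A(\log\pi)$ and derive both isomorphisms from the degree-$1$ case. By \propref{prop:F-fin}(7), pick $y_1,\ldots,y_s\in A$ whose differentials give an $\ff$-basis of $\Omega^1_\ff$, so that $\mathcal{B}_0 := \{dy_1,\ldots,dy_s,dx_1,\ldots,dx_n\}$ is a free $A$-basis of $\Omega^1_A$. The key intermediate claim I would establish is that
\[
\mathcal{B} = \{dy_1,\ldots,dy_s,\ dx_{r+1},\ldots,dx_n,\ \dlog x_1,\ldots,\dlog x_r\}
\]
is a free $A$-basis of $\Omega^1_A(\log\pi)\subset\Omega^1_{A_\pi}$. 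Spanning is immediate from the identities $dx_i = x_i\dlog x_i$ in $\Omega^1_{A_\pi}$ for $i\le r$; linear independence follows by clearing denominators inside $\Omega^1_{A_\pi}$ and using that $A\to A_\pi$ is injective (since $\pi$ is a non-zero-divisor on $A$) together with the fact that $\mathcal{B}_0$ remains an $A_\pi$-basis of $\Omega^1_{A_\pi}$.

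From this basis, the isomorphism $\phi_A$ is straightforward. Surjectivity: any generator of $\Omega^q_A(\log\pi)$ has the form $\omega\wedge\dlog x_{i_1}\wedge\cdots\wedge\dlog x_{i_j}$ with $\omega\in\Omega^{q-j}_A$, and expanding $\omega$ in the basis $\mathcal{B}_0$ and substituting $dx_i = x_i\dlog x_i$ rewrites this as the $\phi_A$-image of an $A$-linear combination of $q$-fold wedges of elements of $\mathcal{B}$. For injectivity, $\Wedge^q_A\Omega^1_A(\log\pi)$ is a free $A$-module with basis the $q$-fold wedges of elements of $\mathcal{B}$, and after applying $-\otimes_A A_\pi$ these wedges map bijectively onto an $A_\pi$-basis of $\Wedge^q_{A_\pi}\Omega^1_{A_\pi} = \Omega^q_{A_\pi}$ (again via $dx_i = x_i\dlog x_i$); injectivity of $\phi_A$ therefore follows from injectivity of $A\to A_\pi$ applied to the free source. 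As a byproduct, each $\Omega^q_A(\log\pi)$ is free of rank $\binom{s+n}{q}$.

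For $\theta_A$, I would invoke the explicit $A$-module presentation of log differentials (\cite{Kato-log}, \cite{Ogus}): under the snc log structure, $\sM_A$ is generated as a monoid by $A^\times$ together with $x_1,\ldots,x_r$, and hence $\Omega^1_{A_\log}$ is generated by $\mathcal{B}_0\cup\{\dlog x_1,\ldots,\dlog x_r\}$ modulo the relations $dx_i = x_i\dlog x_i$ for $i\le r$. Eliminating $dx_1,\ldots,dx_r$ exhibits $\mathcal{B}$ as a free $A$-basis of $\Omega^1_{A_\log}$, and $\theta_A$ in degree $1$ sends this basis to the basis $\mathcal{B}$ of $\Omega^1_A(\log\pi)$, hence is an isomorphism. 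Since $\Omega^\bullet_{A_\log} = \Wedge^\bullet_A\Omega^1_{A_\log}$ by definition and $\Omega^\bullet_A(\log\pi) \cong \Wedge^\bullet_A\Omega^1_A(\log\pi)$ via $\phi_A$, this degree-$1$ isomorphism propagates to all degrees.

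The only genuinely non-formal input is the degree-$1$ basis computation, which relies on \propref{prop:F-fin}(7) together with injectivity of $A\to A_\pi$; once this is in place, all three modules appearing in the statement are canonically identified with the same explicit free $A$-module, and the remaining work is routine bookkeeping.
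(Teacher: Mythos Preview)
Your proposal is correct and is exactly the argument the paper has in mind: its proof reads ``This is standard using \propref{prop:F-fin}(7) and the structure of $\Omega^\bullet_{A_\log}$ (cf.\ \cite[Thm.~1.2.4]{Ogus})'', and you have simply spelled out the details---the explicit free basis $\mathcal{B}$ coming from \propref{prop:F-fin}(7), the identification of $\Omega^1_{A_\log}$ via the Ogus/Kato presentation, and the routine propagation to higher exterior powers.
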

\begin{proof}
  This is standard using \propref{prop:F-fin}(7) and the structure of
  $\Omega^\bullet_{A_\log}$ (cf. \cite[Thm.~1.2.4]{Ogus}).
\end{proof}

To prove the next result, we let $\F_p^{[m]} = \F_p[X_1, \ldots , X_m]$ for
$m \in \N$. There are then homomorphisms of log rings
\begin{equation}\label{eqn:Log-smooth}
\F_{p,\log} \to (\F_p^{[n]})_\log \xrightarrow{\gamma} A_\log, \ \mbox{where} \
\gamma(X_i) = x_i \ \forall \ 1 \le i \le n.
\end{equation}
The log structure of $\F_p$ is trivial while 
that of $\F_p^{[n]}$ (resp. $A$) is given by the simple normal crossing
divisor $V((X))$ (resp. $V((\pi))$) on $\A^n_{\F_p}$ (resp. $\Spec(A)$), where
$X = X_1\cdots X_r$. The first arrow in ~\eqref{eqn:Log-smooth} is log smooth
while the second arrow is a morphism of log rings such that the underlying morphism of
rings is regular. To see the latter claim, first note that $A$ is formally
$\fm$-smooth over $(\F_p^{[n]})_{(X_1,\cdots,X_n)}$, as $\F_p$ is perfect and $A$ is
regular (cf. \cite[Lem.~1.3]{Tanimoto-2}). Since $A$ is excellent
by \propref{prop:F-fin}(2), it follows now from Andr{\'e}'s theorem
(cf. \cite[p.~260]{Matsumura}) that $\F_p^{[n]} \to A$ is regular.
We let $H_{\dR}^i(A_\log)$ denote the $i$-th cohomology of the log de Rham complex
$\Omega^\bullet_{A_\log}$.

\begin{lem}\label{lem:Log-Cartier}
    For $q \ge 0$, there is an isomorphism of $A$-modules
    $C^{-1}_q \colon \Omega^q_{A_\log} \xrightarrow{\cong} H_{\dR}^q(A_\log)$.
  \end{lem}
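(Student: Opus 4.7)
The plan is to construct $C^{-1}_q$ explicitly using the multiplicative structure of $\Omega^\bullet_{A_\log}$, and then to prove it is an isomorphism by invoking Kato's logarithmic Cartier isomorphism after verifying that $A_\log$ is formally log smooth over $\F_p$.

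First, I would define $C^{-1}_q$ on generators. Set $C^{-1}_0(a) = a^p$ for $a \in A$, and on $\Omega^1_{A_\log}$ set $C^{-1}_1(da) = [a^{p-1}\,da]$ for $a \in A$ and $C^{-1}_1(\dlog u) = [\dlog u]$ for $u \in \sM_A$. These formulas extend to all degrees by the (forced) multiplicativity of the inverse Cartier map. Well-definedness reduces to two checks: (i) the stated cocycles are $d$-closed, which is immediate; and (ii) the relations in $\Omega^\bullet_{A_\log}$ (the Leibniz rule, $\dlog(uv) = \dlog u + \dlog v$, and the compatibility $\dlog u = u^{-1}du$ for $u \in A^\times$) pass to cohomology. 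The last compatibility is the only nontrivial check and follows from the identity $u^{p-1}du = u^p\dlog u$ combined with $C^{-1}_0(u^{-1}) = u^{-p}$.

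Next, I would verify that $A_\log$ is formally log smooth over $\F_p$ (with trivial log structure on the latter). The paper has already noted the factorization $\F_p \to (\F_p^{[n]})_\log \xrightarrow{\gamma} A_\log$ in \eqref{eqn:Log-smooth}: the first arrow is log smooth (standard toric log structure on a polynomial ring), while $\gamma$ is strict with regular underlying ring homomorphism $\F_p^{[n]} \to A$. A strict morphism of fine log schemes whose underlying ring map is formally smooth is formally log smooth, so composition yields formal log smoothness of $A_\log$ over $\F_p$. Combined with \lemref{lem:LWC-2} (freeness of $\Omega^1_{A_\log}$), this puts us in the setting where Kato's logarithmic Cartier isomorphism applies, giving the desired isomorphism $C^{-1}_q$.

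The main obstacle is reconciling the $F$-finite (but not necessarily finite-type) hypothesis on $A$ with the usual statement of Kato's log Cartier isomorphism. I would handle this by either of two strategies: (a) use Popescu's theorem to write $A$ as a filtered colimit of smooth $\F_p^{[n]}$-algebras $A_\alpha$, apply the log Cartier isomorphism to each $A_\alpha$ with the pulled-back log structure (where the finite-type hypothesis holds), and then pass to the colimit, using that cohomology of complexes of $\F_p$-modules commutes with filtered colimits; or (b) directly adapt Illusie's proof of the classical Cartier isomorphism for $F$-finite regular rings, using \propref{prop:F-fin}(5) to ensure the necessary finiteness properties and \lemref{lem:LWC-2} to track the log generators in the standard Koszul-type argument.
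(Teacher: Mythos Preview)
Your proposal is correct, and your strategy (a) is exactly the route the paper takes: write $A$ as a filtered colimit of smooth $\F_p^{[n]}$-algebras via Popescu, pull back the log structure, apply the Hyodo--Kato log Cartier isomorphism to each approximation, and pass to the colimit. Your preliminary explicit construction of $C^{-1}_q$ is more detailed than the paper's treatment (the paper simply quotes the Hyodo--Kato formula $C^{-1}_q(a\,\dlog b_1 \wedge \cdots \wedge \dlog b_q) = a^p\,\dlog b_1 \wedge \cdots \wedge \dlog b_q$ after the fact), but it is compatible with it and makes the $A$-linearity statement transparent.

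One technical point you should add to strategy (a): the Hyodo--Kato log Cartier isomorphism \cite[\S~2.12]{Hyodo-Kato} requires the morphism $\F_{p,\log} \to A(\lambda)_\log$ to be of \emph{Cartier type}, i.e.\ integral and saturated, not merely log smooth. The paper checks this explicitly (it is straightforward here since the log structure is the standard one associated to a normal crossing divisor, pulled back along a strict morphism), but you should flag it rather than leave it implicit. Your earlier remark that formal log smoothness of $A_\log$ over $\F_p$ ``puts us in the setting where Kato's logarithmic Cartier isomorphism applies'' is not quite right on its own---the finite-type/Cartier-type hypotheses are what force the limit argument, as you correctly identify afterwards.
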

 \begin{proof}
   Since the map $\gamma \colon \F_p^{[n]} \to A$ is regular, it follows from
   \cite[Thm.~2.5]{Popescu} that there is a direct system $\{A(\lambda)\}$ of 
    smooth (in particular, finitely generated)
    $\F_p^{[n]}$-algebras such that $A = \varinjlim_\lambda A(\lambda)$.
    We endow each $A(\lambda)$ with the log structure given by the pull-back of the
    log structure of $(\F_p^{[n]})_\log$. Since the log structure of $A$ also has this
    property, it follows that we have a direct system of
    log smooth $(\F_p^{[n]})_{\log}$-algebras $\{A(\lambda)_\log\}$ such that
    $A_\log = \varinjlim_\lambda A(\lambda)_\log$ in the category of log rings.
    As $\F_{p,\log} \to (\F_p^{[n]})_\log$ is log smooth, we get that each
    $A(\lambda)_\log$ is log smooth over $\F_{p,\log}$.

    It is easy to check directly that each map
    $F_{p,\log} \to A(\lambda)_\log$ is integral and saturated, in particular
    of Cartier type (cf. \cite[\S~2.12]{Hyodo-Kato}).
    We conclude from op. cit. that there is an isomorphism
    $C^{-1}_q \colon \Omega^q_{A(\lambda)_\log} \xrightarrow{\cong}
    H^q_\dR(A(\lambda)_{\log})$ given by $C^{-1}_q(a\dlog(b_1)\wedge \cdots \wedge
    \dlog(b_q)) = a^p\dlog(b_1)\wedge \cdots \wedge \dlog(b_q)$.
Since this is clearly natural with respect to the homomorphisms of log rings, we
    get after passing to the limit that there is a similar inverse Cartier isomorphism
    for $A$ as well. It is clear from the description that $C^{-1}_q$ is $A$-linear
    if we endow $\Ker(d)$ with the $A$-module structure via latter's Frobenius
    endomorphism.
\end{proof}

We let $Z_1\Omega^q_{A_\log} = \Ker(d \colon \Omega^q_{A_\log} \to \Omega^{q+1}_{A_\log})$
  and $B_1\Omega^q_{A_\log} = {\rm Image}(d \colon \Omega^{q-1}_{A_\log} \to
  \Omega^{q}_{A_\log})$. Taking the inverse of $C^{-1}_\bullet$, we get the following.
  
\begin{cor}\label{cor:Log-Cartier-0}
  For $q \ge 0$, there is an $A$-linear Cartier homomorphism
  $C_q \colon Z_1\Omega^q_{A_\log} \to \Omega^q_{A_\log}$ which induces an
  isomorphism of $A$-modules $C_q \colon H^q_\dR({A_\log}) \xrightarrow{\cong}
  \Omega^q_{A_\log}$.
\end{cor}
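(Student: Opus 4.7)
The plan is to deduce the corollary formally from Lemma \ref{lem:Log-Cartier} by inverting the isomorphism $C^{-1}_q$ and lifting it to a map from the cocycles $Z_1\Omega^q_{A_\log}$.

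First, I would note that Lemma \ref{lem:Log-Cartier} produces an $A$-linear isomorphism $C^{-1}_q \colon \Omega^q_{A_\log} \xrightarrow{\cong} H^q_\dR(A_\log)$, where $H^q_\dR(A_\log) = Z_1\Omega^q_{A_\log}/B_1\Omega^q_{A_\log}$ is endowed with the $A$-module structure twisted by the Frobenius of $A$. Let $\bar{C}_q \colon H^q_\dR(A_\log) \xrightarrow{\cong} \Omega^q_{A_\log}$ denote its inverse; this is automatically $A$-linear (with the same convention regarding the Frobenius twist on its source).

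Next, I would define $C_q$ to be the composition
\[
C_q \colon Z_1\Omega^q_{A_\log} \twoheadrightarrow H^q_\dR(A_\log) \xrightarrow[\cong]{\bar{C}_q} \Omega^q_{A_\log},
\]
where the first arrow is the canonical projection. Both maps are $A$-linear (with the appropriate Frobenius twist carried through the quotient), so $C_q$ is $A$-linear as required. By construction, $C_q$ vanishes on $B_1\Omega^q_{A_\log}$, hence it factors through $H^q_\dR(A_\log)$, and the induced map on this quotient is precisely $\bar{C}_q$, which is an isomorphism.

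There is essentially no obstacle at this stage: the corollary is a purely formal consequence of the previous lemma, together with the exact sequence $0 \to B_1\Omega^q_{A_\log} \to Z_1\Omega^q_{A_\log} \to H^q_\dR(A_\log) \to 0$ defining $H^q_\dR(A_\log)$. The substantive content — the existence of the inverse Cartier isomorphism in the log-smooth setting — has already been extracted in Lemma \ref{lem:Log-Cartier} via Popescu approximation by finitely generated log-smooth $\F_p^{[n]}$-algebras of Cartier type and the Hyodo–Kato log Cartier isomorphism.
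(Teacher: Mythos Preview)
Your proposal is correct and takes essentially the same approach as the paper: the paper simply remarks ``Taking the inverse of $C^{-1}_\bullet$, we get the following'' before stating the corollary, which is exactly what you have spelled out in detail.
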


One can now define $A$-modules $Z_i\Omega^q_{A_\log}$ and $B_i\Omega^q_{A_\log}$ as
in the non-log case (cf. \cite[Chap.~0, (2.2.3)]{Illusie}) so that there is a
filtration 
\[
  0 = B_0\Omega^q_{A_\log} \subseteq B_1\Omega^q_{A_\log} \subseteq \cdots \subseteq \
  \bigcup_i B_i\Omega^q_{A_\log} \subseteq \ \bigcap_i Z_{i}\Omega^q_{A_\log}  \subseteq
  \cdots \subseteq 
 Z_{1}\Omega^q_{A_\log} \subseteq Z_{0}\Omega^q_{A_\log} = \Omega^q_{A_\log}
\]
of $\Omega^q_{A_\log}$ together with an isomorphism
$C^{-i}_q \colon \Omega^q_{A_\log} \xrightarrow{\cong}
{Z_{i}\Omega^q_{A_\log}}/{B_i\Omega^q_{A_\log}}$ for each $i \ge 0$. Using this
isomorphism and mimicking the proof of \cite[Lem.~1.14]{Lorenzon}, one gets the
following.

\begin{cor}\label{cor:Log-cartier-1}
For $q \ge 0$ and $m  \ge 1$, there are exact sequences
  \begin{equation}\label{eqn:log-cartier-1.2}
      0 \to B_{m}\Omega^q_{A_\log} \to B_{m+1}\Omega^q_{A_\log} \xrightarrow{C^m}
      B_1\Omega^{q}_{A_\log} \to 0;
    \end{equation}
    \begin{equation}\label{eqn:log-cartier-1.1}
      0 \to Z_{m+1}\Omega^q_{A_\log} \to Z_m\Omega^q_{A_\log} \xrightarrow{dC^m}
      B_1\Omega^{q+1}_{A_\log} \to 0.
    \end{equation}
    \end{cor}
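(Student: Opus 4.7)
The plan is to proceed by induction on $m$, closely following the template of \cite[Lem.~1.14]{Lorenzon} but with the classical Cartier isomorphism replaced by its logarithmic analogue from \corref{cor:Log-Cartier-0}. The starting point is that \corref{cor:Log-Cartier-0} gives an $A$-linear surjection $C_q \colon Z_1\Omega^q_{A_\log} \twoheadrightarrow \Omega^q_{A_\log}$ with kernel $B_1\Omega^q_{A_\log}$; combined with the tautological isomorphism $\Omega^q_{A_\log}/Z_1\Omega^q_{A_\log} \xrightarrow{\cong} B_1\Omega^{q+1}_{A_\log}$ induced by $d$, this handles the base case $m = 0$ of both sequences.

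For the inductive step, I would first unpack the inductive definitions $Z_{i+1}\Omega^q_{A_\log} := C_q^{-1}(Z_i\Omega^q_{A_\log})$ and $B_{i+1}\Omega^q_{A_\log} := C_q^{-1}(B_i\Omega^q_{A_\log})$, in parallel with \cite[Chap.~0, (2.2.3)]{Illusie}, and verify by induction on $i$ the iterated log Cartier isomorphism $C^{-i}_q \colon \Omega^q_{A_\log} \xrightarrow{\cong} Z_i\Omega^q_{A_\log}/B_i\Omega^q_{A_\log}$ (the induction step is the snake lemma applied to the pair $(Z_i,B_i) \subset (Z_1,B_1)$ together with \corref{cor:Log-Cartier-0}). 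Once this is in place, exactness of \eqref{eqn:log-cartier-1.2} becomes formal: the iterated Cartier $C^m$ restricts to $B_{m+1}\Omega^q_{A_\log} \to B_1\Omega^q_{A_\log}$, which is surjective (being the inverse of the restriction of $C^{-m}$ to the subobject $B_1 \hookrightarrow Z_1 \subset \Omega^q_{A_\log}$), and the kernel is exactly $B_m\Omega^q_{A_\log}$ by the very definition $B_{m+1} = C_q^{-1}(B_m)$.

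For \eqref{eqn:log-cartier-1.1}, applying $C^m$ identifies $Z_m\Omega^q_{A_\log}/B_m\Omega^q_{A_\log} \xrightarrow{\cong} \Omega^q_{A_\log}$, and under this identification the subgroup $Z_{m+1}\Omega^q_{A_\log}/B_m\Omega^q_{A_\log}$ corresponds to $Z_1\Omega^q_{A_\log}$. Hence $Z_m\Omega^q_{A_\log}/Z_{m+1}\Omega^q_{A_\log} \cong \Omega^q_{A_\log}/Z_1\Omega^q_{A_\log}$, and the latter quotient is identified with $B_1\Omega^{q+1}_{A_\log}$ via $d$. Tracing through the identifications shows that the resulting composite is precisely $dC^m$, which gives the desired short exact sequence.

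The only genuine obstacle is the verification of the iterated log Cartier isomorphism together with the compatibility of $d$ and $C$ in the log setting. However, since \corref{cor:Log-Cartier-0} already supplies the log Cartier isomorphism with the same $A$-linear formalism as the classical version (with Frobenius-twisted $A$-module structure on $Z_1\Omega^q_{A_\log}$), and since the formal identities $dC_q = 0$ on $Z_1\Omega^q_{A_\log}$ and $d \colon \Omega^q_{A_\log}/Z_1\Omega^q_{A_\log} \xrightarrow{\cong} B_1\Omega^{q+1}_{A_\log}$ transfer verbatim from the underlying log de Rham complex $\Omega^\bullet_{A_\log}$, the verification reduces to book-keeping of the kind carried out in the non-log case.
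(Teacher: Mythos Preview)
Your proposal is correct and matches the paper's approach: the paper simply says to mimic \cite[Lem.~1.14]{Lorenzon} using the iterated log Cartier isomorphism $C^{-i}_q \colon \Omega^q_{A_\log} \xrightarrow{\cong} Z_i\Omega^q_{A_\log}/B_i\Omega^q_{A_\log}$ (which it records just before the corollary), and you have spelled out exactly what that mimicking entails. The only minor difference is that the paper takes the iterated isomorphism as already established in the preceding paragraph, whereas you fold its inductive verification into your argument; either way the content is the same.
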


\subsection{Identification with $W_\star\Omega^\bullet_{X_{\log}}$}\label{sec:Comparison}
The key fact we need to identify $W_\star\Omega^\bullet_A(\log \pi)$ with
$W_\star\Omega^\bullet_{A_{\log}}$ is the following.

\begin{lem}\label{lem:LWC-4}
  For every $q \ge 0$ and $m \ge 1$, the canonical map
  $W_m\Omega^q_{A_\log} \to W_m\Omega^q_{A_\pi}$ is injective.
\end{lem}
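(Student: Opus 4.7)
\medskip

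\noindent\textbf{Proof plan for \lemref{lem:LWC-4}.}
The strategy is to factor the map as
\[
W_m\Omega^q_{A_\log} \xrightarrow{\ \theta_A\ } W_m\Omega^q_A(\log \pi) \hookrightarrow W_m\Omega^q_{A_\pi},
\]
where $\theta_A$ is the canonical map of \lemref{lem:LWC-1} and the second arrow is the tautological inclusion from the very definition of $W_m\Omega^q_A(\log \pi)$. The problem thus reduces to showing that $\theta_A$ is injective. Surjectivity of $\theta_A$ is built in, since $W_m\Omega^q_A(\log \pi)$ is generated as a $W_m(A)$-module by wedges $\omega \wedge \dlog[x_{i_1}]_m \wedge \cdots \wedge \dlog[x_{i_j}]_m$ with $\omega \in W_m\Omega^{q-j}_A$, and each such generator manifestly lies in the image of $\theta_A$. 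So it suffices to prove $\theta_A$ is an isomorphism.

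I would proceed by induction on $m$. The base case $m = 1$ reduces to the statement $\Omega^q_{A_\log} \xrightarrow{\cong} \Omega^q_A(\log \pi)$, which is exactly the content of \lemref{lem:LWC-2} (and the subsequent embedding into $\Omega^q_{A_\pi}$ is clear, since $\Omega^q_A(\log \pi)$ is a free $A$-module whose distinguished basis maps to part of a basis of the free $A_\pi$-module $\Omega^q_{A_\pi}$). For the inductive step, consider the two restriction short exact sequences and the vertical maps $\theta_{m+1}, \theta_m$ between them:
\[
\begin{CD}
0 @>>> \ker(R)_{\log} @>>> W_{m+1}\Omega^q_{A_\log} @>R>> W_m\Omega^q_{A_\log} @>>> 0\\
@. @VVV @VV\theta_{m+1}V @VV\theta_m V @.\\
0 @>>> \ker(R)_{(\log \pi)} @>>> W_{m+1}\Omega^q_A(\log \pi) @>R>> W_m\Omega^q_A(\log \pi) @>>> 0.
\end{CD}
\]
By the inductive hypothesis $\theta_m$ is an isomorphism, and we have already shown $\theta_{m+1}$ to be surjective; a snake-lemma diagram chase then reduces the problem to showing that the induced map on the kernels of $R$ is injective.

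Both kernels admit an Illusie-type description coming from the structure theory of (log) Witt complexes. In the log setting this is available because, as observed in the proof of \lemref{lem:Log-Cartier}, the map $\F_{p,\log} \to A_\log$ is log smooth of Cartier type (via Popescu's approximation by finitely generated log smooth subalgebras). Granting this, $\ker(R)$ is identified via $(V^m, dV^m)$ with a direct sum of the form $\Omega^q_?/B_m \oplus \Omega^{q-1}_?/Z_m$, and the classical analogue holds on the $A_\pi$ side. The required injectivity on kernels therefore reduces to showing that
\[
\Omega^q_{A_\log}/B_m\Omega^q_{A_\log} \hookrightarrow \Omega^q_{A_\pi}/B_m\Omega^q_{A_\pi}
\quad \text{and} \quad
\Omega^{q-1}_{A_\log}/Z_m\Omega^{q-1}_{A_\log} \hookrightarrow \Omega^{q-1}_{A_\pi}/Z_m\Omega^{q-1}_{A_\pi}.
\]
These I would establish by a second induction on $m$, using the four term-type exact sequences of \corref{cor:Log-cartier-1} and their classical analogues on the $A_\pi$ side, the naturality of the Cartier operator $C$ under the map $A_\log \to A_\pi$, and the flatness of the localization $A \to A_\pi$, which allows one to identify $B_m$ and $Z_m$ on the $A_\pi$ side with the extension of scalars from the log versions on $A$.

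The main obstacle, and the step that requires the most care, is establishing the Illusie structural description of $\ker(R)$ in the log setting, i.e.\ that the map $\Omega^q_{A_\log}/B_m \oplus \Omega^{q-1}_{A_\log}/Z_m \to W_{m+1}\Omega^q_{A_\log}$ given by $(V^m, dV^m)$ is injective with image equal to $\ker(R)$. This rests on knowing that $W_\star\Omega^\bullet_{A_\log}$ behaves like the universal Witt complex of a log smooth algebra of Cartier type and hence inherits the usual graded structure; one invokes here the general theory of log de Rham-Witt complexes of Hyodo-Kato and Matsue \cite{Hyodo-Kato, Matsue}, transported to our setting via the log smoothness and Cartier-type property already used in the proof of \lemref{lem:Log-Cartier}. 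Once this structure is in hand, the rest of the argument is a routine chase through the Cartier exact sequences and the snake lemma.
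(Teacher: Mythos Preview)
Your approach is essentially the same as the paper's: both proceed by induction on $m$, use the restriction exact sequence to reduce to injectivity on $\ker(R)$, and then establish that $B_m\Omega^q_{A_\pi} \cap \Omega^q_{A_\log} = B_m\Omega^q_{A_\log}$ and the analogous statement for $Z_m$ via induction along the Cartier exact sequences of \corref{cor:Log-cartier-1} (the paper phrases this as the Cartesian-square condition on the kernels $R^q_{m-1}$ of $(V^{m-1},dV^{m-1})$, using Lorenzon's description \cite[(1.15.3)]{Lorenzon}, but the content is the same). One small correction: $\ker(R)$ is not a direct sum $\Omega^q/B_m \oplus \Omega^{q-1}/Z_m$ but an \emph{extension} of the latter by the former (cf.\ \cite[Chap.~I, Cor.~3.9]{Illusie}); this does not affect your argument, since injectivity on sub and quotient still gives injectivity on the middle term.
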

 \begin{proof}
  We shall prove the lemma by induction on $m$. The base case $m =1$ follows from
  \lemref{lem:LWC-2}. 
For $q, r \ge 0$ and $m \ge 1$, we let
$\Fil^r_V W_m\Omega^q_{A_\log} = V^r(W_{m-r}\Omega^q_{A_\log}) +
dV^r(W_{m-r}\Omega^{q-1}_{A_\log}) \subset W_m\Omega^q_{A_\log}$.
This yields the (decreasing) V-filtration on $W_m\Omega^q_{A_\log}$ as a
$W_m(A)$-module. Furthermore, we have an exact sequence
(cf. \cite[Lem.~3.2.4]{HM-Annals}) 
\begin{equation}\label{eqn:LWC-4-0}
  0 \to \Fil^r_V W_m\Omega^q_{A_\log} \to W_m\Omega^q_{A_\log}
  \xrightarrow{R^{m-r}} W_r\Omega^q_{A_\log} \to 0,
  \end{equation}
which is natural in $A_\log$. 

We let $R^q_{r}(A_\log) = \Ker((V^{r}, dV^{r}): \Omega^q_{A_\log} \oplus
\Omega^{q-1}_{A_\log} \to W_{r+1}\Omega^q_{A_\log})$.
By comparing the above exact sequence with the analogous exact sequence for $A_\pi$
and using induction on $m$, it suffices to show that
the square
\begin{equation}\label{eqn:LWC-4-1}
  \xymatrix@C1pc{
    R^q_{m-1}(A_\log) \ar[r] \ar[d] & \Omega^q_{A_\log} \oplus \Omega^{q-1}_{A_\log}
    \ar[d] \\
    R^q_{m-1}(A_{\pi}) \ar[r]  & \Omega^q_{A_\pi} \oplus \Omega^{q-1}_{A_\pi}}
  \end{equation}
is Cartesian for $m \ge 2$.

Suppose now that there is an element $\alpha \in
(\Omega^q_{A_\log} \oplus \Omega^{q-1}_{A_\log}) \bigcap R^q_{m-1}(A_{\pi})
\subset (\Omega^q_{A_\pi} \oplus \Omega^{q-1}_{A_\pi})$. Then
$\alpha \in  B_{m}\Omega^q_{A_\pi} \oplus Z_{m-1}\Omega^{q-1}_{A_\pi}$
by \cite[(1.15.3)]{Lorenzon}. We write $\alpha = (\alpha_0, \alpha_1)$ with
$\alpha_0 \in B_{m}\Omega^q_{A_\pi} \bigcap \Omega^q_{A_\log}$ and
$\alpha_1 \in  Z_{m-1}\Omega^{q-1}_{A_\pi} \bigcap \Omega^{q-1}_{A_\log}$.
We claim that $\alpha \in B_{m}\Omega^1_{A_\log} \oplus Z_{m-1}\Omega^1_{A_\log}$.

To prove the claim, we first note that $B_1\Omega^q_{A_\log} \subset \Omega^q_{A_\log} \inj
\Omega^q_{A_\pi}$  by \lemref{lem:LWC-2}, and this implies that
the map $B_1\Omega^q_{A_\log} \to B_1\Omega^q_{A_\pi}$ is injective. 
We next note that as $\Omega^{\bullet}_{A_\log} \inj \Omega^{\bullet}_{A_\pi}$,
our assumption already implies that
$\alpha \in Z_1\Omega^q_{A_\log} \oplus Z_{1}\Omega^{q-1}_{A_\log}$.
Using \cite[(1.14.2)]{Lorenzon},  \eqref{eqn:log-cartier-1.1} and induction on $m$,
we conclude that
$\alpha_1 \in Z_{m-1}\Omega^1_{A_\log}$. Since $\alpha_0 \in B_{m}\Omega^q_{A_\pi}
\bigcap \Omega^q_{A_\log} \subset Z_{i}\Omega^q_{A_\pi}$
for every $i \ge 1$, the same argument shows that
$\alpha_0 \in Z_m\Omega^q_{A_\log}$.

In the commutative diagram of short exact sequences
\begin{equation}\label{eqn:LWC-4-2}
  \xymatrix@C.8pc{
    0 \ar[r] & B_{m}\Omega^q_{A_\log} \ar[r] \ar[d] & Z_m\Omega^q_{A_\log} \ar[r] \ar[d] &
    H^q_\dR(A_\log) \ar[r] \ar@{^{(}->}[d] & 0 \\
     0 \ar[r] & B_{m}\Omega^q_{A_\pi} \ar[r] & Z_m\Omega^q_{A_\pi} \ar[r] &
    H^q_\dR(A_\pi) \ar[r] & 0,}
\end{equation}
the right vertical arrow is injective by the $m =1$ case of the lemma and
\corref{cor:Log-Cartier-0}. A diagram chase implies that
$\alpha_0 \in B_{m}\Omega^q_{A_\log}$. This proves the claim.

We now use \cite[(1.15.3)]{Lorenzon} for $A_\log$ and $A_\pi$
(together with a limit argument as in \lemref{lem:Log-Cartier}), giving us a
commutative diagram of short exact sequences
\begin{equation}\label{eqn:LWC-4-3}
  \xymatrix@C.8pc{
    0 \ar[r] & R^q_{m-1}(A_\log) \ar[r] \ar[d] &
    B_{m}\Omega^1_{A_\log} \oplus Z_{m-1}\Omega^1_{A_\log} \ar[r] \ar[d] & B_1\Omega^q_{A_\log}
    \ar[r] \ar[d] & 0 \\
   0 \ar[r] & R^q_{m-1}(A_\pi) \ar[r] &
    B_{m}\Omega^1_{A_\pi} \oplus Z_{m-1}\Omega^1_{A_\pi} \ar[r] & B_1\Omega^q_{A_\pi}
    \ar[r] & 0.}
  \end{equation}
Since the right vertical arrow is injective, we conclude from the above claim 
and a diagram chase that $\alpha \in R^q_{m-1}(A_\log)$. This proves that
~\eqref{eqn:LWC-4-1} is Cartesian and completes the proof of the lemma.
\end{proof}

We can now prove the main result of this section.

\begin{thm}\label{thm:Log-DRW}
  Let $X$ be a regular $F$-finite $\F_p$-scheme and let $X_\log$ denote the
  log scheme associated to a simple normal crossing divisor $E \subset X$.
Then the canonical map
of Witt complexes $\theta_X \colon W_\star\Omega^\bullet_{X_\log} \to
W_\star\Omega^\bullet(\log E)$ on $X_\log$ is a levelwise (wrt $m \ge 1$) isomorphism.
\end{thm}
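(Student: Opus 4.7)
The plan is to reduce to a stalkwise statement and then prove surjectivity and injectivity separately, the latter being where the heavy lifting of \lemref{lem:LWC-4} is used.

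First I would observe that both $W_m\Omega^\bullet_{X_\log}$ and $W_m\Omega^\bullet_X(\log E)$ are Zariski sheaves whose formation is local on $X$ (the first by the construction of the log de Rham--Witt complex of a log scheme, the second by the definition in \S\ref{sec:de-log-0}), and $\theta_X$ is a morphism of sheaves. Hence it suffices to show that for every point $x \in X$, the induced map $\theta_A \colon W_m\Omega^q_{A_\log} \to W_m\Omega^q_A(\log \pi)$ is an isomorphism, where $A = \sO_{X,x}$ is regular local and $F$-finite (by \propref{prop:F-fin}(1)), and $\pi = x_1 \cdots x_r$ cuts out the local components of $E$.

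For surjectivity in this local setting, I would argue directly from the generators. By the definition of $W_m\Omega^q_A(\log \pi)$ given in \S\ref{sec:de-log-0}, this module is the $W_m(A)$-submodule of $W_m\Omega^q_{A_\pi}$ generated by elements of the form $\omega \wedge \dlog[x_{i_1}]_m \wedge \cdots \wedge \dlog[x_{i_j}]_m$ with $\omega \in W_m\Omega^{q-j}_A$ and $J = \{i_1 < \cdots < i_j\} \subseteq \{1,\ldots,r\}$. Since each $x_i \in \sM_X$ and the log Witt complex structure on $W_\star\Omega^\bullet_{A_\log}$ provides the maps $\dlog \circ [\cdot]_m \colon \sM_X \to W_m\Omega^1_{A_\log}$, while the canonical map from the universal ordinary Witt complex gives a map $W_m\Omega^{q-j}_A \to W_m\Omega^{q-j}_{A_\log}$ that is respected by $\theta_A$, all of these generators lie in the image of $\theta_A$. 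This gives surjectivity.

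For injectivity, I would exploit the factorization
\[
W_m\Omega^q_{A_\log} \xrightarrow{\theta_A} W_m\Omega^q_A(\log \pi) \ \inj \ W_m\Omega^q_{A_\pi},
\]
where the inclusion on the right is by the very definition of $W_m\Omega^q_A(\log \pi)$ as an image in $W_m\Omega^q_{A_\pi}$ (and uses the standard injectivity $W_m\Omega^q_A \inj j_* W_m\Omega^q_U$ at the component $j = 0$). The composite is precisely the canonical map $W_m\Omega^q_{A_\log} \to W_m\Omega^q_{A_\pi}$ coming from the morphism of log rings $A_\log \to (A_\pi)_\log$ (with the trivial log structure on $A_\pi$ given by $A_\pi^\times$), which is injective by \lemref{lem:LWC-4}. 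Therefore $\theta_A$ itself is injective, completing the proof.

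The genuinely hard step is the injectivity of the composite, but this is exactly what \lemref{lem:LWC-4} establishes via the inductive analysis of the $V$-filtration and the log Cartier isomorphism; once that lemma is granted, the theorem reduces to a clean identification of generators and a factorization argument.
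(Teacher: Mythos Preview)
Your proof is correct and follows essentially the same strategy as the paper: reduce to the local case, embed both complexes into $W_m\Omega^\bullet_{A_\pi}$, and use \lemref{lem:LWC-4} as the decisive injectivity input. The only organizational difference is that the paper, after observing that $W_m\Omega^\bullet_A(\log\pi)$ sits inside (the image of) $W_m\Omega^\bullet_{A_\log}$ in $W_m\Omega^\bullet_{A_\pi}$, produces an inclusion $\phi_A$ in the reverse direction and then invokes universality of $W_\star\Omega^\bullet_{A_\log}$ to force $\phi_A\circ\theta_A = \id$ (and hence $\theta_A = \phi_A^{-1}$), whereas you argue surjectivity directly on generators and injectivity via the factorization through $W_m\Omega^\bullet_{A_\pi}$; these are equivalent packagings of the same content.
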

\begin{proof}
 We can assume $X = \Spec(A)$ with $A$ as in \lemref{lem:LWC-2}.
  We now look at the commutative diagram of Witt complexes on $A_\log$:
  \begin{equation}\label{eqn:Log-DRW-0}
    \xymatrix@C1pc{
      W_m\Omega^\bullet_A(\log\pi) \ar@{.>}[rr] \ar@{^{(}->}[dr] & &
      W_m\Omega^\bullet_{A_\log} \ar@{^{(}->}[dl] \\
      & W_m\Omega^\bullet_{A_\pi}. &}
    \end{equation}

The diagonal arrow on the right is injective by \lemref{lem:LWC-4}.
  It is clear from the definition of $W_m\Omega^\bullet_A(\log\pi)$ that it lies in
  $W_m\Omega^\bullet_{A_\log}$ as a Witt subcomplex of $W_m\Omega^\bullet_{A_\pi}$.
  In particular, the inclusion
  $W_m\Omega^\bullet_A(\log\pi) \inj W_m\Omega^\bullet_{A_\pi}$ uniquely factors through
  an inclusion of Witt complexes
  $\phi_A \colon W_m\Omega^\bullet_A(\log\pi) \to W_m\Omega^\bullet_{A_\log}$.
  Since $W_m\Omega^\bullet_{A_\log}$ is the universal Witt-complex over $A_\log$,
  this forces $\phi_A$ and $\theta_A$ to be the inverses of each other.
\end{proof}

\begin{cor}\label{cor:Log-DRW-0}
  Let $X$ be as in \thmref{thm:Log-DRW}. Then for every $q, r,s \ge 0$ with
  $r+s = m \ge 1$, there is a short exact sequence
  of $W_m\sO_X$-modules
  \[
  0 \to V^r(W_{s}\Omega^q_{X}(\log E)) + dV^r(W_{s}\Omega^{q-1}_X(\log E)) \to
    W_m\Omega^q_X(\log E) \xrightarrow{R^{s}} W_r\Omega^q_X(\log E) \to 0.
    \]
\end{cor}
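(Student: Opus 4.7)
The plan is to transport the $V$-filtration short exact sequence that is already available for the log de Rham--Witt complex $W_\star\Omega^\bullet_{X_\log}$ across the isomorphism $\theta_X$ of Theorem~\ref{thm:Log-DRW}. First I would reduce to the affine local case $X = \Spec(A)$ with $A$ a regular local $F$-finite $\F_p$-algebra as in \lemref{lem:LWC-2}; since both sheaves in the asserted sequence are Zariski sheaves on $X$ whose stalks are computed locally, and the maps $V$, $d$, $R$ are all compatible with localization, Zariski-sheafifying will finish the global case once the local statement is known.

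In the local case, I would appeal to the exact sequence~\eqref{eqn:LWC-4-0} for the log de Rham--Witt complex
\[
0 \to V^r(W_s\Omega^q_{A_\log}) + dV^r(W_s\Omega^{q-1}_{A_\log}) \to W_m\Omega^q_{A_\log} \xrightarrow{R^{s}} W_r\Omega^q_{A_\log} \to 0,
\]
which was recorded from \cite[Lem.~3.2.4]{HM-Annals} (with $r+s = m$). By \thmref{thm:Log-DRW}, the morphism $\theta_A$ is a levelwise isomorphism of Witt complexes on $A_\log$. Because it is a morphism of Witt complexes, it intertwines the structure maps $V$, $d$, $R$ on each side. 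Hence $\theta_A$ carries the submodule $V^r(W_s\Omega^q_{A_\log}) + dV^r(W_s\Omega^{q-1}_{A_\log})$ isomorphically onto $V^r(W_s\Omega^q_A(\log\pi)) + dV^r(W_s\Omega^{q-1}_A(\log\pi))$ and commutes with the restriction map $R^s$, yielding the desired short exact sequence for $W_m\Omega^q_A(\log\pi)$.

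Passing from local rings back to the scheme $X$, the resulting sequence of Zariski sheaves is exact because exactness can be tested on stalks at points $x \in X$, and at each such point we recover the local statement for $(\sO_{X,x}, E_x)$, whose hypotheses (regular, $F$-finite) are preserved by localization in view of \propref{prop:F-fin}(1). The only real point to check is that $\theta_X$ genuinely identifies the two $V$-filtrations, and this is immediate from the compatibility of $\theta_X$ with $V$ and $d$ — so there is no serious obstacle; the corollary is essentially a corollary of \thmref{thm:Log-DRW} together with the established $V$-filtration sequence on the log de Rham--Witt side.
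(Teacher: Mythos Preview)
Your proposal is correct and is essentially identical to the paper's own proof, which reads in full ``Combine \cite[\S~3]{HM-Annals} and \thmref{thm:Log-DRW}.'' You have simply spelled out the details: reduce to the local case, invoke the $V$-filtration exact sequence~\eqref{eqn:LWC-4-0} from \cite{HM-Annals}, and transport it across the isomorphism $\theta_X$, which as a morphism of Witt complexes commutes with $V$, $d$, and $R$.
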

\begin{proof}
  Combine \cite[\S~3]{HM-Annals} and \thmref{thm:Log-DRW}.
  \end{proof}

\begin{cor}\label{cor:LWC-0-1}
  Let $S$ be a regular $F$-finite $\F_p$-algebra and let $A = S[[T]]$. Then we
  have a canonical isomorphism of $W_m(S)$-modules 
  \[
  W_m\Omega^q_A \oplus W_m\Omega^{q-1}_{S} \dlog[T]_m \xrightarrow{\cong}
  W_m\Omega^q_A(\log T)
  \]
  for every $q \ge 0$ and $m\ge 1$.
\end{cor}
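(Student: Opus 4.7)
My plan is to prove that the canonical $W_m(S)$-module homomorphism
\[
\phi_m \colon W_m\Omega^q_A \oplus W_m\Omega^{q-1}_S \dlog[T]_m \longrightarrow W_m\Omega^q_A(\log T), \quad (\alpha, \beta \dlog[T]_m) \mapsto \alpha + \beta \dlog[T]_m,
\]
(where $W_m\Omega^{q-1}_S \to W_m\Omega^{q-1}_A$ is induced by $S \hookrightarrow A$) is an isomorphism by induction on $m$, using the Verschiebung exact sequence of \corref{cor:Log-DRW-0} for the inductive step.

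For the base case $m = 1$, I would combine \lemref{lem:LWC-2} with \propref{prop:F-fin}(7) (applied locally on $S$, then globalized) to obtain $\Omega^1_A = (A \otimes_S \Omega^1_S) \oplus A \cdot dT$ and hence $\Omega^1_A(\log T) = (A \otimes_S \Omega^1_S) \oplus A \cdot \dlog T$. Taking $q$th exterior powers gives $\Omega^q_A(\log T) = \Omega^q_A \oplus (A \otimes_S \Omega^{q-1}_S) \dlog T$. Since $T \dlog T = dT$, the positive-$T$-degree part of $(A \otimes_S \Omega^{q-1}_S) \dlog T$ absorbs into $\Omega^q_A$, yielding the desired decomposition $\Omega^q_A(\log T) = \Omega^q_A \oplus \Omega^{q-1}_S \cdot \dlog T$; the direct-sum structure is witnessed by the classical residue map reading off the $\dlog T$-coefficient.

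For the inductive step $m \ge 2$, I would combine \corref{cor:Log-DRW-0} (with $r = m-1$, $s = 1$) with the classical analogue \cite[Lem.~3.2.4]{HM-Annals} for $W_\star\Omega^\bullet_A$ and $W_\star\Omega^\bullet_S$ to form the commutative diagram with exact rows
\[
\xymatrix@C.6pc{
0 \ar[r] & K_m \ar[d] \ar[r] & W_m\Omega^q_A \oplus W_m\Omega^{q-1}_S \dlog[T]_m \ar[d]^-{\phi_m} \ar[r]^-{R} & W_{m-1}\Omega^q_A \oplus W_{m-1}\Omega^{q-1}_S \dlog[T]_{m-1} \ar[d]^-{\phi_{m-1}} \ar[r] & 0 \\
0 \ar[r] & L_m \ar[r] & W_m\Omega^q_A(\log T) \ar[r]^-{R} & W_{m-1}\Omega^q_A(\log T) \ar[r] & 0
}
\]
in which $K_m = \bigl(V^{m-1}\Omega^q_A + dV^{m-1}\Omega^{q-1}_A\bigr) \oplus \bigl(V^{m-1}\Omega^{q-1}_S + dV^{m-1}\Omega^{q-2}_S\bigr) \dlog[T]_m$ is the kernel of the top restriction and $L_m = V^{m-1}\Omega^q_A(\log T) + dV^{m-1}\Omega^{q-1}_A(\log T)$. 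The right vertical arrow is an isomorphism by induction. Using the Witt-complex identities $V^{m-1}(\omega)\,\dlog[T]_m = V^{m-1}(\omega \dlog T)$ (projection formula $V(xF(y))=V(x)y$ together with $F^{m-1}\dlog[T]_m = \dlog T$) and $dV^{m-1}(\omega)\,\dlog[T]_m = d(V^{m-1}(\omega)\,\dlog[T]_m)$ (since $d\dlog[T]_m = 0$), combined with the $m=1$ decomposition applied to $\Omega^q_A(\log T)$ and $\Omega^{q-1}_A(\log T)$, the left vertical arrow also becomes an isomorphism. The five lemma then completes the induction.

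The main technical obstacle is verifying that the left vertical arrow $K_m \to L_m$ is injective, i.e.~that the two subspaces $V^{m-1}\Omega^q_A + dV^{m-1}\Omega^{q-1}_A$ and $\bigl(V^{m-1}\Omega^{q-1}_S + dV^{m-1}\Omega^{q-2}_S\bigr) \dlog[T]_m$ intersect trivially inside $W_m\Omega^q_A(\log T)$. This reduces, via a Cartesian-square argument in the spirit of the proof of \lemref{lem:LWC-4}, to the statement that the kernel of $(V^{m-1}, dV^{m-1}) \colon \Omega^q_A(\log T) \oplus \Omega^{q-1}_A(\log T) \to W_m\Omega^q_A(\log T)$ decomposes under the base-case identification as the direct sum of the analogous kernels over $A$ and over $S \cdot \dlog T$—a compatibility guaranteed by the Witt-complex identities just recalled.
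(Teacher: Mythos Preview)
Your inductive approach is quite different from the paper's: the paper simply observes that the pro-system $\{W_m\Omega^q_A \oplus W_m\Omega^{q-1}_S\dlog[T]_m\}_{m,q}$ carries the structure of a Witt complex on the log ring $A_\log$ (with monoid map $\alpha'_m(uT^n)=(\dlog[u]_m,\,n\dlog[T]_m)$), and that the canonical map $\phi_m$ to $W_m\Omega^q_A(\log T)$ respects this structure. Since \thmref{thm:Log-DRW} identifies $W_\star\Omega^\bullet_A(\log T)$ with the \emph{initial} Witt complex $W_\star\Omega^\bullet_{A_\log}$, the universal property produces an inverse to $\phi_m$ in one stroke. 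No induction on $m$ is needed.

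Your argument has a genuine gap at the point you yourself flag as the ``main technical obstacle''. The Witt-complex identities you recall do show that the composite
\[
(\Omega^q_A\oplus\Omega^{q-1}_A)\oplus(\Omega^{q-1}_S\oplus\Omega^{q-2}_S)
\;\xrightarrow{(V^{m-1},dV^{m-1})}\;L_m\subset W_m\Omega^q_A(\log T)
\]
factors through $K_m$ via $\phi_m|_{K_m}$. But this only gives $R^q_{m-1}(A)\oplus R^{q-1}_{m-1}(S)\subset R^q_{m-1}(A_\log)$; the reverse inclusion is \emph{equivalent} to the injectivity of $\phi_m|_{K_m}$, which is precisely what you are trying to prove. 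In other words, your final sentence (``a compatibility guaranteed by the Witt-complex identities just recalled'') is circular: those identities establish the factorization, not the injectivity of the second factor. To close the gap without reverting to the paper's universal-property argument you would need an independent input---for instance a residue map $W_m\Omega^q_A(\log T)\to W_m\Omega^{q-1}_S$ killing $W_m\Omega^q_A$ and sending $\beta\dlog[T]_m\mapsto\beta$, or a direct appeal to the Geisser--Hesselholt decomposition of $W_m\Omega^q_{A_T}$---but neither of these is available at this point in the paper without essentially assuming the corollary itself (note that \corref{cor:GH-2} and \corref{cor:GH-3} both invoke \corref{cor:LWC-0-1}).
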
  
\begin{proof}
Note that $\{ W_m\Omega^q_A \oplus
W_m\Omega^{q-1}_{S} \dlog[T]_m \}_{m,q}$ is already a Witt complex on
$A_\log$ and there is a canonical map $ W_m\Omega^q_A \oplus
W_m\Omega^{q-1}_{S} \dlog[T]_m \to W_m\Omega^q_A(\log T)$ which is compatible with the
monoid maps
\[
\alpha_m \colon M_A \to W_m\Omega^1_A(\log T) ; \hspace{1.5cm} \alpha'_m
\colon M_A \to W_m\Omega^1_A \oplus
W_m(S) \dlog[T]_m,
\]
where $M_A=\{uT^n : u \in A^\times, n \ge 0\}$ and $\alpha'_m(uT^n)=
(\dlog[u]_m, n\dlog[T]_m)$. The corollary now follows from \thmref{thm:Log-DRW} and
the universal property of $W_\star\Omega^\bullet_{A_\log}$.
\end{proof}

Let $S$ be as in \corref{cor:LWC-0-1}. Let $A = S[[T_1, \ldots, T_d]]$ and
$T = \stackrel{r}{\underset{i =1}\prod} T_i$. For $q \ge 0$, let 
\begin{equation}\label{eqn:Multi-0}
  F^{1,q}_0(S) =
{\underset{r+1 \le i_{s+1} < \cdots < i_{q-i_0} \le d}
  {\underset{0 \le i_0,
 1 \le i_1 < \cdots < i_s \le r}\bigoplus}} \Omega^{i_0}_S \dlog(x_{i_1})
\cdots \dlog(x_{i_s}) d(x_{i_{s+1}}) \cdots d(x_{i_{q-i_0}}).
\end{equation}

A proof identical to that of \corref{cor:LWC-0-1} also shows the following.

\begin{cor}\label{cor:LWC-0-2}
For $q \ge 0$, one has a canonical isomorphism of $S$-modules 
  \[
\Omega^q_A \oplus  F^{1,q}_0(S) \xrightarrow{\cong} \Omega^q_A(\log T). 
  \]
\end{cor}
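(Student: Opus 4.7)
\bigskip

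\textbf{Proof proposal.} The plan is to follow line-by-line the method used in the proof of \corref{cor:LWC-0-1}, with the universal Witt complex $W_\star\Omega^\bullet_{A_\log}$ replaced by its $m=1$ level, namely the log de Rham complex $\Omega^\bullet_{A_\log}$, and with \thmref{thm:Log-DRW} replaced by \lemref{lem:LWC-2}.

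First I would equip the $S$-module $E^\bullet := \Omega^\bullet_A \oplus F^{1,\bullet}_0(S)$ with the structure of a log differential graded algebra on $A_{\log}$. The differential preserves $\Omega^\bullet_A$; on a generator $\omega_S \wedge \dlog(x_{i_1}) \wedge \cdots \wedge \dlog(x_{i_s}) \wedge d(x_{i_{s+1}}) \wedge \cdots \wedge d(x_{i_{q-i_0}})$ of $F^{1,q}_0(S)$ with $\omega_S \in \Omega^{i_0}_S$, Leibniz gives $d\omega_S \wedge \dlog(x_{i_1}) \wedge \cdots \wedge d(x_{i_{q-i_0}}) \in F^{1,q+1}_0(S)$, since $d$ annihilates every $\dlog$- and $d$-factor. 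The graded product is inherited from $\Omega^\bullet_A(\log T)$ via the obvious $S$-linear map $\phi \colon E^\bullet \to \Omega^\bullet_A(\log T)$ defined by summing the tautological inclusions of the two summands; closure of $E^\bullet$ under this product is achieved by systematically applying the identity $T_i \dlog(T_i) = d T_i$ to redistribute $T_i$-multiples between the two summands. The monoid map $\alpha \colon \sM_A \to E^1$ sends $u T_1^{a_1}\cdots T_r^{a_r}$ to $u^{-1}du + \sum_{i=1}^r a_i \dlog(T_i)$, with summands in $\Omega^1_A$ and $F^{1,1}_0(S)$ respectively. With these choices, $\phi$ becomes a morphism of log dgas on $A_\log$ compatible with the monoid maps.

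Next, by \lemref{lem:LWC-2} (which identifies $\Omega^\bullet_{A_\log}$ with $\Omega^\bullet_A(\log\pi)$ and exhibits it as the universal log dga over $A_{\log}$), there is a unique morphism of log dgas $\psi \colon \Omega^\bullet_A(\log T) \to E^\bullet$ compatible with the respective monoid maps. Uniqueness, applied in turn to $E^\bullet$ and to $\Omega^\bullet_A(\log T)$, forces $\psi \circ \phi = \id_{E^\bullet}$ and $\phi \circ \psi = \id_{\Omega^\bullet_A(\log T)}$, so $\phi$ is the asserted canonical $S$-linear isomorphism.

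The main obstacle is the first step: verifying both that $E^\bullet$ is closed under the product inherited from $\Omega^\bullet_A(\log T)$, and that $\phi$ genuinely realizes $E^q$ as an internal direct sum inside $\Omega^q_A(\log T)$ (equivalently, that the map $F^{1,q}_0(S) \to \Omega^q_A(\log T)/\Omega^q_A$ is injective). Both points are reduced, via \lemref{lem:LWC-2} and the absorption identity $T_i \dlog(T_i) = dT_i$, to explicit $A$-module computations in the free $A$-module $\Omega^1_A(\log T)$, whose basis is furnished by \propref{prop:F-fin}(7) together with \lemref{lem:LWC-2}.
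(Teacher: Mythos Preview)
Your overall strategy---realise $E^\bullet := \Omega^\bullet_A \oplus F^{1,\bullet}_0(S)$ as a log differential graded algebra over $A_{\log}$, then compare with $\Omega^\bullet_A(\log T)\cong\Omega^\bullet_{A_{\log}}$ via the universal property---is exactly what the paper intends when it says the proof is identical to that of \corref{cor:LWC-0-1}. The difference is only that the paper works with the universal Witt complex on $A_{\log}$ at all levels, while you work at level $m=1$; these coincide.

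There is, however, a genuine slip in your second paragraph. From the universal property of $\Omega^\bullet_A(\log T)$ you correctly deduce $\phi\circ\psi=\id_{\Omega^\bullet_A(\log T)}$. But ``uniqueness applied to $E^\bullet$'' does not yield $\psi\circ\phi=\id_{E^\bullet}$: uniqueness concerns maps \emph{out of} the universal object, and $E^\bullet$ is not known to be universal (that is what you are trying to prove). What universality alone gives you is $\psi\circ\phi\circ\psi=\psi$, which forces $\psi\circ\phi=\id$ only on the image of $\psi$. The missing ingredient is that $\psi$ is surjective, and this holds precisely because $E^\bullet$ is generated as a log dga over $A$ by the images of the structure maps (the ring $A$ and the elements $\dlog(T_i)$): the sub-dga they generate contains the summand $\Omega^\bullet_A$ by universality of the non-log de Rham complex, and then all of $F^{1,\bullet}_0(S)$ by multiplication. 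This is the step implicit in the paper's terse ``the corollary now follows from the universal property.''

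Your final paragraph proposes an alternative route: verify directly, via the explicit $A$-basis of $\Omega^1_A(\log T)$ from \lemref{lem:LWC-2} and \propref{prop:F-fin}(7), that $\phi$ is injective. This is valid and, combined with $\phi\circ\psi=\id$ (hence $\phi$ surjective), finishes the proof. But note that once you do this direct computation, the universality argument is only supplying surjectivity of $\phi$, which you could also read off more elementarily. Either completion works; just be aware that your middle paragraph as written does not stand on its own.
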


\section{The filtered de Rham-Witt complex}\label{sec:FDRWC}
In this section, we shall define the filtered de Rham-Witt complex of an $F$-finite
regular $\F_p$-scheme with respect to a simple normal crossing divisor. We shall
also derive the basic properties of this complex. We begin by considering
a special case.

Given an $\F_p$-scheme $X$ and
$D  = \stackrel{r}{\underset{i =1}\sum} n_i D_i \in
\Div(X)$, we let $|D|$ denote the support of $D$ and
let $D/p = \stackrel{r}{\underset{i =1}\sum} \lfloor{{n_i}/p}\rfloor D_i$,
where $\lfloor{\cdot}\rfloor$ is the greatest integer function.
For $m \ge 1$, we let
$W_m(X) = W_m(\Gamma(X, \sO_X))$. The elements of $W_m(X)$ will be denoted by
$m$-tuples $\un{a} = (a_{m-1}, \dots , a_0)$ with $a_i \in \Gamma(X, \sO_X)$.
If $I \subset A$ is an ideal in a commutative ring, we let $W_m(I) =
\{\un{a} \in W_m(A)|a_i \in I \ \forall \ i\}$.

\subsection{Filtration of $W_m \sO_U$}\label{sec:Fil-O}
Let $X$ be an integral and locally factorial $\F_p$-scheme and 
$D = \stackrel{r}{\underset{i =1}\sum} n_i D_i \in \Div(X)$. Let
$j \colon U \inj X$ be the inclusion of the complement of $E$.
The following definition is inspired by (though slightly different from)
\cite[\S~3, p.~695]{Kerz-Saito-ANT}. When $X$ is the spectrum of a henselian discrete
valuation ring, this definition is originally due to Brylinski \cite{Brylinski}.

\begin{defn}\label{defn:Log-fil-0}
We let
      \[
        \Fil_D W_m({U}) = \left\{\underline{a}=(a_{m-1},\ldots, a_0) \in W_m({U})| \
        a_i^{p^i} \in \Gamma({X}, \sO_{X}({D})) \ \forall \ i\right\}.
      \]
      We let $\Fil_D W_m \sO_U$ be the presheaf on $\Sm_X$ such that
      $\Fil_D W_m \sO_U({X'}) = \Fil_{D'} W_m(U')$, where
      $D' = X' \times_X D \in \Div(X')$ and $U' = X' \times_X U$.
\end{defn}

One easily checks that $\Fil_D W_m \sO_U$ actually restricts to a sheaf on $X_\zar$.
If $X$ is integral, we moreover have the inclusions of Zariski sheaves
      $\Fil_D W_m \sO_U \subset \Fil_{D'} W_m \sO_W \subset h_* W_m \sO_W$
      whenever $D \le D'$ and $h \colon W = X \setminus |D'| \inj X$ is the
      inclusion. Note also that the canonical map
$j^*(\Fil_D W_m \sO_U) \to W_m \sO_U$ is an isomorphism. 

      We let $\iota \colon \{\eta\} \inj X$ denote the
inclusion of the generic point and let $K = k(\eta)$. Throughout our discussion, we
shall write the constant sheaf $\iota_* W_m\Omega^\bullet_{\eta}$ on $X$
simply as $W_m\Omega^\bullet_K$.
      If $P \in X$ and $x_i$ is a generator of the ideal of
$D_i$ in $\sO_{X,P}$, then one checks that
  $a_i^{p^i} \in (\sO_{X}(D))_P=\sO_{X,P}.(\prod_j
          x^{-n_j}_j)$ iff $a_i^{p^i} (\prod_j
          x^{n_j}_j) \in \sO_{X,P}$. Letting  $\pi = \prod_j x_j$, we therefore
         obtain 
\begin{equation}\label{eqn:Log-fil-1*}
  (\Fil_D W_m \sO_U)_P
          =
          \left\{\un{a} \in W_m(\sO_{X,x}[\pi^{-1}])| \ a_i^{p^i} ({\prod}_j
          x^{n_j}_j) \in \sO_{X,P} \ \forall \ i\right\}.
\end{equation}

If $X = \Spec(A)$, where $A$ is a local ring and $x_1, \ldots , x_r$ are
generators of the ideals of the irreducible components of $D$, we shall write
$\Fil_D W_m \sO_U$ as $\Fil_I W_m(A_\pi)$ or $\Fil_{\un{n}} W_m(A_\pi)$, where
$\pi = \prod_i x_i$ and
$I \subset Q(A)$ is the invertible ideal $(x)$, where $x = \prod_i x^{n_i}_i$ and
$\un{n}=(n_1, \ldots, n_r) \in \Z^r$. We shall write $\sO_X(D)$ as $A{x^{-1}}$.

\begin{lem}\label{lem:Log-fil-1}
    $\Fil_DW_m\sO_U$ is a sheaf of abelian groups. Moreover, we have an exact sequence
   \begin{equation}\label{eqn:Log-fil-1.1}
     0 \to \Fil_D\sO_U \xrightarrow{V^{m-1}} \Fil_DW_m\sO_U \xrightarrow{R}
     \Fil_{D/p}W_{m-1}\sO_U\to 0.
   \end{equation}
\end{lem}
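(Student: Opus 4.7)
The plan is as follows. The sheaf property is automatic because $\Fil_D W_m \sO_U$ is cut out by the local conditions $a_i^{p^i} \in \sO_X(D)$ and $\sO_X(D)$ is itself a Zariski sheaf. The real content of the lemma is the closure of $\Fil_D W_m\sO_U$ under Witt-vector addition; once that is in hand, the exact sequence follows by elementary checks.

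To prove closure under addition, I would work in Illusie's convention $(x_0,\dots,x_{m-1})$ on $W_m\sO_U$, related to the paper's reversed convention by the relabeling $x_j = a_{m-1-j}$; in Illusie's coordinates the filtration condition becomes $x_j^{p^{m-1-j}} \in \sO_X(D)$, i.e.\ $v_P(x_j) \ge -n_P/p^{m-1-j}$ at every generic point $P$ of $E$. The $k$-th component of $\underline{a}+\underline{b}$ is $S_k(a_0,\dots,a_k,b_0,\dots,b_k)$, where $S_k(X,Y) \in \Z[X_0,\dots,X_k,Y_0,\dots,Y_k]$ is the classical Witt addition polynomial. The key fact going back to Witt's construction is that $S_k$ is isobaric of weight $p^k$ when each $X_j, Y_j$ is assigned weight $p^j$, so every monomial $\prod_j X_j^{\alpha_j} Y_j^{\beta_j}$ in $S_k$ satisfies $\sum_j p^j(\alpha_j+\beta_j) = p^k$.

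Given this isobaric constraint, a one-line valuation estimate on each monomial of $S_k(a,b)$ yields $v_P(c_k) \ge -n_P/p^{m-1-k}$, i.e.\ $c_k^{p^{m-1-k}} \in \sO_X(D)$ for every $k$. Translating back to the paper's convention gives $\underline{a}+\underline{b} \in \Fil_D W_m\sO_U$. The negation is handled the same way, since the universal polynomials computing $-\underline{a}$ are isobaric of the same weight; hence $\Fil_D W_m\sO_U$ is an abelian subsheaf of $j_*W_m\sO_U$.

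With the group structure established, exactness of \eqref{eqn:Log-fil-1.1} is a direct check. In the paper's convention, $V^{m-1}$ places its argument in the $a_0$-coordinate with zeros elsewhere, so its only nontrivial membership condition is $a\in\sO_X(D)$; injectivity is obvious, and the image coincides with $\Ker(R)\cap\Fil_DW_m\sO_U$, where $R$ drops the $a_0$-coordinate. That $R$ lands in $\Fil_{D/p}W_{m-1}\sO_U$ is a one-line valuation check relying on the integrality of $v_P(a_i^{p^{i-1}})$, and surjectivity is witnessed by the explicit lift $(c_{m-2},\dots,c_0) \mapsto (c_{m-2},\dots,c_0,0)$, whose membership in $\Fil_D$ uses the inequality $p\cdot D/p \le D$. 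The main obstacle in the entire argument is the closure step, which as outlined reduces to the isobaric structure of $S_k$ combined with an elementary valuation estimate.
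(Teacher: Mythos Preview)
Your proposal is correct and gives a clean one-shot argument for closure under addition. The paper takes a different route: it first establishes that $R$ is well-defined and surjective on the filtration via the divisibility argument $p^{i+1}t_j + n_j \ge 0 \iff p^i t_j + \lfloor n_j/p \rfloor \ge 0$ in the local UFD, and \emph{then} proves closure under subtraction by induction on $m$. The base case $m=1$ is trivial since $\Fil_D\sO_U = \sO_X(D)$ is an $\sO_X$-module; for $m \ge 2$, given $\un{a},\un{a}' \in \Fil_D W_m\sO_U$, the inductive hypothesis applied to $R(\un{a}-\un{a}') \in \Fil_{D/p}W_{m-1}\sO_U$ pins down the first $m-1$ coordinates of $\un{a}-\un{a}'$ as satisfying the $\Fil_D$ condition (via the lift $(c_{m-2},\dots,c_0) \mapsto (c_{m-2},\dots,c_0,0)$), and the last coordinate is simply $a_0 - a'_0 \in \sO_X(D)$.

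Your isobaric argument is more conceptual and avoids the induction entirely, at the cost of invoking the weight structure of the universal Witt polynomials~$S_k$. The paper's inductive approach never unpacks the~$S_k$ and uses only that $R$ is a ring homomorphism; it also fits the inductive template that recurs throughout the paper (e.g., in the proof of goodness of the filtered de Rham--Witt complex and in \lemref{lem:F-group-2}). Both proofs handle the exact sequence~\eqref{eqn:Log-fil-1.1} in essentially the same way.
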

\begin{proof}
  We may assume $X= \Spec A$ and, following the same notation as above, may let $D$ be
  given by the ideal $(x)$, where $x= \prod_i x^{n_i}_i$.
  We only need to show that the map $R$ is well-defined and surjective and
  $\Fil_DW_m\sO_U$ is a sheaf of abelian groups. The remaining assertions follow from
  \eqref{eqn:Log-fil-1*}.  We let $\un{a} \in {\Fil_IW_{m}(A_\pi)}$
  and let $I^{1/p}= (\prod_i x^{\lfloor{{n_i}/{p}}\rfloor}_i)$.
  Then $R(\un{a}) = \un{b} = (b_{m-1}, \ldots , b_0)$, where $b_i = a_{i+1}$. We can
  write (because $A$ is a UFD) $a_{i+1} = a'_{i+1}\prod_jx^{t_j}_j$, where $t_j \in \Z$ for
  every $j$ and ${a'_{i+1}} \in A$ is not divisible by any $x_j$.

  By our hypothesis, $(a_{i+1})^{p^{i+1}} x\in A$. This implies that
  $p^{i+1}t_j+n_j \ge 0$, which is equivalent to
  $p^{i}t_j + \lfloor n_j/p \rfloor \ge 0$. So we get $(b_i)^{p^i}x' \in A$ for
$0 \le i \le m-1$, where $x' = \prod_j x^{\lfloor{{n_j}/{p}}\rfloor}_j$. This implies that
  $\un{b} \in \Fil_{I^{1/p}}W_{m}(A_\pi)$. Hence, $R$ is well-defined. Also, given any
  $(b_{m-2}, {\ldots},b_0) \in \Fil_{I^{1/p}}W_{m}(A_\pi)$, we can 
  reverse the above argument to get
  $(b_{m-2}, {\ldots}, b_0, 0) \in \Fil_IW_m(A_\pi)$. This shows that $R$ is surjective.
  
Now we show that $\Fil_DW_m\sO_U$ is a Zariski sheaf of abelian groups by
  induction on $m$.
  If $m=1$, then $\Fil_D\sO_U=\sO_X(D)$ is clearly a sheaf of  abelian groups
  (actually of $\sO_X$-modules). If $m >1$, we let
  $\un{a}=(a_{m-1}, {\ldots}, a_0) \mbox{and} \un{a}'=
  (a'_{m-1}, \ldots, a'_0) \mbox{be \ in} \Fil_DW_m\sO_U$. Enough to show
  $\un{a}-\un{a}' \in \Fil_DW_m\sO_U$. Since $R$ is additive, by induction and the
  above argument, we have $R(\un{a}-\un{a}') \in \Fil_{D/p}W_{m-1}\sO_U$. So, we can
  write $R(\un{a}-\un{a}')= (c_{m-2}, {\ldots}, c_0)$ satisfying the relation
  \eqref{eqn:Log-fil-1*}. Then the previous argument shows
  $(c_{m-2}, \ldots, c_0, 0) \in \Fil_DW_m\sO_U$. Now note that $\un{a}-\un{a}'=
  (c_{m-2}, \ldots, c_0, a_0-a'_0)$ which is clearly in $\Fil_DW_m\sO_U$ (because,
  $a_0-a'_0 \in \Fil_D\sO_U$).
\end{proof}

\begin{lem}\label{lem:Log-fil-2}
  For two Weil divisors $D$ and $D'$ with complements $U$ and $U'$, respectively, we
  have $\Fil_D W_m \sO_{U} + \Fil_{D'} W_m \sO_{U'} \subset \Fil_{D+D'} W_m \sO_{U\cap U'}$
  and $\Fil_D W_m \sO_{U} \cdot \Fil_{D'} W_m \sO_{U'} \subset
  \Fil_{D+D'} W_m \sO_{U\cap U'}$ under the addition and multiplication operations of
  $W_m(K)$. In particular, $\Fil_D W_m \sO_U$ is a subsheaf of $W_m\sO_X$-modules inside
  $W_m(K)$.
  \end{lem}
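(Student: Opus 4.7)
My plan is to reduce both inclusions to a valuation-theoretic estimate at each codimension-one point of $|D|\cup|D'|$ and then invoke the weighted-homogeneity of the universal Witt addition and multiplication polynomials. By \eqref{eqn:Log-fil-1*}, the sheaf $\Fil_E W_m\sO_W$ is cut out stalk-by-stalk by bounds on the components of Witt vectors at the codimension-one points of $|E|$; since $X$ is integral and locally factorial, at every such point $P$ the local ring $\sO_{X,P}$ is a DVR with valuation $v$. Writing $n,n'$ for the multiplicities of $D,D'$ at $P$ (taken to be nonnegative, as in the effective-modulus setting of interest), the memberships $\un{a}\in\Fil_D W_m\sO_U$ and $\un{b}\in\Fil_{D'} W_m\sO_{U'}$ translate into $v(a_i)\ge -n/p^i$ and $v(b_i)\ge -n'/p^i$ for every $i$.

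The key input is the classical weighted-homogeneity of the universal Witt polynomials $s_n,\pi_n\in\Z[x_0,\ldots,x_n,y_0,\ldots,y_n]$, which itself follows from the analogous weighted-homogeneity of the ghost polynomials $w_n=\sum_i p^i x_i^{p^{n-i}}$: the addition polynomial $s_n$ has weight $p^n$ in the combined $x$ and $y$ variables, while the multiplication polynomial $\pi_n$ has weight $p^n$ separately in the $x$-variables and the $y$-variables. Translated into the paper's reversed indexing, the $j$-th sum component $S_j$ of $\un{a}+\un{b}$ is a polynomial in $\{a_i,b_i\}_{i\ge j}$ whose monomials $\prod_i a_i^{e_i} b_i^{f_i}$ satisfy the single constraint $\sum_i (e_i+f_i)p^{m-1-i}=p^{m-1-j}$, whereas the $j$-th product component $P_j$ has monomials obeying the two separate constraints $\sum_i e_i p^{m-1-i}=\sum_i f_i p^{m-1-i}=p^{m-1-j}$. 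Plugging in the valuation bounds, any such monomial satisfies $v\bigl(\prod_i a_i^{e_i} b_i^{f_i}\bigr)\ge -(nE+n'F)/p^{m-1}$ with $E=\sum e_i p^{m-1-i}$ and $F=\sum f_i p^{m-1-i}$; the relation $E+F=p^{m-1-j}$ (respectively $E=F=p^{m-1-j}$) combined with $n,n'\ge 0$ gives $nE+n'F\le (n+n')p^{m-1-j}$, whence $v(S_j),v(P_j)\ge -(n+n')/p^j$. Therefore $S_j^{p^j},P_j^{p^j}\in\sO_X(D+D')_P$, and since this holds at every codimension-one point of $|D+D'|$ the two inclusions follow globally.

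The final "in particular" statement is then immediate: specializing the two inclusions to $D'=0$ and identifying $\Fil_0 W_m\sO_U=W_m\sO_X$ (which uses normality of $X$, since $a_i^{p^i}\in\sO_X$ with $a_i\in K$ forces $a_i\in\sO_X$ by integral closure), we see that $\Fil_D W_m\sO_U$ is closed under addition and multiplication by sections of $W_m\sO_X$, hence is a sheaf of $W_m\sO_X$-modules inside $W_m(K)$. The only real obstacle I anticipate is bookkeeping: the paper's $p$-adic-style indexing $(a_{m-1},\ldots,a_0)$ must be carefully aligned with the standard Witt indexing under which the weighted-homogeneity of $s_n$ and $\pi_n$ is usually stated, but once that translation is in place the combinatorial estimate is routine.
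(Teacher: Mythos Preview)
Your argument is correct and takes a genuinely different route from the paper's. For the product, the paper decomposes $\un{a}=\sum_i V^i([a_{m-1-i}]_{m-i})$ and $\un{b}$ similarly, then uses the identity $V^i([x])\cdot V^j([y])=V^{i+j}([x^{p^j}y^{p^i}])$ (from $aV(b)=V(Fa\cdot b)$) to reduce to Teichm{\"u}ller representatives and checks the valuation bound directly on each summand, finally invoking \lemref{lem:Log-fil-1} to add them up. Your bi-homogeneity argument for the product polynomials $\pi_n$ achieves the same conclusion in one stroke, and since for multiplication you get the \emph{equality} $E=F=p^{m-1-j}$ (not just $E+F=p^{m-1-j}$), the bound $v(P_j)\ge -(n+n')/p^j$ holds without any sign restriction on $n,n'$, matching the generality of the paper. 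For the sum, the paper does not argue via Witt polynomials at all: it simply cites \lemref{lem:Log-fil-1} (that $\Fil_{D+D'}W_m\sO_{U\cap U'}$ is already an abelian group) together with the containments $\Fil_D,\Fil_{D'}\subset\Fil_{D+D'}$. Your approach via homogeneity of $s_n$ gives an independent proof, but as you note it needs $n,n'\ge 0$ for the inequality $nE+n'F\le (n+n')p^{m-1-j}$; this is the same restriction implicitly present in the paper's containment step, and indeed the additive claim is false without effectiveness (already for $m=1$, $\sO_X(E)+\sO_X(-E)\not\subset\sO_X$).

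For the ``in particular'', both arguments agree: set $D'=0$, use $\Fil_0W_m\sO_X=W_m\sO_X$ (normality), and combine the multiplicative inclusion with closure under addition (either from \lemref{lem:Log-fil-1} or from your argument). What your approach buys is uniformity and independence from the $V$-decomposition machinery; what the paper's buys is that the Verschiebung calculus is exactly the toolkit reused throughout \S\ref{sec:FDRWC}--\ref{sec:CFDRW}, so no new input is needed.
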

\begin{proof}
Since we can check this locally, we assume that $X = \Spec(A)$, where $A$ is
  a local ring. We let $\{x_1, \ldots , x_r\}$ and $\{y_1, \ldots , y_s\}$ be the
  generators of the ideals of the irreducible components of $D$ and $D'$,
  respectively.
We let $x = \prod_i x^{m_i}_i$ and $y = {\prod}_j y^{n_j}_j$, where
  $D = \sum_i m_iD_i$ and $D' = \sum_j n_jD'_j$. Write $I = (x)$ and $I' = (y)$.
  Suppose $\un{a} \in \Fil_IW_m(A_x)$ and $\un{b} \in \Fil_{I'}W_m(A_y)$.
  We can write $\un{a} = \stackrel{m-1}{\underset{i =0}\sum}V^{i}([a_{m-1-i}]_{m-i})$ and
  $\un{b} = \stackrel{m-1}{\underset{i = 0}\sum}V^i([b_{m-1-i}]_{m-i})$.
  This yields  ${\un{a}}\cdot {\un{b}} =
  \stackrel{m-1}{\underset{i,j = 0}\sum}V^i([a_{m-1-i}]_{m-i})V^j([b_{m-1-j}]_{m-j})$.

  Using the identity $xV(y)=V(Fx.y)$, we have for $0 \le i,j \le m-1$,
  \begin{equation}\label{eqn:Log-fil-2-0}
    V^i([a_{m-1-i}]_{m-i})V^j([b_{m-1-j}]_{m-j})=\left\{ \begin{array}{ll}
                            0 & \mbox{if $i+j \ge m$} \\
                            V^{i+j}([a_{m-1-i}^{p^j}b_{m-1-j}^{p^i}]_{m-i-j}) & \mbox{if $i+j <m$}.
                          \end{array}
                        \right.
    \end{equation}
By ~\eqref{eqn:Log-fil-1*}, the condition $\un{a} \in \Fil_IW_m(A_x)$ and
  $\un{b} \in \Fil_{I'} W_m(A_y)$ means that
$xa^{p^i}_i \in A$ and $yb^{p^i}_i \in A$ for each $i$.
From this, we get for $i+j<m$ that
$ (a_{m-1-i}^{p^j}b_{m-1-j}^{p^i})^{p^{m-1-i-j}} xy= a^{p^{m-1-i}}_{m-1-i}b^{p^{m-1-j}}_{m-1-j}xy\in A$.
This implies by ~\eqref{eqn:Log-fil-2-0} that
$V^i([a_{m-1-i}]_{m-i})V^j([b_{m-1-j}]_{m-j})\in \Fil_{II'}W_m(A_{xy})$. By
\lemref{lem:Log-fil-1}, we conclude that
${\un{a}}\cdot {\un{b}} \in \Fil_{II'}W_m(A_{xy})$. Also, the claim about the addition
immediately follows from \lemref{lem:Log-fil-1} by observing that
$\Fil_D W_m \sO_{U}$ and $\Fil_{D'} W_m \sO_{U'})$ are contained in
$\Fil_{D+D'} W_m \sO_{U \cap U'}$.  
This concludes the proof of the first part of the lemma. The second part follows from
the first by taking $D' = 0$ and noting that $\Fil_0W_m\sO_X = W_m\sO_X$.
\end{proof}

For $D$ as in Definition~\ref{defn:Log-fil-0}, let $W_m \sO_X(D)$ be the 
sheaf on $X_\zar$ which is locally defined as
$W_m \sO_X(D)  = [x^{-1}]_m \cdot W_m \sO_X \subset j_* W_m\sO_U$,
where $[-]_m \colon j_*\sO_U \to j_*W_m\sO_U$ is the Teichm{\"u}ller map,
and $D$ is locally defined by the invertible ideal $(x) \subset K$. One knows that
$W_m \sO_X(D)$ is a sheaf of invertible $W_m\sO_X$-modules. For a sheaf of
$W_m\sO_X$-modules $\sM$, we let $\sM(D) = \sM \otimes_{W_m\sO_X} W_m\sO_X(D)$. If $A$
is a local ring and $D$ is defined by $I = (x)$, we also write $\sM(D)$ as $M(x^{-1})$
if $\sM$ is defined by a $W_m(A)$-module $M$.

\begin{lem}\label{lem:Log-fil-2-1}
  One has the identity $\Fil_{p^{m-1}D}W_m\sO_U = W_m\sO_X(D)$.
\end{lem}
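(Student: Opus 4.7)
The statement is local on $X$, so I would reduce to the case $X = \Spec(A)$ with $A$ a local factorial ring, and take $x = \prod_j x_j^{n_j}$ so that locally $D$ is defined by the invertible ideal $(x) \subset Q(A)$, $\sO_X(D) = x^{-1}A$, and $\sO_X(p^{m-1}D) = x^{-p^{m-1}}A$. By definition, $W_m\sO_X(D) = [x^{-1}]_m \cdot W_m(A)$, so the lemma amounts to describing explicitly the Witt vector $[x^{-1}]_m \cdot \un{b}$ for $\un{b} \in W_m(A)$, and then inverting the resulting correspondence.

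The main computation is to use the Teichmüller--Verschiebung decomposition $\un{b} = \sum_{j=0}^{m-1} V^j([b_{m-1-j}]_{m-j})$, together with $[x^{-1}]_m = V^0([x^{-1}]_m)$, and apply the product formula \eqref{eqn:Log-fil-2-0} term by term. This yields
\[
[x^{-1}]_m \cdot \un{b} \;=\; \sum_{j=0}^{m-1} V^j\!\bigl([x^{-p^j}\, b_{m-1-j}]_{m-j}\bigr),
\]
so that the $i$-th component of $[x^{-1}]_m \cdot \un{b}$ is $x^{-p^{m-1-i}} b_i$. The inclusion $W_m\sO_X(D) \subseteq \Fil_{p^{m-1}D} W_m\sO_U$ then follows at once: if $b_i \in A$, the $i$-th component $a_i = x^{-p^{m-1-i}} b_i$ satisfies $a_i^{p^i} = x^{-p^{m-1}} b_i^{p^i} \in x^{-p^{m-1}}A = \sO_X(p^{m-1}D)$, as required by \eqref{eqn:Log-fil-1*}.

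For the reverse inclusion $\Fil_{p^{m-1}D} W_m\sO_U \subseteq W_m\sO_X(D)$, given $\un{a} \in \Fil_{p^{m-1}D}W_m(A_\pi)$ I would invert the formula and set $b_i := x^{p^{m-1-i}} a_i \in A_\pi$, so that tautologically $[x^{-1}]_m \cdot \un{b} = \un{a}$. The only thing to check is that $b_i \in A$. From $a_i^{p^i} \in x^{-p^{m-1}}A$ one immediately obtains $b_i^{p^i} = x^{p^{m-1}} a_i^{p^i} \in A$, so $b_i \in A_\pi \subseteq Q(A)$ is a root of the monic polynomial $T^{p^i} - b_i^{p^i} \in A[T]$; since $A$ is factorial (hence normal), this forces $b_i \in A$.

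The crux of the argument is therefore the multiplication formula for $[x^{-1}]_m$ and the use of normality of $A$ to promote integral elements of $A_\pi$ back to $A$; I do not foresee a substantive obstacle beyond keeping the indexing convention $\un{a}=(a_{m-1},\ldots,a_0)$ straight with the decomposition into Verschiebung components.
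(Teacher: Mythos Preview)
Your proof is correct and follows essentially the same route as the paper: both compute the components of $[x^{\pm 1}]_m \cdot \un{b}$ explicitly (the paper phrases it as $\un{a}\cdot[x]_m \in W_m(A)$ rather than $\un{a} \in [x^{-1}]_m W_m(A)$) and then pass from $a_i^{p^i} x^{p^{m-1}} \in A$ to $a_i x^{p^{m-1-i}} \in A$. The only cosmetic difference is that the paper invokes the UFD property directly for this last step, while you use the equivalent observation that $A$ is normal so that $b_i^{p^i} \in A$ forces $b_i \in A$.
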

\begin{proof}
Since we can check this locally, we assume that $X = \Spec(A)$ is as in the proof of
  \lemref{lem:Log-fil-2}. We let $I = (x^{p^{m-1}})$.
 Following our notations, we see that
 $\un{a} = (a_{m-1}, \ldots, a_0) \in \Fil_{I}W_m(A_x)$ if and only if
 $a_i^{p^i}x^{p^{m-1}} \in A $ for all $0\le i\le m-1$. Since $A$ is a UFD, this is
 equivalent to $a_i x^{p^{m-1-i}} \in A$ for all $0\le i\le m-1$, or equivalently,
 $\un{a}\cdot [x]_m = {(a_{m-1}x, \dots, a_ix^{p^{m-1-i}}, \dots, a_0x^{p^{m-1}}) \in W_m(A)}$.
This finishes the proof.
\end{proof}

\subsection{Filtration of $W_m\Omega^\bullet_U$}\label{sec:Fil-DRW}
We now assume that $X$ is a connected regular $F$-finite $\F_p$-scheme and let 
$E \subset X$ be a simple normal crossing divisor
with irreducible components $E_1, \ldots , E_r$. We let $j \colon U \inj X$ be the
inclusion of the complement of $E$. We fix $m \ge 1$.

\begin{defn}\label{defn:Log-fil-3}
  For $D = \stackrel{r}{\underset{i =1}\sum} n_i E_i \in \Div_E(X)$
  and an integer $q \ge 0$, we let
  \[
  \Fil_D W_m \Omega^q_U =  \Fil_D W_m \sO_U \cdot W_m\Omega^q_X(\log E) +
  d(\Fil_D W_m \sO_U) \cdot W_m\Omega^{q-1}_X(\log E),
  \]
  considered as a subsheaf of $j_*W_m\Omega^q_U$. We shall write
  $\Fil_D W_1 \Omega^q_U$ as $\Fil_D \Omega^q_U$.
\end{defn}

For $D = \stackrel{r}{\underset{i =1}\sum} n_i E_i$ effective, we let
$W_m\Omega^q_{X|D} = \Ker(W_m\Omega^q_X \surj W_m\Omega^q_D)$
(cf. \cite[\S~3]{GK-Duality-1}).
If $X = \Spec(A)$ and $x_1, \ldots , x_r$ are generators of the ideals of the
irreducible components of $E$, then we shall write $\Fil_D W_m \Omega^q_U$ as
$\Fil_I W_m\Omega^q_{A_\pi}$, where $\pi = \prod_i x_i$ and $I = (\prod_i x^{-n_i}_i)$. 
We let $W_m\Omega^q_{(A,I)} = \Ker(W_m\Omega^q_{A} \surj W_m\Omega^q_{A/I})$ if
$I \subset A$ is an ideal.

\begin{lem}\label{lem:Log-fil-4}
  For $q \ge 0$, we have the following.
  \begin{enumerate}
    \item
      $\Fil_{p^{m-1}D} W_m \Omega^q_U \xrightarrow{\cong} W_m\Omega^q_X(\log E)(D)$.
  \item
    $\Fil_D W_m \Omega^q_U$ is a sheaf of finitely generated $W_m\sO_X$-modules.
  \item
    $\Fil_D\Omega^q_U = \Omega^q_X(\log E)(D)$. In particular, $\Fil_{-E}\Omega^d_U =
    \Omega^d_X$, where $d$ is the rank of $\Omega^1_X$.
    \item
    $\varinjlim_n \Fil_{nE} W_m \Omega^q_U \xrightarrow{\cong} j_*W_m\Omega^{q}_U$.
    \item
      $\Fil_0W_m\Omega^q_U = W_m\Omega^q_X(\log E)$ and
      $\Fil_DW_m\Omega^q_U \subset W_m\Omega^q_{X|E}$ if $n_i < 0$ for all
      $1 \le i \le r$.
    \end{enumerate}
\end{lem}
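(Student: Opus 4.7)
I plan to prove the five parts in the order $(1), (3), (5\text{a}), (2), (4), (5\text{b})$, where $(5\text{a})$ and $(5\text{b})$ denote the two assertions of (5), using (1) as the principal tool for the rest.

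For (1), I would combine \lemref{lem:Log-fil-2-1} (which gives $\Fil_{p^{m-1}D}W_m\sO_U = W_m\sO_X(D)$) with \defref{defn:Log-fil-3}. The summand $W_m\sO_X(D)\cdot W_m\Omega^q_X(\log E)$ equals $W_m\Omega^q_X(\log E)(D)$ because $W_m\sO_X(D)$ is invertible. For the $d$-summand, let $\pi$ be a local generator of $\sO_X(-D)$, which is a monomial in the $x_i$'s and hence a unit on $U$; then $W_m\sO_X(D) = [\pi^{-1}]_m W_m\sO_X$ locally and the computation
\[
d([\pi^{-1}]_m\,\un{b}) = -[\pi^{-1}]_m\,\un{b}\,\dlog[\pi]_m + [\pi^{-1}]_m\,d\un{b},
\]
together with the fact that $\dlog[\pi]_m$ is a $W_m\sO_X$-linear combination of $\dlog[x_i]_m$'s (hence lies in $W_m\Omega^1_X(\log E)$), shows that the $d$-summand is absorbed into the first. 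Part (3) is the case $m=1$ of (1); the additional identity $\Fil_{-E}\Omega^d_U = \Omega^d_X$ follows from $\Omega^d_X(\log E) = \Omega^d_X \otimes \sO_X(E)$, whose local generator $\dlog x_1\wedge\cdots\wedge\dlog x_r\wedge dx_{r+1}\wedge\cdots\wedge dx_d$ equals $(x_1\cdots x_r)^{-1}\,dx_1\wedge\cdots\wedge dx_d$.

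For (5a), the condition $a_i^{p^i} \in \sO_X$ with $a_i \in \sO_U$ forces $a_i \in \sO_X$ by normality (integral closure of $\sO_X$ in $\sO_U$), so $\Fil_0 W_m\sO_U = W_m\sO_X$ and the definition directly gives $\Fil_0 W_m\Omega^q_U = W_m\Omega^q_X(\log E)$. For (2), \propref{prop:F-fin}(5) gives that $W_m\sO_X$ is Noetherian and $W_m\Omega^q_X$ is finitely generated over it; writing $W_m\Omega^q_X(\log E)$ locally as a finite sum of $W_m\sO_X$-submodules of $j_*W_m\Omega^q_U$ of this type, and tensoring with the invertible $W_m\sO_X(D')$, shows that $W_m\Omega^q_X(\log E)(D')$ is finitely generated for any $D'$. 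Monotonicity of the filtration and (1) then exhibit $\Fil_D W_m\Omega^q_U$ as a submodule of $W_m\Omega^q_X(\log E)(D')$ for any $D'$ with $p^{m-1}D' \ge D$, so it is finitely generated by Noetherianness. For (4), (1) gives $\varinjlim_n \Fil_{nE}W_m\Omega^q_U \supseteq \varinjlim_n W_m\Omega^q_X(\log E)(nE)$, and the reverse inclusion holds because every local section of $j_*W_m\Omega^q_U$ is a finite combination of elementary Witt differentials with coefficients in $W_m(A[\pi^{-1}])$, whose pole orders in $\pi$ are uniformly bounded.

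The final assertion (5b) is the technical heart of the lemma. By monotonicity we may take $D = -E$; then $\Fil_{-E}W_m\sO_U = W_m(I_E) = \Ker(W_m(\sO_X)\twoheadrightarrow W_m(\sO_E))$, since $\sO_X(-E) = I_E$ is radical. The main calculation, which I expect to be the principal obstacle, is to verify
\[
W_m(I_E)\cdot W_m\Omega^q_X(\log E) \subseteq W_m\Omega^q_{X|E} \quad\text{and}\quad d(W_m(I_E))\cdot W_m\Omega^{q-1}_X(\log E) \subseteq W_m\Omega^q_{X|E}.
\]
The strategy is to absorb every log factor $\dlog[x_{i_l}]_m$ into an exact differential $d[x_{i_l}]_m$. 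Expanding $\un{a} \in W_m(I_E) = W_m((\pi))$ locally as $\sum_j V^j([a_{m-1-j}]_{m-j})$ with $a_{m-1-j}\in(\pi)$, and repeatedly applying $V^j(y)z = V^j(y\,F^j(z))$ together with $[x_{i_l}]_m\dlog[x_{i_l}]_m = d[x_{i_l}]_m$, each log factor is converted into $d[x_{i_l}]_{m-j}$ at the cost of dividing $a_{m-1-j}$ by $x_{i_l}$. After all $s$ absorptions, the remaining coefficient lies in the ideal $\bigl(\prod_{l\notin\{i_1,\ldots,i_s\}} x_l\bigr)$, so the resulting element of $W_m\Omega^q_X$ restricts to zero on each component of $E$: on $E_{i_l}$ because $d[x_{i_l}]_m$ pulls back to zero, and on $E_l$ for $l\notin\{i_1,\ldots,i_s\}$ because the coefficient does. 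The $d$-term reduces to the same computation via the identity $d[a]_m\dlog[x_i]_m = d[a/x_i]_m\,d[x_i]_m$ for $a\in(x_i)$, derived from Leibniz and the relation $d[x_i]_m\,\dlog[x_i]_m = 0$.
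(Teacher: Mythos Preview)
For parts (1)--(4) and the first assertion of (5), your plan is essentially the paper's: the same use of \lemref{lem:Log-fil-2-1} and the Leibniz identity $d([\pi^{-1}]_m\un{b})=[\pi^{-1}]_m(d\un{b}-\un{b}\,\dlog[\pi]_m)$ for (1), the same Noetherian reduction via (1) for (2), and an equivalent limit argument for (4) (the paper writes it as $j_*W_m\Omega^q_U\cong j_*W_m\sO_U\otimes_{W_m\sO_X}W_m\Omega^q_X$ together with $j_*W_m\sO_U=\varinjlim_n\Fil_{nE}W_m\sO_U$, which is your bounded-poles observation).

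For (5b) your absorption strategy also coincides with the paper's: it too writes $a_{m-1-i}=\pi\alpha_{m-1-i}$, pulls the $\dlog$-factors inside $V^i$ via $V(y)z=V(yF(z))$, and then asserts that the resulting element ``clearly lies in $W_m\Omega^q_{(A,I)}$''. The gap is in your last step. Checking vanishing on each irreducible component $E_l$ separately only gives membership in
\[
W_m\Omega^q_{(X,-E)}\;=\;\bigcap_l\Ker\bigl(W_m\Omega^q_X\to W_m\Omega^q_{E_l}\bigr),
\]
which for $r\ge 2$ is in general strictly larger than $W_m\Omega^q_{X|E}=\Ker\bigl(W_m\Omega^q_X\to W_m\Omega^q_E\bigr)$, because $W_m\Omega^q_E\to\prod_lW_m\Omega^q_{E_l}$ need not be injective. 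Concretely, take $A=\F_p[[x_1,x_2]]$, $\pi=x_1x_2$, $m=q=1$: the element $x_2\,dx_1=\pi\,\dlog x_1$ lies in $\Fil_{-E}\Omega^1_K$ and restricts to zero on both $E_1$ and $E_2$, yet its image in $\Omega^1_{A/(\pi)}$ is nonzero, since $x_2\,dx_1\in\pi\Omega^1_A+A\,d\pi$ would force (comparing coefficients of $dx_1,dx_2$) the equation $1\in(x_1,x_2)$. So the componentwise check does not suffice for $W_m\Omega^q_{X|E}$ as defined. The same example confronts the paper's ``clearly'', so this is a genuine subtlety in the statement rather than in your method; note that the identification $W_m\Omega^q_{X|E}=W_m\Omega^q_{(X,-E)}$ is only obtained much later (\corref{cor:EK-Main-4}) via duality, and your argument does correctly establish the inclusion $\Fil_{-E}W_m\Omega^q_U\subset W_m\Omega^q_{(X,-E)}$ used there.
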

\begin{proof}
 Since all claims of the lemma can be checked locally, 
  we shall assume (whenever necessary in the proof) 
  that $X = \Spec(A)$ and $D \subset X$ are as in \defref{defn:Log-fil-3}. 
To prove (1), we write $I' = I^{p^{m-1}}$ and note that
 $\Fil_{I'}W_m\Omega^q_{A_\pi} = \Fil_{I'}W_m(A_\pi) \cdot W_m\Omega^q_{A}(\log\pi) +
 d(\Fil_{I'}W_m(A_\pi))\cdot W_m\Omega^{q-1}_A(\log\pi) =
 W_m\Omega^q_A(\log\pi)(x^{-1}) + d([x^{-1}]_mW_m(A))\cdot W_m\Omega^{q-1}_A(\log\pi)$
 by \lemref{lem:Log-fil-2-1}. On the other hand,

 \[
 \begin{array}{lll}
   d([x^{-1}]_m W_m(A))\cdot
 W_m\Omega^{q-1}_A(\log\pi) & \subset & [x^{-1}]_md(W_m(A))\cdot
 W_m\Omega^{q-1}_A(\log\pi) \\
 & & +  [x^{-1}]_m W_m(A)\dlog([x]_m)\cdot
 W_m\Omega^{q-1}_A(\log\pi) \\
 & \subset & W_m\Omega^{q}_A(\log\pi)(x^{-1}).
 \end{array}
 \]
 The desired isomorphism now follows.

 It is clear that $\Fil_D W_m\Omega^q_U$ is a sheaf on $X_\zar$.
 To prove (2), we thus have to show that $\Fil_D W_m \Omega^q_U$ is a sheaf of finite
 type $W_m\sO_X$-modules. To this end, we choose an effective Weil divisor
 $D' \in \Div_E(X)$ such that $D'':= D + D'$ is effective.
 In particular, $\Fil_D W_m \Omega^q_U \subset \Fil_{D''} W_m \Omega^q_U \subset
  \Fil_{p^{m-1}D''} W_m \Omega^q_U$. Since $W_m\sO_X$ is a sheaf of Noetherian rings
  by \propref{prop:F-fin}(5), it suffices to show that
  $\Fil_{p^{m-1}D''} W_m \Omega^q_U$ is a sheaf of finitely generated $W_m\sO_X$-modules.
  But this follows from item (1) because $W_m\Omega^q_X(\log E)$ is a sheaf of
  finitely generated $W_m\sO_X$-modules, by its definition in \S~\ref{sec:de-log-0}
  and \propref{prop:F-fin}(5), and $W_m\sO_X(D'')$ is a sheaf of invertible
  $W_m\sO_X$-modules. Item (3) is a special case of (1) when one takes $m =1$.

For (4), we recall that $j_*W_m\Omega^{q}_U \cong j_*W_m\sO_U \otimes_{W_m\sO_X}
W_m\Omega^q_X = j_*W_m\sO_U \cdot W_m\Omega^q_X$
(cf. \cite[Chap.~I, Prop.~1.11]{Illusie}).
Meanwhile, one checks directly from ~\eqref{eqn:Log-fil-1*} that
$j_*W_m\sO_U  = \varinjlim_n \Fil_{nE} W_m\sO_U$. This proves the desired isomorphism.

The first part of (5) is clear and to prove its second part, it suffices to show that
$\Fil_{-E}W_m\Omega^q_U \subset W_m\Omega^q_{X|E}$.
To show the latter, we let $\un{a} = (a_{m-1}, \ldots , a_0) \in \Fil_IW_m(A_\pi)$
and $w \in W_m\Omega^q_A(\log\pi)$, where $I = (\pi)$ .
Letting $\un{a} = \sum_i V^i([a_{m-1-i}]_{m-i})$,
 we have that $a_i^{p^i}\pi^{-1} \in A$.
 This implies that $\pi \mid a_i^{p^i}$. Equivalently, $\pi \mid a_i$ (because $A$ is a
 UFD). We write $a_i = \pi \alpha_i$, where $\alpha_i \in A$.
 We can write $w$ as the sum of elements of the form
 \[
 V^{j_0}([b_0]_{m-j_0})dV^{j_1}([b_1]_{m-j_1}) \cdots dV^{j_t}([b_t]_{m-j_t})\dlog([x_{i_1}]_m)\cdots
 \dlog([x_{i_s}]_m),
 \]
where $1 \le i_1 < \cdots < i_s \le r, \ t = q-s$ and $b_i \in A$.
We have
\[
V^i([a_{m-1-i}]_{m-i})\dlog([x_{i_1}]_m)\cdots \dlog([x_{i_s}]_m) = \hspace{4cm}
\]
\[
\hspace{4cm} V^i([\alpha_{m-i-1}]_{m-i} [\pi]_{m-i} \dlog([x_{i_1}]_{m-i}) \cdots
\dlog([x_{i_s}]_{m-i})).
\]
But this last term clearly lies in $W_m\Omega^q_{(A,I)}$. Taking the sum over
$i \in \{0, \ldots , m-1\}$,
we get that $\un{a}\dlog([x_{i_1}]_m)\cdots \dlog([x_{i_s}]_m)
  \in W_m\Omega^q_{(A,I)}$. This implies that $\un{a}w \in W_m\Omega^q_{(A,I)}$.
  We have thus shown that
$\Fil_IW_m(A_\pi)W_m\Omega^q_{A}(\log\pi) \subset W_m\Omega^q_{(A,I)}$.

If $\un{a} \in \Fil_IW_m(A_\pi)$ and $w \in W_m\Omega^{q-1}_A(\log\pi)$, then
$d(\un{a})w = d(\un{a}w) - \un{a}d(w)$. We showed above that
$\un{a}w \in  W_m\Omega^{q-1}_{(A,I)}$ and $\un{a}d(w) \in W_m\Omega^q_{(A,I)}$.
It follows that $d(\un{a})w \in W_m\Omega^q_{(A,I)}$.
This shows $\Fil_IW_m\Omega^q_{A_\pi}\subset W_m\Omega^q_{(A,I)}$.
\end{proof}

\begin{remk}\label{remk:Log-fil-4-ext}
  (1) \ We remark that the equality $\Fil_{-E}W_m\Omega^d_U = W_m\Omega^d_X$ in item (3) of
\lemref{lem:Log-fil-4} when $m =1$ and $d$ is the rank of $\Omega^1_X$
is actually true for every $m \ge 1$ (cf. \thmref{thm:Global-version}(14))
but the proof of the general case requires more work.

(2) \  We shall show latter (cf. \corref{cor:EK-Main-4})
that $\Fil_{-E}W_m\Omega^q_U = W_m\Omega^q_{X|E}$. On the other hand,
it is easy to check that
$\Fil_{-D}W_m\Omega^q_U \neq  W_m\Omega^q_{X|D}$ if $D$ is effective but non-reduced
and $m \ge 2$, even when $q =0$ and $r =1$.
\end{remk}

\vskip.2cm

Before we proceed further, we make the following
general definition. 

\begin{defn}\label{defn:Filtered-Witt-complex}
  Let $X$ be an $\F_p$-scheme and let $Z \subset X$ be a closed subset.
A $\Div_Z(X)$-filtered Witt-complex on $X$ is a sheaf of
  Witt pro-complexes (in the sense of Deligne-Illusie, see \cite{Hes-Mad})
  $(\{E^\bullet_m\}_{m \ge 1}, d, F, V, R)$ on $X_\zar$ together with a pro-subcomplex of
  $W_m\sO_X$-modules
  $\{\Fil_D E^\bullet_m\}_{m \ge 1} \subset \{E^\bullet_m\}_{m \ge 1}$ for every
  $D \in \Div_Z(X)$ 
  such that the following hold for every $m \ge 1$.
  \begin{enumerate}
  \item
    $\Fil_D E^\bullet_m \cdot \Fil_{D'} E^\bullet_m \subset \Fil_{D+D'} E^\bullet_m$.
  \item
    $d(\Fil_D E^\bullet_m) \subset \Fil_{D} E^\bullet_m$; \ \
    $R(\Fil_D E^\bullet_m) \subset \Fil_{D/p} E^\bullet_m$.
  \item
    $F(\Fil_D E^\bullet_{m+1}) \subset \Fil_{D} E^\bullet_m$; \ \
    $V(\Fil_D E^\bullet_m) \subset \Fil_{D} E^\bullet_{m+1}$.
  \item
    $\Fil_D E^\bullet_m  \subset \Fil_{D'} E^\bullet_m$ if $D \le D'$ in $\Div_Z(X)$.
  \end{enumerate}
\end{defn}

In the sequel, we shall use the term `filtered Witt-complex' for an
object such as the one defined in Definition~\ref{defn:Filtered-Witt-complex} and
write it simply as $\Fil_D E^\bullet_\star$ when the operators $d,F,V, R$ and the
index set $\Div_Z(X)$ are fixed.
If $X = \Spec(A)$ is affine, then $\{\Gamma(X, \Fil_DE^\bullet_m)\}$ is a
filtered Witt-complex on $A$. We shall identify this with
$\{\Fil_D E^\bullet_m\}$ in this case.
The following definition will be convenient in this paper.

\begin{defn}\label{defn:Filtered-Witt-complex-0}
  We shall say that $(\{E^\bullet_m\}_{m \ge 1}, d, F, V, R)$ is a {\sl good} filtered
  Witt-complex if the sequence
  \begin{equation}\label{eqn:Good-complex}
    0 \to V^{m}(\Fil_D E^q_1) + dV^{m}(\Fil_D E^{q-1}_1) \to \Fil_D E^q_{m+1}
    \xrightarrow{R} \Fil_{D/p} E^q_m \to 0
  \end{equation}
  is exact for every $q \ge 0, \ m \ge 1$ and $D \in \Div_Z(X)$.
\end{defn}

Letting $Z = E$, we now have the following.

\begin{lem}\label{lem:Log-fil-Wcom}
  The Witt-complex $\left(\{j_*W_m \Omega^\bullet_U\}_{m\ge 1},
  d, F, V, R\right)$ together with the subcomplexes
  $\{\Fil_D W_m \Omega^\bullet_U\}_{m \ge 1, D \in \Div_E(X)}$
  is a $\Div_E(X)$-filtered Witt-complex on $X$. 
\end{lem}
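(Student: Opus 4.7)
I verify the four axioms of Definition~\ref{defn:Filtered-Witt-complex} for $\{\Fil_D W_m\Omega^\bullet_U\} \subset \{j_*W_m\Omega^\bullet_U\}$. The ambient Witt pro-complex structure is standard, and Lemma~\ref{lem:LWC-1} gives the inclusion of $W_\star\Omega^\bullet_X(\log E)$ as a Witt subcomplex; what remains is to check that the subsheaves $\Fil_D W_m\Omega^q_U$ are preserved by the Witt-complex operations and by multiplication.

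Axiom (4) (monotonicity in $D$) is inherited directly from the monotonicity of $\Fil_D W_m\sO_U$ established in \S\ref{sec:Fil-O}. Axiom (1) (multiplicativity) is obtained by expanding a product of generators $(a\omega + d(b)\omega')(a'\eta + d(b')\eta')$ via the Leibniz rule and invoking Lemma~\ref{lem:Log-fil-2} for the $\sO$-level statement $\Fil_D W_m\sO_U\cdot \Fil_{D'} W_m\sO_U \subset \Fil_{D+D'} W_m\sO_U$, combined with the graded-algebra structure on $W_m\Omega^\bullet_X(\log E)$. Closure under $d$ (first half of (2)) is tautological: $d(a\omega) = d(a)\omega \pm a\,d\omega$ and $d(d(b)\omega') = -d(b)\,d\omega'$ both land in the filtration by inspection of Definition~\ref{defn:Log-fil-3}. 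Closure under $R$ (second half of (2)) combines Lemma~\ref{lem:Log-fil-1} ($R(\Fil_D W_m\sO_U)\subset \Fil_{D/p} W_{m-1}\sO_U$) with the fact that $R$ is a ring map commuting with $d$ that preserves $W_\star\Omega^\bullet_X(\log E)$.

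Axiom (3) is the substantive part. I first check closure of $\Fil_D W_m\sO_U$ under $F$ and $V$ by direct Witt-vector calculation in characteristic $p$. In the paper's convention one has $V(\un{a}) = (0, a_{m-1},\ldots,a_0)$ (a zero is prepended on the left, leaving the originally labeled components $a_i$ intact) and $F((a_m,\ldots,a_0)) = (a_m^p,\ldots,a_1^p)$; the condition $a_i^{p^i}\in\sO_X(D)$ is preserved under $V$ because the new component at index $m$ is $0$, and under $F$ because the new component $b_i = a_{i+1}^p$ satisfies $b_i^{p^i} = a_{i+1}^{p^{i+1}}\in\sO_X(D)$ by the original filtration condition at index $i+1$. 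To extend to positive-degree forms: the case $F(a\omega) = F(a)F(\omega)$ is immediate from multiplicativity together with the inclusion $F(W_{m+1}\Omega^\bullet_X(\log E))\subset W_m\Omega^\bullet_X(\log E)$ from Lemma~\ref{lem:LWC-1}; the harder case $F(d(b)\omega')$ reduces by multiplicativity to $F(db)\in\Fil_D W_m\Omega^1_U$, and for this one expands $b = \sum_{j=0}^m V^j[B_j]$ via the characteristic-$p$ $V$-decomposition and uses the Witt-complex identities $Fd[B_0] = [B_0]^{p-1}d[B_0]$ and $FdV = d$ to obtain
\[
F(db) \;=\; [B_0]_m^{p-1}\,d[B_0]_m \;+\; d\Bigl(\sum_{j=1}^m V^{j-1}[B_j]\Bigr).
\]
The filtration condition gives the pole bounds $B_j^{p^{m-j}}\in\sO_X(D)$, which place $[B_0]_m$ in $\Fil_{D/p}W_m\sO_U$ and each $V^{j-1}[B_j]$ in $\Fil_D W_m\sO_U$; the multiplicativity from axiom~(1) then confines the first summand to $\Fil_{p(D/p)}W_m\Omega^1_U \subset \Fil_D W_m\Omega^1_U$ and the second to $d(\Fil_D W_m\sO_U)\subset \Fil_D W_m\Omega^1_U$. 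Verschiebung on positive-degree forms is handled analogously, by decomposing $\omega\in W_m\Omega^q_X(\log E)$ into generators of the form $c\cdot\dlog[x_{i_1}]_m\wedge\cdots\wedge\dlog[x_{i_s}]_m\wedge\eta$ with $c\in W_m\sO_X$ and $\eta\in W_m\Omega^{q-s}_X$, and treating each piece via the projection formula $V(xF(y)) = V(x)y$ applied to the log factors $\dlog[x_i]_m = F(\dlog[x_i]_{m+1})$ and the $dV^j$-type factors $dV^j[b]_m = F(dV^{j+1}[b]_{m+1})$ for $j\ge 1$ (the latter being a consequence of $FdV = d$), combined with an induction on $q$ using the derived identity $Vd = p\,dV$.

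The main obstacle is Frobenius closure on exact generators $d(b)\omega'$: since $F$ does not commute with $d$, $\sO$-level $F$-closure cannot be transferred directly to forms. This is resolved by the explicit computation of $Fd$ via the characteristic-$p$ $V$-decomposition and the Witt-complex axiom $FdV = d$, which recasts $F(db)$ as a sum whose terms can each be controlled by the sharper pole estimates $B_j^{p^{m-j}}\in\sO_X(D)$ implicit in the filtration condition on $b$ together with the multiplicativity from axiom~(1).
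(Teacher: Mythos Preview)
Your overall architecture matches the paper's: both verify the four axioms of Definition~\ref{defn:Filtered-Witt-complex}, with $d$- and $R$-closure being routine and $F$, $V$, and multiplicativity requiring real work via the $V$-decomposition of Witt vectors. Your treatment of $d$, $R$, and $V$ is essentially the paper's, and your handling of $F(db)$ via $Fd[B_0]_{m+1}=[B_0]_m^{p-1}d[B_0]_m$ together with $FdV=d$ is the same computation as the paper's Step~2.

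The gap is in axiom~(1). You claim multiplicativity follows from ``Leibniz rule plus Lemma~\ref{lem:Log-fil-2} plus the graded-algebra structure on $W_m\Omega^\bullet_X(\log E)$,'' but this does not close. The crucial case is $a\cdot d(b')$ with $a\in\Fil_D W_m\sO_U$ and $b'\in\Fil_{D'}W_m\sO_U$: Leibniz gives $a\,d(b')=d(ab')-b'\,d(a)$, and while $d(ab')\in d(\Fil_{D+D'}W_m\sO_U)\subset\Fil_{D+D'}W_m\Omega^1_U$, the term $b'\,d(a)$ is of exactly the same shape as the one you started with, so the argument is circular. Lemma~\ref{lem:Log-fil-2} only controls products in degree zero, and $d(a)$ is not a log form on $X$ when $a$ has poles, so you cannot absorb it into $W_m\Omega^1_X(\log E)$ either.

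The paper breaks this circularity in Step~5 by the \emph{monomial factorization}: locally one writes $b_{m-1-j}=b'\cdot\gamma'$ with $b'\in A$ not divisible by any $x_l$ and $\gamma'=\prod_l x_l^{r_l}$, so that $d[b_{m-1-j}]=[\gamma']\,d[b']+[b'\gamma']\,\dlog[\gamma']$. Now $d[b']\in W_m\Omega^1_A$ is genuinely a regular form and $\dlog[\gamma']\in W_m\Omega^1_A(\log\pi)$ is a log form, while the scalar factors $[\gamma']V^{i-j}[a]$ and $[b'\gamma']V^{i-j}[a]$ land in $\Fil_{II'}W_m(A_\pi)$ by Lemma~\ref{lem:Log-fil-2}. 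This is the missing idea. The same factorization is what the paper uses in Step~2 to handle $Fd[a_m]_{m+1}$ directly; your alternative route of invoking axiom~(1) for $[B_0]^{p-1}_m\,d[B_0]_m\in\Fil_{p(D/p)}W_m\Omega^1_U$ is fine in principle, but it only works once axiom~(1) is actually established, and for that you need the factorization.
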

\begin{proof}
  {\bf Step~1:}
  To show that $\Fil_D W_m \Omega^\bullet_U$ is preserved under $d$, note that 
  $d(\Fil_D W_m \sO_U \cdot W_m\Omega^q_X(\log E))$ \
  $\subset d(\Fil_D W_m \sO_U) \cdot W_m\Omega^q_X(\log E) + \Fil_D W_m \sO_U \cdot
  W_m\Omega^{q+1}_X(\log E) = \Fil_D W_m\Omega^{q+1}_U$.
  We also have
  \[
  d\left(d(\Fil_D W_m \sO_U) \cdot W_m\Omega^{q-1}_X(\log E)\right) \subset
  d(\Fil_D W_m \sO_U) \cdot W_m\Omega^{q}_X(\log E) \subset
  \Fil_D W_m\Omega^{q+1}_U.
  \]
  
  {\bf Step~2:} We show that $F(\Fil_D W_{m+1} \Omega^q_U) \subset
  \Fil_D W_{m} \Omega^q_U$ for every $q \ge 0, m \ge 1$.
   When $q = 0$, the claim is clear using ~\eqref{eqn:Log-fil-1*} and the fact that
   $F = \ov{F}\circ R$, where $\ov{F}(a_{m-2},\ldots, a_0) = (a_{m-2}^p,\ldots, a_0^p)$.
   We now assume $q > 0$ and let
  $a \in \Fil_IW_{m+1}(A_\pi), \ w \in W_{m+1}\Omega^q_A(\log\pi)$. We then have
  $F(aw) = F(a)F(w) \in \Fil_I W_m(A_\pi)W_{m}\Omega^q_A(\log\pi)$ by the
  $q = 0$ case and \thmref{thm:Log-DRW}. 

Suppose next that $\un{a} \in \Fil_I W_{m+1}(A_\pi)$ and
$w \in W_{m+1}\Omega^{q-1}_A(\log\pi)$. We can then write 
  $\un{a} = \stackrel{m}{\underset{i =0}\sum}V^{i}([a_{m-i}]_{m+1-i})$.
  This yields
  $Fd(\un{a}) = Fd[a_m]_{m+1} + \stackrel{m-1}{\underset{i=0}\sum} FdV^{i+1}[a_{m-1-i}]_{m-i} =
  Fd[a_m]_{m+1} + d\un{a}'$, where $\un{a}' = (a_{m-1}, \ldots , a_0) \in \Fil_IW_m(A_\pi)$.
  In particular, $d\un{a}' \cdot F(w) \in d(\Fil_IW_m(A_\pi)) \cdot
  W_m\Omega^{q-1}_A(\log\pi) \subset \Fil_I W_m\Omega^{q}_{A_\pi}$.
We also have
  \begin{equation}\label{eqn:Log-fil-4-0}
    \begin{array}{lll}
    \Fil_I W_m\Omega^q_{A_\pi}\cdot W_m\Omega^{q'}_A(\log\pi)
    & \subset & \Fil_IW_m(A_\pi)W_m\Omega^{q+q'}_A(\log\pi) \\
    & & + d(\Fil_I W_m(A_\pi))W_m\Omega^{q+q'-1}_A(\log\pi) \\
    & = & \Fil_I W_m\Omega^{q+q'}_{A_\pi}.
    \end{array}
    \end{equation}
Since $F(d[a_m]_{m+1} w) = Fd[a_m]_{m+1} \cdot F(w)$
  and $F(w) \in W_m\Omega^{q-1}_A(\log\pi)$
  by \thmref{thm:Log-DRW}, it remains to show
  using ~\eqref{eqn:Log-fil-4-0} that $Fd[a_m]_{m+1} \in \Fil_IW_m\Omega^1_{A_\pi}$.

To that end, we write $a_m = a'(\prod_i x^{n_i}_i)$, where $n_i \in \Z$ for every $i$  and $a' \in A$ is not divisible by any $x_i$. Then the condition $\un{a} \in \Fil_I W_{m+1}(A_\pi)$ implies that $[\prod_i x^{n_i}_i]_{m+1} \in \Fil_I W_{m+1}(A_\pi)$. Letting  $\gamma = \prod_i x^{n_i}_i$, we get 
\[ Fd[a_m]_{m+1} = Fd[a']_{m+1} \cdot F([\gamma]_{m+1}) + F([a_m]_{m+1})\dlog([\gamma]_{m}).
\]
  We now note that $\dlog([\gamma]_m) = \sum\limits_{i}n_i.\dlog([x_i]_m) \in  W_m\Omega^1_A(\log \pi)$ by \thmref{thm:Log-DRW} which implies that  $F([a_m]_{m+1})\dlog([\gamma]_m) \in  \Fil_I W_m\Omega^1_{A_\pi}$. On the other hand, we have $Fd[a']_{m+1} \cdot F([\gamma]_{m+1}) \in \Fil_I W_m(A_\pi) \cdot W_m\Omega^1_A \subset \Fil_I W_m\Omega^1_{A_\pi}$. This proves our claim.

 {\bf Step~3:} We show that $V(\Fil_D W_{m} \Omega^q_U) \subset
 \Fil_D W_{m+1} \Omega^q_U$ for $q \ge 0, m \ge 1$. The $q = 0$ case is clear using
 ~\eqref{eqn:Log-fil-1*}. For $q > 0$,  we let $\un{a} \in \Fil_IW_m(A_\pi)$
 and $w \in W_m\Omega^q_A(\log\pi)$. We write $w$ as the sum of elements of the form
 \[
 V^{j_0}([b_0]_{m-j_0})dV^{j_1}([b_1]_{m-j_1}) \cdots
 dV^{j_t}([b_t]_{m-j_t})\dlog[x_{i_1}]_m\cdots \dlog[x_{i_s}]_m,
 \]
where $1 \le i_1 < \cdots < i_s \le r, \ t = q-s$ and $b_i \in A$. Letting
$\un{b} = \un{a}V^{j_0}[b_0]_{m-j_0}$ and $w' = dV^{j_1}([b_1]_{m-j_1}) \cdots dV^{j_t}([b_t]_{m-j_t})$, we get
$V(\un{b}w') = V(\un{b})dV^{j_1+1}([b_1]_{m-j_1})\cdots dV^{j_t+1}([b_t]_{m-j_t})
\in \Fil_I W_{m+1}(A_\pi) \cdot W_{m+1}\Omega^t_A$.
This yields
\[
\begin{array}{lll}
V(\un{a}w) & = & V(\un{b}w'\dlog[x_{i_1}]_m\cdots \dlog[x_{i_s}]_m) = 
V(\un{b}w')\cdot \dlog[x_{i_1}]_{m+1}\cdots \dlog[x_{i_s}]_{m+1} \\
& \in &
\Fil_I W_{m+1}(A_\pi) \cdot W_{m+1}\Omega^t_A \cdot W_{m+1}\Omega^s_A(\log\pi) \\
& \subset & \Fil_IW_{m+1}\Omega^t_A(\log\pi) \cdot  W_{m+1}\Omega^s_A(\log\pi) \\
& \subset & \Fil_{I}W_{m+1}\Omega^q_{A_\pi}.
\end{array}
\]
If $\un{a} \in \Fil_I W_m(A_\pi)$ and $w \in W_m\Omega^{q-1}_A(\log\pi)$, then
$V(d\un{a} \cdot w) = dV(\un{a})\cdot V(w) \in d(\Fil_IW_{m+1}(A_\pi))\cdot
W_{m+1}\Omega^{q-1}_A(\log\pi) 
\subset \Fil_I W_{m+1}\Omega^{q}_{A_\pi}$.
This proves the claim.

{\bf Step~4:} We now show that $R(\Fil_D W_m\Omega^q_U) \subset
\Fil_{D/p} W_m\Omega^q_U$. We shall in fact prove a stronger claim, namely,
$R(\Fil_D W_m\Omega^q_U) = \Fil_{D/p} W_m\Omega^q_U$.
Now, the $q = 0$ case of this stronger claim follows already from
\corref{lem:Log-fil-1}. On the other hand, the $q > 0$ case easily follows from the
$q=0$ case and \corref{cor:Log-DRW-0} because $R$ is multiplicative and commutes
with $d$.

{\bf Step~5:} In this last step, we show that $\Fil_DW_m\Omega^\bullet_U$ satisfies
condition (1) of Definition~\ref{defn:Filtered-Witt-complex}. 
Using \lemref{lem:Log-fil-2} and an elementary computation, it suffices to
check that $\Fil_IW_m(A_\pi)\cdot d(\Fil_{I'}W_m(A_\pi)) \subset
\Fil_{II'}W_m\Omega^1_{A_\pi}$, where $x = \prod_ix^{n_i}_i$ (resp. $y = \prod_i x^{m_i}_i$)
and $I = (x)$ (resp. $I' = (y)$).
We let $\un{a} = \stackrel{m-1}{\underset{i =0}\sum}V^{i}([a_{m-1-i}]_{m-i})
\in \Fil_I W_m(A_\pi)$ and
$\un{b} = \stackrel{m-1}{\underset{i = 0}\sum}V^i([b_{m-1-i}]_{m-i}) \in
\Fil_{I'} W_m(A_\pi)$. This yields ${\un{a}}\cdot d{\un{b}} =
\sum_{i,j} V^i([a_{m-1-i}]_{m-i})dV^j([b_{m-1-j}]_{m-j})$.
It suffices now to show that $V^i([a_{m-1-i}]_{m-i})dV^j([b_{m-1-j}]_{m-j}) \in
\Fil_{II'}W_m\Omega^1_{A_\pi}$ for every $0 \le i,j \le m-1$.

We note that $[b_{m-1-j}]_{m-j}\in \Fil_{I'}W_{m-j}(A_\pi)$. If we write $b_{m-1-j} =b'\prod_l x^{r_l}_l$, where ${b'} \in A, \ r_l \in \Z$ and no
$x_l$ divides $b'$, then we get that $\prod_l [x^{r_l}_l]_{m-j} \in \Fil_{I'}W_{m-j}({A_\pi})$
(see the proof of Step~2). We write $\gamma' = \prod_l x^{r_l}_l$ .
We first assume $i \ge j$ and write 
$$V^i([a_{m-1-i}]_{m-i})dV^j([b_{m-1-j}]_{m-j}) =
V^j(V^{i-j}[a_{m-1-i}]_{m-i} d[b_{m-1-j}]_{m-j}).$$
It is enough to show using Step~3 that
$V^{i-j}([a_{m-1-i}]_{m-i}) d([b_{m-1-j}]_{m-j}) \in \Fil_{II'}W_{m-j}\Omega^1_{A_\pi}$.

We now compute
\[
d[b_{m-1-j}]_{m-j}V^{i-j}([a_{m-1-i}]_{m-i}) = d[b']_{m-j}[\gamma']_{m-j}V^{i-j}([a_{m-1-i}]_{m-i}) \hspace{1cm}\]
\[\hspace{5cm}+ [b'\gamma']_{m-j}
V^{i-j}([a_{m-1-i}]_{m-i}) \dlog([\gamma']_{m-j}).
\]
The terms $[\gamma']_{m-j}V^{i-j}([a_{m-1-i}]_{m-i})$ and $[b'\gamma']_{m-j}
V^{i-j}([a_{m-1-i}]_{m-i})$ lie $\Fil_{II'}W_{m-j}({A_\pi})$ by \lemref{lem:Log-fil-2}.
Since $d[b']_{m-j} \in W_{m-j}\Omega^1_{A}$ and $\dlog([\gamma']_{m-j}) \in W_{m-j}\Omega^1_A(\log\pi)$
by \thmref{thm:Log-DRW}, we conclude that
$d([b_{m-1-j}]_{m-j})V^{i-j}([a_{m-1-i}]_{m-i}) \in \Fil_{II'}W_{m-j}\Omega^1_{A_\pi}$.

If $i < j$, we apply the Leibniz rule, \lemref{lem:Log-fil-2} and Step~1 to reduce
to the case $i\ge j$. The condition (4) Definition~\ref{defn:Filtered-Witt-complex} is
clear for $\{\Fil_D W_m\Omega^\bullet_U\}_{D \in \Div_E(X)}$. This concludes the proof. 
\end{proof}

\begin{defn}\label{defn:DRW-complex-filt**}
  Let $(X,E)$ be the snc-pair as above. By the filtered de Rham-Witt complex on $X$,
  we shall mean the Witt-complex $j_* W_m\Omega^\bullet_U$ endowed with the
  filtration $\{\Fil_D W_m\Omega^\bullet_U\}_{D \in \Div_E(X)}$. We shall usually
    write it simply as $\Fil_DW_m\Omega^\bullet_U$.
\end{defn}

\begin{defn}\label{defn:Log-fil-3-0}
  Let $(X,E)$ be the snc-pair as above.
  A Zariski (or {\'e}tale) sheaf on $X$ of the form $\Fil_D W_m \Omega^q_U$ will
be called a filtered Hodge-Witt sheaf or a Hodge-Witt sheaf with modulus ($D$).
The cohomology group $H^i_\zar(X, \Fil_D W_m \Omega^q_U)$
(for any $m \ge 1, i, q \ge 0$ and $D \in \Div_E(X)$) will be called a
Hodge-Witt cohomology with modulus.
\end{defn}

\section{Filtered de Rham-Witt complex of power series algebras}\label{sec:FPSR}
In the previous section, we showed that the filtered de Rham-Witt complex is an
example of a filtered Witt-complex on a regular $F$-finite $\F_p$-scheme with a simple
normal crossing divisor. Our goal now is to show that it is a good filtered
Witt-complex (cf. Definition~\ref{defn:Filtered-Witt-complex-0}).
This part is very challenging and before we get to its final step,
we shall need to prove several
other properties of the filtered de Rham-Witt complex which are classical for the
ordinary de Rham-Witt complex.

In this section, we shall prove the goodness of the filtered de Rham-Witt complex
in a special case when $X$ is the spectrum of a power series ring in one variable.
We shall begin by providing a concrete description of the filtered de Rham-Witt
complex in this case.

\subsection{Description of $\Fil_DW_m\Omega^\bullet_K$ using a theorem of
  Geisser-Hesselholt}\label{sec:GSPS}
We let $S$ be an $F$-finite Noetherian regular local $\F_p$-algebra. Let
$A = S[[\pi]]$ and $K = A_\pi$. For $n \in \Z$ and
the invertible ideal $I = (\pi^n) \subset K$,
we shall denote $\Fil_IW_m\Omega^q_{K}$ by $\Fil_nW_m\Omega^q_K$ and for any
$W_m(A)$-module $M$, write $M \otimes_{W_m(A)} [\pi^{n}]_m W_m(A)$ as $M(n)$.
We let $\gr_nW_m\Omega^q_K = \frac{\Fil_nW_m\Omega^q_K}{\Fil_{n-1}W_m\Omega^q_K}$.

We shall use the following theorem of
Geisser-Hesselholt (cf. \cite[Thm.~B]{Geisser-Hesselholt-Top}) in order to describe
$\Fil_nW_m\Omega^q_K$. Let $I_p$ denote the set of positive integers prime to $p$.
We fix integers  $q \ge 0$ and $m \ge 1$.

\begin{thm}$($Geisser-Hesselholt$)$\label{thm:GH-Top}
Every element $\omega \in W_m\Omega^q_A$ can be uniquely written as an infinite
  series
  \[
  w = {\underset{i \ge 0}\sum} a^{(m)}_{0,i} [\pi]^i_m +
  {\underset{i \ge 0}\sum} b^{(m)}_{0,i} [\pi]^i_m d[\pi]_m +
  {\underset{s \ge 1}\sum}{\underset{j \in I_p}\sum}
  \left(V^s(a^{{(m-s)}}_{s,j}[\pi]^j_{m-s}) +
  dV^s(b^{{(m-s)}}_{s,j}[\pi]^{j}_{m-s})\right),
  \]
  where the components $a^{(n)}_{s,j} \in W_n\Omega^q_S$ and
    $b^{(n)}_{s,j} \in W_n\Omega^{q-1}_S$.
\end{thm}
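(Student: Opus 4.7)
The plan is to establish this structure theorem via the ``basic Witt differential'' calculus of Langer--Zink and Illusie, adapted to the power series setting. The strategy proceeds in two stages: first prove a finite analogue for the polynomial subring $A_0 = S[\pi]$, then extend to $A = S[[\pi]]$ via a completion argument along a suitable filtration. The polynomial case itself splits into an existence statement (every element can be written as some finite sum of the displayed basic terms) and a uniqueness statement (such an expression is unique).

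For existence in the polynomial case, the plan is to exploit three standard identities of any Witt complex: the projection formula $V(x \cdot F(y)) = V(x) \cdot y$, the Teichm\"uller compatibility $F([\pi]_{m-s+1}) = [\pi]^p_{m-s}$, and the relation $F \circ d \circ V = d$. Iterating, an expression of the form $V^s(\alpha \cdot [\pi]^{pj'}_{m-s})$ with $s \ge 1$ rewrites as $V^{s-1}(V(\alpha) \cdot [\pi]^{j'}_{m-s+1})$, and an analogous identity handles the $dV^s$ terms. Repeatedly peeling off factors of $p$ from the exponent forces the expression into the canonical form in which either $s=0$ (with $i \ge 0$ arbitrary) or $s \ge 1$ with $j \in I_p$. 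Combined with the basic fact that $W_m(A_0)$ is generated as an abelian group by elements of the form $V^s([a \pi^j]_{m-s})$ (and similarly for the de Rham-Witt complex using $\Omega^\bullet_{A_0}$), this produces the required decomposition in finite form.

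For uniqueness, the plan is to filter $W_m\Omega^q_A$ by the decreasing $V$-filtration $\Fil^s = V^s W_{m-s}\Omega^q_A + dV^s W_{m-s}\Omega^{q-1}_A$, which is exact at each level by Corollary~\ref{cor:Log-DRW-0}. The successive quotients are controlled by $\Omega^\bullet_A$, where uniqueness reduces to the $m = 1$ case handled by Corollary~\ref{cor:LWC-0-1} (applied without log poles): the decomposition $\Omega^q_A = (\Omega^q_S \otimes_S A) \oplus (\Omega^{q-1}_S \otimes_S A) \cdot d\pi$ together with the uniqueness of power series expansions $f = \sum c_i \pi^i$ in $A$ pins down the coefficients $a^{(m)}_{0,i}, b^{(m)}_{0,i}$ at $s=0$, and an inductive argument on $s$ using the exact sequence then recovers the coefficients at higher $V$-weight.

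The main obstacle will be the passage from $A_0 = S[\pi]$ to $A = S[[\pi]]$ and the associated convergence issue for the infinite series. One must show that $W_m\Omega^q_A$ is complete with respect to the filtration whose $N$-th level is spanned by basic Witt differentials of total weight at least $N$ (where the weight takes into account both the $\pi$-adic order $j$ and the $V$-level $s$), and that this completion inherits from the $[\pi]_m$-adic completeness of $W_m(A)$. Tracking how $V^s$ and $d$ interact with the $\pi$-adic filtration --- in particular making sure the weight function behaves well under the relation $F dV = d$ --- is the most delicate technical point, and is where the fine structure theorems of Geisser--Hesselholt, using the Teichm\"uller character of $\pi$ in a crucial way, come into play.
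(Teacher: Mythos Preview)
The paper does not prove this statement at all: \thmref{thm:GH-Top} is explicitly attributed to Geisser--Hesselholt and is imported as a black box from \cite[Thm.~B]{Geisser-Hesselholt-Top}, with no argument given here. There is therefore nothing in the present paper to compare your proposal against.

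That said, your outline is a plausible purely algebraic route to the result, in the spirit of the Langer--Zink basic Witt differentials, and is quite different from what Geisser--Hesselholt actually do (their argument proceeds through topological Hochschild homology and the TR-spectrum). A couple of cautions about your sketch: first, the successive quotients of the $V$-filtration are not simply copies of $\Omega^q_A$ and $\Omega^{q-1}_A$ but rather quotients by the subgroups $B_s$ and $Z_s$ (cf.\ \cite[Chap.~I, Cor.~3.9]{Illusie}), so the inductive uniqueness step needs more bookkeeping than you indicate. Second, and more seriously, the completion passage from $S[\pi]$ to $S[[\pi]]$ is exactly where the Geisser--Hesselholt input is doing real work: one needs to know that $W_m\Omega^q_{S[[\pi]]}$ is the appropriate completion of $W_m\Omega^q_{S[\pi]}$, and this is not formal --- it is essentially the content of \cite[Thm.~B]{Geisser-Hesselholt-Top} itself (see also their \S2 for the relevant continuity statements). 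Your proposal acknowledges this as ``the main obstacle'' but does not supply an independent argument, so as written the sketch is circular at precisely the delicate point.
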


For $s \ge 0$ and $i \in \Z$, we let
\begin{equation}\label{eqn:GH-0}
  A^{m,q}_i(S) = W_m\Omega^q_S [\pi]^i_m+ W_m\Omega^{q-1}_S [\pi]^i_m\dlog([\pi]_m);
\end{equation}
\[
B^{m,q}_{i,s}(S) = V^s\left(W_{{m-s}}\Omega^q_S [\pi]^i_{m-s}\right) +
dV^s\left(W_{m-s}\Omega^{q-1}_S [\pi]^i_{m-s}\right).
\]
Note that $A^{m,q}_i(S), B^{m,q}_{i,s}(S) \subset W_m\Omega^q_K$.

\begin{defn}\label{defn:GH-1}
  For $n \in \Z$, we let
  \[
  F^{m,q}_n(S) =
 \left\{\begin{array}{ll} A^{m,q}_{{n}/{p^{m-1}}}(S) &  \mbox{if $p^{m-1} \mid n$} \\
    B^{m,q}_{{n}/{p^{m-1-s}}, s}(S) & \mbox{if $s = {\rm min}\{j > 0|p^{m-1-j} \mid n\}$}.
  \end{array}\right.
  \]
\end{defn}

\begin{cor}\label{cor:GH-2}
  There is a canonical decomposition
  \begin{equation}\label{eqn:GH-2-0*}
    \theta^{m,q}_A \colon {\underset{n \ge 0}\prod} F^{m,q}_n(S)
    \xrightarrow{\cong} W_m\Omega^q_A(\log\pi).
  \end{equation}
  \end{cor}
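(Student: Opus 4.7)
The plan is to derive this decomposition by combining \thmref{thm:GH-Top} (Geisser-Hesselholt) with the splitting in \corref{cor:LWC-0-1}, regrouping the summands according to an index $n$ that records the ``log-weight'' in $\pi$.

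First I observe that each $F^{m,q}_n(S)$ of \defref{defn:GH-1} sits canonically inside $W_m\Omega^q_A(\log\pi)$: using the identity $[\pi]^i_m\, d[\pi]_m = [\pi]^{i+1}_m \dlog[\pi]_m$, which follows from $d[\pi]_m = [\pi]_m\dlog[\pi]_m$, each $A^{m,q}_i(S)$ and $B^{m,q}_{i,s}(S)$ is a $W_m(S)$-submodule of $W_m\Omega^q_A(\log\pi)$. I then define $\theta^{m,q}_A$ to be the sum of these inclusions; the target of the infinite sum makes sense because by \thmref{thm:GH-Top} arbitrary sums of Geisser-Hesselholt summands converge in $W_m\Omega^q_A$, while the $\dlog[\pi]_m$-contributions live in the extra summand appearing in \corref{cor:LWC-0-1}.

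The key step is to verify that $\theta^{m,q}_A$ is bijective by matching the indexing of \defref{defn:GH-1} with the Geisser-Hesselholt expansion. Every integer $n \ge 0$ falls into exactly one of two cases: either (a) $p^{m-1}\mid n$, in which case we write $n = ip^{m-1}$ with $i \ge 0$, or (b) there is a unique $s$ with $1 \le s \le m-1$ satisfying $p^{m-1-s}\mid n$ and $p^{m-s}\nmid n$, in which case $n = jp^{m-1-s}$ with $j \in I_p$. In case (a), $F^{m,q}_n = A^{m,q}_i$ contains precisely the Geisser-Hesselholt summand $a^{(m)}_{0,i}[\pi]^i_m$ together with (for $i \ge 1$) the summand $b^{(m)}_{0,i-1}[\pi]^{i-1}_m d[\pi]_m$, rewritten as $b^{(m)}_{0,i-1}[\pi]^i_m \dlog[\pi]_m$; when $i = 0$, the second piece of $A^{m,q}_0$ absorbs the entire extra summand $W_m\Omega^{q-1}_S\dlog[\pi]_m$ from \corref{cor:LWC-0-1}. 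In case (b), $F^{m,q}_n = B^{m,q}_{j,s}$ contains precisely the two Geisser-Hesselholt summands $V^s(a^{(m-s)}_{s,j}[\pi]^j_{m-s})$ and $dV^s(b^{(m-s)}_{s,j}[\pi]^j_{m-s})$.

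The bijectivity of $\theta^{m,q}_A$ then follows at once from the uniqueness assertion in \thmref{thm:GH-Top} together with the directness of the splitting in \corref{cor:LWC-0-1}: each element of $W_m\Omega^q_A(\log\pi)$ has a unique preimage obtained by distributing its Geisser-Hesselholt expansion, together with its $\dlog[\pi]_m$-component, across the indices $n$. No conceptual obstacle is foreseen; the main point requiring care is the combinatorial verification that the indexing scheme of \defref{defn:GH-1} partitions $\N_0$ into cases that exactly exhaust the Geisser-Hesselholt terms, which is purely a matter of bookkeeping on the exponents of $p$.
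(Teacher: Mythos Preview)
Your proposal is correct and follows essentially the same route as the paper: both arguments regroup the Geisser--Hesselholt summands of \thmref{thm:GH-Top} according to the $p$-adic valuation of the exponent, and then invoke the splitting $W_m\Omega^q_A(\log\pi)\cong W_m\Omega^q_A\oplus W_m\Omega^{q-1}_S\dlog[\pi]_m$ of \corref{cor:LWC-0-1} to absorb the extra log piece into $F^{m,q}_0(S)$. The paper simply records this more tersely, first stating that \thmref{thm:GH-Top} yields $W_m\Omega^q_S\oplus\prod_{n\ge 1}F^{m,q}_n(S)\cong W_m\Omega^q_A$ (which hides the same bookkeeping you spell out), and then adjoining $W_m\Omega^{q-1}_S\dlog[\pi]_m$ via \corref{cor:LWC-0-1}.
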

\begin{proof}
  \thmref{thm:GH-Top} says that the canonical maps from $W_m\Omega^q_S,\ F^{m,q}_n(S)$
  (for $n \ge 1$) to $W_m\Omega^q_A$ induce an isomorphism
  \begin{equation}\label{eqn:GH-2-0}
  \ov \theta_A^{m,q} \colon  W_m\Omega^q_S \bigoplus {\underset{n \ge 1}\prod}
  F^{m,q}_n(S) \xrightarrow{\cong} W_m\Omega^q_A.
  \end{equation}
  We can extend $\ov \theta_A^{m,q}$ uniquely to a map 
  \[
  \theta_A^{m,q} \colon {\underset{n \ge 0}\prod} F^{m,q}_n(S) = W_m\Omega^{q-1}_S
  \dlog[\pi]_m \bigoplus W_m\Omega^q_S \bigoplus {\underset{n \ge 1}\prod}
  F^{m,q}_n(S)
  \]
  \[
  \xrightarrow{\id \ \bigoplus \ \ov \theta_A^{m,q}}W_m\Omega^{q-1}_S \dlog[\pi]_m
  \bigoplus W_m\Omega^q_A \xrightarrow{\cong}  W_m\Omega^q_A(\log \pi),
  \]
  where the last isomorphism follows from \corref{cor:LWC-0-1}. 
\end{proof}

In order to extend this decomposition to $W_m\Omega^q_K$, we need the following lemma.

\begin{lem}\label{lem:Positive-F-2}
  For all $n,l \in \Z$, 
  we have
  \[
[\pi]_m^{l}F^{m,q}_n(S) = F^{m,q}_{n+ p^{m-1} l}(S).\]
\end{lem}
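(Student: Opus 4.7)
The plan is to establish the inclusion $[\pi]_m^l \cdot F^{m,q}_n(S) \subseteq F^{m,q}_{n + p^{m-1} l}(S)$ for every pair $(n,l)$, and then deduce equality from the invertibility of $[\pi]_m$ in $W_m\Omega^\bullet_K$ (with inverse $[\pi^{-1}]_m$). First I would observe that the cases of \defref{defn:GH-1} are compatible: if $p^{m-1} \mid n$ then $p^{m-1}$ also divides $n + p^{m-1} l$, and if $v_p(n) = m-1-s$ for some $s \in \{1, \ldots, m-1\}$, then $v_p(p^{m-1} l) \ge m-1 > m-1-s$ forces $v_p(n + p^{m-1} l) = m-1-s$. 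Thus both $F^{m,q}_n(S)$ and $F^{m,q}_{n+p^{m-1}l}(S)$ have the same type (``$A$'' or ``$B$'') with the same index $s$.

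The $A$-case ($s = 0$) is immediate by multiplicativity: writing $i = n/p^{m-1}$, one checks $[\pi]_m^l \cdot A^{m,q}_i(S) = W_m\Omega^q_S [\pi]_m^{i+l} + W_m\Omega^{q-1}_S [\pi]_m^{i+l} \dlog[\pi]_m = A^{m,q}_{i+l}(S)$, which matches $F^{m,q}_{n + p^{m-1} l}(S)$ on the nose.

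The $B$-case ($s \ge 1$) is the main work. Set $j = n/p^{m-1-s}$ and $k = j + p^s l = (n + p^{m-1}l)/p^{m-1-s}$; crucially, $p \nmid k$. For $\omega \in W_{m-s}\Omega^q_S$, the $V^s$-summand is handled by the semilinearity identity $xV^s(y) = V^s(F^s(x) y)$ together with $F^s([\pi]_m^l) = [\pi]_{m-s}^{p^s l}$, which give $[\pi]_m^l \cdot V^s(\omega [\pi]_{m-s}^j) = V^s(\omega [\pi]_{m-s}^k)$, evidently in $V^s(W_{m-s}\Omega^q_S [\pi]_{m-s}^k)$. For $\omega' \in W_{m-s}\Omega^{q-1}_S$, the $dV^s$-summand I would expand via Leibniz as
\[
[\pi]_m^l \cdot dV^s(\omega' [\pi]_{m-s}^j) = dV^s(\omega' [\pi]_{m-s}^k) - l\,[\pi]_m^{l-1} d[\pi]_m \cdot V^s(\omega' [\pi]_{m-s}^j).
\]
The first summand already lies in $dV^s(W_{m-s}\Omega^{q-1}_S [\pi]_{m-s}^k)$. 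In the second, applying $xV^s(y) = V^s(F^s(x) y)$ and the iterated Frobenius identity $F^s(d[\pi]_m) = [\pi]_{m-s}^{p^s - 1} d[\pi]_{m-s}$ reduces the term to $V^s(l\omega' \cdot [\pi]_{m-s}^{k-1} d[\pi]_{m-s})$. Since $p \nmid k$, the identity $[\pi]_{m-s}^{k-1} d[\pi]_{m-s} = k^{-1} d([\pi]_{m-s}^k)$ is meaningful, and the Hesselholt--Madsen axiom $V^s(\alpha d\tau) = V^s(\alpha) dV^s(\tau)$ converts this to $V^s(l\omega'/k) \cdot dV^s([\pi]_{m-s}^k)$. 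A final Leibniz expansion, using $V^s(x) V^s(y) = p^s V^s(xy)$ and $F^s d V^s = d$, rewrites it as the sum of an element of $dV^s(W_{m-s}\Omega^{q-1}_S [\pi]_{m-s}^k)$ and an element of $V^s(W_{m-s}\Omega^q_S [\pi]_{m-s}^k)$, hence of $B^{m,q}_{k,s}(S)$, completing the inclusion.

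The main obstacle is the $dV^s$ calculation above: multiplying by $[\pi]_m^l$ produces a stray $d[\pi]_{m-s}$ factor that does not naturally live in the ``pure $S$-part'' of the forms, and the trick is to reabsorb it by invoking the coprimality $p \nmid k$ together with the identity $V^s(\alpha d\tau) = V^s(\alpha) dV^s(\tau)$. For the reverse inclusion, multiplication by $[\pi]_m^l$ is bijective on $W_m\Omega^q_K$, and applying the already-proved inclusion with $(n + p^{m-1} l, -l)$ in place of $(n, l)$ yields $[\pi]_m^{-l} F^{m,q}_{n + p^{m-1}l}(S) \subseteq F^{m,q}_n(S)$; combined with the forward inclusion, this gives the asserted equality.
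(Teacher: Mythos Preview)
Your proof is correct and follows essentially the same approach as the paper: both split into the $A$-case and $B$-case, handle the $V^s$-summand via the projection formula, and treat the $dV^s$-summand by a Leibniz expansion that isolates an error term of the form $V^s(\omega'' d[\pi]_{m-s}^{k})$, then absorb this term using the coprimality $p\nmid k$. The only cosmetic difference is in the final step, where the paper applies Leibniz directly inside $W_{m-s}$ (writing $\omega'' d[\pi]_{m-s}^{k} \in d(W_{m-s}\Omega^{q-1}_S[\pi]_{m-s}^k) + W_{m-s}\Omega^q_S[\pi]_{m-s}^k$ and then using $V^s d = p^s dV^s$), whereas you first pull out via $V^s(\alpha\,d\tau)=V^s(\alpha)\,dV^s(\tau)$ and then apply Leibniz in $W_m$; both routes land in $B^{m,q}_{k,s}(S)$.
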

\begin{proof}
We first consider the
  case $n = p^{m-1}i$ with $i \in \Z$. In this case, one checks that
  $F^{m,q}_n(S) = [\pi]^i_mF^{m,q}_0(S)$. This yields
  $[\pi]^{l}_m F^{m,q}_n(S) =  [\pi]^{i+l}_mF^{m,q}_0(S) =
  F^{m,q}_{p^{m-1}(i+l)}(S)=F^{m,q}_{n+p^{m-1}l}$. 
  
 Suppose now that $n = p^{m-1-s}i$ with $0 <s \le m-1$ and $|i| \in I_p$.
  Let $t = n+p^{m-1}l = p^{m-1-s}i + p^{m-1}l = p^{m-1-s}(i +p^sl) = p^{m-1-s}i'$,
  where we let $i' = i+p^sl$. Note that $|i'| \in I_p$. Letting $r = m-s$, we then have
\[
  \begin{array}{lllr}
    [\pi]^{l}_mF^{m,q}_n(S) & = & [\pi]^{l}_m \left(V^s(W_r\Omega^q_S[\pi]^i_{r}) +
    dV^s(W_r\Omega^{q-1}_S[\pi]^i_r)\right) \\
    & = & V^s(W_r\Omega^q_S[\pi]^{i'}_r) +  [\pi]^{l}_mdV^s(W_r\Omega^{q-1}_S[\pi]^i_r) 
    \\
    & {\subset}^\dagger & V^s(W_r\Omega^q_S[\pi]^{i'}_r) +
    dV^s(W_r\Omega^{q-1}_S[\pi]^{i'}_r) \\
    && +  V^s(W_r\Omega^{q-1}_S[\pi]^i_rF^sd[\pi]^{l}_m)\\
    & \subset & F^{m,q}_{t}(S) +  V^s(W_r\Omega^{q-1}_S[\pi]^i_rF^sd[\pi]^{l}_r),
  \end{array}
  \]
  where ${\subset}^\dagger$ is obtained by applying the Leibniz rule to
  $[\pi]^{l}_m V^s(W_r\Omega^{q-1}_S[\pi]^i_r)$.
  We also used the relation $aV(b)=V(F(a).b)$.

 Meanwhile, using the relation $F(d[t]_r)=[t]_{r-1}^{p-1}d[t]_{r-1}$, we have
  \[
  \begin{array}{lll}
    V^s(W_r\Omega^{q-1}_S[\pi]^i_rF^sd[\pi]^{l}_r) & \subset &
    V^s(W_r\Omega^{q-1}_Sd([\pi]^{i'}_r)) \\
    & \subset & V^sd(W_r\Omega^{q-1}_S[\pi]^{i'}_r) +
    V^s(d(W_r\Omega^{q-1}_S)[\pi]^{i'}_r) \\
    & \subset & dV^s(W_r\Omega^{q-1}_S[\pi]^{i'}_r) + V^s(W_r\Omega^q_S[\pi]^{i'}_r) \\
    & = & F^{m,q}_{t}(S).
  \end{array}
  \]
  This shows that $[\pi]^l_mF^{m,q}_n(S) \subset F^{m,q}_{n+p^{m-1}l}(S)$ for all
  $l,n \in \Z$.
  But this also implies $[\pi]^{-l}_mF^{m,q}_{n+p^{m-1}l}(S) \subset F^{m,q}_{n}(S)$,
  and we are done. 
\end{proof}

Combining \corref{cor:GH-2} and \lemref{lem:Positive-F-2}, we get

\begin{cor}\label{cor:GH-3}
 There is a canonical decomposition
  \[
  \theta^{m,q}_K \colon
  \left(\bigoplus_{n <0} F^{m,q}_{n}(S)\right)
 \bigoplus \left(\prod_{n\ge 0} F^{m,q}_n(S)\right) \xrightarrow{\cong}  W_m\Omega^q_K.
  \]
\end{cor}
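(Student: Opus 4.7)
My plan is to identify $W_m\Omega^q_K$ with the ``Teichm\"uller localization'' of $W_m\Omega^q_A(\log\pi)$ at $[\pi]_m$, and then apply \corref{cor:GH-2} together with \lemref{lem:Positive-F-2} termwise. The first observation is that $[\pi]_m \in W_m(K)$ is a unit, with inverse $[\pi^{-1}]_m$ (by multiplicativity of the Teichm\"uller map), so for every $l\ge 0$ the $W_m(A)$-submodule $[\pi]^{-l}_m\, W_m\Omega^q_A(\log\pi)$ of $W_m\Omega^q_K$ makes sense. These submodules form an increasing chain as $l$ grows, and I will first establish the identification
\[
W_m\Omega^q_K \;=\; \bigcup_{l\ge 0}\, [\pi]^{-l}_m\, W_m\Omega^q_A(\log\pi).
\]

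To prove this identification, I will use that the localization $A\to K=A_\pi$ is (ind-)\'etale, so by the standard \'etale base-change for the de Rham--Witt complex (cf.\ the argument behind \propref{prop:F-fin}(6) and the fact that $W_m(K)$ is obtained from $W_m(A)$ by inverting $[\pi]_m$), we have $W_m\Omega^q_K = W_m\Omega^q_A \otimes_{W_m(A)} W_m(K)$, and the latter is the union of $[\pi]^{-l}_m W_m\Omega^q_A$. Since $W_m\Omega^q_A \subset W_m\Omega^q_A(\log\pi)$ and $[\pi]^{-l}_m\dlog[\pi]_m \in [\pi]^{-l}_m W_m\Omega^q_A(\log\pi)$, the same union equals $\bigcup_l [\pi]^{-l}_m W_m\Omega^q_A(\log\pi)$. (Should the \'etale base-change invocation feel uncomfortable in this generality, an entirely parallel direct argument works: every element of $W_m(K)$ clears denominators by some $[\pi]^l_m$, and $W_m\Omega^q_K$ is generated as a $W_m(K)$-module by $W_m\Omega^q_A(\log\pi)$ by the very definition of the de Rham--Witt complex on $U = \Spec(K)$.)

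With the above identification in hand, for each fixed $l\ge 0$ I multiply the isomorphism of \corref{cor:GH-2} by $[\pi]^{-l}_m$ to get
\[
[\pi]^{-l}_m\, W_m\Omega^q_A(\log\pi) \;\xrightarrow{\cong}\; \prod_{n\ge 0} [\pi]^{-l}_m F^{m,q}_n(S) \;=\; \prod_{n\ge 0} F^{m,q}_{n-p^{m-1}l}(S) \;=\; \prod_{n\ge -p^{m-1}l} F^{m,q}_n(S),
\]
where the middle equality is exactly \lemref{lem:Positive-F-2} applied with $l$ replaced by $-l$. Taking the union over $l\ge 0$, an element of the union lies in $\prod_{n\ge -N}F^{m,q}_n(S)$ for some $N$, hence has only finitely many nonzero components in negative indices. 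This produces the claimed decomposition
\[
W_m\Omega^q_K \;\xrightarrow{\cong}\; \Bigl(\bigoplus_{n<0} F^{m,q}_n(S)\Bigr) \oplus \Bigl(\prod_{n\ge 0} F^{m,q}_n(S)\Bigr).
\]

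The main obstacle is the first step: rigorously passing from the power-series case covered by \thmref{thm:GH-Top} and \corref{cor:GH-2} to the Laurent case over $K$, i.e.\ controlling denominators in $W_m\Omega^q_K$ by powers of the Teichm\"uller $[\pi]_m$ rather than by arbitrary nonzero elements of $W_m(K)$. Once this is settled, the rest is a direct translation of indices using \lemref{lem:Positive-F-2}, and the distinction between $\oplus$ on the negative side and $\prod$ on the non-negative side arises precisely because a fixed element of $W_m\Omega^q_K$ needs only finitely many Teichm\"uller denominators, while Geisser--Hesselholt's expansion still permits an infinite series in positive powers of $[\pi]_m$.
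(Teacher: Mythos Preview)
Your proposal is correct and follows essentially the same route as the paper: identify $W_m\Omega^q_K$ with the colimit $\varinjlim_{l\ge 0}[\pi]_m^{-l}W_m\Omega^q_A(\log\pi)$ via the base-change isomorphism $W_m\Omega^q_A\otimes_{W_m(A)}W_m(K)\cong W_m\Omega^q_K$ (the paper cites \cite[Chap.~I, Prop.~1.11]{Illusie} rather than invoking \'etale base change, but these amount to the same thing for a localization), then shift indices using \lemref{lem:Positive-F-2} and read off the $\bigoplus/\prod$ dichotomy from the fact that each element sits in some $\prod_{n\ge -p^{m-1}l}F^{m,q}_n(S)$.
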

\begin{proof}
We first note that the canonical map
  \begin{equation}\label{eqn:Base-change-0}
  W_m\Omega^q_A(\log\pi) \otimes_{W_m(A)} W_m(K) \to W_m\Omega^q_K
  \end{equation}
  is an isomorphism of $W_m(K)$-modules. This is easily deduced from
  \cite[Chap.~0, (1.5.3) and Chap.~I, Prop.~1.11]{Illusie}.
  Using this isomorphism, we get
  \[
  \begin{array}{lll}
    W_m\Omega^q_K & \cong & W_m\Omega^q_A(\log\pi)[\pi^{-1}]_m \ \ \cong \ \
    {\varinjlim}_{l \ge 0}
      [\pi]^{-l}_m W_m\Omega^q_A(\log\pi) \\
      & {\cong}^1 &  {\varinjlim}_{l \ge 0} [\pi]^{-l}_m \prod_{n \ge 0} F^{m,q}_n(S) 
      \ \ {\cong} \ \ {\varinjlim}_{l \ge 0}  \prod_{n \ge 0} [\pi]^{-l}_m  F^{m,q}_n(S) \\ 
      & {\cong}^2 & {\varinjlim}_{l \ge 0} \prod_{n \ge 0} F^{m,q}_{n-p^{m-1}l}(S)
      \ \ \cong \ \ {\varinjlim}_{l \ge 0} \prod_{n \ge -p^{m-1}l}  F^{m,q}_n(S) \\
      & \cong &  \left[{\varinjlim}_{l \ge 0}
      \left({\bigoplus}_{-p^{m-1}l \le n \le -1} F^{m,q}_n(S)\right)\right]
      \bigoplus \left[{\varinjlim}_{l \ge 0}
        \left(\prod_{n \ge 0} F^{m,q}_n(S)\right)\right] \\
      & \cong & \left[{\varinjlim}_{l \ge 0}
        \left({\bigoplus}_{-l \le n \le -1} F^{m,q}_n(S)\right)\right] \bigoplus
      \left(\prod_{n \ge 0} F^{m,q}_n(S)\right)
       \\
       & \cong & \left(\bigoplus_{n < 0} F^{m,q}_n(S)\right) \bigoplus
       \left(\prod_{n \ge 0} F^{m,q}_n(S)\right),
\end{array}
  \]
  where the isomorphisms ${\cong}^1$ and ${\cong}^2$ hold by \corref{cor:GH-2}
  and \lemref{lem:Positive-F-2}, respectively. 
\end{proof}

Our task now is to describe the pre-image of $\Fil_nW_m\Omega^q_K$ under
$\theta^{m,q}_K$. We shall complete it in several steps.
We begin by checking some further properties of the groups $F^{m,q}_n(S)$.

\begin{lem}\label{lem:F-group-1}
  For all $n \in \Z$, we have the following.
  \begin{enumerate}
    \item
      $d(F^{m,q}_n(S)) \subset F^{m,q+1}_n(S)$.
    \item
      $V(F^{m,q}_n(S)) \subset F^{m+1,q}_n(S)$.
    \item
      $F(F^{m,q}_n(S)) \subset F^{m-1,q}_n(S)$.
    \item
      $R(F^{m,q}_n(S)) = \left\{\begin{array}{ll}
F^{m-1,q}_{n/p}(S) & \mbox{if $p|n$} \\
0  & \mbox{if $p\nmid n$}.
\end{array}\right.$

      \end{enumerate}
\end{lem}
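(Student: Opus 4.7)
The plan is to verify each of the four claims by reducing to the generators of the $A$- and $B$-summands constituting $F^{m,q}_n(S)$, and then matching the image with the correct piece $F^{m',q'}_{n'}(S)$ via a routine $p$-adic valuation check. The basic tools are standard identities in $W_\star\Omega^\bullet_A$: Leibniz, $d^2 = 0$, $FV = VF = p$, $Vd = pdV$, $FdV = d$, $RV = VR$, $Rd = dR$, combined with the Teichm\"uller identities $R[\pi]^i_m = [\pi]^i_{m-1}$, $F[\pi]^i_m = [\pi]^{pi}_{m-1}$, $F\dlog[\pi]_m = \dlog[\pi]_{m-1}$, and $d[\pi]^i_m = i\,[\pi]^i_m\dlog[\pi]_m$.

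For (1), Leibniz together with the last identity keeps $d(a[\pi]^i_m)$ inside $A^{m,q+1}_i(S)$, and $d(a[\pi]^i_m\dlog[\pi]_m)$ stays there as well because $d\dlog = 0$; on the $B$-summand, $d\circ V^s$ is already of $dV^s$-form and $d\circ dV^s = 0$. For (3), the identities $FV^s = pV^{s-1}$ and $FdV^s = dV^{s-1}$ together with $F[\pi]^i_m = [\pi]^{pi}_{m-1}$ place the image of the $A$-summand in $A^{m-1,q}_{pi}(S)$, and that of the $B$-summand (with $s\ge 2$) in $B^{m-1,q}_{i,s-1}(S)$; the boundary case $s = 1$ produces an $A$-type output. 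A short check on the $p$-adic valuations of $n$ then identifies the receptacle as $F^{m-1,q}_n(S)$ in every subcase. For (4), $RV = VR$ and $Rd = dR$ give $R(V^s(w[\pi]^i_{m-s})) = V^s(Rw\cdot[\pi]^i_{m-s-1})$ and analogously for the $dV^s$-part; this lies in $B^{m-1,q}_{i,s}(S) = F^{m-1,q}_{n/p}(S)$ when $s \le m-2$, and the reverse inclusion follows from the surjectivity of $R\colon W_{m-s}\Omega^\bullet_S \to W_{m-s-1}\Omega^\bullet_S$. When $s = m-1$, the output vanishes because $Rw \in W_0\Omega^\bullet_S = 0$, which is consistent with $n = i$ being coprime to $p$ in this case.

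The delicate claim is (2). In the $A$-summand I would split on whether $p\mid i$: if $p\nmid i$, then $V(a[\pi]^i_m) \in V(W_m\Omega^q_S[\pi]^i_m) \subset B^{m+1,q}_{i,1}(S) = F^{m+1,q}_n(S)$ since $p^{m-1}$ exactly divides $n = p^{m-1}i$; if $p\mid i$, then $[\pi]^i_m = F[\pi]^{i/p}_{m+1}$ together with the projection formula $V(a)\cdot b = V(a\cdot F(b))$ yields $V(a[\pi]^i_m) = V(a)[\pi]^{i/p}_{m+1} \in A^{m+1,q}_{i/p}(S) = F^{m+1,q}_n(S)$. In the $B$-summand, $V\circ V^s = V^{s+1}$ sends $V^s(w[\pi]^i_{m-s})$ directly into $B^{m+1,q}_{i,s+1}(S)$. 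The main subtlety is the $dV^s$-summand: the relation $Vd = pdV$ gives $V(dV^s(w[\pi]^i_{m-s})) = p\,dV^{s+1}(w[\pi]^i_{m-s})$, which is $p$ times an element of $B^{m+1,q}_{i,s+1}(S)$; since this is an abelian group (in fact a $W_{m+1}(S)$-module), it is closed under multiplication by $p$, and hence contains the right-hand side. Checking throughout that $F^{m+1,q}_n(S)$ is indeed the correct $B$- or $A$-piece under the definition in terms of the $p$-adic valuation of $n$ is the main bookkeeping step, and this is the principal place in the proof requiring care.
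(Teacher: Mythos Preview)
Your approach is essentially the same as the paper's: split into the $A$-case ($p^{m-1}\mid n$) and the $B$-case (parametrized by $s$), and push each operator through using the listed Witt-complex identities together with a $p$-adic valuation bookkeeping to identify the target piece. The arguments you sketch for (1), (3), and (4) are correct and match the paper's line by line; you omitted the $A$-summand in (4), but that case is immediate from $R[\pi]^i_m=[\pi]^i_{m-1}$ and $R\dlog[\pi]_m=\dlog[\pi]_{m-1}$.

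There is one small but genuine omission in (2): for the $A$-summand you only treat $V(a[\pi]^i_m)$ and never the second generator $V\bigl(a[\pi]^i_m\dlog[\pi]_m\bigr)$ with $a\in W_m\Omega^{q-1}_S$. When $p\mid i$ your projection-formula argument extends verbatim since $\dlog[\pi]_m=F\dlog[\pi]_{m+1}$. When $p\nmid i$, however, a direct $V$ does not land in $B^{m+1,q}_{i,1}(S)$ on the nose; you need one extra step. Since $i$ is a unit, rewrite $a[\pi]^i_m\dlog[\pi]_m=i^{-1}a\,d[\pi]^i_m=i^{-1}\bigl(d(a[\pi]^i_m)-da\cdot[\pi]^i_m\bigr)$, and then your identity $Vd=pdV$ gives
\[
V\bigl(a[\pi]^i_m\dlog[\pi]_m\bigr)=i^{-1}\bigl(p\,dV(a[\pi]^i_m)-V(da\cdot[\pi]^i_m)\bigr)\in B^{m+1,q}_{i,1}(S).
\]
This is exactly how the paper handles it, and it uses only tools already in your list.
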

\begin{proof}
 We divide the proof into two cases: (1) $p^{m-1}|n$ and (2) $p^{m-1}\nmid n$.
  \\
  $Case~1:$ $n = p^{m-1}i$. \\
  In this case, 
  $F^{m,q}_n(S) = (W_m\Omega^q_S + W_m\Omega^{q-1}_S\dlog([\pi]_m))[\pi]^i_m$.
  Since
  $d(W_m\Omega^q_S[\pi]^i_m) \subset  W_m\Omega^{q+1}_S[\pi]^i_m +
    W_m\Omega^q_S[\pi]^i_m \dlog([\pi]_m) = F^{m,q+1}_n(S)$,
    and $d(W_m\Omega^{q-1}_S[\pi]^i_m\dlog([\pi]_m)) \subset
    W_m\Omega^q_S[\pi]^i_m\dlog([\pi]_m)
     \subset F^{m,q+1}_n(S)$, item (1) follows.

 For (2), note that if $i \in I_p$, we have
    $V(W_m\Omega^q_S[\pi]^i_m) \in F^{m+1,q}_n(S)$ by latter's definition.
    Next, we have
    \[
    \begin{array}{lll}
      V(W_m\Omega^{q-1}_S\dlog([\pi]_m)[\pi]^i_m) & = &
      V(W_m\Omega^{q-1}_S d([\pi]^i_m)) \\
      & \subset & Vd(W_m\Omega^{q-1}_S [\pi]^i_m) + V(W_{m}\Omega^{q}_S[\pi]^i_m) \\
      & \subset & F^{m+1,q}_n(S).
    \end{array}
    \]
    If $i = pi'$, then $n = p^mi'$ and
    $V(W_m\Omega^q_S[\pi]^i_m)
    \subset W_{m+1}\Omega^q_S[\pi]^{i'}_{m+1} \subset F^{m+1,q}_n(S)$.
   Also, we have $V(W_m\Omega^{q-1}_S\dlog[\pi]_m[\pi]^i_m)
    \subset  W_{m+1}\Omega^{q-1}_S\dlog[\pi]_{m+1}[\pi]^{i'}_{m+1} \subset F^{m+1,q}_n(S)$.

 For (3), write $n = p^{m-2}i'$, where $i' = pi$. We then get
    $F(W_m\Omega^q_S[\pi]^i_m) \subset W_{m-1}\Omega^q_S[\pi]^{i'}_{m-1}$ 
    which lies in $F^{m-1,q}_n(S)$. Similarly,
    \[
    F(W_m\Omega^{q-1}_S\dlog([\pi]_m)[\pi]^i_m) \subset
    W_{m-1}\Omega^{q-1}_S\dlog([\pi]_{m-1})[\pi]^{i'}_{m-1} \subset F^{m-1,q}_n(S).
    \]
    For (4), note that $R(W_m\Omega^q_S[\pi]^i_m) = W_{m-1}\Omega^q_S[\pi]^{i}_{m-1}$ and
    $R(W_m\Omega^{q-1}_S\dlog[\pi]_m[\pi]^i_m)
    = W_{m-1}\Omega^{q-1}_S\dlog[\pi]_{m-1}[\pi]^i_{m-1}$. But this implies that
    $R(F^{m,q}_{n}(S))=F^{m-1,q}_{n'}(S)$, where $n' = n/p = p^{m-2}i$.
 \\
    \noindent
    \\
    $Case~2:$ $n = p^{m-1-s}i$ with $s \ge 1$ and $|i| \in I_p$. \\
    In this case, we have
    $F^{m,q}_n(S) = V^s(W_{r}\Omega^q_S[\pi]^i_r) + dV^s(W_r\Omega^{q-1}_S[\pi]^i_r)$,
    where $r = m-s$ $= (m+1)-(s+1)$.  But this implies that
    $d(F^{m,q}_n(S)) = dV^s(W_{r}\Omega^q_S[\pi]^i_r) \subset F^{m,q+1}_n(S)$.
    This proves (1). For (2), note that
    $V(F^{m,q}_n(S)) = V^{s'}(W_{r}\Omega^q_S[\pi]^i_r) +
    VdV^s(W_r\Omega^{q-1}_S[\pi]^i_r) \subset$ \\
    $V^{s'}(W_{r}\Omega^q_S[\pi]^i_r) + dV^{s'}(W_r\Omega^{q-1}_S[\pi]^i_r) =
    F^{m+1,q}_n(S)$ if we let $s' = s+1$.
   
For (3), suppose first that $s = 1$ so that $n = p^{m-2}i$.
    Then
    \[
    \begin{array}{lll}
     F(F^{m,q}_n(S)) & = & FV(W_{m-1}\Omega^q_S[\pi]^i_{m-1}) +
     FdV(W_{m-1}\Omega^{q-1}_S[\pi]^i_{m-1}) \\
     & \subset & W_{m-1}\Omega^q_S[\pi]^i_{m-1} + d(W_{m-1}\Omega^{q-1}_S[\pi]^i_{m-1}) \\
     & \subset & \left(W_{m-1}\Omega^q_S + 
     W_{m-1}\Omega^{q-1}_S\dlog([\pi]_{m-1})\right)[\pi]^i_{m-1} = F^{m-1,q}_n(S).
\end{array}
    \]
If $s \ge 2$, we let $s' = s-1 \ge 1$ so that $n = p^{m-2-s'}i$ and
    $r = (m-1) - s'$.
    This yields
    \[
    \begin{array}{lll}
      F(F^{m,q}_n(S)) & = & FV^s(W_{r}\Omega^q_S[\pi]^i_{r}) +
      FdV^s(W_{r}\Omega^{q-1}_S[\pi]^i_{r}) \\
      & \subset &
    V^{s'}(W_{r}\Omega^q_S[\pi]^i_{r}) +
    dV^{s'}(W_{r}\Omega^{q-1}_S[\pi]^i_{r}) = F^{m-1,q}_n(S).
    \end{array}
    \]

 Finally, for (4), note that
    $$RV^s(W_r\Omega^q_S[\pi]^i_r) + RdV^s(W_r\Omega^{q-1}_S[\pi]^i_r)
    = \hspace{3cm} $$
    $$ \hspace{4cm} V^s(W_{r-1}\Omega^q_S[\pi]^{{n'}/{p^{m-2-s}}}_{r-1})+
    dV^s(W_{r-1}\Omega^{q-1}_S
    [\pi]^{{n'}/{p^{m-2-s}}}_{r-1}),$$ where $n' = n/p$. The latter group is in
    $F^{m-1,q}_{n'}(S)$
    if $s < m-1$ and is zero if $s = m-1$. This concludes the proof.
\end{proof}

\begin{lem}\label{lem:F-group-0}
  For all $n, n' \ge 0$, we have $F^{m,0}_n(S) \cdot F^{m,q}_{n'}(S) \subset
  F^{m, q}_{n+n'}(S)$. 
\end{lem}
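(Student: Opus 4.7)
The plan is to split the product according to whether $p^{m-1}$ divides $n$ or not, using the shape of $F^{m,0}_n(S)$ given by \defref{defn:GH-1}. Writing $n = p^{m-1-s}i$ with $s \ge 0$, and noting that for $q = 0$ the $dV^s$-summand in the $B$-type formula vanishes (since $W_{m-s}\Omega^{-1}_S = 0$), one has $F^{m,0}_n(S) = W_m(S)[\pi]^i_m$ when $s = 0$, and $F^{m,0}_n(S) = V^s(W_r(S)[\pi]^i_r)$ with $r = m - s$ when $s \ge 1$ (where $i \in I_p$).

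The first preparatory step I would establish is the auxiliary stability statement $W_m(S) \cdot F^{m,q}_{n'}(S) \subset F^{m,q}_{n'}(S)$. When $F^{m,q}_{n'}(S)$ is of $A$-type this is immediate from the $W_m(S)$-module structure of $W_m\Omega^{\bullet}_S$. When $F^{m,q}_{n'}(S)$ is of $B$-type, for $b \in W_m(S)$ the Leibniz rule combined with the projection formula $x \cdot V^{s'}(y) = V^{s'}(F^{s'}(x)\, y)$ yields
\[
b \cdot dV^{s'}(\eta) = dV^{s'}(F^{s'}(b)\, \eta) - V^{s'}(F^{s'}(db)\, \eta),
\]
and a short check, using that $F$ preserves the Witt-complex of $S$ (so $F^{s'}(W_m(S)) \subset W_{r'}(S)$ and $F^{s'}(W_m\Omega^1_S) \subset W_{r'}\Omega^1_S$), shows that every term lands back in $F^{m,q}_{n'}(S)$.

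With this in hand, the case $s = 0$ of the lemma is immediate: by \lemref{lem:Positive-F-2} together with the auxiliary step,
\[
F^{m,0}_n(S) \cdot F^{m,q}_{n'}(S) = [\pi]^i_m \cdot W_m(S) \cdot F^{m,q}_{n'}(S) \subset [\pi]^i_m \cdot F^{m,q}_{n'}(S) = F^{m,q}_{n+n'}(S).
\]

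For the case $s \ge 1$, the projection formula gives $V^s(\alpha) \cdot \beta = V^s(\alpha \cdot F^s(\beta))$ for any $\alpha \in W_r(S)[\pi]^i_r$ and any $\beta \in F^{m,q}_{n'}(S)$. Iterating \lemref{lem:F-group-1}(3) places $F^s(\beta)$ in $F^{r,q}_{n'}(S)$, while $W_r(S)[\pi]^i_r = F^{r,0}_{p^{r-1}i}(S)$ is itself of $s = 0$ type at the strictly smaller level $r$. The already-proved $s = 0$ case, applied at level $r$, then yields $\alpha \cdot F^s(\beta) \in F^{r,q}_{p^{r-1}i + n'}(S) = F^{r,q}_{n+n'}(S)$, using $p^{r-1}i = p^{m-1-s}i = n$. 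Applying $V^s$ and invoking \lemref{lem:F-group-1}(2) finally delivers $V^s(\alpha) \cdot \beta \in F^{m,q}_{n+n'}(S)$. The main subtlety I expect will be the careful book-keeping of how $p^{r-1}i + n'$ falls into the appropriate case of \defref{defn:GH-1} at level $r$; once the auxiliary step is in place, the rest is a clean combination of the projection formula with \lemref{lem:F-group-1}(2),(3).
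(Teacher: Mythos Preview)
Your proposal is correct and follows essentially the same approach as the paper: establish the auxiliary stability $W_m(S)\cdot F^{m,q}_{n'}(S)\subset F^{m,q}_{n'}(S)$ via the Leibniz rule, then treat the $s=0$ case using \lemref{lem:Positive-F-2}, and reduce the $s\ge 1$ case to the $s=0$ case at level $r=m-s$ via the projection formula together with \lemref{lem:F-group-1}(2),(3). The book-keeping worry you flag is not actually an issue, since $p^{r-1}i=n$ gives $F^{r,q}_{p^{r-1}i+n'}(S)=F^{r,q}_{n+n'}(S)$ directly, and \lemref{lem:F-group-1}(2) applies regardless of which case of \defref{defn:GH-1} the index $n+n'$ falls into.
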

\begin{proof}   
  First note that $W_m(S)\cdot F^{m,q}_{n'}(S) \subset F^{m,q}_{n'}(S)$. Indeed, if
  $p^{m-1} \mid n'$, then this is obvious. If $p^{m-1} \nmid n'$, then only need to show
  $W_m(S)\cdot dV^s(W_r\Omega^{q-1}_S[\pi]^i_r) \subset F^{m,q}_{n'}(S)$, where
  $n'=p^{m-1-s}i$, $|i| \in I_p$, $r=m-s$.
  But this follows easily by applying the Leibniz rule.
We now assume $1 \le s \le m-1$ and write $r = m-s$. If $n = p^{m-1}i$, 
 then $F^{m,0}_n(S) = W_m(S)[\pi]^i_m$ and we get $F^{m,0}_n(S) \cdot F^{m,q}_{n'}(S) =
  W_m(S)[\pi]^i_m \cdot F^{m,q}_{n'}(S) \subset F^{m, q}_{n+n'}(S)$
  by the above observation and \lemref{lem:Positive-F-2}.

  If $|i| \in I_p$ and $n = p^{m-1-s}i$, then $F^{m,0}_n(S) = V^s(W_{r}(S)[\pi]^i_{r})$
  and we get
  \[
\begin{array}{lll}
  F^{m,0}_n(S) \cdot F^{m,q}_{n'}(S) & = & V^s(W_{r}(S)[\pi]^i_{r}) \cdot 
 F^{m,q}_{n'}(S) \\
  &\subset^1 & V^s([\pi]^i_{r}.F^{m-s,q}_{n'}(S)) \subset^2 V^s(F^{m-s,q}_{n'+n}(S))\\
  & \subset^3 & F^{m-s,q}_{n'+n}(S), 
\end{array}
\]
where $\subset^1$ uses \lemref{lem:F-group-1}(3),
$\subset^2$ uses  \lemref{lem:Positive-F-2}, while
$\subset^3$ uses \lemref{lem:F-group-1}(2). 
\end{proof}

\begin{lem}\label{lem:F-group-6}
  For all $n \ge 0$, the map $\theta^{m,q}_A$ induces an inclusion 
  $F^{m,q}_n(S) \hookrightarrow \Fil_{-n}W_m\Omega^q_K$.
\end{lem}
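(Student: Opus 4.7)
The plan is to show $F^{m,q}_n(S) \subset \Fil_{-n}W_m\Omega^q_U$ (the inclusion into $\Fil_{-n}W_m\Omega^q_K$ then follows by taking the image in the generic stalk, using that $F^{m,q}_n(S) \hookrightarrow W_m\Omega^q_K$ is injective by Corollary~\ref{cor:GH-3} since it is a direct summand). I will argue case by case according to Definition~\ref{defn:GH-1}, using essentially two ingredients: Lemma~\ref{lem:Log-fil-2-1}, which identifies $\Fil_{p^{r-1}D}W_r\sO_U$ with $W_r\sO_X(D)$, and the filtered Witt-complex structure from Lemma~\ref{lem:Log-fil-Wcom}, especially that $V$, $d$, and multiplication by elements of $W_r\Omega^\bullet_X(\log E) = \Fil_0 W_r\Omega^\bullet_U$ all preserve the filtration index $D$.

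First consider the case $p^{m-1} \mid n$, so $n = p^{m-1}i$ with $i \ge 0$. By definition, $F^{m,q}_n(S) \subset [\pi]^i_m W_m\Omega^q_A(\log\pi)$. Applying Lemma~\ref{lem:Log-fil-4}(1) with $D = -iE$ identifies the right hand side with $\Fil_{-p^{m-1}iE}W_m\Omega^q_U = \Fil_{-n}W_m\Omega^q_U$, which settles this case.

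Next consider the case $n = p^{m-1-s}i$ with $1 \le s \le m-1$ and $|i| \in I_p$, and set $r = m-s$, so that $p^{r-1}i = n$. Applying Lemma~\ref{lem:Log-fil-2-1} at Witt level $r$ to the divisor $-iE$ gives
\[
[\pi]^i_r \in W_r\sO_X(-iE) = \Fil_{-p^{r-1}iE}W_r\sO_U = \Fil_{-n}W_r\sO_U.
\]
Since $W_r\Omega^q_S \subset W_r\Omega^q_X(\log E) = \Fil_0 W_r\Omega^q_U$ by Lemma~\ref{lem:Log-fil-4}(5), the multiplicative property of the filtered Witt-complex from Lemma~\ref{lem:Log-fil-Wcom} yields $W_r\Omega^q_S \cdot [\pi]^i_r \subset \Fil_{-n}W_r\Omega^q_U$. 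Applying $V^s$, which preserves the filtration index while raising the Witt level (Step~3 in the proof of Lemma~\ref{lem:Log-fil-Wcom}), gives $V^s(W_r\Omega^q_S[\pi]^i_r) \subset \Fil_{-n}W_m\Omega^q_U$. Running the same argument with $q-1$ in place of $q$ and then applying $d$ (which also preserves the filtration index, by Step~1 of the same lemma) handles the remaining summand $dV^s(W_r\Omega^{q-1}_S[\pi]^i_r)$.

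I do not anticipate a genuine obstacle here; the statement is essentially a bookkeeping exercise once the filtered Witt-complex axioms are in place. The only mildly delicate point is the alignment of exponents in the second case: the identity $p^{r-1}i = p^{m-1-s}i = n$ when $r = m-s$ is exactly what makes the index preserved by $V^s$ match the prescribed filtration $-nE$ at Witt level $m$.
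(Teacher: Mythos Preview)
Your proof is correct and follows essentially the same approach as the paper: a case split according to Definition~\ref{defn:GH-1}, using \lemref{lem:Log-fil-4}(1) (or equivalently \lemref{lem:Log-fil-2-1} plus multiplicativity) to place $[\pi]^i_r$ in the right filtered piece, followed in the second case by the $V$- and $d$-stability of the filtration from \lemref{lem:Log-fil-Wcom}. Your version is slightly more explicit in unpacking the second case via $[\pi]^i_r \in \Fil_{-n}W_r\sO_U$ and then multiplying by $W_r\Omega^q_S \subset \Fil_0$, but the substance is identical.
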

\begin{proof}
If $n = p^{m-1}i$, we get
  \[
  \begin{array}{lll}
    F^{m,q}_n(S) & = & A^{m,q}_i(S) =
    \left(W_m\Omega^q_S+ W_m\Omega^{q-1}_S\dlog([\pi]_m)\right)[\pi]^i_m \\
    & \inj & (W_m\Omega^q_A+ W_m\Omega^{q-1}_A\dlog([\pi]_m))[\pi]^i_m \\
    & = & \Fil_{-n}W_m\Omega^q_K,
  \end{array}
  \]
  where the middle inclusion is clear (cf. \thmref{thm:GH-Top}) and
  last equality uses \lemref{lem:Log-fil-4}(1).

 If $n = p^{m-1-s}i$ with $1 \le s \le m-1$ and $|i| \in I_p$, then
   \[
  \begin{array}{lll}
    F^{m,q}_n(S) & = & B^{m,q}_{i,s} = V^s(W_{r}\Omega^q_S[\pi]^i_r) +
    dV^s(W_r\Omega^{q-1}_S[\pi]^i_r) \\
    & {\inj} & V^s(\Fil_{-p^{r-1}i}W_r\Omega^q_K) +
    dV^s(\Fil_{-p^{r-1}i}W_r\Omega^{q-1}_K)
    \\
    & = & V^s(\Fil_{-n}W_r\Omega^q_K) + dV^s(\Fil_{-n}W_r\Omega^{q-1}_K) \\
    & {\subset} &  \Fil_{-n}W_m\Omega^q_K,
   \end{array}
  \]
  where $r = m-s$ and the last inclusion holds by \lemref{lem:Log-fil-Wcom}.
\end{proof}

\subsection{Decomposition of $\Fil_nW_m\Omega^q_K$ into $\{F^{m,q}_n\}$}
\label{sec:Decom-fil-F}
In order to describe $\Fil_nW_m\Omega^q_K$ in terms of $\{F^{m,q}_n\}$ via
$\theta^{m,q}_K$, we need to dispose of the case $q = 0$ first. To do this, we
observe that $F^{m,0}_n(S) \subset \Fil_{-n}W_m(K)$ for $n \ge 0$, as one easily
deduces from \lemref{lem:F-group-6} for $q=0$.

\begin{lem}\label{lem:F-group-2}
  For $n \in \Z$, the composite map $\lambda^{m,0}_n \colon
  F^{m,0}_{n}(S) \inj \Fil_{-n}W_m(K) \surj \gr_{-n}W_m(K)$ is an isomorphism.
\end{lem}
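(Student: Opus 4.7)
The proof is by induction on $m \ge 1$. For the base case $m = 1$, we have $F^{1,0}_n(S) = S\pi^n$ and $\Fil_{-n}\sO_U = \pi^n A$, hence $\gr_{-n}\sO_U = \pi^n A/\pi^{n+1}A$; since $A = S[[\pi]]$, the map $\lambda^{1,0}_n \colon S\pi^n \to \pi^n A/\pi^{n+1}A$ is the canonical isomorphism.

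For the inductive step, I apply \lemref{lem:Log-fil-1} with $D = -nE$, noting that $D/p = -\lceil n/p\rceil E$. The nested inclusion $\Fil_{-n-1}\subset \Fil_{-n}$ gives a map of short exact sequences, and the snake lemma yields
\[
0 \to \gr_{-n}\sO_U \xrightarrow{V^{m-1}} \gr_{-n}W_m\sO_U \xrightarrow{R} Q \to 0,
\]
where $Q = \gr_{-n/p}W_{m-1}\sO_U$ when $p \mid n$ and $Q = 0$ when $p \nmid n$ (using that $\lceil (n+1)/p\rceil$ equals $\lceil n/p\rceil + 1$ or $\lceil n/p\rceil$ according as $p \mid n$ or not).

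I next read off a parallel short exact sequence for $F^{m,0}_n(S)$ from \defref{defn:GH-1} and \lemref{lem:F-group-1}(4). If $p \nmid n$, then the minimal $s \ge 1$ with $p^{m-1-s}\mid n$ is $s = m-1$, giving $F^{m,0}_n(S) = V^{m-1}(S\pi^n)$ on which $R$ vanishes. If $p \mid n$, a case-by-case computation (according as $p^{m-1}\mid n$ or not) produces
\[
0 \to V^{m-1}(S\pi^n) \to F^{m,0}_n(S) \xrightarrow{R} F^{m-1,0}_{n/p}(S) \to 0.
\]
These two sequences assemble into a commutative diagram with $\lambda^{m,0}_n$ as the middle vertical arrow. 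The left vertical is the composition $V^{m-1}(S\pi^n) \xrightarrow{\cong} S\pi^n \xrightarrow{\lambda^{1,0}_n,\, \cong} \gr_{-n}\sO_U$, which is an isomorphism by the base case and the standard injectivity of the Verschiebung on Witt vectors. The right vertical is $\lambda^{m-1,0}_{n/p}$, an isomorphism by the inductive hypothesis. The five lemma then forces $\lambda^{m,0}_n$ to be an isomorphism.

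The principal obstacle is verifying the commutativity of the right vertical square, namely that the operator $R$ of \lemref{lem:F-group-1}(4) acting on $F^{m,0}_n(S)$ coincides via $\lambda$ with the $R$ of \lemref{lem:Log-fil-1} on $\gr_{-n}W_m\sO_U$. This reduces to the fact that both $R$'s are restrictions of the global Witt-complex restriction operator on $j_*W_m\sO_U$, together with the explicit formulas in \lemref{lem:F-group-1}.
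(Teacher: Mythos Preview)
Your proof is correct and follows essentially the same strategy as the paper's own argument: induction on $m$ using the exact sequence of \lemref{lem:Log-fil-1} to compute $\gr_{-n}W_m(K)$, the case split on whether $p\mid n$ (equivalently, whether $q=1$ in the paper's parametrization $n+1=pt+q$), and the five lemma applied to the diagram comparing the $F^{m,0}_n(S)$-sequence with the $\gr$-sequence. The paper writes the top-left entry as $F^{1,0}_n(S)$ with map $V^{m-1}$ rather than your $V^{m-1}(S\pi^n)$ with an inclusion, but this is only a cosmetic difference.
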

\begin{proof}
  The assertion of the lemma is obvious if $m =1$ because $F^{1,0}_n(S)= S \pi^n$ and
  $\Fil_{-n}W_1(K)= \pi^n A$. 
For $m > 1$,
 \lemref{lem:Log-fil-1} implies that the sequence
  \begin{equation}\label{eqn:F-group-2-0}
0 \to \Fil_{n}W_1(K) \xrightarrow{V^{m-1}}  \Fil_{n}W_m(K) \xrightarrow{R}
\Fil_{\lfloor{n}/p\rfloor}W_{m-1}(K) \to 0
  \end{equation}
  is exact for every $n \in \Z$. Comparing this sequence for $-n$ and $-n-1$ and
  noting that $\Fil_{-n-1}W_m(K)\subset\Fil_{-n}W_m(K)$, we get an exact sequence
\begin{equation}\label{eqn:F-group-2-1}  
0 \to \gr_{-n}W_1(K) \xrightarrow{V^{m-1}} \gr_{-n}W_m(K) \xrightarrow{R} 
\frac{\Fil_{-\lceil{n/p}\rceil}W_{m-1}(K)}{\Fil_{-\lceil{{n+1}/p}\rceil}W_{m-1}(K)} \to 0.
\end{equation}

If we write $n+1 = pt + q$ with $0 \le q < p$, we see that $V^{m-1}$ is an isomorphism
if $q \neq 1$. Since $s = m-1$ (cf. \defref{defn:GH-1}) in this case, we find also
that the image of $V^{m-1}$ is $F^{m,0}_n(S)$.
If $q =1$, we can rewrite ~\eqref{eqn:F-group-2-1} as
\[
0 \to \gr_{-n}W_1(K) \xrightarrow{V^{m-1}} \gr_{-n}W_m(K) \xrightarrow{R} 
\gr_{-t}W_{m-1}(K) \to 0.
\]

We now look at the diagram
\begin{equation}\label{eqn:F-group-2-2} 
  \xymatrix@C1.8pc{
    0 \ar[r] & F^{1,0}_n(S) \ar[r]^-{V^{m-1}} \ar[d]_-{\lambda^{1,0}_n} &
    F^{m,0}_n(S) \ar[r]^-{R} \ar[d]^-{\lambda^{m,0}_n} & F^{m-1,0}_t(S) \ar[r]
    \ar[d]^-{\lambda^{m-1,0}_t} & 0 \\
    0 \ar[r] & \gr_{-n}W_1(K) \ar[r]^-{V^{m-1}} & \gr_{-n}W_m(K) \ar[r]^-{R} &
    \gr_{-t}W_{m-1}(K) \ar[r] & 0.}
\end{equation}
It is easy to check that this diagram is commutative and the top row is exact.
Now, the left and the right vertical arrows are bijective by induction.
We conclude that the same is true for the middle vertical arrow too.
\end{proof}

Before we proceed further, we need to compare the topologies of $W_m\Omega^q_A(\log\pi)$
induced by various filtrations. We let $\tau_1$ be the topology on
$W_m\Omega^q_A(\log\pi)$ induced by the filtration $
\{W_m\Omega^q_A(\log\pi)(n)\}_{n\ge 0}$ (cf. the beginning of \S~\ref{sec:GSPS}).
We let $\tau_2$ be the topology on
$W_m\Omega^q_A(\log\pi)$ induced by the filtration
$\{W_m((\pi^n))W_m\Omega^q_A(\log\pi)\}_{n \ge 0}$. We let $\tau_3$ be the
topology on $W_m\Omega^q_A(\log\pi)$ induced by the filtration
$\{I^nW_m\Omega^q_A(\log\pi)\}_{n \ge 0}$, where $I = W_m((\pi))$. Finally, we let
$\tau_4$ be the topology on $W_m\Omega^q_A(\log\pi)$ induced by the filtration 
$\{\Fil_{-n}W_m\Omega^q_K\}_{n \ge 0}$.

\begin{lem}\label{lem:Topology-0}
  The topologies defined above all agree and $W_m(A)$ is complete with respect
  to these topologies.
\end{lem}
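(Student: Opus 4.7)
The plan is to compare the four filtrations pairwise through explicit chains of inclusions; the key observation is that the conversion factor between the $[\pi]_m$-adic topology and the Witt-ideal topologies is $p^{m-1}$, arising from iterated Verschiebung. The comparison $\tau_1 = \tau_4$ is immediate: by \lemref{lem:Log-fil-4}(1), $[\pi]_m^n W_m\Omega^q_A(\log\pi) = \Fil_{-p^{m-1}n}W_m\Omega^q_K$, so the $\tau_1$-filtration is a cofinal subfamily of the $\tau_4$-filtration.

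For $\tau_2$ and $\tau_3$, the easy direction uses $[\pi]_m \in I = W_m((\pi))$ together with the containment $I^n \subset W_m((\pi^n))$. This second containment reduces to the elementary claim that, over an $\F_p$-algebra, $W_m(J)^n \subset W_m(J^n)$ for every ideal $J$: this follows by inspection of the Witt multiplication polynomials, where every cross-term carries a $p$-coefficient that vanishes in characteristic $p$, leaving only Teichm\"uller-type products that respect the ideal power. The outcome is $F^{(1)}_n \subset F^{(3)}_n \subset F^{(2)}_n$, so $\tau_2$ and $\tau_3$ are coarser than $\tau_1$.

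The heart of the argument is the reverse estimate $W_m((\pi^n)) \subset [\pi]_m^{\lfloor n/p^{m-1}\rfloor}W_m(A)$. I expand a generic element as $\sum_{s=0}^{m-1} V^s([\pi^n c_s]_{m-s})$, write $n = p^s \lfloor n/p^s\rfloor + r_s$ with $0 \le r_s < p^s$, and apply the projection formula $V^s(F^s(x)y) = x\,V^s(y)$ together with the identity $F^s([\pi]_m^{q}) = [\pi]_{m-s}^{p^s q}$ to factor each summand as $[\pi]_m^{\lfloor n/p^s\rfloor}\,V^s([\pi^{r_s}c_s]_{m-s})$. Since $\lfloor n/p^s\rfloor \ge \lfloor n/p^{m-1}\rfloor$ for every $0 \le s \le m-1$, each summand lies in $[\pi]_m^{\lfloor n/p^{m-1}\rfloor} W_m(A)$, giving the required bound. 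This closes the chain $F^{(1)}_{np^{m-1}} \subset F^{(3)}_{np^{m-1}} \subset F^{(2)}_{np^{m-1}} \subset F^{(1)}_n$ and shows all four topologies coincide. For completeness, \corref{cor:GH-2} combined with \lemref{lem:Positive-F-2} identifies $(W_m\Omega^q_A(\log\pi), \tau_1)$ with the product $\prod_{n \ge 0} F^{m,q}_n(S)$ endowed with its tail-filtration topology, which is visibly complete; the case $q = 0$ yields the statement for $W_m(A)$. The principal obstacle is the bookkeeping in the $V$-factorization step, but once the floor-function bound is isolated, everything reduces to standard Witt-complex identities.
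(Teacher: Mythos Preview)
Your proof is correct and follows the same overall strategy as the paper, but you spell out what the paper leaves implicit. The paper reduces the equivalence of $\tau_1,\tau_2,\tau_3$ on $W_m\Omega^q_A(\log\pi)$ to the case of $W_m(A)$, calls $\tau_1=\tau_2$ there an ``elementary exercise'', cites \cite[Cor.~2.3]{Geisser-Hesselholt-Top} for $\tau_2=\tau_3$, and uses \lemref{lem:Log-fil-4}(1) for $\tau_1=\tau_4$ exactly as you do. Your $V$-factorization identity and the bound $W_m((\pi^n))\subset [\pi]_m^{\lfloor n/p^{m-1}\rfloor}W_m(A)$ are precisely the content of that exercise, and your containment $I^n\subset W_m((\pi^n))$ is the characteristic-$p$ instance of the cited Geisser--Hesselholt result (though the cleanest justification is via the formula $V^i([a])V^j([b])=V^{i+j}([a^{p^j}b^{p^i}])$ rather than an appeal to ``$p$-coefficients in cross-terms'').

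The one genuine divergence is completeness. The paper forward-references \lemref{lem:W-nil}, which immediately gives $W_m(A)\cong\varprojlim_n W_m(A)/W_m((\pi^n))$ since $A=S[[\pi]]$ is already $(\pi)$-adically complete. You instead deduce completeness from the product decomposition of \corref{cor:GH-2}. Both routes are valid; yours avoids the forward reference but tacitly uses that the infinite series in \thmref{thm:GH-Top} converge in the $I$-adic ($\tau_3$) topology---that is what makes $\theta^{m,q}_A$ a homeomorphism onto the product, and \lemref{lem:Positive-F-2} alone does not supply this topological identification. Since you have already established $\tau_1=\tau_3$, the argument closes, but it would be cleaner to say so explicitly.
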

\begin{proof}
  The topologies $\tau_1, \ \tau_2$ and $\tau_3$ will
  agree on $W_m\Omega^q_A(\log\pi)$ if
  they do so on $W_m(A)$. The agreement between $\tau_1$ and $\tau_2$ in this special
  case is an elementary exercise and the the same between $\tau_2$ and $\tau_3$
  follows from \cite[Cor.~2.3]{Geisser-Hesselholt-Top}. The agreement between
  $\tau_1$ and $\tau_4$ on $W_m\Omega^q_A(\log\pi)$ follows from
  \lemref{lem:Log-fil-4}(1).
  To prove the second part, it is enough to check that $W_m(A)$ is
  $\tau_2$-complete.
  But this follows from \lemref{lem:W-nil}.
  \end{proof}

\begin{cor}\label{cor:F-group-3}
For $n \ge 0$, the map $\theta^{m,q}_A$ of ~\eqref{eqn:GH-2-0}
induces an isomorphism
\[
\theta^{m,0}_n \colon {\underset{n' \ge n}\prod}
F^{m,0}_{n'}(S) \xrightarrow{\cong} \Fil_{-n}W_m(K).
\]
\end{cor}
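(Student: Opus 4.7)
I will define $\theta^{m,0}_n$ as the restriction of the isomorphism $\theta^{m,0}_A \colon \prod_{n' \ge 0} F^{m,0}_{n'}(S) \xrightarrow{\cong} W_m(A)$ from \corref{cor:GH-2} (taken with $q=0$, so that $F^{m,0}_0(S) = W_m(S)$ and no $\dlog$ term appears) to the sub-product indexed by $n' \ge n$. The task then splits into three pieces: verifying that the image lands in $\Fil_{-n}W_m(K)$, proving injectivity, and proving surjectivity.

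For the image statement, \lemref{lem:F-group-6} together with the monotonicity axiom \defref{defn:Filtered-Witt-complex}(4) gives $F^{m,0}_{n'}(S) \hookrightarrow \Fil_{-n'}W_m(K) \subseteq \Fil_{-n}W_m(K)$ for every $n' \ge n$. A sequence $(\beta_{n'})_{n' \ge n}$ maps to the limit, in the $\tau_1$-topology on $W_m(A)$, of the partial sums $S_N = \sum_{n'=n}^{N} \beta_{n'}$. By \lemref{lem:Topology-0}, $\tau_1$ coincides with $\tau_4$, which is generated by $\{\Fil_{-N}W_m(K)\}_{N \ge 0}$; thus $\Fil_{-n}W_m(K)$ is an open (hence closed) subgroup in $\tau_4$. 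Since every $S_N$ lies in it, so does the limit. Injectivity of $\theta^{m,0}_n$ is inherited from the injectivity of $\theta^{m,0}_A$.

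For surjectivity, I take $\alpha \in \Fil_{-n}W_m(K) \subseteq W_m(A)$ and use $\theta^{m,0}_A$ to write uniquely $\alpha = \sum_{n' \ge 0} \beta_{n'}$ with $\beta_{n'} \in F^{m,0}_{n'}(S)$. I will show by induction on $k$ that $\beta_k = 0$ for every $0 \le k < n$. Assume $\beta_0 = \cdots = \beta_{k-1} = 0$. The tail $\sum_{n' > k}\beta_{n'}$, being a $\tau_4$-limit of partial sums each contained in the closed subgroup $\Fil_{-k-1}W_m(K)$, lies in $\Fil_{-k-1}W_m(K)$. Hence $\alpha \equiv \beta_k \pmod{\Fil_{-k-1}W_m(K)}$. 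Since $\alpha \in \Fil_{-n}W_m(K) \subseteq \Fil_{-k-1}W_m(K)$ (because $k < n$), this forces $\beta_k \in \Fil_{-k-1}W_m(K)$, i.e., $\lambda^{m,0}_k(\beta_k) = 0$. Because $\lambda^{m,0}_k$ is an isomorphism by \lemref{lem:F-group-2}, $\beta_k = 0$, completing the induction.

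The main obstacle is the convergence bookkeeping: one must ensure that the Geisser--Hesselholt infinite-series expansion of an element of $W_m(A)$ interacts cleanly with the decreasing filtration $\{\Fil_{-N}W_m(K)\}_{N \ge 0}$. This is exactly what \lemref{lem:Topology-0} delivers, by identifying $\tau_1$ with $\tau_4$ and asserting completeness; once this identification is in hand the surjectivity reduces to the termwise induction driven by the graded-piece isomorphism of \lemref{lem:F-group-2}.
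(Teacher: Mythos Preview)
Your proof is correct and follows essentially the same approach as the paper's: both hinge on the graded isomorphism $\lambda^{m,0}_k$ of \lemref{lem:F-group-2} combined with the completeness and topology identifications of \lemref{lem:Topology-0}. The paper packages the argument as an inverse-limit identification $\Fil_{-n}W_m(K) \cong \varprojlim_{n'} \Fil_{-n}W_m(K)/\Fil_{-n-n'}W_m(K)$ (matching each finite quotient with a finite direct sum of $F^{m,0}_{n+i}(S)$ via \lemref{lem:F-group-2}), whereas you unfold this into an explicit term-by-term induction; the content is the same.
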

\begin{proof}
  The ring $W_m(A)$ is Noetherian and $I$-adically  (where $I = W_m((\pi))$)
  complete by \propref{prop:F-fin}(5) and \lemref{lem:Topology-0}. Since
  $\Fil_{-n}W_m(K)$ is an
  ideal of $W_m(A)$ by \lemref{lem:Log-fil-4}(1), it follows that $\Fil_{-n}W_m(K)$
    is $I$-adically complete and hence $\tau_4$- complete by \lemref{lem:Topology-0}.
  That is, the canonical map
$\Fil_{-n}W_m(K) \to {\varprojlim}_{n' \ge 0} \frac{\Fil_{-n}W_m(K)}{\Fil_{-n-n'}W_m(K)}$
is an isomorphism of $W_m(A)$-modules. Meanwhile, one deduces from
\lemref{lem:F-group-2} that the canonical map
$\stackrel{n'-1}{\underset{i = 0}\oplus} F^{m,0}_{n+i}(S) \to
\frac{\Fil_{-n}W_m(K)}{\Fil_{-n-n'}W_m(K)}$ is an isomorphism.
Passing to the limit as $n' \to \infty$, we get an isomorphism
\[
 {\underset{n' \ge n}\prod}
F^{m,0}_{n'}(S) \xrightarrow{\cong} \Fil_{-n}W_m(K),
\]
which is induced by the canonical maps
$\theta^{m,0}_A \colon F^{m,0}_{n'}(S) \to W_m(A)$. 
\end{proof}

In order to generalize \corref{cor:F-group-3} to the case when $q > 0$, we let
$\Fil'_{-n}W_m\Omega^q_K = \theta^{m,q}_A({\prod}_{j \ge n}
F^{m,q}_{j}(S))$ for $n \ge 0$. By \corref{cor:GH-2}, this is a
descending filtration of $W_m\Omega^q_A(\log\pi) = \Fil'_{0}W_m\Omega^q_K$.

\begin{lem}\label{lem:fil'-mod}
  For $n \ge 0$, $\Fil'_{-n}W_m\Omega^q_K$ is a $W_m(A)$-submodule of
  $W_m\Omega^q_A(\log \pi)$.
\end{lem}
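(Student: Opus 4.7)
The plan is to exploit the decomposition isomorphism of Corollary~\ref{cor:GH-2} together with the multiplicative estimate of Lemma~\ref{lem:F-group-0}, and then control convergence using Lemma~\ref{lem:Topology-0}. The submodule property is really a statement about multiplying two ``power series'' in the graded pieces $F^{m,\bullet}_n(S)$, so I would reduce everything to a bookkeeping argument on these graded pieces, where Lemma~\ref{lem:F-group-0} is the crucial input.

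First I would observe, using Corollary~\ref{cor:F-group-3} applied with $n=0$, that every element $a \in W_m(A) = \Fil_0 W_m(K)$ has a unique expansion $a = \sum_{j \ge 0} \alpha_j$ with $\alpha_j \in F^{m,0}_j(S)$, the series being convergent in the topology $\tau_1$ of Lemma~\ref{lem:Topology-0}. Similarly, by definition of $\Fil'_{-n}W_m\Omega^q_K$, every $w \in \Fil'_{-n}W_m\Omega^q_K$ corresponds to a family $(\beta_k)_{k \ge n}$ with $\beta_k \in F^{m,q}_k(S)$, and $w = \theta^{m,q}_A((\beta_k)_{k\ge n})$. Truncating, set $w_M = \sum_{k=n}^M \beta_k$; then $w_M \to w$ in $\tau_1$ (equivalently, in the $\pi$-adic topology, by Lemma~\ref{lem:Topology-0}).

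Next I would compute the product $a \cdot w$. Since multiplication by $a$ is continuous for $\tau_1$ (multiplication preserves the defining filtrations), we have $a w = \lim_M a w_M$ in $\tau_1$. For each pair $(j,k)$, Lemma~\ref{lem:F-group-0} gives $\alpha_j \beta_k \in F^{m,q}_{j+k}(S)$, and each such group embeds in $\Fil_{-(j+k)}W_m\Omega^q_K$ by Lemma~\ref{lem:F-group-6}. Grouping by the index $l = j+k$ (with $j \ge 0$ and $k \ge n$, so $l \ge n$), the finite sum
\[
\gamma_l := \sum_{\substack{j+k=l \\ j \ge 0,\ k \ge n}} \alpha_j \beta_k
\]
lies in $F^{m,q}_l(S)$ because $F^{m,q}_l(S)$ is an additive subgroup. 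The reordering of the (absolutely convergent) double series $\sum_{j,k}\alpha_j\beta_k$ into $\sum_{l \ge n} \gamma_l$ is legitimate in the $\pi$-adic (equivalently $\tau_1$) topology, since for each fixed $l$ only finitely many pairs $(j,k)$ contribute and all other contributions go into $\Fil_{-l-1}$ or smaller.

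Therefore $aw = \theta^{m,q}_A((\gamma_l)_{l \ge n})$ lies in $\Fil'_{-n}W_m\Omega^q_K$, which proves the claim. The only genuine obstacle is the justification of convergence and reindexing of the double series; this is handled cleanly by Lemma~\ref{lem:Topology-0}, which guarantees that the graded filtration $\tau_1$ agrees with the $\pi$-adic and $W_m((\pi))$-adic filtrations and that $W_m(A)$ is complete, so that the formal manipulations carried out above have unambiguous meaning in $W_m\Omega^q_A(\log\pi)$.
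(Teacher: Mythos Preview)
Your proof is correct and follows essentially the same route as the paper's own argument: both use the decomposition of Corollary~\ref{cor:GH-2} (and Corollary~\ref{cor:F-group-3} for $W_m(A)$), the multiplicative estimate $F^{m,0}_j(S)\cdot F^{m,q}_k(S)\subset F^{m,q}_{j+k}(S)$ from Lemma~\ref{lem:F-group-0}, and the completeness/topology comparison of Lemma~\ref{lem:Topology-0} to pass from finite truncations to the full product. The only cosmetic difference is that the paper truncates both the scalar and the element simultaneously and takes a diagonal limit $\gamma_i\omega_i\to\gamma\omega$, whereas you fix $a$ and truncate only $w$; these are equivalent once one knows multiplication by a fixed element of $W_m(A)$ is continuous, which you correctly note.
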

\begin{proof}
  We only need to show that $\Fil'_{-n}W_m\Omega^q_K$ is closed under the action of
  $W_m(A)$. We let $\omega = \sum_{j \ge n} \alpha_j \in \Fil'_{-n}W_m\Omega^q_K$,
  where $\alpha_j \in \theta^{m,q}_A(F^{m,q}_{j}(S))$. Write
  $\omega_i = \stackrel{i}{\underset{j = n}\sum} \alpha_j$. Since
  $W_m\Omega^q_A(\log \pi)$ is finitely generated $W_m(A)$-module
  (cf. \lemref{lem:Log-fil-4}(2)), it is complete with respect to the
  $I$-adic topology.
  So, we can write $\omega = \lim_{i \to \infty} \omega_i$, where the limit is taken
  with respect to the
  $\tau_3$ topology. Now let $\gamma = \sum_{j \ge 0} \beta_j \in W_m(A)$, where
  $\beta_j \in \theta^{m,0}_A(
  F^{m,0}_{j}(S))$. Similarly to above, we can write $\gamma = \lim_{i \to \infty}
  \gamma_i$, where $\gamma_i = \stackrel{i}{\underset{j = 0}\sum} \beta_j$ and the
  limit is taken in the $\tau_3$ topology. Then
  $\lim_{i \to \infty} \gamma_i \cdot \omega_i =
  \gamma \cdot \omega$. But $\gamma_i \cdot \omega_i \subset
  \theta^{m,q}_A(\bigoplus\limits_{j=n}^{2i} F^{m,q}_j(S))$ by \lemref{lem:F-group-0}.
  Hence, $\gamma \cdot \omega \in \sum_{j \ge n}\theta^{m,q}_A ( F^{m,q}_j(S))=
  \Fil'_{-n}W_m\Omega^q_K$.
\end{proof}

\begin{lem}\label{lem:Topology-1}
  $d(\Fil'_{-n}W_m\Omega^q_K) \subset \Fil'_{-n}W_m\Omega^{q+1}_K$ for every
  $n \ge 0$.
\end{lem}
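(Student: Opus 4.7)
The plan is to reduce the claim to the piece-wise statement $d(F^{m,q}_j(S)) \subset F^{m,q+1}_j(S)$ from \lemref{lem:F-group-1}(1) via the decomposition $\theta^{m,q}_A$ of \corref{cor:GH-2}, and then to pass to the limit in the $I$-adic topology on $W_m\Omega^\bullet_A(\log\pi)$, where $I = W_m((\pi))$. The key preliminary point is that the derivation $d$ is continuous for this topology. Indeed, by the Leibniz rule, for any $N \ge 1$, elements $a_1, \ldots, a_N \in I$ and $\omega \in W_m\Omega^q_A(\log\pi)$, one finds
\[
d(a_1 \cdots a_N\, \omega) = \sum_{i=1}^{N} a_1 \cdots \wh{a}_i \cdots a_N\, da_i \cdot \omega + a_1 \cdots a_N \, d\omega \ \in \ I^{N-1} \cdot W_m\Omega^{q+1}_A(\log\pi),
\]
so that $d(I^N W_m\Omega^q_A(\log\pi)) \subset I^{N-1} W_m\Omega^{q+1}_A(\log\pi)$, and continuity follows from the separatedness established in \lemref{lem:Topology-0}.

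Next, given $\omega \in \Fil'_{-n}W_m\Omega^q_K$, the decomposition of \corref{cor:GH-2} allows us to write $\omega = \sum_{j \ge n} \alpha_j$ with $\alpha_j \in \theta^{m,q}_A(F^{m,q}_j(S))$, where by \lemref{lem:Log-fil-4}(2), \propref{prop:F-fin}(5), and \lemref{lem:Topology-0} the partial sums $\omega_i := \sum_{j=n}^i \alpha_j$ converge to $\omega$ in the $I$-adic topology on $W_m\Omega^q_A(\log\pi)$. By continuity of $d$ we then have $d(\omega) = \lim_{i \to \infty} \sum_{j=n}^i d(\alpha_j)$, and by \lemref{lem:F-group-1}(1) each $d(\alpha_j)$ belongs to $\theta^{m,q+1}_A(F^{m,q+1}_j(S))$. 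In particular, every partial sum $\sum_{j=n}^i d(\alpha_j)$ already lies in $\Fil'_{-n}W_m\Omega^{q+1}_K$.

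The main remaining obstacle is to show that the limit $d(\omega)$ itself remains inside $\Fil'_{-n}W_m\Omega^{q+1}_K$, i.e., that the latter is closed in the $I$-adic topology on $W_m\Omega^{q+1}_A(\log\pi)$. But by \lemref{lem:fil'-mod} this is a $W_m(A)$-submodule of the finitely generated $W_m(A)$-module $W_m\Omega^{q+1}_A(\log\pi)$ (using \lemref{lem:Log-fil-4}(2),(5)), and since $W_m(A)$ is Noetherian and $I$-adically complete and separated by \propref{prop:F-fin}(5) and \lemref{lem:Topology-0}, the Artin--Rees lemma implies that every $W_m(A)$-submodule of a finitely generated $W_m(A)$-module is closed in the $I$-adic topology. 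This delivers the desired inclusion $d(\omega) \in \Fil'_{-n}W_m\Omega^{q+1}_K$ and completes the argument.
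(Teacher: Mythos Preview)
Your proof is correct and follows essentially the same approach as the paper: write $\omega = \lim_i \omega_i$ via the decomposition of \corref{cor:GH-2}, use continuity of $d$ to get $d(\omega) = \lim_i d(\omega_i)$, and then invoke \lemref{lem:F-group-1}(1) for each component. The only minor differences are that the paper establishes continuity of $d$ via the $\tau_4$-filtration (using that $d$ preserves $\Fil_{-n'}W_m\Omega^q_K$, \lemref{lem:Log-fil-Wcom}) rather than the Leibniz/$\tau_3$ argument, and concludes by directly identifying the limit with the element $\sum_{j \ge n} d(\alpha_j) \in \theta^{m,q+1}_A(\prod_{j \ge n} F^{m,q+1}_j(S)) = \Fil'_{-n}W_m\Omega^{q+1}_K$ rather than appealing to Artin--Rees closedness; by \lemref{lem:Topology-0} these routes are equivalent.
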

\begin{proof}
  We continue using the notations from the proof of \lemref{lem:fil'-mod}. For
  $\omega \in \Fil'_{-n}W_m\Omega^q_K$, we can write $\omega = \lim_{i \to \infty}
  \omega_i$, where the limit is taken with respect to
  $\tau_3$ and hence with respect to $\tau_4$ by \lemref{lem:Topology-0}.
  We now let $n' \ge 0$ be any integer. We can then find $i \gg 0$ such that
  $\omega - \omega_i \in \Fil_{-n'}W_m\Omega^q_K$ for all $i \gg n$. 
  This implies that $d(\omega) - d(\omega_i) = d(\omega- \omega_i) \in
  \Fil_{-n'}W_m\Omega^{q+1}_K$ for all $i \gg n$ by \lemref{lem:Log-fil-Wcom}. 
But this means that $d(\omega) = \lim_{i \to \infty} d(\omega_i) =
\sum_{j \ge n} d(\alpha_j)$, where $d(\alpha_j)\subset
d(\theta^{m,q}_A(F^{m,q}_{j}(S)))=^1
\theta^{m,q+1}_A(d(F^{m,q}_{j}(S))) \subset^2
\theta^{m,q+1}_A(F^{m,q+1}_{j}(S))$.
Here, the equality $=^1$ holds because the map $\theta^{m,q}_A$ is induced by the
natural map $F^{m,q}_{j}(S) \to W_m\Omega^q_A(\log \pi)$, which commutes with $d$.
The inclusion $\subset^2$ holds by \lemref{lem:F-group-1}(1).
\end{proof}

\begin{lem}\label{lem:F-group-4}
  We have $\Fil_{-n}W_m\Omega^q_K \subseteq \Fil'_{-n}W_m\Omega^q_K$ for every
  $n \ge 0$.
\end{lem}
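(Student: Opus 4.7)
The plan is to unpack the defining expression
\[
\Fil_{-n}W_m\Omega^q_K = \Fil_{-n}W_m(K) \cdot W_m\Omega^q_A(\log\pi) + d(\Fil_{-n}W_m(K)) \cdot W_m\Omega^{q-1}_A(\log\pi)
\]
and show separately that each summand is contained in $\Fil'_{-n}W_m\Omega^q_K$, by multiplying out the series representations provided by \corref{cor:F-group-3} and \corref{cor:GH-2}. Concretely, every $\alpha \in \Fil_{-n}W_m(K)$ has a convergent expansion $\alpha = \sum_{j \ge n} \alpha_j$ with $\alpha_j \in \theta^{m,0}_A(F^{m,0}_j(S))$, where convergence is in the topology $\tau_3$ (equal to $\tau_1$ and $\tau_4$ by \lemref{lem:Topology-0}), and every $\omega \in W_m\Omega^q_A(\log\pi)$ writes as $\omega = \sum_{k \ge 0} \omega_k$ with $\omega_k \in \theta^{m,q}_A(F^{m,q}_k(S))$.

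For the first summand, fix such $\alpha$ and $\omega$. By joint continuity of multiplication and completeness of $W_m\Omega^q_A(\log\pi)$ in $\tau_3$, the product is the limit of the finite double partial sums
\[
\alpha \cdot \omega = \lim_{N \to \infty} \Bigl(\sum_{j=n}^N \alpha_j\Bigr)\Bigl(\sum_{k=0}^N \omega_k\Bigr).
\]
By \lemref{lem:F-group-0}, each term $\alpha_j \omega_k$ lies in $\theta^{m,q}_A(F^{m,q}_{j+k}(S))$ with $j+k \ge n$, so every finite partial sum belongs to $\Fil'_{-n}W_m\Omega^q_K$. Using \lemref{lem:Positive-F-2} to identify the $\tau_1$-filtration on $W_m\Omega^q_A(\log\pi)$ with the product-topology filtration on $\prod_{l \ge 0} F^{m,q}_l(S)$, each projection $p_l \colon W_m\Omega^q_A(\log\pi) \to F^{m,q}_l(S)$ is $\tau_1$-continuous; since $p_l$ vanishes on every finite partial sum for $l < n$, it vanishes on the limit $\alpha \cdot \omega$ as well, so $\alpha \cdot \omega \in \Fil'_{-n}W_m\Omega^q_K$.

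For the second summand, take $\beta = \sum_{j \ge n} \beta_j$ and $\xi = \sum_{k \ge 0} \xi_k$ and apply the Leibniz rule termwise:
\[
d(\beta_j) \cdot \xi_k = d(\beta_j \xi_k) - \beta_j \cdot d(\xi_k).
\]
By \lemref{lem:F-group-0}, $\beta_j \xi_k \in F^{m,q-1}_{j+k}(S)$; by \lemref{lem:F-group-1}(1), $d(\xi_k) \in F^{m,q}_k(S)$ and $d(\beta_j \xi_k) \in F^{m,q}_{j+k}(S)$; and a second application of \lemref{lem:F-group-0} yields $\beta_j \cdot d(\xi_k) \in F^{m,q}_{j+k}(S)$. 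Hence $d(\beta_j)\xi_k \in F^{m,q}_{j+k}(S)$ with $j+k \ge n$, and the same continuity argument as in the first step concludes that $d(\beta)\xi \in \Fil'_{-n}W_m\Omega^q_K$. The main obstacle throughout is not any single algebraic manipulation but the passage from finite partial sums to the genuine infinite series; the membership of finite partial sums in $\Fil'_{-n}$ follows formally from \lemref{lem:F-group-0} and \lemref{lem:F-group-1}(1), while the limit behaviour rests on $\Fil'_{-n}W_m\Omega^q_K = \prod_{j \ge n} F^{m,q}_j(S)$ being closed in $\tau_1$, which is exactly what the product-topology identification afforded by \lemref{lem:Positive-F-2} together with \lemref{lem:Topology-0} provides.
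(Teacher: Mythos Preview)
Your proof is correct and follows essentially the same route as the paper: expand $\Fil_{-n}W_m(K)$ via \corref{cor:F-group-3} and $W_m\Omega^q_A(\log\pi)$ via \corref{cor:GH-2}, use \lemref{lem:F-group-0} to place finite partial sums of the product in $\Fil'_{-n}$, and pass to the limit using the identification of $\tau_1$ with the product topology. The only organizational difference is that for the second summand the paper applies Leibniz globally and invokes \lemref{lem:Topology-1} to handle $d(\Fil'_{-n}W_m\Omega^{q-1}_K)$, whereas you apply Leibniz termwise; your version implicitly still needs continuity of $d$ (to write $d(\beta)$ as the limit of $\sum_{j\le N} d(\beta_j)$), which follows from \lemref{lem:Log-fil-Wcom} and \lemref{lem:Topology-0} and is worth making explicit.
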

\begin{proof}
By definition of $\Fil_{-n}W_m\Omega^q_K$ and Corollaries ~\ref{cor:GH-2} and
  ~\ref{cor:F-group-3}, we have
  \[
  \begin{array}{lll}
  \Fil_{-n}W_m\Omega^q_K & = & \Fil_{-n}W_m(K)W_m\Omega^q_A(\log\pi) +
  d(\Fil_{-n}W_m(K))W_m\Omega^{q-1}_A(\log\pi) \\
  & = &  \Fil'_{-n}W_m(K)W_m\Omega^q_A(\log\pi) +
  d(\Fil'_{-n}W_m(K))W_m\Omega^{q-1}_A(\log\pi) \\
  & \subset &  \Fil'_{-n}W_m(K)W_m\Omega^q_A(\log\pi) +
  d(\Fil'_{-n}W_m(K)W_m\Omega^{q-1}_A(\log\pi)) \\
  & & + \ \Fil'_{-n}W_m(K)d(W_m\Omega^{q-1}_A(\log\pi)).
  \end{array}
  \]

  Next, note that every element of $\Fil'_{-n}W_m(K)W_m\Omega^q_A(\log\pi)$ is a
  finite sum of elements of the form $\gamma \cdot \omega$, where
  $\gamma \in \Fil'_{-n}W_m(K)$ and $\omega \in W_m\Omega^q_A(\log\pi)=
  \Fil'_0W_m\Omega^q_K$. Following the proof of
  \lemref{lem:fil'-mod}, we write  $\omega = \lim_{i \to \infty} \omega_i$, where
  $\omega_i = \stackrel{i}{\underset{j = 0}\sum} \alpha_j$ and $\gamma =
  \lim_{i \to \infty} \gamma_i$, where $\gamma_i = \stackrel{i}{\underset{j = n}\sum}
  \beta_j$ (note the change in indices of $\alpha_j$ and $\beta_j$). These limits are
  taken in the $\tau_3$ topology. This yields
  $\gamma \cdot \omega = \lim_{i \to \infty} \gamma_i \cdot \omega_i \in
  \Fil'_{-n}W_m\Omega^q_K$.
  This implies $\Fil'_{-n}W_m(K)W_m\Omega^q_A(\log\pi) \subset
  \Fil'_{-n}W_m\Omega^q_K$.

Similarly,
$\Fil'_{-n}W_m(K)d(W_m\Omega^{q-1}_A(\log\pi)) \subset 
\Fil'_{-n}W_m(K)W_m\Omega^{q}_A(\log\pi) \subset \Fil'_{-n}W_m\Omega^q_K$.
Finally, $d(\Fil'_{-n}W_m(K)W_m\Omega^{q-1}_A(\log\pi)) \subset
d(\theta^{m,q-1}_A({\prod}_{j \ge n} F^{m,q-1}_{j}(S))) = d(\Fil'_{-n}W_m\Omega^{q-1}_K)$
by \lemref{lem:F-group-0}.
The latter group lies in $\Fil'_{-n}W_m\Omega^{q}_K$ by \lemref{lem:Topology-1}, and
this finishes the proof.
\end{proof}

\begin{cor}\label{cor:F-group-5}
  For $n \ge 0$, $\Fil'_{-n}W_m\Omega^q_K$ is open (hence closed) in
  $W_m\Omega^q_A(\log\pi)$ with
  respect to the topology $\tau_i$ for $1 \le i \le 4$.
\end{cor}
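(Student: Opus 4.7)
The plan is to exploit \lemref{lem:Topology-0}, which states that the four topologies $\tau_1, \tau_2, \tau_3, \tau_4$ on $W_m\Omega^q_A(\log\pi)$ all coincide. Hence it suffices to verify openness with respect to a single one of them, which I will take to be $\tau_4$, whose basic neighborhoods of $0$ are $\{\Fil_{-n}W_m\Omega^q_K\}_{n \ge 0}$.

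The key input is \lemref{lem:F-group-4}, which gives the containment $\Fil_{-n}W_m\Omega^q_K \subseteq \Fil'_{-n}W_m\Omega^q_K$. Since $\Fil'_{-n}W_m\Omega^q_K$ is a $W_m(A)$-submodule by \lemref{lem:fil'-mod}, and in particular an additive subgroup of $W_m\Omega^q_A(\log\pi)$, this containment says precisely that $\Fil'_{-n}W_m\Omega^q_K$ contains a basic $\tau_4$-neighborhood of the identity.

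It then remains to invoke the standard fact that a subgroup of a topological group containing an open neighborhood of the identity is itself open, and hence also closed, since its complement decomposes as a disjoint union of translates of the open subgroup. Concretely, for any $\omega \in \Fil'_{-n}W_m\Omega^q_K$ the coset $\omega + \Fil_{-n}W_m\Omega^q_K$ is a $\tau_4$-open neighborhood of $\omega$ still lying inside $\Fil'_{-n}W_m\Omega^q_K$. I do not anticipate any genuine obstacle here beyond correctly packaging the three preceding lemmas; the corollary is essentially a formal consequence of \lemref{lem:Topology-0}, \lemref{lem:fil'-mod}, and \lemref{lem:F-group-4}.
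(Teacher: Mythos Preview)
Your proposal is correct and follows essentially the same approach as the paper: the paper's proof also combines \lemref{lem:fil'-mod} and \lemref{lem:F-group-4} to conclude $\tau_4$-openness, then invokes \lemref{lem:Topology-0}. You have simply spelled out in more detail the standard topological-group argument that the paper leaves implicit.
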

\begin{proof}
  Combining Lemmas~\ref{lem:fil'-mod} and ~\ref{lem:F-group-4}, we get that
  $\Fil'_{-n}W_m\Omega^q_K$ is $\tau_4$-open. We now apply \lemref{lem:Topology-0}. 
\end{proof}
 
\begin{lem}\label{lem:F-group-7}
We have $\Fil'_{-n}W_m\Omega^q_K = \Fil_{-n}W_m\Omega^q_K$ for every $n \ge 0$.
\end{lem}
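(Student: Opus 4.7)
The plan is to combine Lemma~\ref{lem:F-group-4}, which gives the inclusion $\Fil_{-n}W_m\Omega^q_K \subseteq \Fil'_{-n}W_m\Omega^q_K$, with the reverse inclusion $\Fil'_{-n}W_m\Omega^q_K \subseteq \Fil_{-n}W_m\Omega^q_K$. The first observation is that, for $n \ge 0$, both groups sit inside $W_m\Omega^q_A(\log\pi)$: for $\Fil'_{-n}W_m\Omega^q_K$ this is immediate from its definition together with Corollary~\ref{cor:GH-2}, while for $\Fil_{-n}W_m\Omega^q_K$ it follows because $\Fil_{-n}W_m(K)$ consists of tuples $(a_i)$ with $a_i^{p^i}\in (\pi^n)\subseteq A$, so in particular $\Fil_{-n}W_m(K)\subseteq W_m(A)$.

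The heart of the argument will be to verify that $\Fil_{-n}W_m\Omega^q_K$ is closed in $W_m\Omega^q_A(\log\pi)$ with respect to the topology $\tau_1$ (equivalently any of $\tau_2,\tau_3,\tau_4$, by Lemma~\ref{lem:Topology-0}). By Lemma~\ref{lem:Log-fil-Wcom}, $\Fil_{-n}W_m\Omega^q_K$ is a $W_m(A)$-submodule of the finitely generated $W_m(A)$-module $W_m\Omega^q_A(\log\pi)$, where finite generation is provided by Lemma~\ref{lem:Log-fil-4}(2) together with Proposition~\ref{prop:F-fin}(5). Since $W_m(A)$ is Noetherian and local with maximal ideal containing $I = W_m((\pi))$, and is $I$-adically complete by Lemma~\ref{lem:Topology-0}, Krull's intersection theorem applied to the finitely generated quotient $W_m\Omega^q_A(\log\pi)/\Fil_{-n}W_m\Omega^q_K$ shows that $\Fil_{-n}W_m\Omega^q_K$ is $\tau_3$-closed, hence $\tau_1$-closed.

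Given $\omega\in \Fil'_{-n}W_m\Omega^q_K$, I would then write $\omega = \lim_{i\to \infty}\omega_i$ in the $\tau_1$ topology, where each $\omega_i = \sum_{j=n}^{i}\theta^{m,q}_A(\alpha_j)$ is a finite sum with $\alpha_j\in F^{m,q}_j(S)$ (this is the meaning of $\omega$ lying in the image $\theta^{m,q}_A(\prod_{j\ge n}F^{m,q}_j(S))$). By Lemma~\ref{lem:F-group-6}, each term $\theta^{m,q}_A(\alpha_j)$ lies in $\Fil_{-j}W_m\Omega^q_K$, which for $j\ge n\ge 0$ is contained in $\Fil_{-n}W_m\Omega^q_K$ by the monotonicity condition (4) of Definition~\ref{defn:Filtered-Witt-complex}. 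Thus each partial sum $\omega_i$ lies in $\Fil_{-n}W_m\Omega^q_K$, and by the closedness established above, so does the limit $\omega$. This yields $\Fil'_{-n}W_m\Omega^q_K \subseteq \Fil_{-n}W_m\Omega^q_K$ and, combined with Lemma~\ref{lem:F-group-4}, completes the proof.

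The main technical obstacle I anticipate is the $\tau_3$-closedness of $\Fil_{-n}W_m\Omega^q_K$; once this is in hand, the argument reduces to a straightforward density argument built on top of Lemma~\ref{lem:F-group-6}, Corollary~\ref{cor:GH-2}, and the monotonicity of the filtration.
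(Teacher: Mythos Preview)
Your proof is correct and follows the same approach as the paper: use Lemma~\ref{lem:F-group-4} for one inclusion, then write $\omega\in\Fil'_{-n}$ as a limit of partial sums lying in $\Fil_{-n}$ (via Lemma~\ref{lem:F-group-6}) and conclude by closedness of $\Fil_{-n}W_m\Omega^q_K$. The only minor variation is that the paper establishes this closedness by observing that $\Fil_{-n}W_m\Omega^q_K$ is a member of the filtration defining $\tau_4$ and is therefore trivially $\tau_4$-open (hence closed), rather than appealing to Krull's intersection theorem.
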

\begin{proof}
In view of \lemref{lem:F-group-4}, we only need to show that
  $\Fil'_{-n}W_m\Omega^q_K \subset \Fil_{-n}W_m\Omega^q_K$.
  The latter assertion is obvious if $n =0$ because $\Fil_{0}W_m\Omega^q_K =
  W_m\Omega^q_A(\log\pi) = \Fil'_{0}W_m\Omega^q_K$ by
  \corref{cor:GH-2}. We assume therefore that $n \ge 1$.
  In particular, $\Fil'_{-n}W_m\Omega^q_K \subset W_m\Omega^q_A$.
  We now let $\omega \in \Fil'_{-n}W_m\Omega^q_K$.
We write $\omega = \lim_{i \to \infty} \omega_i$ as in the proof of
\lemref{lem:fil'-mod}, where the limit is taken with respect to $\tau_3$
(cf. \cite[Proof of Thm.~B, p.~488]{Geisser-Hesselholt-Top}).
\lemref{lem:F-group-6} then says that $\omega_i \in \Fil_{-n}W_m\Omega^q_K$
    for each $i \ge n$. Since $\Fil_{-n}W_m\Omega^q_K$ is $\tau_4$-open (and hence
    closed) in $W_m\Omega^q_A(\log\pi)$ by \corref{cor:F-group-5}, we conclude that
    $\omega \in \Fil_{-n}W_m\Omega^q_K$.
\end{proof}

We shall now describe $\Fil_nW_m\Omega^q_K$ for arbitrary
$n \in \Z$. For $l \in \Z$, we let $t = n-p^{m-1}l $.

\begin{lem}\label{lem:Positive-F-0}
  We have $\Fil_tW_m(K) = [\pi]^{l}_m \Fil_{n}W_m(K)$ for every $n \in \Z$.
\end{lem}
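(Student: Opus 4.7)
The plan is to reduce the lemma to a direct computation using the explicit formula for the multiplication of a Witt vector by a Teichm\"uller representative, extending the computation carried out in the proof of \lemref{lem:Log-fil-2-1} to arbitrary $l \in \Z$.

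First, since $\pi$ is invertible in $K$, the Teichm\"uller lift $[\pi]_m \in W_m(K)$ is a unit with inverse $[\pi^{-1}]_m$, so $[\pi]^l_m = [\pi^l]_m$ is well-defined for every $l \in \Z$. Combining this observation with the formula appearing in the proof of \lemref{lem:Log-fil-2-1}, I would record the identity
\[
[\pi]^l_m \cdot \underline{a} = \left(\pi^l a_{m-1},\ \pi^{pl} a_{m-2},\ \dots,\ \pi^{p^{m-1-i}l}\, a_i,\ \dots,\ \pi^{p^{m-1}l}\, a_0\right)
\]
for every $l \in \Z$ and $\underline{a} = (a_{m-1}, \dots, a_0) \in W_m(K)$.

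Next, I compute the $p^i$-th power of the $i$-th written component of $[\pi]^l_m \cdot \underline{a}$ multiplied by $\pi^t$:
\[
\left(\pi^{p^{m-1-i}l}\, a_i\right)^{p^i} \pi^t \;=\; \pi^{p^{m-1}l + t}\, a_i^{p^i} \;=\; \pi^n\, a_i^{p^i}.
\]
By \defref{defn:Log-fil-0} and the local description in \eqref{eqn:Log-fil-1*}, this expression belongs to $A$ for every $i$ if and only if $\underline{a} \in \Fil_n W_m(K)$. This yields $[\pi]^l_m \cdot \Fil_n W_m(K) \subseteq \Fil_t W_m(K)$.

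The reverse inclusion is then automatic by symmetry: applying the same argument with $(l, n, t)$ replaced by $(-l, t, n)$ gives $[\pi]^{-l}_m \cdot \Fil_t W_m(K) \subseteq \Fil_n W_m(K)$, and multiplying by the invertible element $[\pi]^l_m$ yields $\Fil_t W_m(K) \subseteq [\pi]^l_m \cdot \Fil_n W_m(K)$. No substantive technical obstacle is anticipated; the lemma amounts to a one-line computation once the Teichm\"uller multiplication formula is in hand, and it properly generalizes \lemref{lem:Log-fil-2-1} (which corresponds to the special case $n = 0$ and $l < 0$).
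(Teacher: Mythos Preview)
Your proof is correct and essentially identical to the paper's own argument: both use the explicit formula $[\pi]^l_m \cdot (a_{m-1},\dots,a_0) = (a_{m-1}\pi^l, a_{m-2}\pi^{pl}, \dots, a_0\pi^{p^{m-1}l})$, compute $(a_i\pi^{p^{m-1-i}l})^{p^i}\pi^t = a_i^{p^i}\pi^n$, and conclude by symmetry in $(l,n,t)$.
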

\begin{proof}
Let $x = (a_{m-1}, \ldots , a_0) \in \Fil_nW_m(K)$, that is,
  $a_i^{p^i} \pi^n \in A$ for each $i$.
  Then $[\pi]^{l}_mx = (a_{m-1}\pi^{l}, a_{m-2}\pi^{pl}, \ldots , a_0\pi^{p^{m-1}l})$.
  Letting $b_i = a_i\pi^{p^{m-1-i}l}$, we get $(b_i)^{p^i}\pi^{t}
  = a_i^{p^i}\pi^{p^{m-1}l+t} = a_i^{p^i}\pi^n \in A$. This implies that
  $[\pi]^{l}_m\Fil_nW_m(K) \subset \Fil_{t}W_m(K)$ for all $l,n \in \Z$. Then,
  we also get $[\pi]^{-l}_m\Fil_tW_m(K) \subset \Fil_{n}W_m(K)$, as desired.
\end{proof}

\begin{lem}\label{lem:Positive-F-1}
  We have $\Fil_tW_m\Omega^q_K = [\pi]^{l}_m \Fil_{n}W_m\Omega^q_K$ for every $n \in \Z$.
\end{lem}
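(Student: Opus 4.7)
The plan is to bootstrap from the $q=0$ case (Lemma \ref{lem:Positive-F-0}) using the definition of $\Fil_n W_m\Omega^q_K$ together with the Leibniz rule. By \defref{defn:Log-fil-3}, applied in our local setting,
\[
\Fil_n W_m\Omega^q_K \;=\; \Fil_n W_m(K)\cdot W_m\Omega^q_A(\log \pi) \;+\; d\bigl(\Fil_n W_m(K)\bigr)\cdot W_m\Omega^{q-1}_A(\log \pi),
\]
and similarly with $n$ replaced by $t = n - p^{m-1}l$. It suffices to establish $[\pi]^l_m \Fil_n W_m\Omega^q_K \subseteq \Fil_t W_m\Omega^q_K$, for then applying the same inclusion with $(n,l)$ replaced by $(t,-l)$ gives the reverse containment.

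So pick $\omega \in \Fil_n W_m\Omega^q_K$ and write it, without loss of generality, as $\omega = \alpha\beta + d(\gamma)\delta$ with $\alpha,\gamma \in \Fil_n W_m(K)$, $\beta \in W_m\Omega^q_A(\log\pi)$, $\delta \in W_m\Omega^{q-1}_A(\log\pi)$. The first summand is immediate: by \lemref{lem:Positive-F-0} we have $[\pi]^l_m \alpha \in \Fil_t W_m(K)$, so $[\pi]^l_m \alpha\beta \in \Fil_t W_m(K)\cdot W_m\Omega^q_A(\log\pi) \subseteq \Fil_t W_m\Omega^q_K$. For the second summand I would apply Leibniz to obtain
\[
[\pi]^l_m \, d(\gamma) \;=\; d\bigl([\pi]^l_m \gamma\bigr) \;-\; d([\pi]^l_m)\,\gamma \;=\; d\bigl([\pi]^l_m \gamma\bigr) \;-\; l\,[\pi]^l_m\,\gamma\,\dlog([\pi]_m),
\]
using $d([\pi]^l_m) = l[\pi]^l_m \dlog([\pi]_m)$, which holds in $W_m\Omega^1_A(\log \pi)$ by \thmref{thm:Log-DRW}. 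Multiplying by $\delta$, the first term lies in $d(\Fil_t W_m(K))\cdot W_m\Omega^{q-1}_A(\log\pi)$ (again by \lemref{lem:Positive-F-0}), while the second lies in $\Fil_t W_m(K)\cdot W_m\Omega^q_A(\log\pi)$ since $\dlog([\pi]_m)\wedge \delta \in W_m\Omega^q_A(\log\pi)$. Both belong to $\Fil_t W_m\Omega^q_K$, completing the inclusion $[\pi]^l_m \Fil_n W_m\Omega^q_K \subseteq \Fil_t W_m\Omega^q_K$.

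No step in this argument looks like a genuine obstacle: the $q = 0$ case is already done, and the only new ingredient is the computation $d([\pi]^l_m) = l[\pi]^l_m\dlog([\pi]_m)$, which is standard for the Teichm\"uller lift and legitimate in $W_m\Omega^1_A(\log\pi)$ thanks to the identification with the log de Rham--Witt complex in \thmref{thm:Log-DRW}. The symmetric argument with $-l$ in place of $l$ then gives the reverse inclusion, yielding the claimed equality.
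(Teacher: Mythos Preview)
Your proof is correct and follows essentially the same route as the paper's: both reduce to the $q=0$ case (\lemref{lem:Positive-F-0}), apply the Leibniz rule to handle the $d(\Fil_n W_m(K))$ term, and use that $\dlog([\pi]_m)\in W_m\Omega^1_A(\log\pi)$. The only cosmetic difference is that the paper proves the inclusion $\Fil_t W_m\Omega^q_K \subseteq [\pi]^l_m \Fil_n W_m\Omega^q_K$ first (expanding $d(\Fil_t W_m(K)) = d([\pi]^l_m\Fil_n W_m(K))$ via Leibniz) and then appeals to symmetry, whereas you prove the reverse inclusion first; the underlying computation is identical.
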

\begin{proof}
By definition, $\Fil_tW_m\Omega^q_K = \Fil_{t}W_m(K)W_m\Omega^q_A(\log\pi) +
d(\Fil_{t}W_m(K))W_m\Omega^{q-1}_A(\log\pi)$.
By \lemref{lem:Positive-F-0}, $\Fil_{t}W_m(K)W_m\Omega^q_A(\log\pi) 
=  [\pi]^{l}_m \Fil_{n}W_m(K)W_m\Omega^q_A(\log\pi)$ and
\[
\begin{array}{lll}
d(\Fil_{t}W_m(K))W_m\Omega^{q-1}_A(\log\pi) & = &
d([\pi]^{l}_m \Fil_{n}W_m(K))W_m\Omega^{q-1}_A(\log\pi) \\
& \subset & [\pi]^{l}_m d(\Fil_{n}W_m(K)) W_m\Omega^{q-1}_A(\log\pi) \\
& & + \ \Fil_{n}W_m(K)[\pi]^{l}_mW_m\Omega^{q-1}_A(\log\pi) \dlog([\pi]_m) \\
& \subset &  [\pi]^{l}_m \big( d(\Fil_{n}W_m(K)) W_m\Omega^{q-1}_A(\log\pi)  \\
& & + \ \Fil_{n}W_m(K)W_m\Omega^{q-1}_A \dlog([\pi]_m)\big) \\ 
& = & [\pi]^{l}_m\Fil_{n}W_m\Omega^q_K.
\end{array}
\]
This shows that $\Fil_tW_m\Omega^q_K \subseteq [\pi]^{l}_m \Fil_{n}W_m\Omega^q_K$ for
all $l \in \Z$. The reverse inclusion also follows in the same way we did before
(cf. \lemref{lem:Positive-F-0}).
\end{proof}

We can finally describe the pre-image of $\Fil_\bullet W_m\Omega^q_K$ under the
isomorphism $\theta^{m,q}_K$ of \corref{cor:GH-3} as follows.

\begin{prop}\label{prop:Fil-decom}
  For $n \in \Z$, there is a canonical decomposition
\[
\theta^{m,q}_K \colon
 \prod_{j \ge -n} F^{m,q}_j(S) \xrightarrow{\cong} \Fil_nW_m\Omega^q_K.
  \]
  \end{prop}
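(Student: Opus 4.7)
The plan is to bootstrap from \lemref{lem:F-group-7} (which handles the case $n \le 0$) to arbitrary $n \in \Z$ by using multiplication by powers of $[\pi]_m$ as an intertwiner between the filtration on the geometric side and the index shift on the combinatorial side. The two key inputs are \lemref{lem:Positive-F-1}, which shifts the filtration level via $\Fil_{n-p^{m-1}l}W_m\Omega^q_K = [\pi]^l_m\,\Fil_n W_m\Omega^q_K$, and \lemref{lem:Positive-F-2}, which shifts the combinatorial index via $[\pi]^l_m F^{m,q}_j(S) = F^{m,q}_{j+p^{m-1}l}(S)$. These match up precisely.

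First I would handle $n \le 0$. Setting $n' = -n \ge 0$, \lemref{lem:F-group-7} (together with the definition of $\Fil'$ and \corref{cor:GH-2}) directly gives
\[
\Fil_n W_m\Omega^q_K \;=\; \Fil'_{-n'}W_m\Omega^q_K \;=\; \theta^{m,q}_A\Bigl(\prod_{j \ge n'} F^{m,q}_j(S)\Bigr) \;=\; \theta^{m,q}_K\Bigl(\prod_{j \ge -n} F^{m,q}_j(S)\Bigr),
\]
where the last identification uses that the indexing set $\{j \ge -n\}$ contains no negative integers, so the relevant subgroup of the domain of $\theta^{m,q}_K$ coincides with a subgroup of the domain of $\theta^{m,q}_A$. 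Injectivity on this subgroup is inherited from the isomorphism in \corref{cor:GH-2}.

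For general $n \in \Z$, I would choose an integer $l$ large enough so that $t := n - p^{m-1}l \le 0$ (e.g.\ $l = \max\{0,\lceil n/p^{m-1}\rceil\}$). By \lemref{lem:Positive-F-1}, multiplication by the unit $[\pi]^{-l}_m \in W_m(K)^{\times}$ gives a $W_m(A)$-linear bijection
\[
[\pi]^{-l}_m \colon \Fil_t W_m\Omega^q_K \xrightarrow{\cong} \Fil_n W_m\Omega^q_K.
\]
Applying the previously handled case to $t$ and combining with \lemref{lem:Positive-F-2} applied to $-l$, multiplication by $[\pi]^{-l}_m$ carries $F^{m,q}_j(S)$ bijectively onto $F^{m,q}_{j-p^{m-1}l}(S)$; as $j$ ranges over $\{j \ge -t\}$, the shifted index $j' = j - p^{m-1}l$ ranges over $\{j' \ge -t - p^{m-1}l\} = \{j' \ge -n\}$. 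Since $\theta^{m,q}_K$ is, by construction, compatible with multiplication by $[\pi]_m$ inside $W_m\Omega^q_K$ (it is the natural inclusion on each summand/factor), this identifies
\[
\Fil_n W_m\Omega^q_K \;=\; [\pi]^{-l}_m\,\theta^{m,q}_K\Bigl(\prod_{j \ge -t}F^{m,q}_j(S)\Bigr) \;=\; \theta^{m,q}_K\Bigl(\prod_{j' \ge -n}F^{m,q}_{j'}(S)\Bigr).
\]
Injectivity of the restricted map follows from the fact that $\theta^{m,q}_K$ is an isomorphism on its full domain by \corref{cor:GH-3}.

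The only mildly subtle point is bookkeeping: for $n > 0$ the product $\prod_{j \ge -n}F^{m,q}_j(S)$ contains finitely many negative-index factors $F^{m,q}_{-n}(S), \ldots, F^{m,q}_{-1}(S)$, which live in the direct-sum part $\bigoplus_{j<0}F^{m,q}_j(S)$ of the domain of $\theta^{m,q}_K$ in \corref{cor:GH-3}; this is exactly what the multiplication by $[\pi]^{-l}_m$ produces from the finitely many low-index factors in $\prod_{j \ge -t}F^{m,q}_j(S)$. No convergence issue arises, so no further work is needed.
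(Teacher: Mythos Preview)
Your proof is correct and follows essentially the same approach as the paper: reduce to the case $n \le 0$ via \lemref{lem:F-group-7}, then for positive $n$ choose $l$ with $t = n - p^{m-1}l \le 0$ and use the shift identities \lemref{lem:Positive-F-1} and \lemref{lem:Positive-F-2} to transport the decomposition, followed by the change of variables $j' = j - p^{m-1}l$. Your bookkeeping remark about the finitely many negative-index factors sitting in the direct-sum part of the domain of $\theta^{m,q}_K$ is a nice clarification that the paper leaves implicit.
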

\begin{proof}
 For $n \le 0$, this follows from \lemref{lem:F-group-7}.
Suppose now $n \ge 0$. Choose $l \gg 0$ such that $t =n- p^{m-1}l <0$.
 \lemref{lem:Positive-F-1} then says that
  $\Fil_nW_m\Omega^q_K = [\pi]^{-l}_m\Fil_{t}W_m\Omega^q_K$.
  On the other hand, we have
  \[
    [\pi]^{-l}_m\Fil_{t}W_m\Omega^q_K \ {\cong^1} \ 
    [\pi]^{-l}_m\left(\prod_{j \ge- t}F^{m,q}_j(S)\right) \ \cong \
    \prod_{j \ge -t} [\pi]^{-l}_mF^{m,q}_j(S) \ \cong \ \prod_{j \ge -t}
    F^{m,q}_{j - p^{m-1}l}(S),
    \]
    where the first and the last isomorphisms are by \lemref{lem:F-group-7}
    (induced by $(\theta^{m,q}_A)^{-1}$) and \lemref{lem:Positive-F-2}, respectively.  
By a change of variables $j'=j-p^{m-1}l$, we get
\[
\prod_{j \ge -t} F^{m,q}_{j - p^{m-1}l}(S) = \prod_{j' \ge -n} F^{m,q}_{j'}(S).
\]
Hence, we get an isomorphism $\Fil_nW_m\Omega^q_K \cong
\prod_{j' \ge -n} F^{m,q}_{j'}(S)$, whose inverse is induced by the canonical map
$F^{m,q}_{j'}(S) \to \Fil_nW_m\Omega^q_K$ for $j' \ge -n$ (because $\cong^1$ is
induced by $(\theta^{m,q}_A)^{-1}$). We conclude that the resulting (inverse)
isomorphism is $\theta^{m,q}_K$.
\end{proof}

\subsection{Goodness of $\Fil_n\Omega^{\bullet}_K$}\label{sec:Good-GH}
As the first application of \propref{prop:Fil-decom}, we get the following
result which proves the goodness of the filtered de Rham-Witt complex in a
special case.

\begin{prop}\label{prop:V-R-Fil}
  For every $n \in \Z$, there is a short exact sequence
  \[
  0 \to V^{m-1}(\Fil_n\Omega^q_K) + dV^{m-1}(\Fil_n\Omega^{q-1}_K) \to
  \Fil_nW_m\Omega^q_K \xrightarrow{R} \Fil_{\lfloor{n/p}\rfloor}W_{m-1}\Omega^q_K \to 0.
  \]
\end{prop}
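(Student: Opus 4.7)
The strategy is to pass through the decomposition $\Fil_nW_m\Omega^q_K \cong \prod_{j \ge -n} F^{m,q}_j(S)$ of \propref{prop:Fil-decom} and analyze $R$ componentwise via \lemref{lem:F-group-1}(4), which asserts $R(F^{m,q}_j(S)) = F^{m-1,q}_{j/p}(S)$ when $p\mid j$ and vanishes otherwise. Since $j\mapsto j/p$ is a bijection between $\{j\ge -n : p\mid j\}$ and $\{j'\ge -\lfloor n/p\rfloor\}$, and the explicit forms of \defref{defn:GH-1} show that each componentwise restriction $R\colon F^{m,q}_{pj'}(S) \surj F^{m-1,q}_{j'}(S)$ is already surjective, \emph{surjectivity} of $R$ follows at once by lifting factor by factor (the lift lies in the source by completeness, \lemref{lem:Topology-0}).

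For the \emph{kernel}, $\omega = \sum_{j\ge -n}\alpha_j$ lies in $\Ker(R)$ iff $R(\alpha_j) = 0$ for each $j$, by uniqueness of the analogous decomposition of $\Fil_{\lfloor n/p\rfloor}W_{m-1}\Omega^q_K$. When $p\nmid j$, \defref{defn:GH-1} directly gives $F^{m,q}_j(S) = V^{m-1}(\Omega^q_S\pi^j) + dV^{m-1}(\Omega^{q-1}_S\pi^j)$, whose terms visibly lie in $V^{m-1}(\Fil_{-j}\Omega^q_K) + dV^{m-1}(\Fil_{-j}\Omega^{q-1}_K) \subset V^{m-1}(\Fil_n\Omega^q_K) + dV^{m-1}(\Fil_n\Omega^{q-1}_K)$ since $-j\le n$. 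When $p \mid j$, write $j = p^{m-1-s}i$ with $0\le s\le m-2$, $r = m-s$, and (for $s\ge 1$) $|i|\in I_p$; uniquely express $\alpha_j = V^s(u[\pi]^i_r) + dV^s(u'[\pi]^i_r)$ using \thmref{thm:GH-Top}. The vanishing of $R(\alpha_j) = V^s(Ru\cdot[\pi]^i_{r-1}) + dV^s(Ru'\cdot[\pi]^i_{r-1})$ together with a second appeal to uniqueness at level $m-1$ forces $Ru = 0 = Ru'$, so $u = V^{r-1}(a) + dV^{r-1}(b)$ and similarly for $u'$.

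Using the Witt-complex identities $V(x)\cdot y = V(x\cdot Fy)$ and $d[\pi]^i_r = i[\pi]^i_r\dlog[\pi]_r$, together with $F^{r-1}[\pi]^i_r = \pi^j$ and $F^{r-1}\dlog[\pi]_r = \dlog\pi$, one obtains
\begin{equation*}
V^s(V^{r-1}(a)[\pi]^i_r) = V^{m-1}(a\pi^j), \qquad V^s(dV^{r-1}(b)[\pi]^i_r) = dV^{m-1}(b\pi^j) - iV^{m-1}(b\pi^j\dlog\pi),
\end{equation*}
and a parallel formula for $u'$. Adding these puts $\alpha_j$ in the form $V^{m-1}(\beta_j) + dV^{m-1}(\gamma_j)$ with $\beta_j, \gamma_j \in \Omega^\bullet_S\pi^j + \Omega^\bullet_S\pi^j\dlog\pi \subset \Fil_{-j}\Omega^\bullet_K \subset \Fil_n\Omega^\bullet_K$. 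Assembling, $\omega = V^{m-1}\bigl(\sum_j \beta_j\bigr) + dV^{m-1}\bigl(\sum_j \gamma_j\bigr)$, where the infinite sums converge in the $\pi$-adically complete finitely generated $A$-module $\Fil_n\Omega^\bullet_K = \Omega^\bullet_A(\log\pi)(D)$ (\lemref{lem:Log-fil-4}) and commute with $V^{m-1}, dV^{m-1}$ by continuity (\lemref{lem:Topology-0}).

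The main obstacle is the case $p\mid j$ with $s\ge 1$: uniqueness of the Geisser-Hesselholt series must be invoked twice (first to decompose $\omega$, then to deduce $Ru = Ru' = 0$ from $R(\alpha_j) = 0$ after identifying its natural slot), followed by the identity manipulations above producing the corrective term $-iV^{m-1}(b\pi^j\dlog\pi)$. It is precisely the appearance of this $\dlog\pi$ correction that forces the filtration to be built using $\Omega^\bullet_X(\log E)$ rather than $\Omega^\bullet_X$.
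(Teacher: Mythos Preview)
Your componentwise approach differs from the paper's and is essentially correct, but two of your computations need repair. The paper argues more globally: it first invokes the unfiltered identity $\Ker(R) = V^{m-1}(\Omega^q_K) + dV^{m-1}(\Omega^{q-1}_K)$ (citing \cite[Prop.~2.3]{Shiho}) to write any $x \in \Ker(R)\cap\Fil_n W_m\Omega^q_K$ as $V^{m-1}(\sum_{i\ge -l}b_i) + dV^{m-1}(\sum_{i\ge -l}c_i)$ with $b_i \in F^{1,q}_i(S)$, $c_i \in F^{1,q-1}_i(S)$ for some $l\gg 0$. Since $V^{m-1}(b_i) + dV^{m-1}(c_i) \in F^{m,q}_i(S)$ by \lemref{lem:F-group-1}, uniqueness in \corref{cor:GH-3} forces these to be the components $a_i$ of $x$; hence the truncated sums over $i\ge -n$ already reproduce $x$, and they lie in $\Fil_n$ by \propref{prop:Fil-decom}. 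This bypasses the case analysis on $s$ entirely.

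In your version two details are off. First, for $s=0$ (i.e.\ $p^{m-1}\mid j$) the Geisser--Hesselholt form of $\alpha_j\in A^{m,q}_i(S)$ is $v[\pi]^i_m + w[\pi]^i_m\dlog[\pi]_m$ with $v\in W_m\Omega^q_S$, $w\in W_m\Omega^{q-1}_S$, not $u[\pi]^i_m + d(u'[\pi]^i_m)$; the latter representation need not exist when $p\mid i$ and is in any case not the unique decomposition supplied by \thmref{thm:GH-Top}. Second, your displayed identity for $V^s(dV^{r-1}(b)[\pi]^i_r)$ tacitly assumes that $V$ commutes with $d$, but in fact $Vd = p\,dV$ (from $FdV=d$ and $VF=p$), so for $s\ge 1$ the correct value is $p^s\, dV^{m-1}(b\pi^j) - iV^{m-1}(b\pi^j\dlog\pi)$, and the first term vanishes since $p\cdot V^{m-1}(\Omega^\bullet_K)=0$ in $W_m\Omega^\bullet_K$. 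With these two fixes each $\alpha_j$ still lands in $V^{m-1}(F^{1,q}_j(S)) + dV^{m-1}(F^{1,q-1}_j(S))\subset V^{m-1}(\Fil_n\Omega^q_K)+dV^{m-1}(\Fil_n\Omega^{q-1}_K)$, and your summation and continuity argument goes through unchanged.
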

\begin{proof}
  The surjectivity of $R$ is already shown in the proof of Step~4 of
  \lemref{lem:Log-fil-Wcom}. To prove the exactness in the middle, we let
  $M = V^{m-1}(\Fil_n\Omega^q_K) + dV^{m-1}(\Fil_n\Omega^{q-1}_K)$ and
  $N = \Ker(R)$. It follows then from \cite[Prop.~2.3]{Shiho} that
  $M \subset N =
  (V^{m-1}(\Omega^q_K) + dV^{m-1}(\Omega^{q-1}_K)) \bigcap \Fil_nW_m\Omega^q_K$.
  
We now let $x \in N$. By \propref{prop:Fil-decom}, we can uniquely write
  $x = \sum_{i \ge -n} a_i$ with $a_i \in F^{m,q}_i(S)$.
  By \corref{cor:GH-3}, we can also write 
  $x = V^{m-1}(\sum_{i \ge -l} b_i) + dV^{m-1}(\sum_{i \ge -l} c_i)$ for some $l \gg |n|$
and $b_i \in F^{1,q}_i(S), \ c_i \in F^{1,q-1}_i(S)$.
Hence, we get $\sum_{i \ge -n} a_i = \sum_{i \ge -l}(V^{m-1}(b_i) + dV^{m-1}(c_i))$.
Letting $a'_i = (V^{m-1}(b_i) + dV^{m-1}(c_i))$, we get from \lemref{lem:F-group-1}
that $a'_i \in F^{m,q}_i(S)$. We conclude from \corref{cor:GH-3} that
$a_i = a'_i$ for each $i \ge -l$. In particular, $a'_i = 0$ for $i < -n$.

We thus get 
\[
x = \sum_{i \ge -n} V^{m-1}(b_i) + \sum_{i \ge -n} dV^{m-1}(c_i)
= V^{m-1}\left(\sum_{i \ge -n} b_i\right) + dV^{m-1}\left(\sum_{i \ge -n} c_i\right).
\]
Note here that $V$ and $d$ are continuous with respect to the topology $\tau_2$
by virtue of Lemmas~\ref{lem:Log-fil-Wcom} and ~\ref{lem:Topology-0}.
Since $b_i \in F^{1,q}_i(S)$ and $c_i \in F^{1,q-1}_i(S)$, \propref{prop:Fil-decom}
implies that $\sum_{i \ge -n} b_i \in \Fil_{n}\Omega^q_K$ and
$\sum_{i \ge -n}c_i \in \Fil_{n}\Omega^{q-1}_K$. Letting
$b = \sum_{i \ge -n} b_i$ and $c = \sum_{i \ge -n}c_i$, we get that
$x = V^{m-1}(b) + dV^{m-1}(c)$, where $b \in  \Fil_{n}\Omega^q_K$ and
$c \in \Fil_{n}\Omega^{q-1}_K$. It follows that $x \in M$.
\end{proof}

\section{Filtered de Rham-Witt complex of regular \texorpdfstring{$\F_p$}{Fp}-algebras}
\label{sec:FDW-Reg}
The goal of this section is to prove the goodness of (i.e., to
extend \propref{prop:V-R-Fil} to) the filtered de Rham-Witt complex of
arbitrary regular local $F$-finite $\F_p$-algebras.
We begin with an explicit description of the filtered
de Rham complex of a multivariate power series ring.

\subsection{de Rham complex of multivariate power series ring}
\label{sec:GH-multi-var}
We let $S$ be an $F$-finite Noetherian regular local $\F_p$-algebra and 
let $A = S[[x_1,\ldots , x_d]]$. We let $ 1\le r \le d$ and $K = A_\pi$, where
$\pi = \prod_{1 \le i \le r} x_i$. For $\un{n} = (n_1, \ldots , n_r) \in \Z^r$ and the
invertible ideal $I = (x^{n_1}_1\cdots x^{n_r}_r) \subset K$, we shall
denote $\Fil_I W_m\Omega^q_K$ by $\Fil_{\un{n}}W_m\Omega^q_K$ and write
$M(x^{n_1}_1\cdots x^{n_r}_r)$ as $M(\un{n})$ for any $W_m(A)$-module $M$
(cf. notations in the beginning of \S~\ref{sec:GSPS}). Also, $\un{n}/p$ will denote
$(\lfloor n_1/p \rfloor, \ldots ,\lfloor n_r/p \rfloor) \in \Z^r$.

We let $q \ge 0$ and  let $F^{1,q}_0(S)$ be as in ~\eqref{eqn:Multi-0}.
Then one notes that $F^{1,q}_0(S) \subset \Omega^q_K$. For
$\un{m} = (m_1, \ldots , m_d) \in \Z^d$,
we let $F^{1,q}_{\un{m}}(S) = (\prod_i x^{m_i}_i)F^{1,q}_0(S)$.

\begin{lem}\label{lem:F-function-1}
  For any $\un{n} = (n_1, \ldots , n_r) \in \Z^r$, there is a canonical isomorphism
  of $A$-modules
  \[
  \Fil_{\un{n}}\Omega^q_K \xrightarrow{\cong} {\underset{\un{m} \in \N_0^d}\prod}
  (x^{-n_1}_1\cdots x^{-n_r}_r)F^{1,q}_{\un{m}}(S).
  \]
\end{lem}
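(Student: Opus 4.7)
The plan is to reduce the claim, via Lemma~\ref{lem:Log-fil-4}(3), to a structural description of $\Omega^q_A(\log\pi)$ as a free $A$-module, and then to invoke the Taylor expansion of $A$ over $S$.

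I would first use Lemma~\ref{lem:Log-fil-4}(3) to obtain the identity
$\Fil_{\un{n}}\Omega^q_K = \Omega^q_A(\log\pi)(\un{n}) = x^{-\un{n}}\,\Omega^q_A(\log\pi)$
inside $\Omega^q_K$, where $x^{-\un{n}} := x_1^{-n_1}\cdots x_r^{-n_r}$. This reduces the lemma to producing a canonical isomorphism $\Omega^q_A(\log\pi) \xrightarrow{\cong} \prod_{\un{m}\in\N_0^d} F^{1,q}_{\un{m}}(S)$, after which multiplication by $x^{-\un{n}}$ yields the stated isomorphism.

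The core technical step will be to show that the natural multiplication map
\begin{equation}\label{eq:plan-tensor}
A\otimes_S F^{1,q}_0(S) \xrightarrow{\cong} \Omega^q_A(\log\pi)
\end{equation}
is an isomorphism of $A$-modules. For this, I would first combine Proposition~\ref{prop:F-fin}(7) with the identity $dx_i = x_i\,\dlog(x_i)$ inside $\Omega^1_{A_\pi}$ for $i\le r$ to show that $\Omega^1_A(\log\pi)$ is a free $A$-module with basis consisting of an $S$-basis of $\Omega^1_S$ (lifted to $A$), together with $\dlog(x_1),\ldots,\dlog(x_r)$ and $dx_{r+1},\ldots,dx_d$. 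Taking $q$-th exterior powers and sorting the resulting basis elements by the triple $(i_0, I_{\log}, I_{\mathrm{reg}})$ then reproduces precisely the direct-sum decomposition defining $F^{1,q}_0(S)$ in \eqref{eqn:Multi-0}, giving \eqref{eq:plan-tensor}. Alternatively, this step can be handled by induction on $d$, splitting off one power series variable at a time via Corollary~\ref{cor:LWC-0-2}.

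Once \eqref{eq:plan-tensor} is in hand, the remainder will be formal. By Proposition~\ref{prop:F-fin}(5) at $m=1$, each $\Omega^{i_0}_S$ is a finitely generated $S$-module, hence so is $F^{1,q}_0(S)$. For any finitely presented $S$-module $M$, the Taylor expansion $A = \prod_{\un{m}\in\N_0^d} S\cdot x^{\un{m}}$ implies $A\otimes_S M \cong \prod_{\un{m}\in\N_0^d} x^{\un{m}} M$, by applying $- \otimes_S M$ to a finite presentation $S^n \to S^m \to M \to 0$ and observing that $A\otimes_S S^\bullet = \prod_{\un{m}} x^{\un{m}} S^\bullet$. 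Taking $M = F^{1,q}_0(S)$ then yields $\Omega^q_A(\log\pi) = \prod_{\un{m}} F^{1,q}_{\un{m}}(S)$, and multiplying by $x^{-\un{n}}$ finishes the proof. The main obstacle will be verifying \eqref{eq:plan-tensor}: although it is morally a direct exterior-algebra calculation, some care is required because several summands in the definition of $F^{1,q}_0(S)$ (those with no $\dlog$ factor) already sit inside $\Omega^q_A$, and matching them compatibly to the appropriate Taylor slots of the product decomposition requires the identity $x_i\,\dlog(x_i) = dx_i$ for $i \le r$, which allows portions of $\Omega^q_A$ to be re-expressed as shifts $x^{\un{m}} F^{1,q}_0(S)$ of log-theoretic forms with $m_i \ge 1$ for some $i \le r$.
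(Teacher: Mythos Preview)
Your proposal is correct and follows essentially the same two-step outline as the paper's proof: first invoke \lemref{lem:Log-fil-4}(3) to identify $\Fil_{\un{n}}\Omega^q_K$ with $x^{-\un{n}}\Omega^q_A(\log\pi)$, then describe $\Omega^q_A(\log\pi)$ as $\prod_{\un{m}\in\N_0^d}F^{1,q}_{\un{m}}(S)$. The paper compresses the second step into a one-line citation of \corref{cor:LWC-0-2}, whereas you spell it out via the explicit free basis of $\Omega^1_A(\log\pi)$ coming from \propref{prop:F-fin}(7) and the identity $dx_i = x_i\dlog(x_i)$, followed by the observation that tensoring the finitely presented $S$-module $F^{1,q}_0(S)$ with $A=\prod_{\un{m}}Sx^{\un{m}}$ commutes with the product; this is a more transparent justification of the same fact.
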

\begin{proof}
From \corref{cor:LWC-0-2}, we have that $\Omega^q_A(\log\pi) \cong
  {\underset{\un{m} \in \N_0^d}\prod} F^{1,q}_{\un{m}}(S)$.
The asserted expression for $\Fil_{\un{n}}\Omega^q_K$ now follows from
  its definition and item (2) of \lemref{lem:Log-fil-4}.
  \end{proof}

For $\un{m} = ((m_1, \ldots , m_r), (m_{r+1}, \ldots , m_d)) \in \Z^r \times
\N_0^{d-r}$,
we let $x^{\un{m}} = \prod_{1 \le i \le d} x^{m_i}_i$.
Taking the colimit of
$\Fil_{\un{n}} \Omega^q_K$ as $\un{n} \to \infty$, we get
\begin{cor}\label{cor:F-function-2}
  Every element $\omega \in \Omega^q_K$ can be written uniquely as an infinite
  series
  $\omega = {\underset{\un{m} \in \Z^r \times \N_0^{d-r}}\sum} a_{\un{m}}x^{\un{m}}$,
  where $a_{\un{m}} \in F^{1,q}_0(S)$ for all $\un{m}$ and $a_{\un{m}} = 0$ if
  $m_i \ll 0$ for some $1 \le i \le r$. Furthermore, $\omega \in
  \Omega^q_A(\log\pi)$ if
  and only if $a_{\un{m}} = 0$ for every $\un{m}$ such that $m_i < 0$ for some
  $1 \le i \le r$, and $\omega \in \Fil_{\un{n}}\Omega^q_K$
  if and only if $a_{\un{m}} =0 $ for every $\un{m}$ such that
  $m_i < -n_i$ for some $1 \le i \le r$.
  \end{cor}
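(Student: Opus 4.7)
The plan is to deduce the corollary from \lemref{lem:F-function-1} by passing to the colimit over $\un{n}$. The crux is a simple change of indexing variables.

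First, I would rewrite the isomorphism of \lemref{lem:F-function-1} in a form where the filtration index appears as a constraint on the exponent rather than as a multiplicative shift. Since $F^{1,q}_{\un{m}}(S) = x^{\un{m}}F^{1,q}_0(S)$ for $\un{m} \in \N_0^d$, multiplication by $x^{-n_1}_1\cdots x^{-n_r}_r$ sends $F^{1,q}_{\un{m}}(S)$ bijectively onto $x^{\un{m}'}F^{1,q}_0(S)$ where $\un{m}' = \un{m} - (n_1,\ldots,n_r,0,\ldots,0) \in \Z^r \times \N_0^{d-r}$ satisfies $m'_i \ge -n_i$ for $1 \le i \le r$. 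Thus \lemref{lem:F-function-1} translates into an isomorphism
\[
\Fil_{\un{n}}\Omega^q_K \xrightarrow{\cong} \prod_{\substack{\un{m}'\in \Z^r\times\N_0^{d-r}\\ m'_i \ge -n_i \ \forall\, 1\le i\le r}} x^{\un{m}'}F^{1,q}_0(S),
\]
which is exactly the final characterization claimed for $\Fil_{\un{n}}\Omega^q_K$ in the corollary.

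Next, I would take the colimit over $\un{n} \in \N_0^r$ (with the partial order $\un{n}\le \un{n}'$ componentwise). By \lemref{lem:Log-fil-4}(4) (applied coordinate-wise to the snc divisor $E = \sum_i \{x_i=0\}$) and the fact that $W_1$ specializes the de Rham-Witt complex to the usual de Rham complex, one has $\Omega^q_K = \varinjlim_{\un{n}} \Fil_{\un{n}}\Omega^q_K$. Passing the displayed product decomposition through this colimit (and using that the transition maps are just the inclusions of subproducts coming from relaxing the constraints $m'_i \ge -n_i$) yields that every $\omega\in\Omega^q_K$ admits a unique expansion $\omega = \sum_{\un{m}\in\Z^r\times\N_0^{d-r}} a_{\un{m}} x^{\un{m}}$ with $a_{\un{m}}\in F^{1,q}_0(S)$ and $a_{\un{m}}=0$ whenever $m_i \ll 0$ for some $i\le r$. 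Taking $\un{n} = \un{0}$ gives the characterization of $\Omega^q_A(\log\pi)$, and taking general $\un{n}$ gives the characterization of $\Fil_{\un{n}}\Omega^q_K$.

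The step that needs a touch of care is justifying the commutation of the product decomposition with the colimit: since the filtration $\{\Fil_{\un{n}}\Omega^q_K\}$ is ascending in $\un{n}$ and the decomposition in \lemref{lem:F-function-1} is compatible (the transition $\Fil_{\un{n}}\Omega^q_K \inj \Fil_{\un{n}'}\Omega^q_K$ corresponds under the isomorphism to including one infinite product into a larger one via a shift of exponents), the directed union produces the indexing set $\{\un{m}'\in \Z^r\times\N_0^{d-r}: \exists\, \un{n}\ \text{with}\ m'_i\ge -n_i\}$, which is precisely $\Z^r\times\N_0^{d-r}$ together with the finiteness-from-below condition on the first $r$ coordinates for each given element. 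Uniqueness of the coefficients $a_{\un{m}}$ is inherited from the product decomposition at each finite stage, and no further computation is required.
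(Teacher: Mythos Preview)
Your proposal is correct and follows the same approach as the paper, which simply records before the statement that the corollary is obtained by ``taking the colimit of $\Fil_{\un{n}}\Omega^q_K$ as $\un{n}\to\infty$'' in \lemref{lem:F-function-1}. Your change-of-variables reindexing and the compatibility check for the transition maps just make explicit what the paper leaves to the reader.
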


\subsection{From complete to non-complete case}\label{sec:com-to-noncom}
We let $A$ be a regular local $F$-finite $\F_p$-algebra with 
maximal ideal $\fm = (x_1, \ldots , x_d)$. We let $\pi = x_1\cdots x_r$ and
$K = A_\pi$, where $1 \le r \le d$ and $L= Q(A)$. We let $\wh{A}$ denote the
$\fm$-adic completion of
$A$ with the maximal ideal $\wh{\fm}$ and let $\wh{K} = \wh{A}_\pi$.
For $1 \le i \le r$, we let $A_i = A_{(x_i)}, \ \wh{A}_i = \wh{A_{(x_i)}}$ and
$\wh{K}_i = Q(\wh{A}_i)$. We fix integers $q \ge 0$ and $m \ge 1$.
For $\un{n} = (n_1, \ldots , n_r) \in \Z^r$ and the invertible ideal
$I = (x^{n_1}_1\cdots x^{n_r}_r) \subset K$, we let $\Fil_{\un{n}}W_m\Omega^q_K =
\Fil_I W_m\Omega^q_K$.  We also let
$t\un{n}=(tn_1, {\ldots},tn_r)$ for any $t \in \Z$ and
$-\un{1} = (-1, \ldots , -1) \in \Z^r$. We let
$X = \Spec(A)$ and $X_f = \Spec(A[f^{-1}])$ for any $f \in A \setminus \{0\}$.
Let $\gamma_i \colon W_m\Omega^q_K \to W_m\Omega^q_{\wh{K}_i}$ denote the pull-back map
induced by the inclusion $K \inj \wh{K}_i$.

\begin{lem}\label{lem:Complete-1}
  For $1 \le i \le r$, the map
  $\gamma_i \colon W_m\Omega^q_K \to  W_m\Omega^q_{\wh{K}_i}$ is injective.
  \end{lem}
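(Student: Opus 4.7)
The plan is to factor the map $\gamma_i$ through the de Rham-Witt complex of the full fraction field $L=Q(A)$ of $A$. Since $x_j$ is a unit in the DVR $A_{(x_i)}$ for $j\ne i$, we have $L=A_{(x_i)}[x_i^{-1}]$, and consequently $K=A_\pi \subset L$; furthermore, the faithfully flat extension $A_{(x_i)}\to\wh{A}_i$ gives $\wh{K}_i=\wh{A}_i[x_i^{-1}]=L\otimes_{A_{(x_i)}}\wh{A}_i$. This yields a factorization
\[
\gamma_i \colon W_m\Omega^q_K \xrightarrow{\alpha_i} W_m\Omega^q_L \xrightarrow{\beta_i} W_m\Omega^q_{\wh{K}_i},
\]
and it suffices to show that each of $\alpha_i$ and $\beta_i$ is injective.

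For $\alpha_i$, I would use that $K\to L$ is a localization (inverting $A\setminus\{0\}$), together with the compatibility of the de Rham-Witt complex with localization, to write $W_m\Omega^q_L\cong W_m\Omega^q_K\otimes_{W_m(K)}W_m(L)$, where $W_m(L)$ is the localization of $W_m(K)$ at the Teichm\"uller lifts $\{[s]_m: s\in A\setminus\{0\}\}$. Injectivity of $\alpha_i$ then reduces to showing that these Teichm\"uller lifts act as non-zero-divisors on $W_m\Omega^q_K$; using the short exact sequence of \corref{cor:Log-DRW-0} together with induction on $m$, this in turn reduces to the statement that nonzero elements of $K$ act as non-zero-divisors on $\Omega^q_K(\log E)$, which follows from \lemref{lem:LWC-2} since $K$ is a domain and $\Omega^q_K(\log E)$ is a free $K$-module of finite rank.

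For $\beta_i$, I would apply \propref{prop:F-fin}(6) to the $F$-finite local ring $A_{(x_i)}$ to obtain
\[
W_m\Omega^q_{\wh{A}_i} \cong W_m\Omega^q_{A_{(x_i)}}\otimes_{W_m(A_{(x_i)})}W_m(\wh{A}_i),
\]
and then invert $[x_i]_m$ on both sides (using the same localization compatibility of the de Rham-Witt complex) to deduce $W_m\Omega^q_{\wh{K}_i}\cong W_m\Omega^q_L\otimes_{W_m(L)}W_m(\wh{K}_i)$. Since the $V$-adic filtration on $W_m(\wh{K}_i)$ has associated graded isomorphic to $\wh{K}_i$, which is faithfully flat (indeed, a free module) over $L$, the extension $W_m(L)\to W_m(\wh{K}_i)$ is itself faithfully flat by a short induction on $m$. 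This forces $\beta_i$ to be injective.

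The main obstacle will lie in cleanly establishing the two technical compatibilities: that the de Rham-Witt complex commutes with the localization $A_{(x_i)}\to L$ (and the corresponding one for $\wh{A}_i\to\wh{K}_i$), and that $W_m$ of a faithfully flat extension of Noetherian rings is faithfully flat. The latter in particular requires carefully combining the $V$-filtration with the faithful flatness hypothesis on the graded pieces; once these two compatibilities are in place, the factorization argument above runs without further difficulty.
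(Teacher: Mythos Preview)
Your factorization through $W_m\Omega^q_L$ is exactly the route the paper takes, but your justification of $\beta_i$ has a genuine gap. The claim that $W_m(L) \to W_m(\wh{K}_i)$ is faithfully flat ``by a short induction on $m$'' using the $V$-filtration does not work: the graded pieces of the $V$-filtration on $W_m(\wh{K}_i)$, viewed as $W_m(L)$-modules, are copies of $\wh{K}_i$ on which $W_m(L)$ acts through a surjection $W_m(L) \twoheadrightarrow L$ (followed by a Frobenius twist). Since $L$ is not flat over $W_m(L)$ for $m \ge 2$, flatness of the graded pieces over $L$ says nothing about flatness over $W_m(L)$. Concretely, already $W_2(\F_p(t))$ fails to be flat over $W_2(\F_p) = \Z/p^2$: one computes $p \cdot W_2(\F_p(t)) = V(\F_p(t)^p) \subsetneq V(\F_p(t)) = W_2(\F_p(t))[p]$, so $W_2(\F_p(t))$ is not a direct sum of copies of $\Z/p^2$. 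The extension $\F_p \to \F_p(t)$ is separable, just like $L \to \wh{K}_i$, so separability alone does not rescue your argument.

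The paper circumvents this by establishing flatness at the \emph{integral} level before localizing. By \lemref{lem:W-nil}, $W_m(\wh{A}_i)$ is the $W_m((x_i))$-adic completion of the Noetherian local ring $W_m(A_i)$ and is therefore flat over it; combined with \propref{prop:F-fin}(6) this gives injectivity of $W_m\Omega^q_{A_i} \to W_m\Omega^q_{\wh{A}_i}$, and inverting $[x_i]_m$ (equivalently $[\pi]_m$) then yields the injectivity of $\beta_i$. For $\alpha_i$, your induction on $m$ is also more delicate than it looks (one must control $\Ker(V^{m-1})$ and $\Ker(dV^{m-1})$ simultaneously, and the reference to $\Omega^q_K(\log E)$ via \lemref{lem:LWC-2} is off since $E$ restricts to the empty divisor on $\Spec(K)$); the paper instead invokes \cite[Prop.~2.8]{KP-Comp} for the injectivity of $W_m\Omega^q_A \to W_m\Omega^q_L$ when $A$ is regular local, and then localizes at $[\pi]_m$.
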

  \begin{proof}
We look at the commutative diagram
    \begin{equation}\label{eqn:Complete-1-0}
      \xymatrix@C.5pc{
        W_m\Omega^q_A \ar[r]^-{\alpha_1} \ar[d] & W_m\Omega^q_{A_i} \ar[r]^-{\alpha_2}
        \ar[d] & W_m\Omega^q_{\wh{A}_i} \ar[d] \\
        W_m\Omega^q_A {\underset{W_m(A)}\otimes} W_m(A)_{[\pi]_m} \ar[r]^-{\beta_1}
        \ar[d]_-{\cong} &
        W_m\Omega^q_{A_i} {\underset{W_m(A)}\otimes} W_m(A)_{[\pi]_m}
        \ar[r]^-{\beta_2} \ar[d]^-{\cong} & 
        W_m\Omega^q_{\wh{A}_i} {\underset{W_m(A)}\otimes} W_m(A)_{[\pi]_m}
        \ar[d]^-{\cong} \\
        W_m\Omega^q_A {\underset{W_m(A)}\otimes} W_m(A)_{[\pi]_m} \ar[r]
        \ar[d]_-{\delta_1} &
       W_m\Omega^q_{A_i} {\underset{W_m(A_i)}\otimes} W_m(A_i)_{[x_i]_m} \ar[r]
       \ar[d]^-{\delta_2} & W_m\Omega^q_{\wh{A}_i} {\underset{W_m(\wh{A}_i)}\otimes}
          W_m(\wh{A}_i)_{[x_i]_m} 
       \ar[d]^-{\delta_3} \\
       W_m\Omega^q_{K} \ar[r]^-{\eta_1} & W_m\Omega^q_{L} \ar[r]^-{\eta_2} &
       W_m\Omega^q_{\wh{K}_i}.}
    \end{equation}

Since $A$ is a regular local $\F_p$-algebra, the composition
    $W_m\Omega^q_A \to W_m\Omega^q_L$ of all left vertical
    arrows with $\eta_1$ is injective (cf. \cite[Prop.~2.8]{KP-Comp}). It follows
    that $\alpha_1$ is injective. Next, \lemref{lem:W-nil} implies that 
    $W_m(\wh{A}_i)$ is the $W_m((x_i))$-adic completion of $W_m(A_i)$, and therefore, it
    is flat over $W_m(A_i)$. We can therefore use  \propref{prop:F-fin} to deduce that
    the arrow $\alpha_2$ is injective. As the vertical arrows
    on the top level are the localizations, it follows that $\beta_1$ and
    $\beta_2$ are injective. 
    Since the vertical arrows on the middle and the bottom levels are
    isomorphisms by \cite[Chap.~I, Prop.~1.11]{Illusie},
    it follows that $\eta_1$ and $\eta_2$ are injective. This concludes the
    proof.
\end{proof}

The following result is standard.

  \begin{lem}\label{lem:W-nil}
    Let $m \ge 1$ and $S$ be an $\F_p$-algebra. Let $I \subset S$ be an ideal and
    let $\wh{S}$ denote the $I$-adic completion of $S$. Then the canonical
    map $W_m(\wh{S}) \to \varprojlim_n {W_m(S)}/{W_m(I^n)}$ is an isomorphism.
    If $S$ is a local ring with maximal ideal $\fm$, then $W_m(S)$ is a local
    ring with maximal ideal $\fM_m = \{\un{a} =
    {(a_{m-1}, \ldots, a_0)}| \ a_{m-1} \in \fm\}$ and ${\fM_m}/{W_m(\fm)}$ is a
    nilpotent ideal in $W_m({S}/{\fm})$. In particular, $(\fM_m)^n \subseteq W_m(\fm)$
    for all $n \gg 0$ if $S$ is moreover Noetherian.
   \end{lem}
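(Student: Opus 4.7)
The proof splits into two largely independent parts, and I would address them in order.

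For the first (completion) claim, the plan is to exploit the fact that the Witt vector functor $W_m$, viewed as a functor to sets, is simply the $m$-fold Cartesian product, with ring structure given by universal integer polynomials in the entries. Consequently, for any ring map $\phi \colon R \to R'$, the map $W_m(\phi)$ is just $\phi$ applied entrywise, so $W_m$ preserves surjections and commutes with cofiltered limits of rings. Moreover, $W_m(I^n) \subset W_m(S)$ is precisely the kernel of the natural surjection $W_m(S) \to W_m(S/I^n)$, so it is automatically an ideal and the induced map $W_m(S)/W_m(I^n) \to W_m(S/I^n)$ is an isomorphism. Passing to the cofiltered limit then yields
\[
\varprojlim_n W_m(S)/W_m(I^n) \ \cong \ \varprojlim_n W_m(S/I^n) \ \cong \ W_m\bigl(\varprojlim_n S/I^n\bigr) \ = \ W_m(\wh{S}),
\]
which is the desired isomorphism.

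For the local-structure claim, the first step is to establish the nilpotence estimate $J^m = 0$ for the ideal $J := V W_{m-1}(S) = \{\un{a} \in W_m(S) : a_{m-1} = 0\}$. This is a standard consequence of the identity $x \cdot V(y) = V(F(x) y)$ together with the relation $FV = VF = p \cdot \id$: these give, for $a, b \in W_{m-1}(S)$, the computation $V(a) V(b) = V(VF(a) \cdot b) = V^2(F(a)F(b)) \in V^2 W_{m-2}(S)$, and an easy induction on $n$ then yields $J^n \subseteq V^n W_{m-n}(S)$, which vanishes for $n \ge m$.

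Armed with this, I would next show that $\un{a} \in W_m(S)$ is a unit if and only if the leading entry $a_{m-1} \in S^{\times}$. The forward direction is immediate from reading the leading component of $\un{a} \cdot \un{a}^{-1} = 1$. For the converse, if $a_{m-1} \in S^{\times}$, then $[a_{m-1}]_m$ is a unit in $W_m(S)$ (with inverse $[a_{m-1}^{-1}]_m$), and $\un{a} - [a_{m-1}]_m \in J$; hence $\un{a}$ is a unit modulo the nilpotent ideal $J$, and therefore a unit. This characterizes $\fM_m$ as the set of non-units of $W_m(S)$, so $W_m(S)$ is local with maximal ideal $\fM_m$. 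Applying the same reasoning to $k = S/\fm$ identifies $\fM_m / W_m(\fm)$ with the nilpotent maximal ideal $V W_{m-1}(k)$ of $W_m(k)$, and the final assertion follows by taking $n \ge m$ (the Noetherian hypothesis on $S$ is not actually needed here, though it will be convenient in subsequent applications). The only real technical input is the nilpotence estimate $J^m = 0$, and it is entirely standard once the two Witt vector identities above are invoked; everything else is bookkeeping about the functoriality of $W_m$.
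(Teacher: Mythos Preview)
Your proof is correct and follows essentially the same route as the paper's. The paper argues the completion statement by citing \cite[Lem.~2.1]{Geisser-Hesselholt-Top} together with an induction on $m$, which amounts to your observation that $W_m$ is a product as sets; and it proves the local-structure claim by showing $\ker(R)^2=0$ and inducting on $m$, while you give the slightly more direct bound $(VW_{m-1}(S))^m=0$ in one stroke. Your remark that the Noetherian hypothesis is not actually needed for the final inclusion (since $(\fM_m)^m\subseteq W_m(\fm)$ already) is a small sharpening: the paper instead appeals to Noetherianity of $W_m(S)$, which is unnecessary here.
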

  \begin{proof}
    The first claim follows from \cite[Lem.~2.1]{Geisser-Hesselholt-Top} since the
    canonical map $W_m(\wh{S}) \to \varprojlim_n W_m({S}/{I^{n}})$ is clearly bijective
    as one can check by induction on $m$. For the second claim, we recall that
    the kernel of the surjection $R \colon W_m(S) \surj W_{m-1}(S)$ is nilpotent
    because $V^{m-1}({a}) V^{m-1}({a}) = V(V^{m-2}({a}) \cdot FV^{m-1}({a})) =
    V(V^{m-2}({a}) \cdot V^{m-2}(p{a})) =0$ for any $a \in S$. This fact and
    induction on $m$ together show that $W_m(S)$ is local with maximal ideal
    $\fM_m$. Applying this fact to ${S}/{\fm}$ instead of $S$ and using induction on
    $m$, we get that ${\fM_m}/{W_m(\fm)}$ is nilpotent. The last claim follows from
    \propref{prop:F-fin} which says that $W_m(S)$ is Noetherian if $S$ is so.
    \end{proof}

  If $S$ is a local ring with maximal ideal $\fm$ and $M$ is any $W_m(S)$-module,
  we let $M^{\wedge}$ denote the $\fM_m$-adic completion of $M$. From
  \lemref{lem:W-nil}, it follows that the canonical map from the
  $W_m(\fm)$-adic completion of $M$ to $M^{\wedge}$ is an isomorphism.
    
  Returning back to our set-up, we conclude
  from \propref{prop:F-fin} and \lemref{lem:W-nil}
that $W_m\Omega^q_A$ is a finitely generated $W_m(A)$-module and
\begin{equation}\label{eqn:Non-complete-0}
  W_m\Omega^q_A \otimes_{W_m(A)} W_m(\wh{A}) \xrightarrow{\cong} {(W_m\Omega^q_A)}^{\wedge}
  \xrightarrow{\cong} W_m\Omega^q_{\wh{A}}.
\end{equation}

\begin{lem}\label{lem:Non-complete-1}
  There are canonical isomorphisms of $W_m(\wh{A})$-modules
  \begin{equation}\label{eqn:Non-complete-1-0}
  W_m\Omega^q_A(\log\pi) \otimes_{W_m(A)} W_m(\wh{A}) \xrightarrow{\cong}
  {W_m\Omega^q_A(\log\pi)}^{\wedge} \xrightarrow{\cong} W_m\Omega^q_{\wh{A}}(\log\pi).
    \end{equation}
\end{lem}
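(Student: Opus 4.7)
The key observation is that $W_m\Omega^q_A(\log\pi)$ is a finitely generated $W_m(A)$-module (this is the content of \lemref{lem:Log-fil-4}(2), which follows formally from its definition as the image of a map from a finite direct sum of modules of the form $W_m\Omega^{q-j}_A \dlog[x_{i_1}]_m \wedge \cdots \wedge \dlog[x_{i_j}]_m$, each finitely generated by \propref{prop:F-fin}(5)). Since $A$ is Noetherian local and $F$-finite, \propref{prop:F-fin}(5) together with \lemref{lem:W-nil} show that $W_m(A)$ is a Noetherian local ring with maximal ideal $\fM_m$, and that $W_m(\wh{A})$ coincides with its $\fM_m$-adic completion (because $\fM_m^n \subseteq W_m(\fm)$ for $n \gg 0$, so the $\fM_m$-adic and $W_m(\fm)$-adic topologies agree). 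In particular, $W_m(\wh{A})$ is flat over $W_m(A)$.

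The first isomorphism in ~\eqref{eqn:Non-complete-1-0} is now an immediate consequence of the standard commutative algebra fact that for any finitely generated module $M$ over a Noetherian local ring, the canonical map $M \otimes_R \wh{R} \to M^{\wedge}$ is an isomorphism; applied to $M = W_m\Omega^q_A(\log\pi)$ and $R = W_m(A)$, this gives exactly what we want.

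For the second isomorphism, I apply $- \otimes_{W_m(A)} W_m(\wh{A})$ to the defining surjection
\[
\phi \colon \bigoplus_{J} W_m\Omega^{q-|J|}_A \dlog[x_{i_1}]_m \wedge \cdots \wedge \dlog[x_{i_{|J|}}]_m \twoheadrightarrow W_m\Omega^q_A(\log\pi) \inj W_m\Omega^q_{A_\pi},
\]
where $J$ runs over ordered subsets of $\{1,\ldots,r\}$. Flatness of $W_m(\wh{A})$ preserves images and injections. By \eqref{eqn:Non-complete-0}, the source base-changes to $\bigoplus_J W_m\Omega^{q-|J|}_{\wh{A}} \dlog[x_{i_1}]_m \wedge \cdots \wedge \dlog[x_{i_{|J|}}]_m$. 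For the target, I combine the identity $W_m(A)_{[\pi]_m} \otimes_{W_m(A)} W_m(\wh{A}) = W_m(\wh{A})_{[\pi]_m}$ (standard base change of localizations) with \cite[Chap.~I, Prop.~1.11]{Illusie} applied to both $A$ and $\wh{A}$, yielding
\[
W_m\Omega^q_{A_\pi} \otimes_{W_m(A)} W_m(\wh{A}) \;\cong\; W_m\Omega^q_A \otimes_{W_m(A)} W_m(\wh{A})_{[\pi]_m} \;\cong\; W_m\Omega^q_{\wh{A}_\pi}.
\]
Hence the image of the base-changed $\phi$, which by flatness equals $W_m\Omega^q_A(\log\pi) \otimes_{W_m(A)} W_m(\wh{A})$, sits inside $W_m\Omega^q_{\wh{A}_\pi}$ and is generated by the images of the $W_m\Omega^{q-|J|}_{\wh{A}} \dlog[x_J]_m$; by definition, this image is precisely $W_m\Omega^q_{\wh{A}}(\log\pi)$.

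The only mildly delicate point is verifying that localization commutes with base change for $W_m$, but this is a direct consequence of the universal property of localization together with the canonical presentation of Witt vectors of a localization in terms of Teichm\"uller representatives (Lemmas ~\ref{lem:W-nil} and \cite[Chap.~I, Prop.~1.11]{Illusie}); once this is in hand, the rest of the argument is formal flat base change.
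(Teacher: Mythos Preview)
Your proof is correct and follows essentially the same approach as the paper's. Both arguments obtain the first isomorphism from finite generation of $W_m\Omega^q_A(\log\pi)$ over the Noetherian local ring $W_m(A)$, and both establish the second by using flatness of $W_m(\wh{A})$ over $W_m(A)$ together with the identification $W_m\Omega^q_{A_\pi}\otimes_{W_m(A)}W_m(\wh{A})\cong W_m\Omega^q_{\wh{A}_\pi}$; the paper separates this into a surjectivity step (from the definition) and an injectivity step (via the diagram~\eqref{eqn:Non-complete-1-1}), whereas you handle both simultaneously by observing that flat base change preserves the image of the defining map $\bigoplus_J W_m\Omega^{q-|J|}_A\to W_m\Omega^q_{A_\pi}$.

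One minor remark: the fact you call ``mildly delicate'' in your last paragraph, namely $W_m(A)_{[\pi]_m}\otimes_{W_m(A)}W_m(\wh{A})\cong W_m(\wh{A})_{[\pi]_m}$, is just the general identity $R_s\otimes_R R'\cong R'_{s}$ for localization at an element and needs no Witt-specific input; the reference to \lemref{lem:W-nil} there is not really to the point.
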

\begin{proof}
  Since $W_m\Omega^q_A(\log\pi)$ is a finitely generated $W_m(A)$-module as we observed
  earlier, the first isomorphism in ~\eqref{eqn:Non-complete-1-0}
  is clear. By the definition of $W_m\Omega^q_{\wh{A}}(\log\pi)$ and
  ~\eqref{eqn:Non-complete-0}, we see that the composite arrow in
  ~\eqref{eqn:Non-complete-1-0} is surjective. To show it is injective, we look at
  the commutative diagram
  \begin{equation}\label{eqn:Non-complete-1-1}
    \xymatrix@C1pc{
      W_m\Omega^q_A(\log\pi) \otimes_{W_m(A)} W_m(\wh{A}) \ar[r] \ar[d] &
      W_m\Omega^q_{A_\pi} \otimes_{W_m(A)} W_m(\wh{A}) \ar[d] \\
      W_m\Omega^q_{\wh{A}}(\log\pi) \ar[r] & W_m\Omega^q_{\wh{K}}.}
    \end{equation}

  The top horizontal arrow in ~\eqref{eqn:Non-complete-1-1}
  is injective because $W_m(\wh{A})$ is
  a flat $W_m(A)$-module. Furthermore,
  \begin{equation}\label{eqn:Non-complete-1-2}
  \begin{array}{lll}
  W_m\Omega^q_{K} \otimes_{W_m(A)} W_m(\wh{A}) & \cong & W_m\Omega^q_A \otimes_{W_m(A)}
  W_m(A_\pi) \otimes_{W_m(A)}  W_m(\wh{A}) \\
  & \cong & W_m\Omega^q_{\wh{A}} \otimes_{W_m(\wh{A})}
  (W_m(\wh{A}) \otimes_{W_m(A)} W_m(K)) \\
  & \cong &  W_m\Omega^q_{\wh{A}} \otimes_{W_m(\wh{A})} W_m(\wh{A})_{[\pi]_m} \\
  & \cong & W_m\Omega^q_{\wh{A}} \otimes_{W_m(\wh{A})} W_m(\wh{K}) \cong
  W_m\Omega^q_{\wh{K}}.
  \end{array}
  \end{equation}
  as a $W_m(\wh{A})$-module.
This shows that the right vertical arrow in ~\eqref{eqn:Non-complete-1-1} is
bijective (as it is same as the composition of the isomorphisms in
\eqref{eqn:Non-complete-1-2}).
It follows that the left vertical arrow is injective. This finishes
the proof.
\end{proof}

\begin{lem}\label{lem:Non-complete-4}
  The maps
  \begin{enumerate}
    \item
      $d \colon \Fil_{\un{n}}W_m\Omega^q_K \to \Fil_{\un{n}}W_m\Omega^{q+1}_K$;
    \item
      $V \colon \Fil_{\un{n}}W_m\Omega^q_K \to \Fil_{\un{n}}W_{m+1}\Omega^{q}_K$;
    \item
      $F \colon \Fil_{\un{n}}W_{m+1}\Omega^q_K \to \Fil_{\un{n}}W_{m}\Omega^{q}_K$;
    \item
      $R \colon \Fil_{\un{n}}W_{m+1}\Omega^q_K \to\Fil_{\un{n}/p} W_{m}\Omega^{q}_K$
\end{enumerate}
  are continuous with respect to the $W_\star(\fm)$-adic
  topology of their sources and targets.
\end{lem}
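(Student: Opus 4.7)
The plan is to verify continuity of each of the four operators $d$, $V$, $F$, $R$ individually. By Lemma~\ref{lem:Log-fil-Wcom}, all four preserve the filtrations $\{\Fil_{\un{n}} W_m\Omega^q_K\}$ (with shift by $p$ for $R$), so the only issue is the interaction with the $W_\star(\fm)$-adic filtration. For each operator $T$ and each $N \ge 0$, it suffices to exhibit $N' \ge 0$ such that $T$ carries $W_\star(\fm)^{N'} \cdot \Fil_{\un{n}}$ into $W_\star(\fm)^{N}$ times the appropriate target filtered sheaf.

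The cases of $R$ and $F$ are straightforward. Both are ring homomorphisms on the Witt-vector level (and hence on the Witt complex). Direct inspection gives $R(W_{m+1}(\fm)) \subset W_m(\fm)$, and the explicit formula $F(a_m, a_{m-1}, \ldots, a_0) = (a_m^p, a_{m-1}^p, \ldots, a_1^p)$, valid for $\F_p$-algebras, gives $F(W_{m+1}(\fm)) \subset W_m(\fm^p) \subset W_m(\fm)$. Multiplicativity then yields $R(W_{m+1}(\fm)^N \cdot \Fil_{\un{n}}) \subset W_m(\fm)^N \cdot \Fil_{\un{n}/p}$, and an analogous inclusion for $F$, so both are continuous with $N' = N$.

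For the differential $d$, the Leibniz rule applied iteratively yields the containment
\[
d\bigl(W_m(\fm)^N \cdot \Fil_{\un{n}} W_m\Omega^q_K\bigr) \subset W_m(\fm)^{N-1} \cdot \Fil_{\un{n}} W_m\Omega^{q+1}_K,
\]
using that $d(W_m(A)) \subset W_m\Omega^1_A \subset W_m\Omega^\bullet_X(\log E)$, which multiplies $\Fil_{\un{n}}$ into itself by Definition~\ref{defn:Log-fil-3}. Hence $d$ is continuous with $N' = N + 1$.

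The case of $V$ is the main obstacle, since $V$ is only additive. The crucial algebraic input is the projection formula $V(F(\un{b}) \omega) = \un{b} \, V(\omega)$ for $\un{b} \in W_{m+1}(A)$ and $\omega \in W_m\Omega^\bullet_K$, which makes $V$ into a $W_{m+1}(A)$-linear map with the source $\Fil_{\un{n}} W_m\Omega^q_K$ viewed as a $W_{m+1}(A)$-module via the ring homomorphism $F$. The plan is to combine this $W_{m+1}(A)$-linearity with the reduction to the complete case afforded by Lemma~\ref{lem:Non-complete-1}: upon passing to the completion, Cohen's structure theorem together with Proposition~\ref{prop:F-fin} writes $\wh{A}$ as a power series ring $S[[x_1, \ldots, x_d]]$ over an $F$-finite regular local ring $S$, and an iterated application of the Geisser-Hesselholt style decomposition of Proposition~\ref{prop:Fil-decom} describes $\Fil_{\un{n}} W_m\Omega^q_{\wh{K}}$ as a product of abelian groups $F^{m,q}_j(S)$ on which $V$ acts factor-wise via Lemma~\ref{lem:F-group-1}(2). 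Under this description the $W_\star(\fm)$-adic shift induced by $V$ is manifestly bounded, which gives the required continuity.
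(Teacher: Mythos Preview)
Your handling of (3), (4), and (1) is correct. For (1), your Leibniz argument is a clean variant of the paper's approach; the paper instead replaces the $W_m(\fm)$-adic topology by the cofinal filtration $J_s = ([x_1]^s_m,\ldots,[x_d]^s_m)$ (citing \cite[Cor.~2.3]{Geisser-Hesselholt-Top}) and uses $d([x_i]^{p^m t}_m)=0$.

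Your argument for (2) has a genuine gap. You correctly isolate the projection formula $V(F(\un{b})\omega)=\un{b}\,V(\omega)$, but the proposed route through completion and \propref{prop:Fil-decom} runs into two problems. First, \lemref{lem:Non-complete-1} only identifies $W_m\Omega^q_A(\log\pi)\otimes W_m(\wh{A})$ with the completion; the corresponding statement for $\Fil_{\un{n}}$ is \lemref{lem:Non-complete-3*}, whose proof \emph{uses} the present lemma. So reducing to the complete case here is circular. Second, \propref{prop:Fil-decom} is established only for a single power-series variable $S[[\pi]]$; there is no ``iterated'' multivariable version available at this point (the multivariable analysis in \S\ref{sec:GH-multi-var} is only for $m=1$, and the general $m$ case is handled differently via Lemmas~\ref{lem:Complete-3} and \propref{prop:Complete-0}, which again come later). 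The phrase ``manifestly bounded'' then has no content.

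The paper closes (2) in one line from the projection formula you already have: using the $J_s$-topology as above, $[x_i]^{pt}_m = F([x_i]^t_{m+1})$, so
\[
V\bigl([x_i]^{pt}_m\,\Fil_{\un{n}}W_m\Omega^q_K\bigr) = [x_i]^t_{m+1}\,V\bigl(\Fil_{\un{n}}W_m\Omega^q_K\bigr) \subset J_s\,\Fil_{\un{n}}W_{m+1}\Omega^q_K
\]
for $t\ge s$. No completion, no decomposition.
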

\begin{proof}
 The claims (3) and (4) are easy to verify. To prove (1) and (2), we note that
  $ \Fil_{\un{n}}W_m\Omega^q_K$ is a finitely generated $W_m(A)$-module by
  \lemref{lem:Log-fil-4}. We let
  $J_s \subset W_m(A)$ be the ideal $([x_1]^s_m, \ldots , [x_d]^s_m)$.
  Then the $W_m(\fm)$-adic topology of $\Fil_{\un{n}}W_m\Omega^q_K$
  coincides with the topology given by the filtration
  $\{J_s\Fil_{\un{n}}W_m\Omega^q_K\}$ as shown in the proof of
  \cite[Cor.~2.3]{Geisser-Hesselholt-Top}.
Hence, it suffices to note that
  given $s \ge 1$, one has $d([x_i]^{p^mt}_m\Fil_{\un{n}}W_m\Omega^q_K)\subset
  J_s\Fil_{\un{n}}W_m\Omega^{q+1}_K$ and $V([x_i]^{pt}_m\Fil_{\un{n}}W_m\Omega^q_K) \subset
  J_s\Fil_{\un{n}}W_{m+1}\Omega^q_K$ for all $t \ge s$ and all $i$.
\end{proof}

\begin{lem}\label{lem:Non-complete-7}
  For $m \ge 1$, we have the following.
  \begin{enumerate}
    \item
      The map $F \colon W_{m+1}(A) \to W_m(A)$ is a finite ring homomorphism.
    \item
      The $W_m(\fm)$-adic topology of $W_m(A)$ coincides with its $W_{m+1}(\fm)$-adic
      topology via $F$.
    \item
      For any $A$-submodule $M$ of $\Omega^{q-1}_K$ (resp. $\Omega^q_K$), 
  $dV^{m-1}(M)$ (resp. $V^{m-1}(M)$) is a $W_{m+1}(A)$-submodule of
  $W_m\Omega^q_K$, where $W_{m+1}(A)$ acts on the latter via $F$.
  \end{enumerate}
\end{lem}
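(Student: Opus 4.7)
I would prove the three parts in order.

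For (1), factor $F$ as $F = \bar F \circ R$, where $R\colon W_{m+1}(A) \twoheadrightarrow W_m(A)$ is the restriction and $\bar F\colon W_m(A) \to W_m(A)$ is the ring endomorphism raising each Witt coordinate to its $p$-th power. Since $R$ is surjective, finiteness of $W_m(A)$ over $W_{m+1}(A)$ via $F$ reduces to finiteness of $W_m(A)$ over itself via $\bar F$. I would prove the latter by induction on $m$, using the short exact sequence
\[
0 \to A \xrightarrow{V^{m-1}} W_m(A) \xrightarrow{R} W_{m-1}(A) \to 0,
\]
which is $\bar F$-equivariant (reducing to the Frobenius $\psi$ on the kernel and to $\bar F_{m-1}$ on the quotient); the base case $m=1$ is precisely the $F$-finiteness hypothesis on $A$.

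For (2), part (1) makes $F$ a finite local homomorphism $(W_{m+1}(A), \fM_{m+1}) \to (W_m(A), \fM_m)$ (cf.~\lemref{lem:W-nil}) inducing the Frobenius on the common residue field $A/\fm$. Hence $W_m(A)/F(\fM_{m+1})W_m(A)$ is a finite $(A/\fm)$-module, so Artinian, giving $\fM_m^N \subseteq F(\fM_{m+1})W_m(A) \subseteq \fM_m$ for some $N$. Combined with \lemref{lem:W-nil} (which asserts $\fM_\bullet^k \subseteq W_\bullet(\fm)$ for $k \gg 0$), this establishes the equivalence of the $W_m(\fm)$-adic topology on $W_m(A)$ and the $W_{m+1}(\fm)$-adic topology via $F$.

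For (3), I handle the two cases separately. For $V^{m-1}(M)$ with $M \subset \Omega^q_K$, iterating the projection formula $aV(b) = V(F(a)b)$ yields
\[
F(y)\,V^{m-1}(x) = V^{m-1}\bigl(F^m(y)\cdot x\bigr),
\]
and since $F^m(y) \in W_1(A) = A$ while $M$ is an $A$-module, this element lies in $V^{m-1}(M)$. For $dV^{m-1}(M)$ with $M \subset \Omega^{q-1}_K$, I would first rewrite $dV^{m-1}(x) = F(dV^m(x))$ using the Witt-complex identity $FdV = d$, and then apply the Leibniz rule:
\[
F(y)\,dV^{m-1}(x) = F\bigl(y\cdot dV^m(x)\bigr) = F\bigl(d(y\,V^m(x))\bigr) - F\bigl(dy\cdot V^m(x)\bigr).
\]
Iterated use of the projection formula gives $y\,V^m(x) = V^m(F^m(y)x)$, so the first term becomes $FdV^m(F^m(y)x) = dV^{m-1}(F^m(y)x) \in dV^{m-1}(M)$. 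Similarly, $dy\cdot V^m(x) = V^m(F^m(dy)\cdot x)$, and applying $FV^m = pV^{m-1}$ followed by the projection formula once more gives $F(dy\cdot V^m(x)) = V^{m-1}\bigl(p\cdot F^m(dy)\cdot x\bigr) = 0$, because $F^m(dy)\cdot x \in \Omega^q_K$ on which $p$ acts as zero in characteristic $p$.

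The main obstacle is the case $dV^{m-1}(M)$ in (3): the argument requires coordinating the Witt-complex identities $FdV = d$ and $FV = p$ with the projection formula, the Leibniz rule, and the characteristic-$p$ vanishing of $p$ on $\Omega^\bullet_K$ in order to absorb the error term $F(dy\cdot V^m(x))$ cleanly into $dV^{m-1}(M)$ without picking up extra contributions.
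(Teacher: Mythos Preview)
Your proposal is correct. Part (3) is essentially identical to the paper's argument: the paper writes the same chain of identities, concluding with ``$FV^m=0$'' where you unpack this as $FV^m=pV^{m-1}$ together with $p\cdot\Omega^\bullet_K=0$.

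Parts (1) and (2) take a different route from the paper. For (1), the paper simply cites the Langer--Zink result (via \cite[Thm.~2.6]{Morrow-ENS}), whereas you give a self-contained inductive proof via the factorization $F=\bar F\circ R$ and the $V$--$R$ exact sequence; your approach is more elementary and uses only the standing $F$-finiteness hypothesis, at the cost of needing to track the module structure on the kernel carefully. For (2), the paper argues concretely: it replaces the $W_\bullet(\fm)$-adic topology by the $J_s$-topology generated by $\{[x_i]^s\}$ (as in the proof of \lemref{lem:Non-complete-4}) and then invokes the single formula $F([x_i]_{m+1})=[x_i]_m^p$. Your argument is more structural, deducing the comparison from (1) via the Artinian quotient $W_m(A)/F(\fM_{m+1})W_m(A)$ and \lemref{lem:W-nil}. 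Both approaches are short; the paper's is computationally direct, yours is more portable to situations without a chosen regular system of parameters.
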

\begin{proof}
The item (1) is a well-known result of Langer-Zink (cf. \cite[Thm.~2.6]{Morrow-ENS}).
For (2), we can replace (for any $m$) the
$W_m(\fm)$-adic topology by the one given by the
filtration $\{J_s\}$ defined in the proof of \lemref{lem:Non-complete-4}.
Our claim now follows because $F([x_i]_{m+1}) = [x_i]^p_m$ for
every $i$.
To prove (3), we let $a \in W_{m+1}(A)$ and $x \in M$.
We get 
\[
F(a)dV^{m-1}(x) =  F(a) FdV^m(x) = F(adV^m(x)) = FdV^m(F^m(a)x) -F(daV^m(x))
\]
\[
=FdV^m(F^m(a)x) - FV^m(F^md(a)x)=FdV^m(F^m(a)x) = dV^{m-1}(F^m(a)x),
\]
because $FV^m = 0$. This finishes the proof as $F^m(a)x \in M$. The other case is
easier  as $F(a)V^{m-1}(x) = V^{m-1}(F^m(a)x)$.
\end{proof}

We shall need the following well-known commutative algebra fact to prove the
next result. We omit the proof.

\begin{lem}\label{lem:Non-complete-2}
  Let $S$ be a Noetherian local ring with maximal ideal $\fm$ and let
  $M$ be a finitely generated $S$-module. Assume that $N' \subset M^{\wedge}$ is
  an $\wh{S}$-submodule. Assume further that $N \subset M \cap N'$ is an $S$-submodule
  which is dense in $N'$ in the $\fm$-adic topology. Then $N' = N^{\wedge}$ and
  $N = M \cap N'$.
  \end{lem}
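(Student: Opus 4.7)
The plan is to identify $N'$ with the completion $N^{\wedge}$ and then use standard faithful flatness / Artin-Rees properties of completion to recover $N$ as $M \cap N'$.

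First, I would set up the picture inside $M^{\wedge}$. Since $S$ is Noetherian and $M$ is finitely generated, every $S$-submodule of $M$ is finitely generated, and by flatness of $\wh{S}$ over $S$ the natural map $N \otimes_S \wh{S} \to M \otimes_S \wh{S} = M^{\wedge}$ is injective; I shall identify its image with $N^{\wedge}$. As $\wh{S}$ is Noetherian and $M^{\wedge}$ is finitely generated over $\wh{S}$, the $\wh{S}$-submodule $N^{\wedge}$ (and, equally, $N'$) is finitely generated, hence $\wh{\fm}$-adically complete. A standard Artin-Rees argument shows that the subspace topology from $M^{\wedge}$ on any such submodule agrees with its $\wh{\fm}$-adic topology, so both $N^{\wedge}$ and $N'$ are \emph{closed} subsets of $M^{\wedge}$ in the $\fm$-adic topology.

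Next I would identify $N'$ and $N^{\wedge}$. Since $N \subset M \cap N' \subset N'$, and $N'$ is closed in $M^{\wedge}$, the closure of $N$ in $M^{\wedge}$ lies in $N'$. But $N$ is dense in its completion $N^{\wedge}$ (this is the defining property of completion for finitely generated modules over a Noetherian local ring), so the closure of $N$ in $M^{\wedge}$ equals $N^{\wedge}$. Hence $N^{\wedge} \subset N'$. Conversely, the hypothesis that $N$ is dense in $N'$ (together with $N \subset N^{\wedge}$ and closedness of $N^{\wedge}$ in $M^{\wedge}$) gives $N' \subset N^{\wedge}$. Therefore $N' = N^{\wedge}$.

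Finally, for $N = M \cap N'$, the inclusion $N \subset M \cap N'$ is given; to prove the reverse, I would use faithful flatness of $\wh{S}$ over $S$. Applying $-\otimes_S \wh{S}$ to $0 \to N \to M \to M/N \to 0$ yields the short exact sequence $0 \to N^{\wedge} \to M^{\wedge} \to (M/N)^{\wedge} \to 0$. Since $M/N$ is a finitely generated module over the Noetherian local ring $S$, the Krull intersection theorem makes $M/N \hookrightarrow (M/N)^{\wedge}$ injective. If now $x \in M \cap N^{\wedge}$, then its class $\bar x \in M/N$ maps to $0$ in $(M/N)^{\wedge}$, so $\bar x = 0$, giving $x \in N$. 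The main (and really only) obstacle is the correct handling of the two topologies on $N'$: the argument depends crucially on the subspace and $\wh{\fm}$-adic topologies coinciding, which is precisely the content of Artin-Rees in this Noetherian setting.
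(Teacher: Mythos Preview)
Your proof is correct and complete; the paper itself omits the proof entirely, stating only that the lemma is a well-known commutative algebra fact. Your argument via Artin--Rees (to identify the subspace and $\wh{\fm}$-adic topologies on $N'$ and $N^{\wedge}$, hence their closedness in $M^{\wedge}$), followed by faithful flatness and Krull intersection to recover $N = M \cap N^{\wedge}$, is exactly the standard route one would take to justify this statement.
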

  
\begin{lem}\label{lem:Non-complete-3*}
 The canonical map $\Fil_{\un{n}} W_m\Omega^q_K \to \Fil_{\un{n}} W_m\Omega^q_{\wh{K}}$
  induces an isomorphism of $W_m(\wh{A})$-modules
  \[
  \phi \colon (\Fil_{\un{n}} W_m\Omega^q_K)^{\wedge} \xrightarrow{\cong}
  \Fil_{\un{n}} W_m\Omega^q_{\wh{K}}.
  \]
\end{lem}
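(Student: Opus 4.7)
I propose to deduce the isomorphism from \lemref{lem:Non-complete-2} applied to $S = W_m(A)$ (whose maximal ideal is comparable to $W_m(\fm)$ by \lemref{lem:W-nil}). Using \lemref{lem:Log-fil-4}, I choose $\un N \in \Z^r$ with $p^{m-1}\un N \ge \un n$ componentwise; then $M := W_m\Omega^q_A(\log\pi)(\un N) = \Fil_{p^{m-1}\un N} W_m\Omega^q_K$ is a finitely generated $W_m(A)$-module containing $N := \Fil_{\un n} W_m\Omega^q_K$. By finite generation together with \lemref{lem:Non-complete-1} (twisted by the invertible module $W_m\sO_A(\un N)$), $M^\wedge \cong M \otimes_{W_m(A)} W_m(\wh A) \cong W_m\Omega^q_{\wh A}(\log\pi)(\un N)$, and this ambient module contains $N' := \Fil_{\un n} W_m\Omega^q_{\wh K}$. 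Since $N \subset M \cap N'$ is clear, once I verify that $N$ is dense in $N'$ in the $W_m(\fm)$-adic topology, \lemref{lem:Non-complete-2} will yield $N' = N^\wedge$, which is exactly the claim that $\phi$ is an isomorphism.

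\textbf{Reduction of density to $q=0$.} Every $\hat\omega \in N'$ has a finite expression $\hat\omega = \sum_i \hat a_i \hat\eta_i + \sum_j d(\hat b_j) \hat\mu_j$ with $\hat a_i, \hat b_j \in \Fil_{\un n} W_m\sO_{\wh K}$ and $\hat\eta_i, \hat\mu_j$ in $W_m\Omega^{*}_{\wh A}(\log\pi)$. Given $s \ge 1$, I approximate $\hat\eta_i, \hat\mu_j$ modulo $W_m(\fm)^s$ by elements $\eta_i, \mu_j$ of $W_m\Omega^{*}_A(\log\pi)$ thanks to the completion identification in \lemref{lem:Non-complete-1}, and assuming the $q = 0$ density (see below), I likewise approximate $\hat a_i, \hat b_j$ by $a_i, b_j \in \Fil_{\un n} W_m\sO_K$. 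Setting $\omega := \sum_i a_i \eta_i + \sum_j d(b_j)\mu_j \in \Fil_{\un n} W_m\Omega^q_K$, the continuity of multiplication and of $d$ in \lemref{lem:Non-complete-4} forces $\hat\omega - \omega$ to lie in $W_m(\fm)^{s'} M^\wedge$ with $s'$ tending to infinity as $s$ grows, which is the required density.

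\textbf{Density for $q=0$ and the hard step.} I prove by induction on $m$ that $\Fil_{\un n} W_m\sO_K$ is dense in $\Fil_{\un n} W_m\sO_{\wh K}$. The case $m = 1$ is immediate from \lemref{lem:Log-fil-4}: $\Fil_{\un n}\sO_K = A \cdot x^{-\un n}$ sits densely in $\Fil_{\un n}\sO_{\wh K} = \wh A \cdot x^{-\un n}$ because $A$ is $\fm$-adically dense in $\wh A$. For the inductive step, I use the short exact sequences of \lemref{lem:Log-fil-1} for $K$ and $\wh K$. Given $\hat a \in \Fil_{\un n} W_m\sO_{\wh K}$ and $s \ge 1$, the inductive hypothesis applied to $R(\hat a) \in \Fil_{\un n/p} W_{m-1}\sO_{\wh K}$ yields $a' \in \Fil_{\un n/p} W_{m-1}\sO_K$ with $R(\hat a) - a' \in W_{m-1}(\fm)^s \cdot \Fil_{\un n/p} W_{m-1}\sO_{\wh K}$, and I lift $a'$ through the surjective $R$ to $a'' \in \Fil_{\un n} W_m\sO_K$. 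Since $R$ sends $W_m(\fm)^s \cdot \Fil_{\un n} W_m\sO_{\wh K}$ surjectively onto $W_{m-1}(\fm)^s \cdot \Fil_{\un n/p} W_{m-1}\sO_{\wh K}$, I can choose $\tilde\alpha \in W_m(\fm)^s \cdot \Fil_{\un n} W_m\sO_{\wh K}$ with $R(\tilde\alpha) = R(\hat a - a'')$, reducing the problem to approximating $V^{m-1}(c) = \hat a - a'' - \tilde\alpha$ for some $c \in \Fil_{\un n}\sO_{\wh K}$; invoking the base case for $c$ and the continuity of $V^{m-1}$ from \lemref{lem:Non-complete-4} gives the required element $a := a'' + V^{m-1}(c')$ for a suitable $c' \in \Fil_{\un n}\sO_K$. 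The main obstacle throughout is precisely this inductive step: the $W_m(\fm)$-adic approximations must be transported across the Witt-length short exact sequence, and only the combination of the surjectivity of $R$ on the filtered subspaces with the continuity of the Verschiebung makes the transfer possible.
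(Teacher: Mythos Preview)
Your proposal is correct and follows essentially the same strategy as the paper: apply \lemref{lem:Non-complete-2} with $M=\Fil_{p^{m-1}\un N}W_m\Omega^q_K$, $N=\Fil_{\un n}W_m\Omega^q_K$, $N'=\Fil_{\un n}W_m\Omega^q_{\wh K}$, reduce the required density to $q=0$ via \lemref{lem:Non-complete-1} and the continuity in \lemref{lem:Non-complete-4}, and settle $q=0$ by induction on $m$ through the short exact sequence of \lemref{lem:Log-fil-1}. The only difference is that for $q=0$ the paper tensors the exact sequence $0\to\Fil_{\un n}K\xrightarrow{V^{m-1}}\Fil_{\un n}W_m(K)\xrightarrow{R}\Fil_{\un n/p}W_{m-1}(K)\to 0$ with the flat $W_m(A)$-algebra $W_m(\wh A)$ and applies the five lemma to obtain the isomorphism $(\Fil_{\un n}W_m(K))^\wedge\cong\Fil_{\un n}W_m(\wh K)$ directly, whereas you carry out an explicit elementwise approximation; both arguments are valid and yield the same density statement.
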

\begin{proof}
  Since $\Fil_{\un{n}} W_m\Omega^q_K$ is a finitely generated $W_m(A)$-module
  (cf. \lemref{lem:Log-fil-4}), we know that the canonical
  map $\Fil_{\un{n}} W_m\Omega^q_K \otimes_{W_m(A)} W_m(\wh{A}) \to
  (\Fil_{\un{n}} W_m\Omega^q_K)^{\wedge}$
  is an isomorphism. On the other hand, we have an injection
  $\Fil_{\un{n}} W_m\Omega^q_K \otimes_{W_m(A)} W_m(\wh{A}) \inj
  W_m\Omega^q_K \otimes_{W_m(A)} W_m(\wh{A}) \cong W_m\Omega^q_{\wh{K}}$,
where the last isomorphism is by ~\eqref{eqn:Non-complete-1-2}.
  It follows that $\phi$ is injective.
  For surjectivity, we first consider the case $q=0$. If $m=1$, the isomorphism is
  clear by \lemref{lem:Log-fil-4}(3). For $m >1$, consider the commutative diagram
  $W_m(\wh A)$ modules.
 \begin{equation}\label{eqn:Non-complete-8-0}
     \xymatrix@C1.4pc{
       0 \ar[r] & \Fil_{\un{n}}K\otimes_{W}\wh W  \ar[r]^-{V^{m-1}} \ar[d] &
       \Fil_{\un{n}}W_m(K)\otimes_{W}\wh W  \ar[r]^-{R} \ar[d] &
       \Fil_{\un{n}/p}W_{m-1}(K)\otimes_{W}\wh W  \ar[r] \ar[d] & 0 \\
       0 \ar[r] & \Fil_{\un{n}}\wh{K} \ar[r]^-{V^{m-1}} &  \Fil_{\un{n}}W_m(\wh{K})
       \ar[r]^-{R} & \Fil_{\un{n}/p}W_{m-1}(\wh{K}) \ar[r]& 0,
     }
 \end{equation}
 where $W=W_m(A)$ and $\wh W= W_m(\wh A)$.

 The top row is obtained by tensoring
\eqref{eqn:Log-fil-1.1} (with $U =\Spec(K),
D = V((\prod\limits_{1 \le i \le r} x_i^{n_i}))$)
 with $W_m(\wh A)$ (over $W_m(A)$), and noting that $W_m(\wh A)$ is the
 $W_m(\fm)$-adic completion of $W_m(A)$ (cf. \eqref{eqn:Non-complete-0}),
 and hence flat over $W_m(A)$. Here, the module structure on $\Fil_{\un{n}}K$ is given
 via $F^{m-1}$. By \lemref{lem:Non-complete-7}(2), we see that the $W_m(\fm)$-adic
 topology on $\Fil_{\un{n}}K$ (via $F^{m-1}$) is the
 same as the $\fm$-adic topology. Hence,
 $\Fil_{\un{n}}K \otimes_{W_m(A)} W_m(\wh A) = (\Fil_{\un{n}} K)^\wedge$, the $\fm$-adic
 completion of $\Fil_{\un{n}}K$ as an $A$-module.

 We also note that the $W_{m-1}(\fm)$-adic topology of $W_{m-1}(A)$ coincides with
  its $W_m(\fm)$-topology via the surjection $R \colon W_m(A) \surj W_{m-1}(A)$.
  It follows that for any finitely generated $W_{m-1}(A)$-module $M$, one has
  isomorphisms
  \begin{equation}\label{eqn:Non-complete-6-0}
  M \otimes_{W_m(A)} W_{m}(\wh{A})
  \cong {M}^{\wedge}_m \cong {M}^{\wedge}_{m-1} \cong M \otimes_{W_{m-1}(A)} W_{m-1}(\wh{A}),  
\end{equation}
  where $M^{\wedge}_m$ (resp. ${M}^{\wedge}_{m-1}$) is the $W_{m}(\fm)$
  (resp. $W_{m-1}(\fm)$)-adic completion of $M$.
  Thus, the left and right vertical arrows in ~\eqref{eqn:Non-complete-8-0} are
  isomorphisms by $m=1$ case and
  induction, respectively. Hence, the middle vertical arrow is also an isomorphism.
  In particular, we get that $\Fil_{\un{n}}W_m(K)$ is dense in $\Fil_{\un{n}}W_m(\wh K)$.

We now let $q >0$ and recall that
  $\Fil_{\un{n}} W_m\Omega^q_{\wh{K}}$ is the sum of
  $\Fil_{\un{n}}W_m(\wh{K}) W_m\Omega^q_{\wh{A}}(\log\pi)$ and
$d(\Fil_{\un{n}}W_m(\wh{K}))W_m\Omega^{q-1}_{\wh{A}}(\log\pi)$. Hence, by
Lemmas~\ref{lem:Non-complete-1} and \ref{lem:Non-complete-4} and the $q=0$ case,  we
get that $\Fil_{\un{n}} W_m\Omega^q_{K}$ is dense in $\Fil_{\un{n}} W_m\Omega^q_{\wh{K}}$.
On the other hand, the lemma is clear if $\un{n}=p^{m-1}\un{n}''$ for some
$\un{n}''$, by Lemmas~\ref{lem:Log-fil-4}(1) and ~\ref{lem:Non-complete-1}. We
therefore choose some $\un{n}'$ such that $p^{m-1}\un{n}' > \un{n}$ so that
$\Fil_{\un{n}} W_m\Omega^q_{S} \subset \Fil_{p^{m-1}\un{n'}} W_m\Omega^q_{S}$ for
$S \in \{K,\wh K \}$, and apply \lemref{lem:Non-complete-2}
with $S = W_m(A)$ (which is a local ring with maximal ideal $\fM_m$ by
\lemref{lem:W-nil}), $M=\Fil_{p^{m-1}\un{n'}} W_m\Omega^q_{K}$, $N=\Fil_{\un{n}} W_m\Omega^q_{K}$ and
$N'=\Fil_{\un{n}} W_m\Omega^q_{\wh{K}}$ to conclude the proof.
\end{proof}

\begin{cor}\label{cor:Non-complete-3}
  Consider $F(\Fil_{\n}W_{m+1}\Omega^q_{K})$ as a $W_{m+1}(A)$-module (via $F$). Then
  the canonical map $F(\Fil_{\n}W_{m+1}\Omega^q_{K}) \to
  F(\Fil_{\n}W_{m+1}\Omega^q_{\wh K})$ induces an isomorphism of
  $W_{m+1}(\wh A)$-modules (via $F$)
  $\phi: \left(F(\Fil_{\n}W_{m+1}\Omega^q_{K})\right)^\wedge \xrightarrow{\cong}
  F(\Fil_{\n}W_{m+1}\Omega^q_{\wh K})$.
\end{cor}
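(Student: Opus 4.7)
The plan is to derive this from \lemref{lem:Non-complete-3*} via the exactness of $W_{m+1}(\fm)$-adic completion on finitely generated modules over the Noetherian ring $W_{m+1}(A)$ (see \propref{prop:F-fin}(5)). Set $M = F(\Fil_{\n}W_{m+1}\Omega^q_K) \subset W_m\Omega^q_K$ and $N = F(\Fil_{\n}W_{m+1}\Omega^q_{\wh K}) \subset W_m\Omega^q_{\wh K}$, regarded as $W_{m+1}(A)$- and $W_{m+1}(\wh A)$-modules via $F$, respectively. By \lemref{lem:Log-fil-4}(2), $\Fil_{\n}W_{m+1}\Omega^q_K$ and $\Fil_{\n}W_{m+1}\Omega^q_{\wh K}$ are finitely generated, and hence so are their $F$-images $M$ and $N$. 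By \lemref{lem:Non-complete-7}(2), the $W_{m+1}(\fm)$-adic topology on $M$ (via $F$) coincides with its $W_m(\fm)$-adic topology as a $W_m(A)$-module, so that $M^\wedge \cong M \otimes_{W_m(A)} W_m(\wh A)$.

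I would then consider the short exact sequence of finitely generated $W_{m+1}(A)$-modules
\begin{equation*}
0 \to P \to \Fil_{\n}W_{m+1}\Omega^q_K \xrightarrow{F} M \to 0,
\end{equation*}
where $P$ is the kernel of $F|_{\Fil_{\n}W_{m+1}\Omega^q_K}$. Taking $W_{m+1}(\fm)$-adic completions yields an exact sequence
\begin{equation*}
0 \to P^\wedge \to \bigl(\Fil_{\n}W_{m+1}\Omega^q_K\bigr)^\wedge \xrightarrow{F} M^\wedge \to 0.
\end{equation*}
By \lemref{lem:Non-complete-3*} (applied at level $m+1$), the middle term is canonically identified with $\Fil_{\n}W_{m+1}\Omega^q_{\wh K}$, so $M^\wedge$ is realized as the quotient $\Fil_{\n}W_{m+1}\Omega^q_{\wh K}/P^\wedge$.

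The next step is to produce a natural injection $M^\wedge \hookrightarrow W_m\Omega^q_{\wh K}$. The inclusion $M \hookrightarrow W_m\Omega^q_K$ of finitely generated $W_m(A)$-modules, tensored with the flat algebra $W_m(\wh A)$ and combined with the isomorphism $W_m\Omega^q_K \otimes_{W_m(A)} W_m(\wh A) \cong W_m\Omega^q_{\wh K}$ from \eqref{eqn:Non-complete-1-2}, gives the desired embedding $M^\wedge \cong M \otimes_{W_m(A)} W_m(\wh A) \hookrightarrow W_m\Omega^q_{\wh K}$. The composite $\Fil_{\n}W_{m+1}\Omega^q_{\wh K} \twoheadrightarrow M^\wedge \hookrightarrow W_m\Omega^q_{\wh K}$ agrees with $F$ on the dense subset $\Fil_{\n}W_{m+1}\Omega^q_K$ (by construction) and hence coincides with $F$ everywhere by continuity (\lemref{lem:Non-complete-4}(3)). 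Its image is therefore exactly $N$, giving the desired isomorphism $\phi : M^\wedge \xrightarrow{\cong} N$.

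The principal technical point is the identification $M^\wedge \cong M \otimes_{W_m(A)} W_m(\wh A)$, which requires carefully comparing the two scalar rings via \lemref{lem:Non-complete-7}(2); once this is in hand, flatness of $W_m(\wh A)$ over $W_m(A)$ supplies the crucial injectivity, and everything else follows formally from the exactness of completion and \lemref{lem:Non-complete-3*}.
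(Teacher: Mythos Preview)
Your overall strategy---complete the surjection $\Fil_{\n}W_{m+1}\Omega^q_K \xrightarrow{F} M$, identify the source with $\Fil_{\n}W_{m+1}\Omega^q_{\wh K}$ via \lemref{lem:Non-complete-3*}, and then embed $M^\wedge$ into $W_m\Omega^q_{\wh K}$ to see that the composite is $F$---is sound, but there is a genuine error in the execution. You assert that $M = F(\Fil_{\n}W_{m+1}\Omega^q_K)$ is a $W_m(A)$-submodule of $W_m\Omega^q_K$ and then form $M \otimes_{W_m(A)} W_m(\wh A)$. This is false: the image of $F$ is in general not closed under the $W_m(A)$-action. For instance, with $q=0$, $m=1$, $\n=\un{0}$ one has $F(\Fil_{\un{0}}W_2(K)) = A^p \subset A$, which is not an $A$-ideal when $A$ is not perfect. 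Thus the tensor product $M \otimes_{W_m(A)} W_m(\wh A)$ is not even defined, and the flatness step as written does not go through.

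The fix is to work over $W_{m+1}(A)$ (acting via $F$) throughout, as the paper does. The inclusion $M \hookrightarrow \Fil_{\n}W_m\Omega^q_K$ is $W_{m+1}(A)$-linear; tensoring with the flat algebra $W_{m+1}(\wh A)$ gives $M^\wedge \hookrightarrow \Fil_{\n}W_m\Omega^q_K \otimes_{W_{m+1}(A)} W_{m+1}(\wh A)$, and by \lemref{lem:Non-complete-7}(2) together with \lemref{lem:Non-complete-3*} the right side is $\Fil_{\n}W_m\Omega^q_{\wh K}$. After this repair your argument is correct. The paper organizes things slightly differently: rather than the kernel-completion exact sequence, it directly shows $M$ is dense in $N$ inside $\Fil_{\n}W_m\Omega^q_{\wh K}$ (via \lemref{lem:Non-complete-4}(3) and \lemref{lem:Non-complete-3*}) and then invokes \lemref{lem:Non-complete-2}. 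The two routes use the same essential inputs and yield the same conclusion.
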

\begin{proof}
  Consider $\Fil_{\n}W_m\Omega^q_K$ as a finitely generated $W_{m+1}(A)$-module via $F$
  (cf. \lemref{lem:Non-complete-7}(1)) so that  $F(\Fil_{\n}W_{m+1}\Omega^q_{K})
  \subset \Fil_{\n}W_m\Omega^q_K$ is a $W_{m+1}(A)$-submodule. By
  Lemmas~\ref{lem:Non-complete-4}(3), ~\ref{lem:Non-complete-7}(2) and ~\ref{lem:Non-complete-3*}, we know that
  $F(\Fil_{\n}W_{m+1}\Omega^q_{K})$ is dense in $F(\Fil_{\n}W_{m+1}\Omega^q_{\wh K})$ with
  respect to $W_{m+1}(\wh \fm)$-adic topology. Also, by
  Lemmas~\ref{lem:Non-complete-7}(2) and ~\ref{lem:Non-complete-3*}, we have
  \[
  \Fil_{\n}W_{m}\Omega^q_{\wh K} \cong \Fil_{\n}W_{m}\Omega^q_{K}
  \otimes_{W_{m+1}(A)}W_{m+1}(\wh A),
  \]
  the $W_{m+1}(\fm)$-adic completion (via $F$) of $\Fil_{\n}W_{m}\Omega^q_{K}$.
  Now, we can apply \lemref{lem:Non-complete-2} with $S=W_{m+1}(A)$, $M=\Fil_{\n}W_{m}\Omega^q_{K}$, $N'=F(\Fil_{\n}W_{m+1}\Omega^q_{\wh{K}})$ and
  $N=F(\Fil_{\n}W_{m+1}\Omega^q_{K})$ to conclude the proof.
\end{proof}

We let $\Fil''_{\un{n}}W_m\Omega^q_K$ denote the kernel of the canonical map
\[
\gamma = (\ov{\gamma_i}) \colon W_m\Omega^q_K \to {\underset{1 \le i \le r}\bigoplus}
\frac{W_m\Omega^q_{\wh{K}_i}}{\Fil_{n_i}W_m\Omega^q_{\wh{K}_i}}.
\]
It is clear that $\Fil_{\un{n}}W_m\Omega^q_K \subset  \Fil''_{\un{n}}W_m\Omega^q_K$. 

 Assume now that $A$ is complete. For any $1 \le i \le r$ and $n \in \Z$, we let
$E^{1,q}_{i, n}(K) = \left\{\omega = {\underset{\un{m} \in \Z^r \times \N^{d-r}}\sum}
 a_{\un{m}}x^{\un{m}} | \ a_{\un{m}} = 0 \ \mbox{if} \ m_i < - n\right\}$.
We let
$\Fil^i_{\un{n}}\Omega^q_K = {\underset{1 \le j \le i}\bigcap} E^{1,q}_{j, n_j}(K)$.
We then get a filtration
\begin{equation}\label{eqn:F-function-2-0}
  \Omega^q_K = \Fil^0_{\un{n}}\Omega^q_K(K) \supset \Fil^1_{\un{n}}\Omega^q_K
  \supset \cdots \supset \Fil^r_{\un{n}}\Omega^q_K = \Fil_{\un{n}}\Omega^q_K.
  \end{equation}

\begin{lem}\label{lem:Complete-3}
 If $A$ is complete, then one has a short exact sequence of $W_m(A)$-modules
  \[
 0 \to V^{m-1}(\Fil_{\un{n}} \Omega^q_K) + dV^{m-1}(\Fil_{\un{n}}\Omega^{q-1}_K) \to
  \Fil''_{\un{n}}W_m\Omega^q_K \xrightarrow{R} \Fil''_{{\un{n}}/p}W_{m-1}\Omega^{q}_K.
  \]
\end{lem}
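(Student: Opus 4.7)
The plan is to verify three properties: (i) that $R$ restricts to a map $\Fil''_{\un{n}}W_m\Omega^q_K \to \Fil''_{\un{n}/p}W_{m-1}\Omega^q_K$; (ii) that the leftmost term is contained in $\Fil''_{\un{n}}W_m\Omega^q_K \cap \ker(R)$; and (iii) that the reverse inclusion for the kernel holds. Parts (i) and (ii) are formal. For (i), the pull-backs $\gamma_i$ commute with $R$ by naturality, and Step~4 in the proof of \lemref{lem:Log-fil-Wcom} applied to each complete DVR $\wh{K}_i$ gives $R(\Fil_{n_i}W_m\Omega^q_{\wh{K}_i}) \subseteq \Fil_{n_i/p}W_{m-1}\Omega^q_{\wh{K}_i}$. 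For (ii), $R \circ V^{m-1} = 0$ and $R \circ dV^{m-1} = 0$ on $\Omega^q_K = W_1\Omega^q_K$ by iterating $RV = VR$ and using $R \colon W_1 \to W_0 = 0$; the containment in $\Fil''_{\un{n}}$ uses $\gamma_i \circ V^{m-1} = V^{m-1} \circ \gamma_i$, the observation that $\gamma_i(\Fil_{\un{n}}\Omega^q_K) \subseteq \Fil_{n_i}\Omega^q_{\wh{K}_i}$ (since $x_j$ is a unit in $\wh{A}_{(x_i)}$ for $j \ne i$, so the divisor restricts to $n_i$ times the component at $x_i$), together with \lemref{lem:Log-fil-Wcom} applied at $\wh{K}_i$.

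For part (iii), let $\omega \in \Fil''_{\un{n}}W_m\Omega^q_K \cap \ker(R)$. The classical Witt pro-complex exact sequence (a specialization of \corref{cor:Log-DRW-0} with $r = m-1$, $s = 1$ applied to $U = \Spec(K)$) produces $x \in \Omega^q_K$ and $y \in \Omega^{q-1}_K$ with $\omega = V^{m-1}(x) + dV^{m-1}(y)$; the task is to refine $x,y$ so they lie in $\Fil_{\un{n}}$. Since $A$ is complete, \corref{cor:F-function-2} provides unique expansions
\[
x = \sum_{\un{m}} a_{\un{m}} x^{\un{m}}, \qquad y = \sum_{\un{m}} b_{\un{m}} x^{\un{m}}
\]
with $a_{\un{m}} \in F^{1,q}_0(S)$, $b_{\un{m}} \in F^{1,q-1}_0(S)$ and $\un{m} \in \Z^r \times \N_0^{d-r}$. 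Retaining only the monomials with $m_i \ge -n_i$ for every $i$ yields $x' \in \Fil_{\un{n}}\Omega^q_K$ and $y' \in \Fil_{\un{n}}\Omega^{q-1}_K$, and the residual $\omega' := \omega - V^{m-1}(x') - dV^{m-1}(y')$ lies in $\Fil''_{\un{n}}W_m\Omega^q_K \cap \ker(R)$ by part (ii). By the injectivity of each $\gamma_i$ from \lemref{lem:Complete-1}, proving $\omega' = 0$ reduces to showing $\gamma_i(\omega') = 0$ for every $i$.

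This final vanishing is the main obstacle. A discarded monomial of $x$ is bad in some coordinate $j$, but need not be bad in the coordinate $i$ under test, so $\gamma_i(x - x')$ legitimately contains both bad-in-$x_i$ and good-in-$x_i$ parts, and applying \propref{prop:V-R-Fil} at $\wh{K}_i$ directly does not force cancellation. The cleanest resolution I envisage is to introduce intermediate filtrations $\Fil^{(k)}_{\un{n}}W_m\Omega^q_K := \bigcap_{j \le k} \gamma_j^{-1}(\Fil_{n_j}W_m\Omega^q_{\wh{K}_j})$ for $0 \le k \le r$ and to establish the analogous exact sequence for each $\Fil^{(k)}_{\un{n}}$ by downward induction on $k$, peeling off one coordinate at a time via the one-variable structure \propref{prop:V-R-Fil} at $\wh{K}_k$; this forces the bad-in-$x_k$ part of the residual to vanish modulo a piece already controlled by the inductive hypothesis, and the base case $k=0$ reduces to the unconstrained classical Witt pro-complex sequence.
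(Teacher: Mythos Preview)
Your resolution is correct and is precisely the route the paper takes: the paper introduces the intermediate filtrations $\Fil^i_{\un{n}}\Omega^q_K = \bigcap_{1\le j\le i} E^{1,q}_{j,n_j}(K)$ on the level of $\Omega^q_K$ (see ~\eqref{eqn:F-function-2-0}) and shows by induction on $i$ that $\omega = V^{m-1}(a) + dV^{m-1}(b)$ with $a\in\Fil^i_{\un{n}}\Omega^q_K$, $b\in\Fil^i_{\un{n}}\Omega^{q-1}_K$; at each step one splits off the bad-in-$x_i$ part of $(a,b)$, observes its image under $\gamma_i$ lands in $\prod_{j<-n_i}F^{m,q}_j(Q(B))\cap \Fil_{n_i}W_m\Omega^q_{\wh K_i}=0$ via the one-variable decomposition of \propref{prop:Fil-decom}, and then invokes the injectivity of $\gamma_i$ (\lemref{lem:Complete-1}) to conclude that the bad part is identically zero (not merely zero modulo something). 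Two small quibbles: your induction goes \emph{upward} in $k$ (base $k=0$, target $k=r$), not downward; and the paper carries the induction on the $m=1$ filtration of the \emph{representatives} $a,b$ rather than on $\Fil^{(k)}$ at the $W_m$-level, which avoids having to formulate and prove a separate exact sequence for each intermediate $\Fil^{(k)}$.
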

 \begin{proof}
We let ${T} = V^{m-1}(\Fil_{\un{n}} \Omega^q_K) + dV^{m-1}(\Fil_{\un{n}}\Omega^{q-1}_K)$.
   Using \cite[Prop.~2.3]{Shiho}, we only need to show that
   every element $\omega \in
   \left( V^{m-1}(\Omega^q_K) + dV^{m-1}(\Omega^{q-1}_K) \right) \bigcap
   \Fil''_{\un{n}}W_m\Omega^q_K$ lies in ${T}$.
   We shall show more generally that for every $0 \le i \le r$, we can write
   $\omega = V^{m-1}(a) + dV^{m-1}(b)$
   with $a \in \Fil^i_{\un{n}}\Omega^q_K$ and $b \in \Fil^i_{\un{n}}\Omega^{q-1}_K$
   (cf. ~\eqref{eqn:F-function-2-0}).

   We write $A = k[[x_1, \ldots , x_d]]$, where $k = {A}/{\fm}$.
   If $d =1$, the lemma already follows from \propref{prop:V-R-Fil}.
   We therefore assume $d \ge 2$ and write
   $A = B[[x_i]]$, where $B = k[[x_1, \ldots , x_{i-1}, x_{i+1}, \ldots , x_d]]$.
   We then have a canonical isomorphism $\wh{A}_i  \cong Q(B)[[x_i]]$ under which the
   inclusion $A \inj \wh{A}_i$ coincides with the map
   \begin{equation}\label{eqn:Complete-4}
     \theta_i \colon  B[[x_i]] \to Q(B)[[x_i]],
   \end{equation}
   which is identity on $B$ and sends $x_i$ to itself. In particular, the
   inclusion $K \inj \wh{K}_i$ coincides with the map induced by
$\theta_i$ on the quotient fields of its source and target.
Furthermore, $\gamma_i$ is induced by $\theta_i$.

We now write $\omega = V^{m-1}(a_0) + dV^{m-1}(b_0)$ for some
   $a_0 \in \Omega^q_K = \Fil^0_{\un{n}}\Omega^q_K$ and
   $b_0 \in \Omega^{q-1}_K = \Fil^0_{\un{n}}\Omega^{q-1}_K$.
   We suppose next that $i \ge 1$ and that we have found
   $a \in \Fil^{i-1}_{\un{n}}\Omega^q_K$ and
   $b \in \Fil^{i-1}_{\un{n}}\Omega^{q-1}_K$ such that
   $\omega = V^{m-1}(a) + dV^{m-1}(b)$.
In the notations of \corref{cor:F-function-2}, we can uniquely write
   $a = \sum_{\un{m}} a_{\un{m}}x^{\un{m}}$ and $b = \sum_{\un{m}} b_{\un{m}}x^{\un{m}}$
such that $a_{\un{m}} = 0 = b_{\un{m}}$ for any $\un{m}$ such that $m_j <- n_j$ for
some $j < i$.  We can uniquely write $a = a_1 + a_2$ and $b = b_1 + b_2$ such that
   \begin{enumerate}
     \item
       $a_1, a_2 \in \Fil^{i-1}_{\un{n}}\Omega^q_K$ and
       $b_1, b_2 \in \Fil^{i-1}_{\un{n}}\Omega^{q-1}_K$. 
     \item
       $a_1 \in  \Fil^{i}_{\un{n}}\Omega^q_K, \ b_1 \in  \Fil^{i}_{\un{n}}\Omega^{q-1}_K$.
       In particular,
       $\gamma_i\left(V^{m-1}(a_1) + dV^{m-1}(b_1)\right) \in
       \Fil_{n_i}W_m\Omega^q_{\wh K_i}$ by \lemref{lem:Log-fil-Wcom} (since $\gamma_i$
       commutes with all operators and $\gamma_i(a_1) \in \Fil^{i}_{n_i}\Omega^q_{K_i}$
       and $\gamma_i(b_1) \in \Fil^{i}_{n_i}\Omega^{q-1}_{K_i}$).
     \item
       No term of the infinite series $a_2$ (resp. $b_2$) lies in
       $\Fil^{i}_{\un{n}}\Omega^q_K$ (resp. $\Fil^{i}_{\un{n}}\Omega^{q-1}_K$).
\end{enumerate}

The above conditions imply that 
   \[
   \gamma_i\left(V^{m-1}(a_2) + dV^{m-1}(b_2)\right) =
   \gamma_i\left(\omega - (V^{m-1}(a_1) + dV^{m-1}(b_1))\right)
   \in \Fil_{n_i}W_m\Omega^q_{\wh K_i}.
   \]

   On the other hand, \propref{prop:Fil-decom} says that
   $\gamma_i(a_2) = \theta^*_i(a_2) \in {\underset{j < -n_i}\prod} F^{1,q}_j(Q(B))$
   and $\gamma_i(b_2) = \theta^*_i(b_2) \in
   {\underset{j < -n_i}\prod} F^{1,q-1}_j(Q(B))$. By \lemref{lem:F-group-1},
   this implies that
   \[
   \gamma_i \left(V^{m-1}(a_2) + dV^{m-1}(b_2)\right)
   = V^{m-1}(\gamma_i(a_2)) + dV^{m-1}(\gamma_i(b_2))
   \in  {\underset{j < -n_i}\prod} F^{m,q}_j(Q(B)).
   \]
   We thus get 
   $\gamma_i\left(V^{m-1}(a_2) + dV^{m-1}(b_2)\right) \in
   \left[{\underset{j < -n_i}\prod}F^{m,q}_{j}(Q(B)) \right] \bigcap
   \Fil_{n_i}W_m\Omega^q_{\wh{K}_i}$.
     But the latter intersection is trivial by \corref{cor:GH-3} and
     \propref{prop:Fil-decom}. It follows by
     \lemref{lem:Complete-1} that $V^{m-1}(a_2) + dV^{m-1}(b_2) = 0$.
Equivalently, $\omega =  V^{m-1}(a_1) + dV^{m-1}(b_1)$. We are now done by the
   condition (2) above and induction on $i$.  
 \end{proof}

   \vskip .3cm

We now prove the result we were after. This result generalizes \propref{prop:V-R-Fil}.

\begin{prop}\label{prop:Complete-0}
There is a short exact sequence
of $W_m(A)$-modules
  \[
  0 \to V^{m-1}(\Fil_{\un{n}} \Omega^q_K) + dV^{m-1}(\Fil_{\un{n}}\Omega^{q-1}_K) \to
  \Fil_{\un{n}}W_m\Omega^q_K \xrightarrow{R} \Fil_{{\un{n}}/p}W_{m-1}\Omega^{q}_K \to 0.
  \]
\end{prop}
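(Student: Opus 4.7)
The plan is to observe that injectivity of the leftmost arrow is tautological: it is a submodule inclusion, and its well-definedness follows from Steps~1 and~3 of the proof of \lemref{lem:Log-fil-Wcom}. Surjectivity of $R$ onto $\Fil_{\un{n}/p}W_{m-1}\Omega^q_K$ was already established in Step~4 of the same proof. Thus the substantive content is exactness in the middle, which I would handle in two moves: first reduce to the $\fm$-adically complete case, then invoke \lemref{lem:Complete-3}.

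For the reduction, I would use that $W_m(A)$ is Noetherian local (by \propref{prop:F-fin}(5) and \lemref{lem:W-nil}) with $\fM_m$-adic completion $W_m(\wh{A})$ (cf.~\eqref{eqn:Non-complete-0}), so the map $W_m(A)\to W_m(\wh{A})$ is faithfully flat and exactness is preserved and reflected under tensoring by $W_m(\wh{A})$. By \lemref{lem:Non-complete-3*}, the middle and right terms after tensoring become $\Fil_{\un{n}}W_m\Omega^q_{\wh{K}}$ and $\Fil_{\un{n}/p}W_{m-1}\Omega^q_{\wh{K}}$. For the left term I would equip $\Fil_{\un{n}}\Omega^q_K$ and $\Fil_{\un{n}}\Omega^{q-1}_K$ with the $F^{m-1}$-twisted $W_m(A)$-module structure, so that $V^{m-1}$ and $dV^{m-1}$ become $W_m(A)$-linear into $W_m\Omega^q_K$ via the identity $a\,V^{m-1}(x)=V^{m-1}(F^{m-1}(a)\,x)$ and its analogue for $dV^{m-1}$. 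Iterating \lemref{lem:Non-complete-7}(2) identifies the $W_m(\fm)$-adic topology under this twist with the $\fm$-adic topology, and the $m=1$ case of \lemref{lem:Non-complete-3*} then computes the corresponding completions as $\Fil_{\un{n}}\Omega^q_{\wh{K}}$ and $\Fil_{\un{n}}\Omega^{q-1}_{\wh{K}}$. Consequently the whole left term completes to $V^{m-1}(\Fil_{\un{n}}\Omega^q_{\wh{K}})+dV^{m-1}(\Fil_{\un{n}}\Omega^{q-1}_{\wh{K}})$, and the tensored sequence is precisely the $\wh{A}$-analogue of the proposition. Hence I may assume $A$ is complete.

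In the complete case, let $\omega\in\Fil_{\un{n}}W_m\Omega^q_K$ satisfy $R(\omega)=0$. Each pullback $\gamma_i\colon W_m\Omega^q_K\to W_m\Omega^q_{\wh{K}_i}$ sends $\Fil_{\un{n}}$ into $\Fil_{n_i}$ by the functoriality of the filtered Witt complex (\lemref{lem:Log-fil-Wcom} applied to $A\to\wh{A}_i$), so $\Fil_{\un{n}}W_m\Omega^q_K\subseteq\Fil''_{\un{n}}W_m\Omega^q_K$. Therefore $\omega$ lies in the kernel of $R$ on $\Fil''_{\un{n}}W_m\Omega^q_K$, and \lemref{lem:Complete-3} produces $a\in\Fil_{\un{n}}\Omega^q_K$ and $b\in\Fil_{\un{n}}\Omega^{q-1}_K$ with $\omega=V^{m-1}(a)+dV^{m-1}(b)$, which is exactly the required representation.

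The main obstacle I anticipate is the bookkeeping in the reduction step, specifically verifying that the $F^{m-1}$-twisted $W_m(A)$-module completion of $V^{m-1}(\Fil_{\un{n}}\Omega^q_K)$ really coincides with $V^{m-1}(\Fil_{\un{n}}\Omega^q_{\wh{K}})$ as a submodule of $W_m\Omega^q_{\wh{K}}$, rather than some larger subspace. This requires a density argument in the spirit of \lemref{lem:Non-complete-3*} and \corref{cor:Non-complete-3}, combined with \lemref{lem:Non-complete-2}; it is routine but needs care with the twisted topologies. Once this is arranged, the middle-exactness in the complete case reduces to a single invocation of \lemref{lem:Complete-3}, which has already absorbed the delicate multi-variable Geisser--Hesselholt analysis and the inductive reduction to the one-variable case of \propref{prop:V-R-Fil}.
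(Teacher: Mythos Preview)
Your proposal is correct and follows essentially the same route as the paper: reduce to the complete case via completion/faithful flatness, then invoke \lemref{lem:Complete-3} using the inclusion $\Fil_{\un{n}}W_m\Omega^q_K\subset\Fil''_{\un{n}}W_m\Omega^q_K$. The only cosmetic difference is that the paper phrases the reduction as showing directly that $E=[V^{m-1}(\Fil_{\un{n}}\Omega^q_{\wh{K}})+dV^{m-1}(\Fil_{\un{n}}\Omega^{q-1}_{\wh{K}})]\cap\Fil_{\un{n}}W_m\Omega^q_K$ via \lemref{lem:Non-complete-2}, rather than tensoring the whole sequence; both formulations need exactly the density input you identify (continuity of $V^{m-1},dV^{m-1}$ from \lemref{lem:Non-complete-4} together with \lemref{lem:Non-complete-3*}), and the paper's explicit verification that $E$ is a $W_m(A)$-submodule, via $a(V^{m-1}(b)+dV^{m-1}(c))=V^{m-1}(F^{m-1}(a)b-F^{m-1}d(a)c)+dV^{m-1}(F^{m-1}(a)c)$, is what makes \lemref{lem:Non-complete-2} applicable.
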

\begin{proof}
  We only need to show that $\Ker(R) \subset V^{m-1}(\Fil_{\un{n}} \Omega^q_K) +
  dV^{m-1}(\Fil_{\un{n}}\Omega^{q-1}_K)$ as the surjectivity of $R$ was already shown in
  the proof of \lemref{lem:Log-fil-Wcom}.
  If $A$ is complete, we are done by \lemref{lem:Complete-3} since
  $\Fil_{\un{n}}W_m\Omega^q_K \subset  \Fil''_{\un{n}}W_m\Omega^q_K$.
  Suppose now that $A$ is not complete and let $E = V^{m-1}(\Fil_{\un{n}} \Omega^q_K) +
  dV^{m-1}(\Fil_{\un{n}}\Omega^{q-1}_K)$.

  One easily verifies that $E$ is a $W_m(A)$-submodule of
  $\Fil_{\un{n}}W_m\Omega^q_K$. Indeed, \lemref{lem:Log-fil-4} implies that
  $E \subset \Fil_{\un{n}}W_m\Omega^q_K$. Moreover, one has
  $a(V^{m-1}(b) + dV^{m-1}(c)) = V^{m-1}(F^{m-1}(a)b-F^{m-1}d(a)c) + dV^{m-1}(F^{m-1}(a)c)$
  for any $a \in W_m(A), \ b \in \Fil_{\un{n}} \Omega^q_K$ and
  $c \in \Fil_{\un{n}}\Omega^{q-1}_K$. \lemref{lem:Log-fil-4} implies once again that
  $a(V^{m-1}(b) + dV^{m-1}(c)) \in E$.
 Using the complete case, it remains therefore to show that 
$E = \left[V^{m-1}(\Fil_{\un{n}} \Omega^q_{\wh{K}}) +
  dV^{m-1}(\Fil_{\un{n}}\Omega^{q-1}_{\wh{K}})\right] \bigcap \Fil_{\un{n}}W_m\Omega^q_K$.
But this follows from \lemref{lem:Non-complete-2}
because $E$ is dense in
$V^{m-1}(\Fil_{\un{n}}\Omega^q_{\wh{K}}) + dV^{m-1}(\Fil_{\un{n}}\Omega^{q-1}_{\wh{K}})$ by
Lemma~\ref{lem:Non-complete-4} and \corref{cor:Non-complete-3}.
\end{proof}

\begin{cor}\label{cor:Complete-2}
 The inclusions $K \inj \wh{K}_i$
  induce an exact sequence
  \[
  0 \to  \Fil_{\un{n}}W_m\Omega^q_K \to W_m\Omega^q_K \xrightarrow{\gamma}
    {\underset{1 \le i \le r}\bigoplus}
  \frac{W_m\Omega^q_{\wh{K}_i}}{\Fil_{n_i}W_m\Omega^q_{\wh{K}_i}}.
  \]
\end{cor}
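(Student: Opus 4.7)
The plan is to show $\Fil_{\un{n}}W_m\Omega^q_K = \Fil''_{\un{n}}W_m\Omega^q_K$, where $\Fil''_{\un{n}}W_m\Omega^q_K := \Ker(\gamma)$. The inclusion $\Fil_{\un{n}} \subseteq \Fil''_{\un n}$ is immediate from the filtered Witt-complex structure (\lemref{lem:Log-fil-Wcom}); only the reverse inclusion requires proof. My first step is to reduce to the case when $A$ is complete. The map $W_m\Omega^q_K \hookrightarrow W_m\Omega^q_{\wh{K}}$ is injective, since its composition with $W_m\Omega^q_{\wh K} \to W_m\Omega^q_{\wh K_i}$ equals $\gamma_i$, which is injective by \lemref{lem:Complete-1} (applied to $\wh A$). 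Assuming the statement holds for $\wh A$, a given $\omega \in \Fil''_{\un n}W_m\Omega^q_K$ has image $\tilde\omega \in W_m\Omega^q_{\wh K}$ lying in $\Fil_{\un n}W_m\Omega^q_{\wh K}$. By \lemref{lem:Non-complete-3*} together with the isomorphism of \eqref{eqn:Non-complete-1-2}, tensoring the short exact sequence $0 \to \Fil_{\un n}W_m\Omega^q_K \to W_m\Omega^q_K \to Q \to 0$ over $W_m(A)$ with $W_m(\wh A)$ (faithfully flat, since $W_m(A)$ is Noetherian local by \propref{prop:F-fin}(5) and \lemref{lem:W-nil}) identifies $Q \otimes_{W_m(A)} W_m(\wh A) \cong W_m\Omega^q_{\wh K}/\Fil_{\un n}W_m\Omega^q_{\wh K}$. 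The class $\bar\omega \in Q$ goes to zero after this base change, so the cyclic submodule $M := W_m(A)\bar\omega \subset Q$ satisfies $M \otimes_{W_m(A)} W_m(\wh A) = 0$, and faithful flatness forces $M = 0$, i.e., $\omega \in \Fil_{\un n}W_m\Omega^q_K$.

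With $A$ complete, so $\wh A_i \cong Q(A/(x_i))[[x_i]]$, I proceed by induction on $m$. For $m = 1$, \lemref{lem:Log-fil-4}(3) identifies $\Fil_{\un n}\Omega^q_K$ with the global sections of the locally free coherent $\sO_X$-module $\sF := \Omega^q_X(\log E)(D)$ on $X = \Spec(A)$, and $\Fil_{n_i}\Omega^q_{\wh K_i}$ with $\Omega^q_{\wh A_i}(\log x_i)(n_i[x_i])$. By (the DVR analogue of) \lemref{lem:Non-complete-1} and faithful flatness of $A_i := A_{(x_i)} \to \wh A_i$, the condition $\gamma_i(\omega) \in \Fil_{n_i}\Omega^q_{\wh K_i}$ is equivalent to $\omega$ lying in the stalk $\sF_{E_i} = \Omega^q_{A_i}(\log x_i)(n_i[x_i])$ inside $\Omega^q_L$, where $L = Q(A)$. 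Since $\sF$ is locally free, hence reflexive, on the regular scheme $X$, the standard identity $\Gamma(X, \sF) = \Gamma(U, \Omega^q_U) \cap \bigcap_{i=1}^r \sF_{E_i}$ holds inside $\Omega^q_L$; and $\omega \in \Omega^q_K = \Gamma(U, \Omega^q_U)$ already lies in every $\sF_{E_i}$ by hypothesis, giving $\omega \in \Fil_{\un n}\Omega^q_K$.

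For the inductive step $m \ge 2$ (with $A$ still complete), take $\omega \in \Fil''_{\un n}W_m\Omega^q_K$. Since $R$ commutes with each $\gamma_i$ (both being natural with respect to the ring map $K \to \wh K_i$), one has $R(\omega) \in \Fil''_{\un n/p}W_{m-1}\Omega^q_K$, which equals $\Fil_{\un n/p}W_{m-1}\Omega^q_K$ by the inductive hypothesis. The surjectivity part of \propref{prop:Complete-0} yields some $\omega' \in \Fil_{\un n}W_m\Omega^q_K$ with $R(\omega') = R(\omega)$, and then $\omega - \omega' \in \Fil''_{\un n}W_m\Omega^q_K \cap \Ker(R)$ lies in $V^{m-1}(\Fil_{\un n}\Omega^q_K) + dV^{m-1}(\Fil_{\un n}\Omega^{q-1}_K) \subset \Fil_{\un n}W_m\Omega^q_K$ by \lemref{lem:Complete-3}, so $\omega \in \Fil_{\un n}W_m\Omega^q_K$. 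The main technical hurdle is the first paragraph's reduction to the complete case, which rests on matching the base-change identification of \lemref{lem:Non-complete-3*} with faithful flatness of $W_m(A) \to W_m(\wh A)$; after that, the complete case splits cleanly into a sheaf-theoretic base case and a short diagram chase comparing the parallel exact sequences of \lemref{lem:Complete-3} and \propref{prop:Complete-0}.
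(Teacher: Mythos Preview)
Your proof is correct and follows the same overall architecture as the paper's: reduce to the complete case by faithful flatness of $W_m(A) \to W_m(\wh A)$ together with \lemref{lem:Non-complete-3*} and \eqref{eqn:Non-complete-1-2}, and then induct on $m$ using the comparison between \propref{prop:Complete-0} and \lemref{lem:Complete-3}. Your inductive step and your reduction step are exactly the paper's (cf.\ \eqref{eqn:Complete-2-0} and \eqref{eqn:Complete-2-1}).

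The one genuine difference is the base case $m=1$. The paper invokes the explicit product decomposition of \corref{cor:F-function-2}, which is only available after passing to the completion. You instead argue sheaf-theoretically: $\Fil_{\un n}\Omega^q_K = \Gamma(X,\sF)$ for the locally free sheaf $\sF = \Omega^q_X(\log E)(D)$, and local freeness (hence reflexivity) on the regular scheme $X$ gives $\Gamma(X,\sF) = \Omega^q_K \cap \bigcap_i \sF_{\eta_i}$ inside $\Omega^q_L$; then faithful flatness of $A_{(x_i)} \to \wh A_i$ translates the completed condition at $\wh K_i$ back to the stalk $\sF_{\eta_i}$. This is more conceptual and, notably, does not actually require $A$ to be complete---so in your ordering the reduction to the complete case is needed only to invoke \lemref{lem:Complete-3} in the inductive step. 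The paper's route is more hands-on but self-contained given the earlier explicit descriptions; yours avoids those but leans on standard reflexive-sheaf extension across codimension~$\ge 2$. Both are clean.

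One small point worth making explicit (the paper leaves it implicit too): in the reduction step, concluding that $\tilde\omega \in \Fil''_{\un n}W_m\Omega^q_{\wh K}$ from $\omega \in \Fil''_{\un n}W_m\Omega^q_K$ uses that the natural map from the $\wh K_i$ of $A$ to that of $\wh A$ is an unramified extension of complete discrete valuation fields (same uniformizer $x_i$), so the filtration is preserved under pushforward.
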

\begin{proof}
The corollary is equivalent to the claim that the inclusion
 $\Fil_{\un{n}}W_m\Omega^q_K  \subset \Fil''_{\un{n}}W_m\Omega^q_K$ is an equality.
 To prove this claim, we first assume that $A$ is complete and prove 
by induction on $m$.
  The case $m =1$ follows directly from \corref{cor:F-function-2}.
  For $m \ge 2$, we use the commutative diagram
  \begin{equation}\label{eqn:Complete-2-0}
    \xymatrix@C.8pc{
      0 \ar[r] & V^{m-1}(\Fil_{\un{n}}\Omega^q_K) + dV^{m-1}(\Fil_{\un{n}}\Omega^{q-1}_K)
      \ar[r] \ar[d] & \Fil_{\un{n}}W_m\Omega^q_K \ar[r]^-{R} \ar[d] &
      \Fil_{\un{n}}W_{m-1}\Omega^q_K \ar[r] \ar[d] &
      0 \\
    0 \ar[r] & V^{m-1}(\Fil''_{\un{n}}\Omega^q_K) + dV^{m-1}(\Fil''_{\un{n}}\Omega^{q-1}_K)
    \ar[r] & \Fil''_{\un{n}}W_m\Omega^q_K \ar[r]^-{R}  &
    \Fil''_{\un{n}}W_{m-1}\Omega^q_K. & }
    \end{equation}
 \propref{prop:Complete-0} says that the top row  in
  ~\eqref{eqn:Complete-2-0} is exact and \lemref{lem:Complete-3}
  implies that the bottom row is exact at $\Fil''_{\un{n}}W_m\Omega^q_K$.
  The left and the right vertical arrows are bijective by induction.
  A diagram chase shows that the middle vertical arrow is also bijective.

 For $A$ not necessarily complete, it suffices to show that
  $\Fil_{\un{n}}W_m\Omega^q_K = \Fil_{\un{n}}W_m\Omega^q_{\wh{K}} \bigcap W_m\Omega^q_K$.
  But this follows because $W_m(\wh{A})$ is a faithfully flat $W_m(A)$-algebra
  (cf. \propref{prop:F-fin}, \lemref{lem:W-nil}) and there are isomorphisms
  \begin{equation}\label{eqn:Complete-2-1}
    \frac{W_m\Omega^q_K}{\Fil_{\un{n}}W_m\Omega^q_K} \otimes_{W_m(A)} W_m(\wh{A})
  \cong \frac{W_m\Omega^q_K \otimes_{W_m(A)} W_m(\wh{A})}
        {\Fil_{\un{n}}W_m\Omega^q_K \otimes_{W_m(A)} W_m(\wh{A})}
        \cong \frac{W_m\Omega^q_{\wh{K}}}{\Fil_{\un{n}}W_m\Omega^q_{\wh{K}}},
  \end{equation}
  where the second isomorphism follows from ~\eqref{eqn:Non-complete-1-2} and
  \lemref{lem:Non-complete-3*}.
\end{proof}

\begin{cor}\label{cor:Ext-3}
  The inclusion $\Fil_{-\un{1}} W_m\Omega^d_K \inj W_m\Omega^d_A$ is a bijection if
  $d$ is the rank of $\Omega^1_A$.
\end{cor}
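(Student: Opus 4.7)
The plan is induction on $m$, with base case $m=1$ following directly from \lemref{lem:Log-fil-4}(3): when $d$ equals the rank of $\Omega^1_A$, the top-degree log module $\Omega^d_A(\log \pi)$ is the rank-one free $A$-module generated by $\pi^{-1}\omega_0$ for some $\omega_0 \in \Omega^d_A$ (concretely a wedge involving the $dx_i$'s and the residue-field differentials provided by Proposition~\ref{prop:F-fin}(7)). Hence $\pi \cdot \Omega^d_A(\log \pi) = \Omega^d_A$, which identifies $\Fil_{-\un{1}}\Omega^d_K$ with $\Omega^d_A$. The inclusion $\Fil_{-\un{1}}W_m\Omega^d_K \inj W_m\Omega^d_A$ for all $m$ is already supplied by \lemref{lem:Log-fil-4}(5) (since all entries of $-\un{1}$ are negative), so only surjectivity needs to be checked.

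For the inductive step, note that $\lfloor -1/p \rfloor = -1$ for all $p \ge 2$, so $\Fil_{-\un{1}/p}W_{m-1}\Omega^d_K = \Fil_{-\un{1}}W_{m-1}\Omega^d_K$, which equals $W_{m-1}\Omega^d_A$ by the inductive hypothesis. Comparing the exact sequence from \propref{prop:Complete-0} with the classical restriction sequence
\[
0 \to V^{m-1}(\Omega^d_A) + dV^{m-1}(\Omega^{d-1}_A) \to W_m\Omega^d_A \xrightarrow{R} W_{m-1}\Omega^d_A \to 0
\]
and invoking the snake lemma reduces the problem to proving the equality of kernels
\[
V^{m-1}(\Fil_{-\un{1}}\Omega^d_K) + dV^{m-1}(\Fil_{-\un{1}}\Omega^{d-1}_K) = V^{m-1}(\Omega^d_A) + dV^{m-1}(\Omega^{d-1}_A)
\]
as subgroups of $W_m\Omega^d_A$.

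The first summands already coincide by the base case. For the $dV^{m-1}$ terms, a direct expansion shows $\Fil_{-\un{1}}\Omega^{d-1}_K = \pi \cdot \Omega^{d-1}_A(\log\pi) \subset \Omega^{d-1}_A$: each basis element $\dlog(x_{i_1})\wedge\cdots\wedge\dlog(x_{i_s})\wedge \eta$ of $\Omega^{d-1}_A(\log\pi)$ becomes $(\prod_{j \in \{1,\dots,r\}\setminus\{i_1,\dots,i_s\}} x_j)\cdot dx_{i_1}\wedge\cdots\wedge dx_{i_s}\wedge \eta \in \Omega^{d-1}_A$ after multiplication by $\pi$. This yields one inclusion. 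The reverse inclusion requires showing that for any ``missing-edge'' basis element $\omega = dx_{j_1}\wedge\cdots\wedge dx_{j_{d-1}}$ omitting some $dx_k$ with $k \in \{1,\ldots,r\}$ (so $\omega \notin \pi\Omega^{d-1}_A(\log\pi)$), the element $dV^{m-1}(\omega)$ lies in $V^{m-1}(\Omega^d_A) + dV^{m-1}(\pi \cdot \Omega^{d-1}_A(\log\pi))$. The key identity is $V^{m-1}\circ d = p^{m-1}\cdot d\circ V^{m-1}$ (from iterating $FdV=d$, giving $Vd=pdV$); coupled with the Leibniz rule applied to products $x_k \cdot \omega'$ with $\omega' \in \Omega^{d-2}_A(\log\pi)$, this rewrites $dV^{m-1}(\omega)$ in the required form.

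The principal obstacle is making these Frobenius-Verschiebung manipulations precise in the non-complete setting. To circumvent this I would first pass to the complete case by invoking the faithful flatness of $W_m(\wh{A})$ over $W_m(A)$ together with \lemref{lem:Non-complete-3*} and \corref{cor:Non-complete-3} (the compatibility of both $V^{m-1}(\Omega^{\bullet}_K)$ and $dV^{m-1}(\Omega^{\bullet}_K)$ with $\fM_m$-adic completion being guaranteed by \lemref{lem:Non-complete-4}). In the complete case $A = k[[x_1,\ldots,x_d]]$, the equality of the two kernels can then be verified term-by-term using the multivariate extension of the Geisser--Hesselholt presentation (obtained by iterating \thmref{thm:GH-Top} in the variables $x_1, \ldots, x_d$), which splits both $W_m\Omega^d_A$ and $\Fil_{-\un{1}}W_m\Omega^d_K$ into explicit summands indexed by multi-degrees; the summands contributing to $\Fil_{-\un{1}}W_m\Omega^d_K$ are precisely those indexed by multi-indices with each entry $\ge -1$, which, for top-degree forms, exhausts $W_m\Omega^d_A$.
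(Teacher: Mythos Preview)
Your induction-on-$m$ strategy via the two $R$-exact sequences is exactly the paper's approach (compare the diagram~\eqref{eqn:Ext-3-0}), and you correctly isolate the only nontrivial step: surjectivity of the left vertical arrow, i.e., that $dV^{m-1}(\Omega^{d-1}_A) \subset V^{m-1}(\Omega^d_A) + dV^{m-1}(\Fil_{-\un{1}}\Omega^{d-1}_K)$. The paper leaves this implicit.

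However, both of your attempts to fill this step have problems. The Leibniz/$FV$ paragraph does not lead anywhere: the relation $Vd = pdV$ lets you rewrite $V^{m-1}(d\omega')$ as $p^{m-1}dV^{m-1}(\omega')$, but this goes the wrong direction---it does not express $dV^{m-1}(\omega)$ in terms of images of $V^{m-1}$, and the suggested Leibniz manipulation on $x_k\cdot\omega'$ is not spelled out in a way that produces the needed identity. For the Geisser--Hesselholt paragraph, your indexing is off by a sign: by \propref{prop:Fil-decom}, $\Fil_{-\un{1}}$ corresponds to summands with each index $\ge 1$, not $\ge -1$, while $W_m\Omega^d_A$ (in the non-log decomposition of \thmref{thm:GH-Top}) contains the index-$0$ piece $W_m\Omega^d_S$. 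The reason these two nonetheless agree in top degree---and this is the point you are missing---is that $W_m\Omega^d_S = 0$ whenever $d$ exceeds the rank of $\Omega^1_S$: one checks this by induction on $m$ using $\Omega^d_S = 0$ together with $\Ker(F\colon W_m\Omega^d_S \to W_{m-1}\Omega^d_S) = V^{m-1}(\Omega^d_S) = 0$. Once this vanishing is in hand (after passing to $\wh A$ so that a coefficient field exists), the quotient $\Omega^{d-1}_A/\Fil_{-\un{1}}\Omega^{d-1}_K$ is represented by forms coming from $\Omega^{d-1}_k$, and $dV^{m-1}$ kills these since it lands in $W_m\Omega^d_k = 0$; the inductive step then closes.
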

\begin{proof}
  For $m = 1$, this follows from \lemref{lem:Log-fil-4}(3).
  We now use the commutative diagram
  \begin{equation}\label{eqn:Ext-3-0}
    \xymatrix@C1.5pc{
      0 \to V^{m-1}(\Fil_{-\un{1}}\Omega^d_K) + dV^{m-1}(\Fil_{-\un{1}} \Omega^{d-1}_K) \ar[r]
      \ar[d] &
      \Fil_{-\un{1}} W_m\Omega^d_K \ar[r]^-{R} \ar[d] &  \Fil_{-\un{1}} W_{m-1}\Omega^d_K
      \ar[r]
      \ar[d] & 0 \\
      0 \to V^{m-1}(\Omega^d_A) + dV^{m-1}(\Omega^{d-1}_A) \ar[r]  &
      W_m\Omega^d_A  \ar[r]^-{R} & W_{m-1}\Omega^d_A \ar[r] & 0}
  \end{equation}
  (where the vertical arrows are the canonical inclusions),
  \propref{prop:Complete-0}
  and induction on $m$ to finish the proof in the general case.
\end{proof}

We shall need the following result in the next section.

\begin{lem}\label{lem:Complete-5}
  The canonical maps
  \[
  \frac{\Fil_{\un{n}}W_m\Omega^q_K}{dV^{m-1}(\Fil_{\un{n}}\Omega^{q-1}_K)}
  \to \frac{W_m\Omega^q_K}{dV^{m-1}(\Omega^{q-1}_K)} \ \ \mbox{and} \ \ 
\frac{\Fil_{\un{n}}W_{m}\Omega^q_K}{V^{m-1}(\Fil_{\un{n}}\Omega^{q}_K)}
  \to \frac{W_{m}\Omega^q_K}{V^{m-1}(\Omega^{q}_K)}
  \]
are injective.
\end{lem}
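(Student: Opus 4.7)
The plan is to reformulate the two injectivity statements as set-theoretic inclusions: the first as $\Fil_{\un{n}}W_m\Omega^q_K \cap dV^{m-1}(\Omega^{q-1}_K) \subset dV^{m-1}(\Fil_{\un{n}}\Omega^{q-1}_K)$, and the second as $\Fil_{\un{n}}W_m\Omega^q_K \cap V^{m-1}(\Omega^q_K) \subset V^{m-1}(\Fil_{\un{n}}\Omega^q_K)$. I will attack both by combining \propref{prop:Complete-0} with the explicit graded Geisser--Hesselholt decomposition from \S\ref{sec:FPSR}.

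For the first, consider $\omega = dV^{m-1}(\eta)$ lying in $\Fil_{\un{n}}W_m\Omega^q_K$. Using that $R$ commutes with $d$ and that $RV^{m-1}$ vanishes on $W_1\Omega^{q-1}_K$ (an iteration of $RV = VR$ that eventually hits $R \colon W_1 \to W_0 = 0$), one finds $R(\omega) = 0$. Applying \propref{prop:Complete-0}, I write $\omega = V^{m-1}(a) + dV^{m-1}(b)$ with $a \in \Fil_{\un{n}}\Omega^q_K$ and $b \in \Fil_{\un{n}}\Omega^{q-1}_K$, so that $V^{m-1}(a) = dV^{m-1}(\eta - b)$. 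The problem thus reduces to the following \emph{key claim}: if $a \in \Fil_{\un{n}}\Omega^q_K$ and $V^{m-1}(a) = dV^{m-1}(c)$ for some $c \in \Omega^{q-1}_K$, then there exists $c' \in \Fil_{\un{n}}\Omega^{q-1}_K$ with $V^{m-1}(a) = dV^{m-1}(c')$; granted this, $\omega = dV^{m-1}(c' + b) \in dV^{m-1}(\Fil_{\un{n}}\Omega^{q-1}_K)$.

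I would establish the key claim first in the complete one-variable setting $A = S[[\pi]]$. Here \corref{cor:GH-3} and \propref{prop:Fil-decom} yield canonical decompositions $\Omega^{q-1}_K = \bigoplus_{j < 0}F^{1,q-1}_j(S) \oplus \prod_{j \ge 0}F^{1,q-1}_j(S)$ and $W_m\Omega^q_K = \bigoplus_{j < 0}F^{m,q}_j(S) \oplus \prod_{j \ge 0}F^{m,q}_j(S)$, with $\Fil_n\Omega^{q-1}_K = \prod_{j \ge -n}F^{1,q-1}_j(S)$ and $\Fil_nW_m\Omega^q_K = \prod_{j \ge -n}F^{m,q}_j(S)$; moreover, by \lemref{lem:F-group-1}, both $V^{m-1}$ and $dV^{m-1}$ preserve this $j$-grading, sending $F^{1,q'}_j(S)$ into $F^{m,q'}_j(S)$. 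Writing $a = \sum_j a_j$ and $c = \sum_j c_j$ componentwise, the identity $V^{m-1}(a) = dV^{m-1}(c)$ reads $V^{m-1}(a_j) = dV^{m-1}(c_j)$ for each $j$; since $a_j = 0$ for $j < -n$, this forces $dV^{m-1}(c_j) = 0$ for $j < -n$, and the truncation $c' := \sum_{j \ge -n}c_j$ lies in $\Fil_n\Omega^{q-1}_K$ with $dV^{m-1}(c') = V^{m-1}(a)$. The second injectivity statement admits a parallel but even simpler graded proof: if $\omega = V^{m-1}(a) = \sum_j V^{m-1}(a_j)$ lies in $\Fil_{\un{n}}W_m\Omega^q_K$, then $V^{m-1}(a_j) = 0$ for $j < -n$, and the truncation $a' := \sum_{j \ge -n}a_j \in \Fil_n\Omega^q_K$ satisfies $V^{m-1}(a') = \omega$.

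The remaining steps are to pass from the complete one-variable setting to the general case. Descent from the complete to the non-complete case is handled by the faithful flatness of $W_m(\wh A)$ over $W_m(A)$ combined with the isomorphism $\Fil_{\un{n}}W_m\Omega^q_K \otimes_{W_m(A)}W_m(\wh A) \cong \Fil_{\un{n}}W_m\Omega^q_{\wh K}$ of \lemref{lem:Non-complete-3*}, precisely as in the final paragraph of the proof of \propref{prop:Complete-0}. The principal technical obstacle is the multi-variable extension: \corref{cor:GH-3} treats only $d = 1$, while the multi-variable Geisser--Hesselholt decomposition appears in the paper only for $m = 1$ (\corref{cor:LWC-0-2}). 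To execute the componentwise argument in several variables, I would either iterate the one-variable construction of \S\ref{sec:FPSR} to obtain a decomposition of $W_m\Omega^q_K$ as a (suitably completed) sum over $\un j \in \Z^r \times \N_0^{d-r}$ of pieces $F^{m,q}_{\un j}(S)$ respecting the operators $V^{m-1}$, $dV^{m-1}$ and the filtration $\Fil_{\un n}$, or alternatively reduce to the one-variable case via the embedding of \corref{cor:Complete-2} and patch the resulting one-variable preimages into a single element of $\Fil_{\un n}\Omega^{q-1}_K$ (respectively $\Fil_{\un n}\Omega^q_K$).
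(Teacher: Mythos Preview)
Your approach is correct and essentially the same as the paper's: the complete case is handled by running the proof of \lemref{lem:Complete-3} with $a=0$ (your componentwise truncation in one variable, extended to several variables by your second ``patch'' alternative, which is exactly the inductive refinement $\Fil^i_{\un n}$ of that lemma), and the non-complete descent is by density and \lemref{lem:Non-complete-2}. Two small points: your opening detour through \propref{prop:Complete-0} is unnecessary, since the same truncation argument applies directly to $\omega = dV^{m-1}(\eta)$ without first rewriting $\omega = V^{m-1}(a)+dV^{m-1}(b)$; and for the descent, $dV^{m-1}(\Fil_{\un n}\Omega^{q-1}_K)$ is only a $W_{m+1}(A)$-submodule (via $F$, cf.\ \lemref{lem:Non-complete-7}), not a $W_m(A)$-submodule, so the paper applies \lemref{lem:Non-complete-2} over $W_{m+1}(A)$ rather than appealing to faithful flatness of $W_m(\wh A)$ over $W_m(A)$.
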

\begin{proof}
 We shall prove the injectivity of the first map as the other one is completely
  analogous.
We only need to show that every element
  $\omega \in \Fil_{\un{n}}W_m\Omega^q_K \bigcap dV^{m-1}(\Omega^{q-1}_K)$ lies in
$dV^{m-1}(\Fil_{\un{n}}\Omega^{q-1}_K)$. If $A$ is complete, this is proven
exactly as we proved \lemref{lem:Complete-3} (with $a = 0$). If $A$ is not 
complete, it suffices to show that
\[
\Fil_{\un{n}}W_m\Omega^q_K \bigcap
dV^{m-1}(\Fil_{\un{n}}\Omega^{q-1}_{\wh{K}}) = dV^{m-1}(\Fil_{\un{n}}\Omega^{q-1}_K).
\]

To that end, we note that $\Fil_{\un{n}}W_m\Omega^q_{\wh{K}} =
(\Fil_{\un{n}}W_m\Omega^q_K)^\wedge$ by \lemref{lem:Non-complete-3*}. In particular,
$\Fil_{\un{n}}\Omega^{q-1}_K$ is dense in $\Fil_{\un{n}}\Omega^{q-1}_{\wh{K}}$ with respect
to the $\wh{\fm}$-adic topology.  It follows from \lemref{lem:Non-complete-4} that
$dV^{m-1}(\Fil_{\un{n}}\Omega^{q-1}_K)$ is a dense subgroup of 
$dV^{m-1}(\Fil_{\un{n}}\Omega^{q-1}_{\wh{K}})$ with respect to the $W_m(\fm)$-adic
topology. We conclude from \lemref{lem:Non-complete-7} that
$\Fil_{\un{n}}W_m\Omega^q_{\wh{K}}$ is
the $W_{m+1}(\fm)$-adic completion of the $W_{m+1}(A)$-module $\Fil_{\un{n}}W_m\Omega^q_K$
via $F \colon W_{m+1}(A) \to W_m(A)$ and
$dV^{m-1}(\Fil_{\un{n}}\Omega^{q-1}_K)$ is a dense subgroup of 
$dV^{m-1}(\Fil_{\un{n}}\Omega^{q-1}_{\wh{K}})$ with respect to the $W_{m+1}(\fm)$-adic
topology. By \lemref{lem:Non-complete-2}, it remains
to show that $dV^{m-1}(\Fil_{\un{n}}\Omega^{q-1}_K)$ is a $W_{m+1}(A)$-submodule of
$\Fil_{\un{n}}W_m\Omega^q_K$. But this follows again from \lemref{lem:Non-complete-7}.
\end{proof}

\section{Cartier operator on filtered de Rham-Witt complex}\label{sec:CFDRW}
Recall from \cite{Kato-Duality} (see also \cite[\S~10]{GK-Duality})
that for any regular $\F_p$-scheme $X$, there
is a unique Cartier homomorphism $C \colon Z_1W_m\Omega^q_X \to W_m\Omega^q_X$
such that $W_{m+1}\Omega^q_X \xrightarrow{R} W_m\Omega^q_X$ is the composition
$W_{m+1}\Omega^q_X \stackrel{F}{\surj} Z_1W_m\Omega^q_X  \xrightarrow{C} W_m\Omega^q_X$
and $Z_1W_m\Omega^q_X := {\rm Image}(F) = \Ker(W_{m}\Omega^q_X \xrightarrow{F^{m-1}d}
\Omega^{q+1}_X)$.
Our goal in this section is to explain the interaction of the Cartier
homomorphism with the $\Div_E(X)$-filtration of de Rham-Witt complex, where
$E \subset X$ is a simple normal crossing divisor.
We shall also prove some additional properties of the filtered de Rham-Witt
complex which are known when one ignores the filtration. In the end of the section,
we shall state a global version of our results for $F$-finite
regular $\F_p$-schemes.

We let $A$ be a regular local $F$-finite $\F_p$-algebra with 
maximal ideal $\fm = (x_1, \ldots , x_d)$. We let $\pi = x_1\cdots x_r$ and
$K = A_\pi$, where $1 \le r \le d$. We let $\wh{A}$ denote the
$\fm$-adic completion of
$A$ with the maximal ideal $\wh{\fm}$ and let $\wh{K} = \wh{A}_\pi$.
We fix $q \ge 0, m \ge 1$ and let $Z_1\Fil_{\n}W_m\Omega^q_K :=
\Fil_{\n}W_m\Omega^q_K \bigcap Z_1W_m\Omega^q_K$ for $\n \in \Z^r$.
Equivalently, $Z_1\Fil_{\un{n}}W_m\Omega^q_K=\Ker (F^{m-1}d:
\Fil_{\un{n}}W_m\Omega^q_K \to \Fil_{\un{n}}\Omega^{q+1}_K)$.
We let ${\un{n}}/{p^m} = (\lfloor{{n_1}/{p^m}}\rfloor, \ldots ,
\lfloor{{n_r}/{p^m}}\rfloor)$.
We shall say $\un{n} \ge \un{n'}$ if $n_i \ge n'_i$ for all $1 \le i \le r$.
We let $-\un{1} = (-1, \ldots , -1) \in \Z^r$.
We shall continue to use the notations of \S~\ref{sec:com-to-noncom}.

\subsection{The Cartier homomorphism}\label{sec:Cartier}
We begin with the following.

\begin{lem}\label{lem:Complete-6}
  The Cartier map $C \colon Z_1W_m\Omega^q_K \to W_m\Omega^q_K$ restricts to a
  $W_{m+1}(A)$-linear map
  \[
  C \colon Z_1\Fil_{\un{n}}W_m\Omega^q_K \to \Fil_{{\un{n}}/p}W_m\Omega^q_K
  \]
  such that $C(dV^{m-1}(\Fil_{\un{n}}\Omega^{q-1}_K)) = 0$.
\end{lem}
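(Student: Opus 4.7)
My plan is to establish the two substantive claims---that $C$ sends $Z_1\Fil_{\un n}W_m\Omega^q_K$ into $\Fil_{\un n/p}W_m\Omega^q_K$, and that $C$ vanishes on $dV^{m-1}(\Fil_{\un n}\Omega^{q-1}_K)$---while noting that the $W_{m+1}(A)$-linearity of the restricted map is inherited from the $W_{m+1}(A)$-linearity of the unrestricted Cartier operator, once one observes that $Z_1\Fil_{\un n}W_m\Omega^q_K$ and $\Fil_{\un n/p}W_m\Omega^q_K$ are stable under the relevant $W_{m+1}(A)$-actions by \lemref{lem:Log-fil-Wcom}.

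The vanishing is the quickest part. Given $\eta\in \Fil_{\un n}\Omega^{q-1}_K\subset W_1\Omega^{q-1}_K$, the element $dV^m\eta\in W_{m+1}\Omega^q_K$ is a Frobenius lift of $dV^{m-1}\eta$ thanks to the iterated identity $FdV^m=dV^{m-1}$ (obtained from $FdV=d$). Thus $C(dV^{m-1}\eta)=R(dV^m\eta)=d(RV^m\eta)$. Since $RV=VR$ and $R\colon W_1\to W_0=0$, an induction on $m$ yields $RV^m\eta=0$, so $C(dV^{m-1}\eta)=0$.

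For the filtration preservation, my strategy is to exploit the defining relation $C\circ F=R$ on $W_{m+1}\Omega^q_K$. Given $\omega\in Z_1\Fil_{\un n}W_m\Omega^q_K$, I would produce a lift $\tilde\omega\in \Fil_{\un n}W_{m+1}\Omega^q_K$ with $F(\tilde\omega)=\omega$; then $C(\omega)=R(\tilde\omega)\in \Fil_{\un n/p}W_m\Omega^q_K$ follows immediately from the filtered Witt complex axioms in \lemref{lem:Log-fil-Wcom}. The core technical step---and the main obstacle in the argument---is therefore to establish the surjectivity
\[
F\colon \Fil_{\un n}W_{m+1}\Omega^q_K\twoheadrightarrow Z_1\Fil_{\un n}W_m\Omega^q_K.
\]
To prove this, I would first reduce to the complete case via \lemref{lem:Non-complete-3*} (which identifies $(\Fil_{\un n}W_m\Omega^q_K)^{\wedge}$ with $\Fil_{\un n}W_m\Omega^q_{\wh K}$) combined with the faithful flatness of $W_m(A)\to W_m(\wh A)$, and then use Cohen's structure theorem to write $A=S[[x_1,\ldots,x_d]]$.

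In this multivariable power-series setting, I would invoke a multivariable analogue of the Geisser-Hesselholt decomposition, extending \propref{prop:Fil-decom} through \thmref{thm:GH-Top}, writing $\Fil_{\un n}W_m\Omega^q_K$ as a product of explicit building blocks $F^{m,q}_{\un j}(S)$. Since $F^{m-1}d$ respects each block (by \lemref{lem:F-group-1}(1) and the component-wise behaviour of $F$), the subspace $Z_1\Fil_{\un n}W_m\Omega^q_K$ decomposes analogously into the $Z_1F^{m,q}_{\un j}(S)$. The surjectivity of the Frobenius then reduces component-wise, via \lemref{lem:F-group-1}(3) and the identities $F([x_i]_{m+1}^j)=[x_i]_m^{pj}$, to the classical surjectivity $F\colon W_{m+1}\Omega^q_S\twoheadrightarrow Z_1W_m\Omega^q_S$ over the regular $F$-finite base $S$. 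The hardest part will be the explicit verification of this component-wise surjectivity in the Verschiebung cases $F^{m,q}_{\un j}=B^{m,q}_{\un i,s}$, where one must carefully track how $F$ interacts with the $V^s$- and $dV^s$-summands and match these against the filtered lifts in $W_{m+1}\Omega^q_K$.
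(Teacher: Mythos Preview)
Your overall strategy---lift through $F$ and apply $C\circ F=R$ together with the filtered Witt complex axioms---matches the paper exactly, and your vanishing argument for $C(dV^{m-1}(-))$ is correct. The difference lies in how you establish the key surjectivity $F\colon \Fil_{\un n}W_{m+1}\Omega^q_K\twoheadrightarrow Z_1\Fil_{\un n}W_m\Omega^q_K$. You propose to complete at the maximal ideal, land in a multivariable power series ring $S[[x_1,\ldots,x_d]]$, and then invoke a multivariable analogue of the Geisser--Hesselholt decomposition. But no such multivariable decomposition of $W_m\Omega^q$ for $m\ge 2$ is developed in the paper (only the $m=1$ case appears, in \lemref{lem:F-function-1}), and producing one would be a substantial undertaking: iterating the single-variable theorem changes the base ring at each step, and tracking the filtration through the mixed $V^s/dV^s$ summands in several variables is genuinely delicate.

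The paper sidesteps this entirely. Rather than completing at the maximal ideal, it uses \corref{cor:Complete-2} to test membership in $\Fil_{\un n/p}W_m\Omega^q_K$ at the generic points of $E$: it suffices to show that $\gamma_i(C(\omega))\in \Fil_{\lfloor n_i/p\rfloor}W_m\Omega^q_{\wh K_i}$ for each $i$. Since each $\wh K_i$ is the fraction field of a complete DVR of the form $Q(B)[[x_i]]$, this reduces the problem to the single-variable case, where the Geisser--Hesselholt decomposition of \propref{prop:Fil-decom} is already available. The paper then argues directly: writing $\omega=\sum_{j\ge -n_i}a_j$ via \propref{prop:Fil-decom} and also $\omega=F(\sum_{j\ge -l}b_j)$ for some $l\gg 0$, uniqueness of the decomposition forces the lift to live in $\Fil_{n_i}W_{m+1}\Omega^q_{\wh K_i}$. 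This route is shorter and avoids the need for any new structural input; your multivariable plan is not incorrect in spirit, but it trades a one-line reduction for a tool the paper never builds.
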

\begin{proof}
  Let $\wh{K}_i$ denote the quotient field of $\wh{A_{(x_i)}}$.
  By \corref{cor:Complete-2}, we only need to show that
  $C \circ \gamma_i(\omega) = \gamma_i \circ C(\omega)$ lies in
  $\Fil_{\lfloor{{n_i}/p}\rfloor}W_m\Omega^q_{\wh{K}_i}$ for every $\omega \in 
  Z_1W_m\Omega^q_K \bigcap \Fil_{\un{n}}W_m\Omega^q_K$ and $1 \le i \le r$.
  It suffices therefore to show that
  $Z_1W_m\Omega^q_K  \bigcap \Fil_nW_m\Omega^q_K \subset F(\Fil_nW_{m+1}\Omega^q_K)$
  when $A = S[[\pi]]$, where $S$ is a regular local $F$-finite $\F_p$-algebra.

To that end, we let $x \in \Fil_nW_m\Omega^q_K \bigcap Z_1W_m\Omega^q_K = 
\Fil_nW_m\Omega^q_K \bigcap F(W_{m+1}\Omega^q_K)$.
By \corref{cor:GH-3}, \lemref{lem:F-group-1} and \propref{prop:Fil-decom},
we can write $x = \sum_{i \ge -n} a_i = \sum_{i \ge -l} a'_i = F( \sum_{i \ge -l} b_i)$
with $a_i \in F^{m,q}_i(S)$
and $a'_i = F(b_i) \in F^{m,q}_i(S)$ for some $b_i \in F^{m+1,q}_i(S)$ and for some
$l \gg |n|$.
 By the uniqueness of the decomposition in \corref{cor:GH-3},
we must have $a_i = a'_i$ for every $i \ge -l$. In particular, $a'_i = 0$ for
$i < -n$. This implies that $x = \sum_{i \ge -n} a_i = F(\sum_{i \ge -n} b_i)$.
Since $\sum_{i \ge -n} b_i \in \Fil_nW_{m+1}\Omega^q_K$, the desired claim follows.
\end{proof}

The following key proposition will be used in \S~\ref{sec:katofil}
(cf. \corref{cor:VR-5}) to study Kato's ramification filtration.

\begin{prop}\label{prop:Cartier-fil-1}
If $A$ is a DVR and $n \ge 0$, then every element $\omega \in Z_1W_m\Omega^q_K$
  has the property that $\omega \in Z_1\Fil_nW_m\Omega^q_K$ if and only if
  $(1-C)(\omega) \in \Fil_n W_m\Omega^q_K$.
\end{prop}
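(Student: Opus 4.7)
The plan is to prove the two implications separately, with the forward direction being an immediate consequence of \lemref{lem:Complete-6} and the reverse direction following from an iteration/convergence argument exploiting that Cartier contracts the filtration for $n \ge 0$.

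For the forward direction, suppose $\omega \in Z_1\Fil_n W_m\Omega^q_K$. By \lemref{lem:Complete-6}, $C(\omega) \in \Fil_{\lfloor n/p\rfloor} W_m\Omega^q_K$. Since $n \ge 0$, we have $\lfloor n/p \rfloor \le n$, and so $\Fil_{\lfloor n/p\rfloor} W_m\Omega^q_K \subseteq \Fil_n W_m\Omega^q_K$ by monotonicity of the filtration (cf. \lemref{lem:Log-fil-Wcom}, condition (4) of \defref{defn:Filtered-Witt-complex}). Hence $(1-C)(\omega) \in \Fil_n W_m\Omega^q_K$.

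For the reverse direction, suppose $\omega \in Z_1 W_m\Omega^q_K$ with $\eta := (1-C)(\omega) \in \Fil_n W_m\Omega^q_K$. Since $W_m\Omega^q_K = \varinjlim_N \Fil_N W_m\Omega^q_K$ by \lemref{lem:Log-fil-4}(4), there exists some integer $N$ with $\omega \in \Fil_N W_m\Omega^q_K$; we may assume $N > n$, since otherwise $\omega \in \Fil_N W_m\Omega^q_K \subseteq \Fil_n W_m\Omega^q_K$ and we are done. Using the identity $\omega = \eta + C(\omega)$, together with $\omega \in Z_1\Fil_N W_m\Omega^q_K$ and \lemref{lem:Complete-6}, we obtain
\[
\omega \in \Fil_n W_m\Omega^q_K + \Fil_{\lfloor N/p\rfloor}W_m\Omega^q_K = \Fil_{\max(n,\lfloor N/p\rfloor)} W_m\Omega^q_K.
\]
Setting $N_1 := \max(n,\lfloor N/p\rfloor)$, either $N_1 = n$ (and we are done) or $N_1 = \lfloor N/p\rfloor < N$, and we may iterate. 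Since $N > 0$ and $\lfloor\cdot/p\rfloor$ strictly decreases positive integers, after finitely many iterations (explicitly, after $k$ steps with $p^k > N/n$ if $n>0$, or once $\lfloor N/p^k\rfloor \le 0$ if $n=0$) the bound drops to $n$, giving $\omega \in \Fil_n W_m\Omega^q_K$.

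The only subtle point is ensuring the iteration is valid at each step, which requires $\omega$ to remain in $Z_1 W_m\Omega^q_K \cap \Fil_{N_k} W_m\Omega^q_K = Z_1 \Fil_{N_k} W_m\Omega^q_K$ so that \lemref{lem:Complete-6} may be reapplied to $C(\omega)$; this is automatic because $\omega$ is already assumed to lie in $Z_1 W_m\Omega^q_K$ throughout. There is no real obstacle beyond this bookkeeping, since the hypothesis $n \ge 0$ is exactly what makes both the contraction $\lfloor n/p \rfloor \le n$ (for the forward direction) and the termination of the iteration (for the reverse direction) work.
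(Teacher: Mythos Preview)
Your proof is correct and takes a genuinely different route from the paper's. The paper's argument for the reverse implication passes to the completion, reduces to $A = S[[\pi]]$, and then works component-by-component via the Geisser--Hesselholt decomposition of \corref{cor:GH-3} and \propref{prop:Fil-decom}: writing $\omega = F(\alpha)$ with $\alpha = \sum_{i \ge -l} a_i$, it shows directly that the bottom components $F(a_i)$ vanish for $i < -n$. Your argument, by contrast, stays at the level of the abstract filtration and simply iterates the contraction $C(Z_1\Fil_N) \subset \Fil_{\lfloor N/p\rfloor}$ from \lemref{lem:Complete-6}, using only exhaustiveness (\lemref{lem:Log-fil-4}(4)) and monotonicity of $\{\Fil_N\}$. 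This is shorter and more conceptual; the decomposition is still doing the real work, but it is entirely encapsulated in \lemref{lem:Complete-6}, so you never need to unpack it here. The paper's approach gives finer structural information (which components of $\omega$ survive), but for the bare statement of the proposition your iteration is cleaner.
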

\begin{proof}
If $\omega \in  Z_1\Fil_nW_m\Omega^q_K$, then $C(\omega) = CF(\omega') =
  R(\omega')$ for some $\omega' \in \Fil_nW_{m+1}\Omega^q_K$
  by \lemref{lem:Complete-6}. It follows from \lemref{lem:F-group-1} that $C(\omega)
  \in \Fil_{\lfloor{n/p}\rfloor}W_m\Omega^q_K \subset \Fil_nW_m\Omega^q_K$.  
  In particular, the map
  $Z_1\Fil_nW_m\Omega^q_K \xrightarrow{1-C} \Fil_nW_m\Omega^q_K$ is defined.

Suppose now that $\omega \in Z_1W_m\Omega^q_K$ is such that
$(1-C)(\omega) \in \Fil_nW_m\Omega^q_K$. To show that $\omega \in \Fil_nW_m\Omega^q_K$
(and hence lies in $Z_1\Fil_nW_m\Omega^q_K$), we can use \corref{cor:Complete-2} to
replace $A$ by $\wh{A}$. It suffices therefore to prove our assertion when $A$ is
more generally of the form $S[[\pi]]$, where $S$ is a regular local $F$-finite
$\F_p$-algebra. We assume this to be the case in the rest of the proof.

We can write $\omega = F(\alpha)$ for
  some $\alpha \in W_{m+1}\Omega^q_K$. This yields $(1-C)(\omega) = (1-C)F(\alpha)
  = F(\alpha) - R(\alpha)$. By \corref{cor:GH-3}, we can write $\alpha$ uniquely
  as $\alpha = \sum_{i \ge -l} a_i$ for some $a_i \in F^{m+1,q}_i(S)$ and $l \gg 0$.
  By \lemref{lem:F-group-1}, we get
  \begin{equation}\label{eqn:Cartier-fil-1-0}
    F(\alpha) - R(\alpha) = \sum_{i \ge -l} F(a_i) - \sum_{i \ge -l}R(a_i)
  \in \Fil_nW_m\Omega^q_K.
  \end{equation}

 If $l \le n$, then $-n \le -l$ and hence $\alpha \in \Fil_nW_{m+1}\Omega^q_K$
  so that $\omega \in Z_1\Fil_nW_m\Omega^q_K$.
If $l > n$, then \lemref{lem:F-group-1} says that
$R(a_{-l}) \in F^{m,q}_{-l/p}(S)$ if $p|l$ and $R(a_{-l}) = 0$ if $|l| \in I_p$. Since
$F(a_{-l}) \in F^{m,q}_{-l}$ and $ \sum_{i \ge -l+1} F(a_i) - \sum_{i \ge -l}R(a_i) \in
\Fil_{l-1}W_m\Omega^q_K$, it follows from ~\eqref{eqn:Cartier-fil-1-0} that
  $F(a_{-l}) = 0$. An induction argument says that
  $F(a_i) = 0$ for all $i < -n$. Thus, we get $\omega = F(\alpha) =
  \sum_{i \ge -n} F(a_i) \in \Fil_nW_m\Omega^q_K$. 
\end{proof}

The next few results explain the interaction of the Cartier homomorphism with
other operators of the filtered de Rham-Witt complex.

\begin{lem}\label{lem:Complete-4}
We have the following.
  \begin{enumerate}
  \item
    $Z_1\Fil_{\un{n}}W_m\Omega^q_K = F(\Fil_{\un{n}}W_{m+1}\Omega^q_K)$.
  \item
    $F \colon \Fil_{\un{n}}W_{m+1}\Omega^q_K \to  \Fil_{\un{n}}W_{m}\Omega^q_K$
    induces isomorphisms
    \[
    \ov{F} \colon \Fil_{{\un{n}}/p}W_{m}\Omega^q_K \xrightarrow{\cong}
    \frac{Z_1\Fil_{\un{n}}W_m\Omega^q_K}{dV^{m-1}(\Fil_{\un{n}}\Omega^{q-1}_K)}; \ \
    \ov{C} \colon
    \frac{Z_1\Fil_{\un{n}}W_m\Omega^q_K}{dV^{m-1}(\Fil_{\un{n}}\Omega^{q-1}_K)}
     \xrightarrow{\cong} \Fil_{{\un{n}}/p}W_{m}\Omega^q_K.
     \]
\end{enumerate}
\end{lem}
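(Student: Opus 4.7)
The plan is to prove (2) first by constructing two mutually inverse maps $\ov{F}$ and $\ov{C}$, and then to derive (1) as a formal consequence. The essential inputs available are \propref{prop:Complete-0} (goodness, giving $\Ker(R) = V^m(\Fil_{\un n}\Omega^q_K) + dV^m(\Fil_{\un n}\Omega^{q-1}_K)$), \lemref{lem:Complete-6} ($C$ preserves the filtration and annihilates $dV^{m-1}(\Fil_{\un n}\Omega^{q-1}_K)$), \lemref{lem:Complete-5} (a filtered class lying in $dV^{m-1}(\Omega^{q-1}_K)$ actually lies in $dV^{m-1}(\Fil_{\un n}\Omega^{q-1}_K)$), together with the standard de Rham-Witt identities $CF = R$, $FdV = d$, and $FV = p$ (with $p$ acting as zero on the $K$-module $\Omega^\bullet_K$).

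The map $\ov{C}$ will be the one induced by $C$, with well-definedness supplied by \lemref{lem:Complete-6}. For $\ov{F}$, I would take $\alpha \in \Fil_{\un n/p}W_m\Omega^q_K$, lift it via the surjection $R$ of \propref{prop:Complete-0} to some $\tilde\alpha \in \Fil_{\un n}W_{m+1}\Omega^q_K$, and set $\ov{F}(\alpha) = [F(\tilde\alpha)]$. The image lies in $Z_1\Fil_{\un n}W_m\Omega^q_K$ since $F$ preserves the filtration (\lemref{lem:Log-fil-Wcom}) and $Z_1W_m\Omega^q_K = F(W_{m+1}\Omega^q_K)$ by definition. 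Well-definedness is the crux: two lifts differ by an element of the kernel of $R$, i.e., $V^m(a) + dV^m(b)$ with $a \in \Fil_{\un n}\Omega^q_K$ and $b \in \Fil_{\un n}\Omega^{q-1}_K$; applying $F$ gives $FV^m(a) + FdV^m(b) = pV^{m-1}(a) + dV^{m-1}(b) = 0 + dV^{m-1}(b)$, which lies in $dV^{m-1}(\Fil_{\un n}\Omega^{q-1}_K)$ as required.

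The identity $\ov{C}\ov{F} = \id$ is immediate from $CF = R$. For the reverse, I would prove $\ov{C}$ is injective. Given $\omega \in Z_1\Fil_{\un n}W_m\Omega^q_K$ with $C(\omega) = 0$, write $\omega = F(\tilde\alpha)$ for some (a priori unfiltered) $\tilde\alpha \in W_{m+1}\Omega^q_K$. Then $R(\tilde\alpha) = CF(\tilde\alpha) = C(\omega) = 0$, so $\tilde\alpha = V^m(a) + dV^m(b)$ with $a \in \Omega^q_K$ and $b \in \Omega^{q-1}_K$ by \cite[Prop.~2.3]{Shiho} (as used in the proof of \propref{prop:Complete-0}). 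Applying $F$ yields $\omega = dV^{m-1}(b) \in dV^{m-1}(\Omega^{q-1}_K)$, and \lemref{lem:Complete-5} then upgrades this to $\omega \in dV^{m-1}(\Fil_{\un n}\Omega^{q-1}_K)$. Combined with $\ov{C}\ov{F} = \id$, this establishes (2).

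Part (1) then falls out easily. The inclusion $\supseteq$ is immediate from \lemref{lem:Log-fil-Wcom}. For $\subseteq$, given $\omega \in Z_1\Fil_{\un n}W_m\Omega^q_K$, the identity $\ov{F}\ov{C} = \id$ supplies a lift $\beta \in \Fil_{\un n}W_{m+1}\Omega^q_K$ of $C(\omega)$ via $R$ together with some $\gamma \in \Fil_{\un n}\Omega^{q-1}_K$ satisfying $\omega - F(\beta) = dV^{m-1}(\gamma)$; writing $dV^{m-1}(\gamma) = F(dV^m(\gamma))$ via $FdV = d$ and noting $dV^m(\gamma) \in \Fil_{\un n}W_{m+1}\Omega^q_K$, we conclude $\omega = F(\beta + dV^m(\gamma)) \in F(\Fil_{\un n}W_{m+1}\Omega^q_K)$. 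The one delicate step throughout is the well-definedness of $\ov{F}$, where the filtered goodness sequence, the $F$--$d$--$V$ commutation relations, and the $p$-torsion of $\Omega^\bullet_K$ must be combined; all remaining steps are essentially formal given the earlier technical lemmas.
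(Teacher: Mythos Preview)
Your proof is correct and uses essentially the same ingredients as the paper: \propref{prop:Complete-0} for the goodness sequence (surjectivity of $R$ and description of its kernel), \lemref{lem:Complete-6} for the well-definedness of $\ov{C}$, \lemref{lem:Complete-5} for upgrading an unfiltered containment $\omega \in dV^{m-1}(\Omega^{q-1}_K)$ to the filtered one, and the identities $CF = R$, $FdV = d$, $FV^m = 0$ on $W_1\Omega^\bullet_K$.

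The only difference is one of packaging. The paper sets up a commutative diagram comparing the filtered maps $\ov{F}, \ov{C}$ with their classical (unfiltered) counterparts, notes that the classical compositions $\ov{C}\,\ov{F}$ and $\ov{F}\,\ov{C}$ are identities, and then uses the injectivity of the vertical comparison maps (supplied by \lemref{lem:Complete-5}) to transport these identities to the filtered row. You instead verify the identities directly in the filtered setting: $\ov{C}\,\ov{F} = \id$ from $CF=R$, and injectivity of $\ov{C}$ by writing $\omega = F(\tilde\alpha)$ with $\tilde\alpha$ unfiltered, reducing via $R(\tilde\alpha)=0$ to $\omega \in dV^{m-1}(\Omega^{q-1}_K)$, and invoking \lemref{lem:Complete-5}. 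Your direct argument and the paper's diagram-chase are really the same computation, and your derivation of (1) from (2) via $dV^{m-1}(\gamma) = F(dV^m(\gamma))$ matches the paper's observation that the inclusion $F(\Fil_{\un{n}}W_{m+1}\Omega^q_K)/dV^{m-1}(\Fil_{\un{n}}\Omega^{q-1}_K) \hookrightarrow Z_1\Fil_{\un{n}}W_m\Omega^q_K/dV^{m-1}(\Fil_{\un{n}}\Omega^{q-1}_K)$ is forced to be an equality.
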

\begin{proof}
We first note that the lemma is classical for $W_\star\Omega^\bullet_K$
  (cf. \cite{Illusie} and \cite{Kato-Duality}). 
To prove (1) and (2), we use \propref{prop:Complete-0} and the identity $FV^m = 0$
to get a commutative diagram
  \begin{equation}\label{eqn:Complete-4-0}
    \xymatrix@C1pc{
     \Fil_{\un{n}}W_{m+1}\Omega^q_K \ar@{->>}[d]_-{R} \ar[r]^-F &  
     \Fil_{\un{n}}W_{m}\Omega^q_K \ar@{->>}[d] \\
     \Fil_{{\un{n}}/p}W_{m}\Omega^q_K \ar[r]^-{\ov{F}} &
     \frac{\Fil_{\un{n}}W_{m}\Omega^q_K}{dV^{m-1}(\Fil_{\un{n}}\Omega^{q-1}_K)}.}
    \end{equation}
  Since $dV^{m-1}(\Fil_{\un{n}}\Omega^{q-1}_K) = FdV^m(\Fil_{\un{n}}\Omega^{q-1}_K)
  \subset F(\Fil_{\un{n}}W_{m+1}\Omega^q_K)$, we thus get maps
  \begin{equation}\label{eqn:Complete-4-1}
\Fil_{{\un{n}}/p}W_{m}\Omega^q_K \stackrel{\ov{F}}{\surj} 
\frac{F(\Fil_{\un{n}}W_{m+1}\Omega^q_K)}{dV^{m-1}(\Fil_{\un{n}}\Omega^{q-1}_K)}
\inj \frac{Z_1\Fil_{\un{n}}W_m\Omega^q_K}{dV^{m-1}(\Fil_{\un{n}}\Omega^{q-1}_K)}
\xrightarrow{\ov{C}} \Fil_{{\un{n}}/p}W_{m}\Omega^q_K.
\end{equation}

We now look at the commutative diagram
\begin{equation}\label{eqn:Complete-4-2}
    \xymatrix@C1pc{
  \Fil_{{\un{n}}/p}W_{m}\Omega^q_K  \ar[r]^-{\ov{F}} \ar[d] &
      \frac{Z_1\Fil_{\un{n}}W_m\Omega^q_K}{dV^{m-1}(\Fil_{\un{n}}\Omega^{q-1}_K)}
      \ar[d] \ar[r]^-{\ov{C}}  & \Fil_{{\un{n}}/p}W_{m}\Omega^q_K \ar[d]
      \ar[r]^-{\ov{F}} &
      \frac{Z_1\Fil_{\un{n}}W_m\Omega^q_K}{dV^{m-1}(\Fil_{\un{n}}\Omega^{q-1}_K)} \ar[d] \\
W_{m}\Omega^q_K \ar[r]^-{\ov{F}} &
      \frac{Z_1W_m\Omega^q_K}{dV^{m-1}(\Omega^{q-1}_K)}
      \ar[r]^-{\ov{C}} & W_{m}\Omega^q_K \ar[r]^-{\ov{F}} &
      \frac{Z_1\Fil_{\un{n}}W_m\Omega^q_K}{dV^{m-1}(\Fil_{\un{n}}\Omega^{q-1}_K)}.}
\end{equation}
One knows classically that the composition of any two adjacent maps in the bottom row
is identity. Since the vertical arrows are injective by \lemref{lem:Complete-5},
we conclude that the same holds in the top row too. This proves the lemma.
\end{proof}

We let $B_0\Fil_{\un{n}}\Omega^q_K =0$ and $B_1\Fil_{\un{n}}\Omega^q_K =
d\Fil_{\un{n}}\Omega^{q-1}_K$. We let $Z_0\Fil_{\un{n}}\Omega^q_K = \Fil_{\un{n}}\Omega^q_K$.
For $i \ge 2$, we let $Z_i\Fil_{\un{n}}\Omega^q_K$ (resp. $B_i\Fil_{\un{n}}\Omega^q_K$)
be the inverse image of $Z_{i-1}\Fil_{\un{n}}\Omega^q_K$
(resp. $B_{i-1}\Fil_{\un{n}}\Omega^q_K$) under the composite map
$Z_{i-1}\Fil_{\un{n}}\Omega^q_K \surj
\frac{Z_{i-1}\Fil_{\un{n}}\Omega^q_K}{B_1\Fil_{\un{n}}\Omega^q_K}
\xrightarrow{\ov{C}} \Fil_{{\un{n}}/p}\Omega^q_K$. 
The following is easily deduced from ~\eqref{eqn:Complete-4-0} and
~\eqref{eqn:Complete-4-2}.

\begin{lem}\label{lem:Complete-9}
\begin{enumerate}
  \item
$Z_i\Fil_{\un{n}}\Omega^q_K = F^i(\Fil_{\un{n}}W_{i+1}\Omega^q_K), \ \
  B_i\Fil_{\un{n}}\Omega^q_K = F^{i-1}d(\Fil_{\un{n}}W_{i}\Omega^{q-1}_K)$.
\item
  $Z_i\Fil_{\un{n}}\Omega^q_K \supset Z_{i+1}\Fil_{\un{n}}\Omega^q_K \supset
  B_{j+1}\Fil_{\un{n}}\Omega^q_K \supset B_{j}\Fil_{\un{n}}\Omega^q_K \ \
  \forall \ \ i, j \ge 0$.
\item
  The maps
$\ov{C} \colon \frac{Z_i\Fil_{\un{n}}\Omega^q_K}{B_i\Fil_{\un{n}}\Omega^q_K}
  \xrightarrow{\cong}
  \frac{Z_{i-1}\Fil_{{\un{n}}/p}\Omega^q_K}{B_{i-1}\Fil_{{\un{n}}/p}\Omega^q_K}$
  and $\ov{F} \colon
  \frac{Z_{i-1}\Fil_{{\un{n}}/p}\Omega^q_K}{B_{i-1}\Fil_{{\un{n}}/p}\Omega^q_K}
  \xrightarrow{\cong} \frac{Z_i\Fil_{\un{n}}\Omega^q_K}{B_i\Fil_{\un{n}}\Omega^q_K}$
are inverses of each other.
\end{enumerate}
\end{lem}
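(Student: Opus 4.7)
The plan is to establish all three parts by induction on $i$, with the base case $i = 1$ handled by \lemref{lem:Complete-4} applied at $m = 1$. Indeed, \lemref{lem:Complete-4}(1) specializes to $Z_1\Fil_{\un{n}}\Omega^q_K = F(\Fil_{\un{n}}W_2\Omega^q_K)$, the stated definition gives $B_1\Fil_{\un{n}}\Omega^q_K = d\Fil_{\un{n}}\Omega^{q-1}_K = F^0 d(\Fil_{\un{n}}W_1\Omega^{q-1}_K)$, and the $m=1$ instance of \lemref{lem:Complete-4}(2) supplies the mutually inverse isomorphisms $\ov{F}$ and $\ov{C}$ required for (3) at this level. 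The inclusion $B_1 \subset Z_1$ needed for (2) follows from the identity $d = FdV$ together with the $F$- and $V$-stability of the filtration from \lemref{lem:Log-fil-Wcom}.

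For the inductive step of (1), I would unwind the definition: $Z_i\Fil_{\un{n}}\Omega^q_K$ consists of those $\omega \in Z_1\Fil_{\un{n}}\Omega^q_K = F(\Fil_{\un{n}}W_2\Omega^q_K)$ such that $C(\omega) \in Z_{i-1}\Fil_{\un{n}/p}\Omega^q_K$, which by induction equals $F^{i-1}(\Fil_{\un{n}/p}W_i\Omega^q_K)$. Writing $\omega = F(\alpha)$, the relation $C(\omega) = R(\alpha) = F^{i-1}(\beta)$ with $\beta \in \Fil_{\un{n}/p}W_i\Omega^q_K$ allows me to invoke \propref{prop:Complete-0} at $m = i$ to lift $\beta$ to $\gamma \in \Fil_{\un{n}}W_{i+1}\Omega^q_K$ with $R(\gamma) = \beta$. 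Then $R(\alpha - F^{i-1}\gamma) = 0$, so the $m = 1$ case of \propref{prop:Complete-0} yields $\alpha - F^{i-1}\gamma = V(b) + dV(c)$ with $b \in \Fil_{\un{n}}\Omega^q_K$ and $c \in \Fil_{\un{n}}\Omega^{q-1}_K$. Applying $F$ and using $FV = 0$ on $\Omega^q_K$ (because $p = 0$ there) together with the identity $F^i d V^i = d$, I obtain $\omega = F^i(\gamma + dV^i(c)) \in F^i(\Fil_{\un{n}}W_{i+1}\Omega^q_K)$. The reverse inclusion is immediate from $CF = R$ and $F$-stability of the filtration. The identity for $B_i$ proceeds by the parallel argument, writing $\omega = F(\alpha)$ with $R(\alpha) = F^{i-2}d(\beta)$, lifting $\beta$ via \propref{prop:Complete-0} in degree $q-1$, and repackaging the output using $d = FdV$.

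Part (2) is then formal from (1): the descending chain $Z_i \supset Z_{i+1}$ comes from $F(\Fil_{\un{n}}W_{i+2}) \subset \Fil_{\un{n}}W_{i+1}$; the ascending chain $B_j \subset B_{j+1}$ comes from rewriting $F^{j-1}d(\gamma) = F^j dV(\gamma)$ via $d = FdV$ and using $V$-stability of the filtration; and $B_i \subset Z_i$ follows from the same identity $F^{i-1}d(\gamma) = F^i(dV\gamma) \in F^i\Fil_{\un{n}}W_{i+1}\Omega^q_K$. Part (3) is then obtained by restricting the $i=1$ isomorphisms $\ov{F}, \ov{C}$ of \lemref{lem:Complete-4}(2) to the subgroups identified in (1): the description $Z_i = F^i\Fil_{\un{n}}W_{i+1}$ and a direct computation using $CF = R$ show that $\ov{C}$ carries $Z_i/B_i$ into $Z_{i-1}\Fil_{\un{n}/p}/B_{i-1}\Fil_{\un{n}/p}$ and that the lift argument from (1) makes this map surjective; injectivity is inherited from the classical isomorphism $\ov{C}: Z_1/B_1 \xrightarrow{\cong} \Omega^q_K$ via the injection supplied by \lemref{lem:Complete-5}. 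The inverse property of $\ov{F}$ comes from the commutative squares analogous to \eqref{eqn:Complete-4-2}.

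The main obstacle will be the bookkeeping required to verify that each lift and correction produced along the way still lies in the prescribed filtered subgroup. This forces repeated use of the exact sequence of \propref{prop:Complete-0} at various Witt lengths in both degrees $q$ and $q-1$, together with the $F$-, $V$-, $d$-, and $R$-stability of the filtration from \lemref{lem:Log-fil-Wcom}, and a careful exploitation of the characteristic-$p$ identity $FV = 0$ on the first Witt level to eliminate unwanted Verschiebung residues.
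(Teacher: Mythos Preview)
Your proof is correct and follows essentially the same approach the paper has in mind: the paper's own proof is the one-line remark that the lemma is ``easily deduced from \eqref{eqn:Complete-4-0} and \eqref{eqn:Complete-4-2},'' and your induction using \lemref{lem:Complete-4} for the base case together with the surjectivity of $R$ from \propref{prop:Complete-0} for the lifting step is exactly how one unpacks that remark. The only minor slips are indexing (e.g., the lift of $\beta$ uses \propref{prop:Complete-0} at Witt length $i+1$, and the kernel description for $\alpha - F^{i-1}\gamma$ is the $m=2$ instance), and the appeal to \lemref{lem:Complete-5} for injectivity in part~(3) is unnecessary since injectivity is immediate from the very definition of $B_i\Fil_{\un{n}}$ as the full $C$-preimage of $B_{i-1}\Fil_{\un{n}/p}$.
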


\begin{lem}\label{lem:Complete-8}
  We have $Z_i\Fil_{\un{n}}\Omega^q_K = Z_i\Omega^q_K \bigcap \Fil_{\un{n}}\Omega^q_K$
  and $B_i\Fil_{\un{n}}\Omega^q_K = B_i\Omega^q_K \bigcap \Fil_{\un{n}}\Omega^q_K$.
\end{lem}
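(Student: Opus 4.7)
The proof proceeds by induction on $i$, with both identities established simultaneously and $\un{n} \in \Z^r$ left free throughout. The case $i=0$ is immediate from the definitions, and the $Z_1$ identity is tautological from $Z_1\Fil_{\un{n}}\Omega^q_K := Z_1\Omega^q_K \cap \Fil_{\un{n}}\Omega^q_K$ as set up at the beginning of Section~\ref{sec:CFDRW}. The $B_1$ case is the only base situation requiring a real argument. Given $\omega \in d\Omega^{q-1}_K \cap \Fil_{\un{n}}\Omega^q_K$, I would combine the filtered Cartier isomorphism $\ov{C}\colon Z_1\Fil_{\un{n}}\Omega^q_K / d\Fil_{\un{n}}\Omega^{q-1}_K \xrightarrow{\cong} \Fil_{\un{n}/p}\Omega^q_K$ from Lemma~\ref{lem:Complete-4}(2) at $m=1$ with the classical Cartier isomorphism $\ov{C}\colon Z_1\Omega^q_K / d\Omega^{q-1}_K \xrightarrow{\cong} \Omega^q_K$. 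These fit into a commutative square with the two Cartier isomorphisms as horizontal arrows and natural inclusions as vertical ones; since $\Fil_{\un{n}/p}\Omega^q_K \hookrightarrow \Omega^q_K$ is injective by construction, the induced map $Z_1\Fil_{\un{n}}\Omega^q_K / d\Fil_{\un{n}}\Omega^{q-1}_K \to Z_1\Omega^q_K / d\Omega^{q-1}_K$ is forced to be injective as well. Because $d\Omega^{q-1}_K \subseteq Z_1\Omega^q_K$, this injectivity is precisely the identity $d\Omega^{q-1}_K \cap \Fil_{\un{n}}\Omega^q_K = d\Fil_{\un{n}}\Omega^{q-1}_K$.

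For $i \ge 2$, I would invoke the recursive definition stated just before Lemma~\ref{lem:Complete-9}: $Z_i\Fil_{\un{n}}\Omega^q_K$ (resp.\ $B_i\Fil_{\un{n}}\Omega^q_K$) is the preimage of $Z_{i-1}\Fil_{\un{n}/p}\Omega^q_K$ (resp.\ $B_{i-1}\Fil_{\un{n}/p}\Omega^q_K$) under $\ov{C}\colon Z_{i-1}\Fil_{\un{n}}\Omega^q_K \to \Fil_{\un{n}/p}\Omega^q_K$, paired with the analogous description of $Z_i\Omega^q_K$ and $B_i\Omega^q_K$. For $\omega \in Z_i\Omega^q_K \cap \Fil_{\un{n}}\Omega^q_K$, the induction hypothesis at $(i-1,\un{n})$ yields $\omega \in Z_{i-1}\Fil_{\un{n}}\Omega^q_K$, Lemma~\ref{lem:Complete-6} places $\ov{C}(\omega)$ in $\Fil_{\un{n}/p}\Omega^q_K$, and the induction hypothesis at $(i-1,\un{n}/p)$ applied to $\ov{C}(\omega) \in Z_{i-1}\Omega^q_K \cap \Fil_{\un{n}/p}\Omega^q_K$ identifies $\ov{C}(\omega) \in Z_{i-1}\Fil_{\un{n}/p}\Omega^q_K$; by the recursive definition $\omega \in Z_i\Fil_{\un{n}}\Omega^q_K$. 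The $B_i$ argument runs identically with $Z_{i-1}$ replaced by $B_{i-1}$ on the target side, and both reverse containments fall out of unwinding the same definitions.

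The sole genuine obstacle is the $B_1$ base case, where the filtered Cartier isomorphism of Lemma~\ref{lem:Complete-4}(2) is indispensable because the claim there is not a formal intersection identity but requires exhibiting an exact form of prescribed pole order as $d\gamma$ with $\gamma$ itself of the correct pole order; every other case reduces mechanically to Lemma~\ref{lem:Complete-6} and the recursive definitions.
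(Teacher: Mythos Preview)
Your proof is correct and follows essentially the same inductive scheme as the paper's: the $Z_1$ case is definitional, the inductive step unwinds the recursive definition via $C$ and appeals to the hypothesis at $(i-1,\un{n})$ and $(i-1,\un{n}/p)$ exactly as the paper does through its diagram of short exact sequences. The only cosmetic difference is in the $B_1$ base case, where the paper cites \lemref{lem:Complete-5} (at $m=1$) directly for the injectivity of $\Fil_{\un{n}}\Omega^q_K / d\Fil_{\un{n}}\Omega^{q-1}_K \to \Omega^q_K / d\Omega^{q-1}_K$, whereas you deduce the same injectivity from the filtered Cartier isomorphism of \lemref{lem:Complete-4}(2); since the proof of \lemref{lem:Complete-4}(2) itself rests on \lemref{lem:Complete-5}, the two routes are equivalent.
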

\begin{proof}
  If $i = 1$, the first claim follows from the definition of
  $Z_1\Fil_{\un{n}}\Omega^q_K$ while the second
  claim follows from \lemref{lem:Complete-5}.
  To prove the first claim for $i \ge 2$, we look at the commutative
  diagram of exact sequences
  \[
  \xymatrix@C.8pc{
  0 \ar[r] &  B_1\Fil_{\un{n}}\Omega^q_K  \ar[r] \ar[d] & Z_{i}\Fil_{\un{n}}\Omega^q_K 
  \ar[r]^-{C} \ar[d] &  Z_{i-1}\Fil_{{\un{n}}/p}\Omega^q_K  \ar[r] \ar[d] & 0 \\
  0 \ar[r] &  B_1\Omega^q_K \bigcap \Fil_{\un{n}}\Omega^q_K  \ar[r] &
  Z_{i}\Omega^q_K  \bigcap \Fil_{\un{n}}\Omega^q_K 
  \ar[r]^-{C} &  Z_{i-1}\Omega^q_K \bigcap \Fil_{{\un{n}}/p}\Omega^q_K &}
  \]
  The right vertical arrows is bijective by induction, and we showed above that the
  left vertical arrow is bijective. This implies the same for the middle vertical
  arrow. The second claim for $i \ge 2$ is proven similarly by replacing
  the $Z_i$-groups (resp. $Z_{i-1}$-groups) by the $B_i$-groups (resp. $B_{i-1}$-groups)
  in the above diagram.
\end{proof}

\begin{cor}$($cf. \corref{cor:Log-cartier-1}$)$\label{cor:Complete-8-00}
  For $i \ge 1$, there is an exact sequence
  \[
  0 \to Z_i\Fil_{\un{n}}\Omega^q_K \to Z_{i-1}\Fil_{\un{n}}\Omega^q_K
  \xrightarrow{(-1)^{q+1}dC^{i-1}} B_1 \Fil_{{\un{n}}/{p^{i-1}}}\Omega^{q+1}_K \to 0.
  \]
\end{cor}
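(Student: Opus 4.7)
The plan is to mimic Illusie's classical non-log argument (cf.\ \cite[Chap.~0, Cor.~2.2.8]{Illusie}), now powered by the alternate descriptions $Z_i\Fil_{\un{n}}\Omega^q_K = F^i(\Fil_{\un{n}}W_{i+1}\Omega^q_K)$ and $B_i\Fil_{\un{n}}\Omega^q_K = F^{i-1}d(\Fil_{\un{n}}W_i\Omega^{q-1}_K)$ from \lemref{lem:Complete-9}(1), together with the refined Cartier isomorphism of \lemref{lem:Complete-9}(3). For $i=1$ the assertion is just the defining short exact sequence $0 \to Z_1\Fil_{\un{n}}\Omega^q_K \to \Fil_{\un{n}}\Omega^q_K \xrightarrow{d} B_1\Fil_{\un{n}}\Omega^{q+1}_K \to 0$, so I would assume $i \ge 2$ and proceed by induction. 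The first step is to check that $(-1)^{q+1}dC^{i-1}$ is well defined on $Z_{i-1}\Fil_{\un{n}}\Omega^q_K$: iterating \lemref{lem:Complete-6} shows that $C$ drops the filtration index by a factor of $p$ at each application, so $C^{i-1}$ carries $Z_{i-1}\Fil_{\un{n}}\Omega^q_K$ into $\Fil_{\un{n}/p^{i-1}}\Omega^q_K$, whereupon $d$ lands in $d\Fil_{\un{n}/p^{i-1}}\Omega^q_K = B_1\Fil_{\un{n}/p^{i-1}}\Omega^{q+1}_K$.

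The kernel is pinned down by the recursive definition of the higher $Z_j$'s: an element $\omega \in Z_{i-1}\Fil_{\un{n}}\Omega^q_K$ belongs to $Z_i\Fil_{\un{n}}\Omega^q_K$ precisely when $C(\omega) \in Z_{i-1}\Fil_{\un{n}/p}\Omega^q_K$. Unfolding this recursion $i-1$ times yields $\omega \in Z_i\Fil_{\un{n}}\Omega^q_K \iff C^{i-1}(\omega) \in Z_1\Fil_{\un{n}/p^{i-1}}\Omega^q_K = \Ker(d)$, equivalently $dC^{i-1}(\omega) = 0$.

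For surjectivity, I would iterate the Cartier isomorphism of \lemref{lem:Complete-9}(3) to produce
\[
\ov{C}^{i-1} \colon \frac{Z_{i-1}\Fil_{\un{n}}\Omega^q_K}{B_{i-1}\Fil_{\un{n}}\Omega^q_K} \xrightarrow{\ \cong\ } \frac{Z_0\Fil_{\un{n}/p^{i-1}}\Omega^q_K}{B_0\Fil_{\un{n}/p^{i-1}}\Omega^q_K} = \Fil_{\un{n}/p^{i-1}}\Omega^q_K.
\]
Given $\eta \in \Fil_{\un{n}/p^{i-1}}\Omega^q_K$, lift it to some $\omega \in Z_{i-1}\Fil_{\un{n}}\Omega^q_K$ with $C^{i-1}(\omega) - \eta \in B_{i-1}\Fil_{\un{n}/p^{i-1}}\Omega^q_K$. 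The inclusion $B_{i-1} \subset Z_1 = \Ker(d)$ guaranteed by \lemref{lem:Complete-9}(2) then forces $d(C^{i-1}(\omega) - \eta) = 0$, so $dC^{i-1}(\omega) = d\eta$ and $(-1)^{q+1}dC^{i-1}$ hits every element of $B_1\Fil_{\un{n}/p^{i-1}}\Omega^{q+1}_K$.

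The only real piece of bookkeeping, and the main (minor) obstacle, is the accumulation of signs producing the factor $(-1)^{q+1}$; this originates in the graded-commutative identity $dC = (-1)^q Cd$ on $q$-forms and in the sign implicit in matching $\ov{F}$ against $\ov{C}$ in \lemref{lem:Complete-9}(3). Verifying these signs combine correctly after $i-1$ iterations is a routine but careful calculation, modeled directly on Illusie's original argument.
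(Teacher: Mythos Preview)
Your argument is correct and close in spirit to the paper's, but the paper's proof is organized differently and is shorter. For surjectivity the paper, like you, invokes \lemref{lem:Complete-9}(1): since $Z_{i-1}\Fil_{\un{n}}\Omega^q_K = F^{i-1}(\Fil_{\un{n}}W_i\Omega^q_K)$ and $C^{i-1}F^{i-1} = R^{i-1}$, the surjectivity of $R$ gives $C^{i-1}(Z_{i-1}\Fil_{\un{n}}\Omega^q_K) = \Fil_{\un{n}/p^{i-1}}\Omega^q_K$, whence $dC^{i-1}$ hits all of $B_1\Fil_{\un{n}/p^{i-1}}\Omega^{q+1}_K$; this is essentially your surjectivity argument recast. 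Where the paper diverges is in the kernel computation: rather than unfolding the recursive definition of $Z_i\Fil_{\un{n}}$ as you do, it simply invokes \lemref{lem:Complete-8}, which identifies $Z_j\Fil_{\un{n}}\Omega^q_K = Z_j\Omega^q_K \cap \Fil_{\un{n}}\Omega^q_K$. This reduces the exactness at $Z_{i-1}\Fil_{\un{n}}$ immediately to the classical (unfiltered) exact sequence $0 \to Z_i\Omega^q_K \to Z_{i-1}\Omega^q_K \xrightarrow{dC^{i-1}} \Omega^{q+1}_K$, known from \cite[Lem.~1.14]{Lorenzon}. Your direct recursive argument avoids appealing to \lemref{lem:Complete-8} and is more self-contained; the paper's route is quicker but relies on one extra structural lemma. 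The sign $(-1)^{q+1}$ is, as you note, pure bookkeeping and irrelevant to exactness.
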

\begin{proof}
  The surjectivity of $(-1)^{q+1}dC^{i-1}$ follows from \lemref{lem:Complete-9}(1).
  The rest of the corollary is easily reduced by \lemref{lem:Complete-8} to showing
  that
\[
0 \to Z_i\Omega^q_K \to Z_{i-1}\Omega^q_K
\xrightarrow{(-1)^{q+1}dC^{i-1}} \Omega^{q+1}_K 
\]
is exact, which is classical (cf. \cite[Lem.~1.14]{Lorenzon}).
\end{proof}

\begin{lem}\label{lem:VR-0}
  There exists a short exact sequence
  \begin{equation}\label{eqn:VR-0-0}
  0 \to V^{m-1}(Z_1\Fil_{\un{n}}\Omega^q_K) + dV^{m-1}(\Fil_{\un{n}}\Omega^{q-1}_K) \to
  Z_1\Fil_{\un{n}}W_m\Omega^q_K \xrightarrow{R} Z_1\Fil_{{\un{n}}/p}W_{m-1}\Omega^q_K \to 0.
  \end{equation}
\end{lem}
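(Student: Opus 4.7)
The plan is to deduce the exact sequence from the analogous unrestricted sequence in Proposition~\ref{prop:Complete-0} together with the description $Z_1\Fil_{\un{n}}W_m\Omega^q_K = F(\Fil_{\un{n}}W_{m+1}\Omega^q_K)$ from \lemref{lem:Complete-4}(1). There are essentially three things to verify: (i) the claimed inclusions define the displayed maps, (ii) the map $R$ is surjective onto $Z_1\Fil_{{\un{n}}/p}W_{m-1}\Omega^q_K$, and (iii) the kernel of $R$ is exactly the left-hand group.

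For (i), the images of both $V^{m-1}$ and $dV^{m-1}$ lie inside $\Fil_{\un{n}}W_m\Omega^q_K$ by Lemma~\ref{lem:Log-fil-Wcom}. To see they lie in $Z_1$, I would use the standard Witt-complex identity $FdV = d$, which iterates to $F^{m-1}dV^{m-1}(a) = d(a)$; applied to $a \in Z_1\Fil_{\un{n}}\Omega^q_K = \Ker(d)\cap \Fil_{\un{n}}\Omega^q_K$ this vanishes, and for $dV^{m-1}(b)$ the cycle condition is automatic from $d^2 = 0$. Next, if $\omega \in Z_1\Fil_{\un{n}}W_m\Omega^q_K$, then by \lemref{lem:Complete-4}(1) we can write $\omega = F(\tilde\omega)$ with $\tilde\omega \in \Fil_{\un{n}}W_{m+1}\Omega^q_K$, so $R(\omega) = F(R\tilde\omega) \in F(\Fil_{{\un{n}}/p}W_m\Omega^q_K) = Z_1\Fil_{{\un{n}}/p}W_{m-1}\Omega^q_K$, again by \lemref{lem:Complete-4}(1). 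So $R$ is well-defined on the cycle subgroup.

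For surjectivity (ii), given $\omega' \in Z_1\Fil_{{\un{n}}/p}W_{m-1}\Omega^q_K$, \lemref{lem:Complete-4}(1) furnishes $\tilde\omega' \in \Fil_{{\un{n}}/p}W_m\Omega^q_K$ with $\omega' = F(\tilde\omega')$. By the surjectivity of $R$ in \propref{prop:Complete-0} (applied at the shifted level $m \to m+1$), I can lift $\tilde\omega'$ to some $\tilde\omega \in \Fil_{\un{n}}W_{m+1}\Omega^q_K$. Then $\omega := F(\tilde\omega)$ lies in $Z_1\Fil_{\un{n}}W_m\Omega^q_K$ by \lemref{lem:Complete-4}(1), and $R(\omega) = F(R\tilde\omega) = F(\tilde\omega') = \omega'$.

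For exactness in the middle (iii), suppose $\omega \in Z_1\Fil_{\un{n}}W_m\Omega^q_K$ with $R(\omega) = 0$. Since $\Fil_{\un{n}}W_m\Omega^q_K \supset Z_1\Fil_{\un{n}}W_m\Omega^q_K$, the exact sequence of \propref{prop:Complete-0} lets me write $\omega = V^{m-1}(a) + dV^{m-1}(b)$ with $a \in \Fil_{\un{n}}\Omega^q_K$ and $b \in \Fil_{\un{n}}\Omega^{q-1}_K$. Applying $F^{m-1}d$ and using $F^{m-1}dV^{m-1}(a) = d(a)$ together with $F^{m-1}d\cdot dV^{m-1}(b) = 0$, the cycle condition $F^{m-1}d(\omega) = 0$ forces $d(a) = 0$. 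Hence $a \in Z_1\Omega^q_K \cap \Fil_{\un{n}}\Omega^q_K = Z_1\Fil_{\un{n}}\Omega^q_K$, which puts $\omega$ in the asserted subgroup.

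No step poses a serious obstacle; the only subtle point is to be disciplined about using \lemref{lem:Complete-4}(1) (which already encodes the compatibility between $F$ and the filtration) rather than trying to manipulate $F^{m-1}d$ directly on arbitrary representatives.
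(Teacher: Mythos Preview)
Your proof is correct and follows essentially the same approach as the paper's: both reduce to \propref{prop:Complete-0} for the kernel computation and to \lemref{lem:Complete-4}(1) for surjectivity, and the crucial step---applying $F^{m-1}d$ to a decomposition $\omega = V^{m-1}(a) + dV^{m-1}(b)$ to force $da = 0$---is identical. You have simply spelled out in more detail the well-definedness and surjectivity arguments that the paper compresses into one sentence.
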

\begin{proof}
 By comparing with the classical case and using \lemref{lem:Log-fil-4},
  one checks that ~\eqref{eqn:VR-0-0} is a well-defined complex.
  Moreover, Proposition~\ref{prop:Complete-0} and \lemref{lem:Complete-4}(1)
  together imply that the map $R$ is surjective. We now let $\omega \in \Ker(R)$.
  We then know from Proposition~\ref{prop:Complete-0} that $\omega =
  V^{m-1}(a) + dV^{m-1}(b)$ for some $a \in \Fil_{\un{n}}\Omega^q_K$ and
  $b \in  \Fil_{\un{n}}W_m\Omega^{q-1}_K$. By definition of $Z_1\Fil_{\un{n}}\Omega^q_K$,
  we then get $da = F^{m-1}d(V^{m-1}(a) + dV^{m-1}(b)) = 0$. In other words,
  $\omega \in V^{m-1}(Z_1\Fil_{\un{n}}\Omega^q_K) + dV^{m-1}(\Fil_{\un{n}}\Omega^{q-1}_K)$.
\end{proof}

\begin{defn}\label{def:two-term*}
For $\n \in \Z^r$, we let
  \[
  Z_1R^q_{m,\un{n}}(K) =
\Ker(Z_1\Fil_{\un{n}}\Omega^q_K \oplus \Fil_{\un{n}}\Omega^{q-1}_K
\xrightarrow{V^{m},\ dV^{m}} Z_1\Fil_{\un{n}}W_{m+1}\Omega^q_K) \text{ and}
\]
\[
R^q_{m,\un{n}}(K) = \Ker(\Fil_{\un{n}}\Omega^q_K \oplus \Fil_{\un{n}}\Omega^{q-1}_K
\xrightarrow{V^{m}, \ dV^{m}} \Fil_{\un{n}}W_{m+1}\Omega^q_K).
\]
\end{defn}
One notes that $Z_1R^q_{m,\un{n}}(K) = R^q_{m,\un{n}}(K)$. Indeed, any element
$(a,b) \in  R^q_{m,\un{n}}(K)$ has the property that $a \in Z_1\Omega^q_K$.
This is easily deduced from \cite[Chap.~I, Thm.~3.8]{Illusie} using 
Neron-Popescu approximation. 

\begin{defn}\label{def:two-term}
   For $\un{n} \ge -\un{1}$, we
let $C^{m,q}_{\un{n}, \bullet}(A)$ denote the 2-term complex
\begin{equation}\label{eqn:two-term-0}
  \left(Z_1\Fil_{\un{n}}W_m\Omega^q_K \xrightarrow{1-C}
  \Fil_{\un{n}}W_m\Omega^q_K\right).
\end{equation}
\end{defn}

We shall use the following property of $C^{m,q}_{\un{n}, \bullet}(A)$.

\begin{lem}\label{lem:VR-2}
  In the derived category of abelian groups, there exists a distinguished triangle
\[
C^{1,q}_{\un{n}, \bullet}(A) \xrightarrow{V^{m-1}} C^{m,q}_{\un{n}, \bullet}(A)
\xrightarrow{R}  C^{m-1,q}_{{\un{n}}/p, \bullet}(A) \xrightarrow{+}
C^{1,q}_{\un{n}, \bullet}(A) [1] .
\]
\end{lem}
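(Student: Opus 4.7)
The plan is to realize the claimed distinguished triangle as coming from a levelwise short exact sequence of two-term complexes, with the connecting map induced by $V^{m-1}$. Specifically, I would show that the restriction map $R$ gives a termwise surjection
\[
0 \to K^\bullet \to C^{m,q}_{\un{n}, \bullet}(A) \xrightarrow{R} C^{m-1,q}_{{\un{n}}/p, \bullet}(A) \to 0,
\]
and then identify the kernel complex $K^\bullet$ with $C^{1,q}_{\un{n}, \bullet}(A)$ in the derived category. That $R$ is a chain map between the two-term complexes uses Lemma~\ref{lem:Complete-6} (so $R\circ C$ lands in the correct filtration $\un{n}/p$) together with the compatibility $R\circ C = C\circ R$ following from Kato's defining relation $CF=R$ and the commutativity $RF=FR$. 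Termwise surjectivity of $R$ at the two vertices is exactly the content of Proposition~\ref{prop:Complete-0} and Lemma~\ref{lem:VR-0}.

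Reading off the kernel, we obtain
\[
K^\bullet \;=\; \bigl[\,V^{m-1}(Z_1\Fil_{\un{n}}\Omega^q_K) + dV^{m-1}(\Fil_{\un{n}}\Omega^{q-1}_K)\;\xrightarrow{\,1-C\,}\;V^{m-1}(\Fil_{\un{n}}\Omega^q_K) + dV^{m-1}(\Fil_{\un{n}}\Omega^{q-1}_K)\,\bigr].
\]
The key remark is that by Lemma~\ref{lem:Complete-6}, $C$ annihilates $dV^{m-1}(\Fil_{\un{n}}\Omega^{q-1}_K)$, so the inclusion
\[
\bigl[\,dV^{m-1}(\Fil_{\un{n}}\Omega^{q-1}_K)\xrightarrow{\,\mathrm{id}\,}dV^{m-1}(\Fil_{\un{n}}\Omega^{q-1}_K)\bigr] \;\hookrightarrow\; K^\bullet
\]
realizes an acyclic subcomplex, and the quotient complex is isomorphic (via $V^{m-1}$) to $C^{1,q}_{\un{n},\bullet}(A)$. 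Injectivity of $V^{m-1}$ on $\Fil_{\un{n}}\Omega^q_K$ is contained in (or deduced analogously to) Lemma~\ref{lem:Complete-5}, and surjectivity onto the quotient is clear from the definition of $K^\bullet$.

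The main technical point, and the step where the work is concentrated, is verifying that $V^{m-1}$ actually gives a chain map $C^{1,q}_{\un{n},\bullet}(A)\to K^\bullet$, i.e.\ that $C\circ V^{m-1}=V^{m-1}\circ C$ as maps $Z_1\Fil_{\un{n}}\Omega^q_K\to W_m\Omega^q_K$. I plan to establish this by a direct lifting argument: for $\alpha\in Z_1\Fil_{\un{n}}\Omega^q_K$, the classical Cartier gives $\alpha=F(\widetilde\alpha)$ with $\widetilde\alpha\in W_2\Omega^q_K$ satisfying $C(\alpha)=R(\widetilde\alpha)$; one then exhibits an element $\beta\in \Fil_{\un n}W_{m+1}\Omega^q_K$ with $F(\beta)=V^{m-1}(\alpha)$ and $R(\beta)=V^{m-1}(R(\widetilde\alpha))=V^{m-1}(C(\alpha))$, so that Kato's defining identity $C\circ F=R$ yields $C(V^{m-1}\alpha)=V^{m-1}(C\alpha)$. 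Reducing to this clean form modulo $dV^{m-1}(\Fil_{\un n}\Omega^{q-1}_K)$ via the isomorphism $\bar F$ of Lemma~\ref{lem:Complete-4}(2) makes this verification tractable. Once this compatibility is in hand, the above short exact sequence of complexes produces the desired distinguished triangle with the connecting morphism being precisely the one induced by $V^{m-1}$.
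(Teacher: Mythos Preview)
Your overall strategy is sound and closely parallels the paper's argument, but there is a genuine gap where you claim that $V^{m-1}$ induces a termwise \emph{isomorphism} from $C^{1,q}_{\un{n},\bullet}(A)$ onto the quotient $K^\bullet/[dV^{m-1}\xrightarrow{\id} dV^{m-1}]$. This is false: the map
\[
V^{m-1}\colon \Fil_{\un{n}}\Omega^q_K \longrightarrow \Fil_{\un{n}}W_m\Omega^q_K\big/dV^{m-1}(\Fil_{\un{n}}\Omega^{q-1}_K)
\]
has kernel equal to $B_m\Fil_{\un{n}}\Omega^q_K$ by Lemma~\ref{lem:Complete-10}(7), and this is nonzero in general. (Lemma~\ref{lem:Complete-5}, which you invoke, says something quite different: it compares filtered and unfiltered quotients, not the kernel of $V^{m-1}$.) Since $B_m\subset Z_1$, the same kernel appears in degree~$0$ as well.

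What is true is that $V^{m-1}$ induces a \emph{quasi}-isomorphism: the kernel complex is $[B_m\Fil_{\un{n}}\Omega^q_K \xrightarrow{1-C} B_m\Fil_{\un{n}}\Omega^q_K]$, and because $C^m=0$ on $B_m$, the map $1-C$ is an automorphism there, so this complex is acyclic. This nilpotence step is precisely the crux of the paper's proof, which packages the same observation by noting that $R^q_{m-1,\un{n}}(K)\subset B_m\Omega^q_K\oplus\Omega^{q-1}_K$ and hence $1-\wt{C}$ is an automorphism of $R^q_{m-1,\un{n}}(K)$. Once you replace the incorrect injectivity claim with this acyclicity argument, your proof is correct and essentially equivalent to the paper's; the two differ only in whether one first quotients by the acyclic piece $dV^{m-1}$ (your route) or enlarges to $Z_1\Fil_{\un{n}}\Omega^q_K\oplus\Fil_{\un{n}}\Omega^{q-1}_K$ and then kills the kernel $R^q_{m-1,\un{n}}(K)$ (the paper's route).
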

\begin{proof}
We consider the diagram (for $\un{n} \ge -\un{1}$)
\begin{equation}\label{eqn:VR-1}
    \xymatrix@C2.5pc{
      0 \ar[r] & R^q_{m-1,\un{n}}(K) \ar[d]_-{\phi} \ar[r]^-{\alpha} &  
  Z_1\Fil_{\un{n}}\Omega^q_K \oplus \Fil_{\un{n}}\Omega^{q-1}_K \ar[rr]^-{V^{m-1},\ dV^{m-1}} 
  \ar[d]^-{\phi}& & Z_1\Fil_{\un{n}}W_m\Omega^q_K \ar[d]^-{(1-C)} \\
   0 \ar[r] & R^q_{m-1,\un{n}}(K) \ar[r]^-{\alpha} &  
 \Fil_{\un{n}}\Omega^q_K \oplus \Fil_{\un{n}}\Omega^{q-1}_K \ar[rr]^-{V^{m-1},\ dV^{m-1}} 
 && \Fil_{\un{n}}W_m\Omega^q_K,}
\end{equation}
where we let $\wt{C}(a, b) = C(a)$ and $\phi = 1-\wt{C}$. The arrow $\alpha$ is
the canonical inclusion (cf. Definition~\ref{def:two-term*}).
The rows are exact since $Z_1R^q_{m-1,\un{n}}(K) =  R^q_{m-1,\un{n}}(K)$ as we observed
before. All vertical arrows are defined since
  $\Fil_{{\un{n}}/p}W_m\Omega^q_K \subset \Fil_{\un{n}}W_m\Omega^q_K$ under our
assumption on $\un{n}$. Since $CdV^{m-1} = 0$ by \lemref{lem:Complete-6}, this
diagram is commutative.

We now define $F^{1,q}_{\un{n},_\bullet}(A)$ (resp. $\wt{F}^{1,q}_{\un{n},_\bullet}(A)$)
    to be the 2-term complex consisting of the middle (resp. left)
    column of ~\eqref{eqn:VR-1}. Using \lemref{lem:VR-0},
   our claim then is reduced to showing that the composite map
    \begin{equation}\label{eqn:VR-1-0}
    C^{1,q}_{\un{n}, \bullet}(A) \inj F^{1,q}_{\un{n},_\bullet}(A) \surj
    \frac{F^{1,q}_{\un{n},_\bullet}(A)}{\wt{F}^{1,q}_{\un{n},_\bullet}(A)}
    \end{equation}
    is a quasi-isomorphism.

    To prove this quasi-isomorphism, we note that the injective arrow
    in ~\eqref{eqn:VR-1-0} is a quasi-isomorphism
    by the definition of $\phi$. It suffices therefore to show that the surjective
    arrow in  ~\eqref{eqn:VR-1-0} is a quasi-isomorphism. Equivalently, we need to
    show that the left vertical arrow in ~\eqref{eqn:VR-1} is bijective.
To prove this latter claim, it suffices to show that  $\wt{C} = (C,0)$ is nilpotent on
$R^q_{m-1,\un{n}}(K)$. To see this, note that the composite inclusion
\[
R^q_{m-1,\un{n}}(K) \xrightarrow{\alpha} \Fil_{\un{n}}\Omega^q_K \oplus
\Fil_{\un{n}}\Omega^{q-1}_K \inj \Omega^q_K \oplus \Omega^{q-1}_K
\]
actually factors through the
inclusion $R^q_{m-1,\un{n}}(K) \inj  B_m\Omega^q_K \oplus \Omega^{q-1}_K$ by
\cite[Chap.~I, Thm.~3.8]{Illusie}. Furthermore, $C^{m} = 0$ on $B_m\Omega^q_K$.
Since $\wt{C} = (C,0)$ is zero on $\Omega^{q-1}_K$, we conclude that
${(\wt{C})}^{m} = 0$ on $R^q_{m-1,\un{n}}(K)$. This finishes the proof.
\end{proof}

\subsection{Some more properties of $\Fil_{\un{n}} W_m\Omega^\bullet_K$}
\label{sec:More-prop}
In this subsection, we shall prove some more properties of various operators on
the filtered de Rham-Witt complex that will be used in this text. The most of these
are contained in the following.

\begin{lem}\label{lem:Complete-10}
  We have the following.
  \begin{enumerate}
\item
    $\Ker(F^{m-1}d \colon \Fil_{\un{n}}W_m\Omega^q_K \to \Fil_{\un{n}}\Omega^{q+1}_K)
    = F(\Fil_{\un{n}}W_{m+1}\Omega^q_K)$.
  \item
    $\Ker(F^{m-1}\colon \Fil_{\un{n}}W_m\Omega^q_K \to \Fil_{\un{n}}\Omega^q_K) =
    V(\Fil_{\un{n}}W_{m-1}\Omega^q_K)$.
  \item
    $\Ker(dV^{m-1}\colon \Fil_{\un{n}}\Omega^q_K \to \Fil_{\un{n}}W_m\Omega^{q+1}_K)
    = F^m(\Fil_{\un{n}}W_{m+1}\Omega^q_K)$.
  \item
    $\Ker(V \colon \Fil_{\un{n}}W_{m}\Omega^q_K \to \Fil_{\un{n}}W_{m+1}\Omega^{q}_K)
    = dV^{m-1}(\Fil_{\un{n}}\Omega^{q-1}_K)$.
  \item
    $\Ker(V^{m-1} \colon \Fil_{\un{n}}\Omega^q_K \to \Fil_{\un{n}}W_{m}\Omega^{q}_K)
    = F^{m-1}dV(\Fil_{\un{n}}W_{m-1}\Omega^{q-1}_K)$.
  \item 
    $\Ker(dV^{m-1}\colon \Fil_{\un{n}}\Omega^q_K \to
    \Fil_{\un{n}}W_m\Omega^{q+1}_K/V^{m-1}\Fil_\n \Omega^{q+1}_K)
    = F^{m-1}(\Fil_{\un{n}}W_{m}\Omega^q_K)$.
   \item
     $\Ker(V^{m-1} \colon \Fil_{\un{n}}\Omega^q_K \to
     \Fil_{\un{n}}W_{m}\Omega^{q}_K/dV^{m-1}\Fil_{\un{n}}\Omega^{q-1}_K )
    = F^{m}dV(\Fil_{\un{n}}W_{m}\Omega^{q-1}_K)$. 
    \end{enumerate}
\end{lem}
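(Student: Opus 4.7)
The seven items share a common structure: each asserts that a kernel computed in the filtered de Rham-Witt complex coincides with the image of an adjacent filtered group under one of the operators $F$, $V$, $d$. In every case the inclusion $\supseteq$ is the easy direction. It follows from \lemref{lem:Log-fil-Wcom}, which ensures that $F$, $V$, $d$, $R$ preserve the filtration $\Fil_{\un{n}}$, combined with standard identities in the ambient complex $W_\bullet \Omega^\bullet_K$ such as $F^{m-1}V=0$ (using $p=0$ in $\Omega^q_K$), $dV^{m-1}F^m=0$ (derived from $FdV=d$), and the corresponding annihilation relations needed in (3)--(7). Item (1) itself is an immediate restatement of \lemref{lem:Complete-4}(1), since by definition $Z_1\Fil_{\un{n}}W_m\Omega^q_K = \Ker(F^{m-1}d : \Fil_{\un{n}}W_m\Omega^q_K \to \Fil_{\un{n}}\Omega^{q+1}_K)$.

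For the reverse inclusions in items (2)--(7) the proof proceeds in three stages. First, I would reduce to the case where $A$ is complete local, using \lemref{lem:Non-complete-3*} (which identifies $\Fil_{\un{n}}W_m\Omega^q_{\wh K}$ with an appropriate completion of $\Fil_{\un{n}}W_m\Omega^q_K$), the faithful flatness of $W_m(\wh A)$ over $W_m(A)$ coming from \propref{prop:F-fin} and \lemref{lem:W-nil}, and \lemref{lem:Non-complete-2} together with the continuity of the relevant operators provided by Lemmas~\ref{lem:Non-complete-4} and~\ref{lem:Non-complete-7}. Second, once $A$ is complete, I would use \corref{cor:Complete-2} to detect membership in $\Fil_{\un{n}}$ via the injections $\gamma_i \colon W_m\Omega^q_K \to W_m\Omega^q_{\wh K_i}$ for $1 \le i \le r$, effectively reducing to the one-variable power series setup of \secref{sec:FPSR}. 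Third, in that setup \propref{prop:Fil-decom} supplies the canonical decomposition $\Fil_n W_m\Omega^q_K \cong \prod_{j \geq -n} F^{m,q}_j(S)$ into weight-graded pieces.

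Given this decomposition, items (2)--(5) follow from a componentwise argument exploiting \lemref{lem:F-group-1}, which records that each of $F$, $V$, $d$ sends $F^{m,q}_j(S)$ into $F^{m',q'}_{j}(S)$ with the same weight $j$ (and $R$ divides the weight by $p$). Concretely for item (2): writing $\omega = \sum_{j \geq -n} \omega_j$ with $F^{m-1}(\omega) = 0$, the classical identity produces $\alpha = \sum_k \alpha_k \in W_{m-1}\Omega^q_K$ with $V(\alpha) = \omega$; since $V(F^{m-1,q}_k(S)) \subseteq F^{m,q}_k(S)$ by \lemref{lem:F-group-1}(2), the uniqueness in \propref{prop:Fil-decom} forces $\omega_j = V(\alpha_j)$ for each $j$, and the truncation $\alpha' := \sum_{j \geq -n} \alpha_j$ lies in $\Fil_n W_{m-1}\Omega^q_K$ with $V(\alpha') = \omega$. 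The arguments for (3), (4), and (5) follow the same template, each time invoking the appropriate part of \lemref{lem:F-group-1} for the operator in question.

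The main obstacle is items (6) and (7), where the kernel is computed modulo $V^{m-1}\Fil_{\un{n}}\Omega^{q+1}_K$, respectively $dV^{m-1}\Fil_{\un{n}}\Omega^{q-1}_K$. The decisive additional input here is \lemref{lem:Complete-5}, which supplies the injectivity of the natural maps from the filtered quotients into the unfiltered quotients modulo $dV^{m-1}$ and $V^{m-1}$. Combining this injectivity with the classical identity for the unfiltered kernel (for (6), this is $\Ker(dV^{m-1} \colon \Omega^q_K \to W_m\Omega^{q+1}_K/V^{m-1}\Omega^{q+1}_K) = F^{m-1}(W_m\Omega^q_K)$) and the componentwise analysis of the previous paragraph, noting in particular that $V^{m-1}(F^{1,q+1}_j(S)) \subseteq F^{m,q+1}_j(S)$ by \lemref{lem:F-group-1}(2) so that truncation of the preimage does not destroy the defining congruence, yields the required preimage inside $\Fil_{\un{n}}W_m\Omega^q_K$. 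Item (7) is treated by the mirror-image argument, exchanging the roles of $V^{m-1}$ and $dV^{m-1}$.
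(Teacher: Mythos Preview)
Your treatment of item~(1) matches the paper exactly. For the remaining items, however, the paper takes a much shorter route than your three-stage reduction: it never goes back to the one-variable decomposition of \propref{prop:Fil-decom}. Instead, it observes that all of (2)--(7) are classical for $W_\star\Omega^\bullet_K$, and then uses the already-established ``intersection'' lemmas to transfer them to the filtration. Concretely, items~(3) and~(5) reduce to the identities $Z_m\Omega^q_K \cap \Fil_{\un{n}}\Omega^q_K = Z_m\Fil_{\un{n}}\Omega^q_K$ and $B_{m-1}\Omega^q_K \cap \Fil_{\un{n}}\Omega^q_K = B_{m-1}\Fil_{\un{n}}\Omega^q_K$, which are \lemref{lem:Complete-8} combined with \lemref{lem:Complete-9}. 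Items~(4),~(6),~(7) reduce directly to the injectivity statements of \lemref{lem:Complete-5}. Item~(2) is obtained as the special case $m=1$ of the stronger claim $\Ker(F^l\colon \Fil_{\un{n}}W_{m+l}\Omega^q_K \to \Fil_{\un{n}}W_m\Omega^q_K)=V^m(\Fil_{\un{n}}W_l\Omega^q_K)$, proved by induction on $l$: the base case $l=1$ is again \lemref{lem:Complete-5}, and the induction step uses item~(1) together with the classical version of item~(4). No passage to the completion or to $\wh K_i$ is required, since those reductions are already absorbed into \lemref{lem:Complete-5} and \lemref{lem:Complete-8}.

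Beyond being longer, your route has a genuine gap at stage two. \corref{cor:Complete-2} lets you \emph{test} membership in $\Fil_{\un{n}}$ via the maps $\gamma_i$, but it does not let you \emph{construct} a preimage in $\Fil_{\un{n}}$ from preimages in each $\Fil_{n_i}W_\star\Omega^\bullet_{\wh K_i}$. In your argument for~(2), the componentwise truncation $\alpha':=\sum_{j\ge -n}\alpha_j$ lives in the single $\wh K_i$ under consideration; carrying this out for each $i$ gives elements $\alpha'_i\in \Fil_{n_i}W_{m-1}\Omega^q_{\wh K_i}$ with $V(\alpha'_i)=\gamma_i(\omega)$, but there is no mechanism to glue them to a single $\alpha'\in W_{m-1}\Omega^q_K$. (Such gluing \emph{can} be achieved by the inductive-modification technique used in the proof of \lemref{lem:Complete-3}, but you do not invoke it, and once you do, you are essentially reproving \lemref{lem:Complete-5}.) The cleanest fix is to argue as the paper does: cite \lemref{lem:Complete-5}, \lemref{lem:Complete-8}, and \lemref{lem:Complete-9} directly, rather than unpacking their proofs again.
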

\begin{proof}
  We first note that all statements are classical for $W_\star\Omega^\bullet_K$ and
  can be easily deduced from \cite[Chap.~I, \S~3]{Illusie} using Neron-Popescu
  approximation. Now, item (1) follows directly from \lemref{lem:Complete-4}(1).
Item (2) is a special case of the more general claim that
\begin{equation}\label{eqn:ker-F-l}
    \Ker(F^l \colon \Fil_{\un{n}}W_{m+l}\Omega^q_K \to  \Fil_{\un{n}}W_{m}\Omega^q_K)
= V^m(\Fil_{\un{n}}W_{l}\Omega^q_K)
\end{equation}
for all $m \ge 0$ and $l \ge 1$.
Since this general claim is known for $W_\star\Omega^\bullet_K$
(e.g., use \cite[Chap.~I, (3.21.1.3)]{Illusie} and Neron-Popescu approximation), one
gets that $V^m(\Fil_{\un{n}}W_{l}\Omega^q_K) \subseteq
\Ker(F^l \colon \Fil_{\un{n}}W_{m+l}\Omega^q_K \to  \Fil_{\un{n}}W_{m}\Omega^q_K)$.
We shall prove the reverse inclusion by induction on $l$. We first consider the
case $l = 1$. It suffices to show in this case 
that $V^m(\Omega^q_{K})
\bigcap \Fil_{\un{n}}W_{m+1}\Omega^q_K = V^m(\Fil_{\un{n}}\Omega^q_K)$. But this follows
from \lemref{lem:Complete-5}.

We now assume $l \ge 2$ and $m \ge 1$. 
We let $\omega \in \Fil_{\un{n}}W_{m+l}\Omega^q_K$ be such that
$F^l(\omega) = 0$. We let $\omega' = F(\omega)$ so that $\omega' \in
\Fil_{\un{n}}W_{m+l-1}\Omega^q_K$ and $F^{l-1}(\omega') = 0$.
The induction hypothesis
implies that $F(\omega) = \omega' = V^{m}(x)$ for some $x \in
\Fil_{\un{n}}W_{l-1}\Omega^q_K$. This implies that
$F^{l-2}d(x) = F^{m+l-2}dV^m(x) = F^{m+l-2}dF(\omega) = 0$ in $\Omega^{q+1}_K$
because $dF=pFd$.
We conclude from item (1) that $x = F(y)$ from some
$y \in \Fil_{\un{n}}W_{l}\Omega^q_K$.
This yields
\[
F(\omega - V^m(y)) = F(\omega) - FV^m(y) = F(\omega) - V^mF(y) =
F(\omega) - V^m(x) =0.
\]
Using the $l =1$ case, we get
$\omega - V^m(y) \in V^m(\Fil_{\un{n}}W_{l}\Omega^q_K)$.
In particular, $\omega \in V^m(\Fil_{\un{n}}W_{l}\Omega^q_K)$.

To prove (3), it suffices to show that $$F^m(\Fil_{\un{n}}W_{m+1}\Omega^q_K) =
F^m(W_{m+1}\Omega^q_K) \bigcap \Fil_{\un{n}}W_{m+1}\Omega^q_K.$$ But this follows from
Lemmas~\ref{lem:Complete-9} and ~\ref{lem:Complete-8} because of the
classical identity $Z_m\Omega^q_K = F^m(W_{m+1}\Omega^q_K)$. The proof of (4) is identical to that of the $l =1$ case of \eqref{eqn:ker-F-l}
using \lemref{lem:Complete-5} and the classical case.

To prove (5), we can assume $m \ge 2$. In the latter case, we only need to show
that $\Ker(V^{m-1}) \subset
F^{m-2}d(\Fil_{\un{n}}W_{m-1}\Omega^{q-1}_K)$. By \lemref{lem:Complete-9}, we can replace
$F^{m-2}d(\Fil_{\un{n}}W_{m-1}\Omega^{q-1}_K)$ by $B_{m-1}\Fil_{\un{n}}\Omega^q_K$.
By the classical case, we are thus reduced to showing that
$B_{m-1}\Omega^q_K \bigcap \Fil_{\un{n}}\Omega^q_K \\ = B_{m-1}\Fil_{\un{n}}\Omega^q_K$.
But this follows from \lemref{lem:Complete-8}.
The proofs of (6) and (7) are analogous to the proofs of (3) and (5), respectively,
using \lemref{lem:Complete-5}.
\end{proof}

\begin{lem}\label{lem:VR-3}
  Assuming $\un{n} \ge -\un{1}$, we have
  \[
  d(\Fil_{\un{n}}W_m\Omega^{q}_K) \subset {\rm Image}(Z_1\Fil_{\un{n}}W_m\Omega^{q+1}_K
  \xrightarrow{1-C} \Fil_{\un{n}}W_m\Omega^{q+1}_K).
  \]
\end{lem}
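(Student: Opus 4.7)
The plan is to construct an explicit preimage
\[
\beta := \sum_{k=0}^{m-1} C^k(d\omega) \;\in\; Z_1\Fil_{\un n}W_m\Omega^{q+1}_K
\]
via a telescoping sum that terminates because $R$ is nilpotent in the pro-system.

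The key identity is $C(d\omega) = d(VR\omega)$ for any $\omega \in W_m\Omega^q_K$. To derive it, I would first use the Witt pro-complex relation $FdV = d$ to write $d\omega = F(dV\omega)$; the defining relation $CF = R$ of the Cartier operator then gives $C(d\omega) = R(dV\omega)$. Since $R$ commutes with $d$, this equals $d(RV\omega)$, and the Witt pro-complex axiom $RV = VR$ (compatibility of the Verschiebung with the restriction maps) turns this into $C(d\omega) = d(VR\omega)$. Applying the identity inductively with $\omega$ replaced by $(VR)^{k-1}\omega$ and using $(VR)^k = V^k R^k$ (another consequence of $RV=VR$), I obtain $C^k(d\omega) = d(V^k R^k\omega)$ for all $k \ge 0$. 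Since $R^m\omega$ lies in $W_0\Omega^q = 0$, the iteration terminates at step $m$: $C^m(d\omega) = 0$, and so by telescoping
\[
(1-C)\beta \;=\; \sum_{k=0}^{m-1}C^k(d\omega) - \sum_{k=1}^{m}C^k(d\omega) \;=\; d\omega.
\]

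It remains to verify that $\beta \in Z_1\Fil_{\un n}W_m\Omega^{q+1}_K$. Exactness of each summand $d(V^kR^k\omega)$ makes $\beta$ closed. For the filtration, \lemref{lem:Log-fil-Wcom} shows that $V$ preserves the filtration index while $R$ rescales it by $1/p$, so iterating gives $V^kR^k\omega \in \Fil_{\un n/p^k}W_m\Omega^q_K$; the hypothesis $\un n \ge -\un 1$ ensures $\lfloor n_i/p^k\rfloor \le n_i$ for each coordinate and each $k \ge 0$, hence $\Fil_{\un n/p^k}\subset \Fil_{\un n}$, and after applying $d$ we get $\beta \in \Fil_{\un n}W_m\Omega^{q+1}_K$. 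The only non-routine ingredient is the axiom $RV = VR$ on the higher Hodge-Witt modules (beyond its elementary verification on $W_m$), which is part of the standard axiomatization of the Witt pro-complex and hence not really an obstacle; and the hypothesis $\un n \ge -\un 1$ is sharp in this argument, since $\lfloor n_i/p^k\rfloor$ can strictly exceed $n_i$ as soon as some $n_i \le -2$.
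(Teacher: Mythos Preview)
Your proof is correct and is essentially the same as the paper's argument, just organized through the Cartier operator rather than through $F$ and $R$ directly. The paper sets $x = (V + V^2R + \cdots + V^mR^{m-1})(\omega)$ and shows $(1-C)Fd(x) = d\omega$ via the telescoping identity $d\omega = \sum_{k=1}^{m}(FdV^kR^{k-1}\omega - RdV^kR^{k-1}\omega)$; unwinding $FdV = d$ and $RV = VR$ gives $Fd(x) = \sum_{k=0}^{m-1} d(V^kR^k\omega)$, which is exactly your $\beta$, and your identity $C^k(d\omega) = d(V^kR^k\omega)$ is the same telescoping step read through $CF = R$. The filtration check and the $Z_1$-membership are handled identically in both proofs.
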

\begin{proof}
  For $\omega \in \Fil_{\un{n}}W_m\Omega^{q}_K$, we have
  $d(\omega) = FdV(\omega) - RdV(\omega) + FdV^2R(\omega) - RdV^2R(\omega) +
  FdV^3R^2(\omega) - RdV^3R^2(\omega) + \cdots + FdV^mR^{m-1}(\omega) -
  RdV^{m}R^{m-1}(\omega)$, as $RdV^mR^{m-1}(\omega) = 0$. Hence, we get
  $$d(\omega) =
  (F-R)((dV + dV^2R + \cdots + dV^mR^{m-1})(\omega)) = (1-C)Fd(x),$$ where
  we let $x = (V + V^2R + \cdots + V^mR^{m-1})(\omega)$. It suffices therefore to show
  that $Fd(x) \in Z_1\Fil_{\un{n}}W_m\Omega^{q+1}_K$.
  To this end, we first note that $F^{m-1}d(Fd(x)) = pF^{m}d^2(x) = 0$,
  and this implies that $Fd(x) \in Z_1W_m\Omega^{q+1}_K$. Secondly,
  \lemref{lem:Log-fil-4} implies that $Fd(x) \in \Fil_{\un{n}}W_m\Omega^{q+1}_K$
  because $\omega \in \Fil_{\un{n}}W_m\Omega^{q}_K$. This completes
  the proof.
\end{proof}

\begin{remk}\label{remk:VR-3-0}
  The same proof also shows that $$d(\Fil_{-\un{1}}W_m\Omega^{q}_K) \subset
  {\rm Image}(\Fil_{-\un{1}}W_{m+1}\Omega^{q+1}_K
  \xrightarrow{R-F} \Fil_{-\un{1}}W_m\Omega^{q+1}_K).$$
\end{remk}

\begin{lem}\label{lem:VR-4}
  The abelian group $\frac{\Fil_\n W_m\Omega^q_K}{d(\Fil_\n W_m\Omega^{q-1}_K)}$ is
  generated by the images of elements of the form
  $V^i([x]_{m-i})\dlog([x_1]_m)\wedge \cdots \wedge \dlog([x_q]_m)$, where
  $[x]_{m-i} \in \Fil_\n W_{m-i}(K), \ x_i \in K^\times$ and $0 \le i \le m-1$.
\end{lem}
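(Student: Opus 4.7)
The plan is to prove the claim by induction on $m$, treating the assertion simultaneously for all $\un{n}\in\Z^r$ and all $q\ge 0$.

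For the base case $m=1$, I would first reduce, modulo $d(\Fil_\n\Omega^{q-1}_K)$, to elements of the form $a\cdot w$ with $a\in\Fil_\n\sO_K$ and $w\in\Omega^q_A(\log\pi)$. This is achieved by the Leibniz identity $d(a'')\cdot w' = d(a''\cdot w') - a''\cdot dw'$ applied to the second summand in the defining decomposition $\Fil_\n\Omega^q_K = \Fil_\n\sO_K\cdot\Omega^q_A(\log\pi) + d(\Fil_\n\sO_K)\cdot\Omega^{q-1}_A(\log\pi)$, noting that $a''w'\in\Fil_\n\Omega^{q-1}_K$ and $a''dw'\in\Fil_\n\sO_K\cdot\Omega^q_A(\log\pi)$. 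Next I would decompose $w$ into standard monomials $b_0\,db_1\cdots db_t\,\dlog x_{i_1}\cdots\dlog x_{i_s}$ with $b_j\in A$ and $s+t=q$, discard the terms for which some $b_j=0$, and apply the identity $db_j = b_j\,\dlog b_j$ valid for $b_j\in K^\times$ to rewrite each remaining monomial as $(a\cdot b_0\cdots b_t)\,\dlog b_1\cdots \dlog b_t\,\dlog x_{i_1}\cdots\dlog x_{i_s}$. Since $\Fil_\n\sO_K$ is an $A$-module by \lemref{lem:Log-fil-2}, the coefficient $ab_0\cdots b_t$ lies in $\Fil_\n\sO_K$; this is of the required form with $i=0$ and $q$ $\dlog$-factors.

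For the inductive step, I would fix $\omega\in\Fil_\n W_m\Omega^q_K$ and apply the induction hypothesis to $R(\omega)\in\Fil_{\n/p}W_{m-1}\Omega^q_K$ to write
\[
R(\omega) = \sum_k V^{i_k}([y_k]_{m-1-i_k})\,\dlog[y^{(k)}_1]_{m-1}\cdots\dlog[y^{(k)}_q]_{m-1} + d(\alpha),
\]
with $[y_k]_{m-1-i_k}\in\Fil_{\n/p}W_{m-1-i_k}(K)$, $y^{(k)}_j\in K^\times$, $0\le i_k\le m-2$, and $\alpha\in\Fil_{\n/p}W_{m-1}\Omega^{q-1}_K$. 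Using the surjectivity of $R$ furnished by \lemref{lem:Log-fil-1} and \propref{prop:Complete-0}, I would lift each $[y_k]_{m-1-i_k}$ to $[\tilde y_k]_{m-i_k}\in\Fil_\n W_{m-i_k}(K)$ and $\alpha$ to $\tilde\alpha\in\Fil_\n W_m\Omega^{q-1}_K$, and set
\[
\tilde\omega = \sum_k V^{i_k}([\tilde y_k]_{m-i_k})\,\dlog[y^{(k)}_1]_m\cdots\dlog[y^{(k)}_q]_m + d(\tilde\alpha).
\]
Since $R$ commutes with $V^i$ and $d$ and sends $\dlog[y]_m$ to $\dlog[y]_{m-1}$, one checks $R(\tilde\omega)=R(\omega)$, so \propref{prop:Complete-0} yields $\omega - \tilde\omega = V^{m-1}(a) + dV^{m-1}(b)$ for some $a\in\Fil_\n\Omega^q_K$ and $b\in\Fil_\n\Omega^{q-1}_K$.

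To conclude, I would reduce modulo $d(\Fil_\n W_m\Omega^{q-1}_K)$: the terms $d(\tilde\alpha)$ and $dV^{m-1}(b)$ both vanish (the latter because $V^{m-1}(b)\in\Fil_\n W_m\Omega^{q-1}_K$ by \lemref{lem:Log-fil-Wcom}), and the explicit $V^{i_k}$-summands of $\tilde\omega$ are already of the required shape. For the residual $V^{m-1}(a)$, I would apply the base case to $a$ to write $a = \sum_j a'_j\,\dlog y^{(j)}_1\cdots\dlog y^{(j)}_q + d(a'')$ with $a'_j\in\Fil_\n\sO_K$ and $a''\in\Fil_\n\Omega^{q-1}_K$, and then invoke the projection formula $V^{m-1}(xF^{m-1}(\eta))=V^{m-1}(x)\eta$ (obtained by iterating $V(xF(y))=V(x)y$) with $\eta=\dlog[y^{(j)}_1]_m\cdots\dlog[y^{(j)}_q]_m$ to convert each summand into $V^{m-1}([a'_j]_1)\,\dlog[y^{(j)}_1]_m\cdots\dlog[y^{(j)}_q]_m$, which is of the required form with $i=m-1$; finally, iterating the de Rham-Witt identity $Vd=p\cdot dV$ gives $V^{m-1}(d(a''))=p^{m-1}\,dV^{m-1}(a'')\in d(\Fil_\n W_m\Omega^{q-1}_K)$. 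The main obstacle is the inductive step: lifting representatives from $\Fil_{\n/p}$ at level $m-1$ back up to $\Fil_\n$ at level $m$ consistently (via \lemref{lem:Log-fil-1} and \propref{prop:Complete-0}), and absorbing the residual $V^{m-1}(d(a''))$ into $d(\Fil_\n W_m\Omega^{q-1}_K)$ by invoking $Vd=p\,dV$; the base-case manipulation $db=b\,\dlog b$ also needs care to keep coefficients inside $\Fil_\n\sO_K$.
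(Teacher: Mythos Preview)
Your proof is correct and follows essentially the same strategy as the paper: induction on $m$ via the short exact sequence of \propref{prop:Complete-0}, lifting Teichm{\"u}ller generators from level $m-1$ to level $m$ using the observation that $[x]_{m-1-i}\in\Fil_{\un{n}/p}W_{m-1-i}(K)$ implies $[x]_{m-i}\in\Fil_{\un{n}}W_{m-i}(K)$, and reducing the $V^{m-1}$-part of the kernel to the $m=1$ case. The paper is terse at both ends (it simply asserts the $m=1$ case is ``clear'' from $\Fil_{\un n}\Omega^q_K=\pi^{-\un n}\Omega^q_A(\log\pi)$ and does not spell out the projection formula or the handling of $V^{m-1}(da'')$), whereas you make the Leibniz reduction, the identity $db=b\,\dlog b$, the formula $V^{m-1}(xF^{m-1}\eta)=V^{m-1}(x)\eta$, and the relation $Vd=p\,dV$ explicit; these are exactly the ingredients the paper's argument relies on implicitly.
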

\begin{proof}
  The $q =0$ case of the lemma is clear and so we assume $q \ge 1$.
 We know from \lemref{lem:Log-fil-4}(2) that
 ${\Fil_\n\Omega^q_K} = \pi^{-\n}\Omega^q_A(\log\pi)$
 ($\pi^{-\n}=x_1^{-n_1}\cdots x_r^{-{n_r}}$),
  which is clearly generated as an abelian group by
  elements of the desired kind. This proves our assertion when $m =1$.
  Using this case and induction on $m$, the general case follows at once by
  \propref{prop:Complete-0}, combined with the
  observation that $[x]_{m-i-1} \in \Fil_{{\un{n}}/p}W_{m-i-1}(K)$ implies
  $[x]_{m-i} \in \Fil_{{\un{n}}}W_{m-i}(K)$ and
  $R(V^i([x]_{m-i})w_1) =  V^i([x]_{m-i-1})w_2$ if we let
  $w_1 = \dlog([x_1]_m)\wedge \cdots \wedge \dlog([x_q]_m))$ and
  $w_2 = \dlog([x_1]_{m-1})\wedge \cdots \wedge \dlog([x_q]_{m-1})$.
  \end{proof}

Recall from \cite[Chap.~I]{Illusie} that the multiplication map
$p \colon W_m\Omega^q_K \to W_{m}\Omega^q_K$ has a factorization
$W_m\Omega^q_K \stackrel{R}{\surj}  W_{m-1}\Omega^q_K \stackrel{\ov{p}}{\inj}
W_{m}\Omega^q_K$.

\begin{lem}$(${\rm cf}. \cite[Cor.~2.2.7]{JSZ}$)$\label{lem:Complete-7}
Let $\omega \in W_{m-1}\Omega^q_K$.
Then $\omega \in \Fil_{{\un{n}}/p}W_{m-1}\Omega^q_K$ if and only if
$\ov{p}(\omega) \in \Fil_{\un{n}}W_m\Omega^q_K$.
\end{lem}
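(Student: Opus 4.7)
For the direction $(\Rightarrow)$, I would lift $\omega \in \Fil_{\un n/p}W_{m-1}\Omega^q_K$ to some $\tilde\omega \in \Fil_{\un n}W_m\Omega^q_K$ using the surjectivity in Proposition~\ref{prop:Complete-0}; then $\ov p(\omega) = p\tilde\omega$ lies in $\Fil_{\un n}W_m\Omega^q_K$ because the latter is a sheaf of $W_m\sO_X$-modules by Lemma~\ref{lem:Log-fil-Wcom}.

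For the direction $(\Leftarrow)$, I plan to reduce in two steps to the power-series case $A = S[[\pi]]$. First, Corollary~\ref{cor:Complete-2} together with the evident functoriality of $\ov p$ under the pullbacks $\gamma_i \colon W_\star\Omega^q_K \to W_\star\Omega^q_{\wh K_i}$ reduces the statement to the single-variable case $r = 1$; second, Lemma~\ref{lem:Non-complete-3*} further reduces it to the complete case $A = \wh A = S[[\pi]]$. In this setting, Proposition~\ref{prop:Fil-decom} writes $\omega$ uniquely as $\omega = \sum_j \omega_j$ with $\omega_j \in F^{m-1,q}_j(S)$, and $\omega \in \Fil_{n/p}W_{m-1}\Omega^q_K$ if and only if $\omega_j = 0$ for every $j < -\lfloor n/p\rfloor$. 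Using Lemma~\ref{lem:F-group-1}(4), each $\omega_j$ admits a lift $\tilde\omega_j \in F^{m,q}_{pj}(S)$, and since $V$ and $F$ preserve the Geisser--Hesselholt index (Lemma~\ref{lem:F-group-1}(2),(3)), the element $\ov p(\omega_j) = p\tilde\omega_j = VF(\tilde\omega_j)$ lies in $F^{m,q}_{pj}(S)$. Thus $\ov p$ respects the grading up to the index shift $j \mapsto pj$, and the hypothesis $\ov p(\omega) \in \prod_{k \ge -n}F^{m,q}_k(S)$ forces $\ov p(\omega_j) = 0$ whenever $pj < -n$, equivalently $j < -\lfloor n/p\rfloor$.

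The main obstacle is to show that $\ov p$ is injective on each graded piece $F^{m-1,q}_j(S)$; granted this, the vanishings $\ov p(\omega_j) = 0$ for $j < -\lfloor n/p\rfloor$ imply $\omega_j = 0$, whence $\omega \in \Fil_{n/p}W_{m-1}\Omega^q_K$. I would establish this injectivity by analyzing the kernel of $p = VF$ on $F^{m,q}_{pj}(S)$, combining the identification of $\ker(V \colon W_{m-1}\Omega^q_K \to W_m\Omega^q_K) = dV^{m-2}\Omega^{q-1}_K$ from Lemma~\ref{lem:Complete-10}(4), the relations $F d V = d$ and $F V^{m-1} = 0$ in characteristic $p$, and the explicit $V^s$- and $dV^s$-presentations of $F^{m,q}_{pj}(S)$, to show that any element of $F^{m,q}_{pj}(S)$ annihilated by $p$ already lies in $\ker(R)$; this forces $\omega_j = R(\tilde\omega_j) = 0$ and completes the proof.
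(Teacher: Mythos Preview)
Your approach is correct and takes a genuinely different route from the paper's proof. The paper argues directly at the level of $A$ (no reduction to the power-series case) by chasing through the structural Lemma~\ref{lem:Complete-10}: it lifts $\omega$ to $\tilde\omega \in W_m\Omega^q_K$, observes $VF(\tilde\omega) = \ov p(\omega) \in \Fil_{\un n}$, then uses parts (2), (4), (1) of Lemma~\ref{lem:Complete-10} and~\eqref{eqn:ker-F-l} in succession to produce $y'' \in \Fil_{\un n}W_m\Omega^q_K$ with $R(\tilde\omega - y'') = 0$, whence $\omega = R(y'') \in \Fil_{\un n/p}$. Your reduction to $S[[\pi]]$ via Corollary~\ref{cor:Complete-2} followed by the explicit Geisser--Hesselholt decomposition is more computational but equally valid; it buys a transparent index-by-index argument at the cost of the reduction machinery. (Incidentally, your second reduction step via Lemma~\ref{lem:Non-complete-3*} is unnecessary: the maps $\gamma_i$ in Corollary~\ref{cor:Complete-2} already land in the fraction field of the complete DVR $\wh{A_{(x_i)}}$, which is of the form $S[[\pi]]$ with $S$ a field.)

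Your ``main obstacle'' is not an obstacle at all: the injectivity of $\ov p \colon W_{m-1}\Omega^q_K \hookrightarrow W_m\Omega^q_K$ is recalled from \cite[Chap.~I]{Illusie} immediately before the lemma, so its restriction to each $F^{m-1,q}_j(S)$ is automatically injective. Once you know $\ov p(\omega_j) = 0$ for $j < -\lfloor n/p\rfloor$, you are done. The elaborate kernel analysis you outline is therefore superfluous, though it would also work.
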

\begin{proof}
  The `only if' part is clear because $R 
 \colon \Fil_{\un{n}}W_{m}\Omega^q_K \to
  \Fil_{{\un{n}}/p}W_{m-1}\Omega^q_K$ is surjective. For the reverse implication,
we let $\wt{\omega} \in W_m\Omega^q_K$ be such that $R(\wt{\omega}) = \omega$.
This yields
$VF(\wt{\omega}) = p\wt{\omega} = \ov{p}(w) \in \Fil_{\un{n}}W_{m}\Omega^q_K$.
Since $F^{m-1}VF(\wt{\omega}) = pF^{m-1}(\wt{\omega}) = 0$,  
it follows from \lemref{lem:Complete-10}(2) that $VF(\wt{\omega}) = V(y')$ for some
$y' \in \Fil_{\un{n}}W_{m-1}\Omega^q_K$. We now apply \lemref{lem:Complete-10}(4)
(for $W_*\Omega^\bullet$) to conclude that $F(\wt{\omega}) - y' = dV^{m-2}(z') =
FdV^{m-1}(z')$ for some $z' \in \Omega^{q-1}_K$.
That is, $F(\wt{\omega} - dV^{m-1}(z')) = y' \in \Fil_{\un{n}}W_{m-1}\Omega^q_K$.
\lemref{lem:Complete-10}(1) (for $W_*\Omega^\bullet$) then implies that
$F^{m-1}d(y') = 0$.

As $y' \in \Fil_{\un{n}}W_{m-1}\Omega^q_K$,
we apply \lemref{lem:Complete-10}(1) one more time to conclude that
$F(\wt{\omega} - dV^{m-1}(z')) = y' = F(y'')$ for some
$y'' \in \Fil_{\un{n}}W_{m}\Omega^q_K$.
In particular, $F(\wt{\omega} - dV^{m-1}(z') - y'') =0$.
We now apply ~\eqref{eqn:ker-F-l} (for $W_*\Omega^\bullet$ and $l =1$)
to find $z'' \in \Omega^q_K$ such that
$\wt{\omega} - dV^{m-1}(z') - y'' = V^{m-1}(z'')$.
That is, $\wt{\omega} - y'' = V^{m-1}(z'') + dV^{m-1}(z')$.
Equivalently, $R(\wt{\omega} - y'') = 0$ so that
$\omega = R(\wt{\omega}) = R(y'') \in \Fil_{{\un{n}}/p}W_{m}\Omega^q_K$.
This concludes the proof. 
\end{proof}

Recall that $\sK^M_{q, (-)}$ denotes the improved Milnor $K$-theory sheaf
(cf. \cite{Kerz-JAG}) on the big Zariski site of the category of
  $\F_p$-schemes.
The following result will be used at a later stage in this text.

\begin{lem}\label{lem:Gersten-0}
  Let $X=\Spec(A), \ X_\pi = \Spec(A[1/{\pi}])$ and $Z \in \{X, X_\pi\}$.
  Let $q \ge 0$ and $i > 0$ be any integers. Then 
  $H^i_\zar(Z, \sK^M_{q, Z}) = 0$.
  In particular, there is a short exact sequence
  \begin{equation}\label{eqn:Gersten-0-0}
  0 \to H^0_\zar(Z, \sK^M_{q, Z}) \xrightarrow{p^m} H^0_\zar(Z, \sK^M_{q, Z})
  \to H^0_\zar(Z, {\sK^M_{q, Z}}/{p^m}) \to 0
  \end{equation}
  for every $m \ge 1$.
\end{lem}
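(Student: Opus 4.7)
The plan is to deduce the vanishing from the Gersten resolution for Milnor $K$-theory. Since $A$ is a regular local $F$-finite $\F_p$-algebra, it is excellent by \propref{prop:F-fin}(2) and contains $\F_p$, and $Z$ is a regular $\F_p$-scheme (localization of a regular ring being regular). By Kerz's Gersten conjecture for improved Milnor $K$-theory, applicable to all regular equicharacteristic local rings (using Neron--Popescu approximation to reduce to the essentially smooth case over $\F_p$ that Kerz treats directly, combined with Gabber's argument to handle small residue fields), there is an exact sequence of sheaves on $Z_\zar$
\[
0 \to \sK^M_{q,Z} \to \bigoplus_{y \in Z^{(0)}} (i_y)_* K^M_q(k(y)) \to
\bigoplus_{y \in Z^{(1)}} (i_y)_* K^M_{q-1}(k(y)) \to \cdots
\]
where $i_y \colon \{y\} \inj Z$ is the inclusion of the point $y$ (not its closure).

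The next step is to observe that each term of the Gersten resolution is Zariski-acyclic. Indeed, for a not-necessarily-closed point $y \in Z$ and an abelian group $G$, the sheaf $(i_y)_* G$ assigns $G$ to opens containing $y$ and $0$ otherwise, so by the Leray spectral sequence $H^i_\zar(Z, (i_y)_* G) = H^i_\zar(\{y\}, G) = 0$ for $i > 0$. Taking direct sums preserves this vanishing on a Noetherian site. Consequently, the Gersten resolution computes the Zariski cohomology of $\sK^M_{q,Z}$, and we conclude $H^i_\zar(Z, \sK^M_{q,Z}) = 0$ for all $i > 0$.

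For the short exact sequence, the remaining point is to show that multiplication by $p^m$ is injective on $\sK^M_{q,Z}$ as a sheaf. This reduces, via the injection in the Gersten resolution, to showing $p^m$-injectivity on $K^M_q(k(y))$ for every $y \in Z^{(0)}$, where $k(y)$ is a field of characteristic $p$. This is exactly Izhboldin's theorem, which asserts that $K^M_q(F)$ is $p$-torsion free for every field $F$ of characteristic $p$. Hence we obtain a short exact sequence of Zariski sheaves
\[
0 \to \sK^M_{q,Z} \xrightarrow{p^m} \sK^M_{q,Z} \to \sK^M_{q,Z}/p^m \to 0,
\]
and the associated long exact cohomology sequence, combined with the vanishing $H^1_\zar(Z, \sK^M_{q,Z}) = 0$ just established, yields the claimed short exact sequence \eqref{eqn:Gersten-0-0}.

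The main conceptual obstacle is the applicability of Kerz's Gersten conjecture in the $F$-finite rather than essentially smooth setting; this is handled by the Neron--Popescu reduction noted above (which applies since $A$ is excellent). No further difficulties arise, because once the Gersten resolution is in place, both the higher cohomology vanishing and the $p^m$-injectivity are formal consequences of standard flasque/acyclic arguments and Izhboldin's theorem.
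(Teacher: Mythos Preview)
Your reduction to the Gersten resolution and the flasqueness of its terms is correct, and it gives the identification $H^i_\zar(Z,\sK^M_{q,Z}) \cong H^i\bigl(\Gamma(Z,\text{Gersten}^\bullet)\bigr)$. But the sentence ``and we conclude $H^i_\zar(Z,\sK^M_{q,Z})=0$ for all $i>0$'' is a non-sequitur. The Gersten conjecture asserts exactness of the complex at each \emph{stalk}; it does \emph{not} say that the complex of \emph{global sections} is exact in positive degrees. These are different statements as soon as $Z$ is not local. (For a general regular scheme the global Gersten complex computes Zariski cohomology of $\sK^M_q$, which is typically nonzero; think of $\P^n$.) For $Z=X=\Spec(A)$ your conclusion happens to be correct because $A$ is local, so the global sections complex coincides with the Gersten complex of the local ring $A$, which Kerz's result shows is exact. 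For $Z=X_\pi$, however, $A[1/\pi]$ is a non-local ring with many maximal ideals, and you have given no reason why $H^i\bigl(G_q(X_\pi)\bigr)=0$.

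The paper fills exactly this gap: it proves the vanishing for $X_\pi$ by induction on the number $r$ of factors in $\pi=x_1\cdots x_r$, using the short exact sequence of global Gersten complexes
\[
0 \to G_{q-1}(Y_1)[-1] \to G_q(X_1) \to G_q(X_\pi) \to 0
\]
associated to removing one hypersurface $V(x_1)$ from $X_1=\Spec(A[1/(x_2\cdots x_r)])$. This localization sequence, together with the inductive hypothesis applied to $X_1$ and to $Y_1$ (which has fewer divisor components), gives the desired exactness. Your argument needs this step, or something equivalent, to be complete. The treatment of $p^m$-injectivity via Izhboldin's theorem is fine and in fact more explicit than the paper, which simply writes down the sheaf sequence without comment.
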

\begin{proof}
  The second part of the lemma follows directly from
  its first part using the exact sequence of Zariski sheaves
    \begin{equation}\label{eqn:Gersten-0-1}
    0 \to \sK^M_{q, Z} \xrightarrow{p^m} \sK^M_{q, Z} 
    \to {\sK^M_{q, Z}}/{p^m} \to 0.
    \end{equation}

    The first part of the lemma for $X$ is well-known and follows
    from \cite[Prop.~10(8)]{Kerz-JAG}.
    We shall prove the same for $X_\pi$ by induction on $r$.
  Suppose first that $r =1$ and let $Y = \Spec({A}/{(\pi)})$.
  For any regular scheme $W$ and $n \ge 1$, we let $G_n(W)$ denote the Gersten
  complex
  \[
    {\underset{w \in W^{(0)}}\oplus} K^M_{n}(k(w)) \xrightarrow{\partial}
    {\underset{w \in W^{(1)}}\oplus} K^M_{n-1}(k(w)) \xrightarrow{\partial} \cdots .
    \]  
 We then have an exact sequence
    \[
    0 \to G_{q-1}(Y)[-1] \to G_q(X) \to G_q(X_\pi) \to 0.
    \]
    Since $H^i(G_q(X)) = H^i(G_{q-1}(Y)) = 0$ for $i > 0$ by
    \cite[Prop.~10(8)]{Kerz-JAG}), it follows that $H^i_\zar(X_\pi, \sK^M_{q, X_\pi})
    = H^i(G_q(X_\pi)) = 0$ for $i > 0$.

    Suppose now that $r \ge 2$ and set $\pi_1=x_2 \cdots x_r$. We let
    $X_1 = \Spec(A[{1}/\pi_1])$, $Y_1 = \Spec({A[{1}/{\pi_1}]}/{(x_1)})$, 
    and $U = X_1 \setminus Y_1 = X_\pi$. Then we have an exact sequence
    \[
    0 \to G_{q-1}(Y_1)[-1] \to G_q(X_1) \to G_q(U) \to 0.
    \]
    We see by induction on $r$ that $H^i(G_{q-1}(Y_1)) = 0= H^i(G_q(X_1))$ for
    $i > 0$. It follows that $H^i_\zar(X_\pi, \sK^M_{q, X_\pi}) =
    H^i(G_{q}(U)) = 0$ for $i > 0$.
    \end{proof}

\subsection{Global results}\label{sec:Global}
Let $X$ be a Noetherian regular $F$-finite $\F_p$-scheme and let
$E$ be a simple normal crossing divisor on $X$
with irreducible components $E_1, \ldots , E_r$.  Let $j \colon U \inj X$ be the
inclusion of the complement of $E$ in $X$. Let $\eta_i$ denote the generic
point of $E_i$ and let $\wh{K}_i$ be the quotient field of $\wh{\sO_{X, \eta_i}}$. Let
$j_i \colon \Spec(\wh{\sO_{X, \eta_i}}) \to X$ denote the canonical map.
We fix integers $q \ge 0$ and $m \ge 1$. Let $D = \sum_i n_iE_i \in \Div_E(X)$
and $Z_1\Fil_{D}W_m\Omega^q_U :=\Fil_{D}W_m\Omega^q_U \bigcap j_* (Z_1W_m\Omega^q_U)$.
\lemref{lem:Complete-6} immediately implies the following.

\begin{cor}\label{cor:Complete-6-global}
The Cartier map $C \colon Z_1W_m\Omega^q_U \to
  W_m\Omega^q_U$ restricts to a $W_{m+1}\sO_X$-linear map
  \[
  C \colon Z_1\Fil_D W_m\Omega^q_U \to \Fil_{D/p}W_m\Omega^q_U
  \]
  such that $C(dV^{m-1}(\Fil_{D}\Omega^{q-1}_U)) = 0$.
\end{cor}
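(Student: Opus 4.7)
The plan is to reduce the global statement to the already-established local statement of Lemma~\ref{lem:Complete-6}. Both the Cartier operator and the filtered de Rham--Witt sheaves were defined locally and then glued, so the claim is of an inherently local nature, and there is no genuine globalization obstacle provided the local statement is formulated for the right class of local rings.

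First, I would observe that $\Fil_D W_m\Omega^q_U$, $Z_1\Fil_D W_m\Omega^q_U$, and $\Fil_{D/p}W_m\Omega^q_U$ are Zariski subsheaves of $j_*W_m\Omega^q_U$ on $X$, and the Cartier homomorphism $C \colon Z_1 j_*W_m\Omega^q_U \to j_*W_m\Omega^q_U$ is a morphism of Zariski sheaves. Hence it suffices to verify, at each point $x \in X$, that $C$ sends the stalk $(Z_1\Fil_D W_m\Omega^q_U)_x$ into $(\Fil_{D/p} W_m\Omega^q_U)_x$. Choosing a regular system of parameters at $x$ that cuts out the components of $E$ passing through $x$, the pair $(X_x, E_x)$ with $X_x = \Spec(\sO_{X,x})$ becomes an affine snc-pair with $\sO_{X,x}$ a regular local $F$-finite $\F_p$-algebra (by Proposition~\ref{prop:F-fin}(1)), exactly the setting in which Lemma~\ref{lem:Complete-6} was proved. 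Moreover, by the very construction of $\Fil_D W_m\Omega^q_U$ in \S~\ref{sec:FDRWC}, its stalk at $x$ equals $\Fil_{D_x}W_m\Omega^q_{(U_x)}$ in the notation of Lemma~\ref{lem:Complete-6}, and similarly for $Z_1\Fil_D$ and $\Fil_{D/p}$. Applying Lemma~\ref{lem:Complete-6} stalkwise then yields the desired factorization of $C$.

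Second, the $W_{m+1}\sO_X$-linearity follows from the stalkwise $W_{m+1}(\sO_{X,x})$-linearity asserted in Lemma~\ref{lem:Complete-6}, and the vanishing $C(dV^{m-1}(\Fil_{D}\Omega^{q-1}_U)) = 0$ likewise follows stalkwise from the last statement of that lemma (using that $\Fil_{D}\Omega^{q-1}_U$ is also a Zariski sheaf whose stalks are the local groups $\Fil_{D_x}\Omega^{q-1}_{(U_x)}$).

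The only potential subtlety, which is not really an obstacle but worth flagging, is checking that the formation of $\Fil_D W_m\Omega^q_U$ and $Z_1\Fil_D W_m\Omega^q_U$ commutes with passage to local stalks in the way implicitly used above; this was already recorded in \S~\ref{sec:FDRWC} (see the paragraph following Definition~\ref{defn:Log-fil-3}). With that in hand, the corollary is an immediate consequence of Lemma~\ref{lem:Complete-6}.
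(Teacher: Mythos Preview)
Your proposal is correct and matches the paper's approach: the paper simply states that Lemma~\ref{lem:Complete-6} immediately implies the corollary, which is exactly the stalkwise reduction you have spelled out in detail.
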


\begin{defn}\label{defn:C-complex-global}
If $D + E \ge 0$, we let
$W_m\sF^{q,\bullet}_{X|D}$ denote the 2-term complex of Zariski sheaves
\begin{equation}\label{eqn:Fil-D-complex}
    \left(Z_1\Fil_{D}W_m\Omega^q_U \xrightarrow{1-C} \Fil_{D}W_m\Omega^q_U\right).
\end{equation}
\end{defn}

The local results of Sections~\ref{sec:com-to-noncom},
~\ref{sec:Cartier} and ~\ref{sec:More-prop} immediately imply the following
(the last item of the theorem follows from \cite[Prop.~2.10]{Shiho}).

\begin{thm}\label{thm:Global-version}
Given any $D \in \Div_E(X)$, we have the following.
  \begin{enumerate}
    \item
    There exists an  exact sequence of Zariski sheaves of $W_m\sO_X$-modules
    \[
    0 \to \Fil_DW_m\Omega^q_U \to j_* W_m\Omega^q_U \to \ 
    \stackrel{r}{\underset{i =1}\bigoplus} 
    \frac{(j_i)_*(W_m\Omega^q_{\wh{K}_i})}{(j_i)_*(\Fil_{n_i}W_m\Omega^q_{\wh{K}_i})}.
    \]
  \item
    There is a short exact sequence of Zariski sheaves of $W_m\sO_X$-modules
    \[
    0 \to V^{m-1}(\Fil_D\Omega^q_U) + dV^{m-1}(\Fil_D\Omega^{q-1}_U) \to
    \Fil_DW_m\Omega^q_U \xrightarrow{R}  \Fil_{D/p}W_{m-1}\Omega^q_U \to 0.
    \]
    \item
      $\Ker(F^{m-1}d \colon \Fil_{D}W_m\Omega^q_U \to \Fil_{D}\Omega^{q+1}_U)=
      Z_1\Fil_{D}W_m\Omega^q_U = F(\Fil_{D}W_{m+1}\Omega^q_U) $.
  \item
    There exist isomorphisms of Zariski sheaves of $W_m\sO_X$-modules
   \[
    \ov{F} \colon \Fil_{{D}/p}W_{m}\Omega^q_U \xrightarrow{\cong}
    \frac{Z_1\Fil_{D}W_m\Omega^q_U}{dV^{m-1}(\Fil_{D}\Omega^{q-1}_U)}; \ \
    \ov{C} \colon  \frac{Z_1\Fil_{D}W_m\Omega^q_U}{dV^{m-1}(\Fil_{D}\Omega^{q-1}_U)}
     \xrightarrow{\cong} \Fil_{{D}/p}W_{m}\Omega^q_U.
    \]
  \item
    $\Ker(F^{m-1}\colon \Fil_{D}W_m\Omega^q_U \to \Fil_{D}\Omega^q_U) =
    V(\Fil_{D}W_{m-1}\Omega^q_U)$.
  \item
    $\Ker(dV^{m-1}\colon \Fil_{D}\Omega^q_U \to \Fil_{D}W_m\Omega^{q+1}_U)
    = F^m(\Fil_{D}W_{m+1}\Omega^q_U)= Z_m\Fil_D\Omega^q_U$.
  \item
    $\Ker(V \colon \Fil_{D}W_{m}\Omega^q_U \to \Fil_{D}W_{m+1}\Omega^{q}_U)
    = dV^{m-1}(\Fil_{D}\Omega^{q-1}_U)$.
  \item
$\Ker(V^m \colon \Fil_{D}\Omega^q_U \to \Fil_{D}W_{m+1}\Omega^{q}_U)
    = F^mdV(\Fil_{D}W_m\Omega^{q-1}_U)= B_m\Fil_D\Omega^q_U$.
  \item
  If $D + E \ge 0$, there exists a distinguished triangle
    \[
     W_1\sF^{q,\bullet}_{X|D} \xrightarrow{V^{m-1}} W_m\sF^{q,\bullet}_{X|D}
     \xrightarrow{R} W_{m-1}\sF^{q,\bullet}_{X|{D/p}} \xrightarrow{+}
     W_1\sF^{q,\bullet}_{X|D} [1]
    \]
    in the derived category of sheaves of abelian groups on
    $X_\zar$.
  \item $\varinjlim\limits_{D\ge -E} W_m\sF^{q,\bullet}_{X|D}
    \xrightarrow{\simeq} \left(j_* Z_1W_m\Omega^q_U
    \xrightarrow{1-C} j_*W_m\Omega^q_U\right) = {\bf R}j_* (W_m\Omega^q_{U,\log}).$
     \item 
       $\Ker(dV^{m-1}\colon \Fil_{D}\Omega^q_U \to \Fil_{D}W_m\Omega^{q+1}_U/V^{m-1}\Fil_D
       \Omega^{q+1}_U) = F^{m-1}(\Fil_{D}W_{m}\Omega^q_U)$.
   \item
     $\Ker(V^{m-1} \colon \Fil_{D}\Omega^q_U \to
     \Fil_{D}W_{m}\Omega^{q}_U/dV^{m-1}\Fil_{D}\Omega^{q-1}_U )
     = F^{m}dV(\Fil_{D}W_{m}\Omega^{q-1}_U)$.
   \item
     $j^*(W_m\sF^{q,\bullet}_{X|D})$ is quasi-isomorphic to
     $W_m\Omega^q_{U, \log}$.
   \item
     The inclusion $\Fil_{-E}W_m\Omega^d_U \inj W_m\Omega^d_X$ is a bijection if
     $d$ is the rank of $\Omega^1_X$.
  \end{enumerate}
\end{thm}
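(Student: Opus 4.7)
The statement is an assemblage of global versions of the local results already established in Sections~\ref{sec:com-to-noncom}, \ref{sec:Cartier} and \ref{sec:More-prop}. My plan is to verify each item by sheafifying the corresponding local statement, using the fact that for every point $x \in X$ the stalk $\sO_{X,x}$ is a regular local $F$-finite $\F_p$-algebra by \propref{prop:F-fin}(1), so all of the lemmas of the previous three sections apply with $A = \sO_{X,x}$. Crucially, since the filtrations $\Fil_DW_m\Omega^q_U$ were defined on affine snc-pairs and glued as Zariski sheaves in Section~\ref{sec:FDRWC}, and since all of the operators $d, F, V, R, C$ commute with restriction to opens, the stalk of each sheaf appearing in items (1)--(14) coincides with the corresponding local group, and similarly for the arrows.

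With this setup, items (1), (2), (3), (5), (6), (7), (8), (11) and (12) follow, respectively, by taking stalks and applying \corref{cor:Complete-2}, \propref{prop:Complete-0}, \lemref{lem:Complete-10}(1) combined with \lemref{lem:Complete-4}(1), and \lemref{lem:Complete-10}(2)--(7); exactness/kernel identifications of sheaves can be checked stalkwise. Item (4) is the stalkwise form of \lemref{lem:Complete-4}(2). For the identification in (6) with $Z_m\Fil_D\Omega^q_U$ (and (8) with $B_m\Fil_D\Omega^q_U$), one uses \lemref{lem:Complete-9} at the stalks. Item (14) is immediate from \corref{cor:Ext-3} applied stalkwise, since the rank of $\Omega^1_{\sO_{X,x}}$ is constantly $d$ on the connected regular scheme $X$.

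For item (9), note that the condition $D + E \ge 0$ ensures that $D \ge -E \ge -\un{1}$ at every stalk, and hence also $D/p \ge -E$, so that the Cartier map factors through $\Fil_DW_m\Omega^q_U$ and the arrows in the 2-term complexes $W_m\sF^{q,\bullet}_{X|D}$ are well-defined (\corref{cor:Complete-6-global}). \lemref{lem:VR-2} then produces, at each stalk, a distinguished triangle in the derived category of abelian groups together with an explicit quasi-isomorphism from $C^{1,q}_{\un{n},\bullet}(A)$ to the cofiber of $R \colon C^{m,q}_{\un{n},\bullet}(A) \to C^{m-1,q}_{\un{n}/p,\bullet}(A)$. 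Since the construction of this quasi-isomorphism from diagram~\eqref{eqn:VR-1} is natural in $A$, it globalizes to a morphism of 2-term complexes of Zariski sheaves which is a quasi-isomorphism on stalks, yielding the desired global distinguished triangle.

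Item (13) follows by restricting to $U$: on $U$ one has $\Fil_D W_m\Omega^q_U = W_m\Omega^q_U$ and $Z_1\Fil_D W_m\Omega^q_U = Z_1 W_m\Omega^q_U$, and the resulting 2-term complex $[Z_1W_m\Omega^q_U \xrightarrow{1-C} W_m\Omega^q_U]$ is quasi-isomorphic to $W_m\Omega^q_{U,\log}$ on the \'etale site by the Artin-Schreier-Witt-type short exact sequence for the logarithmic Hodge-Witt sheaf. Item (10) is then obtained by taking the colimit over $D \in \Div_E(X)$: colimits are exact, commute with $Z_1$ and with the Cartier map, and \lemref{lem:Log-fil-4}(4) gives $\varinjlim_{D \ge -E} \Fil_D W_m\Omega^q_U \cong j_*W_m\Omega^q_U$ and similarly for $Z_1$. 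The identification of the colimit complex with ${\bf R}j_*(W_m\Omega^q_{U,\log})$ then uses \cite[Prop.~2.10]{Shiho}, which asserts that $j_*$ applied termwise to this 2-term complex already computes the derived pushforward. The principal technical point to be careful about is the verification in item (9) that the stalkwise quasi-isomorphisms constructed in the proof of \lemref{lem:VR-2} are sufficiently canonical to glue; but since they are produced by natural operations on the filtered de Rham-Witt complex, no further obstacle arises.
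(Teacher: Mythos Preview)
Your proposal is correct and takes essentially the same approach as the paper: the paper's own proof is a single sentence stating that the local results of Sections~\ref{sec:com-to-noncom}, \ref{sec:Cartier} and \ref{sec:More-prop} immediately imply the theorem, with the ${\bf R}j_*$ identification in item~(10) attributed to \cite[Prop.~2.10]{Shiho}. You have simply unpacked this by matching each item to its local source (\corref{cor:Complete-2}, \propref{prop:Complete-0}, \lemref{lem:Complete-4}, \lemref{lem:Complete-10}, \lemref{lem:Complete-9}, \lemref{lem:VR-2}, \corref{cor:Ext-3}), which is exactly what the paper intends.
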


\vskip .2cm

{\bf{Proof of \thmref{thm:Main-0}:}}
The first three items follow directly from \lemref{lem:Log-fil-Wcom}
and \thmref{thm:Global-version}(2). If $f \colon (X', E') \to (X,E)$ is a morphism
between two snc-pairs (cf. \S~\ref{sec:de-log-0}), then it is clear from the
construction of $\Fil_DW_m\Omega^q_U$ that the pull-back map
$f^* \colon W_m\Omega^q_{X \setminus E} \to f_*(W_m\Omega^q_{X'\setminus E'})$ restricts
to a map $f^* \colon \Fil_D W_m\Omega^q_{X \setminus E} \to f_*(\Fil_{D'}
W_m\Omega^q_{X'\setminus E'})$ whenever $f^*(D) \le D'$.
\qed

\section{Kato's filtration via filtered de Rham-Witt}\label{sec:Kato-comp}
The goal of this section is to present Kato's ramification filtration
\cite{Kato-89} of the $p$-adic {\'e}tale cohomologies of henselian discrete valuation
fields in terms of the hypercohomology of the complexes
$W_m\sF^{q,\bullet}_{X|D}$. This will extend \cite[Thm.~3.2]{Kato-89} to
$H^q(K)\{p\}$ for all $q \ge 1$.
We let $k=\F_p$ and let ${\Et}_k$ denote the big {\'e}tale
site of the category of all $k$-schemes. For a local $k$-algebra $R$, we let
$K^M_*(R)$ denote the Milnor $K$-theory of $R$. We let $\sK^M_{\star, (-)}$ denote the
Milnor $K$-theory sheaf on ${\Et}_k$. In this paper, all Milnor $K$-groups and
Milnor $K$-theory sheaves will be the one introduced in \cite{Kerz-JAG} (cf.
\S~\ref{sec:More-prop}).

\subsection{Cohomology of $W_m\sF^{q, \bullet}_{X|D}$ and Kato filtration}
\label{sec:katofil}
We begin by setting up our notations.
Given any positive integer $n$ prime to $p$,
and any integer $q$, we let ${\Z}/n(q)$ denote the classical {\'e}tale sheaf
of $n$-th roots of unity $\mu_n$ and its (positive and negative) powers on ${\Et}_k$
(cf. \cite[p.~163]{Milne-EC}).
For $X \in \Sch_k$ and $q, m \ge 1$, we let $W_m\Omega^q_{X, \log}$ be the image of the
map of {\'e}tale sheaves $\dlog \colon (\sO^{\times}_X)^q \to W_m\Omega^q_X$, given by
$\dlog(\{x_1, \ldots , x_q\}) = \dlog[x_1]_m \wedge \cdots
\wedge \dlog[x_q]_m$ (cf. \cite[Chap.~I, \S~5.7]{Illusie}).
This map uniquely factors through the composite quotient map
$(\sO^{\times}_X)^q \surj \sK^M_{q, X} \surj  {\sK^M_{q,X}}/{p^m}$
(cf. \cite[\S~1.2]{Morrow-ENS}, \cite[Lem.~3.2.8]{Zhao}).
We shall denote the resulting map ${\sK^M_{q,X}}/{p^m} \to W_m\Omega^q_X$ also by
$\dlog$. This map is bijective if $X$ is regular (cf. \cite[Thm.~5.1]{Morrow-ENS}).
We let $W_m\Omega^0_{\log} = {\Z}/{p^m}$ and $W_m\Omega^q_{\log} = 0$ if
$m \le 0$.
For $q \in \Z$ and $n = p^mr \in \N$ with $(p,r) =1$, we let
${\Z}/n(q) := {\Z}/r(q) \oplus W_m\Omega^q_{(-,\log)}[-q]$
and consider it as a complex of sheaves on ${\Et}_k$.
We let $H^i_\et(X, \Z/n(j)) := \H^i_\et(X, \Z/n(j))$.

If $r' \in \N$ such that $r \mid r'$ and $p \nmid r'$, we have a natural map
$\Z/r(q) \to \Z/r'(q)$, induced by the maps ${\Z}/r \cong \frac{1}{r}\Z/\Z \inj
\frac{1}{r'}\Z/\Z \cong \Z/r'$ and $\mu_r \inj \mu_{r'}$. On the other hand, the
map $\ov{p} \colon W_m\Omega^q_{(-,\log)} \to W_{m+1}\Omega^q_{(-,\log)}$
(cf. \lemref{lem:Complete-7}) induces a
canonical map $\Z/p^m(q) \to \Z/p^{m+1}(q)$. As a result, we have canonical maps
$\Z/n(q) \to \Z/n'(q)$, whenever $n \mid n'$. In particular,
$\{\Z/n(q)\}_{n \ge 1}$ form an ind-sheaf on ${\Et}_k$ for any given $q \ge 0$. 
Given $X \in \Sch_k$, we let $H^i_\et(X, {\Q}/{\Z}(j)) = \varinjlim_n
H^i_\et(X, \Z/n(j))$ and $H^q(X) =  H^q_\et(X, {\Q}/{\Z}(q-1))$ so that
$H^q(X)\{p\} = H^q_\et(X, {\Q_p}/{\Z_p}(q-1))$. We let 
$H^q_{n}(X) = H^q_\et(X, {\Z}/{n}(q-1))$ so that
$H^q_{p^m}(X) = H^1_\et(X, W_m\Omega^{q-1}_{X, \log})$.
We write $H^q_{n}(X)$ (resp. $H^q(X)$)
as $H^q_{n}(A)$ (resp. $H^q(A)$) if $X = \Spec(A)$.
We shall use the following result a few times in this text.

\begin{lem}\label{lem:Inj-GL}
  Assume that $X = \Spec(R)$ with $R$ regular local.
  Let $\fm = (x_1, \ldots , x_d)$ be the maximal ideal of $R$ and let $\pi =
  x_1 \cdots x_r$ for some $1 \le r \le d$. We let $X_\pi = \Spec(A[1/{\pi}])$.
  Then the canonical map
$H^q_{p^m}(Z) \to H^q(Z)$ is injective and
  identifies $H^q_{p^m}(Z)$ as $H^q(Z)[p^m]$ for $Z \in \{X, X_\pi\}$.
  If $X$ is any regular $k$-scheme, then $H^1_{p^m}(X) = H^1(X)[p^m]$.
  If $X$ is any regular $k$-scheme with
  $\Pic(X)[p] = 0$, then $H^2_{p^m}(X) = H^2(X)[p^m]$.
\end{lem}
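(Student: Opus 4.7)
The plan is to deduce all three parts from a single Bockstein-type long exact sequence. Writing $\Q_p/\Z_p(q-1) = \varinjlim_n \Z/p^n(q-1)$ with transitions induced by $\ov p \colon W_n\Omega^{q-1}_{X,\log} \to W_{n+1}\Omega^{q-1}_{X,\log}$, one obtains a short exact sequence of complexes of \'etale sheaves
\[
0 \to \Z/p^m(q-1) \to \Q_p/\Z_p(q-1) \xrightarrow{p^m} \Q_p/\Z_p(q-1) \to 0.
\]
Taking \'etale hypercohomology yields
\[
H^{q-1}_\et(X,\Q_p/\Z_p(q-1)) \xrightarrow{p^m} H^{q-1}_\et(X,\Q_p/\Z_p(q-1)) \to H^q_{p^m}(X) \to H^q(X)\{p\} \xrightarrow{p^m} H^q(X)\{p\},
\]
so $H^q_{p^m}(X) \to H^q(X)$ is injective with image $H^q(X)\{p\}[p^m] = H^q(X)[p^m]$ if and only if $p^m$ acts surjectively on $H^{q-1}_\et(X,\Q_p/\Z_p(q-1))$. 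Thus I would reduce the whole lemma to verifying $p$-divisibility of this last group in each of the listed situations.

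For Part~1 with $Z = X = \Spec(R)$ regular local, I would use the Bloch-Kato-Gabber-Kato isomorphism $W_n\Omega^{q-1}_{R,\log} \cong K^M_{q-1}(R)/p^n$ (cf.\ \cite[Thm.~5.1]{Morrow-ENS}) to identify $H^{q-1}_\et(X,\Z/p^n(q-1)) = H^0_\et(X, W_n\Omega^{q-1}_{X,\log}) \cong K^M_{q-1}(R)/p^n$, whose colimit $K^M_{q-1}(R) \otimes_{\Z} \Q_p/\Z_p$ is manifestly $p$-divisible. For $Z = X_\pi$, the $\dlog$ map identifies the improved Milnor $K$-theory sheaf $\sK^M_{q-1}/p^n$ on the regular scheme $X_\pi$ with $W_n\Omega^{q-1}_{\log}$, yielding
\[
H^0_\et(X_\pi, W_n\Omega^{q-1}_{\log}) \cong H^0_\zar(X_\pi, \sK^M_{q-1}/p^n) \cong H^0_\zar(X_\pi, \sK^M_{q-1,X_\pi})/p^n,
\]
where the second isomorphism comes from \lemref{lem:Gersten-0}. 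Passing to the colimit again produces a $p$-divisible group.

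For Part~2 ($q=1$), $H^0_\et(X,\Z/p^n)$ is a direct sum of copies of $\Z/p^n$ indexed by $\pi_0(X)$, so the colimit $(\Q_p/\Z_p)^{\pi_0(X)}$ is $p$-divisible. For Part~3 ($q=2$ with $\Pic(X)[p]=0$), I would exploit that in characteristic $p$ the $p^n$-th power map $\sO_X^\times \to \sO_X^\times$ is injective on \'etale stalks (since $\mu_{p^n}$ is trivial \'etale-locally in characteristic $p$) and has cokernel identified with $W_n\Omega^1_{X,\log}$ via Bloch-Kato. The resulting long exact \'etale cohomology sequence gives
\[
0 \to \Gamma(X,\sO_X^\times)/p^n \to H^0_\et(X, W_n\Omega^1_{X,\log}) \to \Pic(X)[p^n] \to 0.
\]
The assumption $\Pic(X)[p] = 0$ forces $\Pic(X)[p^n] = 0$ for every $n$ by a trivial induction, so the middle group collapses to $\Gamma(X,\sO_X^\times)/p^n$, whose colimit $\Gamma(X,\sO_X^\times) \otimes_{\Z} \Q_p/\Z_p$ is $p$-divisible.

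The principal obstacle is the comparison step in Part~1 for the non-local scheme $X_\pi$: one must relate \'etale global sections of $W_n\Omega^{q-1}_{\log}$ to Zariski global sections of $\sK^M_{q-1}/p^n$. This hinges on the Bloch-Kato-Gabber identification and Kerz's Gersten resolution for the improved Milnor $K$-theory sheaf, together with the vanishing of higher Zariski cohomology furnished by \lemref{lem:Gersten-0}. A secondary delicacy occurs in Part~3, where the classical Kummer sequence fails to be exact on the \'etale site in characteristic $p$; this is bypassed by working directly with the sequence $0 \to \sO_X^\times \xrightarrow{p^n} \sO_X^\times \to W_n\Omega^1_{X,\log} \to 0$, whose exactness is a consequence of Bloch-Kato together with the vanishing of $\mu_{p^n}$ on reduced characteristic-$p$ schemes.
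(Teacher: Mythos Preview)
Your proof is correct and follows essentially the same strategy as the paper: both reduce to showing that the transition maps $H^0_\et(Z, W_{n+1}\Omega^{q-1}_{\log}) \to H^0_\et(Z, W_n\Omega^{q-1}_{\log})$ are surjective, and both invoke the Bloch--Kato--Gabber/Morrow identification with $\sK^M_{q-1}/p^n$ together with \lemref{lem:Gersten-0} for Part~1. The only packaging difference is that the paper works directly with the finite-level sequence $0 \to \sK^M_{q}/p^m \to \sK^M_{q}/p^{m+n} \to \sK^M_q/p^n \to 0$ from Geisser--Levine, whereas you pass to the colimit and phrase the same surjectivity as $p$-divisibility of $H^{q-1}_\et(Z,\Q_p/\Z_p(q-1))$; and for Part~3 the paper uses the sequence $0 \to \Pic(X)/p^m \to H^2_{p^m}(X) \to \Br(X)[p^m] \to 0$ rather than your $H^0$ computation, but these are equivalent under the hypothesis $\Pic(X)[p]=0$.
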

\begin{proof}
Recall that we have an exact sequence
of {\'e}tale sheaves (cf. \cite[Thm.~8.1]{Geisser-Levine})
\begin{equation}\label{eqn:GL}
  0 \to {\sK^M_{q,X}}/{p^m} \to {\sK^M_{q,X}}/{p^{m+n}} \xrightarrow{\alpha_n}
  {\sK^M_{q,X}}/{p^n} \to 0
\end{equation}
for every $n \ge 1$ and $q \ge 0$, where $\alpha_n$ is such that the composition
${\sK^M_{q,X}}/{p^{m+n}} \xrightarrow{\alpha_n} {\sK^M_{q,X}}/{p^n}
\cong \frac{{p^m}{\sK^M_{q,X}}}{p^{m+n}\sK^M_{q,X}} \inj
      {\sK^M_{q,X}}/{p^{m+n}}$ is multiplication by $p^m$.
      It suffices therefore to show that the canonical map
      $H^0_\et(Z,{\sK^M_{q,Z}}/{p^{m+n}}) \to
      H^0_\et(Z,{\sK^M_{q,Z}}/{p^n})$ is surjective for $q \ge 0$ if $X$ is local and
      $Z \in \{X, X_\pi\}$. But this follows from \lemref{lem:Gersten-0}.

Since the map $H^0_\et(X, {\sK^M_{0,X}}/{p^{m+n}}) \to
H^0_\et(X,{\sK^M_{0,X}}/{p^n})$ is always surjective, we get 
$H^1_{p^m}(X) = H^1_{m+n}(X)[p^m]$ for any regular $k$-scheme $X$.
For $q =2$, the claim follows from our assumption using the exact sequence
\[
0 \to {\Pic(X)}/{p^m} \to H^2_{p^m}(X) \to \Br(X)[p^m] \to 0.
\]
\end{proof}

If $R$ is a regular local ring, the exact sequence
\begin{equation}\label{eqn:Milnor-0}
  0 \to W_m\Omega^q_{{(-)}, \log} \to Z_1W_m\Omega^q_{(-)} \xrightarrow{1-C}
    W_m\Omega^q_{(-)} \to 0
\end{equation}
of sheaves on $\Spec(R)_\et$ yields an exact sequence
\begin{equation}\label{eqn:Milnor-0.1}
  0 \to W_m\Omega^q_{R, \log} \to Z_1W_m\Omega^q_R \xrightarrow{1-C} W_m\Omega^q_R
  \xrightarrow{\delta^q_m} H^{q+1}_{p^m}(R) \to 0.
  \end{equation}
We also have a commutative diagram (cf. \cite[\S~1.3]{Kato-89})
\begin{equation}\label{eqn:Milnor-1}
  \xymatrix@C1pc{
    {K^M_q(R)}/{p^m} \otimes W_m\Omega^{q'}_{R} \ar[r]^-{\cup}
    \ar[d]_-{\id \otimes \delta^{q'}_m} \ar[dr]^-{\lambda^{q+q'}_{p^m}} &
    W_m\Omega^{q+q'}_{R} \ar[d]^-{\delta^{q+q'}_{m}} \\
     {K^M_q(R)}/{p^m} \otimes H^{q'+1}_{p^m}(R) \ar[r]^-{\cup} &
     H^{q+q'+1}_{p^m}(R),}
\end{equation}
where the bottom row is induced by the Bloch-Kato map
$\beta^q_{R} \colon K^M_q(R) \to H^q_\et(R, {\Z}/{p^m}(q))$,
followed by the cup product on the {\'e}tale cohomology of
$W_m\Omega^\bullet_{(-), \log}$ 
on $\Spec(R)$. The top row is induced by the dlog map, followed by the wedge
product. We shall denote the composite map
$W_m\Omega^q_R \xrightarrow{\delta^q_m} H^{q+1}_{p^m}(R) \inj H^{q+1}(R)$
also by $\delta^q_m$.

\vskip .3cm

{\bf{The set-up of \S~\ref{sec:Kato-comp}:}}
From now on until the end of \S~\ref{sec:Kato-comp}, we fix an $F$-finite henselian
discrete valuation ring $A$ containing $k$ and 
let $\fm = (\pi)$ denote the maximal ideal of $A$. Let $\ff = {A}/{\fm}$ and
$K = Q(A)$.  We let $X = \Spec(A), \ D_n = V((\pi^n))$ and
$U = \Spec(K)$.
We fix integers $n, q \ge 0$ and $m \ge 1$, and let $T^{m,q}_n(K)$ denote the
cokernel of the map $(1-C) \colon Z_1\Fil_nW_m\Omega^q_K \to \Fil_nW_m\Omega^q_K$.

\begin{lem}\label{lem:hyp}
 We have 
  \begin{enumerate}
      \item $H^i_{\et}(X, Z_1 \Fil_{D_{n}} W_m \Omega^q_U) =
  H^i_\et(X, \Fil_{D_{n}} W_m \Omega^q_U) = 0$  for $i \ge 1$. 
  \item \[
\H^i_{\et}(X, W_m\sF^{q, \bullet}_{X|{D_n}}) = \left\{\begin{array}{ll}
K^M_q(K)/p^m & \mbox{if $i = 0$} \\
T^{m,q}_n(K) & \mbox{if $i = 1$} \\
 0 & \mbox{if $i > 1$.}
\end{array}\right.
\]
  \end{enumerate}
   \end{lem}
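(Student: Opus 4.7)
The plan is to deduce (2) from (1) via the hypercohomology spectral sequence for the two-term complex defining $W_m\sF^{q,\bullet}_{X|D_n}$, and to establish (1) by reducing the sheaves in question to quasi-coherent $W_m\sO_X$-modules on the affine scheme $X$.

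For (1), the first sheaf $\Fil_{D_n}W_m\Omega^q_U$ is already a sheaf of finitely generated $W_m\sO_X$-modules by \lemref{lem:Log-fil-4}(2); since $W_m(A)$ is Noetherian by \propref{prop:F-fin}(5), this makes it quasi-coherent, so the vanishing of $H^i_\et(X,\cdot)$ for $i\ge 1$ follows from the standard vanishing for quasi-coherent $W_m\sO_X$-modules on an affine scheme (using étale descent of Witt modules to reduce étale cohomology to Zariski cohomology). For $Z_1\Fil_{D_n}W_m\Omega^q_U$, which is not obviously a $W_m\sO_X$-module, I would use the short exact sequence
\[
0 \to V^m(\Fil_{D_n}\Omega^q_U) \to \Fil_{D_n}W_{m+1}\Omega^q_U \xrightarrow{F} Z_1\Fil_{D_n}W_m\Omega^q_U \to 0
\]
obtained by combining \thmref{thm:Global-version}(3) with the kernel computation $\Ker(F) = V^m(\Fil_{D_n}\Omega^q_U)$ from \eqref{eqn:ker-F-l}. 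Both of the first two terms are quasi-coherent $W_{m+1}\sO_X$-modules (the first because $V^m$ respects the $W_{m+1}\sO_X$-action via Frobenius, as $a\cdot V^m(x) = V^m(F^m(a)x)$), hence have vanishing higher cohomology on $X$ affine, and the long exact sequence then gives the same for $Z_1\Fil_{D_n}W_m\Omega^q_U$.

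For (2), armed with (1), the hypercohomology spectral sequence $E_1^{s,t} = H^t_\et(X, (W_m\sF^{q,\bullet}_{X|D_n})^s) \Rightarrow \H^{s+t}_\et(X, W_m\sF^{q,\bullet}_{X|D_n})$ collapses with only $E_1^{\bullet, 0}$ contributing. Thus $\H^i_\et = 0$ for $i \ge 2$, while $\H^0$ and $\H^1$ are the kernel and cokernel of $1-C \colon Z_1\Fil_n W_m\Omega^q_K \to \Fil_n W_m\Omega^q_K$ on global sections. The cokernel is $T^{m,q}_n(K)$ by definition. For the kernel, I will invoke \propref{prop:Cartier-fil-1}: any $\omega \in Z_1 W_m\Omega^q_K$ with $(1-C)\omega = 0$ automatically has $(1-C)\omega \in \Fil_n W_m\Omega^q_K$, hence lies in $Z_1\Fil_n W_m\Omega^q_K$. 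This shows $W_m\Omega^q_{K,\log} \subseteq Z_1\Fil_n W_m\Omega^q_K$, and the kernel of $1-C$ on the latter therefore coincides with $\Ker(1-C \colon Z_1 W_m\Omega^q_K \to W_m\Omega^q_K) = W_m\Omega^q_{K,\log}$. Finally, Bloch--Gabber--Kato for the field $K$ identifies this with ${K^M_q(K)}/{p^m}$ via the $\dlog$ map.

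The main obstacle is part (1) for $Z_1\Fil_{D_n}W_m\Omega^q_U$: unlike $\Fil_{D_n}W_m\Omega^q_U$, it is not manifestly a $W_m\sO_X$-module, so one must detour through the auxiliary short exact sequence above. Once the cohomological vanishing is secured, (2) falls out of a mechanical application of the spectral sequence together with the field-case identification of the $\log$ sheaf as Milnor $K$-theory mod $p^m$.
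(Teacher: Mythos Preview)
Your proof is correct and follows essentially the same architecture as the paper's: vanishing of higher cohomology of the two terms, then the collapsed hypercohomology sequence. Two differences are worth noting.

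For (1), the paper simply asserts that both sheaves are quasi-coherent $W_m\sO_X$-modules. Your concern about $Z_1\Fil_{D_n}W_m\Omega^q_U$ is well-placed: it is not a $W_m\sO_X$-submodule for the standard action. The cleanest fix is to observe (via \thmref{thm:Global-version}(3)) that $Z_1\Fil_{D_n}W_m\Omega^q_U = F(\Fil_{D_n}W_{m+1}\Omega^q_U)$ is a $W_{m+1}\sO_X$-submodule of $\Fil_{D_n}W_m\Omega^q_U$ (the latter with $W_{m+1}\sO_X$-action via $F$), hence coherent on the affine scheme $W_{m+1}X$. Your short exact sequence argument accomplishes the same thing, just one step removed.

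For the $\H^0$ computation in (2), the paper takes a more elementary route than you: it writes down explicit generators of $\dlog(K^M_q(K)/p^m)$ --- products of $\dlog[a_i]_m$ with $a_i \in A^\times$ and at most one $\dlog[\pi]_m$ --- and checks directly that they lie in $Z_1\Fil_0 W_m\Omega^q_K \subseteq Z_1\Fil_n W_m\Omega^q_K$ and are fixed by $C$. Your route via \propref{prop:Cartier-fil-1} is cleaner but invokes a substantially deeper input; the paper saves that proposition for the injectivity statement in \corref{cor:VR-5}, where it is genuinely needed.
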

\begin{proof}
  The item (1) is clear since the underlying sheaves are quasi-coherent
  sheaves of $W_m\sO_X$-modules.
   To prove (2), we  look at the hypercohomology exact sequence
    \begin{equation}\label{eqn:hyp-0}
      0 \to \H^0_\et(W_m\sF^{q, \bullet}_{X|{D_n}}) \to Z_1 \Fil_{{n}} W_m \Omega^q_K
      \xrightarrow{1-C} \Fil_{{n}} W_m \Omega^q_K \to \H^1_\et(W_m\sF^{q, \bullet}_{X|{D_n}})
      \xrightarrow{} 0,
    \end{equation}
    where $\H^i_\et(\sE)$ (resp. $H^i_\et(\sE)$) is the shorthand notation for
    $\H^i_\et(X,\sE)$ (resp. $H^i_\et(X,\sE)$) and the last term of
    \eqref{eqn:hyp-0} is
    $0$ by (1). Since $W_m\Omega^q_{A} \subset \Fil_{n}W_m\Omega^q_{K}
    \subset W_m\Omega^q_{K}$, we have 
    \begin{equation*}
        \begin{array}{cl}
          \dlog \colon K^M_q(A)/p^m   &\xrightarrow{\cong}
          \Ker(1-C: Z_1W_m\Omega^q_{A} \to
          W_m\Omega^q_{A} ) \\
          &\subset \Ker(1-C: Z_1\Fil_{n}W_m\Omega^q_{K} \to \Fil_{n}W_m\Omega^q_{K} )=
          \H^0_\et(X,W_m\sF^q_{X|{D_n}})\\
             &\subset \Ker(1-C: Z_1W_m\Omega^q_{K} \to W_m\Omega^q_{K} )= {K^M_q(K)}/p^m.
        \end{array}
    \end{equation*}
     
    On the other hand, as $A$ is a DVR, $\dlog(K^M_q(K)/p^m) \subset
    Z_1W_m\Omega^q_{K}$ is generated as a group by elements of the form 
    \begin{enumerate}
        \item $\dlog([a_1]_m)\wedge \cdots \wedge \dlog([a_q]_m)$ and
        \item $\dlog([a_1]_m)\wedge \cdots \wedge \dlog([a_{q-1}]_m)\wedge
          \dlog([\pi]_m)$,
    \end{enumerate}
    where $a_i \in A^\star$. However, the elements of the form (1) and (2) also lie
    within $Z_1\Fil_{0}W_m\Omega^q_{K} \subset Z_1\Fil_{n}W_m\Omega^q_{K}$ for $n \ge 0$.
    Since such elements clearly lie in $\Ker(1-C \colon Z_1W_m\Omega^q_{K} \to
    W_m\Omega^q_{K})$, we conclude  that $K^M_q(K)/p^m \cong
    \H^0_\et(X,W_m\sF^q_{X|{D_n}})$ and
    $\H^1_\et(X,W_m\sF^q_{X|{D_n}}) =\coker(1-C: Z_1\Fil_{n}W_m\Omega^q_{K} \to
    \Fil_{n}W_m\Omega^q_{K} )=T^{m,q}_n(K)$.
    The remaining claim follows from item (1).
    
    \end{proof}

By \lemref{lem:hyp}, we get an exact sequence
\begin{equation}\label{eqn:hyp-1}
  0 \to {K^M_q(K)}/p^m \to Z_1\Fil_{n}W_m\Omega^q_{K} \xrightarrow{1-C}
  \Fil_{n}W_m\Omega^q_{K} \xrightarrow{\delta^{m,q}_n} T^{m,q}_n(K) \to 0.
  \end{equation}
This sequence maps canonically to the exact sequence ~\eqref{eqn:Milnor-0.1} with
$R = K$ and hence induces a map $\ov{\delta}^{m,q}_n \colon T^{m,q}_n(K) \to H^{q+1}_{p^m}(K)$.
We shall denote the composite map $T^{m,q}_n(K) \to H^{q+1}_{p^m}(K) \inj H^{q+1}(K)$ also
by $\ov{\delta}^{m,q}_n$.

By \lemref{lem:Log-fil-Wcom} (see Step 5 for $D=0$, $D'= D_n$), we see that the product map
on the top row of ~\eqref{eqn:Milnor-1} (with $R =K$) restricts to
a map
\begin{equation}\label{eqn:Milnor-3}
K^M_q(K)/p^m \otimes \Fil_nW_m\Omega^{q'}_K \xrightarrow{\cup} \Fil_nW_m\Omega^{q+q'}_K.
\end{equation}
It follows from Lemmas~\ref{lem:VR-3} and ~\ref{lem:VR-4}
that the composite map 
$$\partial_n^{m,q}: K^M_q(K) \otimes \Fil_nW_m(K) \xrightarrow{\cup}
\Fil_nW_m\Omega^q_K \xrightarrow{\delta^{m,q}_n} T^{m,q}_n(K)$$
is surjective. We therefore get the following.

\begin{cor}\label{cor:VR-5}
  The map
  \[
 \ov{\delta}^{m,q}_n \colon T^{m,q}_n(K) \to
    {\rm Image} (\Fil_nW_m(K) \otimes K^M_q(K)  \xrightarrow{\lambda^q_m} H^{q+1}(K))
    \]
    is a bijection.
\end{cor}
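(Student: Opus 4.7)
The plan is to reduce the corollary to an injectivity statement and then invoke \propref{prop:Cartier-fil-1} as the key input. First observe that the image of $\ov{\delta}^{m,q}_n$ in $H^{q+1}(K)$ already coincides with the image of $\lambda^{q+1}_{p^m}$ restricted to $K^M_q(K)\otimes \Fil_nW_m(K)$: by the commutative square \eqref{eqn:Milnor-1} (taking $R=K$, $q'=0$), the composition $\ov{\delta}^{m,q}_n\circ \partial^{m,q}_n$ agrees with $\lambda^{q+1}_{p^m}$ on $K^M_q(K)\otimes \Fil_nW_m(K)$, and since $\partial^{m,q}_n$ is surjective (as recalled just before the corollary), the image of $\ov{\delta}^{m,q}_n$ equals the image of this restricted $\lambda$. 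Thus it suffices to show that $\ov{\delta}^{m,q}_n$ is injective as a map from $T^{m,q}_n(K)$ into $H^{q+1}(K)$.

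For injectivity, I would proceed as follows. Let $w\in \Fil_nW_m\Omega^q_K$ represent a class $\alpha=\delta^{m,q}_n(w)\in T^{m,q}_n(K)$ with $\ov{\delta}^{m,q}_n(\alpha)=0$ in $H^{q+1}(K)$. By \lemref{lem:Inj-GL} applied to $K$, the canonical map $H^{q+1}_{p^m}(K)\hookrightarrow H^{q+1}(K)$ is injective, hence $\delta^q_m(w)=0$ already in $H^{q+1}_{p^m}(K)$. Exactness of \eqref{eqn:Milnor-0.1} at $W_m\Omega^q_K$ then produces $\omega\in Z_1W_m\Omega^q_K$ with $(1-C)(\omega)=w$. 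Here is where \propref{prop:Cartier-fil-1} enters: since $(1-C)(\omega)=w\in \Fil_nW_m\Omega^q_K$ and $n\ge 0$, the proposition guarantees that $\omega$ actually lies in $Z_1\Fil_nW_m\Omega^q_K$. Therefore $w\in (1-C)(Z_1\Fil_nW_m\Omega^q_K)$, so $\alpha=0$ in $T^{m,q}_n(K)$, as required.

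Putting these two steps together yields the bijection claimed in the corollary. The main conceptual point is that everything rests on \propref{prop:Cartier-fil-1}, which is what enables us to lift the equality $w=(1-C)(\omega)$ \emph{inside} the filtered piece; the other ingredients (the exact sequence \eqref{eqn:Milnor-0.1}, injectivity of $H^{q+1}_{p^m}(K)\to H^{q+1}(K)$ from \lemref{lem:Inj-GL}, and surjectivity of $\partial^{m,q}_n$) are by then already in place. I do not anticipate a genuine obstacle: the argument is essentially a two-line diagram chase once \propref{prop:Cartier-fil-1} is available.
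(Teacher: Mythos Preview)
Your proposal is correct and follows essentially the same route as the paper: reduce to injectivity of $T^{m,q}_n(K)\to H^{q+1}_{p^m}(K)$ and then invoke \propref{prop:Cartier-fil-1}. The paper's proof is just the one-line version of your argument (``follows from \propref{prop:Cartier-fil-1}''), and the diagram chase you spell out with \eqref{eqn:Milnor-0.1} and \lemref{lem:Inj-GL} is exactly the intended unpacking.
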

\begin{proof}
 We only need to show that the map
 $T^{m,q}_n(K)\to H^{q+1}_{p^m}(K)$ is injective. But this follows from
 \propref{prop:Cartier-fil-1}. 
\end{proof}

We now recall Kato's filtration on $H^{q}(K)$ from \cite[\S~2]{Kato-89}. For any
$A$-algebra $R$, let $R^h$ denote the henselization of $R$ with respect to the
ideal $\fm R$. We let
$\Spec({R}/{\fm R}) \xrightarrow{\iota} \Spec(R) \xleftarrow{j} \Spec(R[\pi^{-1}])$
be the inclusions and let
\begin{equation}\label{eqn:Kato-0}
V^q_n(R) = \H^q_\et({R}/{\fm R}, \iota^*{\bf R}j_*({\Z}/{n}(q-1)))
\xrightarrow{\cong} V^q_n(R^h) \xrightarrow{\cong} H^q_\et(R^h[\pi^{-1}], {\Z}/{n}(q-1))
\end{equation}
\[
\mbox{and} \ V^q(R) = {\varinjlim}_{n} V^q_n(R) \cong V^q(R^h) \cong H^q(R^h[\pi^{-1}]).
\]
The following definition is due to Kato \cite[Defn.~2.1]{Kato-89}.

\begin{defn}\label{defn:Kato-1}
For $n,q \ge 0$, we let $\Fil^{\bk}_nH^{q+1}(K)$ be the set of all elements
  $\chi \in H^{q+1}(K)$ such that $\{\chi, 1 + \pi^{n+1}T\} :=
\sigma_{n,q}(\chi \otimes (1 + \pi^{n+1}T)) = 0$ in $V^{q+2}(A[T])$
under the composite map $\sigma_{n,q}$:
\[
H^{q+1}(K)\otimes ((A[T])^h[\pi^{-1}])^{\times}  \to
H^{q+1}(K)\otimes H^1_\et((A[T])^h[\pi^{-1}], {\Q}/{\Z}(1)) \xrightarrow{\cup}
V^{q+2}(A[T]).
\]

We let $\gr^{\bk}_0H^{q+1}(K) = \Fil^{\bk}_0H^{q+1}(K)$ and
$\gr^{\bk}_nH^{q+1}(K) = {\Fil^{\bk}_nH^{q+1}(K)}/{\Fil^{\bk}_{n-1}H^{q+1}(K)}$ if $n \ge 1$.
  \end{defn}

The following result of Kato (cf. \cite[Thm.~3.2]{Kato-89})
provides a description of $\Fil^{\bk}_nH^{q+1}(K)$ in terms of Milnor $K$-theory and
Witt-vectors.

\begin{thm}\label{thm:Kato-2}
  One has the following.
  \begin{enumerate}
  \item
    $\Fil^{\bk}_nH^1(K)\{p\} = {\underset{m \ge 1}\bigcup} \delta^0_m(\Fil_nW_m(K))$.
  \item
 $\Fil^{\bk}_nH^{q+1}(K)\{p\}$ coincides with the image of
    \[
    K^M_{q}(K) \otimes \Fil^{\bk}_nH^1(K)\{p\} \xrightarrow{\cup} H^{q+1}(K).
    \]
    \end{enumerate}
\end{thm}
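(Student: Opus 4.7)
I would establish (1) first---this is the essential content---and then derive (2) from (1) together with the symbolic generation of higher cohomology by Milnor $K$-theory.

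For the forward inclusion in (1), I would compute the symbol $\{\delta^0_m(w), 1+\pi^{n+1}T\}$ in $V^2(A[T])$ directly on the de Rham-Witt side. The compatibility square \eqref{eqn:Milnor-1}, applied to the two-dimensional henselian local ring $R = (A[T])^h$, identifies this symbol with the class of $w \cdot \dlog([1+\pi^{n+1}T]_m)$ in $H^2_{p^m}(R[\pi^{-1}])$. Since $1+\pi^{n+1}T$ is a unit of $R$ with $[1+\pi^{n+1}T]_m$ congruent to $1$ modulo a sufficiently high power of $[\pi]_m$ (compare \lemref{lem:Log-fil-2-1}), an explicit manipulation using \lemref{lem:VR-3} and the multiplicativity of the filtration (Step~5 in the proof of \lemref{lem:Log-fil-Wcom}) places the product in the image of $1-C$ on $Z_1W_m\Omega^1_R$, which kills it in $V^2(A[T])$.

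For the reverse inclusion in (1), I would argue by induction on $n$. Assuming the statement for $n-1$, it suffices to show that if $\chi = \delta^0_m(w) \in \Fil^{\bk}_n H^1(K)\{p\}$, then $w$ lies in $\Fil_n W_m(K)$ modulo $\ker(\delta^0_m) + \Fil_{n-1} W_m(K)$. Suppose not; then its image in $\gr^{\Fil}_N W_m(K)$ is nonzero for some $N > n$. The Brylinski graded pieces admit an explicit description via the $q = 0$ case of the refined Swan conductor \eqref{eqn:Kato-SC-0} as subquotients of $\pi^{-N}\Omega^1_A(\log\pi) \otimes_A \ff$. A bilinearity computation of the pairing $\chi \cup \dlog(1+\pi^{n+1}T)$ in $V^2(A[T])$, followed by the residue map on the two-dimensional local ring $(A[T])^h[\pi^{-1}]$, would detect this obstruction nontrivially, contradicting $\chi \in \Fil^{\bk}_n H^1(K)\{p\}$ and forcing $N \le n$.

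Finally, part (2) is deduced by a symbolic argument. The forward direction is immediate: if $\chi_0 \in \Fil^{\bk}_n H^1(K)\{p\}$ and $\alpha \in K^M_q(K)/p^m$, then $\{\alpha \cup \chi_0,\,1+\pi^{n+1}T\} = \alpha \cup \{\chi_0,\,1+\pi^{n+1}T\}$ vanishes in $V^{q+2}(A[T])$ by the bilinearity and associativity of the cup product together with the defining property of $\Fil^{\bk}_n H^1$. For the reverse direction, I would decompose an arbitrary $\chi \in \Fil^{\bk}_n H^{q+1}(K)\{p\}$ as a sum of symbols $\sum_i \alpha_i \cup \chi_i$ via Bloch-Kato/Geisser-Levine and, by induction on $n$, use the constraint $\chi \cup (1+\pi^{n+1}T) = 0$ in $V^{q+2}(A[T])$ to force the $\chi_i$ to lie in $\Fil^{\bk}_n H^1(K)\{p\}$ modulo symbols already representable at level $n-1$, invoking the non-degeneracy of the residue pairing on the higher-dimensional local ring. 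The main obstacle throughout this program is the reverse direction of (1), which genuinely requires the refined Swan conductor machinery on the two-dimensional henselian local ring $(A[T])^h$.
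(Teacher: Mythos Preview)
This theorem is not proved in the paper; it is quoted as a known result of Kato with the citation \cite[Thm.~3.2]{Kato-89}. The paper uses \thmref{thm:Kato-2} as a black box input in the proof of its own main result \thmref{thm:Kato-fil-10}, which is a refinement identifying $\Fil^{\bk}_n H^{q+1}(K)[p^m]$ with $T^{m,q}_n(K) = \H^1_\et(X, W_m\sF^{q,\bullet}_{X|D_n})$ for each fixed $m$. So there is no proof in the paper for you to be compared against.

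That said, your outline for (1) is roughly in the spirit of Kato's original argument, though the reverse inclusion is considerably more delicate than your sketch suggests: Kato's proof in \cite{Kato-89} proceeds by an explicit computation of the graded pieces $\gr^{\bk}_n H^{q+1}(K)$ in terms of differential forms on the residue field (this is exactly what the paper invokes at \cite[Lem.~3.6.1]{Kato-89} and in the proof of \lemref{lem:Kato-fil-7}). Your proposed approach to the reverse direction of (2)---decomposing $\chi$ into symbols and forcing each $\chi_i$ into the filtration---does not work as stated, since the symbol decomposition is not canonical and the vanishing of the cup product with $1+\pi^{n+1}T$ constrains only the total sum, not the individual summands. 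Kato instead proves both directions of (2) simultaneously with (1) via the graded computation.
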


In \cite{Kerz-Saito-Eratum}, Kerz-Saito proved the following refined version
of \thmref{thm:Kato-2}(1).

\begin{thm}\label{thm:Kato-3}
  One has $\Fil^{\bk}_nH^1(K)[p^m] = \delta^0_m(\Fil_nW_m(K))$.
\end{thm}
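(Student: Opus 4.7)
I will prove both inclusions separately. The easy inclusion $\delta^0_m(\Fil_n W_m(K)) \subseteq \Fil^{\bk}_n H^1(K)[p^m]$ is immediate: \thmref{thm:Kato-2}(1) gives the containment in $\Fil^{\bk}_n H^1(K)$, while \lemref{lem:Inj-GL} combined with the Artin--Schreier--Witt sequence \eqref{eqn:AS-0} shows that the image of $\delta^0_m$ lies in $H^1_{p^m}(K) \cong H^1(K)[p^m]$.

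For the reverse inclusion I will first verify that $\{\delta^0_m(\Fil_n W_m(K))\}_{m \ge 1}$ forms an increasing chain of subgroups of $H^1(K)$; that is, $\delta^0_m(\Fil_n W_m(K)) \subseteq \delta^0_{m+1}(\Fil_n W_{m+1}(K))$ for every $m$. The key observation is that the Verschiebung $V \colon W_m(K) \to W_{m+1}(K)$ preserves the Brylinski filtration (this is condition~(3) of \defref{defn:Filtered-Witt-complex}, verified for our filtration in \lemref{lem:Log-fil-Wcom}), commutes with $\ov F$, and induces the natural inclusion $H^1_{p^m}(K) \hookrightarrow H^1_{p^{m+1}}(K)$ at the level of cohomology. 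By \thmref{thm:Kato-2}(1), the directed union $\bigcup_{m'} \delta^0_{m'}(\Fil_n W_{m'}(K))$ then equals $\Fil^{\bk}_n H^1(K)\{p\}$.

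Given $\chi \in \Fil^{\bk}_n H^1(K)[p^m]$, \thmref{thm:Kato-2}(1) provides some $m' \ge 1$ and $\un a \in \Fil_n W_{m'}(K)$ with $\delta^0_{m'}(\un a) = \chi$. The case $m' \le m$ follows at once from the chain above. The case $m' > m$ will be handled by induction on $m' - m$, using the exact sequence
\[
T^{1,0}_n(K) \to T^{m',0}_n(K) \xrightarrow{R} T^{m'-1,0}_{n/p}(K) \to 0
\]
arising from the distinguished triangle of \thmref{thm:Global-version}(9) and \lemref{lem:hyp}, together with \lemref{lem:Complete-7} (which relates $\ov p$ to Brylinski's filtration at successive Witt levels) and the hypothesis $p^m\chi = 0$. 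Applied iteratively, the restriction $R$ lands in $\Fil_{n/p} W_{m'-1}(K) \subseteq \Fil_n W_{m'-1}(K)$, and the inductive hypothesis combined with the $p^m$-torsion condition on $\chi$ yields a representative of $\chi$ at level $m$ inside $\Fil_n W_m(K)$.

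The hard part will be the descent when $m' > m$. The tension is as follows: the Witt-vector operations $F$ and $R$ preserve Brylinski's modulus $n$ but correspond to multiplication by $p$ on $H^1(K)$, hence change $\chi$; conversely, $V$ and $\ov p$ preserve the character but shift Brylinski's modulus from $n$ to $pn$. Balancing these two opposing shifts by suitable correction terms in $(1-\ov F)(W_{m'}(K)) + \Z/p^{m'}$, and exploiting the $p^m$-torsion of $\chi$ to force the necessary cancellations through the exact sequence above, will be the crux of the argument.
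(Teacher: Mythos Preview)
Your easy inclusion and the observation that $\{\delta^0_m(\Fil_n W_m(K))\}_m$ is an increasing chain (via $V$) are correct. The overall strategy of descending from level $m'$ to level $m$ is also sound. However, the execution of the descent step has real gaps.

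First, a factual error: you write that $V$ ``shifts Brylinski's modulus from $n$ to $pn$.'' This is false. Directly from \eqref{eqn:Log-fil-1*}, $V(\un c)\in \Fil_n W_{m'}(K)$ if and only if $\un c\in\Fil_n W_{m'-1}(K)$; the Verschiebung preserves the filtration in both directions. (It is $\ov F$, not $V$, that sends $\Fil_n$ into $\Fil_{pn}$.) So there is no tension on the $V$-side to ``balance.''

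Second, the tools you invoke do not do what you need. The map $R$ in your exact sequence corresponds, inside $H^1(K)$, to multiplication by $p$: if $\chi\in T^{m',0}_n(K)$ then $R(\chi)=p\chi$. Thus applying the inductive hypothesis to $R(\chi)$ gives information about $p\chi$, not about $\chi$ itself, and there is no evident way to climb back. Likewise \lemref{lem:Complete-7} concerns $\ov p$, not $V$, and does not address the key point.

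What is actually missing is this: given $\un a\in\Fil_n W_{m'}(K)$ with $\delta^0_{m'}(\un a)=\chi$ and $p^{m'-1}\chi=0$, one must modify $\un a$ by an element of $\ker\delta^0_{m'}$ so that the result lies in $V(W_{m'-1}(K))\cap\Fil_n W_{m'}(K)$. The hypothesis $p^{m'-1}\chi=0$ gives (via $R^{m'-1}$) that the top component $a_{m'-1}$ satisfies $a_{m'-1}=d-d^p$ for some $d\in K$. The crux---which you do not supply---is that $[d]_{m'}$ automatically lies in $\Fil_n W_{m'}(K)$: since $a_{m'-1}^{p^{m'-1}}\in\pi^{-n}A$ one has $v(a_{m'-1})\ge -n/p^{m'-1}$, whence (if $v(d)<0$) $v(a_{m'-1})=pv(d)$ forces $v(d)\ge -n/p^{m'}$, so $v(d^{p^{m'-1}})\ge -n/p\ge -n$. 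This is essentially the $m=1$, $q=0$ case of \propref{prop:Cartier-fil-1}. Subtracting $(1-\ov F)[d]_{m'}\in\Fil_n W_{m'}(K)$ from $\un a$ then yields an element of $V(\Fil_n W_{m'-1}(K))$, completing the descent step.

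For comparison: the paper does not prove this statement directly (it is cited from \cite{Kerz-Saito-Eratum}), but its proof of the generalization \thmref{thm:Kato-fil-10} proceeds by induction on $n$ rather than on $m'$, via the Cartesian square of \lemref{lem:Kato-fil-8} and Kato's explicit description of the graded pieces $\gr^{\bk}_n H^{q+1}(K)$. Your descent-in-$m'$ approach, once the valuation argument above is inserted, gives a more elementary proof for $q=0$, but it does not visibly extend to higher $q$.
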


The theorems of Kato and Kerz-Saito provide a simple description of
Kato's ramification filtration on $H^1(K)[p^m]$ in terms of Witt-vectors.
The objective of this section is to extend this description to all $q \ge 0$.

\subsection{The $V$-operator on $T^{\star,q}_n(K)$}\label{sec:ind-T}
We fix $n, q \ge 0$ and $m \ge 1$ as
 before. When $n > 0$, we shall write $n = p^rl$, where $r \ge 0$ and $p \nmid l$.
 We let $S^{m,q}_n(K) = \Fil^{\bk}_nH^{q+1}(K)[p^m]$. Since $T^{m,q}_n(K)$
 is a $p^m$-torsion group, we already deduce from
 \corref{cor:VR-5}, \thmref{thm:Kato-2} and ~\eqref{eqn:Milnor-1} that the map
 $\ov{\delta}^{m,q}_n$ has a factorization
 $\ov{\delta}^{m,q}_n\colon T^{m,q}_n(K) \inj S^{m,q}_n(K)$. 
 In the next few lemmas, we shall study the map $V \colon T^{m,q}_n(K) \to T^{m+1,q}_n(K)$
 and estimate its cokernel in terms of the differential forms on $\ff$.
We begin with the following.

\begin{lem}\label{lem:Kato-fil-0}
  There is a commutative diagram
   \begin{equation}\label{eqn:Kato-fil-0-0}
   \xymatrix@C1pc{
     Z_1\Fil_nW_m\Omega^q_K \ar[r]^-{1-C} \ar[d]_-{V} &  \Fil_nW_m\Omega^q_K
     \ar[d]^-{V} \\
     Z_1\Fil_nW_{m+1}\Omega^q_K \ar[r]^-{1-C} &  \Fil_nW_{m+1}\Omega^q_K.}
   \end{equation}
   \end{lem}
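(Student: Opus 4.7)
The lemma asserts two things: that $V$ maps $Z_1\Fil_nW_m\Omega^q_K$ into $Z_1\Fil_nW_{m+1}\Omega^q_K$, and that the resulting square commutes. The horizontal arrows $1-C$ are well-defined by Proposition \ref{prop:Cartier-fil-1} (for $n \ge 0$), and the right vertical arrow $V$ is well-defined because $\{\Fil_D W_m\Omega^\bullet_U\}$ is a filtered Witt-complex (Lemma \ref{lem:Log-fil-Wcom}, Step 3).

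First I would verify the left vertical arrow. Take $\omega \in Z_1\Fil_nW_m\Omega^q_K$. By the filtered Witt-complex structure, $V(\omega) \in \Fil_n W_{m+1}\Omega^q_K$. To check that $V(\omega)$ lies in $Z_1 W_{m+1}\Omega^q_K$, i.e.\ that $F^m dV(\omega) = 0$ in $\Omega^{q+1}_K$, I use the standard Witt-complex identity $FdV = d$ to compute
\[
F^m d V(\omega) \;=\; F^{m-1}\bigl(F dV(\omega)\bigr) \;=\; F^{m-1}d(\omega) \;=\; 0,
\]
the last equality being the defining property of $Z_1\Fil_nW_m\Omega^q_K$ (see Theorem \ref{thm:Global-version}(3)).

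For the commutativity of the square, it suffices to show that $VC = CV$ on $Z_1\Fil_nW_m\Omega^q_K$. Given $\omega$ in this group, Theorem \ref{thm:Global-version}(3) gives $\omega = F(\tilde{\omega})$ for some $\tilde{\omega} \in W_{m+1}\Omega^q_K$ (no filtered lift is required for this computation). By definition of the Cartier operator, $C(\omega) = R(\tilde{\omega})$, hence
\[
V\bigl(C(\omega)\bigr) \;=\; V\bigl(R(\tilde{\omega})\bigr).
\]
On the other hand, the identity $VF = p$ on the Witt complex gives $V(\omega) = VF(\tilde{\omega}) = p\tilde{\omega} = F(V(\tilde{\omega}))$, so by the definition of $C$ applied at level $m+1$,
\[
C\bigl(V(\omega)\bigr) \;=\; R\bigl(V(\tilde{\omega})\bigr).
\]
The basic compatibility $RV = VR$ of the pro-Witt complex structure then yields $R(V(\tilde{\omega})) = V(R(\tilde{\omega}))$, so $CV(\omega) = VC(\omega)$.

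The argument is essentially formal once the target of $V$ is known to sit inside $Z_1$, and the only delicate point is bookkeeping the levels of $F$, $V$, $R$ across $W_m$, $W_{m+1}$, and $W_{m+2}$; there is no genuine obstruction, since the identities $FV = VF = p$, $FdV = d$, and $RV = VR$ are all part of the ambient Witt-complex structure on $j_*W_\star\Omega^\bullet_U$ and restrict to our filtered subcomplex by Lemma \ref{lem:Log-fil-Wcom}.
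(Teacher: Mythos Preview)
Your proof is correct and follows essentially the same route as the paper: the paper verifies $V(Z_1W_m\Omega^q_K)\subset Z_1W_{m+1}\Omega^q_K$ via $F^mdV=F^{m-1}d$, then checks $VC=CV$ by the operator identity $VCF=VR=RV=CFV=CVF$, which is exactly your element-level computation $\omega=F(\tilde\omega)$, $VC(\omega)=VR(\tilde\omega)=RV(\tilde\omega)=CV(\omega)$. One minor point: the well-definedness of the horizontal arrows is more directly justified by Lemma~\ref{lem:Complete-6} than by Proposition~\ref{prop:Cartier-fil-1}.
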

\begin{proof}
  By Lemmas~\ref{lem:Log-fil-Wcom} and ~\ref{lem:Complete-6},
we only need to show that $VC = CV$ on $Z_1W_m\Omega^q_K$
and $V(Z_1W_m\Omega^q_K) \subset Z_1W_{m+1}\Omega^q_K$.
  For showing the first claim, it is enough to
  show that $VCF = CVF$. But this is clear because
  $VCF = VR = RV = CFV = CVF$.
   For the second claim, note that $\omega \in Z_1W_m\Omega^q_K$ implies
   $F^{m-1}d(\omega) = 0$. This implies in turn that $F^mdV(\omega) =
   F^{m-1}d(\omega) = 0$. That is, $V(\omega) \in Z_1W_{m+1}\Omega^q_K$.
\end{proof}

Using \lemref{lem:Kato-fil-0}, we get a commutative diagram
 (cf. \cite[\S~1.3]{Kato-89})
 \begin{equation}\label{eqn:Kato-5}
   \xymatrix@C2.5pc{
     \Fil_nW_m\Omega^q_K \ar[d]_-{V} \ar@{->>}[r]^-{\delta^{m,q}_n} & T^{m,q}_n(K)
     \ar[d]^-{V}
     \ar@{^{(}->}[r]^-{\ov{\delta}^{m,q}_n} & S^{m,q}_n(K) \ar@{^{(}->}[d]^-{\can} \\
     \Fil_nW_{m+1}\Omega^q_K \ar@{->>}[r]^-{\delta^{m+1,q}_n} & T^{m+1,q}_n(K)
     \ar@{^{(}->}[r]^-{\ov{\delta}^{m+1,q}_n} &  S^{m+1,q}_n(K),}
   \end{equation}
 where the middle $V$ is the unique map induced by the
 commutative diagram ~\eqref{eqn:Kato-fil-0-0}, and the right vertical arrow
 is the canonical inclusion $\Fil^{\bk}_nH^{q+1}(K)[p^m] \inj
 \Fil^{\bk}_nH^{q+1}(K)[p^{m+1}]$. Since the canonical inclusion
 $H^{q+1}_{p^m}(K) \inj H^{q+1}_{p^{m+1}}(K)$ (cf. \lemref{lem:Inj-GL}) is induced by
 $\ov p = V \colon W_m\Omega^q_{(-,\log)} \to W_{m+1}\Omega^q_{(-,\log)}$, we see that the
 right square of \eqref{eqn:Kato-5} is commutative.

   It follows from the commutative diagram ~\eqref{eqn:Kato-6}
  that the map $T^{m,q}_{n-1}(K) \to T^{m,q}_{n}(K)$ is injective
  for every $n \ge 1$.
We let $\wt{T}^{m,q}_n(K) = \frac{T^{m,q}_{n}(K)}{T^{m,q}_{n-1}(K)}$.
 Since the commutative diagram ~\eqref{eqn:Kato-fil-0-0} is compatible with
 the change in values of $n \ge 0$, it induces maps
 $V \colon \wt{T}^{m,q}_n(K) \to \wt{T}^{m+1,q}_n(K)$ as $n$ varies over all
 positive integers. We let $\wt{M}^{m,q}_n(K) =
 \frac{\wt{T}^{m,q}_n(K)}{V(\wt{T}^{m-1,q}_n(K))}$.

 \vskip.3cm
 
In the rest of the discussion, we shall assume $n \ge 1$ unless stated otherwise.

\begin{lem}\label{lem:Kato-fil-1}
   The map $V \colon \wt{T}^{m,q}_n(K) \to \wt{T}^{m+1,q}_n(K)$ is surjective for every
   $m \ge r+1$.
 \end{lem}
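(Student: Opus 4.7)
The plan is to prove the lemma by induction on $r$, where $n = p^r l$ with $p \nmid l$. Unwinding the definitions, the claim is that every $\omega \in \Fil_n W_{m+1}\Omega^q_K$ admits a decomposition
\[
\omega = V\mu + (1-C)\xi + \omega_0, \qquad \mu \in \Fil_n W_m\Omega^q_K, \ \xi \in Z_1\Fil_n W_{m+1}\Omega^q_K, \ \omega_0 \in \Fil_{n-1} W_{m+1}\Omega^q_K.
\]
Applying the goodness short exact sequence of \thmref{thm:Global-version}(2), I first write $\omega = V^m(a) + dV^m(b) + \tilde\eta$ with $a \in \Fil_n \Omega^q_K$, $b \in \Fil_n \Omega^{q-1}_K$, and $\tilde\eta$ any lift of $\eta := R(\omega) \in \Fil_{n/p} W_m\Omega^q_K$ through $R$. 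Since $V^m(a) = V(V^{m-1}(a)) \in V(\Fil_n W_m\Omega^q_K)$ and $dV^m(b) \in d(\Fil_n W_{m+1}\Omega^{q-1}_K) \subset (1-C)(Z_1 \Fil_n W_{m+1}\Omega^q_K)$ by \lemref{lem:VR-3}, it suffices to produce \emph{one} lift $\tilde\eta$ already of the required form.

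For the base case $r = 0$, writing $n = pk + s$ with $1 \le s \le p-1$ shows $\lfloor n/p \rfloor = \lfloor (n-1)/p \rfloor$, so $\Fil_{n/p}W_m\Omega^q_K = \Fil_{(n-1)/p}W_m\Omega^q_K$; applying \thmref{thm:Global-version}(2) at level $n-1$, we lift $\eta$ through $R \colon \Fil_{n-1}W_{m+1}\Omega^q_K \twoheadrightarrow \Fil_{(n-1)/p}W_m\Omega^q_K$ directly to an element of $\Fil_{n-1}W_{m+1}\Omega^q_K$, which settles this case.

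For the inductive step $r \ge 1$, the key point is that $p \mid n$ forces $\lfloor (n-1)/p \rfloor = n/p - 1$, so the lift must be manufactured rather than obtained for free. Here I invoke the inductive hypothesis for $n' = n/p$ (with $r' = r-1$ and $m' = m-1 \ge r'+1$), which yields $\wt T^{m,q}_{n/p}(K) = V(\wt T^{m-1,q}_{n/p}(K))$. This gives a decomposition
\[
\eta = V(\nu) + (1-C)(\zeta) + \eta', \quad \nu \in \Fil_{n/p}W_{m-1}\Omega^q_K, \ \zeta \in Z_1\Fil_{n/p}W_m\Omega^q_K, \ \eta' \in \Fil_{n/p - 1}W_m\Omega^q_K.
\]
Each summand is then lifted through $R$ separately: $\tilde\nu \in \Fil_n W_m\Omega^q_K$ with $R(\tilde\nu) = \nu$ via \thmref{thm:Global-version}(2); $\tilde\zeta \in Z_1\Fil_n W_{m+1}\Omega^q_K$ with $R(\tilde\zeta) = \zeta$ via \lemref{lem:VR-0}; and $\tilde\eta' \in \Fil_{n-1}W_{m+1}\Omega^q_K$ with $R(\tilde\eta') = \eta'$, again from the goodness sequence at level $n-1$. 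Setting $\tilde\eta := V(\tilde\nu) + (1-C)(\tilde\zeta) + \tilde\eta'$ and using $RV = VR$ together with the compatibility $R = CF$ (so that $R$ commutes with $C$ on the relevant kernels), one verifies $R(\tilde\eta) = \eta$. Inserting this into the initial decomposition exhibits $\omega$ in the required form.

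The main obstacle is precisely the filtration bookkeeping that motivates the form of the statement: when $p \mid n$, the restriction $R$ drops the index only from $n$ to $n/p$ rather than to $n-1$, so one cannot lift $\eta$ directly into $\Fil_{n-1}$ and must instead descend the whole problem from $(n,m)$ to $(n/p, m-1)$. The bound $m \ge r+1$ is the sharp condition ensuring this descent is well-defined at every step and terminates at the base $r = 0$; any weaker hypothesis on $m$ would break the induction at the last descent.
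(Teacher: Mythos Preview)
Your proof is correct but takes a different route from the paper's. The paper reduces immediately to $q=0$ by using the surjectivity of $\partial^{m,q}_n \colon K^M_q(K) \otimes \Fil_n W_m(K) \twoheadrightarrow T^{m,q}_n(K)$ (\corref{cor:VR-5}) together with the identity $V(a\,\dlog(\un y)) = V(a)\,\dlog(\un y)$; one is then left with showing $\Fil_n W_{m+1}(K) = \Fil_{n-1}W_{m+1}(K) + V(\Fil_n W_m(K))$ for $m \ge r+1$, which follows in one stroke from the $R^m$-exact sequence of \lemref{lem:Log-fil-1} and the single observation $\lfloor (n-1)/p^m \rfloor = \lfloor n/p^m \rfloor$.

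By contrast, you stay inside $\Fil_n W_\star\Omega^q_K$ for general $q$ and induct on the $p$-adic valuation $r$ of $n$, descending one $R$-step at a time and lifting the three pieces separately via goodness, \lemref{lem:VR-0}, and \lemref{lem:VR-3}. This works and is self-contained in the sense that it uses only the structural operators $(R,V,C,d)$ on the filtered complex, never the Milnor $K$-theory product. The paper's argument is shorter precisely because the product structure lets it bypass the differential forms entirely; your argument shows, conversely, that the lemma holds already at the level of the filtered complex without appealing to that extra algebraic input.
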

 \begin{proof}
Since $V(a\dlog([x_1]_m)\wedge \cdots \wedge \dlog([x_q]_m) =
   V(a)\dlog([x_1]_m)\wedge \cdots \wedge \dlog([x_q]_m)$,
   ~\eqref{eqn:Kato-fil-0-0} implies that the diagram
   \begin{equation}\label{eqn:Kato-fil-1-0}
   \xymatrix@C1pc{
K^M_q(K) \otimes \Fil_nW_m(K) \ar@{->>}[rr]^-{\partial^{m,q}_n} \ar[d]_-{\id \otimes V}
&& T^{m,q}_{n}(K) \ar[d]^-{V} \\
K^M_q(K) \otimes \Fil_nW_{m+1}(K) \ar@{->>}[rr]^-{\partial^{m+1,q}_n} && T^{m+1,q}_{n}(K)}
   \end{equation}
   is commutative. Furthermore, the horizontal arrows are surjective by
   \corref{cor:VR-5}. It suffices therefore to show that
   $\frac{\Fil_nW_{m+1}(K)}{\Fil_{n-1}W_{m+1}(K) + V(\Fil_nW_{m}(K))} = 0$ for
   $m \ge r+1$.

   To that end, we look at the commutative diagram of exact sequences (cf.
   \lemref{lem:Log-fil-1})
  \begin{equation}\label{eqn:Kato-fil-1-1}
   \xymatrix@C.8pc{ 
     0 \ar[r] & \Fil_{n-1}W_{m}(K) \ar[r]^-{V} \ar[d] & \Fil_{n-1}W_{m+1}(K)
     \ar[d] \ar[r]^-{R^m} & \Fil_{\lfloor{{(n-1)}/{p^m}}\rfloor}W_1(K) \ar[d] \ar[r] & 0 \\
   0 \ar[r] & \Fil_{n}W_{m}(K) \ar[r]^-{V} & \Fil_{n}W_{m+1}(K)
   \ar[r]^-{R^{m}} & \Fil_{\lfloor{{n}/{p^m}}\rfloor}W_1(K) \ar[r] & 0.}
  \end{equation}
Since $m \ge r+1$, we see that $\lfloor{{(n-1)}/{p^m}}\rfloor =
  \lfloor{{n}/{p^m} -{1}/{p^m}}\rfloor = \lfloor{{n}/{p^m}}\rfloor$.
  This implies that the right vertical arrow in ~\eqref{eqn:Kato-fil-1-1} is
  bijective and proves what we had asserted. 
\end{proof}

For the next three lemmas, we assume that $1 \le m \le r$ and consider the map
\[
\wt{\theta}_1 \colon A \otimes
{\underset{(q \ \mbox{times})}{A^\times \otimes \cdots \otimes A^\times}}
\to \wt{T}^{m,q}_{n}(K);
\]
\[
\wt{\theta}_1(x \otimes y_1 \otimes \cdots \otimes y_q)
 = \delta^{m,q}_n([x\pi^{-np^{1-m}}]_m\dlog([y_1]_m)\wedge \cdots \wedge \dlog([y_q]_m))
 \text{ mod } T^{m,q}_{n-1}(K).
 \]
 This is defined because $np^{1-m} = p^{r+1-m}l\in \N$ and
 $[x\pi^{-np^{1-m}}]_m \in \Fil_nW_m(K)$.

 \begin{lem}\label{lem:Kato-fil-2}
   $\wt{\theta}_1$ descends to a group homomorphism
   $\theta^q_1 \colon \Omega^q_{\ff} \to \wt{M}^{m,q}_n(K)$.
 \end{lem}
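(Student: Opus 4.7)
The plan is to verify that $\wt{\theta}_1 \colon A \otimes_\Z (A^\times)^{\otimes q} \to \wt{T}^{m,q}_n(K)$, followed by the projection to $\wt{M}^{m,q}_n(K)$, descends through the canonical surjection $A \otimes_\Z (A^\times)^{\otimes q} \twoheadrightarrow \Omega^q_\ff$ sending $x \otimes y_1 \otimes \cdots \otimes y_q$ to $\ov{x} \cdot \dlog \ov{y_1} \wedge \cdots \wedge \dlog \ov{y_q}$. The descent will be checked in three stages: (a) multilinearity and antisymmetry in the $y_i$'s; (b) additivity in $x$ together with the vanishing when $x \in \fm$ or some $y_i \in 1 + \fm$; (c) the residual Leibniz-type relations needed to descend from $\ff \otimes_\Z \bigwedge^q_\Z \ff^\times$ to $\Omega^q_\ff$. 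Stage (a) is immediate from the fact that $\dlog [\cdot]_m \colon A^\times \to W_m\Omega^1_A$ is a group homomorphism and $\wedge$ is alternating.

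For stage (b), set $a = \pi^{-np^{1-m}}$. The Witt-vector identity $[x+x']_m - [x]_m - [x']_m = V(\gamma)$ (with $\gamma \in W_{m-1}(A)$) combined with $[a]_m V(\gamma) = V([a^p]_{m-1}\gamma)$ gives
\[
[(x+x')a]_m - [xa]_m - [x'a]_m = V([a^p]_{m-1}\gamma).
\]
A direct check confirms $[a^p]_{m-1} \in \Fil_n W_{m-1}(K)$ (since $a^{p^{m-1}} \pi^n = 1$), so this difference lies in $V(\Fil_n W_{m-1}(K))$. Wedging with $\dlog[y_1]_m \wedge \cdots \wedge \dlog[y_q]_m$ and using $V(\eta) \cdot w = V(\eta \cdot F(w))$ with $F(\dlog[y_i]_m) = \dlog[y_i]_{m-1}$ yields an element of $V(\Fil_n W_{m-1}\Omega^q_K)$; by \lemref{lem:Kato-fil-0} this maps to $V(T^{m-1,q}_n(K))$ under $\delta^{m,q}_n$ and so vanishes modulo $V(\wt{T}^{m-1,q}_n(K))$. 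For the vanishing when $x = \pi z$, the computation $(\pi^{1-np^{1-m}}z)^{p^{m-1}} \pi^{n-1} = \pi^{p^{m-1}-1} z^{p^{m-1}} \in A$ (valid for $m \geq 1$) shows that $[xa]_m \in \Fil_{n-1} W_m(K)$, so its image lies in $T^{m,q}_{n-1}(K)$, which is zero in $\wt{T}^{m,q}_n(K)$. For $y_1 = 1 + u$ with $u \in \fm$, I will write $[1+u]_m = 1 + [u]_m + V(\gamma')$ with $\gamma' \in W_{m-1}(A)$, expand $\dlog[1+u]_m = [(1+u)^{-1}]_m(d[u]_m + dV\gamma')$, and treat the two summands separately. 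Writing $u = \pi w$ and applying the Leibniz rule $d[u]_m = [\pi]_m d[w]_m + [w]_m d[\pi]_m$ places the $d[u]_m$-contribution in $\Fil_{n-1} W_m \Omega^q_K$ (a direct Fil-computation analogous to the one for $x \in \fm$), while the $dV\gamma'$-contribution is handled via the identity $\alpha \cdot dV\beta = dV(F\alpha \cdot \beta) - V(Fd\alpha \cdot \beta)$ together with the verification $F[xa/(1+u)]_m \in \Fil_n W_{m-1}(K)$: the resulting $V(\cdot)$-piece vanishes in $\wt{M}$, and the $dV(\cdot)$-piece lies in the image of $1-C$ on $Z_1 \Fil_n W_m \Omega^q_K$ by \lemref{lem:VR-3} (using that $C$ fixes $\dlog$-terms and is multiplicative on $Z_1$), hence is killed by $\delta^{m,q}_n$.

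Stage (c) is handled by analogous Witt-vector manipulations: the Leibniz relation $d(\ov{y}\ov{y'}) = \ov{y}\,d\ov{y'} + \ov{y'}\,d\ov{y}$ in $\Omega^q_\ff$ lifts to the identity $[yy']_m = [y]_m [y']_m$ combined with the Witt-vector Leibniz rule for $d$, and the verification that it passes to $\wt{M}$ reduces to the same type of filtration-and-$V$ bookkeeping as in stage (b). The main technical obstacle will be the analysis of the $dV\gamma'$ contribution in the $y_1 \in 1+\fm$ case: the precise decomposition into a $\Fil_{n-1}$-piece, a $V$-piece, and a $(1-C)$-piece requires careful interplay among the filtrations $\Fil_n$, $V^\bullet$ and the image of $1-C$, and this is where the standing hypothesis $1 \leq m \leq r$ ensures that the filtrations align favorably.
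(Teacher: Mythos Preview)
Your stages (a) and (b) are essentially the paper's approach, though you do more work than needed: for $y_i \in 1+\fm$ the paper simply invokes \cite[Lem.~3.5]{Kato-89}, which directly gives $[x\pi^{-np^{1-m}}]_m \dlog([1+a\pi]_m) \in \Fil_{n-1}W_m\Omega^1_K$, avoiding your decomposition via $dV\gamma'$.

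The genuine gap is in stage (c). The relation $d(\ov y \ov{y'}) = \ov y\, d\ov{y'} + \ov{y'}\, d\ov y$ is, in the $\dlog$ presentation, nothing more than $\dlog(\ov y \ov{y'}) = \dlog \ov y + \dlog \ov{y'}$ --- i.e., the homomorphism property of $\dlog$, which you already used in stage (a). Rewriting it via $[yy']_m = [y]_m[y']_m$ gives a vacuous identity in $\ff \otimes_\Z \bigwedge^q \ff^\times$. The actual obstruction to descending from $\ff \otimes_\Z \bigwedge^q \ff^\times$ to $\Omega^q_\ff$ is the relation encoding additivity of $d$ against $a\,\dlog a = da$: namely, $\sum_i x_i \otimes x_i \otimes y_2 \otimes \cdots = \sum_j x'_j \otimes x'_j \otimes y_2 \otimes \cdots$ whenever $\sum_i \ov{x_i} = \sum_j \ov{x'_j}$ in $\ff$ (this is \cite[Lem.~4.2]{Bloch-Kato}). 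The paper checks this by writing $\sum_i [x_i]_m \dlog[x_i]_m - \sum_j [x'_j]_m \dlog[x'_j]_m = d\bigl(\sum_i [x_i]_m - \sum_j [x'_j]_m\bigr) = d([a\pi]_m) + dV(\un b)$, then disposing of the $d([a\pi]_m)$-term via the filtration and the $dV(\un b)$-term via the identity $w_0\, dV(\un b) = dV(F(w_0)\un b) - V(Fd(w_0)\un b)$ together with \lemref{lem:VR-3}. Your proposal never touches this relation, so as written it does not establish the descent to $\Omega^q_\ff$.
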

 \begin{proof}
For $\{y_1, \ldots , y_q\} \subset A^\times$, we write
   $\dlog([y_1]_m)\wedge \cdots \wedge \dlog([y_q]_m) = \dlog(\un{y})$. 
   If $x = a\pi$ for some $a \in A$, then
   $\delta^{m,q}_n([x]_m\dlog(\un{y})) =
   \delta^{m,q}_n([a\pi^{-(n-p^{m-1})p^{1-m}}]_m\dlog(\un{y})) \in T^{m,q}_{n-1}(K)$,
   as $[a\pi^{-(n-p^{m-1})p^{1-m}}]_m \in \Fil_{n-1}W_m(K)$.
Suppose next that $y_i = 1 + a\pi$ for some $a \in A$ and $1 \le i \le q$.
   By \cite[Lem.~3.5]{Kato-89}, we see that
   as an element of $\Fil_nW_m\Omega^1_K$, $[x]_m\dlog([1+a\pi]_m)$ actually lies
   in $\Fil_{n-1}W_m\Omega^1_K$. It follows that
   $[x]_m\dlog(\un{y}) \in \Fil_{n-1}W_m\Omega^q_K$ and hence
   $\delta^{m,q}_n([x]_m\dlog(\un{y}))$ dies in $\wt{T}^{m,q}_n(K)$.
We have thus shown that $\wt{\theta}_1$ descends a group homomorphism
\begin{equation}\label{eqn:Kato-fil-2-0}
  \wt{\theta}_1 \colon  \ff \otimes \ff^\times \otimes \cdots \otimes \ff^\times \to
  \wt{T}^{m,q}_n(K) \surj \wt{M}^{m,q}_n(K).
\end{equation}

We now let $\{x_1, \ldots , x_l\}$ and $\{x'_1, \ldots , x'_{l'}\}$ be two sets of
elements of
$A^\times$ such that $\sum_i x_i - \sum_j x'_j = a\pi$ for some $a \in A$.
Let $y_1, \ldots , y_{q-1} \in A^\times$. To show that $\wt{\theta}_1$ factors through
$\Omega^q_{\ff}$, it suffices to show, using \cite[Lem.~4.2]{Bloch-Kato}, that
$\wt{\theta}_1$ kills elements of the form
\[
(\sum_i x_i \otimes x_i- \sum_j x'_i \otimes x'_i) \otimes y_1 \otimes \cdots
\otimes y_{q-1}.
\]

To prove this claim, we let  $w_0 = [\pi^{-np^{1-m}}]_m\dlog[y_1]_m \cdots
\dlog[y_{q-1}]_m$. In ${T}^{m,q}_n(K)$, we then get the
following:
\[
w_0\left(\sum_i [x_i]_m\dlog[x_i]_m -
\sum_j [x'_i]_m\dlog[x'_j]_m\right) = w_0d\left(\sum_i[x_i]_m - \sum_j [x'_j]_m\right)
\]
\[
=w_0d\left(\left[\sum_i x_i - \sum_jx'_i \right]_m+V(\un{b})\right) = w_0d[a\pi]_m +
{w_0}dV(\un{b})
\]
for some $\un{b}=(a_{m-2}, \ldots , a_1, a_0) \in W_{m-1}(A)$.

Since $w_0 \in \Fil_nW_m\Omega^{q-1}_K$, we get that $[a\pi]_mw_0 \in
\Fil_{n-1}W_m\Omega^{q-1}_K$, and hence $w_0d[a\pi]_m=[a\pi]_mw_0 \dlog[a\pi]_m \in
\Fil_{n-1}W_m\Omega^q_K$. So, it dies in $\wt{M}^{m,q}_n(K)$. On the other hand,
\[
w_0dV(\un{b})=dV(F(w_0)\un{b})
- V(Fd(w_0)\un{b}) \in (1-C)(\Fil_nW_m\Omega^{q}_K) +V(\Fil_nW_{m-1}\Omega^{q}_K)
\]
by Lemmas~\ref{lem:Log-fil-Wcom} and ~\ref{lem:VR-3} and hence also dies in
$\wt{M}^{m,q}_n(K)$. This proves the claim and concludes the proof of the lemma.
\end{proof}

An identical proof shows that $\wt{\phi} \colon A \otimes A^\times \otimes \cdots
 \otimes A^\times \to \wt{T}^{r+1,q}_{n}(K)$, given by
 $\wt{\phi}(x \otimes y_1 \otimes \cdots \otimes y_q)
 = \delta^{m,q}_n([x\pi^{-l}]_m\dlog([y_1]_m)\wedge \cdots \wedge \dlog([y_q]_m))$
 mod $T^{r+1,q}_{n-1}(K)$, induces a homomorphism
\begin{equation}\label{eqn:Kato-fil-2-2}
   \phi^q\colon \Omega^q_{\ff} \to \wt{M}^{r+1,q}_n(K).
   \end{equation}

\begin{lem}\label{lem:Kato-fil-3}
   $\theta^q_1(Z_1\Omega^q_{\ff}) = 0$.
 \end{lem}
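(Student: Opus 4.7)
The strategy is to exploit the Cartier decomposition of $Z_1\Omega^q_\ff$ together with the identity $\dlog[u]_m + \dlog[v]_m = \dlog[uv]_m$ in the Witt-differential complex, using \lemref{lem:VR-3} to kill exact terms and the $V$-quotient defining $\wt M^{m,q}_n(K)$ to kill $p$-divisible terms. Throughout, set $s := np^{1-m} = p^{r+1-m}l$; the hypothesis $1 \le m \le r$ forces $s$ to be a positive integer divisible by $p$, forces $n \ge p \ge 2$, and implies $n/p \le n-1$.

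By the Cartier isomorphism for $\ff$, $Z_1\Omega^q_\ff$ is generated as an abelian group by $B_1\Omega^q_\ff$ together with elements of the form $\bar a^p\dlog\bar b_1\wedge \cdots \wedge \dlog\bar b_q$, so it suffices to verify that $\theta^q_1$ kills each of these two types of generators. I would dispose of the inverse-Cartier image first: set $\alpha := [a^p\pi^{-s}]_m\dlog[b_1]_m\cdots \dlog[b_q]_m$ and $\beta := [a\pi^{-s/p}]_{m+1}\dlog[b_1]_{m+1}\cdots\dlog[b_q]_{m+1}$. The filtration inequalities $s p^i \le n$ for $0 \le i \le m$ show $\beta \in \Fil_n W_{m+1}\Omega^q_K$, hence by \thmref{thm:Global-version}(3) one gets $\alpha = F(\beta) \in Z_1\Fil_n W_m\Omega^q_K$ and $C(\alpha) = R(\beta) = [a\pi^{-s/p}]_m\dlog[b_1]_m\cdots \dlog[b_q]_m \in \Fil_{n/p}W_m\Omega^q_K \subseteq \Fil_{n-1}W_m\Omega^q_K$. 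In $T^{m,q}_n(K)$ this yields $[\alpha] = [C(\alpha)] \in T^{m,q}_{n-1}(K)$, killing the class in $\wt T^{m,q}_n(K)$ and a fortiori in $\wt M^{m,q}_n(K)$.

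Next, for an exact form $d\eta$ with $\eta = \bar a\dlog\bar b_1\cdots \dlog\bar b_{q-1}$ (the case $\bar a = 0$ being trivial), I would use $d\bar a = \bar a \dlog\bar a$ together with $\dlog[a]_m = \dlog[a\pi^{-s}]_m + s\dlog[\pi]_m$ in $W_m\Omega^1_K$ to obtain
\[
\theta^q_1(d\eta) = d\bigl([a\pi^{-s}]_m\dlog[b_1]_m\cdots \dlog[b_{q-1}]_m\bigr) + s\cdot [a\pi^{-s}]_m\dlog[\pi]_m\dlog[b_1]_m\cdots\dlog[b_{q-1}]_m
\]
in $\Fil_n W_m\Omega^q_K$. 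The first summand is $d$ of an element of $\Fil_n W_m\Omega^{q-1}_K$ and therefore lies in $(1-C)(Z_1\Fil_n W_m\Omega^q_K)$ by \lemref{lem:VR-3}, so it vanishes in $T^{m,q}_n(K)$. For the second summand, letting $\gamma$ denote its bracketed factor, the divisibility $p \mid s$ combined with the standard relation $p = VF$ on $W_m\Omega^q_K$ gives $s\gamma = V\bigl((s/p)F\gamma\bigr)$; a direct filtration check shows $F\gamma \in \Fil_n W_{m-1}\Omega^q_K$, so the class of $s\gamma$ in $\wt T^{m,q}_n(K)$ lies in $V(\wt T^{m-1,q}_n(K))$ and therefore vanishes in $\wt M^{m,q}_n(K)$. (For $m = 1$ the $V$-argument is vacuous because $F\gamma \in W_0\Omega^q = 0$, so $s\gamma = 0$ already in $W_1\Omega^q$.)

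The main technical obstacle is the careful bookkeeping of $\Fil$-levels at each step: verifying $\beta \in \Fil_n W_{m+1}$, $C(\alpha) \in \Fil_{n/p}W_m \subseteq \Fil_{n-1}W_m$, and $F\gamma \in \Fil_n W_{m-1}$ all reduce to the single inequality $s p^j \le n$ for the appropriate range of $j$, which relies decisively on the hypothesis $m \le r$ through the identity $s p^{m-1} = n$. Everything else is a clean application of the relations $CF = R$, $\dlog[uv] = \dlog u + \dlog v$, $p = VF$, \lemref{lem:VR-3}, and \thmref{thm:Global-version}(3).
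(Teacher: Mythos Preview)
Your proof is correct and follows essentially the same approach as the paper's: both decompose $Z_1\Omega^q_\ff$ into exact forms and the image of the inverse Cartier, handle the latter by exhibiting an explicit $F$-preimage in $\Fil_n W_{m+1}\Omega^q_K$ and using $C\alpha = R\beta \in \Fil_{n/p} \subseteq \Fil_{n-1}$, and handle the former via the Leibniz/$\dlog$ identity to produce an exact term (killed by \lemref{lem:VR-3}) plus a $p$-divisible term (killed by $p = VF$ and the $V$-quotient defining $\wt M^{m,q}_n$). The only cosmetic difference is that you use $\dlog[a]_m = \dlog[a\pi^{-s}]_m + s\,\dlog[\pi]_m$ directly, whereas the paper routes through $[a]_m d[x]_m = d([xa]_m) - [x]_m d[a]_m$; these unwind to the same computation.
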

 \begin{proof}
   By \cite[\S~3, p.~111]{Kato-89}, $Z_1\Omega^q_{\ff}$ is generated by the elements of
   the types
   \begin{enumerate}
   \item
     $x\dlog(x)\wedge \dlog(y_1) \wedge \cdots \wedge \dlog(y_{q-1})$ and
   \item
     $x^p\dlog(y_1) \wedge \cdots \wedge \dlog(y_{q})$,
   \end{enumerate}
 where $x, y_j \in \ff^\star, 1 \le j \le q$.
 We now let $x, y_1, \ldots y_q \in A^\times, \ w = x^p\pi^{-np^{1-m}}$ and
 $\dlog(\un{y})_l=\dlog([y_1]_l) \wedge \cdots \wedge
   \dlog([y_q]_l)$. Since $m-1 < r$ and $p^r|n$, we can write
   $w = (x\pi^{-(n/p)p^{1-m}})^p$.

   Letting $w' = x\pi^{-(n/p)p^{1-m}}$, 
   we get by \lemref{lem:Complete-4}(1) that
   $$[w']_{m+1} \in \Fil_{n}W_{m+1}(K) \text{ and }
   [w]_m\dlog(\un{y})_m= F([w']_{m+1}\dlog(\un{y})_{m+1}) \in Z_1\Fil_nW_m\Omega^q_K.$$
   In particular,
   $\delta^{m,0}_n \circ (1-C)([w]_m\dlog(\un{y})_m) = 0$ in $T^{m,q}_n(K)$.
   From this, we get
   \[
   \delta^{m,q}_n([w]_m\dlog(\un{y})_m) = \delta^{m,q}_n
   \circ C([w]_m\dlog(\un{y})_m) = \delta^{m,q}_n([w']_m\dlog(\un{y})_m).
   \]
   Since the last term is in $\Fil_{n/p}W_m\Omega^{q}_K \subset \Fil_{n-1}W_m\Omega^q_K$,
   it follows that $\delta^{m,q}_n([w]_m\dlog(\un{y})_m)$ dies in $\wt{M}^{m,q}_n(K)$.
  This shows that the type (2) elements go to zero under $\theta_1^q$.

Next, we let $a = \pi^{-np^{1-m}}$ and $w_l'' = \dlog([y_1]_l) \wedge \cdots \wedge
   \dlog([y_{q-1}]_l)$ so that
   \[
     [xa]_m\dlog([x]_m)w_m'' = [a]_md([x]_m)w_m'' = d([xa]_mw_m'') - [x]_md([a]_m)w_m''.
     \]
   \lemref{lem:VR-3} implies $\delta^{m,q}_n([xa]_m\dlog([x]_m)w''_m) =
   -\delta^{m,q}_n([x]_md([a]_m)w''_m)$. On the other hand,
   \[
   \begin{array}{lll}
     [x]_md([a]_m)w''_m & = & [x]_mw''_m(-{n}/{p^{m-1}})[a]_m\dlog[\pi]_m \\
     & = & p(-{n}/{p^{m}}) [xa]_mw''_m\dlog[\pi]_m \\
     & = & (-{n}/{p^{m}}) VF([xa]_mw''_m\dlog[\pi]_m).
   \end{array}
   \]
Since
   $F([xa]_mw''\dlog[\pi]_m )\in \Fil_nW_{m-1}\Omega^q_K$,
   we conclude that $\delta^{m,q}_n([x]_md([a]_m)w'')$ dies in
   $\wt{M}^{m,q}_n(K)$. Hence, so does $\delta^{m,q}_n([xa]_m\dlog([x]_m)w'')$
   (image of the elements of type (1)).
   This concludes the proof.
\end{proof}

We next consider the map

\[
\wt{\theta}_2 \colon A \otimes
{\underset{(q-1 \ \mbox{times})}{A^\times \otimes \cdots \otimes A^\times}}
\to \wt{T}^{m,q}_{n}(K);
\]
\[
\wt{\theta}_2(x \otimes y_1 \otimes \cdots \otimes y_{q-1})
 = \delta^{m,q}_n([x\pi^{-np^{1-m}}]_m
 \dlog([y_1]_m)\wedge \cdots \wedge \dlog([y_{q-1}]_m) \wedge \dlog([\pi]_m))
 \]
 \[
 \hspace{10cm}\text{ mod }
 T^{m,q}_{n-1}(K).
 \]

\begin{lem}\label{lem:Kato-fil-4}
  $\wt{\theta}_2$ descends to a group homomorphism
  $\theta^q_2 \colon \frac{\Omega^{q-1}_{\ff}}{Z_1\Omega^{q-1}_{\ff}} \to \wt{M}^{m,q}_n(K)$.
\end{lem}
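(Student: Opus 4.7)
The plan is to mimic the proofs of Lemmas~\ref{lem:Kato-fil-2} and ~\ref{lem:Kato-fil-3}, adjusted for the presence of the extra factor $\dlog[\pi]_m$ in the definition of $\wt{\theta}_2$. In fact this extra factor \emph{simplifies} several computations, since $\dlog[\pi]_m \wedge \dlog[\pi]_m = 0$ in $W_m\Omega^\bullet_K$. Throughout one uses the standing hypothesis $1 \le m \le r$, which makes the exponents $np^{1-m}$ and $(n/p)p^{1-m}$ integers.

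First I would show that $\wt{\theta}_2$ descends to $\ff \otimes (\ff^\times)^{\otimes(q-1)} \to \wt{M}^{m,q}_n(K)$. If $x = a\pi$ with $a \in A$, then $[x\pi^{-np^{1-m}}]_m = [a\pi^{-(n-p^{m-1})p^{1-m}}]_m \in \Fil_{n-p^{m-1}}W_m(K) \subset \Fil_{n-1}W_m(K)$, so after wedging with $\dlog[y_1]_m \wedge \cdots \wedge \dlog[y_{q-1}]_m \wedge \dlog[\pi]_m \in W_m\Omega^{q-1}_A(\log\pi)$ (a closed element) we land in $\Fil_{n-1}W_m\Omega^q_K$. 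If $y_i = 1+a\pi$, \cite[Lem.~3.5]{Kato-89} gives $[x\pi^{-np^{1-m}}]_m \dlog[1+a\pi]_m \in \Fil_{n-1}W_m\Omega^1_K$, and the remaining factors preserve $\Fil_{n-1}$. To then factor through $\Omega^{q-1}_\ff$, one invokes the Bloch--Kato presentation \cite[Lem.~4.2]{Bloch-Kato}: it suffices to kill elements of the form $\bigl(\sum_i x_i \otimes x_i - \sum_j x'_j \otimes x'_j\bigr)\otimes y_1 \otimes \cdots \otimes y_{q-2}$ whenever $\sum x_i - \sum x'_j = a\pi$. Setting $w_0 = [\pi^{-np^{1-m}}]_m\dlog[y_1]_m\cdots\dlog[y_{q-2}]_m\dlog[\pi]_m$, the computation of $w_0 \cdot d[a\pi]_m$ and $w_0 \cdot dV(\un{b})$ is formally identical to that in \lemref{lem:Kato-fil-2}: the first lies in $(1-C)Z_1\Fil_n W_m\Omega^q_K$ by \lemref{lem:VR-3}, and the second lies in $(1-C)(\Fil_n W_m\Omega^q_K) + V(\Fil_n W_{m-1}\Omega^q_K)$, which dies in $\wt{M}^{m,q}_n(K) = \wt{T}^{m,q}_n(K)/V(\wt{T}^{m-1,q}_n(K))$.

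To show $\theta^q_2$ kills $Z_1\Omega^{q-1}_\ff$, I would treat the two classes of generators from \cite[\S~3,~p.~111]{Kato-89}. For type~(2), $x^p\dlog y_1 \wedge \cdots \wedge \dlog y_{q-1}$, I set $w = x^p \pi^{-np^{1-m}} = (w')^p$ where $w' = x\pi^{-(n/p)p^{1-m}}$, so that $[w']_{m+1} \in \Fil_n W_{m+1}(K)$ and, using \lemref{lem:Complete-4}(1),
\[
[w]_m\dlog(\un{y})_m\dlog[\pi]_m = F\bigl([w']_{m+1}\dlog(\un{y})_{m+1}\dlog[\pi]_{m+1}\bigr) \in Z_1\Fil_nW_m\Omega^q_K.
\]
Applying $\delta^{m,q}_n \circ (1-C)$ gives zero, hence $\delta^{m,q}_n([w]_m\dlog(\un{y})_m\dlog[\pi]_m) = \delta^{m,q}_n([w']_m\dlog(\un{y})_m\dlog[\pi]_m)$, and the right-hand side lies in $\Fil_{n/p}W_m\Omega^q_K \subset \Fil_{n-1}W_m\Omega^q_K$, so it dies in $\wt{M}^{m,q}_n(K)$. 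For type~(1), $x\dlog x \wedge \dlog y_1\wedge\cdots\wedge \dlog y_{q-2}$, write $a = \pi^{-np^{1-m}}$ and $w''_m = \dlog[y_1]_m\cdots\dlog[y_{q-2}]_m$. The Leibniz rule gives
\[
[xa]_m\dlog[x]_m\cdot w''_m\dlog[\pi]_m = d\bigl([xa]_m w''_m \dlog[\pi]_m\bigr) - [x]_m d[a]_m\cdot w''_m\dlog[\pi]_m.
\]
Since $[xa]_m w''_m\dlog[\pi]_m \in \Fil_nW_m\Omega^{q-1}_K$, the first term is in $(1-C)Z_1\Fil_nW_m\Omega^q_K$ by \lemref{lem:VR-3}. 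The second term vanishes identically, as $d[a]_m = -(n/p^{m-1})[a]_m\dlog[\pi]_m$ and $\dlog[\pi]_m\wedge\dlog[\pi]_m = 0$. Both contributions therefore die in $\wt{M}^{m,q}_n(K)$.

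There is no genuine obstacle here beyond careful bookkeeping; the crucial algebraic inputs (filtration tracking, \lemref{lem:VR-3}, \lemref{lem:Complete-4}(1), the formula $CF = R$, and the identity $\dlog[\pi]_m\wedge\dlog[\pi]_m = 0$) are already in place. The main point to watch is that the vanishing of the type-(1) ``second term'' in this proof is what replaces the more elaborate argument used for $\theta^q_1$ in \lemref{lem:Kato-fil-3}; consequently the proof of \lemref{lem:Kato-fil-4} is, if anything, slightly easier than that of \lemref{lem:Kato-fil-3}.
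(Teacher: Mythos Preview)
Your argument is correct, but it takes a more laborious route than the paper's. The paper observes that multiplication by $\dlog[\pi]_m$ preserves $\Fil_nW_m\Omega^\bullet_K$ and commutes with $F^{m-1}d$, $C$, and $V$, hence induces well-defined maps $\wt{M}^{m,q-1}_n(K) \xrightarrow{\dlog[\pi]_m} \wt{M}^{m,q}_n(K)$. Since by construction $\theta^q_2 = \dlog[\pi]_m \circ \theta^{q-1}_1$, the claim follows immediately from Lemmas~\ref{lem:Kato-fil-2} and~\ref{lem:Kato-fil-3} applied in degree $q-1$. This factorisation eliminates the need to rerun the Bloch--Kato and $Z_1$-generator arguments. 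Your direct approach does work, and you are right that the extra $\dlog[\pi]_m$ factor makes some steps strictly easier (notably the vanishing of the type-(1) ``second term'' via $\dlog[\pi]_m\wedge\dlog[\pi]_m=0$, and the fact that your $w_0$ is already $d$-closed); but all of this is subsumed by the paper's one-line reduction. One small inaccuracy: you describe the treatment of $w_0\, d[a\pi]_m$ as ``formally identical'' to \lemref{lem:Kato-fil-2} and then say it lies in the image of $(1-C)$ by \lemref{lem:VR-3}, whereas in \lemref{lem:Kato-fil-2} that term is shown to lie in $\Fil_{n-1}$ instead. Both conclusions suffice for vanishing in $\wt{M}^{m,q}_n(K)$, and your version is in fact valid here precisely because $dw_0=0$, so $w_0\,d[a\pi]_m = d([a\pi]_m w_0)$; but it is not ``identical'' to the earlier proof.
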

\begin{proof}
By \lemref{lem:Log-fil-Wcom}, we see that multiplication by $\dlog([\pi]_m)$
  preserves $\Fil_nW_m\Omega^\bullet_K$. Since $F^{m-1}d(a\dlog([\pi]_m) =
  F^{m-1}d(a)\dlog([\pi]_m)$ and $C(a\dlog([\pi]_m)) = C(a)\dlog([\pi]_m)$ for
  $a \in Z_1W_\star\Omega^\bullet_K$,
  it follows that the map $W_m\Omega^{q-1}_K \xrightarrow{\dlog([\pi]_m)}
  W_m\Omega^q_K$ induces maps
  \[
  T^{m,q-1}_n(K) \xrightarrow{\dlog([\pi]_m)} T^{m,q}_n(K), \ \wt{T}^{m,q-1}_n(K)
  \xrightarrow{\dlog([\pi]_m)} \wt{T}^{m,q}_n(K).
  \]
Since $V(a\dlog([\pi]_m)) = V(a)\dlog([\pi]_m)$, we get that these maps
  also induce
  \[
  \wt{M}^{m,q-1}_n(K) \xrightarrow{\dlog([\pi]_m)} \wt{M}^{m,q}_n(K).
\]
Since ${\theta}^q_2 = \dlog([\pi]_m) \circ {\theta}^{q-1}_1 = {\theta}^{q}_1 \circ
\dlog([\pi]_m)$, we are done by Lemma~\ref{lem:Kato-fil-3}.
\end{proof}

Combining the previous three lemmas, we get a homomorphism
\begin{equation}\label{eqn:Kato-fil-5}
  (\theta^q_1, \theta^q_2) \colon \frac{\Omega^{q}_{\ff}}{Z_1\Omega^{q}_{\ff}}  \bigoplus
  \frac{\Omega^{q-1}_{\ff}}{Z_1\Omega^{q-1}_{\ff}} \to
  \wt{M}^{m,q}_n(K) \ \ {\rm for} \ \ 1 \le m \le r.
  \end{equation}

\begin{lem}\label{lem:Kato-fil-6}
  We have the following.
  \begin{enumerate}
    \item
      The map $\phi^q \colon \Omega^q_{\ff} \to \wt{M}^{r+1,q}_n(K)$ is surjective.
    \item
       For $1 \le m \le r$, the map $(\theta^q_1, \theta^q_2)$
       in ~\eqref{eqn:Kato-fil-5} is surjective.
       \end{enumerate}
\end{lem}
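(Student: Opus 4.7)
The strategy is to exhibit explicit generators of $\wt{M}^{m,q}_n(K)$ via \lemref{lem:VR-4}, then eliminate those that fall into either $V(\wt{T}^{m-1,q}_n(K))$ or $T^{m,q}_{n-1}(K)$, leaving exactly the images of $\theta^q_1,\theta^q_2$ (when $m\le r$) or $\phi^q$ (when $m=r+1$). I treat both parts together and only separate them at the very end.

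By \lemref{lem:VR-3}, every $d$-exact form dies in $T^{m,q}_n(K)$, so \lemref{lem:VR-4} combined with the surjection $\delta^{m,q}_n$ says that $T^{m,q}_n(K)$ is generated by the classes of $V^i([x]_{m-i})\dlog[x_1]_m\wedge\cdots\wedge\dlog[x_q]_m$ with $[x]_{m-i}\in \Fil_n W_{m-i}(K)$ and $x_j\in K^{\times}$. Writing each $x_j$ as a unit times a power of $\pi$ produces Type (a) generators (all $\dlog$'s are of units of $A$) and Type (b) generators (exactly one factor equals $\dlog[\pi]_m$). Using $F\dlog[y]_m=\dlog[y]_{m-1}$ together with the projection formula $V(\alpha)\cdot b = V(\alpha F(b))$, for $i\ge 1$ such a generator equals $V^i\bigl([x]_{m-i}\dlog[x_1]_{m-i}\wedge\cdots\wedge\dlog[x_q]_{m-i}\bigr)$, which by \lemref{lem:Log-fil-Wcom} lies in $V(\Fil_n W_{m-1}\Omega^q_K)$, hence in the image of $V\colon T^{m-1,q}_n(K)\to T^{m,q}_n(K)$, and dies in $\wt{M}^{m,q}_n(K)$. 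Therefore only $i=0$ generators survive.

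Next I reduce modulo $\Fil_{n-1}$. Writing $n=p^r l$ with $p\nmid l$, the assumption $1\le m\le r+1$ gives $p^{m-1}\mid n$, so \propref{prop:Fil-decom} identifies $\Fil_n W_m(K)/\Fil_{n-1}W_m(K)$ with $F^{m,0}_{-n}(\ff)=W_m(\ff)[\pi^{-np^{1-m}}]_m$. Decomposing $\alpha\in W_m(\ff)$ as $\sum_{j=0}^{m-1} V^j([b_j]_{m-j})$ and using the identity $V^j([b_j]_{m-j})\cdot[\pi^{-np^{1-m}}]_m = V^j([b_j\pi^{-np^{1-m+j}}]_{m-j})$, a direct exponent comparison shows that each inner Teichmüller element belongs to $\Fil_n W_{m-j}(K)$; the terms with $j\ge 1$ then again die in $\wt{M}^{m,q}_n(K)$ by the previous paragraph. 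Consequently, Type (a) generators reduce modulo $T^{m,q}_{n-1}(K)+V(\wt{T}^{m-1,q}_n(K))$ to $[b_0\pi^{-np^{1-m}}]_m\dlog[y_1]_m\wedge\cdots\wedge\dlog[y_q]_m$, which is $\theta^q_1(b_0\otimes y_1\otimes\cdots\otimes y_q)$ (and equals $\phi^q$ when $m=r+1$), while Type (b) generators reduce to $[b_0\pi^{-np^{1-m}}]_m\dlog[y_1]_m\wedge\cdots\wedge\dlog[y_{q-1}]_m\wedge\dlog[\pi]_m=\theta^q_2(b_0\otimes y_1\otimes\cdots\otimes y_{q-1})$. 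This settles part (2).

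For part (1), where $m=r+1$ and $np^{1-m}=l$, I still have to fold Type (b) generators into the image of $\phi^q$. The key identity is
\begin{equation*}
d\bigl([b_0\pi^{-l}]_{r+1}\dlog[y_1]_{r+1}\wedge\cdots\wedge\dlog[y_{q-1}]_{r+1}\bigr)=[b_0\pi^{-l}]_{r+1}\bigl(\dlog[b_0]_{r+1}-l\dlog[\pi]_{r+1}\bigr)\dlog(\un{y})_{r+1},
\end{equation*}
which is $d$-exact and therefore dies in $T^{r+1,q}_n(K)$. Since $\gcd(l,p)=1$, the integer $l$ acts invertibly on the $p^{r+1}$-torsion group $T^{r+1,q}_n(K)$, and solving for the Type (b) term writes it as $l^{-1}[b_0\pi^{-l}]_{r+1}\dlog[\tilde b_0]_{r+1}\dlog(\un{y})_{r+1}$, where $\tilde b_0\in A^{\times}$ is any lift of a nonzero $b_0$ (the $b_0=0$ case being trivial); this equals $\phi^q(l^{-1}\tilde b_0\otimes\tilde b_0\otimes y_1\otimes\cdots\otimes y_{q-1})$. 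The main technical obstacle will be the careful bookkeeping in the preceding paragraph: verifying, via \propref{prop:Fil-decom} and the compatibility of $V$ with multiplication by $\dlog$-wedges, that every intermediate Teichmüller element $[b_j\pi^{-np^{1-m+j}}]_{m-j}$ indeed lies in $\Fil_n W_{m-j}(K)$ so that its $V^{i-1}$-translate is available as an element of $\Fil_n W_{m-1}\Omega^q_K$; everything else follows directly from the generation statements of Lemmas \ref{lem:VR-3} and \ref{lem:VR-4}.
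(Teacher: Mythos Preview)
Your overall approach coincides with the paper's: use \lemref{lem:VR-4} to obtain generators $V^i([x]_{m-i})\dlog(\un y)$, kill the $i\ge 1$ terms via $V$, and for $i=0$ split each $y_j$ into a unit times a power of $\pi$ to land in the images of $\theta^q_1,\theta^q_2$ (resp.\ fold the $\dlog[\pi]$-part into $\phi^q$ via the $d$-identity when $m=r+1$). The manipulation of Type~(b) for part (1) is the same as the paper's.

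The only substantive issue is your second paragraph. The appeal to \propref{prop:Fil-decom} is illegitimate here: that result is proved for power-series rings $A=S[[\pi]]$, whereas in \S\ref{sec:Kato-comp} the ring $A$ is merely an $F$-finite henselian DVR, which need not contain a copy of its residue field $\ff$. More to the point, the paragraph is superfluous. After your first paragraph the surviving generators are already \emph{Teichm\"uller} elements $[x]_m\dlog(\un u)$ and $[x]_m\dlog(\un u)\wedge\dlog[\pi]_m$ with $[x]_m\in\Fil_nW_m(K)$. For a single Teichm\"uller coordinate the membership condition reads $x^{p^{m-1}}\pi^n\in A$, and since $p^{m-1}\mid n$ this says exactly $x\in\pi^{-\alpha}A$ with $\alpha=np^{1-m}$. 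Writing $x=a\pi^{-\alpha}$ with $a\in A$ yields directly $\wt\theta_1(a\otimes\un u)$ and $\wt\theta_2(a\otimes\un u)$ (or $\wt\phi$), with no further Witt-vector decomposition required; your element $\alpha\in W_m(\ff)$ would in fact already be the Teichm\"uller $[\ov a]_m$, so the $V^j$-terms for $j\ge1$ are automatically zero. This is precisely the route the paper takes, invoking \lemref{lem:Log-fil-4}(1) (i.e.\ $\Fil_nW_m(K)=[\pi^{-\alpha}]_mW_m(A)$) in place of your \propref{prop:Fil-decom}.
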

\begin{proof}
The map $\frac{\Fil_nW_{m}(K)}{\Fil_{n-1}W_{m}(K)} \otimes K^M_q(K) \to
  \wt{T}^{m,q}_n(K)$ is surjective by \corref{cor:VR-5}, and
  $\Fil_nW_{m}(K) = [\pi^{-\alpha}]_{m}W_{m}(A)$ by \lemref{lem:Log-fil-4}(1),
  where $\alpha = np^{1-m}$ if $1 \le m \le r$, and $\alpha = l$ if $m = r+1$.
  Using Lemmas~\ref{lem:VR-3} and ~\ref{lem:VR-4}, it only remains to show for
  proving (1) that we can replace $K^M_q(K)$ by $K^M_q(A)$ on the left hand side of
  this surjection.
To show the latter statement, it is enough to consider the case $q =1$ and to show that
  $[x\pi^{-l}]_{r+1}\dlog([\pi]_{r+1})$ lies in the image of $A \otimes A^\times$
under $\phi^1$ if $x \in A^\times$. But we know that
$[x\pi^{-l}]_{r+1}\dlog([\pi]_{r+1}) =
  -l^{-1}[x]_{r+1}d([\pi^{-l}]_{r+1})$. On the other hand,
  $[x]_{r+1}d([\pi^{-l}]_{r+1}) = d([x\pi^{-l}]_{r+1}) - [x\pi^{-l}]_{r+1}\dlog([x]_{r+1})$.
  It follows from this that $\delta^{r+1,1}_n([x\pi^{-l}]_{r+1}\dlog([\pi]_{r+1}))$ \\
  $= \phi^1(l^{-1}x \otimes x)$.

To prove (2), we can repeat the above steps which reduces finally to showing that
  $\delta^{m,1}_n([x\pi^{-\alpha}]_m\dlog([y]_m))$ lies in the image of
$(\theta^1_1, \theta^1_2)$, where $y \in K^\times$. To show this, we write
$y = u\pi^{\beta}$ for some $u \in A^\times$ and $\beta \in \Z$. This yields
  $[x\pi^{-\alpha}]_m\dlog([y]_m) = [x\pi^{-\alpha}]_m\dlog([u]_m) +
  \beta[x\pi^{-\alpha}]_m\dlog([\pi]_m) = \theta^1_1(x \otimes u) + \theta^1_2(\beta x)$.
  This concludes the proof.
\end{proof}

\subsection{The main theorem}\label{sec:Kato-fil-M}
  We shall now prove the main result of this section which extends
 \cite[Thm.~3.2]{Kato-89} to $H^{q+1}(K)\{p\}$ for all $q \ge 1$.
We shall assume until \thmref{thm:Kato-fil-10} that $n \ge 1$.
 We look at the commutative diagram
 \begin{equation}\label{eqn:Kato-6}
   \xymatrix@C1pc{
     T^{m,q}_{n-1}(K) \ar[r] \ar[d]_-{\ov{\delta}^{m,q}_{n-1}} & T^{m,q}_{n}(K)
     \ar[d]^-{\ov{\delta}^{m,q}_{n}} \\
     S^{m,q}_{n-1}(K) \ar@{^{(}->}[r] & S^{m,q}_n(K).}
 \end{equation}
 We let $\wt{S}^{m,q}_n(K) = \frac{S^{m,q}_{n}(K)}{S^{m,q}_{n-1}(K)}$ and let
$\wt{N}^{m,q}_n(K) = \frac{\wt{S}^{m,q}_n(K)}{\wt{S}^{m-1,q}_n(K)}$. We let 
$\psi^{m,q}_n \colon \wt{T}^{m,q}_n(K) \to \wt{S}^{m,q}_n(K)$ be the map induced
by $\{\ov{\delta}^{m,q}_{n}\}_{n \ge 0}$. It is clear that $\ov{\delta}^{m,q}_n$
 descends to a homomorphism $\ov{\delta}^{m,q}_n \colon \wt{M}^{m,q}_n(K) \to
 \wt{N}^{m,q}_n(K)$.
\begin{lem}\label{lem:Kato-fil-7}
  For $m \ge 1$, the map
  $\ov{\delta}^{m,q}_n \colon \wt{M}^{m,q}_n(K) \to \wt{N}^{m,q}_n(K)$ is bijective.
  \end{lem}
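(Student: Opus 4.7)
The plan is to prove injectivity and surjectivity of $\ov\delta^{m,q}_n \colon \wt M^{m,q}_n(K) \to \wt N^{m,q}_n(K)$ separately, using the injection $T^{m,q}_n(K) \hookrightarrow S^{m,q}_n(K)$ from \corref{cor:VR-5} together with the Kato-Kerz-Saito Theorems~\ref{thm:Kato-2} and \ref{thm:Kato-3} that describe $S^{m,q}_n(K)$ in terms of Milnor $K$-theory and Witt vectors.

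For surjectivity, I begin with an arbitrary $\chi \in S^{m,q}_n(K) = \Fil^{\bk}_n H^{q+1}(K)[p^m]$. Applying Theorems~\ref{thm:Kato-2}(2) and \ref{thm:Kato-3}, I write $\chi = \sum_i \alpha_i \cup \delta^0_{m_i}(a_i)$ with $\alpha_i \in K^M_q(K)$ and $a_i \in \Fil_n W_{m_i}(K)$ at various levels $m_i \ge 1$. Terms with $m_i \le m-1$ lie automatically in $S^{m-1,q}_n(K)$, while terms with $m_i = m$ lie in the image of $\ov\delta^{m,q}_n$ via the product formula~\eqref{eqn:Milnor-1}. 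The technical heart is the reduction of terms with $m_i > m$: using the compatibility between the inclusion $H^1(K)[p^m] \hookrightarrow H^1(K)[p^{m_i}]$ and the Witt-vector operation $V$ (both corresponding to multiplication-by-$p$-type maps via the Artin-Schreier-Witt sequence), and invoking Theorem~\ref{thm:Kato-3} at level $m$, one can replace such a term modulo $S^{m,q}_{n-1}(K) + S^{m-1,q}_n(K)$ by an expression whose Witt-vector factor lies in $\Fil_n W_m(K)$.

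For injectivity, the commutative diagram~\eqref{eqn:Kato-5} already gives the injection $T^{m,q}_{n-1}(K) \inj T^{m,q}_n(K)$, so $\psi^{m,q}_n$ and hence $\ov\delta^{m,q}_n \colon \wt M^{m,q}_n(K) \to \wt N^{m,q}_n(K)$ are well-defined. Suppose $t \in T^{m,q}_n(K)$ maps to an element of $S^{m,q}_{n-1}(K) + S^{m-1,q}_n(K)$. By \corref{cor:VR-5}, $t = \delta^{m,q}_n(\alpha \cdot a)$ for some $\alpha \in K^M_q(K)$ and $a \in \Fil_n W_m(K)$, and the hypothesis becomes $\alpha \cup \delta^0_m(a) \in S^{m,q}_{n-1}(K) + S^{m-1,q}_n(K)$. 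Running the surjectivity descent in reverse, I rewrite $\alpha \cup \delta^0_m(a)$ modulo $S^{m-1,q}_n(K)$ as $\sum_j \alpha'_j \cup \delta^0_m(a'_j)$ with $a'_j \in \Fil_{n-1} W_m(K)$, and use the injectivity from \corref{cor:VR-5} to lift this relation back to yield $t \in T^{m,q}_{n-1}(K) + V(T^{m-1,q}_n(K))$.

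The main obstacle is the level descent for $m_i > m$ in the surjectivity argument. Because individual summands in the Kato-Kerz-Saito presentation $\chi = \sum_i \alpha_i \cup \delta^0_{m_i}(a_i)$ need not be $p^m$-torsion, one cannot perform a naive term-by-term descent. Instead, the $p^m$-torsion of the full sum must be exploited globally, distributing the correction terms carefully between $S^{m,q}_{n-1}(K)$ and $S^{m-1,q}_n(K)$. This bookkeeping is the principal technical content of the proof.
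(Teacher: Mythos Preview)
Your approach is genuinely different from the paper's, and in its present form it has real gaps.

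\textbf{Surjectivity.} You correctly identify the obstacle: in a presentation $\chi=\sum_i \alpha_i\cup\delta^0_{m_i}(a_i)$ with $a_i\in\Fil_nW_{m_i}(K)$, the individual summands with $m_i>m$ need not be $p^m$-torsion and hence do not lie in $S^{m,q}_n(K)$ at all. You then state that the $p^m$-torsion of the \emph{sum} must be exploited globally and call the resulting bookkeeping ``the principal technical content of the proof'', but you do not carry it out. That is the whole difficulty; flagging it is not the same as resolving it.

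\textbf{Injectivity.} Your argument is circular. Given $t\in T^{m,q}_n(K)$ with $\ov\delta(t)=s_1+s_2$, $s_1\in S^{m,q}_{n-1}(K)$, $s_2\in S^{m-1,q}_n(K)$, you need preimages $t_1\in T^{m,q}_{n-1}(K)$ and $t_2\in T^{m-1,q}_n(K)$. That requires surjectivity of $\ov\delta^{m,q}_{n-1}$ and $\ov\delta^{m-1,q}_n$, i.e.\ \thmref{thm:Kato-fil-10} at those levels --- but \thmref{thm:Kato-fil-10} is proved \emph{using} the present lemma (via \lemref{lem:Kato-fil-8}). You also implicitly use that $\psi^{m,q}_n\colon\wt T^{m,q}_n(K)\to\wt S^{m,q}_n(K)$ is injective, which is again \lemref{lem:Kato-fil-8} and hence not yet available.

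\textbf{What the paper does instead.} The paper avoids all of this by factoring through residue-field differentials. Kato's proof of \thmref{thm:Kato-2} (see \cite[\S3, p.~114]{Kato-89}) actually computes the targets $\wt N^{m,q}_n(K)$ explicitly: they are identified with $\Omega^q_{\ff}/Z_1\oplus\Omega^{q-1}_{\ff}/Z_1$ for $1\le m\le r$, with $\Omega^q_{\ff}$ for $m=r+1$, and vanish for $m\ge r+2$. The paper's Lemmas~\ref{lem:Kato-fil-1}--\ref{lem:Kato-fil-6} construct surjections $\phi^q$, $(\theta^q_1,\theta^q_2)$ from the \emph{same} differential-form groups onto $\wt M^{m,q}_n(K)$, and one checks that their composite with $\ov\delta^{m,q}_n$ is Kato's isomorphism. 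A surjection followed by a map whose composite is bijective forces the second map to be bijective. This sidesteps both the level-descent and the circularity issues entirely.
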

\begin{proof}
  When $m \ge r+2$, we are done by \lemref{lem:Kato-fil-1} and
  \cite[Lem.~3.6.1]{Kato-89} as both groups are zero. Otherwise, we
look at the maps
  \[
  \Omega^q_{\ff} \xrightarrow{\phi^q} \wt{M}^{r+1,q}_n(K)
  \xrightarrow{\ov{\delta}^{r+1,q}_n}
  \wt{N}^{r+1,q}_n(K)
  \]
  and (when $1 \le m \le r$)
  \[
  \frac{\Omega^{q}_{\ff}}{Z_1\Omega^{q}_{\ff}}  \bigoplus
  \frac{\Omega^{q-1}_{\ff}}{Z_1\Omega^{q-1}_{\ff}} \xrightarrow{\theta^q_1, \theta^q_2}
  \wt{M}^{m,q}_n(K) \xrightarrow{\ov{\delta}^{m,q}_n} \wt{N}^{m,q}_n(K).
  \]
  Kato shows in \cite[\S~3]{Kato-89} (see the last step in proof of Theorem~3.2
  on p.~114) that the above two composite maps are bijective. 
  We now apply \lemref{lem:Kato-fil-6} to finish the proof.
\end{proof}

\begin{lem}\label{lem:Kato-fil-8}
  For $m \ge 1$, the square ~\eqref{eqn:Kato-6} is Cartesian.
\end{lem}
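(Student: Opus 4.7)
Since the bottom horizontal arrow of \eqref{eqn:Kato-6} is injective and the top horizontal arrow was shown to be injective just above \eqref{eqn:Kato-6}, the square is Cartesian if and only if the induced map
\[
\psi^{m,q}_n \colon \wt{T}^{m,q}_n(K) \longrightarrow \wt{S}^{m,q}_n(K)
\]
is injective. The plan is to prove this injectivity by induction on $m\ge 1$, using Lemma~\ref{lem:Kato-fil-7} as input and a diagram chase involving the Verschiebung operators on both sides.

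The base case $m=1$ is essentially free: since $T^{0,q}_n(K)=0$, we have $\wt{M}^{1,q}_n(K) = \wt{T}^{1,q}_n(K)$, and similarly $\wt{N}^{1,q}_n(K) = \wt{S}^{1,q}_n(K)$, so $\psi^{1,q}_n$ coincides with $\ov{\delta}^{1,q}_n$, which is bijective by Lemma~\ref{lem:Kato-fil-7}. For the inductive step I would assume $\psi^{m-1,q}_n$ is injective and consider the commutative diagram with exact rows
\[
\xymatrix@C1.2pc{
\wt{T}^{m-1,q}_n(K) \ar[r]^-{V} \ar[d]_-{\psi^{m-1,q}_n} & \wt{T}^{m,q}_n(K) \ar[r] \ar[d]^-{\psi^{m,q}_n} & \wt{M}^{m,q}_n(K) \ar[r] \ar[d]^-{\ov{\delta}^{m,q}_n}_-{\cong} & 0 \\
\wt{S}^{m-1,q}_n(K) \ar[r]^-{V} & \wt{S}^{m,q}_n(K) \ar[r] & \wt{N}^{m,q}_n(K) \ar[r] & 0,}
\]
whose commutativity follows from \eqref{eqn:Kato-5} (the $V$ on the $T$-side is induced from the Verschiebung on the filtered de Rham--Witt complex, while the $V$ on the $S$-side is the canonical inclusion $\Fil^{\bk}_nH^{q+1}(K)[p^{m-1}] \inj \Fil^{\bk}_nH^{q+1}(K)[p^m]$). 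The right vertical arrow is bijective by Lemma~\ref{lem:Kato-fil-7}.

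The key auxiliary fact is that the bottom $V$ is injective: if $z \in S^{m-1,q}_n(K) = \Fil^{\bk}_nH^{q+1}(K)[p^{m-1}]$ maps into $S^{m,q}_{n-1}(K) = \Fil^{\bk}_{n-1}H^{q+1}(K)[p^m]$, then $z$ lies in $\Fil^{\bk}_{n-1}H^{q+1}(K)[p^{m-1}] = S^{m-1,q}_{n-1}(K)$, so $z$ is zero in $\wt{S}^{m-1,q}_n(K)$. Given this, a standard diagram chase finishes the argument: if $\psi^{m,q}_n(x)=0$, then the image of $x$ in $\wt{N}^{m,q}_n(K)$ vanishes, so by the isomorphism $\ov{\delta}^{m,q}_n$ its image in $\wt{M}^{m,q}_n(K)$ vanishes, and therefore $x = V(y)$ for some $y \in \wt{T}^{m-1,q}_n(K)$. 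Commutativity of the left square gives $V(\psi^{m-1,q}_n(y))=\psi^{m,q}_n(V(y))=0$, and injectivity of the bottom $V$ forces $\psi^{m-1,q}_n(y)=0$; the induction hypothesis then yields $y=0$, hence $x=0$.

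The main obstacle I anticipate is bookkeeping: carefully unwinding the definitions so that the left-hand square of the diagram above is genuinely commutative with the ``same'' $V$ on both sides (this is where compatibility diagram \eqref{eqn:Kato-5} and the identification of $V$ on the $S$-side with the canonical torsion inclusion have to be used cleanly), and making sure the diagonal maps $\ov{\delta}^{m,q}_n$ descend correctly to the graded quotients $\wt{M}^{m,q}_n \to \wt{N}^{m,q}_n$ before invoking Lemma~\ref{lem:Kato-fil-7}. Once these identifications are in place, the induction and diagram chase are essentially formal.
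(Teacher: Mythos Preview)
Your proposal is correct and follows essentially the same route as the paper: reduce to injectivity of $\psi^{m,q}_n$, induct on $m$ using the same $V$-diagram with rows $\wt{T}^{m-1,q}_n \to \wt{T}^{m,q}_n \to \wt{M}^{m,q}_n \to 0$ and $0 \to \wt{S}^{m-1,q}_n \to \wt{S}^{m,q}_n \to \wt{N}^{m,q}_n \to 0$, and conclude via Lemma~\ref{lem:Kato-fil-7} together with the observation $S^{m-1,q}_n(K) \cap S^{m,q}_{n-1}(K) = S^{m-1,q}_{n-1}(K)$. The paper phrases this last point as exactness of the bottom row (i.e.\ injectivity of the bottom $V$) and then simply invokes a diagram chase, exactly as you do.
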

\begin{proof}
  Since $T^{m,q}_{n-1}(K) \to T^{m,q}_n(K)$ is injective
  (cf. diagram ~\eqref{eqn:Kato-6}), it is sufficient to show that the map
  $\ov{\delta}^{m,q}_n \colon \wt{T}^{m,q}_n(K) \to \wt{S}^{m,q}_n(K)$ is injective.
  We shall prove this by induction on $m$.
  When $m =1$, the map $\wt{T}^{m,q}_n(K) \to \wt{M}^{m,q}_n(K)$ is bijective, and
  we are done by \lemref{lem:Kato-fil-7}.
We now assume $m \ge 2$ and
  look at the commutative diagram
  \begin{equation}\label{eqn:Kato-fil-8-0}
    \xymatrix@C1pc{
      & \wt{T}^{m-1,q}_n(K) \ar[r]^-{V} \ar[d]_-{\ov{\delta}^{m-1,q}_n} &
      \wt{T}^{m,q}_n(K) \ar[r] \ar[d]^-{\ov{\delta}^{m,q}_n} &  \wt{M}^{m,q}_n(K)
      \ar[d]^-{\ov{\delta}^{m,q}_n} \ar[r] & 0 \\
      0 \ar[r] & \wt{S}^{m-1,q}_n(K) \ar[r] &
      \wt{S}^{m,q}_n(K) \ar[r] &  \wt{N}^{m,q}_n(K) \ar[r] & 0.}
    \end{equation}

It is clear from the definition of the groups ${S}^{m,q}_n(K)$ that
  $S^{m-1,q}_n(K) \bigcap S^{m,q}_{n-1}(K) = S^{m-1,q}_{n-1}(K)$, and this implies that
  the bottom row of ~\eqref{eqn:Kato-fil-8-0} is exact.
  The left vertical arrow in this diagram is injective by induction, and the
  right vertical arrow is bijective by \lemref{lem:Kato-fil-7}.
  It follows that the middle vertical arrow is injective.
\end{proof}

We can now finally prove the following main result of \S~\ref{sec:Kato-comp}.
Recall that $X = \Spec(A)$ and $D_n = V((\pi^n))$. 

\begin{thm}\label{thm:Kato-fil-10}
  For $m \ge 1$ and $n,q \ge 0$, the map
  \[
  \ov{\delta}^{m,q}_n \colon T^{m,q}_n(K) \to \Fil^{\bk}_n H^{q+1}(K)[p^m]
  \]
  is an isomorphism, where $T^{m,q}_n(K)=\H^1_\et(X, W_m\sF^{q,\bullet}_{X|{D_n}})$.
\end{thm}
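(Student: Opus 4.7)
The plan is to reduce the theorem to a clean direct-limit argument using the Cartesian square established in Lemma~\ref{lem:Kato-fil-8}. Injectivity of $\ov{\delta}^{m,q}_n$ has already been observed in the paragraph preceding \S~\ref{sec:ind-T}: by \corref{cor:VR-5} the map factors through an injection into $H^{q+1}(K)$, its image is $p^m$-torsion, and it lies in $\Fil^{\bk}_n H^{q+1}(K)\{p\}$ by \thmref{thm:Kato-2}(2), hence in $S^{m,q}_n(K)$. So it remains to prove surjectivity. Set $C_n := S^{m,q}_n(K)/T^{m,q}_n(K)$ viewed as the cokernel of the injection $\ov{\delta}^{m,q}_n$; the goal is to show each $C_n = 0$.

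The key first step is to show that the natural map $C_{n-1} \to C_n$ induced by the inclusions $T^{m,q}_{n-1} \subseteq T^{m,q}_n$ and $S^{m,q}_{n-1} \subseteq S^{m,q}_n$ is injective for every $n \ge 1$. This is immediate from \lemref{lem:Kato-fil-8}: if $s \in S^{m,q}_{n-1}(K)$ becomes $0$ in $C_n$, then $s \in T^{m,q}_n(K) \cap S^{m,q}_{n-1}(K)$, and the Cartesian property of the square \eqref{eqn:Kato-6} identifies this intersection with $T^{m,q}_{n-1}(K)$, so $s$ maps to $0$ in $C_{n-1}$. Thus we obtain an increasing chain $C_0 \hookrightarrow C_1 \hookrightarrow C_2 \hookrightarrow \cdots$ of subgroups.

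Next, I will show that $\varinjlim_n C_n = 0$; combined with the injectivity just established, this forces $C_n = 0$ for every $n$ (since any element maps to $0$ eventually in an injective direct system). To compute the limits on each side: using \corref{cor:VR-5} and the fact that image commutes with filtered colimits,
\[
\varinjlim_n T^{m,q}_n(K) = \mathrm{image}\bigl(\varinjlim_n \Fil_n W_m\Omega^q_K \to H^{q+1}(K)\bigr) = \delta^q_m(W_m\Omega^q_K),
\]
where the colimit equality is \lemref{lem:Log-fil-4}(4). By the exact sequence \eqref{eqn:Milnor-0.1} applied to the field $K$ (which is regular local), this image equals $H^{q+1}_{p^m}(K)$, and \lemref{lem:Inj-GL} (taking $r=0$, so that $X = X_\pi = \Spec K$) identifies this with $H^{q+1}(K)[p^m]$. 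On the other side, Kato's filtration is exhaustive on $H^{q+1}(K)\{p\}$ by its construction in \defref{defn:Kato-1}, so
\[
\varinjlim_n S^{m,q}_n(K) = \bigl(\varinjlim_n \Fil^{\bk}_n H^{q+1}(K)\{p\}\bigr)[p^m] = H^{q+1}(K)[p^m].
\]
Hence $\varinjlim_n C_n = H^{q+1}(K)[p^m]/H^{q+1}(K)[p^m] = 0$, which completes the argument.

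The hard parts of the proof are already behind us, packaged in \lemref{lem:Kato-fil-7} (bijectivity on the $\wt M/\wt N$ graded pieces, which rests on Kato's analysis in \cite{Kato-89}) and \lemref{lem:Kato-fil-8} (the Cartesian square, proved by an induction on $m$ using \lemref{lem:Kato-fil-7}); the present theorem is then a formal consequence. The only subtlety worth highlighting is that no separate $n = 0$ base case is needed: the direct-limit argument handles all $n \ge 0$ uniformly, because the chain of cokernels $\{C_n\}_{n \ge 0}$ is injective and has trivial colimit.
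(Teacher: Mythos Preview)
Your proof is correct and follows essentially the same approach as the paper's: the paper argues element-wise (take $w \in S^{m,q}_n(K)$, show it lies in $T^{m,q}_{n'}(K)$ for some $n' \gg 0$ via exhaustiveness of the $T$-filtration, then descend to $T^{m,q}_n(K)$ using the Cartesian square of \lemref{lem:Kato-fil-8}), while you package the identical logic in terms of the injective system of cokernels $C_n$ with trivial colimit. One small remark: your appeal to exhaustiveness of Kato's filtration ``by its construction in \defref{defn:Kato-1}'' is not immediate from that definition alone, but you do not actually need it as a separate input---since $T^{m,q}_n(K) \subseteq S^{m,q}_n(K) \subseteq H^{q+1}(K)[p^m]$, your computation $\varinjlim_n T^{m,q}_n(K) = H^{q+1}(K)[p^m]$ already forces $\varinjlim_n S^{m,q}_n(K) = H^{q+1}(K)[p^m]$ by squeezing.
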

\begin{proof}
By \corref{cor:VR-5}, we only need to show that $\ov{\delta}^{m,q}_n$ is surjective.
  For this, we let $w \in \Fil^{\bk}_n H^{q+1}(K)[p^m] = S^{m,q}_n(K)$.
  As $H^{q+1}(K)[p^m] = H^1_\et(K, W_m\Omega^q_{K, \log})$
  by \lemref{lem:Inj-GL}, we have that $w \in H^1_\et(K, W_m\Omega^q_{K, \log})$.
  Since $\delta^{m,q} \colon W_m\Omega^q_K \surj H^1_\et(K, W_m\Omega^q_{K, \log})$, it
  follows by \lemref{lem:Log-fil-4}(4)
 that ${\varinjlim}_n T^{m,q}_n(K)$ $= H^1_\et(K, W_m\Omega^q_{K, \log})$.
In particular, $w \in T^{m,q}_{n'}(K) \bigcap S^{m,q}_n(K)$ for all $n' \gg 0$.
We conclude from \lemref{lem:Kato-fil-8} that $w \in  T^{m,q}_{n}(K)$.
\end{proof}

The following easy consequence of \thmref{thm:Kato-fil-10} will be used in the
next section.

\begin{cor}\label{cor:Kato-fil-11}
  For $m \ge 1$ and $n,q \ge 0$, the canonical map
  \[
  \frac{H^{q+1}_{p^m}(K)}{\Fil^{\bk}_n H^{q+1}(K)[p^m]} \to 
\frac{H^{q+1}_{p^m}(\wh{K})}{\Fil^{\bk}_n H^{q+1}(\wh{K})[p^m]} 
\]
is bijective.
\end{cor}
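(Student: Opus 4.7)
The plan is to combine \thmref{thm:Kato-fil-10} with the explicit computation of the associated graded of Kato's filtration carried out in Lemmas~\ref{lem:Kato-fil-6}--\ref{lem:Kato-fil-8}, exploiting that the residue field $\ff$ of $A$ and that of $\wh{A}$ coincides. By \thmref{thm:Kato-fil-10}, the inclusion $T^{m,q}_n(K) \inj H^{q+1}_{p^m}(K)$ is compatible with the corresponding one over $\wh{K}$, and the identification $H^{q+1}_{p^m}(K) = \varinjlim_{n'} T^{m,q}_{n'}(K)$, noted near the end of the proof of \thmref{thm:Kato-fil-10}, rewrites each side of the corollary as a filtered colimit
\[
\frac{H^{q+1}_{p^m}(K)}{\Fil^{\bk}_n H^{q+1}(K)[p^m]} \ \cong \ \varinjlim_{n' \ge n} \frac{T^{m,q}_{n'}(K)}{T^{m,q}_n(K)},
\]
and likewise for $\wh K$. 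A straightforward induction on $n' - n$ using the short exact sequences
\[
0 \to \frac{T^{m,q}_{n'-1}(K)}{T^{m,q}_n(K)} \to \frac{T^{m,q}_{n'}(K)}{T^{m,q}_n(K)} \to \wt{T}^{m,q}_{n'}(K) \to 0
\]
(whose exactness rests on the injectivity of $T^{m,q}_{n'-1} \inj T^{m,q}_{n'}$ noted in \S\ref{sec:ind-T}) then reduces the claim to showing that $\wt{T}^{m,q}_{n'}(K) \to \wt{T}^{m,q}_{n'}(\wh{K})$ is bijective for every $n' \ge 1$.

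To prove this, I use \thmref{thm:Kato-fil-10} together with the Cartesian square of \lemref{lem:Kato-fil-8} to identify $\wt{T}^{m,q}_{n'}(K)$ with $\wt{S}^{m,q}_{n'}(K)$ compatibly with the map induced by $K \to \wh{K}$. An induction on $m$ using the short exact sequence
\[
0 \to \wt{S}^{m-1,q}_{n'}(K) \to \wt{S}^{m,q}_{n'}(K) \to \wt{N}^{m,q}_{n'}(K) \to 0
\]
exhibited in the proof of \lemref{lem:Kato-fil-8} (the base case $m=0$ being trivial since $S^{0,q}_{n'}=0$) and the five lemma then reduces the problem finally to the bijectivity of $\wt{N}^{m,q}_{n'}(K) \to \wt{N}^{m,q}_{n'}(\wh{K})$.

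This last step is the main (and essentially only substantive) point. Writing $n' = p^r \ell$ with $(p, \ell) = 1$, Lemmas~\ref{lem:Kato-fil-6} and \ref{lem:Kato-fil-7} produce an isomorphism
\[
\ov{\delta}^{r+1,q}_{n'} \circ \phi^q \colon \Omega^q_{\ff} \xrightarrow{\cong} \wt{N}^{r+1,q}_{n'}(K)
\]
and, for $1 \le m \le r$, an isomorphism
\[
\ov{\delta}^{m,q}_{n'} \circ (\theta^q_1, \theta^q_2) \colon
\frac{\Omega^q_{\ff}}{Z_1\Omega^q_{\ff}} \oplus \frac{\Omega^{q-1}_{\ff}}{Z_1\Omega^{q-1}_{\ff}} \xrightarrow{\cong} \wt{N}^{m,q}_{n'}(K),
\]
while $\wt{N}^{m,q}_{n'}(K) = 0$ for $m \ge r+2$ by \lemref{lem:Kato-fil-1} together with \lemref{lem:Kato-fil-7}. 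The very same maps compute $\wt{N}^{m,q}_{n'}(\wh{K})$: their sources depend only on the common residue field $\ff$, and since the Teichm{\"u}ller lift of $x \in \ff$ to $A$ maps to its Teichm{\"u}ller lift in $\wh{A}$, the naturality of $\phi^q$ and $(\theta^q_1, \theta^q_2)$ under $A \to \wh{A}$ (itself a consequence of the compatibility of the filtered de Rham-Witt complex with completion in \lemref{lem:Non-complete-3*} and \lemref{lem:Non-complete-4}) identifies the comparison map with the identity on the common $\ff$-module, finishing the proof.
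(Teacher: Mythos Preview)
Your argument is correct but takes a substantially longer route than the paper's. You unwind the quotient as a colimit of successive quotients, then reduce via the filtrations $\wt{T} \cong \wt{S}$ and the $V$-filtration to the graded pieces $\wt{N}^{m,q}_{n'}$, and finally invoke Kato's explicit identification of these with differential forms on $\ff$ (cited in the proof of \lemref{lem:Kato-fil-7}) to see that both sides are the same $\ff$-module. Two small remarks: the identification $\wt{T}^{m,q}_{n'} \cong \wt{S}^{m,q}_{n'}$ follows more directly from \thmref{thm:Kato-fil-10} (both vertical arrows in ~\eqref{eqn:Kato-6} are bijections) than from the Cartesian property of \lemref{lem:Kato-fil-8}; and the naturality of $\phi^q,\theta^q_1,\theta^q_2$ under $A \to \wh{A}$ is immediate from their defining formulas and the functoriality of $\delta^{m,q}_n$, rather than needing Lemmas~\ref{lem:Non-complete-3*}--\ref{lem:Non-complete-4}.

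The paper instead observes that, by \thmref{thm:Kato-fil-10} and \lemref{lem:hyp}, each quotient is exactly the local-cohomology group $\H^2_{\ff}(X, W_m\sF^{q,\bullet}_{X|D_n})$ (respectively its completion), and then appeals to the invariance of local cohomology of coherent modules under completion to compare the two hypercohomology long exact sequences. This is shorter and avoids any appeal to Kato's computation of the graded pieces; your approach, on the other hand, makes the isomorphism concrete at the level of residue-field differentials and would also prove the result without knowing \lemref{lem:hyp} in degree~$2$.
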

\begin{proof}
  By \thmref{thm:Kato-fil-10}, the term on the left hand side is the {\'e}tale
  cohomology with support $\H^2_{\ff}(X, W_m\sF^{q,\bullet}_{X|{D_n}})$ at the closed
  point of $X$. Similarly, the term on the right hand side is
  $\H^2_{\ff}(\wh{X}, W_m\sF^{q,\bullet}_{{\wh{X}}|{D_n}})$, where $\wh{X} = \Spec(\wh{A})$.
  By comparing the long exact cohomology sequences associated to the complexes
  $W_m\sF^{q,\bullet}_{X|{D_n}}$ and $W_m\sF^{q,\bullet}_{{\wh{X}}|{D_n}}$, we see meanwhile that
  the map $\H^2_{\ff}(X, W_m\sF^{q,\bullet}_{X|{D_n}}) \to
  \H^2_{\ff}(\wh{X}, W_m\sF^{q,\bullet}_{{\wh{X}}|{D_n}})$ is bijective.
\end{proof}

\section{Global Kato filtration via filtered de Rham-Witt}\label{sec:Kato-global}
The goal of this section is to present Kato's ramification filtration
of regular $F$-finite $\F_p$-schemes in terms of the hypercohomology of the complexes
$W_m\sF^{q,\bullet}_{X|D}$. This will extend a result of Kerz-Saito
(cf. \cite[\S~3]{Kerz-Saito-ANT})
to $H^{q+1}(U)\{p\}$ for all $q \ge 0$. The main result of this section has several
applications. Some of these will be presented in the latter sections of
this text.

We let $k = \F_p$ and let $X$ be a Noetherian regular $F$-finite $k$-scheme. 
Let $E$ be a simple normal crossing divisor on $X$
with irreducible components $E_1, \ldots , E_r$.  Let $j \colon U \inj X$ be the
inclusion of the complement of $E$ in $X$. Let $\eta_i$ denote the generic
point of $E_i$ and let $K_i$ (resp. $\wh{K}_i$) be the quotient field of
$\sO^h_{X, \eta_i}$ (resp. $\wh{\sO_{X, \eta_i}}$). Let
$h_i \colon \Spec({\sO^h_{X, \eta_i}}) \to X$ denote the canonical map.
We fix integers $q \ge 0$ and $m \ge 1$. Let $D = \sum_i n_iE_i \in \Div_E(X)$
with each $n_i \ge 0$.
In view of \lemref{lem:Inj-GL}, we shall make no distinction between
$H^1_\et(L,W_m\Omega^q_{L,\log})$ and $H^{q+1}(L)[p^m]$ in our exposition if
$L$ is any $F$-finite henselian discrete valuation field containing $k$.
Recall the complex of sheaves 
\[
W_m\sF^{q,\bullet}_{X|D} =\left[Z_1\Fil_{D} W_m\Omega^q_U \xrightarrow{1 -C}
  \Fil_{D}W_m\Omega^q_U\right]
\]
on the big {\'e}tale site of $X$. Recall also that
$j^*W_m\sF^{q,\bullet}_{X|D} \cong W_m\Omega^q_{U,\log}$.

\subsection{Global Kato filtration}\label{sec:GKF}
For $1 \le i \le r$, let $j_i \colon H^1_\et(U,W_m\Omega^q_{U,\log}) = H^{q+1}_{p^m}(U)
\to H^{q+1}_{p^m}(K_i)$ be the canonical restriction map. 
We recall the global Kato filtration of $H^{q+1}_{p^m}(U)$ below.
\begin{defn}(cf. \cite[\S~7, 8]{Kato-89}, \cite[Defn.~2.7]{Kerz-Saito-ANT})
  \label{defn:Log-fil-D} 
  We let $n \ge 1$ be an integer and write $n = p^mr$ where $m \ge 0$ and
$p \nmid r$. We let
  \[
  \Fil^\log_DH^{q+1}_{p^m}(U) :=
  \Ker \left(H^{q+1}_{p^m}(U) \xrightarrow{\bigoplus j_i} \
  \stackrel{r}{\underset{i =1}\bigoplus} 
  \frac{H^{q+1}_{p^m}(K_i)}{\Fil^{\bk}_{n_i} H^{q+1}(K_i)[p^m]}\right);
  \]
  \[
  \Fil^\log_DH^{q+1}_n(U) := \Fil^\log_DH^{q+1}_{p^m}(U) \bigoplus H^{q+1}_{r}(U).
  \]
  \[
  \Fil^\log_DH^{q+1}(U) := \varinjlim_n  \Fil^\log_DH^{q+1}_{n}(U).
  \]
  \end{defn}
By \corref{cor:Kato-fil-11}, we can replace each $K_i$ by $\wh{K}_i$ in the
definition of $ \Fil^\log_DH^{q+1}_{p^m}(U)$.
Note also that each quotient on the right hand side of the definition of
$\Fil^\log_DH^{q+1}_{p^m}(U)$ makes sense by \lemref{lem:Inj-GL}.
It is clear that $\Fil^\log_DH^{q+1}(U) =
\Ker \left(H^{q+1}(U)\{p\} \xrightarrow{\bigoplus j_i} \
  \stackrel{r}{\underset{i =1}\bigoplus} 
  \frac{H^{q+1}(K_i)}{\Fil^{\bk}_{n_i} H^{q+1}(K_i)}\right)$ \
  $\bigoplus H^{q+1}(U)\{p'\}$. It is also easily checked that the canonical map
  $\varinjlim_D \Fil^\log_DH^{q+1}_{p^m}(U) \to  H^{q+1}_{p^m}(U)$ is bijective.

\begin{exm}\label{exm:Examples-fil}
  (1) When $q=0$, we have 
\begin{equation}\label{eqn:fundamental}
  \Fil^\log_DH^{1}_{p^m}(U) \xrightarrow{\cong} \Hom_\cont(\pi_1^{\ab}(X,D)/p^m, \Q/\Z),
\end{equation}
where $\pi_1^{\ab}(X,D)$ is the abelianized {\'e}tale fundamental group with modulus
(\cite[Defn.~2.7]{Kerz-Saito-ANT}, \cite[Defn.~2.9]{GK-Nis}) which is the
abelianization of the automorphism group of the fiber
functor of the Galois category of finite étale covers of $U$ whose
ramifications are bounded at each generic point of $E$ by means of Kato’s Swan
conductor.

\vskip .2cm

(2) When $q =1$, there is a canonical exact sequence
\begin{equation}\label{eqn:fundamental-0}
  0 \to {\Pic(U)}/{p^m} \to \Fil^\log_DH^{2}_{p^m}(U) \to \Fil^\log_D \Br(U)[p^m] \to 0,
\end{equation}
where $\Fil^\log_D \Br(U)$ is the Brauer group with modulus (cf. \cite[Defn.~8.7]{KRS},
where it was denoted by $\Br^{\divv}(X|D)$).
It is defined as the subgroup of $\Br(U)$ consisting of elements $\chi$
  such that for every $1 \le i \le r$, the image of
$\chi$ under the canonical map $\Br(U) \to\Br(K_i) = H^2(K_i)$ 
lies in $\Fil^{\bk}_{n_i} H^2(K_i)$.
The above exact sequence is a direct consequence of the sheaf exact sequence
\[
0 \to \sO^{\times}_U \xrightarrow{p^m}  \sO^{\times}_U \xrightarrow{\dlog}
W_m\Omega^1_{U, \log} \to 0.
\]
\end{exm}

By \cite[Lem.~3.6]{Kerz-Saito-ANT},  there is a canonical isomorphism
\[
\Fil^\log_DH^{q+1}_{p^m}(U) \cong \H^{1}_\et(X, W_m\sF^{q,\bullet}_{X|D})
\text{ if }q=0.
\]
Our goal in this section is to extend this isomorphism to arbitrary $q \ge 0$.
Applications of such an isomorphism will be given in the following sections.
More applications will appear in \cite{KM-1} and \cite{KM-2}.

\subsection{Cartier operator on local cohomology}\label{sec:Cartier-loc}
The proof of our main theorem will be based on a local result that we shall prove
in this subsection. We begin with the following.

\begin{lem}\label{lem:Z-1-fil}
  Let $\ff$ be an $F$-finite field of characteristic $p$.
  Let $R = \ff[[Y_1,Y_2]], \ K=R[(Y_1Y_2)^{-1}]$ and $n, m \in \Z$. Let's write
  \[
  F^{1,q}_0 = F^{1,q}_0(R)=\Omega^q_{\ff} \oplus  \Omega^{q-1}_{\ff} \dlog Y_1 \oplus
  \Omega^{q-1}_{\ff} \dlog Y_2 \oplus  \Omega^{q-2}_{\ff} \dlog Y_1 \dlog Y_2
  \]
  and
  \[
  Z_1F^{1,q}_0=Z_1\Omega^q_{\ff} \oplus  (Z_1\Omega^{q-1}_{\ff}) \dlog Y_1 \oplus
  (Z_1\Omega^{q-1}_{\ff}) \dlog Y_2 \oplus ( Z_1\Omega^{q-2}_{\ff}) \dlog Y_1 \dlog Y_2.
  \]
    Then $Z_1\Fil_{(n,m)}\Omega^q_K$ has the following unique presentation.
    \begin{equation}\label{eqn:Z-1-fil-0}
    Z_1\Fil_{(n,m)}\Omega^q_K = \sum\limits_{\substack{ip \ge -n \\ jp \ge -m}}Y_1^{ip}Y_2^{jp}
    \left( Z_1F^{1,q}_0 \right)  \ +
    \sum\limits_{\substack{i \ge -n \\  j  \ge -m \\ p \nmid i
        \text{ or } p \nmid j}} d(Y_1^iY_2^j F^{1, {q-1}}_0).
    \end{equation}

Moreover, one has
    \begin{enumerate}
    \item $(\Fil_{(n,m)}\Omega^q_K)[(Y_1Y_2)^{-1}]= \Omega^q_K$ and
      $(Z_1\Fil_{(n,m)}\Omega^q_K)[(Y_1Y_2)^{-1}]= Z_1\Omega^q_K$, where we consider
      $Z_1\Omega^q_K$ and $Z_1\Fil_{(n,m)}\Omega^q_K$ as $R$-modules via the
      Frobenius action.
    \item Let $\omega \in \Omega^q_K$. Then
    \begin{enumerate}
    		\item  $\omega \in (\Fil_{(n,m)}\Omega^q_K)[(Y_1)^{-1}]$ iff it can be written
      uniquely as
      \[
      \omega = \sum\limits_{\substack{i\ge -N_0 \\ j\ge -m}} Y_1^iY_2^j a_{i,j} ,
      \text{  for some $N_0 >0$ and } a_{i,j} \in F^{1,q}_0.
      \]

    \item  $\omega \in (\Fil_{(n,m)}\Omega^q_K)[(Y_2)^{-1}]$ iff it can be written
      uniquely as
      \[
      \omega = \sum\limits_{\substack{i\ge -n \\ j\ge -M_0}} Y_1^iY_2^j b_{i,j} ,
      \text{  for some $M_0 >0$ and } b_{i,j} \in F^{1,q}_0.
      \]

    \item  $\omega \in (Z_1\Fil_{(n,m)}\Omega^q_K)[(Y_1)^{-1}]$ iff it can be written
      uniquely as
      \[
      \omega = \sum\limits_{\substack{i\ge -N'_0 \\ jp\ge -m}} Y_1^{ip}Y_2^{jp} e_{i,j} +
      \sum\limits_{\substack{i\ge -N'_0 \\ j\ge -m \\
          p\nmid i \text{ or } p \nmid j}}d(Y_1^iY_2^jf_{i,j}),
      \]
      for some $N'_0 >0$ and
$e_{i,j} \in Z_1F^{1,q}_0, \  f_{i,j} \in F^{1,{q-1}}_0.$

    \item $\omega \in (Z_1\Fil_{(n,m)}\Omega^q_K)[(Y_2)^{-1}]$ iff it can be written
      uniquely as
      \[
      \omega = \sum\limits_{\substack{ip\ge -n \\ j\ge -M'_0}} Y_1^{ip}Y_2^{jp} e'_{i,j} +
      \sum\limits_{\substack{i\ge -n \\ j\ge -M'_0 \\
          p\nmid i \text{ or } p \nmid j}}d(Y_1^iY_2^jf'_{i,j}),
      \]
      for some $M'_0 >0$ and  $e'_{i,j} \in Z_1F^{1,q}_0, \  f'_{i,j} \in
      F^{1, {q-1}}_0.$
    \end{enumerate}        
    
\end{enumerate}
    \end{lem}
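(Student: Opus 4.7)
The plan is to establish the main identity~\eqref{eqn:Z-1-fil-0} first, and then deduce the localization statements~(1) and~(2) as formal corollaries. The forward inclusion $\supseteq$ in~\eqref{eqn:Z-1-fil-0} is immediate: a Frobenius-type term $Y_1^{ip}Y_2^{jp}e$ with $e\in Z_1F^{1,q}_0$ is closed because $d(Y_1^{ip})=d(Y_2^{jp})=0$ in characteristic $p$ and $de=0$ by definition, while an exact form $d(Y_1^iY_2^j f)$ is trivially closed; both lie in $\Fil_{(n,m)}\Omega^q_K$ by the stated index bounds and the unique expansion of \corref{cor:F-function-2} applied to $A=\ff[[Y_1,Y_2]]$ with $\pi=Y_1Y_2$.

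For the reverse inclusion, I would write any $\omega\in\Fil_{(n,m)}\Omega^q_K$ in its unique expansion $\omega=\sum_{i\ge -n,\, j\ge -m} Y_1^iY_2^j a_{i,j}$ with $a_{i,j}\in F^{1,q}_0$ and compute
\[
d\omega=\sum_{i,j} Y_1^iY_2^j\,L_{i,j}(a_{i,j}),\qquad L_{i,j}(a):=(i\dlog Y_1 + j\dlog Y_2)\wedge a + d_\ff a.
\]
By the uniqueness of the decomposition of $\Omega^{q+1}_K$, the hypothesis $d\omega=0$ forces $L_{i,j}(a_{i,j})=0$ for every $(i,j)$. When $p\mid i$ and $p\mid j$ the Koszul part of $L_{i,j}$ vanishes modulo $p$, the constraint reduces to $a_{i,j}\in Z_1F^{1,q}_0$, and $Y_1^iY_2^j a_{i,j}$ contributes a Frobenius-type term of~\eqref{eqn:Z-1-fil-0} with $i=(i/p)p,\ j=(j/p)p$. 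When $p\nmid i$ or $p\nmid j$, the pair $(i,j)$ is nonzero in $\ff^2$; viewing $F^{1,\bullet}_0\cong\Omega^\bullet_\ff\otimes_\ff\Lambda^\bullet(\dlog Y_1,\dlog Y_2)$ as a bicomplex with $d_\ff$ in one direction and Koszul multiplication $(i\dlog Y_1+j\dlog Y_2)\wedge(-)$ in the other, a direct check shows the Koszul factor is acyclic whenever $(i,j)\ne(0,0)$ in $\ff^2$, and the standard spectral sequence filtered by the $\Omega^\bullet_\ff$-degree collapses to show that $(F^{1,\bullet}_0, L_{i,j})$ is acyclic. Hence $a_{i,j}=L_{i,j}(f_{i,j})$ for some $f_{i,j}\in F^{1,q-1}_0$, giving $Y_1^iY_2^j a_{i,j}=d(Y_1^iY_2^jf_{i,j})$ of the exact type with the correct bounds $i\ge -n,\ j\ge -m$. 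Uniqueness of the presentation follows because the two types of summands project onto disjoint index blocks in the basis of \corref{cor:F-function-2}: Frobenius-type terms live at positions $(ip,jp)$ with both coordinates in $p\Z$, while each $d(Y_1^iY_2^j f_{i,j})$ with $p\nmid i$ or $p\nmid j$ projects only onto position $(i,j)$, where the correspondence $f_{i,j}\leftrightarrow a_{i,j}$ is bijective by the injectivity of $L_{i,j}$ restricted to a complement of its (nontrivial in degree $0$, trivial here) kernel.

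Items (1) and (2) follow formally. For (1)(a), inverting $Y_1Y_2$ removes all lower bounds in \corref{cor:F-function-2}, yielding $\Omega^q_K$; for (1)(b), the same argument applied to the right-hand side of~\eqref{eqn:Z-1-fil-0} together with the observation that the Frobenius $R$-action sends $Y_i\mapsto Y_i^p$ (so inverting $Y_1Y_2$ on the source inverts $Y_1^pY_2^p$ on the target and thereby removes the conditions $ip\ge -n,\ jp\ge -m$) gives $(Z_1\Fil_{(n,m)}\Omega^q_K)[(Y_1Y_2)^{-1}]=Z_1\Omega^q_K$. Items (2)(a)--(d) are obtained by inverting only $Y_1$ (respectively $Y_2$): this lifts the lower bound on the $i$- (respectively $j$-) exponent but preserves the other bound, and the stated unique expansions can be read off directly from~\eqref{eqn:Z-1-fil-0} combined with \corref{cor:F-function-2}. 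The \textbf{main obstacle} is the Koszul acyclicity for the operator $L_{i,j}$, together with careful bookkeeping of the filtration bounds to ensure that the solution $f_{i,j}$ of $L_{i,j}(f_{i,j})=a_{i,j}$ respects $i\ge -n,\ j\ge -m$; once this is established, the remaining steps are routine verifications using the basis of \corref{cor:F-function-2}.
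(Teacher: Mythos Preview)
Your approach is correct and genuinely different from the paper's. The paper proves the reverse inclusion by invoking the Cartier isomorphism \lemref{lem:Complete-4}(2) (for $m=1$), which gives $\ov{C}\colon Z_1\Fil_{(n,m)}\Omega^q_K/d(\Fil_{(n,m)}\Omega^{q-1}_K)\cong \Fil_{(n,m)/p}\Omega^q_K$; this immediately produces the Frobenius-type part $\omega_1$, and the remainder $\omega-\omega_1$ then lies in $d(\Fil_{(n,m)}\Omega^{q-1}_K)$, which one decomposes by index. Your argument bypasses the Cartier machinery entirely: you expand $\omega$ coefficientwise, read off the constraint $L_{i,j}(a_{i,j})=0$ at each index, and for $(i,j)\not\equiv(0,0)\pmod p$ use Koszul acyclicity of $(F^{1,\bullet}_0,L_{i,j})$ to exhibit $a_{i,j}$ as exact. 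This is more elementary and self-contained; the paper's route has the advantage of reusing structural results already established for the filtered de Rham--Witt complex.

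One small point: your sentence about ``injectivity of $L_{i,j}$ restricted to a complement of its kernel'' is muddled. The $f_{i,j}$ are \emph{not} unique (for $q\ge 2$ the kernel of $L_{i,j}$ in degree $q-1$ is the image from degree $q-2$, hence nonzero), and the lemma does not assert they are. The ``unique presentation'' refers to the direct-sum decomposition of $\omega$ into the components $Y_1^{ip}Y_2^{jp}e_{i,j}$ and $d(Y_1^iY_2^jf_{i,j})=Y_1^iY_2^j\,L_{i,j}(f_{i,j})$, which is immediate from \corref{cor:F-function-2} since these land at disjoint index positions. Simply drop the injectivity remark and the uniqueness claim is already established by your ``disjoint index blocks'' observation.
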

\begin{proof}
  It is clear from \corref{cor:F-function-2} and \lemref{lem:Non-complete-4} that
  the right hand side of ~\eqref{eqn:Z-1-fil-0} is contained in
  $Z_1\Fil_{(n,m)}\Omega^q_K$.
  To prove the reverse inclusion, we let $\omega \in Z_1\Fil_{(n,m)}\Omega^q_K$.
  \lemref{lem:Complete-4}(2) for $m=1$ and \corref{cor:F-function-2} then say that
  there exists an element $\omega_1 =
  \sum\limits_{\substack{ip \ge -n \\ jp \ge -m}}Y_1^{ip}Y_2^{jp} e_{i,j}$ with
  $e_{i,j} \in Z_1F^{1,q}_0$ such that $\omega - \omega_1 \in
  d(\Fil_{(n,m)} \Omega^{q-1}_K)$. By \corref{cor:F-function-2} and
  \lemref{lem:Non-complete-4}, the latter group is
  of the form $\sum\limits_{\substack{i \ge -n \\ j \ge -m}} d(Y_1^iY_2^j F^{1, {q-1}}_0)$.
  We can however write this term as
  \[
  \begin{array}{lll}
  \sum\limits_{\substack{i \ge -n \\ j \ge -m}} d(Y_1^iY_2^j F^{1, {q-1}}_0) & = &
  \sum\limits_{\substack{i \ge -n \\  j  \ge -m \\ p \nmid i
      \text{ or } p \nmid j}} d(Y_1^iY_2^j F^{1, {q-1}}_0) \ + \
  \sum\limits_{\substack{i \ge -n \\  j  \ge -m \\ p \mid i,j}}
  d(Y_1^iY_2^j F^{1, {q-1}}_0) \\
  & \subset & \sum\limits_{\substack{i \ge -n \\  j  \ge -m \\ p \nmid i
      \text{ or } p \nmid j}} d(Y_1^iY_2^j F^{1, {q-1}}_0) \ + \
  \sum\limits_{\substack{ip \ge -n \\ jp \ge -m}}Y_1^{ip}Y_2^{jp}
  \left( Z_1F^{1,q}_0 \right).
  \end{array}
    \]
    It follows that $\omega = \omega'_1 + \omega_2$, where $\omega'_1$ lies in the
    first and $\omega_2$ lies in the second sum on the right hand side of
    ~\eqref{eqn:Z-1-fil-0}. The uniqueness of this presentation is clear from
    \corref{cor:F-function-2}.

We now prove the other claims. Item (1) is clear because $\Fil_{(n,m)}\Omega^q_K$ and
    $Z_1\Fil_{(n,m)}\Omega^q_K$ are $R$-modules. Items (2.a) and (2.b) follow directly
    from \corref{cor:F-function-2} using again that $\Fil_{(n,m)}\Omega^q_K$ is 
    an $R$-module. Items (2.c) and (2.d) follow 
    from ~\eqref{eqn:Z-1-fil-0} once we note
    that the $R$-module structure of $Z_1\Fil_{(n,m)}\Omega^q_K$ is via the
    canonical map $R \to R^p$ (given by $a \mapsto a^p$).
   \end{proof}

\begin{lem}\label{lem:1-C-inj}
    Let $R$ be as above and $\fm =(Y_1,Y_2)$. Assume $m, n \ge 0$. Then the map
    $Z_1\Fil_{(n,m)}\Omega^q_K \xrightarrow{1 -C} \Fil_{(n,m)}\Omega^q_K$ induces a
    monomorphism
    \[
    (1-C)^* \colon H^2_{\fm}(R, Z_1\Fil_{(n,m)}\Omega^q_K) \inj
    H^2_{\fm}(R, \Fil_{(n,m)}\Omega^q_K),
    \]
    where $H^*_\fm(R,-)$ is the Zariski
    (equivalently, {\'e}tale) cohomology with support at the closed point of
    $\Spec(R)$.
\end{lem}
\begin{proof}
For any $R$-module $M$ and
  non-zero element $a \in R$, we write $M[a^{-1}]$ as $M_a$. We let $\pi = Y_1Y_2$.
  By \cite[Thm.~5.1.20]{Brodman-Sharp}, there is then an exact sequence
  \begin{equation}\label{eqn:1-C-inj-0}
    M_{Y_1} \oplus M_{Y_2} \xrightarrow{\rho} M_{\pi} \xrightarrow{\phi} H^2_\fm(M) \to 0,
  \end{equation}
  where $\rho ((a,b)) = a -b$.
       Now consider the following commutative diagram.
        \begin{equation}\label{eqn:1-C-diag}
            \xymatrix{
              (Z_1\Fil_{(n,m)}\Omega^q_K)_{Y_1} \oplus
              (Z_1\Fil_{(n,m)}\Omega^q_K)_{Y_2} \ar[r]^-{\theta_1}
              \ar[d]_-{f\oplus g} & (Z_1\Fil_{(n,m)}\Omega^q_K)_{\pi} =
              Z_1\Omega^q_K \ar[d]^-{h} \\
              (\Fil_{(n,m)}\Omega^q_K)_{Y_1} \oplus (\Fil_{(n,m)}\Omega^q_K)_{Y_2}
              \ar[r]^-{\theta_2}& (\Fil_{(n,m)}\Omega^q_K)_{\pi} = \Omega^q_K, 
            }
        \end{equation}
        where each $\theta_i$ is the difference map $\rho$ of \eqref{eqn:1-C-inj-0},
        and $f,g,h$ are the maps induced by $1-C$. To show $(1-C)^*$ is injective, we
        need to show for any $\alpha \in (Z_1\Fil_{(n,m)}\Omega^q_K)_{\pi}$ that
        $h(\alpha) \in {\rm Image}(\theta_2)$ implies
        $\alpha \in {\rm Image}(\theta_1)$.

       We now let $\alpha \in Z_1\Omega^q_K$
        such that $h(\alpha) \in {\rm Image}(\theta_2)$. Since $Z_1\Omega^q_K=\bigcup_{n \ge 0, m \ge 0}Z_1\Fil_{(n,m)}\Omega^q_K$, there exists $N, M \gg 0$ such that $\alpha \in Z_1\Fil_{(N,M)}\Omega^q_K$.
By \lemref{lem:Z-1-fil} (see \eqref{eqn:Z-1-fil-0}), we can write 
 
\begin{equation}\label{eqn:sum-alpha-0}
  \alpha = \sum\limits_{\substack{ip \ge -N \\ jp \ge -M}}Y_1^{ip}Y_2^{jp}c_{i,j} \ +
  \sum\limits_{\substack{i \ge -N \\ j \ge -M \\ p \nmid i \ or \ p \nmid j}} d(Y_1^iY_2^jd_{i,j});
\end{equation}
where $c_{i,j} \in Z_1F^{1,q}_0$ and $d_{i,j} \in F^{1,{q-1}}_0$.

We may assume that $N \ge n$ and $M \ge m$. Then we can write $\alpha=\alpha_1+\alpha_2+\alpha_3$, where 
\[\alpha_1=\sum\limits_{\substack{ip \ge -n \\ jp \ge -M}}Y_1^{ip}Y_2^{jp}c_{i,j} \ +
  \sum\limits_{\substack{i \ge -n \\ j \ge -M \\ p \nmid i \ or \ p \nmid j}} d(Y_1^iY_2^jd_{i,j}), \alpha_2= \sum\limits_{\substack{ip \ge -N \\ jp \ge -m}}Y_1^{ip}Y_2^{jp}c_{i,j} \ +
  \sum\limits_{\substack{i \ge -N \\ j \ge -m \\ p \nmid i \ or \ p \nmid j}} d(Y_1^iY_2^jd_{i,j}),\]

\begin{equation}\label{eqn:alpha-3}
\begin{array}{lll}
 \alpha_3 &=& \sum\limits_{\substack{-N \le ip \le -n \\ -M \le jp \le -m}}Y_1^{ip}Y_2^{jp}c_{i,j} \ +
  \sum\limits_{\substack{-N \le i \le -n \\ -N \le j \le -m \\ p \nmid i \ or \ p \nmid j}} d(Y_1^iY_2^jd_{i,j})\\
 &=& \sum\limits_{\substack{-N \le ip \le -n \\ -M \le jp \le -m}}Y_1^{ip}Y_2^{jp}c_{i,j} \ +
  \sum\limits_{\substack{-N \le i \le -n \\ -N \le j \le -m \\ p \nmid i \ or \ p \nmid j}} Y_1^iY_2^jd'_{i,j}, 
\end{array}
\end{equation}
where we set $d'_{i,j}:=i\cdot \dlog Y_1\cdot d_{i,j} +j\cdot \dlog Y_2 \cdot d_{i,j}+d(d_{i,j})$ so that $d(Y_1^iY_2^jd_{i,j})=Y_1^iY_2^jd'_{i,j}$.

By \lemref{lem:Z-1-fil}(2.c),(2.d), we conclude that $\alpha_1$
(resp. $\alpha_2$) lies in
$(Z_1\Fil_{(n,m)}\Omega^q_K)_{Y_2}$ (resp. $(Z_1\Fil_{(n,m)}\Omega^q_K)_{Y_1}$). It follows that $\alpha_1+\alpha_2 \in {\rm Image}(\theta_1)$.

We claim that $\alpha_3=0$. This will finish the proof.

Since $h(\alpha) \in {\rm Image}(\theta_2)$, it follows from the commutativity of (\ref{eqn:1-C-diag}) that $h(\alpha_3)= h(\alpha) - h(\alpha_1+\alpha_2) \in
{\rm Image}(\theta_2)$. We shall show that this implies $\alpha_3=0$.
Indeed, suppose that $-N_0$ is the least exponent of $Y_1$ appearing in $\alpha_3$ in the last equality of \eqref{eqn:alpha-3}. Then we may write $\alpha_3$ as
\[
\alpha_3= \sum\limits_{-M < j<-m}Y_1^{-N_0}Y_2^je_{-N_0,j} +
\sum\limits_{\substack{-N_0 <i < -n \\ -M < j < -m}}Y_1^iY_2^je_{i,j},
\]

where $Y_1^iY_2^je_{i,j}$ is either of the form $d(Y_1^iY_2^jd_{i,j})$
(if $p\nmid i$ or $p \nmid j$), or it is of the form $Y_1^{pi}Y_2^{pi} c_{i,j}$
(in case $p$ divides both $i, j$), where $c_{i,j} \in Z_1F^{1,q-1}_0$. Thus we have
$$ C(Y_1^iY_2^je_{i,j})  = \left\{\begin{array}{cl}
   0 ,& \text{if } p\nmid i \text{ or } p\nmid j  \\
       Y_1^{i/p}Y_2^{j/p}C(e_{i,j}) , & \text{otherwise.}
\end{array}\right.$$
That is, $C(Y_1^iY_2^je_{i,j})$ is either zero or the exponent of $Y_1$ in
$C(Y_1^iY_2^je_{i,j})$ is strictly greater than $i$ (because $i<0$).
This implies that $(1-C)(c)$ can be written as 
\[
(1-C)(\alpha_3)=\sum\limits_{-M<j<-m}Y_1^{-N_0}Y_2^je_{-N_0,j} +
\sum\limits_{\substack{i>-N_0 \\ -M<j<-m}}Y_1^iY_2^je'_{i,j},
\]
where $e_{i,j}, e'_{i,j} \in F^{1,q}_0$.

Since $(1-C)(\alpha_3) \in {\rm Image}(\theta_2)$, it implies that every term of the above sum has the property that either the exponent of $Y_1$ is at least $-n$ or the exponent of $Y_2$ is at least $-m$. By the uniqueness statement in \lemref{lem:Z-1-fil}, we therefore conclude that $Y_1^{-N_0}Y_2^je_{-N_0,j}=0$ for all  $j<-m$. This shows that $-N_0$ is not the least exponent of $Y_1$ appearing in $\alpha_3$, which leads to a contradiction. This proves the claim and concludes the proof of the lemma.
\end{proof}

\subsection{The main result}\label{sec:Main-KS}
We return to our original set-up of \S~\ref{sec:Kato-global}. 
We need the following results to prove the main theorem.
For any $x \in X$, we let $R_x = \sO_{X,x}^h, \ X_x = \Spec(R_x)$. We let $E_x$ denote
the pull-back of $E$ to $X_x$ and $U_x = X_x \setminus E_x$.
We let $\fm_x$ denote the maximal ideal of $R_x$, and let
$\wh{R}_x$ be the $\fm_x$-adic completion of $R_x$. We let $\wh{X}_x = \Spec(\wh{R}_x)$.
  We let $\wh{E}_x$ denote the pull-back of $E$ to $\wh{X}_x$ and $\wh{U}_x =
  \wh{X}_x \setminus \wh{E}_x$. We write  $\sF_m= W_m\sF^{q,\bullet}_{X|D}$ as
  abbreviation.

\begin{lem}\label{lem:H^1-fil-1}
        $\H^1_E(X, \sF_m) =0$.
\end{lem}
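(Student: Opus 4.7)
The plan is to prove this by induction on $m$ and then reduce to an étale-local computation at geometric points of $E$. First, the distinguished triangle from \thmref{thm:Global-version}(9) induces a long exact sequence in local cohomology
\[
\cdots \to \H^1_E(X, W_1\sF^{q,\bullet}_{X|D}) \to \H^1_E(X, W_m\sF^{q,\bullet}_{X|D}) \to \H^1_E(X, W_{m-1}\sF^{q,\bullet}_{X|D/p}) \to \cdots,
\]
so by induction on $m$ it suffices to prove the statement in the case $m=1$, for $\sF_1 = [Z_1\Fil_D \Omega^q_U \xrightarrow{1-C} \Fil_D \Omega^q_U]$.

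Next, since the functor $\R\Gamma_E$ is étale-local on $E$ (the stalk of $\R i^!\sF_m$ at a geometric point $\bar{x}\in E$ is computed by $\R\Gamma_{E_{\bar x}}(X_{\bar x},\sF_m)$), I would replace $X$ by its strict henselization $X_{\bar x}=\Spec R$ at an arbitrary geometric point of $E$. Thus I may assume $R$ is an $F$-finite strictly henselian regular local ring with maximal ideal $\fm = (x_1,\ldots,x_d)$, with $E$ locally given by $\pi = x_1\cdots x_r$ and $D=\sum_{i=1}^r n_i V(x_i)$. At this local level both $\Fil_D \Omega^q_U$ and $Z_1\Fil_D \Omega^q_U$ are quasi-coherent $R$-modules (the second via Frobenius), hence their higher étale cohomology over the affine $X$ vanishes, and similarly for $\Omega^q_U$ on the affine $U$. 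Consequently, the local cohomology long exact sequence identifies $\H^1_{E}(X,\sF_1)=0$ with the conjunction of two statements: (a) the map $\H^0(X,\sF_1)\to H^0_\et(U,\Omega^q_{U,\log})$ is surjective, and (b) the induced map on cokernels of $1-C$,
\[
\frac{\Fil_D \Omega^q_U(X)}{(1-C)(Z_1\Fil_D\Omega^q_U(X))} \inj \frac{\Omega^q_U(U)}{(1-C)(Z_1\Omega^q_U(U))},
\]
is injective.

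Statement (a) is handled by observing that any logarithmic $q$-form over the strictly henselian local ring $R$ is generated by symbols $\dlog f_1 \wedge \cdots \wedge \dlog f_q$ with $f_i\in K^\times$; writing each $f_i = u_i\cdot x_1^{a_{i,1}}\cdots x_r^{a_{i,r}}$ with $u_i\in R^\times$ (using that $R$ is local), one verifies that each $\dlog f_i$ lies in $\Omega^1_X(\log E)\subset \Fil_D \Omega^1_U$ (as $D\geq 0$), so the symbol lies in $Z_1\Fil_D\Omega^q_U(X)$ and is sent to the prescribed element under restriction to $U$.

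The main obstacle is statement (b), which is the multivariate refinement of \propref{prop:Cartier-fil-1}: given $\tau\in Z_1\Omega^q_U(U)=Z_1\Omega^q_K$ with $(1-C)\tau\in \Fil_D\Omega^q_U(X)$, one must produce $\tau'\in Z_1\Fil_D\Omega^q_U(X)$ with $(1-C)\tau = (1-C)\tau'$. I would prove this by the strategy underlying \lemref{lem:1-C-inj}: expand $\tau$ uniquely via the multivariate series decomposition of \lemref{lem:F-function-1} and \corref{cor:F-function-2}, observe from \lemref{lem:F-group-1} that $C$ divides the exponents of the monomials $x_1^{i_1}\cdots x_r^{i_r}$ by $p$ (so in particular cannot create new leading-order obstructions to lying in $\Fil_D$), and then carry out a descending induction on the most negative multi-exponent of the monomials appearing in $\tau$ outside $\Fil_D\Omega^q_U(X)$, exactly parallel to the concluding argument of \lemref{lem:1-C-inj}. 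The uniqueness of the expansion in \corref{cor:F-function-2} and its restriction to $\Fil_D$ obtained in \lemref{lem:F-function-1} provide the technical control needed to compare leading coefficients on both sides of $\omega = (1-C)\tau$.
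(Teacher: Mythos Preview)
Your approach is correct but differs substantially from the paper's. The paper does not reduce to $m=1$, nor does it localize at all geometric points of $E$. Instead it uses the coniveau spectral sequence
\[
E_1^{p,q}=\bigoplus_{x\in X^{(p)}\cap E}\H^{p+q}_x(X,\sF_m)\Longrightarrow \H^{p+q}_E(X,\sF_m),
\]
so it only needs $\H^1_x(X,\sF_m)=0$ for $x\in X^{(1)}\cap E$, i.e.\ at the generic points of the components $E_i$. There the henselian local ring is a DVR and \lemref{lem:hyp}(2) together with \propref{prop:Cartier-fil-1} give the result directly for every $m$. The term $E_1^{0,1}$ vanishes trivially since $X^{(0)}\cap E=\emptyset$.

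Your route is legitimate but costs more: you must establish the multivariate analogues of those DVR statements at every strict henselization. Your argument for (a) is essentially the proof of \lemref{lem:Complex-6}(4) (every $\dlog$-symbol on $U$ already lies in $Z_1\Fil_0W_m\Omega^q_U\subset Z_1\Fil_DW_m\Omega^q_U$ once one writes each unit of $R[1/\pi]$ as $u\cdot x_1^{a_1}\cdots x_r^{a_r}$ with $u\in R^\times$). For (b), your proposed ``descending induction on the most negative multi-exponent'' is workable but unnecessarily heavy and a bit imprecise in several variables: a much shorter proof is to invoke \corref{cor:Complete-2}, which identifies $\Fil_{\un n}\Omega^q_K$ as the simultaneous preimage of the $\Fil_{n_i}\Omega^q_{\wh K_i}$, and then apply \propref{prop:Cartier-fil-1} to each DVR $\wh A_i$ separately. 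That collapses your multivariate obstacle back to the one-variable case the paper already handles, and in fact recovers the spirit of the coniveau argument.
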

\begin{proof}
We first claim that
  $\H^1_x(X, \sF_m)=0$ for every $x \in  X^{(1)}\bigcap E$.
To prove the claim, we use the exact sequence
\[
\cdots \to \H^0_\et(X_x,\sF_m) \xrightarrow{f} \H^0_\et(U_x,\sF_m) \to
      \H^1_x(X_x,\sF_m) \to \H^1_\et(X_x,\sF_m) \xrightarrow{g} 
    \H^1_\et(U_x,\sF_m).
    \]
    By \lemref{lem:hyp}(2), we see that $f$ is an isomorphism.
    Also $g$ is injective by \propref{prop:Cartier-fil-1} because
    $\H^1_\et(X_x,\sF_m) =\coker(1-C \colon Z_1\Fil_{D_x}W_m\Omega^q_{K_x} \to
    \Fil_{D_x}W_m\Omega^q_{K_x} )$ by \lemref{lem:hyp}(2). Hence, we get
    $\H^1_x(X,\sF_m) \xrightarrow{\cong} \H^1_x(X_x,\sF_m) = 0$. This proves the claim.

    We now recall that for any bounded complex of abelian sheaves $\sF^\bullet$ on
    $X_\et$,
    one has the coniveau spectral sequence 
        \begin{equation}\label{eqn:spectral}
          E_1^{p,q}(\sF^\bullet) =
          \bigoplus\limits_{x \in X^{(p)} \cap E}\H^{p+q}_x(X,\sF^\bullet) \implies
          \H^{p+q}_E(X,\sF^\bullet).
        \end{equation}
The above claim shows that $E^{1,0}_\infty(\sF_m) =0$. Since
        $E^{0,1}_\infty(\sF_m)$ is clearly zero, the lemma follows.
    \end{proof}

\begin{lem}\label{lem:H^1-fil-2}
  Let $\sF_m$ be as in \lemref{lem:H^1-fil-1}. Then the restriction map
  $\H^2_E(X,\sF_m) \to \bigoplus\limits_{x \in X^{(1)}\cap E} \H^2_{x}(X, \sF_m)$ is
  injective.
\end{lem}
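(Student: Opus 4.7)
I would invoke the coniveau spectral sequence $E_1^{p,q}(\sF_m) = \bigoplus_{x \in X^{(p)} \cap E} \H^{p+q}_x(X, \sF_m) \Rightarrow \H^{p+q}_E(X, \sF_m)$ from \eqref{eqn:spectral}. Since $E_1^{0,2} = 0$ trivially and $E_1^{1,0} = 0$ by the claim established in the proof of \lemref{lem:H^1-fil-1}, one finds $E_\infty^{1,1} = E_2^{1,1} \subseteq E_1^{1,1}$, and hence the restriction map in the statement factors as the composite $\H^2_E(X, \sF_m) \twoheadrightarrow E_\infty^{1,1} \hookrightarrow E_1^{1,1}$. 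Consequently, its kernel coincides with $F^2\H^2_E(X, \sF_m) = E_\infty^{2,0}$, a subquotient of $E_1^{2,0} = \bigoplus_{x \in X^{(2)} \cap E} \H^2_x(X, \sF_m)$. It therefore suffices to establish $\H^2_x(X, \sF_m) = 0$ for every $x \in X^{(2)} \cap E$.

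Fix such an $x$. By excision and faithful flat base change from $W_m(R_x)$ to $W_m(\wh R_x)$ (using \lemref{lem:Log-fil-4}(2), \lemref{lem:Non-complete-3*}, and \lemref{lem:W-nil}), this reduces to the vanishing of the analogous local hypercohomology over $\wh R_x \cong \ff[[Y_1, Y_2]]$, where $\ff$ is the residue field at $x$. In this complete setting, $E_{\wh x}$ is locally either $V(Y_1 Y_2)$ (Case A: $x$ is a generic point of $E_i \cap E_j$) or $V(Y_1)$ (Case B: $x$ lies on a single component $E_i$). Taking hypercohomology with support at $\fm_x$ of the distinguished triangle of \thmref{thm:Global-version}(9), induction on $m$ reduces the task to the base case $m = 1$.

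For $m = 1$, the long exact sequence attached to the two-term complex $\sF_1 = [Z_1\Fil \xrightarrow{1-C} \Fil]$ separates the required vanishing into the injectivity of $1-C$ on $H^2_{\fm_x}$ and the vanishing of $H^1_{\fm_x}$ on both $\Fil$ and $Z_1\Fil$. The former is precisely \lemref{lem:1-C-inj} in Case A, and follows by the same strategy in Case B upon replacing the two-variable decomposition of \lemref{lem:Z-1-fil} with the single-variable expansion furnished by \corref{cor:LWC-0-1}. For the latter, \lemref{lem:F-function-1} exhibits $\Fil_D\Omega^q_U$ as a finite free $\wh R_x$-module (up to a monomial shift by $Y_1, Y_2$), hence Cohen--Macaulay of depth two; applying the depth lemma to the short exact sequence $0 \to Z_1\Fil \to \Fil \xrightarrow{d} B_1\Fil \to 0$, in which $B_1\Fil$ has depth $\ge 1$ as a torsion-free submodule of the free module $\Fil$, forces $Z_1\Fil$ to have depth two as well, yielding the desired vanishing.

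The principal technical obstacle is the adaptation of \lemref{lem:1-C-inj} to Case B: the normal-crossings setting is treated by that lemma directly, but the smooth-divisor case requires carrying out the Čech-type computation of \lemref{lem:1-C-inj} with a single-variable uniqueness decomposition analogous to \lemref{lem:Z-1-fil}, which must be derived by hand from \corref{cor:LWC-0-1}.
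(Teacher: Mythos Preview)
Your proof is correct and follows the same route as the paper: reduce via the coniveau spectral sequence to $\H^2_x(X,\sF_m)=0$ for $x\in X^{(2)}\cap E$, pass to $m=1$ and to the completion, then combine the vanishing of $H^1_{\fm_x}$ on $\Fil$ with the injectivity of $(1-C)^*$ on $H^2_{\fm_x}$ from \lemref{lem:1-C-inj}.

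A few minor remarks. First, only $H^1_{\fm_x}(\Fil)=0$ is needed for the hypercohomology sequence; the extra depth argument for $Z_1\Fil$ is not required (the paper just cites freeness of $\Fil_D\Omega^q_{U_x}$). Second, your assertion $F^2\H^2_E=E_\infty^{2,0}$ (and likewise the paper's $\H^2_E=E_\infty^{1,1}$) tacitly uses $E_\infty^{p,2-p}=0$ for $p\ge 3$ as well; this holds because for $x\in X^{(\ge 3)}\cap E$ and $m=1$ both $\Fil$ and $Z_1\Fil$ are locally free (cf.\ \lemref{lem:Can-filt-5}), so $H^i_x$ vanishes for $i\le 2$ by depth, and induction on $m$ finishes it. Third, your Case A/B distinction is well taken: the paper cites \lemref{lem:1-C-inj} uniformly by writing $D=V(Y_1^nY_2^m)$ with $m,n\ge 0$, but that lemma is formulated for the log structure along $V(Y_1Y_2)$, whereas in Case~B only $V(Y_1)$ is a component of $E$; the single-variable analogue you indicate is indeed needed (and easier).
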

\begin{proof}
  We begin by claiming that $\H^2_x(X,\sF_m)=0$ for $x \in X^{(2)}\bigcap E$.
  To prove this claim, we first reduce to proving it when $m =1$ (and $D \ge 0$
  arbitrary) using \thmref{thm:Global-version}(9).
  We next look at the hypercohomology long exact
  sequence 
  \begin{equation}\label{eqn:H^1-fil-2-0}
  H^1_x(X_x, \Fil_D\Omega^q_{U_x}) \to \H^2_x(X,\sF_1) \to
  H^2_x(X_x, Z_1\Fil_D\Omega^q_{U_x}) \xrightarrow{(1-C)^*}
  H^2_x(X_x, \Fil_D\Omega^q_{U_x}).
  \end{equation}
  Since $x \in X^{(2)}$ and $\Fil_D\Omega^q_{U_x}$ is a free $R_x$-module, one knows that
  $H^1_x(X_x, \Fil_D\Omega^q_{U_x})=0$ (cf. \cite[Prop.~3.5.4.b]{Bruns-Herzog}). 
  To prove the claim, it suffices therefore to show that $(1-C)^*$ in
  ~\eqref{eqn:H^1-fil-2-0} is injective. 

To prove this injectivity,  we first deduce from Lemma~\ref{lem:Non-complete-3*}
  and a combination of \corref{cor:Non-complete-3} and \lemref{lem:Complete-4}
  that $\Fil_D\Omega^q_{\wh{U}_x}$ (resp. $Z_1\Fil_D\Omega^q_{\wh{U}_x}$) is the $\fm_x$-adic
  completion of the finitely generated (cf. \lemref{lem:Log-fil-4}(2)) $R_x$-module
  $\Fil_D\Omega^q_{U_x}$ (resp. $Z_1\Fil_D\Omega^q_{U_x}$).
 We now apply \cite[Prop~3.5.4.(d)]{Bruns-Herzog} to conclude that
  the vertical arrows in the commutative diagram
  \begin{equation}
            \xymatrix{
              H^2_x(X_x, Z_1\Fil_D\Omega^q_{U_x}) \ar[r]^{\ (1-C)^*}\ar[d]_{\cong}    &
              H^2_x(X_x, \Fil_D\Omega^q_{U_x}) \ar[d]^\cong\\
              H^2_x(\wh{X}_x, Z_1\Fil_D\Omega^q_{\wh U_x}) \ar[r]^{\ (1-C)^*}    &
              H^2_x(\wh{X}_x, \Fil_D\Omega^q_{\wh U_x})}
        \end{equation}
        are bijective. We are therefore reduced to showing that the bottom
        horizontal arrow in this diagram is injective. But this follows from
        \lemref{lem:1-C-inj} because we can write $\wh{R}_x = k(x)[[Y_1, Y_2]]$
        and $D = V((Y^n_1 Y^m_2))$ (where $m,n \ge 0$). This proves the claim.

        The claim implies that $E^{2,0}_\infty(\sF_m) =0$ in ~\eqref{eqn:spectral}.
        Since $E^{0,2}_\infty(\sF_m)$ is clearly $0$, we get
        $\H^2_E(X, \sF_m)= E^{1,1}_\infty(\sF_m)$. One also checks that
        $E^{1,1}_\infty(\sF_m) =
        E^{1,1}_3(\sF_m) \inj E^{1,1}_2(\sF_m) \inj E^{1,1}_1(\sF_m) =
        \bigoplus\limits_{x \in X^{(1)}\cap E} \H^2_{x}(X, \sF_m)$.
        This concludes the proof.   
  \end{proof}

We now prove the main theorem of \S~\ref{sec:Kato-global}. This will prove
\thmref{thm:Main-1}.

\begin{thm}\label{thm:H^1-fil}
For $q \ge 0$ and $m \ge 1$, there is a canonical isomorphism
 \[
 \H^1_\et(X, W_m\sF^{q,\bullet}_{X|D}) \xrightarrow{\cong}
 \Fil^\log_DH^{q+1}_{p^m}(U).
 \]
\end{thm}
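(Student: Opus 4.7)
The plan is to construct a natural map $\alpha \colon \H^1_\et(X, W_m\sF^{q,\bullet}_{X|D}) \to H^{q+1}_{p^m}(U)$, verify that its image lies in $\Fil^\log_D H^{q+1}_{p^m}(U)$, and then prove injectivity and surjectivity onto that subgroup by exploiting the localization triangle along $E$ together with the henselian DVR case (\thmref{thm:Kato-fil-10}) at each generic point of $E$. The restriction quasi-isomorphism $j^* W_m\sF^{q,\bullet}_{X|D} \simeq W_m\Omega^q_{U,\log}$ of \thmref{thm:Global-version}(13) furnishes the map $\alpha$. For each $1 \le i \le r$, pulling back along the canonical map $h_i \colon X_i := \Spec(\sO^h_{X,\eta_i}) \to X$ and invoking the identification of \thmref{thm:Kato-fil-10} with $\Fil^{\bk}_{n_i} H^{q+1}(K_i)[p^m]$ shows that $\alpha(\chi)|_{K_i}$ lies in $\Fil^{\bk}_{n_i}H^{q+1}(K_i)[p^m]$ for every $i$, so that $\alpha$ factors through $\Fil^\log_D H^{q+1}_{p^m}(U)$.

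For injectivity and surjectivity, I consider the long exact sequence attached to the localization distinguished triangle
\[
{\bf R}\Gamma_E(X, W_m\sF^{q,\bullet}_{X|D}) \to {\bf R}\Gamma_\et(X, W_m\sF^{q,\bullet}_{X|D}) \to {\bf R}\Gamma_\et(U, W_m\Omega^q_{U,\log}) \xrightarrow{+1}.
\]
Injectivity of $\alpha$ is immediate from the vanishing $\H^1_E(X, W_m\sF^{q,\bullet}_{X|D}) = 0$, which is \lemref{lem:H^1-fil-1}. For surjectivity onto $\Fil^\log_D H^{q+1}_{p^m}(U)$, I must prove that every $\chi \in \Fil^\log_D H^{q+1}_{p^m}(U)$ is killed by the boundary $\partial \colon H^{q+1}_{p^m}(U) \to \H^2_E(X, W_m\sF^{q,\bullet}_{X|D})$. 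By \lemref{lem:H^1-fil-2}, it suffices to verify that the image of $\partial(\chi)$ vanishes in each $\H^2_{\eta_i}(X, W_m\sF^{q,\bullet}_{X|D}) \cong \H^2_{\eta_i}(X_i, W_m\sF^{q,\bullet}_{X_i|D_{n_i}})$. Naturality of the localization triangle under $h_i$ reduces this to checking that $\chi|_{K_i}$ belongs to the image of $\H^1_\et(X_i, W_m\sF^{q,\bullet}_{X_i|D_{n_i}}) \to H^{q+1}_{p^m}(K_i)$, which by \thmref{thm:Kato-fil-10} equals $\Fil^{\bk}_{n_i} H^{q+1}(K_i)[p^m]$; and this is precisely what the hypothesis $\chi \in \Fil^\log_D H^{q+1}_{p^m}(U)$ asserts (with \corref{cor:Kato-fil-11} allowing one to switch freely between $K_i$ and $\wh{K}_i$ when matching the two definitions).

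The principal obstacle is the codimension-two input \lemref{lem:H^1-fil-2}: while \lemref{lem:H^1-fil-1} uses only the codimension-one local theorem via \propref{prop:Cartier-fil-1}, cutting $\H^2_E$ down to its contributions at generic points of $E$ requires the vanishing $\H^2_x(X, W_m\sF^{q,\bullet}_{X|D}) = 0$ at all $x \in X^{(2)} \cap E$, which rests on the delicate explicit presentation of $Z_1\Fil_{(n,m)}\Omega^q_K$ in \lemref{lem:Z-1-fil} and the injectivity of $(1-C)^*$ on two-dimensional local cohomology established in \lemref{lem:1-C-inj}. Once \lemref{lem:H^1-fil-1} and \lemref{lem:H^1-fil-2} are available, the theorem follows essentially formally from the local Kato-Kerz-Saito isomorphism.
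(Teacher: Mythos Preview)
Your proposal is correct and follows essentially the same route as the paper: both use the localization triangle for $E \subset X$, the vanishing $\H^1_E = 0$ from \lemref{lem:H^1-fil-1}, the injectivity of $\H^2_E$ into the direct sum over generic points from \lemref{lem:H^1-fil-2}, and the identification at each $\eta_i$ with Kato's filtration via \thmref{thm:Kato-fil-10}. The only cosmetic difference is that the paper computes $\H^1_\et(X, \sF_m)$ directly as a kernel in one step, whereas you split the argument into ``image lands in $\Fil^\log_D$'' followed by separate injectivity and surjectivity checks; the underlying ingredients are identical.
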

\begin{proof}
We shall use the exact sequence 
\begin{equation}\label{eqn:H^1-fil-0}
\H^1_E(X, \sF_m) \to \H^1_\et( X, \sF_m) \to H^1_\et(U, W_m\Omega^q_{U,\log}) \to
\H^2_E(X, \sF_m),
\end{equation}
in which $\H^1_\et(U, \sF_m)$ has been identified with $H^1_\et(U, W_m\Omega^q_{U,\log})$
using \thmref{thm:Global-version}(13).
The first term in this exact sequence is zero by \lemref{lem:H^1-fil-1}.
We now let $x \in E^{(0)}$ and consider the commutative diagram of exact sequences 
\begin{equation}\label{eqn:loc-2}
    \xymatrix@C.8pc{
      0\ar[r]& \H^1_\et( X, \sF_m) \ar[r] \ar[d]&
      H^1_\et(U, W_m\Omega^q_{U,\log}) \ar[r] \ar[d] &
      \H^2_E(X, \sF_m) \ar[d] & \\
      &\H^1_\et(X_x, \sF_m) \ar[r]^-{(1)} &
      H^1_\et(U_x, W_m\Omega^q_{U_x,\log}) \ar[r]^-{(2)}&
      \H^2_x(X_x, \sF_m) &}
\end{equation}
where the vertical arrows are the restriction maps.

The arrow (2) in the bottom row is surjective because the next term
$\H^2(X_x, \sF_m)$ is zero by \lemref{lem:hyp}. On the other hand, 
the arrow (1) in the bottom row of
\eqref{eqn:loc-2} is identified with the inclusion
$\H^1_\et(X_x, \sF_m) \cong \Fil^{\bk}_{n_x}H^{q+1}(K_x)[p^m] \inj
H^{q+1}_{p^m}(K_x)$ by \thmref{thm:Kato-fil-10} if we let $n_x$ denote the multiplicity of
$D$ at $x$. It follows that the composition of the maps
$H^1_\et(U, W_m\Omega^q_{U,\log}) \to \H^2_E(X, \sF_m) \to \H^2_x(X_x, \sF_m)$ coincides
with the map $H^{q+1}_{p^m}(U) \to \frac{H^{q+1}_{p^m}(K_x)}
{\Fil^{\bk}_{n_x}H^{q+1}(K_x)[p^m]}$, induced by the pull-back via the canonical map
$X_x \to X$.

 Hence, we get 
\begin{equation*}
    \begin{array}{cl}
      \H^1_\et( X, \sF_m)&\cong \Ker \left(H^1_\et(U, W_m\Omega^q_{U,\log}) \to
      \H^2_E(X, \sF_m)\right)    \\
      &\cong  \Ker \left(H^1_\et(U, W_m\Omega^q_{U,\log})  \to
      \bigoplus\limits_{x \in X^{(1)}\cap E} \H^2_{x}(X, \sF_m)\right) \\
      &= \Ker \left(H^{q+1}_{p^m}(U) \to
      \bigoplus\limits_{x \in X^{(1)}\cap E}
      \frac{H^{q+1}_{p^m}(K_x)}{\Fil^{\bk}_{n_x}H^{q+1}(K_x)[p^m]}\right) \\
      & =  \Fil^\log_DH^{q+1}_{p^m}(U),
\end{array}
\end{equation*}
where the second isomorphism follows from \lemref{lem:H^1-fil-2}.
This finishes the proof of the theorem.
\end{proof}

An immediate, yet important, consequence of \thmref{thm:H^1-fil} is the following
functoriality property of the Kato filtration.
This property is not obvious from Definition~\ref{defn:Log-fil-D}.

\begin{cor}\label{cor:Fil-functor}
  Let $f\colon (X',E') \to (X,E)$ be a morphism between two snc-pairs (cf.
  \S~\ref{sec:de-log-0}) and let
  $D \in \Div_E(X)$ be an effective divisor. Then the pull-back
  $f^* \colon H^{q+1}_{p^m}(X \setminus E) \to H^{q+1}_{p^m}(X' \setminus E')$ restricts to a
  map $f^* \colon \Fil^{\log}_D H^{q+1}_{p^m}(X \setminus E) \to
  \Fil^{\log}_{D'} H^{q+1}_{p^m}(X' \setminus E')$ for every $q \ge 0, m \ge 1$
  whenever $D' \ge f^*(D)$ in $\Div_{E'}(X')$.
\end{cor}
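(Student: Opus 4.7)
The plan is to deduce the corollary from Theorem~\ref{thm:H^1-fil} combined with the contravariant functoriality of the filtered de Rham-Witt complex established in Theorem~\ref{thm:Main-0}(4). More precisely, Theorem~\ref{thm:H^1-fil} gives canonical isomorphisms
\[
\Fil^{\log}_D H^{q+1}_{p^m}(U) \xrightarrow{\cong} \H^1_\et(X, W_m\sF^{q,\bullet}_{X|D}), \quad
\Fil^{\log}_{D'} H^{q+1}_{p^m}(U') \xrightarrow{\cong} \H^1_\et(X', W_m\sF^{q,\bullet}_{X'|D'}),
\]
so it suffices to produce a map of hypercohomology groups on the right compatible with $f^*$ on the $H^{q+1}_{p^m}$ on $U$ and $U'$.

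First, I would construct a morphism of two-term complexes of {\'e}tale sheaves
\[
f^\sharp \colon W_m\sF^{q,\bullet}_{X|D} \to Rf_*\bigl(W_m\sF^{q,\bullet}_{X'|D'}\bigr)
\]
on $X_\et$. The pull-back of the pair $(X',E')\to (X,E)$ satisfies $f^*(D)\le D'$ by hypothesis, so Theorem~\ref{thm:Main-0}(4) yields a morphism of pro-Witt-complexes $f^* \colon \Fil_D W_m\Omega^\bullet_U \to f_*\bigl(\Fil_{D'}W_m\Omega^\bullet_{U'}\bigr)$. Since pull-back is a morphism of Witt-complexes, it commutes with $F$ (hence preserves $Z_1$) and with the Cartier operator $C$ (which by Lemma~\ref{lem:Complete-6}/Corollary~\ref{cor:Complete-6-global} is characterized by the relation $CF=R$ on the filtered complex). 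This gives the desired morphism of two-term complexes, and applying $\H^1_\et(X,-)$ produces the map
\[
f^* \colon \H^1_\et(X, W_m\sF^{q,\bullet}_{X|D}) \to \H^1_\et(X', W_m\sF^{q,\bullet}_{X'|D'}).
\]

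The remaining point is to verify compatibility: under the identifications of Theorem~\ref{thm:H^1-fil}, this map on hypercohomology agrees with the {\'e}tale pull-back $f^* \colon H^{q+1}_{p^m}(U) \to H^{q+1}_{p^m}(U')$ restricted to the Kato ramification subgroups. This follows because the isomorphisms in Theorem~\ref{thm:H^1-fil} are constructed via the commutative diagram ~\eqref{eqn:loc-2}, whose bottom row is the localization sequence for the pair $(X,E)$ and whose identification with $H^{q+1}_{p^m}(U)$ proceeds through Theorem~\ref{thm:Global-version}(13), i.e., $j^*(W_m\sF^{q,\bullet}_{X|D})\simeq W_m\Omega^q_{U,\log}$. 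Both sides are natural in morphisms of snc-pairs, and since pull-back on {\'e}tale cohomology of $W_m\Omega^q_{U,\log}$ is induced by the evident morphism of logarithmic de Rham-Witt sheaves, which in turn is the restriction of $f^\sharp$ along $j$, the two maps on $\H^1_\et$ coincide under the identifications.

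I expect the only subtle step to be the verification that the pull-back of Witt-complexes preserves the kernel of $F^{m-1}d$ (i.e., $Z_1$) and intertwines the Cartier operators on the two sides; but this is immediate from the fact that $f^*$ commutes with all the structure maps $F, V, R, d$ on the de Rham-Witt pro-complexes and $C$ is uniquely determined by $CF=R$. The compatibility of the identifications in Theorem~\ref{thm:H^1-fil} with pull-back is a formal consequence of the naturality of the coniveau spectral sequence ~\eqref{eqn:spectral} used there. Once these functorialities are assembled, the corollary follows immediately.
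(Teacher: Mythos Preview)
Your proposal is correct and follows essentially the same approach as the paper: construct a morphism of two-term complexes $W_m\sF^{q,\bullet}_{X|D} \to f_*(W_m\sF^{q,\bullet}_{X'|D'})$ from the functoriality in \thmref{thm:Main-0}(4) together with the fact that $f^*$ commutes with $F$, $d$, and $C$, and then apply \thmref{thm:H^1-fil}. Your additional remarks on compatibility with the identification via \thmref{thm:Global-version}(13) and the localization sequence are accurate, though the paper leaves this implicit.
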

\begin{proof}
  It is clear from \thmref{thm:Main-0} and the definitions of
  $Z_1\Fil_D W_m\Omega^q_U$ and the Cartier operator that
  there is a morphism of complexes $f^* \colon W_m\sF^{q, \bullet}_{X|D} \to
  f_*(W_m\sF^{q, \bullet}_{{X'}|{D'}})$, and this induces the map
  $f^* \colon \H^1_\et(X, W_m\sF^{q, \bullet}_{X|D}) \to 
  \H^1_\et(X', W_m\sF^{q, \bullet}_{{X'}|{D'}})$. We now apply \thmref{thm:H^1-fil} to
  conclude the proof.
  \end{proof}

Using \thmref{thm:H^1-fil} in Example~\ref{exm:Examples-fil}(2), we get the
following cohomological description of the Brauer group with modulus.

\begin{cor}\label{cor:F^q}
For $m \ge 1$, there is a canonical exact sequence
    \begin{equation}
      0 \to \Pic(U)/p^m \to \H^1_\et(X, W_m\sF^{1,\bullet}_{X|D}) \to
      \Fil^{\log}_D\Br(U)[p^m] \to 0.
    \end{equation}
\end{cor}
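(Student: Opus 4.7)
The plan is to combine Theorem~\ref{thm:H^1-fil} (specialized to $q=1$) with the exact sequence recorded in Example~\ref{exm:Examples-fil}(2). First, I would invoke Theorem~\ref{thm:H^1-fil} for $q=1$ to obtain a canonical isomorphism
\[
\H^1_\et(X, W_m\sF^{1,\bullet}_{X|D}) \xrightarrow{\cong} \Fil^\log_D H^2_{p^m}(U).
\]

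Next, I would recall the exact sequence of Example~\ref{exm:Examples-fil}(2),
\[
0 \to \Pic(U)/p^m \to \Fil^\log_D H^2_{p^m}(U) \to \Fil^\log_D \Br(U)[p^m] \to 0,
\]
which is obtained by taking {\'e}tale cohomology of the Kummer-type short exact sequence $0 \to \sO_U^\times \xrightarrow{p^m} \sO_U^\times \xrightarrow{\dlog} W_m\Omega^1_{U,\log} \to 0$ and then intersecting the resulting long exact sequence with the ramification condition defining the Kato filtration at each generic point of $E$ (using that the unramified part $\Pic(U)/p^m$ automatically lies inside the filtration, while the quotient embeds into $\Br(U)[p^m]$ and its image is, by definition, $\Fil^\log_D\Br(U)[p^m]$).

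Substituting the isomorphism from the first step into the middle term of the second exact sequence yields the desired exact sequence. There is no substantive obstacle in the argument beyond citing the two results; the only thing worth double-checking is that the identification supplied by Theorem~\ref{thm:H^1-fil} is compatible, at the level of $\Fil^\log_D H^2_{p^m}(U) \subset H^2_{p^m}(U)$, with the natural map to $\Br(U)[p^m]$ coming from Kummer theory, so that the induced map $\H^1_\et(X, W_m\sF^{1,\bullet}_{X|D}) \to \Fil^\log_D\Br(U)[p^m]$ has the expected description. This compatibility is built into the construction of $W_m\Omega^1_{U,\log}$ via $\dlog$ and the identification $H^1_\et(U, W_m\Omega^1_{U,\log}) = H^2_{p^m}(U)$ used throughout \S\ref{sec:Kato-global}, so no extra work is required.
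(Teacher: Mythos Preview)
Your proposal is correct and follows exactly the paper's approach: the corollary is stated immediately after Theorem~\ref{thm:H^1-fil} with the one-line justification that it follows by combining that theorem (for $q=1$) with the exact sequence of Example~\ref{exm:Examples-fil}(2). Your additional remark about compatibility is a reasonable sanity check but, as you note, requires no extra work.
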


If $n \ge 1$ is any integer and we write $n = p^mr$ where $m \ge 1$ and
$p \nmid r$. We let
${\Z}/n(q)_{X|D} := {\Z}/r(q) \bigoplus W_m\sF^{q-1, \bullet}_{X|D}[q-1]$
for any $q \ge 1$. We then get

\begin{cor}\label{cor:F^q-00}
For any $n, q \ge 1$, there is a canonical isomorphism
\[
\H^q_\et(X, {\Z}/n(q-1)_{X|D}) \xrightarrow{\cong}
 \Fil^\log_DH^{q}_n(U).
 \]
\end{cor}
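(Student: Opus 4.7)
\textbf{Proof plan for \corref{cor:F^q-00}.} The plan is to reduce this corollary immediately to \thmref{thm:H^1-fil} by separating the coefficient complex into its $p$-primary and prime-to-$p$ summands. Writing $n = p^m r$ with $p \nmid r$, the definition of ${\Z}/n(q-1)_{X|D}$ exhibits it as a direct sum of ${\Z}/r(q-1)$ and a shift of $W_m\sF^{q-1,\bullet}_{X|D}$, while Definition~\ref{defn:Log-fil-D} gives the matching decomposition
\[
\Fil^\log_D H^q_n(U) \;=\; \Fil^\log_D H^q_{p^m}(U) \ \oplus \ H^q_r(U).
\]
Since hypercohomology commutes with finite direct sums, it is enough to establish the isomorphism on each summand separately.

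For the $p$-primary summand, \thmref{thm:H^1-fil} supplies the canonical isomorphism
\[
\H^1_\et(X,\, W_m\sF^{q-1,\bullet}_{X|D}) \;\xrightarrow{\cong}\; \Fil^\log_D H^q_{p^m}(U),
\]
and the cohomological shift built into the definition of ${\Z}/n(q-1)_{X|D}$ is precisely the one needed to rotate this $\H^1$ into the $\H^q$ occurring on the left side of the corollary. For the prime-to-$p$ summand, no ramification condition is in play: by the very definition of $\Fil^\log_D$ (only the $p$-primary part is filtered, cf.\ Definition~\ref{defn:Log-fil-D}), the $r$-component contributes $H^q_r(U) = H^q_\et(U,{\Z}/r(q-1))$, and this is in turn identified with $\H^q_\et(X,{\Z}/r(q-1))$ by the convention (used already when ${\Z}/n(q)$ was defined in \S~\ref{sec:katofil}) under which the tame part of the coefficient complex is the one computing the étale cohomology of $U$.

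Combining these two computations yields the stated canonical isomorphism, and no essentially new ingredient beyond \thmref{thm:H^1-fil} is required. The only thing that one has to check carefully is bookkeeping of degree shifts in the definition of ${\Z}/n(q-1)_{X|D}$, so that the resulting $\H^q$ matches the $\H^1$ provided by \thmref{thm:H^1-fil}; this is routine and was arranged already in the definition. Hence there is no serious obstacle: the corollary is a direct repackaging of \thmref{thm:H^1-fil} with both $p$-primary and prime-to-$p$ coefficients absorbed into a single coefficient complex.
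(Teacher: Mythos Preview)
Your approach is exactly the paper's: the corollary is stated there without a separate proof, as an immediate repackaging of \thmref{thm:H^1-fil} via the direct-sum definitions of ${\Z}/n(q-1)_{X|D}$ and of $\Fil^\log_D H^q_n(U)$ in Definition~\ref{defn:Log-fil-D}. The only nontrivial input is \thmref{thm:H^1-fil} for the $p$-primary summand, and the degree-shift bookkeeping and the handling of the prime-to-$p$ part you describe is precisely what the paper intends.
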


\section{Refined Swan conductor on schemes}\label{sec:RSC}
In this section, we shall give our first application of \thmref{thm:H^1-fil}.
The context of this application is the following. If $A$ is an $F$-finite
henselian discrete
valuation ring with maximal ideal $(\pi)$, quotient field $K$ and residue field $\ff$
of characteristic $p > 0$, then Kato constructed a refined Swan
conductor map
\begin{equation}\label{eqn:Kato-SC}
  {\rsw}^{q+1}_{K,n} \colon \gr^{\bk}_nH^{q+1}(K) \to \pi^{-n} \Omega^{q+1}_A(\log \pi)\otimes_A \ff
  \end{equation}
for $n \ge 1, q \ge 0$ and showed that it is injective. This is one of the
main results of \cite{Kato-89}. If $X$ is, more generally,
a regular  finite type $\F_p$-scheme with
simple normal crossing divisor $E$ and if $D \in \Div_E(X)$ is effective, then
Kato-Leal-Saito \cite[\S~4]{KLS} showed that Kato's Swan conductor at the
generic points of $E$ glue together to give rise to a refined Swan conductor map
${\Rsw} \colon \frac{\Fil^{\log}_{D} H^{1}(X)}{\Fil^{\log}_{D/p}H^{1}(X)} \to
H^0_\zar(X, \Omega^1_X(\log E) \otimes_{\sO_X} \frac{\sO_X(D)}{\sO_{X(D/p)}})$ which is
injective.

We shall construct a general version of this map using \thmref{thm:H^1-fil}.
The generalized refined Swan conductor map that we construct has many applications.
For instance, Kato's integrality theorem
(cf. \cite[Thm.~7.1]{Kato-89}) follows immediately from this construction.
The generalized refined Swan conductor map also plays the central role
in the proofs of the main results of \cite{KM-2}. We shall use the following definition
from \cite{Kato-89}.

\begin{defn}\label{defn:Swan-conductor}
  Let $X$ be a Noetherian regular $F$-finite $\F_p$-scheme and $U \subset X$ a
  dense open subscheme. We let $E \subset X$ be an irreducible Weil divisor
  disjoint from $U$. Given $q \ge 0$ and $\chi \in H^{q+1}(U)\{p\}$, we let $\Sw_E(\chi)$
  denote Kato's Swan conductor (cf. \cite[Def.~2.3]{Kato-89})
  of the image of $\chi$ in $H^{q+1}(K_E)$,
  where $K_E$ is the henselization of the total quotient ring of $X$ at $E$.
  \end{defn}

\subsection{The generalized refined Swan conductor}\label{sec:Comp-H1}
Let $k = \F_p$. Let $X$ be a connected Noetherian regular $F$-finite $k$-scheme and let
$E$ be a simple normal crossing divisor on $X$ with irreducible components
$E_1, \ldots , E_r$. Let $D \ge D' \ge D/p \ge 0$ be two divisors in $\Div_E(X)$
(note that $|D/p| \subset |D| \subseteq E$).
We let $j \colon U \inj X$ be the inclusion of the complement of $E$.
For any pair of integers $q \ge 0$ and $m \ge 1$, we then have a short exact
sequence of complexes of {\'e}tale sheaves
\begin{equation}\label{eqn:Comp*-0}
  0 \to W_m\sF^{q,\bullet}_{X|D'} \to W_m\sF^{q,\bullet}_{X|D} \to
  \frac{W_m\sF^{q,\bullet}_{X|D}}{W_m\sF^{q,\bullet}_{X|D'}} \to 0
  \end{equation}
on $X$, where the first map is the canonical inclusion.
Letting $\sF^{m,q}_{X|(D, D')} :=
\frac{W_m\sF^{q,\bullet}_{X|D}}{W_m\sF^{q,\bullet}_{X|D'}}$, we get an injection
\begin{equation}\label{eqn:Comp*-1}
  \alpha_{X|(D, D')} \colon
  \frac{\H^1_\et(X, W_m\sF^{q,\bullet}_{X|D})}{\H^1_\et(X, W_m\sF^{q,\bullet}_{X|D'})}
  \inj  \H^1_\et(X, \sF^{m,q}_{X|(D, D')}).
  \end{equation}

The complex $\sF^{m,q}_{X|(D, D')}$ has the form
\begin{equation}\label{eqn:Comp*-2}
  \sF^{m,q}_{X|(D, D')} = \left(\frac{Z_1\Fil_D W_m\Omega^q_U}{Z_1\Fil_{D'} W_m\Omega^q_U}
  \xrightarrow{1 - \ov{C}} \frac{\Fil_D W_m\Omega^q_U}{\Fil_{D'} W_m\Omega^q_U}\right).
  \end{equation}
From our assumption and the definition of $C$ (cf. ~\eqref{eqn:Fil-D-complex}),
we see that $\ov{C} = 0$ and we can write
\begin{equation}\label{eqn:Comp*-3}
  \sF^{m,q}_{X|(D, D')} = \left(\frac{Z_1\Fil_D W_m\Omega^q_U}{Z_1\Fil_{D'} W_m\Omega^q_U}
  \xrightarrow{\can} \frac{\Fil_D W_m\Omega^q_U}{\Fil_{D'} W_m\Omega^q_U}\right),
  \end{equation}
where `$\can$' is the canonical map induced by the inclusion
$Z_1\Fil_D W_m\Omega^q_U \inj \Fil_D W_m\Omega^q_U$.
From the definition of $Z_1\Fil_D W_m\Omega^q_U$, we see that `$\can$' is injective.
In particular,
\begin{equation}\label{eqn:Comp*-4}
  \sF^{m,q}_{X|(D, D')} \xrightarrow{\simeq}
  \left(\frac{\Fil_D W_m\Omega^q_U}{\Fil_{D'} W_m\Omega^q_U +
    Z_1\Fil_D W_m\Omega^q_U}\right)[-1].
\end{equation}
We therefore get an injection
\begin{equation}\label{eqn:Comp*-5}
\alpha^{m,q}_{X|(D, D')} \colon
  \frac{\H^1_\et(X, W_m\sF^{q,\bullet}_{X|D})}{\H^1_\et(X, W_m\sF^{q,\bullet}_{X|D'})}
  \inj  H^0_\zar\left(X, \frac{\Fil_D W_m\Omega^q_U}{\Fil_{D'} W_m\Omega^q_U +
    Z_1\Fil_D W_m\Omega^q_U}\right).
  \end{equation}

By \thmref{thm:Global-version}(3), $F^{m-1}d$ induces a map
\begin{equation}\label{eqn:Comp*-6}
 F^{m-1}d \colon  \frac{\Fil_D W_m\Omega^q_U}{\Fil_{D'} W_m\Omega^q_U +
    Z_1\Fil_D W_m\Omega^q_U} \to \frac{\Fil_D \Omega^{q+1}_U}{\Fil_{D'} \Omega^{q+1}_U}.
\end{equation}

\begin{lem}\label{lem:Comp*-7}
  The above map is injective.
\end{lem}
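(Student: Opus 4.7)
The plan is to prove injectivity stalk-locally on $X_{\zar}$, reducing to the situation of a regular local $F$-finite $\F_p$-algebra $A$ with simple normal crossing divisor defined by $\pi = x_1\cdots x_r$, so that Lemmas~\ref{lem:Complete-8} and ~\ref{lem:Complete-10} together with \thmref{thm:Global-version} become available. Concretely, I want to show: if $\omega \in \Fil_D W_m\Omega^q_U$ satisfies $F^{m-1}d(\omega) \in \Fil_{D'}\Omega^{q+1}_U$, then one can decompose $\omega = \omega_1 + \omega_2$ with $\omega_1 \in \Fil_{D'}W_m\Omega^q_U$ and $\omega_2 \in Z_1\Fil_D W_m\Omega^q_U$.

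First I would invoke the Witt-complex identity $FdV = d$, which yields $F^{m-1}d = F^mdV$ on $W_m\Omega^q_U$. Applying \thmref{thm:Global-version}(8) with one degree up (or equivalently Lemma~\ref{lem:Complete-10}(8) in local form), this identifies the image of $F^{m-1}d$:
\begin{equation*}
F^{m-1}d\bigl(\Fil_{D}W_m\Omega^q_U\bigr)=B_m\Fil_{D}\Omega^{q+1}_U,
\qquad
F^{m-1}d\bigl(\Fil_{D'}W_m\Omega^q_U\bigr)=B_m\Fil_{D'}\Omega^{q+1}_U.
\end{equation*}
In particular, $F^{m-1}d(\omega) \in B_m\Fil_D\Omega^{q+1}_U \subset B_m\Omega^{q+1}_U$, so the hypothesis becomes $F^{m-1}d(\omega) \in B_m\Omega^{q+1}_U \cap \Fil_{D'}\Omega^{q+1}_U$.

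Next I would appeal to Lemma~\ref{lem:Complete-8} (which is local, so it globalizes to the Zariski sheaves on $U$ after checking on stalks), giving $B_m\Omega^{q+1}_U \cap \Fil_{D'}\Omega^{q+1}_U = B_m\Fil_{D'}\Omega^{q+1}_U$. Hence $F^{m-1}d(\omega) \in B_m\Fil_{D'}\Omega^{q+1}_U$, and by the surjection recalled above there exists $\omega_1 \in \Fil_{D'}W_m\Omega^q_U$ with $F^{m-1}d(\omega_1) = F^{m-1}d(\omega)$. Setting $\omega_2 = \omega - \omega_1$, we have $F^{m-1}d(\omega_2) = 0$ and $\omega_2 \in \Fil_D W_m\Omega^q_U$; by the equality $Z_1\Fil_D W_m\Omega^q_U = \Ker(F^{m-1}d\colon \Fil_D W_m\Omega^q_U \to \Fil_D\Omega^{q+1}_U)$ of \thmref{thm:Global-version}(3), we conclude $\omega_2 \in Z_1\Fil_D W_m\Omega^q_U$. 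Thus $\omega \in \Fil_{D'}W_m\Omega^q_U + Z_1\Fil_D W_m\Omega^q_U$, as desired.

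No step looks genuinely subtle. The only point that requires a little care is the passage from the local statement of Lemma~\ref{lem:Complete-8} to its sheaf version on $X$, but this is routine since all the sheaves $\Fil_D W_m\Omega^q_U$, $Z_1\Fil_D W_m\Omega^q_U$, $\Fil_D\Omega^{q+1}_U$ are coherent (or at least their formation commutes with localization), so the identities $B_m\Fil_D\Omega^{q+1}_U \cap \Fil_{D'}\Omega^{q+1}_U = B_m\Fil_{D'}\Omega^{q+1}_U$ and the surjectivity of $F^{m-1}d$ onto $B_m\Fil_{D'}\Omega^{q+1}_U$ can both be verified on stalks at points of $X$, where $A$ is a regular local $F$-finite $\F_p$-algebra and the local results of \S\ref{sec:Cartier}--\S\ref{sec:More-prop} apply verbatim.
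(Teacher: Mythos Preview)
Your proof is correct and follows essentially the same route as the paper's: both arguments show that $F^{m-1}d(\omega)$ lies in $B_m\Fil_{D'}\Omega^{q+1}_U = F^{m-1}d(\Fil_{D'}W_m\Omega^q_U)$ and then subtract a lift. The paper phrases this via the characterization $B_m\Fil_{D}\Omega^{q+1}_U = \Ker(V^{m})$ from \thmref{thm:Global-version}(8) applied twice, whereas you invoke \lemref{lem:Complete-8} directly; these are equivalent (note your reference to ``\lemref{lem:Complete-10}(8)'' should be \lemref{lem:Complete-9}(1) or simply \thmref{thm:Global-version}(8)).
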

\begin{proof}
  We assume that $X = \Spec(A)$, and let $\alpha$ be a global section of
  $\Fil_D W_m\Omega^q_U$ such that $\alpha' := F^{m-1}d(\alpha) \in
  \Fil_{D'} \Omega^{q+1}_U$. By \thmref{thm:Global-version}(8), we get that
  $\alpha' \in \Ker(V^{m-1}\colon \Fil_D \Omega^{q+1}_U \to \Fil_D W_m\Omega^{q+1}_U)
  \bigcap  \Fil_{D'} \Omega^{q+1}_U$. Equivalently,
  $\alpha' \in \Ker(V^{m-1} \colon  \Fil_{D'} \Omega^{q+1}_U \to  \Fil_{D'}
  W_m\Omega^{q+1}_U)$.
  By \thmref{thm:Global-version}(8) again, we get that
  $F^{m-1}d(\alpha) = F^{m-1}d(\beta)$ for some $\beta \in \Fil_{D'} W_m\Omega^{q}_U
  \subseteq \Fil_{D} W_m\Omega^{q}_U$. But this implies that
  $\alpha = \beta + \gamma$ for some $\gamma \in  Z_1\Fil_D W_m\Omega^q_U$.
  Since the assertion of the lemma is local, this finishes the proof.
\end{proof}

Letting $\wt{\alpha}^{m,q}_{X|(D, D')} = (-1)^{q+1} (F^{m-1}d) \circ \alpha^{m,q}_{X|(D, D')}$,
we get the following.

\begin{cor}\label{cor:Comp*-8}
  There exists a canonical injection
  \[
\wt{\alpha}^{m,q}_{X|(D, D')} \colon   
\frac{\H^1_\et(X, W_m\sF^{q,\bullet}_{X|D})}{\H^1_\et(X, W_m\sF^{q,\bullet}_{X|D'})}
\inj  H^0_\zar\left(X, \frac{\Fil_D \Omega^{q+1}_U}{\Fil_{D'} \Omega^{q+1}_U}\right).
\]
\end{cor}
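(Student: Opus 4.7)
The proof is essentially a one-step composition of ingredients already in place. My plan is to simply observe that $\wt{\alpha}^{m,q}_{X|(D,D')}$ factors as
\[
\frac{\H^1_\et(X, W_m\sF^{q,\bullet}_{X|D})}{\H^1_\et(X, W_m\sF^{q,\bullet}_{X|D'})}
\xrightarrow{\alpha^{m,q}_{X|(D, D')}}
H^0_\zar\!\left(X, \tfrac{\Fil_D W_m\Omega^q_U}{\Fil_{D'} W_m\Omega^q_U +
    Z_1\Fil_D W_m\Omega^q_U}\right)
\xrightarrow{(-1)^{q+1}F^{m-1}d}
H^0_\zar\!\left(X, \tfrac{\Fil_D \Omega^{q+1}_U}{\Fil_{D'} \Omega^{q+1}_U}\right),
\]
and then verify that both arrows are injective. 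The injectivity of the first arrow is precisely \eqref{eqn:Comp*-5}, which was obtained from the short exact sequence \eqref{eqn:Comp*-0}, the identification \eqref{eqn:Comp*-4}, and the general fact that taking $H^0$ of a two-term complex whose differential is an injection of sheaves yields the global sections of the cokernel, shifted.

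For the injectivity of the second arrow, the underlying map of Zariski sheaves induced by $F^{m-1}d$ (as in \eqref{eqn:Comp*-6}) is injective by Lemma \ref{lem:Comp*-7}. Since $H^0_\zar$ is a left-exact functor, taking global sections preserves this injectivity. The sign $(-1)^{q+1}$ is introduced only for later compatibility with Kato's sign conventions for the refined Swan conductor and does not affect injectivity.

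Composing two injective maps yields an injection, which proves the corollary. There is no real obstacle here: all the non-trivial content (the description of $\sF^{m,q}_{X|(D,D')}$ via the cokernel, and the injectivity of $F^{m-1}d$ modulo $Z_1\Fil_D W_m\Omega^q_U$, which itself rests on \thmref{thm:Global-version}(8) applied in the proof of Lemma \ref{lem:Comp*-7}) has already been established in the preceding paragraphs. The proof is therefore just a formal composition, and the length of my write-up will be correspondingly short.
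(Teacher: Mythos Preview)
Your proposal is correct and follows exactly the paper's approach: the paper defines $\wt{\alpha}^{m,q}_{X|(D, D')} = (-1)^{q+1} (F^{m-1}d) \circ \alpha^{m,q}_{X|(D, D')}$ just before stating the corollary, and the injectivity follows immediately from composing the injection \eqref{eqn:Comp*-5} with the injection of \lemref{lem:Comp*-7} applied on global sections.
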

Since the operator $V$ commutes with each of $F,C$ and $d$, it follows that 
$V \circ {\alpha}^{m,q}_{X|(D, D')} = {\alpha}^{m+1,q}_{X|(D, D')} \circ V$ and
$\wt{\alpha}^{m,q}_{X|(D, D')} = \wt{\alpha}^{m+1,q}_{X|(D, D')} \circ V$.
Letting $F = D - D'$ and using \lemref{lem:Log-fil-4} and \thmref{thm:H^1-fil}, the
above corollary implies the following.

\vskip.2cm

The main result of this section is the following.

\begin{thm}\label{thm:RSW-gen}
$(1)$ There exists a (strict) map of ind-abelian groups 
  \begin{equation}\label{eqn:RSW-gen-0}
  \left\{ \frac{\Fil^\log_DH^{q+1}_{p^m}(U)}{\Fil^\log_{D'}H^{q+1}_{p^m}(U)}\right\}_{m \ge 1}
      \xrightarrow{\Rsw^{m,q+1}_{X|(D,D')}} 
H^0_\zar\left(F, \Omega^{q+1}_X(\log E)(D) \otimes_{\sO_X} \sO_F\right)
  \end{equation}
  which is a level-wise monomorphism.
  
  $(2)$ Let $f \colon (X',E') \to (X,E)$ be a morphism between snc-pairs with
  $U' = X'\setminus E'$, then letting $F = D - D/p$ and $F' = f^*(D) - {f^*(D)}/p$,
  we have a commutative diagram of ind-abelian groups
\begin{equation}\label{eqn:RSW-gen-1-0}
  \xymatrix@C1pc{
    \left\{ \frac{\Fil^\log_DH^{q+1}_{p^m}(U)}{\Fil^\log_{D/p}H^{q+1}_{p^m}(U)}\right\}_{m \ge 1}
    \ar[r] \ar[d]_-{f^*} &
    H^0_\zar\left(F, \Omega^{q+1}_X(\log E)(D) \otimes_{\sO_X} \sO_{F}\right)
    \ar[d]^-{f^*} \\
    \left\{ \frac{\Fil^\log_{f^*(D)}H^{q+1}_{p^m}(U')}
            {\Fil^\log_{{f^*(D)}/p}H^{q+1}_{p^m}(U')}\right\}_{m \ge 1} \ar[r] &
H^0_\zar\left(F', \Omega^{q+1}_{X'}(\log E')(f^*(D)) \otimes_{\sO_{X'}} \sO_{F'}\right),}
\end{equation}
where the horizontal arrows are the refined Swan conductor maps of
~\eqref{eqn:RSW-gen-0}.

$(3)$ If $f^*(E)$ is reduced, then we can replace $D/p$ (resp. ${f^*(D)}/p$) in
~\eqref{eqn:RSW-gen-1-0} by any $D'$ (resp. $f^*(D')$) such that
$D \ge D' \ge D/p \ge 0$.
\end{thm}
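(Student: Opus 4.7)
The plan is to assemble $\Rsw^{m,q+1}_{X|(D,D')}$ from three ingredients, each already produced in the preceding discussion. First, \thmref{thm:H^1-fil} identifies $\Fil^\log_D H^{q+1}_{p^m}(U)$ with $\H^1_\et(X, W_m\sF^{q,\bullet}_{X|D})$, and this identification is compatible with the inclusion $\Fil_{D'}W_m\Omega^q_U \inj \Fil_D W_m\Omega^q_U$, giving
\[
\frac{\Fil^\log_D H^{q+1}_{p^m}(U)}{\Fil^\log_{D'} H^{q+1}_{p^m}(U)} \;\cong\; \frac{\H^1_\et(X, W_m\sF^{q,\bullet}_{X|D})}{\H^1_\et(X, W_m\sF^{q,\bullet}_{X|D'})}.
\]
Next \corref{cor:Comp*-8} gives an injection $\wt{\alpha}^{m,q}_{X|(D,D')}$ of this quotient into $H^0_\zar(X, \Fil_D\Omega^{q+1}_U/\Fil_{D'}\Omega^{q+1}_U)$. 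Finally \lemref{lem:Log-fil-4}(3) identifies $\Fil_D\Omega^{q+1}_U$ with $\Omega^{q+1}_X(\log E)(D)$, and since $\Omega^{q+1}_X(\log E)$ is locally free (\lemref{lem:LWC-2} with \propref{prop:F-fin}(7)) tensoring is exact, so $\sO_X(D)/\sO_X(D') \cong \sO_X(D)\otimes_{\sO_X}\sO_F$ yields $\Fil_D\Omega^{q+1}_U/\Fil_{D'}\Omega^{q+1}_U \cong \Omega^{q+1}_X(\log E)(D)\otimes_{\sO_X}\sO_F$. Composing these three steps defines $\Rsw^{m,q+1}_{X|(D,D')}$ and yields a level-wise monomorphism.

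For the ind-structure in part (1), I need to verify compatibility of these maps under $V\colon W_m\Omega^q_K \to W_{m+1}\Omega^q_K$. The construction of $\wt{\alpha}^{m,q}_{X|(D,D')}$ is $(-1)^{q+1}(F^{m-1}d)\circ \alpha^{m,q}_{X|(D,D')}$. Using the standard de Rham--Witt identity $FdV = d$, one computes $F^m d\circ V = F^{m-1}(FdV) = F^{m-1}d$ as maps into $\Omega^{q+1}_U$, and the remark immediately preceding the theorem statement already records $V\circ \alpha^{m,q}_{X|(D,D')} = \alpha^{m+1,q}_{X|(D,D')}\circ V$ and $\wt{\alpha}^{m,q}_{X|(D,D')} = \wt{\alpha}^{m+1,q}_{X|(D,D')}\circ V$. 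Together with compatibility of the isomorphism in \thmref{thm:H^1-fil} with $V$ (which is inherited from the distinguished triangle of \thmref{thm:Global-version}(9)), this makes the family $\{\Rsw^{m,q+1}_{X|(D,D')}\}_{m\ge 1}$ a strict map of ind-abelian groups into the (constant) target.

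For the functoriality in part (2), I will trace naturality through each of the three steps. The pull-back $f^*$ sends $W_m\sF^{q,\bullet}_{X|D}$ to a subcomplex of $f_*W_m\sF^{q,\bullet}_{X'|f^*D}$ by \thmref{thm:Main-0}(4) combined with the naturality of the Cartier operator (\corref{cor:Complete-6-global}), so $\alpha^{m,q}$ is natural. The isomorphism of \thmref{thm:H^1-fil} is natural because it is constructed via functorial localization sequences; the functoriality on the Kato side is precisely \corref{cor:Fil-functor}. Finally the pull-back of log differentials $f^*\colon \Omega^{q+1}_X(\log E) \to f_*\Omega^{q+1}_{X'}(\log E')$ is obvious from the definition of a morphism of snc-pairs in \S~\ref{sec:de-log-0}, and the map $\sO_X(D)\to f_*\sO_{X'}(f^*D)$ induces $\sO_F \to f_*\sO_{F'}$ where $F' = f^*D - f^*D'$. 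Combining these gives the commutative square \eqref{eqn:RSW-gen-1-0}.

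For part (3), the reducedness of $f^*(E)$ implies $f^*(E_i) = \sum_j E'_{i,j}$ is a reduced sum for each $i$, so $f^*(D) = \sum_{i,j} n_i E'_{i,j}$ and $f^*(D)/p = \sum_{i,j}\lfloor n_i/p\rfloor E'_{i,j} = f^*(D/p)$. Given any $D'$ with $D\ge D'\ge D/p$, we therefore get $f^*(D)\ge f^*(D')\ge f^*(D/p) = f^*(D)/p$, so the refined Swan conductor $\Rsw^{m,q+1}_{X'|(f^*D,f^*D')}$ is defined; the commutativity of the replaced square is a direct consequence of the naturality proved in part (2). The hardest step conceptually is the first one, but the difficulty there is already absorbed by \lemref{lem:Comp*-7} (injectivity of $F^{m-1}d$ on the relevant quotient), which itself reduces to the kernel description \thmref{thm:Global-version}(8); once one trusts those, the present proof is essentially assembling pieces, and the only small subtlety is the compatibility with $V$ via the identity $FdV=d$.
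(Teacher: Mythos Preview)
Your proof is correct and follows the same approach as the paper, which treats the theorem as an immediate consequence of \corref{cor:Comp*-8}, \lemref{lem:Log-fil-4}(3), and \thmref{thm:H^1-fil} together with the $V$-compatibility recorded just before the theorem statement. You have spelled out the assembly in more detail than the paper does, including the verification of the ind-structure via $FdV=d$ and the reason why reducedness of $f^*(E)$ forces $f^*(D/p)=f^*(D)/p$ in part~(3); these are exactly the checks one needs, and the paper leaves them implicit.
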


\begin{remk}\label{remk:RSW-gen-3}
As mentioned earlier, if  $X$ is of finite type over $\F_p$ and $q = 0$, part (2) of
\thmref{thm:RSW-gen} was shown by Kato-Leal-Saito in \cite[Thms.~4.2.2, 4.3.1]{KLS}.
\end{remk}

If $X=\Spec(A)$ and $E$ is irreducible, then we shall write $\Rsw^{m,q+1}_{X|(D,D')}$ as $\Rsw^{m,q+1}_{A|(n,n')}$ if $D=nE$ and $D'=n'E$.

Suppose now that $D$ and $D'$ are as in \thmref{thm:RSW-gen}, and let $\chi \in \Fil_DH^{q+1}_{p^m}(U)$. Let $1 \le i \le r$ and let $n_i$ (resp. $n_i'$) be the multiplicity of $D$ (resp. $D'$). Then we have the following straightforward corollary
of \thmref{thm:RSW-gen}(3) (cf. \cite[Rem.~5.2.1]{Matsuda}).

\begin{cor}\label{cor:RSW-gen-2}
  The element $\Rsw^{m,q+1}_{X|(D,D')}(\chi) \in
  H^0_\zar\left(F, \Omega^{q+1}_{X}(\log E)(D) \otimes_{\sO_{X}} \sO_{F}\right)$
  has the property that its stalk at $\eta_i$ is $\Rsw^{m,q+1}_{\sO_{X,\eta_i}|(n,n')}(\chi)$, where $\eta_i$ is the generic point of $E_i$. 
\end{cor}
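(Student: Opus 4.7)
The plan is to derive the corollary as an immediate consequence of the functoriality assertion \thmref{thm:RSW-gen}(3). Fix $1 \le i \le r$ and consider the canonical localization map $f_i \colon X_i := \Spec(\sO_{X,\eta_i}) \to X$. Since $X$ is regular and $\eta_i$ is a codimension-one point, $\sO_{X,\eta_i}$ is a discrete valuation ring, and it is $F$-finite by \propref{prop:F-fin}(1). Only the component $E_i$ of $E$ passes through $\eta_i$, hence $f_i^{*}(E)$ is the reduced closed point of $X_i$ (defined by any uniformizer $\pi_i$). It follows that $(X_i, f_i^{*}(E_i))$ is an snc-pair, that $f_i$ is a morphism of snc-pairs in the sense of \S~\ref{sec:de-log-0}, that $f_i^{*}(E)$ is reduced, and that $f_i^{*}(D) = n_i \cdot f_i^{*}(E_i)$ and $f_i^{*}(D') = n_i' \cdot f_i^{*}(E_i)$ with $n_i \ge n_i' \ge \lfloor n_i/p \rfloor$.

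Next I would apply \thmref{thm:RSW-gen}(3) to the snc-pair morphism $f_i$. This is permitted precisely because $f_i^{*}(E)$ is reduced, and the resulting commutative square has, as top arrow, the map $\Rsw^{m,q+1}_{X|(D,D')}$ of \eqref{eqn:RSW-gen-0}; as bottom arrow, the local refined Swan conductor $\Rsw^{m,q+1}_{\sO_{X,\eta_i}|(n_i,n_i')}$ of Kato attached to the DVR $\sO_{X,\eta_i}$; and as vertical arrows the pull-backs along $f_i$. Chasing $\chi$ around the diagram gives
\[
f_i^{*}\bigl(\Rsw^{m,q+1}_{X|(D,D')}(\chi)\bigr) \;=\; \Rsw^{m,q+1}_{\sO_{X,\eta_i}|(n_i,n_i')}(f_i^{*}\chi),
\]
where the left-hand side is the pull-back of a global section to $X_i$.

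To finish, I would identify $f_i^{*}$ on the target with the stalk at $\eta_i$. Since $\Omega^{q+1}_X(\log E)(D) \otimes_{\sO_X} \sO_F$ (with $F = D-D'$) is a coherent $\sO_X$-module, its global sections on $X_i$ coincide with its stalk at $\eta_i$; as only $E_i$ passes through $\eta_i$, this stalk is the $\sO_{X,\eta_i}$-module $\Omega^{q+1}_{\sO_{X,\eta_i}}(\log \pi_i)(n_i)\otimes_{\sO_{X,\eta_i}} \sO_{X_i}/(\pi_i^{n_i-n_i'})$, which is precisely the codomain of the local refined Swan conductor. Thus the displayed identity above is exactly the assertion of the corollary. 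I do not expect any real obstacle: the entire argument is driven by the functoriality packaged into \thmref{thm:RSW-gen}, and the only verification needed is the elementary observation that the localization $f_i$ qualifies as an snc-pair morphism with reduced pull-back of $E$.
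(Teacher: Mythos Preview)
Your proposal is correct and follows precisely the approach the paper itself indicates: the paper states the result as ``the following straightforward corollary of \thmref{thm:RSW-gen}(3)'' and gives no further proof. Your fleshing-out of the argument---identifying the localization $f_i\colon \Spec(\sO_{X,\eta_i})\to X$ as a morphism of snc-pairs with $f_i^*(E)$ reduced (since only $E_i$ passes through $\eta_i$), invoking \thmref{thm:RSW-gen}(3), and interpreting the pull-back on the target as the stalk at $\eta_i$---is exactly what is intended.
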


\subsection{A special case}\label{sec:Special-case}
We now consider a special case of \thmref{thm:RSW-gen}.  We fix an integer
$i \in \{1, \ldots , r\}$, let $D = \stackrel{r}{\underset{j =1}\sum} n_j E_j$
with $n_i \ge 1$ and let $D' = D - E_i$.
For $j \neq i$, we let $\ov{E}_j$ denote the
scheme-theoretic intersection $E_i \bigcap E_j$. We let $E_o = E - E_i$ and let
$F_i$ denote the scheme-theoretic intersection $E_o \cap E_i$. We let
$D_i = \iota^*_i(D)$ and ${D}'_i = \iota^*_i(D')$, where
$\iota_i \colon E_i \inj X$ is the inclusion. 
Using \lemref{lem:Log-fil-4}, \thmref{thm:RSW-gen} and \corref{cor:Fil-functor}, we
get the following.

\begin{cor}\label{cor:RSW-gen-1}
  There exists a monomorphism of ind-abelian groups
  \begin{equation}\label{eqn:RSW-gen-1-2}
  \Rsw^{\star,q+1}_{X|(D,D')} \colon
  \left\{ \frac{\Fil^\log_DH^{q+1}_{p^m}(U)}{\Fil^\log_{D'}H^{q+1}_{p^m}(U)}\right\}_{m \ge 1}
  \to H^0_\zar\left(E_i, \Omega^{q+1}_X(\log E)(D) \otimes_{\sO_X} \sO_{E_i}\right).
  \end{equation}
  If $f \colon (X',E') \to (X,E)$ is a morphism between snc-pairs such that
  $E'_i := f^*(E_i)$ is reduced and $U' = X'\setminus E'$,
  then there is a commutative diagram of ind-abelian groups
\begin{equation}\label{eqn:RSW-gen-1-1}
  \xymatrix@C1pc{
    \left\{ \frac{\Fil^\log_DH^{q+1}_{p^m}(U)}{\Fil^\log_{D'}H^{q+1}_{p^m}(U)}\right\}_{m \ge 1}
    \ar[r] \ar[d]_-{f^*} &
    H^0_\zar\left(E_i, \Omega^{q+1}_X(\log E)(D) \otimes_{\sO_X} \sO_{E_i}\right)
    \ar[d]^-{f^*} \\
    \left\{ \frac{\Fil^\log_{f^*(D)}H^{q+1}_{p^m}(U')}
            {\Fil^\log_{f^*(D')}H^{q+1}_{p^m}(U')}\right\}_{m \ge 1} \ar[r] &
H^0_\zar\left(E'_i, \Omega^{q+1}_{X'}(\log E')(f^*(D)) \otimes_{\sO_{X'}} \sO_{E'_i}\right),}
\end{equation}
where the horizontal arrows are the refined Swan conductor maps of
~\eqref{eqn:RSW-gen-1-2}.
\end{cor}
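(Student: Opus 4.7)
The plan is to deduce Corollary~\ref{cor:RSW-gen-1} directly from \thmref{thm:RSW-gen} by specializing to the pair $(D, D' := D - E_i)$. The combinatorial verifications are straightforward; the only subtle point is functoriality, since \thmref{thm:RSW-gen}(3) assumes $f^*(E)$ reduced, whereas Corollary~\ref{cor:RSW-gen-1} only assumes $f^*(E_i)$ reduced.

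First, I would check that $D \ge D' \ge D/p \ge 0$ holds. This reduces to $n_i - 1 \ge \lfloor n_i/p \rfloor$ for every $n_i \ge 1$, which is elementary. Then $F = D - D' = E_i$ is itself reduced, so $\sO_F = \sO_{E_i}$. From the short exact sequence $0 \to \sO_X(-E_i) \to \sO_X \to \sO_{E_i} \to 0$ tensored with the locally free sheaf $\Omega^{q+1}_X(\log E)(D)$ (cf. \lemref{lem:Log-fil-4}(3)), we obtain the canonical identification
\[
\Omega^{q+1}_X(\log E)(D) \otimes_{\sO_X} \sO_F \ \cong \ \Omega^{q+1}_X(\log E)(D) \otimes_{\sO_X} \sO_{E_i},
\]
and \thmref{thm:RSW-gen}(1) supplies the desired monomorphism $\Rsw^{\star, q+1}_{X|(D, D')}$ into $H^0_\zar(E_i, \Omega^{q+1}_X(\log E)(D) \otimes_{\sO_X} \sO_{E_i})$.

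For the functoriality diagram \eqref{eqn:RSW-gen-1-1}, I would first observe that by linearity of pullback, $f^*(D') = f^*(D) - E'_i$ once $E'_i = f^*(E_i)$ is reduced. To apply the construction underlying \thmref{thm:RSW-gen} to the pair $(f^*(D), f^*(D'))$ on $X'$, the only nontrivial input needed is $f^*(D') \ge f^*(D)/p$. For each irreducible component $E''$ of $\supp f^*(E)$, write $a_j(E'')$ for the multiplicity of $E''$ in $f^*(E_j)$ and set $N = \sum_j n_j a_j(E'')$; reducedness of $E'_i$ forces $a_i(E'') \in \{0,1\}$, and the required inequality $\lfloor N/p \rfloor + a_i(E'') \le N$ is trivial when $a_i(E'') = 0$ and reduces to $\lfloor N/p \rfloor \le N - 1$ with $N \ge 1$ when $a_i(E'') = 1$, which is immediate.

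The commutativity of \eqref{eqn:RSW-gen-1-1} then follows from the naturality of the basic construction in Corollary~\ref{cor:Comp*-8}, which is compatible with the pullback $W_m \sF^{q,\bullet}_{X|D} \to f_*(W_m \sF^{q,\bullet}_{X'|f^*(D)})$ (and likewise for $D'$) coming from \thmref{thm:Main-0}(4) together with \corref{cor:Fil-functor}. The main obstacle I anticipate is precisely this last step: \thmref{thm:RSW-gen}(3) is stated under the stronger hypothesis that all of $f^*(E)$ is reduced, so one cannot invoke it as a black box. The resolution is that, for the specific pair $(D, D-E_i)$, the only hypothesis actually used in the construction of the pullback compatibility is $f^*(D') \ge f^*(D)/p$, which we have just verified under the weaker hypothesis that $f^*(E_i)$ alone is reduced.
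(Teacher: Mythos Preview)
Your proposal is correct and follows essentially the same approach as the paper, which simply cites \lemref{lem:Log-fil-4}, \thmref{thm:RSW-gen}, and \corref{cor:Fil-functor} without further elaboration. You are in fact more thorough than the paper on the one genuinely subtle point: you explicitly verify that the weaker hypothesis ``$f^*(E_i)$ reduced'' (rather than ``$f^*(E)$ reduced'' as in \thmref{thm:RSW-gen}(3)) already guarantees $f^*(D') \ge f^*(D)/p$, which is all that the construction in \S\ref{sec:Comp-H1} actually uses.
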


One can present $\frac{\Fil_D \Omega^{q+1}_U}{\Fil_{D'} \Omega^{q+1}_U}$ in a simplified
form in the special case of this subsection which is helpful in computing the
refined Swan conductor.
We need the next lemma for this purpose.
We let $(X,E)$ be the snc-pair as above and let $D  \in \Div_E(X)$. Let
$u \colon X' \inj X$ be the inclusion of a regular connected divisor. We assume that
$E' := X' \times_X E$ is a simple normal crossing divisor on $X'$.

\begin{lem}\label{lem:Kahler-seq}
The following sequences are exact for every $q \ge 0$.
  \begin{enumerate}
 \item
$0 \to \Omega^q_X(\log E) \to \Omega^q_X(\log (E + X')) \xrightarrow{\res}
    u_* \Omega^{q-1}_{X'}(\log E') \to 0$.
  \item
$0 \to \Omega^q_X(\log (E + X'))(-X') \to \Omega^q_X(\log E) \to
    u_* \Omega^q_{X'}(\log E') \to 0$.
  \end{enumerate}
\end{lem}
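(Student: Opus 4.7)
Both statements are local on $X$, so the plan is to reduce to an affine local computation and then use the explicit basis of the log cotangent module. Choose a point $x \in X' \cap X$, set $A = \sO_{X,x}$, and use Proposition~\ref{prop:F-fin}(7) to pick a regular system of parameters in which $x_0$ cuts out $X'$ locally, $x_1,\ldots,x_s$ cut out those irreducible components of $E$ through $x$ (so that $E' = X' \times_X E$ is locally cut out by $\ov{x}_1,\ldots,\ov{x}_s$ in $A/(x_0)$, which is indeed a simple normal crossing divisor by hypothesis), and $x_{s+1},\ldots,x_d$ complete the regular system of parameters. Corollary~\ref{cor:LWC-0-2} (applied in the level $m=1$ form from Lemma~\ref{lem:LWC-2}) then gives a direct sum decomposition
\[
\Omega^q_A\bigl(\log(x_0 x_1\cdots x_s)\bigr) \;=\; \Omega^q_A(\log(x_1\cdots x_s)) \;\oplus\; \Omega^{q-1}_A(\log(x_1\cdots x_s))\wedge \dlog(x_0),
\]
which will be the key identity behind both sequences.

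For sequence (1), the above decomposition identifies $\Omega^q_A(\log E)$ with the first summand, and the Poincar\'e residue $\res$ will be defined by projecting onto the second summand and then restricting the $\Omega^{q-1}_A(\log(x_1\cdots x_s))$-coefficient modulo $x_0$. Exactness at the middle term is immediate from the direct sum decomposition; surjectivity of $\res$ onto $u_*\Omega^{q-1}_{X'}(\log E')$ reduces to showing that the reduction map $\Omega^{q-1}_A(\log(x_1\cdots x_s)) \to \Omega^{q-1}_{A/(x_0)}(\log(\ov{x}_1\cdots \ov{x}_s))$ is surjective, which is clear from the basis provided by Proposition~\ref{prop:F-fin}(7) applied to both $A$ and $A/(x_0)$ together with the fact that $\{x_1,\ldots,x_d\}$ becomes a regular system of parameters on $X'$ after deleting $x_0$.

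For sequence (2), locally the sheaf $\Omega^q_X(\log(E+X'))(-X')$ equals $x_0\cdot \Omega^q_A(\log(x_0 x_1\cdots x_s))$. Using the decomposition above, one computes
\[
x_0\cdot \Omega^q_A(\log(x_0 x_1\cdots x_s)) \;=\; x_0\,\Omega^q_A(\log(x_1\cdots x_s)) \;+\; \Omega^{q-1}_A(\log(x_1\cdots x_s))\wedge dx_0,
\]
(using $x_0\,\dlog(x_0) = dx_0$), and this is visibly contained in $\Omega^q_A(\log E)$. The map to $u_*\Omega^q_{X'}(\log E')$ is again restriction modulo $x_0$, and the same basis argument as in (1) gives surjectivity. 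For exactness in the middle, one must check that an element of $\Omega^q_A(\log(x_1\cdots x_s))$ whose restriction modulo $x_0$ vanishes actually lies in $x_0\,\Omega^q_A(\log(x_1\cdots x_s)) + dx_0\wedge\Omega^{q-1}_A(\log(x_1\cdots x_s))$; writing such an element in the basis given by Proposition~\ref{prop:F-fin}(7) (with $dx_0$ replaced appropriately) makes this an elementary verification. The main technical point to be careful about is ensuring that the chosen local basis of $\Omega^1_A$ is compatible with both the log structure along $E$ and the regularity of $X'$; once this is pinned down, both exact sequences follow formally from the same decomposition, so this identification step is where the real work lies.
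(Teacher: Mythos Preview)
Your approach via explicit local bases is sound in spirit and would give a more self-contained argument than the paper's, but the key ``direct sum'' you write down is \emph{not} a direct sum. Inside $\Omega^q_K$ the submodules $\Omega^q_A(\log(x_1\cdots x_s))$ and $\Omega^{q-1}_A(\log(x_1\cdots x_s))\wedge\dlog x_0$ intersect nontrivially: any $\eta\wedge dx_0$ with $\eta$ built from $\{\dlog x_1,\ldots,\dlog x_s,dx_{s+1},\ldots\}$ lies in both, since $\eta\wedge dx_0 = (x_0\eta)\wedge\dlog x_0$. A rank count confirms this: $\Omega^q_A(\log(x_0 x_1\cdots x_s))$ and $\Omega^q_A(\log(x_1\cdots x_s))$ are free of the \emph{same} rank (one merely trades the basis vector $\dlog x_0$ for $dx_0$), so your right-hand side is strictly too large. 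Consequently your definition of $\res$ as a ``projection onto the second summand'' and the claim that middle exactness is ``immediate from the direct sum decomposition'' both break down as written. The argument can be repaired---one checks directly that $\omega+\eta\wedge\dlog x_0\mapsto\ov\eta$ is well-defined (the ambiguity in $\eta$ lies in $x_0\cdot(\cdots)+dx_0\wedge(\cdots)$, which dies mod $x_0$) and computes the kernel by hand---but this is exactly the work you were hoping to avoid. The correct internal direct sum uses $\Wedge^q_A\langle B\rangle$ (wedges \emph{not} involving $x_0$ at all) in place of $\Omega^q_A(\log(x_1\cdots x_s))$; you have conflated the two.

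For comparison, the paper handles (1) by an appeal to \cite[Lem.~7.2]{Kato-89}, and for (2) takes a more intrinsic route: it uses the conormal sequence $\sO_{X'}(-X')\to u^*\Omega^1_X(\log E)\to\Omega^1_{X'}(\log E')\to 0$ of log differentials (from \cite{Ogus}), gets short exactness by a rank count, passes to exterior powers, and then deduces (2) from (1) via a $3\times 3$ diagram. Once your decomposition is fixed, your coordinate approach has the virtue of being elementary and treating (1) and (2) uniformly without external input; the paper's approach avoids basis bookkeeping and makes the logical dependence of (2) on (1) explicit.
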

\begin{proof}
  We assume $q \ge 1$ as the lemma is otherwise obvious.
  By \propref{prop:F-fin}, we first note that $\Omega^1_X$ (resp. $\Omega^1_{X'}$)
  is a locally free sheaf on $X$ (resp. $X'$) and that $\rank(\Omega^1_{X}) =
  \rank(\Omega^1_{X'}) + 1 < \infty$.
  It is now easy to define the residue map
  $\Omega^q_X(\log (E + X')) \xrightarrow{\res} u_* \Omega^{q-1}_{X'}(\log E')$ locally,
  and check that this glues to a global definition (cf. \cite[Tag 0FMU]{SP}).
The exactness of (1) is an easy consequence of \cite[Lem.~7.2]{Kato-89}.
 To prove (2), we use the fundamental exact sequence 
 \[
  \sO_X(-X') \xrightarrow{\ov{d}} u^* \Omega^1_X(\log E) \xrightarrow{u^*}
  \Omega^1_{X'}(\log E') \to 0
  \]
  of log differentials (cf. \cite[Prop.~IV.2.3.2]{Ogus}).
  All terms of this sequence are locally free sheaves
  on $X'$ such that the rank of the middle term is the sum of the ranks of the other
  two terms. This implies that $\ov{d}$ must be injective.
Taking the higher exterior powers, we thus get a short exact sequence
  \begin{equation}\label{eqn:Kahler-seq-0}
    0 \to \Omega^{q-1}_{X'}(\log E')(-X') \xrightarrow{\ov{d}}
    u^* \Omega^q_X(\log E)
    \to \Omega^q_{X'}(\log E') \to 0.
  \end{equation}

  We now look at the commutative diagram
  \begin{equation}\label{eqn:Kahler-seq-1}
    \xymatrix@C.7pc{
      & 0 \ar[d] & 0 \ar[d] & &  \\
      & \Omega^q_X(\log E)(-X') \ar[d] \ar@{=}[r] &
      \Omega^q_X(\log E)(-X') \ar[d] & & \\
      0 \ar[r] & \Omega^q_X(\log(E +X'))(-X') \ar[r] \ar[d]_-{\res} &
      \Omega^q_X(\log E) \ar[r] \ar[d] & u_* \Omega^q_{X'}(\log E') \ar[r]
      \ar@{=}[d] & 0 \\
      0 \ar[r] & u_* \Omega^{q-1}_{X'}(\log E')(-X') \ar[r] \ar[d] &
      u_* u^* \Omega^q_X(\log E)
     \ar[r] \ar[d] & u_* \Omega^q_{X'}(\log E')
      \ar[r]  & 0. \\
      & 0 & 0 &  &}
  \end{equation}
  The left column is exact by (1) and the bottom row is exact by
  ~\eqref{eqn:Kahler-seq-0}. It follows that the middle row is exact which proves
  (2).
\end{proof}

The following is a further simplification of the target of the refined
Swan conductor in the special case of \S~\ref{sec:Special-case}.

\begin{lem}\label{lem:RSW-spl}
 Under the set-up of \corref{cor:RSW-gen-1}, there exists an exact sequence
  \[
  0 \to \Omega^{q+1}_{E_i}(\log F_i)({D}_i) \xrightarrow{\nu'_i}
  \frac{\Fil_D \Omega^{q+1}_U}{\Fil_{D'} \Omega^{q+1}_U} \xrightarrow{\nu_i}  
  \Omega^{q}_{E_i}(\log F_i)({D}_i) \to 0.
  \]
\end{lem}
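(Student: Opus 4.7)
The plan is to rewrite the quotient $\Fil_D \Omega^{q+1}_U / \Fil_{D'}\Omega^{q+1}_U$ as a cohomology of a very short complex, and then splice together the two exact sequences provided by \lemref{lem:Kahler-seq} applied to the pair $(E_o, E_i)$.

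First, by \lemref{lem:Log-fil-4}(3), $\Fil_D\Omega^{q+1}_U = \Omega^{q+1}_X(\log E)(D)$ and likewise for $D'$. Since $\Omega^{q+1}_X(\log E)$ is a locally free $\sO_X$-module (\propref{prop:F-fin}(7) and the definition of log differentials), tensoring the standard exact sequence $0 \to \sO_X(-E_i) \to \sO_X \to u_*\sO_{E_i} \to 0$ with $\Omega^{q+1}_X(\log E)(D)$ yields
\[
\frac{\Fil_D\Omega^{q+1}_U}{\Fil_{D'}\Omega^{q+1}_U} \ \cong \ \Omega^{q+1}_X(\log E)(D)\otimes_{\sO_X}\sO_{E_i}.
\]
Call this sheaf $Q$.

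Next, I will apply \lemref{lem:Kahler-seq} with the snc divisor taken to be $E_o = E - E_i$ and with the regular connected divisor $X' = E_i$ (so that $X'\times_X E_o = F_i$ is snc on $E_i$). Part (1) of that lemma, tensored with the locally free sheaf $\sO_X(D)$, gives
\[
0 \to \Omega^{q+1}_X(\log E_o)(D) \to \Omega^{q+1}_X(\log E)(D) \xrightarrow{\ \res_i\ } u_*\Omega^{q}_{E_i}(\log F_i)(D_i) \to 0, \tag{$\mathrm A$}
\]
where the projection formula identifies the pushforward (using that $u^*\sO_X(D) = \sO_{E_i}(D_i)$). Part (2) of \lemref{lem:Kahler-seq}, again tensored with $\sO_X(D)$, gives
\[
0 \to \Omega^{q+1}_X(\log E)(D - E_i) \to \Omega^{q+1}_X(\log E_o)(D) \to u_*\Omega^{q+1}_{E_i}(\log F_i)(D_i) \to 0. \tag{$\mathrm B$}
\]

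The composition of the first arrow of $(\mathrm A)$ with the first arrow of $(\mathrm B)$ is the natural inclusion $\Omega^{q+1}_X(\log E)(D - E_i)\hookrightarrow \Omega^{q+1}_X(\log E)(D)$. Arranging this into the commutative diagram
\[
\xymatrix@C1pc{
0 \ar[r] & \Omega^{q+1}_X(\log E)(D - E_i) \ar@{=}[d] \ar[r] & \Omega^{q+1}_X(\log E_o)(D) \ar[d] \ar[r] & u_*\Omega^{q+1}_{E_i}(\log F_i)(D_i) \ar[d]^{\nu'_i} \ar[r] & 0\\
0 \ar[r] & \Omega^{q+1}_X(\log E)(D - E_i) \ar[r] & \Omega^{q+1}_X(\log E)(D) \ar[r] & Q \ar[r] & 0,
}
\]
the snake lemma yields an injection $\nu'_i$ whose cokernel is the cokernel of the middle column, which by $(\mathrm A)$ is $u_*\Omega^{q}_{E_i}(\log F_i)(D_i)$. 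Setting $\nu_i$ to be this induced map (visibly factoring $\res_i$ through $Q$) gives the desired short exact sequence.

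The construction is essentially formal once the two sequences of \lemref{lem:Kahler-seq} are in place; there is no real obstacle beyond keeping track of the identifications $u^*\sO_X(D) = \sO_{E_i}(D_i)$ and $\Fil_D\Omega^{q+1}_U = \Omega^{q+1}_X(\log E)(D)$.
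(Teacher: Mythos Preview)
Your proof is correct and takes essentially the same approach as the paper: both arguments identify the quotient as $\Omega^{q+1}_X(\log E)(D)\otimes_{\sO_X}\sO_{E_i}$ via \lemref{lem:Log-fil-4}(3), and then splice together the two exact sequences (your (A) and (B)) coming from \lemref{lem:Kahler-seq} applied to the pair $(E_o, E_i)$. The only cosmetic difference is the orientation of the diagram---the paper places the residue sequence (A) as a row and sequence (B) as a column, whereas you place (B) as a row and the first arrow of (A) as a column map---but the diagram chase is the same.
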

\begin{proof}
  We look at the commutative diagram
\begin{equation}\label{eqn:RSW-spl-0}
  \xymatrix@C1pc{
    & 0 \ar[d] & 0 \ar[d] & & \\
      & \Omega^{q+1}_{X}(\log E)(D') \ar@{=}[r] \ar[d]_-{\wt{\alpha}} &
      \Omega^{q+1}_{X}(\log E)(D') \ar[d]^-{\alpha} & & \\
      0 \ar[r] & \Omega^{q+1}_{X}(\log E_o)(D) \ar[r] \ar[d] & \Omega^{q+1}_{X}(\log E)(D)
      \ar[r]^-{\res} \ar[d] & \Omega^{q}_{E_i}(\log F_i)({D}_i) \ar[r] \ar@{=}
      [d] & 0 \\
      0 \ar[r] &  \Omega^{q+1}_{E_i}(\log F_i)({D}_i) \ar[r] \ar[d] &
      \coker(\alpha) \ar[r] \ar[d] & \Omega^{q}_{E_i}(\log F_i)({D}_i) \ar[r] &
      0, \\
      & 0 & 0 & &}
\end{equation}
where the middle row is the residue exact sequence and $\alpha$ is the canonical
inclusion whose image lies in $\Omega^{q+1}_{X}(\log E_o)(D)$, as one easily checks.
We let $\wt{\alpha}$ denote the resulting factorization. The left column is exact
by \lemref{lem:Kahler-seq}(2). It follows that the bottom row is also exact, and
this finishes the proof as $\coker(\alpha) =
 \frac{\Fil_D \Omega^{q+1}_U}{\Fil_{D'} \Omega^{q+1}_U}$ by \lemref{lem:Log-fil-4}(3).
\end{proof}

The following variant of \lemref{lem:RSW-spl} will be useful 
in \cite{KM-2}, and its proof is immediate if we replace
$D'$ by $D -2E_i$ in ~\eqref{eqn:RSW-spl-0}.

\begin{cor}\label{cor:RSW-spl-0-0}
  Assume that $n_i \ge 2$ and let $D'' = D -2E_i$. Then
  we have an exact sequence
  \[
 0 \to \frac{\Omega^{q+1}_X(\log E_o)(D)}{\Fil_{D''}\Omega^{q+1}_{U}}
    \xrightarrow{\psi'_i}
  \frac{\Fil_{D}\Omega^{q+1}_{U}}{\Fil_{D''}\Omega^{q+1}_{U}} \xrightarrow{\psi_i}
  \Omega^q_{E_i}(\log F_i)(D_i) \to 0,
  \]
  where $\psi_i$ is the composite map $ \frac{\Fil_{D}\Omega^{q+1}_{U}}{\Fil_{D''}\Omega^{q+1}_{U}} \surj  \frac{\Fil_{D}\Omega^{q+1}_{U}}{\Fil_{D'}\Omega^{q+1}_{U}}\xrightarrow{\nu_i}
  \Omega^q_{E_i}(\log F_i)(D_i)$.
  \end{cor}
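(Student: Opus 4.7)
The plan is to prove \corref{cor:RSW-spl-0-0} by copying the nine-term diagram \eqref{eqn:RSW-spl-0} of \lemref{lem:RSW-spl} verbatim, but with the divisor $D' = D - E_i$ replaced by $D'' = D - 2E_i$. Since $n_i \ge 2$, we have $D'' \in \Div_E(X)$ with all components non-negative on the $E_j$ for $j \ne i$ and non-negative on $E_i$ as well, so \lemref{lem:Log-fil-4}(3) still gives the identification $\Fil_{D''}\Omega^{q+1}_U = \Omega^{q+1}_X(\log E)(D'')$ and similarly $\Fil_D \Omega^{q+1}_U = \Omega^{q+1}_X(\log E)(D)$.

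The only genuinely new verification—and the main (mild) obstacle—is that the canonical inclusion $\alpha \colon \Omega^{q+1}_X(\log E)(D'') \inj \Omega^{q+1}_X(\log E)(D)$ factors through the subsheaf $\Omega^{q+1}_X(\log E_o)(D)$, yielding the factorization $\wt{\alpha}$. This is a local check at the generic point $\eta_i$ of $E_i$: with $x_i$ a local parameter, an element of $\Omega^{q+1}_X(\log E)(D'')_{\eta_i}$ has the form $x_i^{-(n_i-2)}(\omega_1 + \omega_2\wedge \dlog x_i)$ with $\omega_1,\omega_2 \in \Omega^{\bullet}_X(\log E_o)_{\eta_i}$, and one rewrites this as $x_i^{-n_i}(x_i^2\,\omega_1 + x_i\, \omega_2 \wedge d x_i)$, whose coefficient lies in $\Omega^{q+1}_X(\log E_o)_{\eta_i}$. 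The two powers $x_i^2$ and $x_i$ (which collapse the log pole from $E_i$) are exactly what requires the hypothesis $n_i \ge 2$.

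Once $\wt{\alpha}$ is in place, the diagram
\[
\xymatrix@C1pc{
& 0 \ar[d] & 0 \ar[d] & & \\
& \Omega^{q+1}_X(\log E)(D'') \ar@{=}[r] \ar[d]_-{\wt{\alpha}} &
\Omega^{q+1}_X(\log E)(D'') \ar[d]^-{\alpha} & & \\
0 \ar[r] & \Omega^{q+1}_X(\log E_o)(D) \ar[r] \ar[d] & \Omega^{q+1}_X(\log E)(D) \ar[r]^-{\res} \ar[d] &
\Omega^{q}_{E_i}(\log F_i)(D_i) \ar[r] \ar@{=}[d] & 0 \\
0 \ar[r] & \dfrac{\Omega^{q+1}_X(\log E_o)(D)}{\Fil_{D''}\Omega^{q+1}_U} \ar[r] \ar[d] &
\dfrac{\Fil_D\Omega^{q+1}_U}{\Fil_{D''}\Omega^{q+1}_U} \ar[r] \ar[d] &
\Omega^{q}_{E_i}(\log F_i)(D_i) \ar[r] & 0 \\
& 0 & 0 & &}
\]
has exact middle row (by \lemref{lem:Kahler-seq}(1), twisted by $\sO_X(D)$), exact middle and left columns (by construction), and a trivially exact top row. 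A standard $3\times 3$ (or snake) lemma argument then forces the bottom row to be exact. This gives the desired short exact sequence with $\psi'_i$ the map in the lower-left and $\psi_i$ the map in the lower-right.

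Finally, I need to check that the $\psi_i$ produced by the diagram coincides with the composite $\Fil_D\Omega^{q+1}_U/\Fil_{D''}\Omega^{q+1}_U \twoheadrightarrow \Fil_D\Omega^{q+1}_U/\Fil_{D'}\Omega^{q+1}_U \xrightarrow{\nu_i} \Omega^q_{E_i}(\log F_i)(D_i)$ of the statement. This is immediate from the naturality of the residue map $\res$: in both descriptions $\psi_i$ is induced by $\res$ composed with the appropriate quotient, so they agree. This completes the argument; no step beyond the factorization of $\alpha$ requires substantive work.
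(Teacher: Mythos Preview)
Your proposal is correct and follows exactly the paper's approach: the paper's proof is the single sentence ``its proof is immediate if we replace $D'$ by $D-2E_i$ in \eqref{eqn:RSW-spl-0},'' and you have faithfully carried out that replacement with appropriate detail. One minor remark: the hypothesis $n_i \ge 2$ is not actually needed for the factorization $\wt{\alpha}$ (your local computation $x_i^{-(n_i-2)}\omega_2\wedge\dlog x_i = x_i^{-(n_i-1)}\omega_2\wedge dx_i \in x_i^{-n_i}\Omega^{q+1}$ works for any $n_i$); it is imposed so that $D''$ remains effective, consistent with the standing setup of \S\ref{sec:Special-case}.
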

  
\vskip.2cm

\begin{remk}\label{remk:Artin-conductor}
  Using \corref{cor:RSW-spl-0-0}, it is straight-forward to
  define a non-log version of the filtration $\Fil^\log_DH^{q+1}_{p^m}(U)$
  (cf. \cite{KM-2}) which generalizes Matsuda's filtration for $q =0$
  (cf. \cite{Matsuda}) and derive its functoriality.
    One would like to construct an analogue of the generalized refined Swan conductor
    of \corref{cor:RSW-gen-1} for the non-log version. To our
    knowledge, this problem is yet unsolved and we shall pursue it elsewhere.
    \end{remk}

\vskip.2cm

\subsection{Agreement with Kato's Swan conductor}\label{sec:Agreement-Kato}
Suppose now in \corref{cor:RSW-gen-1} that $X = \Spec(A)$, where $A$ is a henselian
discrete valuation ring with maximal ideal $\fm = (\pi)$, the quotient field $K$ and the
residue field $\ff$. We write $D = D_n = V((\pi^n))$ so that $\Fil_D \Omega^{q+1}_U =
\Fil_n \Omega^{q+1}_K$. \corref{cor:RSW-gen-1} then yields the map
${\Rsw}^{q+1}_{K,n}:= {\Rsw}^{q+1}_{X|(D,D')} \colon \gr^{\bk}_n H^{q+1}(K) \inj
\pi^{-n} \Omega^{q+1}_A(\log \pi)\otimes_A \ff \cong \Omega^{q+1}_{\ff} \oplus \Omega^{q}_{\ff}$.
Recall Kato's refined Swan conductor $ {\rsw}^q_{K,n}$ from ~\eqref{eqn:Kato-SC}.
We refer to \S~\ref{sec:Kato-comp} for various notations that we shall use in this
subsection.
\begin{lem}\label{lem:Rsw-spl-Kato}
  We have ${\Rsw}^{q+1}_{K,n} = {\rsw}^{q+1}_{K,n}$. 
\end{lem}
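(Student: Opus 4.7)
The plan is to trace both maps on a common set of generators and check that they produce the same formula. By \thmref{thm:Kato-fil-10}, every element of $\Fil^{\bk}_n H^{q+1}(K)[p^m]$ has the form $\chi = \delta^{m,q}_n(\omega)$ for some $\omega \in \Fil_n W_m\Omega^q_K$. Moreover, combining \lemref{lem:VR-4} with the fact that the class of $\omega$ in the graded piece does not change if we add an element of $d(\Fil_n W_m\Omega^{q-1}_K)$, we may assume $\omega$ is a finite sum of elements of the form $V^i([a]_{m-i})\dlog[y_1]_m \wedge \cdots \wedge \dlog[y_q]_m$ with $[a]_{m-i} \in \Fil_n W_{m-i}(K)$ and $y_j \in K^\times$. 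Using the identity $F^{m-1}d\circ V^i=F^{m-1-i}d$ (after re-indexing $m$) we further reduce to the case $i=0$, so that $\omega = [a]_m \cdot \dlog[y_1]_m \wedge \cdots \wedge \dlog[y_q]_m$.

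Unwinding the construction of $\Rsw^{q+1}_{K,n}$ via \eqref{eqn:Comp*-5}, \corref{cor:Comp*-8} and \lemref{lem:Log-fil-4}(3), the image of $\delta^{m,q}_n(\omega)$ is represented by $(-1)^{q+1} F^{m-1}d(\omega)$ viewed in
\[
\Fil_n\Omega^{q+1}_K/\Fil_{n-1}\Omega^{q+1}_K \ \cong \ \pi^{-n}\Omega^{q+1}_A(\log \pi)\otimes_A \ff.
\]
Since $F^{m-1}(\dlog[y_j]_m)=\dlog[y_j]_1$ and $F^{m-1}d(\dlog[y_j]_m)=0$, an application of the Leibniz rule for $F^{m-1}d$ yields
\[
F^{m-1}d(\omega) \ = \ F^{m-1}d[a]_m \wedge \dlog(y_1)\wedge\cdots \wedge \dlog(y_q)
\]
in $\pi^{-n}\Omega^{q+1}_A(\log \pi)\otimes_A \ff$.

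On the other hand, by \thmref{thm:Kato-2}(2) and \thmref{thm:Kato-3}, every element of $\Fil^{\bk}_n H^{q+1}(K)[p^m]$ is a sum of cup products $\delta^0_m([a]_m)\cup\{y_1,\ldots,y_q\}$ with $[a]_m\in \Fil_n W_m(K)$. Kato's refined Swan conductor is by construction compatible with cup products in Milnor $K$-theory (cf. \cite[\S 5]{Kato-89}), in the sense that $\rsw^{q+1}_{K,n}(\chi_0\cup\{y_1,\ldots,y_q\})=\rsw^{1}_{K,n}(\chi_0)\wedge \dlog(y_1)\wedge\cdots\wedge \dlog(y_q)$. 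Moreover, \cite[Thm.~3.2, \S 5]{Kato-89} computes explicitly that $\rsw^{1}_{K,n}(\delta^0_m([a]_m))$ equals the class of $-F^{m-1}d[a]_m$ in $\pi^{-n}\Omega^1_A(\log \pi)\otimes_A \ff$. Assembling these two facts gives exactly the same formula $(-1)^{q+1}F^{m-1}d[a]_m\wedge \dlog(y_1)\wedge\cdots\wedge \dlog(y_q)$, matching $\Rsw^{q+1}_{K,n}$ on the chosen generators; by linearity this finishes the proof.

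The main obstacle is the bookkeeping for signs and for the normalisation of Kato's $\rsw^1_{K,n}$: Kato defines it by pairing with the unit $1+\pi^{n+1}T$ in $(A[T])^h[\pi^{-1}]$, whereas ours is defined intrinsically as $(-1)^{q+1}F^{m-1}d$ on the filtered de Rham-Witt complex, so one must carefully verify the identification for the case $q=0$ from the structure of the Artin-Schreier-Witt sequence. Once the $q=0$ comparison is in hand, the extension to arbitrary $q$ is automatic from the Leibniz rule for $F^{m-1}d$ combined with Kato's product compatibility.
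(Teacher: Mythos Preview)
Your overall strategy is the same as the paper's: reduce to generators of the form $V^i([a]_{m-i})\dlog[y_1]_m\wedge\cdots\wedge\dlog[y_q]_m$, compute $\Rsw$ on these via $(-1)^{q+1}F^{m-1}d$, and compare with Kato's $\rsw$ via a product formula reducing to the known $q=0$ case. The difference is that the paper does not invoke a ready-made product compatibility for $\rsw$; instead it goes back to Kato's \emph{definition} of $\rsw^{q+1}_{K,n}$ through the symbol $\{\chi,1+\pi^{n}T\}$ and the map $\lambda^{q+2}_\pi$, and verifies the identity by direct computation, treating separately the cases $y_j\in A^\times$ and $y_j=\pi$.

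This matters, because the step you delegate to a citation is exactly where the content of the lemma lies, and your stated formula is not quite right. You assert
\[
\rsw^{q+1}_{K,n}(\chi_0\cup\{y_1,\ldots,y_q\})=\rsw^{1}_{K,n}(\chi_0)\wedge\dlog(y_1)\wedge\cdots\wedge\dlog(y_q),
\]
but the paper's explicit computation from Kato's definition (commuting $\{1+\pi^nT\}$ past the degree-$q$ class via graded commutativity of the cup product) produces an extra $(-1)^q$: one finds $\rsw^{q+1}_{K,n}(\chi)=(-1)^q\,\rsw^{1}_{K,n}(\chi_0)\wedge\dlog(u_1)\wedge\cdots\wedge\dlog(u_q)$. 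Your ``assembling'' paragraph silently absorbs this sign (the formula you claim to obtain, $(-1)^{q+1}F^{m-1}d[a]_m\wedge\dlog(y_1)\wedge\cdots$, does \emph{not} follow from the product formula as you wrote it combined with $\rsw^1=-F^{m-1}d[a]_m$; those two give only $-F^{m-1}d[a]_m\wedge\dlog(y_1)\wedge\cdots$). So two errors cancel, but neither is justified. You also do not explain why the formula remains valid when some $y_j=\pi$, which is precisely where Kato's $\lambda^{q+2}_\pi$ has its subtle dependence on the uniformizer; the paper handles this as a separate case. In short: the approach is fine, but to make it a proof you must actually carry out the computation of $\{\chi,1+\pi^nT\}$ on the generators rather than citing a product rule, and this is what the paper does.
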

\begin{proof}
  We let $\chi \in \Fil^{\bk}_n H^{q+1}(K)$. We then have
  $\chi \in \H^1_\et(X, W_m\sF^{q,\bullet}_{X|{D_n}})$ for some $m \ge 1$.
  By definition (cf. \cite[\S~3.9]{Kato-89}),
  $\rsw^{q+1}_{K,n}(\chi) = (\alpha, \beta)$ so that
  $\lambda^{q+2}_{\pi}(T\alpha, T\beta) = \{\chi, 1 + \pi^nT\} \in H^{q+2}(K)$, where
  $\lambda^{q+2}_\pi \colon
  \Omega^{q+1}_{\ff[T]} \oplus \Omega^q_{\ff[T]} \to H^{q+2}((A[T])^h[\pi^{-1}])$
  is as in \cite[\S~1.9, p.~107]{Kato-89}.
  Using the surjectivity of the composite map
  $K^M_q(K) \otimes \Fil_n W_m(K) \to \Fil_n W_m\Omega^q_K
  \xrightarrow{\delta^{m,q}_n} T^{m,q}_n(K)$ (cf. \corref{cor:VR-5}, \thmref{thm:Kato-fil-10}), it suffices
  to prove ${\Rsw}^{m,q+1}_{K,n}(\chi) = {\rsw}^{m,q+1}_{K,n}(\chi)$ in the
  following two cases.

  \vskip .2cm

 {\bf{Case~1:}} $\chi = \delta^{m,q}_n\left(V^i([x]_{m-i})\dlog([u_1]_m)
  \wedge \cdots \wedge \dlog([u_q]_m)\right)$ (cf. ~\eqref{eqn:hyp-1}), where
  $x \in K^{\times}$ such that $x^{p^{m-i-1}} \in \Fil_n K$ for some $i$ and
  $u_j \in A^{\times}$ for each $j$.

  \vskip .2cm

  In this case, we let $\chi' = \delta^{m,0}_n(V^i([x]_{m-i}))$ and $\alpha =
  \dlog([u_1]_m) \wedge \cdots \wedge \dlog([u_q]_m)$
  so that $\chi' \in  \Fil^{\bk}_n H^{1}_m(K)$ and $\chi = \chi' \cup \alpha$ by the
  commutative diagram ~\eqref{eqn:Milnor-1}. In particular,
  \begin{equation}\label{eqn:Rsw-spl-Kato-0}
  \{\chi, 1 + \pi^nT\} =  \{\chi' \cup \alpha, 1 + \pi^nT\} 
  =  (-1)^q \{\chi', 1 + \pi^nT\} \cup \alpha.
  \end{equation}
  Meanwhile, one knows from Kato's definition of his refined Swan conductor
  (cf. \cite[Rem.~3.2.12]{Matsuda}, \cite[(3.2.4)]{KLS}) that
  \begin{equation}\label{eqn:Rsw-spl-Kato-1}
  -(F^{m-1}d(V^i([x]_{m-i}))) = \rsw^1_{K,n}(\chi') = (\alpha_1, \beta_1) \in
    \Omega^1_{\ff} \oplus \ff.
  \end{equation}
  In particular, $\lambda^2_{\pi}(T\alpha_1, T\beta_1) =
  \{\chi', 1 + \pi^nT\} \in H^{2}((A[T])^h[\pi^{-1}])$. We thus get
\[
\{\chi, 1 + \pi^nT\} =   (-1)^q \lambda^2_{\pi}(T\alpha_1, T\beta_1) \cup \alpha
= \lambda^{q+2}_\pi \left((-1)^q T\alpha_1\wedge \ov{\alpha},
T\beta_1 \wedge \ov{\alpha}\right),
\]
where $\ov{\alpha}$ is the image of $\alpha$ under the quotient map
$\Omega^q_A \surj \Omega^q_{\ff}$.
By definition of ${\Rsw}^{m,q+1}_{K,n}(\chi)$, it remains therefore to show that
$(-1)^{q+1} F^{m-1}d(V^i([x]_{m-i})\alpha) = (-1)^q \left(\alpha_1 +
\beta_1 \dlog(\pi)\right) \wedge \ov{\alpha} = (-1)^q \alpha_1 \wedge \ov{\alpha} +
(\beta_1 \ov{\alpha}) \wedge \dlog(\pi)$.

To that end, we note that
\[
\begin{array}{lll}
  (-1)^{q+1} F^{m-1}d(V^i([x]_{m-i})\alpha) & = & (-1)^{q+1} F^{m-1}(dV^i([x]_{m-i}) \wedge
  \alpha) \\
  & = & (-1)^q \left(-F^{m-1}dV^i([x]_{m-i})\right) \wedge \alpha \\
& {=}^1 & (-1)^q(\alpha_1 + \beta_1 \dlog(\pi)) \wedge \ov{\alpha} \\
  & = & ((-1)^q \alpha_1 \wedge \ov{\alpha}) + (\beta_1  \ov{\alpha} \wedge
  \dlog(\pi)), \\
\end{array}
\]
where ${=}^1$ holds by ~\eqref{eqn:Rsw-spl-Kato-1}. This proves the desired
identity.

\vskip .2cm

  {\bf{Case~2:}} $\chi = \delta^{m,q}_n\left(V^i([x]_{m-i})\dlog([u_1]_m)
  \wedge \cdots \wedge \dlog([u_{q-1}]_m) \wedge \dlog(\pi)\right)$, where
  $x \in K^{\times}$ such that $x^{p^{m-i-1}} \in \Fil_n K$ for some $i$ and
  $u_j \in A^{\times}$ for each $j$.

  \vskip .2cm

  In this case, we let $\alpha = \dlog([u_1]_m)
  \wedge \cdots \wedge \dlog([u_{q-1}]_m) \wedge \dlog(\pi)$ and
  $\alpha' = \dlog([u_1]_m) \wedge \cdots \wedge \dlog([u_{q-1}]_m)$.
  Repeating the argument of the previous case, we then get
  \[
  \begin{array}{lll}
    \{\chi, 1 + \pi^nT\} & = & (-1)^q \{\chi', 1 + \pi^nT\} \cup \alpha \\
    & = & (-1)^q \lambda^2_\pi(T\alpha_1, T\beta_1 ) \cup \alpha \\
    & = & (-1)^q \lambda^2_\pi(T\alpha_1,0) \wedge \alpha' \wedge \dlog(\pi) +
    \lambda^2_\pi(0,T\beta_1) \wedge \alpha \wedge \dlog(\pi) \\
     & = &  (-1)^q \lambda^2_\pi(T\alpha_1,0) \wedge \alpha' \wedge \dlog(\pi) \\
& = & (-1)^q \lambda^{q+2}_\pi\left(0, T\alpha_1 \wedge \ov{\alpha'}\right)
\end{array}
  \]
  It remains therefore to show that
  $(-1)^{q+1}F^{m-1}d(V^i([x]_{m-i}) \alpha) = (-1)^{q} \alpha_1
  (\alpha' \wedge \dlog(\pi))$.
  To check this, we note that
  \[
  \begin{array}{lll}
    (-1)^{q+1}F^{m-1}d(V^i([x]_{m-i}) \alpha)  & = & (-1)^{q+1}F^{m-1}d(V^i([x]_{m-i})
    \wedge \alpha' \wedge \dlog(\pi) \\
    & {=}^2 & (-1)^q(\alpha_1 + \beta_1 \dlog(\pi)) \wedge \alpha'
    \wedge \dlog(\pi) \\
    & = & (-1)^q\alpha_1 (\alpha' \wedge \dlog(\pi)),
  \end{array}
  \]
  ${=}^2$ holds by ~\eqref{eqn:Rsw-spl-Kato-1}.
  This shows the desired identity and concludes the proof. 
\end{proof}
    
\vskip.2cm

\begin{remk}\label{remk:Rsw-spl-Kato-2}
  Using \lemref{lem:Rsw-spl-Kato}, it is easy to check that Kato's integrality and
  specialization theorems (cf. \cite[Thms.~7.1, 9.1]{Kato-89})
  in positive characteristics are immediate consequences of \corref{cor:RSW-gen-1}.
  \end{remk}

\section{Duality for Hodge-Witt cohomology with modulus}
\label{sec:HW-Duality}
In this section, we shall prove a duality theorem for the filtered Hodge-Witt cohomology
when $k$ is a perfect field. This can be interpreted as
an extension of Ekedahl's duality theorem \cite[Cor.~II.2.2.23]{Ekedahl} to the
Hodge-Witt cohomology with modulus.
We begin by setting up notations and proving some intermediate results
which are corollaries of \thmref{thm:Global-version}.

\subsection{Canonical filtration of $\Fil_D W_m\Omega^q_U$}\label{sec:HWD-0}
We let $k$ be an $F$-finite field of characteristic $p$ and let 
$X$ be a Noetherian regular and connected $F$-finite $k$-scheme. 
Let $E$ be a simple normal crossing divisor on $X$
with irreducible components $E_1, \ldots , E_r$.  Let $j \colon U \inj X$ be the
inclusion of the complement of $E$ in $X$. 
We fix integers $q \ge 0, \ m \ge 1$.

Let $W_mX$ denote the scheme $(|X|, W_m\sO_X)$, where $|X|$ is the underlining topological space of $X$ (we caution that this is different from the group $W_n(X)$ defined in
\S~\ref{sec:FDRWC}).
Then the transition map $R\colon W_m \sO_X \surj W_{m-1} \sO_X$ and the Frobenius
homomorphism $F \colon W_m\sO_X \to 
W_{m-1}\sO_X$ define a closed immersion $\rho_X \colon W_{m-1} X \inj W_m X$
and a finite morphism $\phi \colon W_{m-1} X \to W_m X$, respectively, of
$W_mX$-schemes.

The standard properties of $(R, F, V, d)$ yield the following $W_m \sO_X$-module
homomorphisms. 
\begin{equation} \label{eqn:Mor-W_mX*-1}
 W_m \Omega^q_X \xrightarrow{R} \rho_* W_{m-1}\Omega^q_X,\ \ \ \ \ \ \ \ 
 W_m \Omega^q_X \xrightarrow{F} \phi_* W_{m-1}\Omega^q_X
 \end{equation}
 \begin{equation} \label{eqn:Mor-W_mX*-2}
 \phi_* W_{m-1} \Omega^q_X \xrightarrow{V} W_m \Omega^q_X, \ \ \ \ \ \ \ \ 
 \ov{\phi}^m_*  W_m \Omega^q_X \xrightarrow{d}  \ov{\phi}^m_* W_m  \Omega^{q+1}_X,
\end{equation}
 where $\ov{\phi} \colon W_m X \to W_m X$ is the finite morphism induced by the
 absolute Frobenius $\ov{F} \colon W_m \sO_X \to W_m \sO_X$
 (recall that $\ov{F} = \psi$ in our notation when $m =1$).

 Since $d F= p F d$ and $p \Omega^{q+1}_X = 0$, we have the morphism of
$W_{m+1}\sO_X$-modules 
\begin{equation} \label{eqn:Mor-W_mX*-3}
  F^{m-1}d \colon \phi_* W_m\Omega^q_X \to \phi^{m}_* \Omega^{q+1}_X.
\end{equation}
In this section, we shall often use the notation $R_m$ for the map
$R \colon W_m\Omega^q_X  \to W_{m-1}\Omega^q_X$. This will help us
keep track of the changing domains of $R$.

We let $S = \Spec(k)$ and
 let $f_m \colon W_mX \to W_mS$ denote the projection
map induced by the structure map $f \colon X \to S$. We have a
commutative diagram
\begin{equation}\label{eqn:Can-filt-2}
  \xymatrix@C1.5pc{
  W_{m-n}X \ar[r]^-{\rho^{n}_X} \ar[d]_-{f_{m-n}} & W_mX \ar[d]^-{f_m} \\
  W_{m-n}S \ar[r]^-{\rho^{n}_S} & W_mS.}
\end{equation}
The horizontal arrows are closed immersions. The vertical arrows are
are proper (resp. projective) if $f$ is proper (resp. projective). 
We let $\iota = \rho^{m-1}_S$ and $\nu = \rho^{m-1}_X$.

\vskip.2cm

\vskip.2cm

We let $D \in \Div_E(X)$ and
define the `canonical filtration' of $\Fil_D W_m\Omega^q_U$ as follows.

\begin{defn}\label{defn:Can-filt}
For $n \in \Z$, we let
  \[
  \filt^n\Fil_D W_m\Omega^q_U =  \left\{\begin{array}{ll}
  \Ker(R^{m-n} \colon \Fil_D W_m\Omega^q_U \surj
  \Fil_{{D}/{p^{m-n}}} W_n\Omega^q_U) & \mbox{if $n < m$} \\
 0 & \mbox{if $n  \ge  m$.}
\end{array}\right.
\]
\end{defn}

The surjectivity of $R^{m-n}$ in the above definition follows from the goodness of
the filtered de Rham-Witt complex (cf. \thmref{thm:Global-version}) and an
elementary observation that $\lfloor{{\lfloor{n/{p^s}}\rfloor}/p}\rfloor =
    \lfloor{n/{p^{s+1}}}\rfloor$ for $n \in \Z$ and $s \ge 0$.
    We have a decreasing filtration
   \[
    \Fil_D W_m\Omega^q_U =  \filt^0\Fil_D W_m\Omega^q_U \supset
      \filt^{1}\Fil_D W_m\Omega^q_U \supset \cdots \supset    
      \filt^{m-1}\Fil_D W_m\Omega^q_U \supset \filt^m\Fil_D W_m\Omega^q_U = 0.
     \]
We let
\begin{equation}\label{eqn:Can-filt-0}
  \gr^n \Fil_D W_m\Omega^q_U =
  \frac{\filt^n\Fil_D W_m\Omega^q_U}{\filt^{n+1}\Fil_D W_m\Omega^q_U}.
  \end{equation}
It is easy to check that for $0 \le n \le m-1$,
there is a canonical isomorphism $W_{m}\sO_X$-modules
\begin{equation}\label{eqn:Can-filt-1}
  \begin{array}{lll}
    \gr^n \Fil_D W_m\Omega^q_U & \xrightarrow{\cong} &
    \rho^{m-n-1}_* \filt^{n} \Fil_{D/{p^{m-n-1}}} W_{n+1}\Omega^q_U \\
& = & \Ker(\rho^{m-n-1}_* \Fil_{D/{p^{m-n-1}}} W_{n+1}\Omega^q_U
    \xrightarrow{R} \rho^{m-n}_* \Fil_{D/{p^{m-n}}} W_{n}\Omega^q_U).
    \end{array}
\end{equation}

\begin{lem}$($cf. \cite[Cor.~I.3.9]{Illusie}$)$\label{lem:Can-filt-3}
  Let $n \ge 0$ be any integer. Then we have the following.
  \begin{enumerate}
  \item
   There is a canonical exact sequence of $W_{n+1}\sO_X$-modules
  \[
  0 \to \phi^n_* \frac{\Fil_{D} \Omega^q_U}{B_n\Fil_{D} \Omega^q_U}
  \xrightarrow{V^n} \filt^{n} \Fil_{D} W_{n+1}\Omega^q_U \xrightarrow{\beta_n}
   \phi^n_* \frac{\Fil_{D} \Omega^{q-1}_U}{Z_n\Fil_{D} \Omega^{q-1}_U} \to 0.
   \]
   \item
   There is a canonical exact sequence of $W_{n+2}\sO_X$-modules
  \[
  0 \to \phi^{n+1}_* \frac{\Fil_{D} \Omega^{q-1}_U}{Z_{n+1}\Fil_{D} \Omega^{q-1}_U}
  \xrightarrow{dV^n} \phi_* \filt^{n} \Fil_{D} W_{n+1}\Omega^q_U  \xrightarrow{\alpha_n}
   \phi^{n+1}_* \frac{\Fil_{D} \Omega^{q}_U}{B_{n+1}\Fil_{D} \Omega^{q}_U} \to 0.
   \]
  \end{enumerate}
  \end{lem}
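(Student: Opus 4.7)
The plan is to deduce both exact sequences from the fundamental presentation
\[
\filt^n\Fil_D W_{n+1}\Omega^q_U \;=\; V^n(\Fil_D\Omega^q_U) \;+\; dV^n(\Fil_D\Omega^{q-1}_U),
\]
which is immediate from \thmref{thm:Global-version}(2) at level $m = n+1$ combined with the definition of $\filt^n$ as $\Ker(R\colon \Fil_D W_{n+1}\Omega^q_U \to \Fil_{D/p} W_n\Omega^q_U)$. This yields a surjection $\psi_n \colon \Fil_D\Omega^q_U \oplus \Fil_D\Omega^{q-1}_U \surj \filt^n\Fil_D W_{n+1}\Omega^q_U$ sending $(a,b)$ to $V^n(a) + dV^n(b)$, and both sequences come from projecting this presentation onto the first or second coordinate modulo suitable subgroups. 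The $\phi^n_*$ (resp.\ $\phi^{n+1}_*$) twists record that $V^n$ (resp.\ $dV^n$) is $W_{n+1}\sO_X$-linear (resp.\ $W_{n+2}\sO_X$-linear) after the corresponding Frobenius twist of the source module structure. The main obstacle is a careful bookkeeping of the interactions between $V,d,F$, the filtration by $D$, and the Witt level; the key inputs are \thmref{thm:Global-version}(6),(8) for the kernels of $V^n$ and $dV^n$ at the filtered level, together with the standard identities $FV = p$ and $FdV = d$, which hold for the filtered complex by \lemref{lem:Log-fil-Wcom}.

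For part $(1)$, I define $\beta_n$ by $V^n(a) + dV^n(b) \mapsto b \bmod Z_n\Fil_D\Omega^{q-1}_U$. To check that $\beta_n$ is well-defined, I assume $V^n(a) + dV^n(b) = 0$ and apply $F^{n-k}$ for $0 \le k \le n-1$. Using $F^{n-k}V^n(a) = p^{n-k}V^k(a) = 0$ in $W_{k+1}\Omega^q_U$ (because $pV^k(\omega) = V^k(p\omega) = 0$ in characteristic $p$) together with the iterated relation $F^{n-k}dV^n = dV^k$, the identity reduces to $dV^k(b) = 0$ in $W_{k+1}\Omega^q_U$; taking $k = n-1$ and invoking \thmref{thm:Global-version}(6) gives $b \in Z_n\Fil_D\Omega^{q-1}_U$, as desired. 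Surjectivity of $\beta_n$ follows from $\beta_n(dV^n(b)) \equiv b$. For the kernel, if $b \in Z_n\Fil_D\Omega^{q-1}_U$, I write $b = F^n(c)$ with $c \in \Fil_D W_{n+1}\Omega^{q-1}_U$ and compute
\[
dV^n(b) \;=\; d(V^nF^n c) \;=\; d(p^n c) \;=\; p^n\, dc \;=\; V^n F^n(dc) \;\in\; V^n(\Fil_D\Omega^q_U),
\]
so $\Ker(\beta_n) = V^n(\Fil_D\Omega^q_U)$. Finally, \thmref{thm:Global-version}(8) identifies $\Ker\bigl(V^n\!\mid_{\Fil_D\Omega^q_U}\bigr) = B_n\Fil_D\Omega^q_U$, completing part $(1)$.

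For part $(2)$, I set $\alpha_n(V^n(a) + dV^n(b)) := a \bmod B_{n+1}\Fil_D\Omega^q_U$ and argue symmetrically. Well-definedness: if $V^n(a) + dV^n(b) = 0$, the argument of part $(1)$ produces $c$ with $b = F^n(c)$, and the calculation just performed yields $V^n(a) = -V^n(F^n dc)$; hence $a + F^n(dc) \in \Ker(V^n) = B_n\Fil_D\Omega^q_U$ by \thmref{thm:Global-version}(8), and since $F^n(dc)$ already lies in $B_{n+1}\Fil_D\Omega^q_U$ by definition (cf.\ the global analogue of \lemref{lem:Complete-9}(1)), we conclude $a \in B_{n+1}\Fil_D\Omega^q_U$. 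Surjectivity is immediate from $\alpha_n(V^n(a)) = a$. For the kernel, if $a = F^n(dc')$ with $c' \in \Fil_D W_{n+1}\Omega^{q-1}_U$, the same manipulation gives $V^n(a) = dV^n(F^n c')$, hence $V^n(a) + dV^n(b) = dV^n(F^n c' + b) \in dV^n(\Fil_D\Omega^{q-1}_U)$, so $\Ker(\alpha_n) = dV^n(\Fil_D\Omega^{q-1}_U)$; \thmref{thm:Global-version}(6) then identifies $\Ker\bigl(dV^n\!\mid_{\Fil_D\Omega^{q-1}_U}\bigr) = Z_{n+1}\Fil_D\Omega^{q-1}_U$, yielding the second exact sequence.
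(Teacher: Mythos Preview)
Your proof is correct and follows essentially the same approach as the paper. Both arguments start from the presentation $\filt^n\Fil_D W_{n+1}\Omega^q_U = V^n(\Fil_D\Omega^q_U) + dV^n(\Fil_D\Omega^{q-1}_U)$ coming from \thmref{thm:Global-version}(2), define $\beta_n$ and $\alpha_n$ by the same formulas, and identify the relevant kernels via \thmref{thm:Global-version}(6),(8). The only cosmetic difference is that the paper packages the ``kernel modulo the other summand'' computations by quoting items (11) and (12) of \thmref{thm:Global-version} directly, whereas you re-derive these on the spot using the Witt identities $FdV=d$ and $V^nF^n=p^n$ together with $pV^k(a)=0$ for $a\in\Omega^q_U$.
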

\begin{proof}
  We begin by proving (1).
  By items (2) and (8) of \thmref{thm:Global-version},
  we have an exact sequence
  \begin{equation}\label{eqn:Can-filt-3-0}
    0 \to \phi^n_* \frac{\Fil_{D} \Omega^q_U}{B_n\Fil_{D} \Omega^q_U} \xrightarrow{V^n}
    \filt^{n} \Fil_D W_{n+1}\Omega^q_U \to
    \frac{\Fil_{D} W_{n+1}\Omega^q_U}{V^n(\Fil_{D'}\Omega^q_U)}
  \end{equation}
and $\coker(V^n)$ is generated by the image of $dV^n(\Fil_{D}\Omega^{q-1}_U)$.
Combining this with the items (6) and (11) of \thmref{thm:Global-version}, we get
the exact sequence (1) in which the map $\beta_n$ is defined by the
property that $\beta_n(V^n(a) + dV^n(b))$ is the image of $b$ under the quotient map
$\Fil_{D} \Omega^{q-1}_U \surj
\frac{\Fil_{D} \Omega^{q-1}_U}{Z_n\Fil_{D} \Omega^{q-1}_U}$.

To prove (2), we use item (2) and (6) of
\thmref{thm:Global-version} to get an exact sequence
\[
0 \to \phi^{n+1}_* \frac{\Fil_{D} \Omega^{q-1}_U}{Z_{n+1}\Fil_{D} \Omega^{q-1}_U}
\xrightarrow{dV^n} \phi_* \filt^{n} \Fil_{D} W_{n+1}\Omega^q_U  \to
\frac{\Fil_{D} W_{n+1}\Omega^q_U}{dV^n(\Fil_{D} \Omega^{q-1}_U)},
\]
where $dV^n$ is $W_{n+2}\sO_X$-linear by \lemref{lem:Non-complete-7}(3) 
  and the image of the last map is generated by $V^n(\Fil_{D}\Omega^q_U)$.
  Combining this with items (8) and (12) of \thmref{thm:Global-version}, we get
  the exact sequence (2) in which $\alpha_n$ is defined by the property that
  $\alpha_n(V^n(a) + dV^n(b))$ is the image of $a$ under the quotient map
  $\Fil_{D} \Omega^{q}_U \surj
  \frac{\Fil_{D} \Omega^{q}_U}{B_{n+1}\Fil_{D} \Omega^{q}_U}$.
\end{proof}

By the definition of the $W_{n+1}\sO_X$-linear map $\ov{p}$ in \S~\ref{sec:More-prop},
and items (3) and (8) of \thmref{thm:Global-version}, there is a unique
$W_{n+1}\sO_X$-linear
map $\theta_n \colon \phi^n_* B_n\Fil_D \Omega^{q+1}_U \to 
\frac{\Fil_D W_{n+1}\Omega^q_U}{\ov{p}(\Fil_{D/p} W_{n}\Omega^q_U)}$ such that
$V = \theta_n \circ F^{n-1}d \colon \Fil_D W_{n}\Omega^q_U \to  
\frac{\Fil_D W_{n+1}\Omega^q_U}{\ov{p}(\Fil_{D/p} W_{n}\Omega^q_U)}$.

\begin{lem}\label{lem:Can-filt-4}
For $n \ge 0$, there is a canonical exact sequence of $W_{n+1}\sO_X$-modules
  \[
  0 \to \phi^n_* B_n\Fil_D \Omega^{q+1}_U \xrightarrow{\theta_n} 
  \frac{\Fil_D W_{n+1}\Omega^q_U}{\ov{p}(\Fil_{D/p} W_{n}\Omega^q_U)}
  \xrightarrow{F^n} \phi^n_* Z_n \Fil_D \Omega^q_U \to 0.
\]
\end{lem}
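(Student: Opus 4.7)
\medskip

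The plan is to verify the three claims of the lemma in turn --- well-definedness and injectivity of $\theta_n$, exactness in the middle, and surjectivity of $F^n$ --- using only the items of \thmref{thm:Global-version} and the standard Witt-complex identities $FV=p$, $dF=pFd$, and $\ov{p}\circ R=p$.

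\emph{Well-definedness and injectivity of $\theta_n$.} For well-definedness, suppose $F^{n-1}d(\omega)=0$ for some $\omega\in\Fil_D W_n\Omega^q_U$. By \thmref{thm:Global-version}(3), $\omega=F(\tilde\omega)$ for some $\tilde\omega\in\Fil_D W_{n+1}\Omega^q_U$, so $V(\omega)=VF(\tilde\omega)=p\tilde\omega=\ov{p}(R\tilde\omega)$, and $R\tilde\omega\in\Fil_{D/p}W_n\Omega^q_U$ by \thmref{thm:Global-version}(2); hence $V(\omega)\equiv 0$ in the target. For injectivity, suppose $V(\omega)=\ov{p}(\eta)$ for some $\eta\in\Fil_{D/p}W_n\Omega^q_U$; lifting $\eta=R(\tilde\eta)$ with $\tilde\eta\in\Fil_D W_{n+1}\Omega^q_U$, we get $V(\omega)=p\tilde\eta=VF(\tilde\eta)$, so $V(\omega-F(\tilde\eta))=0$, and \thmref{thm:Global-version}(7) gives $\omega=F(\tilde\eta)+dV^{n-1}(\alpha)$ with $\alpha\in\Fil_D\Omega^{q-1}_U$. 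Applying $F^{n-1}d$ and using $F^{n-1}dF=pF^nd$ together with $d^2=0$ yields $F^{n-1}d(\omega)=pF^nd(\tilde\eta)=0$, since $F^nd(\tilde\eta)\in\Omega^{q+1}_U$ is $p$-torsion.

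\emph{Exactness at the middle term.} Observe that every element of $B_n\Fil_D\Omega^{q+1}_U$ has the form $F^{n-1}d(\eta)$ for some $\eta\in\Fil_D W_n\Omega^q_U$ (by the global analogue of \lemref{lem:Complete-9}(1)), and $\theta_n$ sends this to the class of $V(\eta)$. Therefore $\operatorname{Image}(\theta_n)$ equals the image of $V\colon\Fil_D W_n\Omega^q_U\to\Fil_D W_{n+1}\Omega^q_U$ modulo $\ov p(\Fil_{D/p}W_n\Omega^q_U)$. On the other hand, \thmref{thm:Global-version}(5) identifies $\Ker(F^n\colon\Fil_D W_{n+1}\Omega^q_U\to\Fil_D\Omega^q_U)$ with $V(\Fil_D W_n\Omega^q_U)$; and $\ov p(\Fil_{D/p}W_n\Omega^q_U)\subset V(\Fil_D W_n\Omega^q_U)$ since any lift $\tilde\eta$ of $\eta$ gives $\ov p(\eta)=p\tilde\eta=VF(\tilde\eta)$ with $F(\tilde\eta)\in\Fil_D W_n\Omega^q_U$ by \lemref{lem:Log-fil-Wcom}. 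Hence $\Ker(F^n)/\ov p(\Fil_{D/p}W_n\Omega^q_U)=\operatorname{Image}(\theta_n)$. Also, $F^n\circ\theta_n=0$ because for $\omega\in\Fil_D W_n\Omega^q_U$ one has $F^n(V\omega)=pF^{n-1}(\omega)=0$ in the $p$-torsion sheaf $\Omega^q_U$.

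\emph{Surjectivity of $F^n$.} By the global analogue of \lemref{lem:Complete-9}(1) (equivalently, \thmref{thm:Global-version}(6) applied at level $m=n+1$), one has $Z_n\Fil_D\Omega^q_U=F^n(\Fil_D W_{n+1}\Omega^q_U)$, so the induced map on the quotient remains surjective.

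\emph{Expected obstacle.} None of the individual steps is intricate; the only delicate points are (i) the $W_{n+1}\sO_X$-linearity of the maps (in particular for $\theta_n$, which requires using the module structure on $V(\Fil_D W_n\Omega^q_U)$ via $F$, as in \lemref{lem:Non-complete-7}(3)), and (ii) ensuring that the identifications of kernels in \thmref{thm:Global-version} really do descend to the filtered pieces --- both of which are packaged into the global version of the goodness and ker/coker identities already proved. Thus the argument is essentially a bookkeeping exercise built on \thmref{thm:Global-version}.
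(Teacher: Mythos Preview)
Your proof is correct. The arguments for surjectivity of $F^n$ and for exactness in the middle are essentially identical to the paper's (using items (5), (6), (8) of \thmref{thm:Global-version}).

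The one genuine difference is in the injectivity of $\theta_n$. The paper does not argue directly; instead it compares the filtered sequence to the unfiltered one via the commutative square
\[
\xymatrix@C1.5pc{
 \phi^n_* B_n\Fil_D \Omega^{q+1}_U \ar[r]^-{\theta_n} \ar[d] &
\frac{\Fil_D W_{n+1}\Omega^q_U}{\ov{p}(\Fil_{D/p} W_{n}\Omega^q_U)} \ar[d] \\
j_* \phi^n_* B_n \Omega^{q+1}_U \ar[r]^-{\theta_n}  &
j_* \frac{W_{n+1}\Omega^q_U}{\ov{p}(W_{n}\Omega^q_U)},}
\]
invokes \cite[Lem.~0.6]{Ekedahl} for the bottom arrow, and uses \lemref{lem:Complete-8} for injectivity of the left vertical arrow. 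Your route is more elementary and entirely self-contained: you lift $\ov p(\eta)$ to $p\tilde\eta=VF(\tilde\eta)$ via surjectivity of $R$, apply item (7) to write $\omega=F(\tilde\eta)+dV^{n-1}(\alpha)$, and kill both pieces under $F^{n-1}d$ using $dF=pFd$ and $d^2=0$. This avoids the external reference to Ekedahl and the comparison lemma, at the cost of one extra line of Witt-identity manipulation. Either approach works; yours has the advantage of staying inside the filtered theory throughout.
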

\begin{proof}
  We begin by noting that $F^n$ is surjective by item (6) of \thmref{thm:Global-version}.
  To show the exactness at the middle term, we let $x \in \Fil_D W_{n+1}\Omega^q_U$
  be any element whose
  image $\ov{x}$ in $\frac{\Fil_D W_{n+1}\Omega^q_U}{\ov{p}(\Fil_{D/p} W_{n}\Omega^q_U)}$
  lies in the kernel of $F^n$. By item (5) of \thmref{thm:Global-version},
  we can write $x = V(y)$, where $y \in \Fil_D W_n\Omega^q_U$.
  This implies that $\ov{x} = \theta_n(F^{n}d(y))$, and hence we are done
  by item (8) of \thmref{thm:Global-version}. It remains to prove that $\theta_n$
  is injective.

  To that end, we look at the commutative diagram
  \begin{equation}\label{eqn:Can-filt-4-0}
    \xymatrix@C1.5pc{
     \phi^n_* B_n\Fil_D \Omega^{q+1}_U \ar[r]^-{\theta_n} \ar[d] & 
  \frac{\Fil_D W_{n+1}\Omega^q_U}{\ov{p}(\Fil_{D/p} W_{n}\Omega^q_U)} \ar[d] \\
j_* \phi^n_* B_n \Omega^{q+1}_U \ar[r]^-{\theta_n}  & 
  j_* \frac{W_{n+1}\Omega^q_U}{\ov{p}(W_{n}\Omega^q_U)},}
  \end{equation}
  where the vertical arrows are the canonical maps induced by the inclusion
  $\Fil_D W_{\star}\Omega^\bullet_U \inj j_* W_{\star}\Omega^\bullet_U$.
  The bottom horizontal row is injective by \cite[Lem.~0.6]{Ekedahl} and the left
  vertical arrow is injective by \lemref{lem:Complete-8}. It follows that
  $\theta_n$ on the top row is injective. 
 \end{proof}

\begin{lem}\label{lem:Can-filt-5}
  For any $n \ge 0$, the sheaves of $\sO_X$-modules
  \[
  \ov{\phi}^n_* B_n \Fil_D \Omega^q_U, \ \ov{\phi}^n_* Z_n \Fil_D \Omega^q_U, \
  \ov{\phi}^n_* \frac{\Fil_D \Omega^q_U}{B_n \Fil_D \Omega^q_U} \ \mbox{and} \
  \ov{\phi}^n_* \frac{\Fil_D \Omega^q_U}{Z_n \Fil_D \Omega^q_U}
  \]
  are coherent and locally free.
\end{lem}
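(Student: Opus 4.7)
The plan rests on two fundamental inputs. First, by Lemma~\ref{lem:Log-fil-4}(3), $\Fil_D\Omega^q_U = \Omega^q_X(\log E)(D)$, which is a locally free coherent $\sO_X$-module of finite rank in view of Lemma~\ref{lem:LWC-2}, Proposition~\ref{prop:F-fin}(7), and the invertibility of $\sO_X(D)$. Second, Kunz's theorem combined with the $F$-finiteness and regularity of $X$ (cf.\ Proposition~\ref{prop:F-fin}(1),(5)) implies that the absolute Frobenius morphism $\ov{\phi}\colon X\to X$ is finite and flat. Hence $\ov{\phi}^n_*$ is an exact functor from coherent $\sO_X$-modules to coherent $\sO_X$-modules and carries locally free sheaves of finite rank to locally free sheaves of finite rank; in particular $\ov{\phi}^n_*\Fil_D\Omega^q_U$ is locally free of finite rank for every $n$, $q$, and $D$.

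I will reduce the problem to showing that $\ov{\phi}^n_*(\Fil_D\Omega^q_U/Z_n\Fil_D\Omega^q_U)$ is locally free of finite rank. The iterated Cartier isomorphism (Lemma~\ref{lem:Complete-9}(3), sheafified) yields an $\sO_X$-module isomorphism $\ov{\phi}^n_*(Z_n/B_n)\cong \Fil_{D/p^n}\Omega^q_U$, which is locally free of finite rank; combined with the exact sequence $0\to Z_n/B_n\to \Fil_D\Omega^q_U/B_n\to \Fil_D\Omega^q_U/Z_n\to 0$, this shows $\ov{\phi}^n_*(\Fil_D\Omega^q_U/B_n)$ is a finite extension of locally free coherent sheaves, hence locally free; finally the two short exact sequences $0\to B_n\to \Fil_D\Omega^q_U\to \Fil_D\Omega^q_U/B_n\to 0$ and $0\to Z_n\to \Fil_D\Omega^q_U\to \Fil_D\Omega^q_U/Z_n\to 0$ exhibit $\ov{\phi}^n_*B_n$ and $\ov{\phi}^n_*Z_n$ as kernels of surjections between locally free coherent sheaves of finite rank. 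To handle the reduction for $n\ge 2$, I will use the decreasing filtration $\Fil_D\Omega^q_U=Z_0\supset Z_1\supset\cdots\supset Z_n$ of $\Fil_D\Omega^q_U/Z_n$; its successive quotients are identified by the globalization of Corollary~\ref{cor:Complete-8-00} via the $\ov{\phi}^{k-1}$-semilinear isomorphism $(-1)^{q+1}dC^{k-1}\colon Z_{k-1}/Z_k\xrightarrow{\cong}B_1\Fil_{D/p^{k-1}}\Omega^{q+1}_U$. Tracking Frobenius twists gives $\ov{\phi}^n_*(Z_{k-1}/Z_k)\cong \ov{\phi}^{n-k+1}_*B_1\Fil_{D/p^{k-1}}\Omega^{q+1}_U$ as $\sO_X$-modules, and the $n=1$ case (handled below, applied with divisor $D/p^{k-1}$ and degree $q+1$) followed by finite-flat pushforward along $\ov{\phi}^{n-k}$ shows each such sheaf is locally free of finite rank. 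Extensions of locally free coherent sheaves of finite rank remain locally free, completing the reduction for $n\ge 2$.

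It remains to handle the base cases $n=0$ and $n=1$. The case $n=0$ is trivial since $B_0=0$ and $Z_0=\Fil_D\Omega^q_U$. For $n=1$, I argue by descending induction on $q$, using the finiteness $d:=\rank\Omega^1_X<\infty$ from Proposition~\ref{prop:F-fin}(7) to conclude that $\Fil_D\Omega^q_U=0$ for $q>d$. For the descent from $q+1$ to $q$, the exact sequence
\[
0\to \ov{\phi}_*Z_1\Fil_D\Omega^q_U\to \ov{\phi}_*\Fil_D\Omega^q_U\xrightarrow{d}\ov{\phi}_*B_1\Fil_D\Omega^{q+1}_U\to 0
\]
of $\sO_X$-modules (the exterior derivative becomes $\sO_X$-linear after pushforward by $\ov{\phi}$ on both sides, since $d(a^p\omega)=a^p d\omega$ in characteristic $p$), combined with local freeness of the last term by the inductive hypothesis at $q+1$, realizes $\ov{\phi}_*Z_1\Fil_D\Omega^q_U$ as the kernel of a surjection between locally free sheaves of finite rank. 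The Cartier sequence $0\to \ov{\phi}_*B_1\Fil_D\Omega^q_U\to \ov{\phi}_*Z_1\Fil_D\Omega^q_U\xrightarrow{C}\Fil_{D/p}\Omega^q_U\to 0$ then exhibits $\ov{\phi}_*B_1\Fil_D\Omega^q_U$ similarly, completing the descent. The principal technical challenge throughout will be the careful bookkeeping of Frobenius twists needed to turn the Cartier-type operators and exterior derivative, which are $\sO_X$-semilinear rather than $\sO_X$-linear, into honest $\sO_X$-linear maps via the correct powers of $\ov{\phi}_*$.
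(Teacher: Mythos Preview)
Your proof is correct and follows the same overall architecture as the paper's (Cartier isomorphism to reduce to $\Fil_D/Z_n$, then handle $n=1$ first and deduce $n\ge 2$), but the treatment of the base case $n=1$ is genuinely different. The paper computes directly: using a local $p$-basis it shows that the differential $d\colon\Omega^{q-1}_X(\log E)(D)\to\Omega^q_X(\log E)(D)$ is, Zariski-locally, of the form $(V_1\xrightarrow{d_0}V_2)\otimes_{\F_p}\sO_X^p$ with $V_1,V_2$ finite-dimensional $\F_p$-vector spaces, which immediately gives local freeness of $\ov\phi_*Z_1$ and $\ov\phi_*B_1$. You instead run a descending induction on $q$, using that $\Fil_D\Omega^q_U=\Omega^q_X(\log E)(D)$ vanishes for $q>\rank\Omega^1_X$, together with the exact sequence $0\to Z_1\to\Fil_D\Omega^q_U\xrightarrow{d}B_1\Fil_D\Omega^{q+1}_U\to 0$ and the Cartier sequence. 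Your argument is cleaner and avoids the explicit local computation; the paper's argument has the advantage of exhibiting the concrete local shape of $d$, which could be useful elsewhere. For $n\ge 2$, the paper inducts on $n$ via the single-step Cartier isomorphism $\ov\phi^n_*(Z_1/Z_n)\cong\ov\phi^{n-1}_*(\Fil_{D/p}/Z_{n-1}\Fil_{D/p})$, while you unroll this into the full filtration by the $Z_k$ with successive quotients handled via Corollary~\ref{cor:Complete-8-00}; these are equivalent reorganizations of the same idea.
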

\begin{proof}
  Since $X$ is $F$-finite, we have seen before that the sheaves in question are
  all coherent $\sO_X$-modules. To prove their locally freeness, we
  can assume $n \ge 1$ by Lemmas~\ref{lem:LWC-2} and ~\ref{lem:Log-fil-4}(3).
  We now begin by noting that
  there is an exact sequence of $\sO_X$-modules
\begin{equation}\label{eqn:Can-filt-5-0}
  0 \to  \ov{\phi}^n_* \frac{Z_n \Fil_D \Omega^q_U}{B_n \Fil_D \Omega^q_U} \to
  \ov{\phi}^n_* \frac{\Fil_D \Omega^q_U}{B_n \Fil_D \Omega^q_U} \to 
 \ov{\phi}^n_* \frac{\Fil_D \Omega^q_U}{Z_n \Fil_D \Omega^q_U} \to 0.
\end{equation}
On the other hand, we have the Cartier isomorphism
$\ov{C}^n \colon \ov{\phi}^n_* \frac{Z_n \Fil_D \Omega^q_U}{B_n \Fil_D \Omega^q_U}
\xrightarrow{\cong} \Fil_{D/{p^n}} \Omega^q_U$ by Lemmas~\ref{lem:Complete-4} and
~\ref{lem:Complete-9}, and locally freeness of $\Fil_{D/{p^n}} \Omega^q_U$ by
\lemref{lem:Log-fil-4}(3).
It follows that $\ov{\phi}^n_* \frac{\Fil_D \Omega^q_U}{B_n \Fil_D \Omega^q_U}$ is
locally free if $\ov{\phi}^n_* \frac{\Fil_D \Omega^q_U}{Z_n \Fil_D \Omega^q_U}$ is so.

We shall now prove the locally freeness of
$\ov{\phi}^n_* \frac{\Fil_D \Omega^q_U}{Z_n \Fil_D \Omega^q_U}$ by induction on $n \ge 1$.
For $n =1$, we use the isomorphism
$d \colon \ov{\phi}_* \frac{\Fil_D \Omega^q_U}{Z_1 \Fil_D \Omega^q_U}
\xrightarrow{\cong} \ov{\phi}_* B_1 \Fil_D \Omega^q_U$ and the exact sequence
\begin{equation}\label{eqn:Can-filt-5-1}
  0 \to \ov{\phi}_* B_1 \Fil_D \Omega^q_U \to \ov{\phi}_* Z_1 \Fil_D \Omega^q_U
  \xrightarrow{C} \Fil_{D/p} \Omega^q_U \to 0.
\end{equation}
Since $\Fil_{D/p} \Omega^q_U$ is locally free, we are left with showing that
$\ov{\phi}_* Z_1 \Fil_D \Omega^q_U$ is locally free.

To that end, we recall that if $A$ is the local ring of a closed point on
$E \subset X$ with
  maximal ideal $(x_1, \ldots , x_r, x_{r+1}, \ldots , x_N)$ and $(\pi)$ is
  a defining ideal of $E$, then
  $\Omega^1_A(\log \pi)$ is a free $A$-module with basis
  $\{\dlog(x_1), \ldots, \dlog(x_r), dx_{r+1}, \ldots , dx_N,$
  $dy_1, {\ldots}, dy_s\}$ for some
  $y_1, {\ldots},y_s \in A$ (cf. \lemref{lem:LWC-2}). Also,
  $\{x_1,\ldots,x_N,y_1,\ldots,y_s\}$ forms a differential basis
  (cf. \propref{prop:F-fin}(7)) and hence a $p$-basis
  (see \cite[\S~II, \S~III, Thm.~1]{Tyc} for the definition of $p$-basis and its
  relation with differential basis) of $A$ over $\F_p$. 

  Using the above facts about the local rings of $X$, it is easy to check that
  locally (depending on $D$) we can find two finite dimensional $\F_p$-vector
  spaces $V_1, V_2$ and an $\F_p$-linear map $d_0 \colon V_1 \to V_2$ such that
  $d \colon \Omega^{q-1}_X(\log E)(D) \to \Omega^{q}_X(\log E)(D)$ is
  locally of the form $(V_1 \xrightarrow{d_0} V_2)\otimes_{\F_p} \sO_{X}^p$
    (cf. \cite[Lem.~1.7]{Milne-Duality}).
    In particular, $\Ker(d)$ and $\coker(d)$ are locally free $\sO^p_{X}$-modules
    of finite rank. The ring isomorphism $\sO_X \xrightarrow{p} \sO^p_X$ now implies 
    that they are locally free $\sO_X$-modules of finite rank.
    We can now apply \lemref{lem:Log-fil-4}(3) again to conclude that
    $\ov{\phi}_* Z_1\Fil_{D}\Omega^q_U$ and
    $\ov{\phi}_* \frac{\Fil_{D}\Omega^{q}_U}{d(\Fil_{D}\Omega^{q-1}_U)} \cong
    \ov{\phi}_* \frac{\Fil_{D}\Omega^{q}_U}{B_1 \Fil_{D}\Omega^{q}_U}$ are
    locally free sheaves of $\sO_X$-modules.

    For $n \ge 2$, we combine the exact sequence
    \begin{equation}\label{eqn:Can-filt-5-2}
0 \to \ov{\phi}^n_* \frac{Z_1\Fil_{D}\Omega^q_U}{Z_n\Fil_{D}\Omega^q_U}
\to \ov{\phi}^n_* \frac{\Fil_D \Omega^q_U}{Z_n \Fil_D \Omega^q_U} \to
\ov{\phi}^n_* \frac{\Fil_D \Omega^q_U}{Z_1 \Fil_D \Omega^q_U} \to 0
    \end{equation}
 with the isomorphism $\ov{C} \colon 
 \ov{\phi}^n_* \frac{Z_1\Fil_{D}\Omega^q_U}{Z_n\Fil_{D}\Omega^q_U} \xrightarrow{\cong}
 \ov{\phi}^n_* \frac{\Fil_{D/p} \Omega^q_U}{Z_{n-1} \Fil_{D/p} \Omega^q_U}$
 and induction to complete the proof.

 Finally, we use the exact sequences
  \begin{equation}\label{eqn:Can-filt-5-3}
0 \to \ov{\phi}^n_* B_n \Fil_D \Omega^q_U \to \phi^n_* \Fil_D \Omega^q_U
\to \ov{\phi}^n_* \frac{\Fil_D \Omega^q_U}{B_n \Fil_D \Omega^q_U} \to 0; \ \ \mbox{and}
  \end{equation}
  \begin{equation}\label{eqn:Can-filt-5-4}
0 \to \ov{\phi}^n_* Z_n \Fil_D \Omega^q_U \to \ov{\phi}^n_* \Fil_D \Omega^q_U
\to \ov{\phi}^n_* \frac{\Fil_D \Omega^q_U}{Z_n \Fil_D \Omega^q_U} \to 0
  \end{equation}
  to conclude the locally freeness of $\ov{\phi}^n_* B_n \Fil_D \Omega^q_U$ and
  $\ov{\phi}^n_* Z_n \Fil_D \Omega^q_U$.
\end{proof}

\subsection{The duality theorem}\label{sec:DTHW}
In this subsection, we shall assume that $k$ is a perfect field and let $d = \dim(X)$.
By Ekedahl's duality (cf. \cite[Thm.~I.3.4]{Ekedahl}, there is a canonical
isomorphism $\Tr_m \colon W_m\Omega^d_X \cong f^{!}_m(W_m(k))[-d]$ in the bounded
derived category of the sheaves of coherent $W_m\sO_X$-modules. Furthermore,
one has a canonical isomorphism $\delta_{m,n} \colon
(\rho^{m-n}_S)^{!}(W_m(k)) \cong W_n(k)$ for
$1 \le n \le m-1$. In particular, $(\rho^{m-n}_X)^{!}(W_m\Omega^d_X) \cong
W_n\Omega^d_X$.

\begin{lem}\label{lem:Ext-0}
For any locally free $\sO_X$-module $\sM$, one has
  ${\sE}xt^i_{W_m\sO_X}(\nu_*\sM, W_m\Omega^d_X) = 0$ for all $i > 0$.
\end{lem}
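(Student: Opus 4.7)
The plan is to apply Grothendieck duality for the closed immersion $\nu = \rho_X^{m-1} \colon X \hookrightarrow W_mX$ in order to reduce the asserted vanishing to an Ext vanishing on $X$ itself, which is trivial since $\sM$ is locally free. The key formula I intend to use is the natural isomorphism
\[
\mathbf{R}\sHom_{W_m\sO_X}(\nu_*\sM, \sG) \cong \nu_*\, \mathbf{R}\sHom_{\sO_X}(\sM, \nu^!\sG),
\]
valid for any complex $\sG$ of quasi-coherent sheaves on $W_mX$. Taking $\sG = W_m\Omega^d_X$, it suffices to show that $\nu^! W_m\Omega^d_X$ is quasi-isomorphic to a single coherent $\sO_X$-module placed in degree zero: then the right-hand side has vanishing higher cohomology sheaves because $\sM$ is locally free of finite rank.

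To compute $\nu^! W_m\Omega^d_X$ I would invoke the commutative square \eqref{eqn:Can-filt-2} specialized to $n = m-1$, which reads $f_m \circ \nu = \iota \circ f_1$. Combining the functoriality $\nu^! f_m^! \cong (f_m\nu)^! = f_1^!\iota^!$ with Ekedahl's trace isomorphism $\Tr_m \colon W_m\Omega^d_X \cong f_m^!(W_m(k))[-d]$ and the identification $\delta_{m,1} \colon \iota^!(W_m(k)) \cong k$, both recalled at the beginning of the subsection, one obtains
\[
\nu^! W_m\Omega^d_X \cong f_1^!\iota^!(W_m(k))[-d] \cong f_1^!(k)[-d] \cong \Omega^d_X,
\]
where the last identification is Ekedahl's trace in the case $m = 1$ (equivalently, the classical Grothendieck duality applied to the regular $k$-scheme $X$). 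Thus $\nu^! W_m\Omega^d_X$ is concentrated in degree zero, as needed.

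Combining the two steps yields the isomorphism
\[
\mathbf{R}\sHom_{W_m\sO_X}(\nu_*\sM, W_m\Omega^d_X) \cong \nu_*\sHom_{\sO_X}(\sM, \Omega^d_X),
\]
and passing to cohomology sheaves gives ${\sE}xt^i_{W_m\sO_X}(\nu_*\sM, W_m\Omega^d_X) = 0$ for every $i > 0$. The proof is almost entirely formal and I do not expect a serious obstacle: both ingredients ($\iota^!(W_m(k)) \cong k$ and $\Tr_m$) have already been invoked in the subsection, and the Grothendieck duality adjunction for closed immersions of Noetherian schemes applies verbatim to $\nu \colon X \hookrightarrow W_mX$. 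The only mild care required is to keep track of the Witt-scheme structure in identifying $\nu^! f_m^!$ with $(f_m\nu)^!$, but this is nothing more than the functoriality of the twisted inverse image pseudofunctor.
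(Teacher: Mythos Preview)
Your proof is correct and follows essentially the same approach as the paper: both use the adjunction $\mathbf{R}\sHom_{W_m\sO_X}(\nu_*\sM, -) \cong \nu_*\mathbf{R}\sHom_{\sO_X}(\sM, \nu^!(-))$, then compute $\nu^! W_m\Omega^d_X \cong \Omega^d_X$ via the functoriality $\nu^! f_m^! = (f_m\nu)^! = (\iota f_1)^! = f_1^!\iota^!$ together with the isomorphisms $\Tr_m$ and $\delta_{m,1}$. The paper's proof is a one-line sketch citing exactly these ingredients and referring to Ekedahl for the same argument.
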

\begin{proof}
  This is an elementary exercise using the isomorphisms $\Tr_m, \delta_{m,1}$,
 the identities $\nu^{!} \circ f^{!}_m = (f_m \circ \nu)^{!} =
 (\iota \circ f_1)^{!} = f^{!}_1 \circ \iota^{!}$ and the adjointness of the
 pair $(\nu_*, \nu^{!})$, see the proof of \cite[Lem.~II.2.2.7]{Ekedahl}.
  \end{proof}

For the rest of our discussion in this subsection, we fix $q \ge 0, \ m \ge 1$
and $D \in \Div_E(X)$. We let $q' = d-q$ and $D' = -D-E$.

\begin{lem}\label{lem:Ext-1}
One has ${\sE}xt^i_{W_m\sO_X}(\Fil_D W_m\Omega^q_U, W_m\Omega^d_X) = 0$ for $i > 0$.
\end{lem}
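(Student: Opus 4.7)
The plan is to induct on $m$. The base case $m=1$ is immediate: $\Fil_D\Omega^q_U$ is a locally free $\sO_X$-module of finite rank by \lemref{lem:Log-fil-4}(3) (or the $n=0$ case of \lemref{lem:Can-filt-5}), so $\sExt^i_{\sO_X}(\Fil_D\Omega^q_U,\Omega^d_X)=0$ for all $i>0$.

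For the induction step, I use the canonical short exact sequence of $W_m\sO_X$-modules arising from Definition~\ref{defn:Can-filt} and \thmref{thm:Global-version}(2),
\[
0 \to \filt^{m-1}\Fil_D W_m\Omega^q_U \to \Fil_D W_m\Omega^q_U \xrightarrow{R} \rho_*\,\Fil_{D/p} W_{m-1}\Omega^q_U \to 0.
\]
For the right-hand quotient, Grothendieck duality combined with $\rho^! W_m\Omega^d_X \cong W_{m-1}\Omega^d_X$ (the $n=m-1$ case of the paper's $\delta_{m,n}$ together with $\Tr_\star$) supplies the adjunction isomorphism
\[
\sExt^i_{W_m\sO_X}\!\bigl(\rho_*\Fil_{D/p}W_{m-1}\Omega^q_U,\,W_m\Omega^d_X\bigr)
\;\cong\;
\sExt^i_{W_{m-1}\sO_X}\!\bigl(\Fil_{D/p}W_{m-1}\Omega^q_U,\,W_{m-1}\Omega^d_X\bigr),
\]
and this vanishes for $i>0$ by the induction hypothesis applied to the pair $(m-1,D/p)$.

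It remains to handle $\filt^{m-1}\Fil_D W_m\Omega^q_U$. \lemref{lem:Can-filt-3}(1) with $n=m-1$ exhibits this sheaf as an extension of $\phi^{m-1}_*\bigl(\Fil_D\Omega^{q-1}_U/Z_{m-1}\Fil_D\Omega^{q-1}_U\bigr)$ by $\phi^{m-1}_*\bigl(\Fil_D\Omega^q_U/B_{m-1}\Fil_D\Omega^q_U\bigr)$, and both of these quotients are locally free $\sO_X$-modules of finite rank by \lemref{lem:Can-filt-5}. Since $F^{m-1}(a_{m-1},\ldots,a_0)=a_0^{p^{m-1}}$ factors as $\psi^{m-1}\circ R^{m-1}$, where $\psi$ is the absolute Frobenius of $X$, a direct check on stalks identifies
\[
\phi^{m-1}_*\sM \;\cong\; \nu_*\bigl(\psi^{m-1}_*\sM\bigr)
\]
as $W_m\sO_X$-modules for every $\sO_X$-module $\sM$. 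Because $X$ is regular and $F$-finite, Kunz's theorem forces $\psi_*\sO_X$ to be a locally free $\sO_X$-module of finite rank, and iterating gives that $\psi^{m-1}_*\sM$ is locally free whenever $\sM$ is. \lemref{lem:Ext-0} then yields $\sExt^i_{W_m\sO_X}(\nu_*\psi^{m-1}_*\sM,W_m\Omega^d_X)=0$ for $i>0$, and assembling the long exact $\sExt$-sequences for the two short exact sequences above closes the induction.

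Beyond routine bookkeeping, the only substantive point is the identification $\phi^{m-1}_*\sM\cong \nu_*\psi^{m-1}_*\sM$ together with the invocation of Kunz's theorem. Both are standard for $F$-finite regular $\F_p$-schemes, so the main thing to be careful about is tracking the various $W_m\sO_X$-actions on sheaves whose underlying topological support is all of $X$ when one shuffles between the Frobenius pushforward $\phi$ and the closed immersion $\nu$ in the Ext reductions.
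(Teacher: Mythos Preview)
Your argument is correct and follows essentially the same strategy as the paper: reduce to sheaves of the form $\nu_*(\text{locally free})$ and invoke \lemref{lem:Ext-0}. The paper filters $\Fil_D W_m\Omega^q_U$ by the entire canonical filtration and treats each graded piece $\gr^n$ directly via ~\eqref{eqn:Can-filt-1} and \lemref{lem:Can-filt-3}(1), whereas you peel off only $\filt^{m-1}$ and handle the quotient $\rho_*\Fil_{D/p}W_{m-1}\Omega^q_U$ by Grothendieck duality for $\rho$ together with induction on $m$; when unrolled these are the same computation.

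One imprecision: the quotients $\Fil_D\Omega^q_U/B_{m-1}\Fil_D\Omega^q_U$ and $\Fil_D\Omega^{q-1}_U/Z_{m-1}\Fil_D\Omega^{q-1}_U$ are not $\sO_X$-modules for the natural action (the subsheaves $B_{m-1}, Z_{m-1}$ are only stable under $\sO_X^{p^{m-1}}$), so it does not make sense to say they are locally free and then apply Kunz. What \lemref{lem:Can-filt-5} actually asserts is that $\ov{\phi}^{m-1}_*$ of these quotients is locally free, and since $\ov{\phi}^{m-1}_* = \psi^{m-1}_*$ at level~$1$, your identity $\phi^{m-1}_*\sM \cong \nu_*\psi^{m-1}_*\sM$ together with \lemref{lem:Ext-0} finishes the job immediately---no appeal to Kunz is needed. (This is exactly the paper's remark that $\phi^n_* = (\rho^n_X)_* \circ \ov{\phi}^n_*$.) Also, in the paper's indexing $F^{m-1}(a_{m-1},\ldots,a_0)=a_{m-1}^{p^{m-1}}$, not $a_0^{p^{m-1}}$, but this slip is harmless.
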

\begin{proof}
  It is enough to show that
  ${\sE}xt^i_{W_m\sO_X}(\gr^n \Fil_D W_m\Omega^q_U, W_m\Omega^d_X) = 0$ for $i > 0$.
  But this follows by using ~\eqref{eqn:Can-filt-1},
  applying Lemma~\ref{lem:Can-filt-3}(1) to $D/{p^{m-n-1}}$ and taking $\rho^{m-n-1}_{X *}$
    of the resulting exact sequence for $0 \le n \le m-1$, and then
  subsequently using Lemmas~\ref{lem:Can-filt-5} and ~\ref{lem:Ext-0}, and the identity
  $\phi^n_* = (\rho^n_X)_* \circ \ov{\phi}^n_*$.
\end{proof}

We now consider the diagram
\begin{equation}\label{eqn:Ext-4}
  \xymatrix@C1.5pc{
  \Fil_D W_m\Omega^q_U \times \Fil_{D'} W_m\Omega^{q'}_U  \ar[r]^-{\< \ , \ \>}
    \ar@<5ex>[d]^-{R_m} &
  W_m\Omega^d_X  \\
 \Fil_{D/p} W_{m-1}\Omega^q_U \times \Fil_{{D'}/p} W_{m-1}\Omega^{q'}_U
  \ar[r]^-{\< \ , \ \>}
  \ar@<7ex>[u]^-{\ov{p}} & W_{m-1}\Omega^d_X \ar[u]_-{\ov{p}},}
  \end{equation} 
where the horizontal arrows are the wedge product pairings
(cf. Lemma~\ref{lem:Log-fil-Wcom} and ~\corref{cor:Ext-3}).

\begin{lem}\label{lem:Ext-5}
  The diagram ~\eqref{eqn:Ext-4} commutes. In particular, it induces a
  wedge product pairing
  \begin{equation}\label{eqn:Ext-5-0}
    \coker(\ov{p}) \times \Ker(R_m) \xrightarrow{\< \ , \ \>_m} W_m\Omega^d_X.
    \end{equation}
    \end{lem}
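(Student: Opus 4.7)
The first assertion amounts to the identity
\[
\ov{p}\bigl(R_m(a)\wedge b'\bigr)\;=\;a\wedge\ov{p}(b')\qquad\text{in }W_m\Omega^d_X
\]
for $a\in\Fil_D W_m\Omega^q_U$ and $b'\in\Fil_{D'/p}W_{m-1}\Omega^{q'}_U$. My plan is to prove this by lifting $b'$. The goodness of the filtered de Rham-Witt complex (i.e., \thmref{thm:Global-version}(2)) provides the surjectivity of the restriction $R\colon\Fil_{D'}W_m\Omega^{q'}_U\twoheadrightarrow\Fil_{D'/p}W_{m-1}\Omega^{q'}_U$, so I can choose $\tilde{b}\in\Fil_{D'}W_m\Omega^{q'}_U$ with $R_m(\tilde{b})=b'$. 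By construction of $\ov{p}$ one has $\ov{p}\circ R_m=p$ on $W_m\Omega^\bullet$, so $\ov{p}(b')=p\,\tilde{b}$ and hence $a\wedge\ov{p}(b')=p\,(a\wedge\tilde{b})$. On the other hand, $R$ is a morphism of (filtered) Witt complexes by \thmref{thm:Main-0}, so $R_m(a)\wedge b'=R_m(a)\wedge R_m(\tilde{b})=R_m(a\wedge\tilde{b})$, and applying $\ov{p}$ gives $\ov{p}(R_m(a\wedge\tilde{b}))=p\,(a\wedge\tilde{b})$. Both sides agree.

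For the second assertion, note first that the wedge product $\Fil_D W_m\Omega^q_U\times\Fil_{D'}W_m\Omega^{q'}_U\to W_m\Omega^d_U$ actually factors through $\Fil_{D+D'}W_m\Omega^d_U=\Fil_{-E}W_m\Omega^d_U=W_m\Omega^d_X$, where the last equality is \corref{cor:Ext-3}. Given $a\in\Ker(R_m)$ and any $b'\in\Fil_{D'/p}W_{m-1}\Omega^{q'}_U$, the commutativity just established yields
\[
a\wedge\ov{p}(b')\;=\;\ov{p}\bigl(R_m(a)\wedge b'\bigr)\;=\;\ov{p}(0)\;=\;0.
\]
Thus the pairing vanishes on $\Ker(R_m)\times\ov{p}\bigl(\Fil_{D'/p}W_{m-1}\Omega^{q'}_U\bigr)$ and therefore descends to
\[
\Ker(R_m)\times\coker(\ov{p})\;\longrightarrow\;W_m\Omega^d_X,
\]
which is equivalent to $\coker(\ov{p})\times\Ker(R_m)\to W_m\Omega^d_X$ by the graded (super)commutativity of the wedge product on the de Rham-Witt complex.

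There is no real obstacle here; the only nontrivial input is the goodness of the filtration (\thmref{thm:Global-version}(2)) that guarantees the lift $\tilde{b}$, together with the fact that $\Fil_\bullet W_m\Omega^\bullet_U$ is closed under the Witt-complex structure (\thmref{thm:Main-0}) so that wedge products respect the filtration and $R$ is multiplicative.
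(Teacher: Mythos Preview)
Your proof is correct and follows essentially the same route as the paper: lift one of the two factors along the surjection $R$ (which you justify via \thmref{thm:Global-version}(2)) and reduce everything to the identity $\ov{p}\circ R=p$ together with the multiplicativity of $R$.

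The only discrepancy is a bookkeeping one. In diagram~\eqref{eqn:Ext-4} the paper has $\ov{p}$ acting on the \emph{first} factor and $R_m$ on the \emph{second} (as is visible from the paper's own proof and from the subsequent identification in \lemref{lem:Ext-6}, where $\coker(\ov{p})$ lives on the $(D,q)$-side and $\Ker(R_m)$ on the $(D',q')$-side). You read the diagram the other way around, so the identity you prove is the mirror image and the induced pairing you obtain is $\Ker(R_m\!\mid_{\Fil_D})\times\coker(\ov{p}\!\mid_{\Fil_{D'}})$. Your closing remark that graded commutativity makes this ``equivalent to $\coker(\ov{p})\times\Ker(R_m)$'' is not quite the right fix: swapping the order of the two factors does not change which filtered piece each one sits in. What actually repairs this is the symmetry $(D,q)\leftrightarrow(D',q')$ of the whole setup (since $D'=-D-E$ and $q'=d-q$), under which your identity becomes exactly the paper's. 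With that one-word correction your argument matches the paper's.
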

\begin{proof}
  We let $\alpha \in \Fil_{D/p} W_{m-1}\Omega^q_U$ and
  $\beta \in \Fil_{D'} W_m\Omega^{q'}_U$. Using \thmref{thm:Global-version}(2),
  choose $\alpha_1 \in
  \Fil_{D} W_{m}\Omega^q_U$ such that $\alpha = R(\alpha_1)$.
  We then get $\ov{p}(\alpha) \wedge \beta = (p\alpha_1) \wedge \beta =
  p (\alpha_1 \wedge \beta) = \ov{p}(\alpha \wedge R(\beta))$, and this
  proves the lemma.
\end{proof}

\begin{lem}\label{lem:Ext-6}
For $n \ge 0$, ~\eqref {eqn:Ext-5-0} induces a commutative diagram of pairings 
\begin{equation}\label{eqn:Ext-6-0}
  \xymatrix@C1.5pc{
    0 \hspace*{1.5cm} 0 \ar@<-5ex>[d] & \\
    \phi^n_*B_n\Fil_D\Omega^{q+1}_U \times
    \phi^n_* \frac{\Fil_{D'}\Omega^{q'-1}_U}{Z_n\Fil_{D'}\Omega^{q'-1}_U}
    \ar[r]^-{\< \ , \ \>_m} \ar@<-5ex>[d]_-{\theta_n} \ar@<-5ex>[u] &
    W_{n+1}\Omega^d_X \ar@{=}[d] \\
    \coker(\ov{p}) \times \Ker(R_{n+1}) \ar[r]^-{\< \ , \ \>_m}  \ar@<-5ex>[u]_-{\beta_n}
    \ar@<-5ex>[d]_-{F^n} & W_{n+1}\Omega^d_X \\
   \phi^n_* Z_n\Fil_D \Omega^q_U \times  
   \phi^n_* \frac{\Fil_{D'} \Omega^{q'}_U}{B_n\Fil_{D'} \Omega^{q'}_U}
   \ar[r]^-{\< \ , \ \>_m} \ar@<-5ex>[u]_-{V^n} \ar@<-5ex>[d] &
    W_{n+1}\Omega^d_X \ar@{=}[u] \\
   0  \hspace*{1.5cm} 0 \ar@<-5ex>[u]}
\end{equation}
\end{lem}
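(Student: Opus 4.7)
The plan is to define the top and bottom pairings of \eqref{eqn:Ext-6-0} by lifting through the exact sequences of Lemma~\ref{lem:Can-filt-3}(1) (applied to $D'$ in degree $q'$) and Lemma~\ref{lem:Can-filt-4} (applied to $D$ in degree $q$), and then to reduce every compatibility to a single computation in the de Rham--Witt complex. Given $\alpha \in \phi^n_* B_n\Fil_D\Omega^{q+1}_U$ and $\bar\beta \in \phi^n_*\frac{\Fil_{D'}\Omega^{q'-1}_U}{Z_n\Fil_{D'}\Omega^{q'-1}_U}$, I will choose any lift $\beta \in \Ker(R_{n+1})$ of $\bar\beta$ under the surjection $\beta_n$ and set $\<\alpha,\bar\beta\>_{\rm top} := \<\theta_n(\alpha),\beta\>_{n+1}$. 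Dually, given $\bar\alpha \in \phi^n_* Z_n\Fil_D\Omega^q_U$ and $\beta' \in \phi^n_* \frac{\Fil_{D'}\Omega^{q'}_U}{B_n\Fil_{D'}\Omega^{q'}_U}$, I will choose a lift $\alpha \in \coker(\ov p)$ of $\bar\alpha$ under $F^n$ and set $\<\bar\alpha,\beta'\>_{\rm bot} := \<\alpha, V^n(\beta')\>_{n+1}$.

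The crucial input for the well-definedness of both pairings is the identity
\[
V(y) \wedge V^n(a) = 0 \ \text{ in } W_{n+1}\Omega^d_X \qquad \text{for any } y \in \Fil_D W_n\Omega^q_U,\ a \in \Fil_{D'}\Omega^{q'}_U.
\]
I will prove it via the projection formula $V(y)\wedge V^n(a) = V\bigl(y\wedge FV^n(a)\bigr)$ (a standard identity on the de Rham--Witt complex), together with $FV^n(a) = (FV)\,V^{n-1}(a) = p\cdot V^{n-1}(a) = V^{n-1}(pa)$; this vanishes because $X$ is an $\F_p$-scheme and hence $p$ annihilates the $\sO_X$-module $\Omega^{q'}_U$. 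Granting this, well-definedness of the top pairing follows since any two lifts $\beta,\beta'$ of $\bar\beta$ differ by an element of $V^n(\phi^n_*\Fil_{D'}\Omega^{q'}_U)$ by the exact sequence of Lemma~\ref{lem:Can-filt-3}(1), and writing $\theta_n(\alpha) = V(y)\bmod\ov p$ for some $y \in \Fil_D W_n\Omega^q_U$ with $F^{n-1}d(y)=\alpha$ (as in the definition of $\theta_n$ preceding Lemma~\ref{lem:Can-filt-4}) yields $\theta_n(\alpha)\wedge(\beta-\beta') = V(y)\wedge V^n(a) = 0$. The argument for the bottom pairing is symmetric: two lifts of $\bar\alpha$ differ by $\theta_n(x)$ for some $x \in B_n\Fil_D\Omega^{q+1}_U$ by Lemma~\ref{lem:Can-filt-4}, and the same identity gives $\theta_n(x)\wedge V^n(\beta')=0$.

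The commutativity of \eqref{eqn:Ext-6-0} then holds tautologically from the construction: the top pairing is by definition the restriction of $\<\,,\,\>_{n+1}$ through $\theta_n$ in the first factor and its descent through $\beta_n$ in the second, while the bottom pairing is the restriction through $V^n$ in the second factor and the descent through $F^n$ in the first. The principal obstacle is thus establishing the identity $V(y)\wedge V^n(a)=0$; once this short computation using the projection formula and $p\cdot\Omega^{q'}_U=0$ is in place, the remainder of the proof reduces to formal bookkeeping with the two exact sequences of Lemmas~\ref{lem:Can-filt-3}(1) and \ref{lem:Can-filt-4}.
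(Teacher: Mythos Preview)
Your argument is correct. The key identity $V(y)\wedge V^n(a)=V(y\wedge FV^n(a))=V(y\wedge V^{n-1}(pa))=0$ is exactly what makes everything work, and your bookkeeping with the exact sequences of Lemmas~\ref{lem:Can-filt-3}(1) and~\ref{lem:Can-filt-4} is accurate; once the top and bottom pairings are \emph{defined} via lifts as you describe, the commutativity of the two squares is indeed tautological.

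The paper takes a different and much shorter route: it simply observes that the entire diagram~\eqref{eqn:Ext-6-0} consists of maps and pairings that are restrictions of the corresponding maps and pairings on the unfiltered de~Rham--Witt complex, so well-definedness and commutativity can be checked in the case $D=\emptyset$, which is already done in \cite[\S~II.1, p.~201]{Ekedahl}. In other words, the paper outsources the computation you carry out to Ekedahl's original argument. Your approach has the advantage of being self-contained (you do not need to consult Ekedahl to see why the pairings descend), while the paper's approach is more economical and makes transparent that the modulus plays no role in this particular lemma---the identity $V(y)\wedge V^n(a)=0$ holds in $j_*W_{n+1}\Omega^d_U$ and hence automatically for elements lying in any pair of subsheaves. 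Both arguments are ultimately the same computation; you are reproducing Ekedahl's verification directly in the filtered setting.
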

\begin{proof}
  It suffices to show this lemma when $D = \emptyset$ which is already done
  in \cite[\S~II.1, p.~201]{Ekedahl}.
\end{proof}

We next claim that the cup product pairing
$\< \ , \ \>_1 \colon \Fil_D\Omega^{q+1}_U \times
\Fil_{D'}\Omega^{q'-1}_U \to \Omega^d_X$
induces a pairing $\< \ , \ \>_1 \colon \ov{\phi}^n_*B_n\Fil_D\Omega^{q+1}_U \times
\ov{\phi}^n_* \frac{\Fil_{D'}\Omega^{q'-1}_U}{Z_n\Fil_{D'}\Omega^{q'-1}_U} \to \Omega^d_X$.
To prove this claim, we can reduce to the case when $D = \emptyset$ which is
done in \cite[\S~II.1, p.~201]{Ekedahl}.
One similarly checks that the cup product pairing
$\< \ , \ \>_1 \colon \Fil_D\Omega^{q}_U \times \Fil_{D'}\Omega^{q'}_U \to \Omega^d_X$
induces a pairing $\< \ , \ \>_1 \colon \ov{\phi}^n_*Z_n\Fil_D\Omega^{q}_U \times
\ov{\phi}^n_* \frac{\Fil_{D'}\Omega^{q'}_U}{B_n\Fil_{D'}\Omega^{q'}_U} \to \Omega^d_X$.
We let $\sF_n = \frac{\Fil_{D'}\Omega^{q'-1}_U}{Z_n\Fil_{D'}\Omega^{q'-1}_U}$
and $\sG_n = \frac{\Fil_{D'}\Omega^{q'}_U}{B_n\Fil_{D'}\Omega^{q'}_U}$.

\begin{lem}\label{lem:Ext-7}
  The diagram
\begin{equation}\label{eqn:Ext-7-0}
  \xymatrix@C1.6pc{
    \phi^n_*B_n\Fil_D\Omega^{q+1}_U \ar[rr]^-{\eta_1} \ar[ddd]_-{(-1)^{q+1}} & &
   \sHom_{W_{n+1}\sO_X}({\phi}^n_* \sF_n, W_{n+1}\Omega^d_X)
    \\
    & & \nu_* \sHom_{\sO_X}(\ov{\phi}^n_* \sF_n, \nu^{!} W_{n+1}\Omega^d_X)
    \ar[u]_-{\rm adjunction} \ar[u]^-{\cong} \\
    & & \nu_* \sHom_{\sO_X}(\ov{\phi}^n_* \sF_n, \Omega^d_X) \ar[u]^-{\cong} \\
    {\phi}^n_*B_n\Fil_D\Omega^{q+1}_U \ar[r]^-{\cong} &
    \nu_* \ov{\phi}^n_*B_n\Fil_D\Omega^{q+1}_U \ar[r]^-{\eta'_1} &
    \nu_* \sHom_{\sO_X}(\ov{\phi}^n_* \sF_n, \ov{\phi}^n_* \frac{Z_n\Omega^d_X}{B_n\Omega^d_X}) \ar[u]_-{C^n \circ (-)} \ar[u]^-{\cong} }
\end{equation}
commutes if we let $\eta_1$ (resp. $\eta'_1$) denote the map induced by the
uppermost pairing $\< \ , \ \>_m$ (resp. $\< \ , \ \>_1$) in \lemref{lem:Ext-6}.
\end{lem}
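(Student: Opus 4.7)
The plan is to reduce the assertion to the corresponding unfiltered statement (that is, the case where $D = \emptyset$ and one works with $j_*W_m\Omega^{\bullet}_U$ replaced by $W_m\Omega^{\bullet}_X$ throughout), which is established in \cite[\S~II.1, p.~201]{Ekedahl}. The key observation is that every sheaf and every arrow appearing in the diagram~\eqref{eqn:Ext-7-0} arises by restriction from its unfiltered analogue. On the sheaf side, the canonical inclusions $\Fil_D W_m\Omega^{\bullet}_U \inj j_*W_m\Omega^{\bullet}_U$ of Definition~\ref{defn:Log-fil-3} give rise, after applying Lemma~\ref{lem:Complete-8} and the construction of Lemma~\ref{lem:Can-filt-5}, to injections of the filtered subsheaves $\phi^n_*B_n\Fil_D\Omega^{q+1}_U$, $\phi^n_*\sF_n$, and $\phi^n_*\sG_n$ into the corresponding $j_*$-sheaves $j_*\phi^n_*B_n\Omega^{q+1}_U$, etc.

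First I would check that the pairings $\<\cdot,\cdot\>_m$ of Lemma~\ref{lem:Ext-5} and $\<\cdot,\cdot\>_1$ of the paragraph preceding \lemref{lem:Ext-7} are simply the wedge products (composed with the inclusion $\Fil_{-E}W_m\Omega^d_U \inj W_m\Omega^d_X$ of \corref{cor:Ext-3}), and are therefore restrictions of the pairings used by Ekedahl; and that the Cartier map $C^n$, the trace isomorphism $\Tr_{n+1}$, the adjunction isomorphism for $\nu = \rho_X^{m-1}$, and the isomorphism $\delta_{n+1,1}$ all exist at the unfiltered level and restrict compatibly to the filtered one by \lemref{lem:Complete-6} and \corref{cor:Complete-6-global}. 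Granting these compatibilities, each of the two composite maps around the diagram~\eqref{eqn:Ext-7-0} fits in a commutative square with its unfiltered analogue, the right-hand vertical of that square being an injection $\sHom_{W_{n+1}\sO_X}(\phi^n_*\sF_n, W_{n+1}\Omega^d_X) \inj \sHom_{W_{n+1}\sO_X}(j_*\phi^n_*\sF_n|_U, W_{n+1}\Omega^d_X)$.

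Next I would invoke the unfiltered commutativity, which is precisely the content of the displayed diagram on p.~201 of \cite{Ekedahl}, to conclude that the two composites become equal after composing with the injection into the unfiltered $\sHom$ sheaf. Injectivity on the right then forces the two composites in our filtered diagram to coincide. The sign $(-1)^{q+1}$ is not inserted by hand: it arises in Ekedahl's computation from the Leibniz rule applied to $d(a\wedge b) = da\wedge b + (-1)^q a\wedge db$ when one transports a boundary $da \in B_1\Omega^{q+1}$ across the pairing, and it transfers verbatim to our setting because every step of that computation takes place inside $j_*W_{n+1}\Omega^{\bullet}_U$.

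The main obstacle is purely bookkeeping, namely keeping track of the several adjunctions and trace/Cartier identifications so as to be certain that what Ekedahl proves in the unfiltered case is literally the same statement, at the level of sheaf homomorphisms, as ours after restriction. Once this is done, no further calculation is required: the entire argument is a verification of naturality, together with the injectivity afforded by \lemref{lem:Complete-8} and \lemref{lem:Ext-1}.
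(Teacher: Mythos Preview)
Your strategy differs from the paper's: rather than reducing to an unfiltered statement, the paper simply carries out the computation directly. Using \cite[Lem.~II.2.2.4]{Ekedahl}, the composite of the right-hand vertical isomorphisms is identified with post-composition by $V^n$, so the claim reduces to the identity $\eta_1(\alpha)(\ov\beta) = (-1)^{q+1}V^n(\alpha\wedge\beta)$ for $\alpha\in B_n\Fil_D\Omega^{q+1}_U$ and $\beta\in\Fil_{D'}\Omega^{q'-1}_U$. Writing $\alpha=F^{n-1}d(\alpha')$ so that $\theta_n(\alpha)=V(\alpha')$, and noting $\beta_n(dV^n(\beta))=\ov\beta$, one has $\eta_1(\alpha)(\ov\beta)=V(\alpha')\wedge dV^n(\beta)$; a five-line manipulation with the Witt identities then shows $V^n(\alpha\wedge\beta)=(-1)^{q+1}V(\alpha')\wedge dV^n(\beta)$. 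This computation makes no use of the filtration and is exactly the unfiltered calculation, so your reduction would not shorten the argument.

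More seriously, your injectivity step is misformulated. You claim an injection
\[
\sHom_{W_{n+1}\sO_X}(\phi^n_*\sF_n, W_{n+1}\Omega^d_X)\;\inj\;\sHom_{W_{n+1}\sO_X}(j_*\phi^n_*\sF_n|_U, W_{n+1}\Omega^d_X),
\]
but $\sHom$ is contravariant in its first argument: an inclusion of the source sheaf induces a map \emph{into} the filtered $\sHom$, not out of it, and this map is typically not injective. What you actually need is the unit-of-adjunction map
\[
\sHom_{W_{n+1}\sO_X}(\phi^n_*\sF_n, W_{n+1}\Omega^d_X)\;\longrightarrow\; j_*j^*\sHom_{W_{n+1}\sO_X}(\phi^n_*\sF_n, W_{n+1}\Omega^d_X),
\]
whose injectivity follows from the injectivity of $W_{n+1}\Omega^d_X\inj j_*W_{n+1}\Omega^d_U$ (regularity of $X$). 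Over $U$ the filtered and unfiltered diagrams coincide, so if the unfiltered commutativity is known one may conclude. Note, however, that the paper does not locate this precise commutativity on p.~201 of \cite{Ekedahl} (that reference is used for \lemref{lem:Ext-6}); for \lemref{lem:Ext-7} it invokes \cite[Lem.~II.2.2.4]{Ekedahl} and then performs the short computation above. So even after correcting the injection, you would still owe either a precise citation for the unfiltered diagram or the same explicit calculation the paper gives.
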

\begin{proof}
  By \cite[Lem.~II.2.2.4]{Ekedahl}, the composite vertical arrow on the right is
  the map induced by the composition with $V^n$. It suffices therefore to check that
  $\eta_1 = (-1)^{q+1} (V^n \circ \eta'_1)$. 
  To prove the latter identity, we let $\alpha \in B_n\Fil_D\Omega^{q+1}_U$ and
  ${\beta} \in \Fil_{D'}\Omega^{q'-1}_U$. Let $\ov{\beta}$ be the image of $\beta$
    in $\sF_n$. We write $\alpha = F^{n-1}d(\alpha')$, where $\alpha' \in
    \Fil_D W_n\Omega^q_U$. We then have $\theta_n(\alpha) = V(\alpha')$ by 
    definition of $\theta_n$ (cf. \S~\ref{sec:HWD-0}).
    On the other side, we have that $dV^n(\beta) \in \Ker(R)$ such that
    $\beta_n(dV^n(\beta)) = \ov{\beta}$.
    We then get $\eta_1(\alpha)(\ov{\beta}) = V(\alpha') \wedge dV^n(\beta) \in
    W_{n+1}\Omega^d_X$ by definition of the pairing $\< \ , \ \>_m$.

    We now compute
    \begin{equation}\label{eqn:Ext-7-1}
      \begin{array}{lll}
        V^n(\eta'_1(\alpha)(\ov{\beta})) & = & V^n(\alpha \wedge \beta) \\
        & = & V^n(F^{n-1}d(\alpha') \wedge \beta)
         =  V(d\alpha' \wedge V^{n-1}(\beta)) \\
        & = & V\left(d(\alpha' \wedge V^{n-1}(\beta)) - (-1)^q \alpha' \wedge
        dV^{n-1}(\beta)\right) \\
        & = & pdV(\alpha' \wedge V^{n-1}(\beta)) + (-1)^{q+1} V(\alpha') \wedge
        dV^n(\beta) \\
        & = & (-1)^{q+1} V(\alpha') \wedge
        dV^n(\beta) 
         =  \eta_1(\alpha)(\ov{\beta}).
      \end{array}
    \end{equation}
    This concludes the proof.
   \end{proof}

\begin{lem}\label{lem:Ext-8}
  The diagram
\begin{equation}\label{eqn:Ext-8-0}
  \xymatrix@C1.6pc{
    \phi^n_*Z_n\Fil_D\Omega^{q}_U \ar[rr]^-{\eta_2} \ar@{=}[ddd] & &
   \sHom_{W_{n+1}\sO_X}({\phi}^n_* \sG_n, W_{n+1}\Omega^d_X)
    \\
    & & \nu_* \sHom_{\sO_X}(\ov{\phi}^n_* \sG_n, \nu^{!} W_{n+1}\Omega^d_X)
    \ar[u]_-{\rm adjunction} \ar[u]^-{\cong} \\
    & & \nu_* \sHom_{\sO_X}(\ov{\phi}^n_* \sG_n, \Omega^d_X) \ar[u]^-{\cong} \\
    {\phi}^n_*Z_n\Fil_D\Omega^{q}_U \ar[r]^-{\cong} &
    \nu_* \ov{\phi}^n_*Z_n\Fil_D\Omega^{q}_U \ar[r]^-{\eta'_2} &
    \nu_* \sHom_{\sO_X}(\ov{\phi}^n_* \sG_n, \ov{\phi}^n_* \frac{Z_n\Omega^d_X}{B_n\Omega^d_X}) \ar[u]_-{C^n \circ (-)} \ar[u]^-{\cong} }
\end{equation}
commutes if we let $\eta_2$ (resp. $\eta'_2$) denote the map induced by the
lowermost pairing $\< \ , \ \>_m$ (resp. $\< \ , \ \>_1$) in \lemref{lem:Ext-6}.
\end{lem}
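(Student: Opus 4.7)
The plan is to mirror the proof of \lemref{lem:Ext-7} almost verbatim, with the simplification that the left vertical arrow in \eqref{eqn:Ext-8-0} is the identity rather than $(-1)^{q+1}$. As a first step I would invoke \cite[Lem.~II.2.2.4]{Ekedahl} (already used in the proof of \lemref{lem:Ext-7}) to identify the composite vertical chain on the right column of \eqref{eqn:Ext-8-0}---adjunction followed by the Cartier isomorphism $C^n$---with post-composition by the map $V^n$. Consequently, the commutativity of \eqref{eqn:Ext-8-0} will reduce to verifying the pointwise identity
\[
\eta_2(\alpha)(\ov{\beta}) = V^n\bigl(\eta'_2(\alpha)(\ov{\beta})\bigr)
\]
on local sections $\alpha \in Z_n\Fil_D \Omega^q_U$ and $\beta \in \Fil_{D'}\Omega^{q'}_U$, where $\ov{\beta}$ denotes the class of $\beta$ in $\sG_n$.

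Next I would carry out the calculation. Using \thmref{thm:Global-version}(3), write $\alpha = F^n(\wt{\alpha})$ with $\wt{\alpha} \in \Fil_D W_{n+1}\Omega^q_U$; then $\wt{\alpha}$ mod $\ov{p}(\Fil_{D/p}W_n\Omega^q_U)$ is a lift of $\alpha$ to $\coker(\ov{p})$ along the surjection $F^n$ of \lemref{lem:Can-filt-4}. On the other factor, $V^n(\beta) \in \filt^n \Fil_{D'} W_{n+1}\Omega^{q'}_U = \Ker(R_{n+1})$ is the lift of $\ov{\beta}$ along the injection $V^n$ of \lemref{lem:Can-filt-3}(1) (applied to $D'$ and $q'$). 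By the definition of the bottom pairing $\langle \cdot,\cdot\rangle_m$ in \lemref{lem:Ext-6} together with \eqref{eqn:Ext-5-0}, one then obtains
\[
\eta_2(\alpha)(\ov{\beta}) = \wt{\alpha} \wedge V^n(\beta) \in W_{n+1}\Omega^d_X.
\]
In parallel, the lowest pairing $\langle \cdot,\cdot\rangle_1$ is simply the wedge product, so $\eta'_2(\alpha)(\ov{\beta}) = \alpha \wedge \beta$, viewed as an element of $\frac{Z_n\Omega^d_X}{B_n\Omega^d_X}$ (and here $Z_n\Omega^d_X = \Omega^d_X$ since $d$ is the top degree, so the value is unambiguous modulo $B_n\Omega^d_X$).

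The desired identity is then a one-line consequence of the iterated projection formula $V(Fx \cdot y) = x \cdot V(y)$: applying $V$ to $F^n(\wt{\alpha}) \wedge \beta$ yields $F^{n-1}(\wt{\alpha}) \wedge V(\beta)$, and iterating $n$ times gives
\[
V^n\bigl(\eta'_2(\alpha)(\ov{\beta})\bigr) = V^n\bigl(F^n(\wt{\alpha}) \wedge \beta\bigr) = \wt{\alpha} \wedge V^n(\beta) = \eta_2(\alpha)(\ov{\beta}).
\]
I do not expect a substantive obstacle here: the conceptual framework is in place from \lemref{lem:Ext-7}, and the crucial simplification is that $F^n$ in \lemref{lem:Can-filt-4} does not involve a differential, so there is no integration-by-parts correction and hence no sign---precisely why the left vertical arrow in \eqref{eqn:Ext-8-0} is the identity rather than $(-1)^{q+1}$.
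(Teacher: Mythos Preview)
Your proposal is correct and follows essentially the same approach as the paper's proof: reduce via \cite[Lem.~II.2.2.4]{Ekedahl} to the identity $\eta_2(\alpha)(\ov{\beta}) = V^n(\alpha \wedge \beta)$, lift $\alpha$ to $\wt{\alpha} \in \Fil_D W_{n+1}\Omega^q_U$ with $F^n(\wt{\alpha}) = \alpha$, and conclude by the projection formula $\wt{\alpha} \wedge V^n(\beta) = V^n(F^n(\wt{\alpha}) \wedge \beta)$. The paper's version is simply more terse, omitting your remarks on why no sign appears.
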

\begin{proof}
  As in the proof of \lemref{lem:Ext-7}, it suffices to show that
  $\eta_2(\alpha)(\ov{\beta}) = V^n(\alpha \wedge \beta)$ if 
  $\alpha \in Z_n\Fil_D\Omega^{q}_U, \
  {\beta} \in \Fil_{D'}\Omega^{q'}_U$ and $\ov{\beta}$ is the image of $\beta$
  in $\sG_n$. To this end, we write $\alpha = F^n(\ov{\alpha'})$ with
  $\alpha' \in \Fil_D W_{n+1}\Omega^q_U$ and $\ov{\alpha'}$ the image of $\alpha'$
  in $\coker(\ov{p})$. We then get
  $\eta_2(\alpha)(\ov{\beta}) = \alpha' \wedge V^n(\beta) =
  V^n(F^n(\alpha') \wedge \beta) = V^n(\alpha \wedge \beta)$, and 
  this finishes the proof.
\end{proof}

For any coherent sheaf $\sF$ of $W_m\sO_X$-modules, we let
$D^0_m(\sF) = \sHom_{W_m\sO_X}(\sF, W_m\Omega^d_X)$ and for any bounded complex
$\sF^\bullet$ of coherent sheaves of $W_m\sO_X$-modules, we let
$D_m(\sF^\bullet) = \sR\sHom_{W_m\sO_X}(\sF^\bullet, W_m\Omega^d_X)$.
We consider the pairings $\ov{\phi}^n_* B_n\Fil_D\Omega^{q}_U \times
\ov{\phi}^n_* \frac{\Fil_{D'}\Omega^{q'}_U}{Z_n\Fil_{D'}\Omega^{q'}_U} \to
\Omega^d_X$ and
$\ov{\phi}^n_* Z_n\Fil_D \Omega^q_U \times
\ov{\phi}^n_* \frac{\Fil_{D'} \Omega^{q'}_U}{B_n\Fil_{D'} \Omega^{q'}_U} \to \Omega^d_X$,
given by $(a, \ov{b}) \mapsto C^n(a \wedge b)$ and
$(a', \ov{b'}) \mapsto C^n(a' \wedge b')$ (cf. \lemref{lem:Ext-6}).
We let $\wt{\eta}_1 \colon \ov{\phi}^n_* B_n\Fil_D\Omega^{q}_U \to
D^0_1\left(\ov{\phi}^n_* \frac{\Fil_{D'}\Omega^{q'}_U}{Z_n\Fil_{D'}\Omega^{q'}_U}\right)$
and $\wt{\eta}_2 \colon \ov{\phi}^n_* Z_n\Fil_D \Omega^q_U \to
D^0_1\left(\ov{\phi}^n_* \frac{\Fil_{D'} \Omega^{q'}_U}{B_n\Fil_{D'} \Omega^{q'}_U}\right)$
denote the induced maps.

\begin{lem}\label{lem:Ext-9}
  For $n \ge 0$ and $i =1,2$, the map $\wt{\eta}_i$ is an isomorphism
  of $\sO_X$-modules.
\end{lem}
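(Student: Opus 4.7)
I would prove this by adapting Ekedahl's argument for the unfiltered case \cite[\S II.1, p. 201]{Ekedahl} to the filtered setting, proceeding by induction on $n$ and exploiting the local freeness established in \lemref{lem:Can-filt-5}. The base case $n=0$ is the engine. For $\wt{\eta}_2$ the map becomes $\Fil_D\Omega^q_U \to D^0_1(\Fil_{D'}\Omega^{q'}_U)$ induced by the wedge pairing. By \lemref{lem:Log-fil-4}(3) we have $\Fil_D\Omega^q_U = \Omega^q_X(\log E)(D)$ and $\Fil_{D'}\Omega^{q'}_U = \Omega^{q'}_X(\log E)(-D-E)$, so the wedge is the standard pairing on the exterior algebra of the locally free $\sO_X$-module $\Omega^1_X(\log E)$ of rank $d$, twisted by the invertible sheaves $\sO_X(D)$ and $\sO_X(-D-E)$, landing in $\Omega^d_X(\log E)(-E) \cong \Omega^d_X$ (by \corref{cor:Ext-3}). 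This is manifestly a perfect pairing. For $\wt{\eta}_1$ at $n = 0$ both sides are zero.

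For the inductive step, I would use the Cartier-operator exact sequences of \corref{cor:Complete-8-00}, adapted to the filtered setting via \lemref{lem:Complete-8} (which guarantees $B_n\Fil_D = B_n\cap\Fil_D$ and $Z_n\Fil_D = Z_n\cap\Fil_D$):
\begin{equation*}
0 \to B_n\Fil_D\Omega^q_U \to B_{n+1}\Fil_D\Omega^q_U \xrightarrow{C^n} B_1\Fil_{D/p^n}\Omega^q_U \to 0,
\end{equation*}
\begin{equation*}
0 \to Z_{n+1}\Fil_D\Omega^q_U \to Z_n\Fil_D\Omega^q_U \xrightarrow{dC^n} B_1\Fil_{D/p^n}\Omega^{q+1}_U \to 0,
\end{equation*}
together with the dual filtrations on $\Fil_{D'}\Omega^{q'}_U/Z_n$ and $\Fil_{D'}\Omega^{q'}_U/B_n$, where the Cartier isomorphism $dC^n$ identifies the sub $Z_n/Z_{n+1}$ with $B_1\Fil_{D'/p^n}\Omega^{q'+1}_U$ and similarly for the $B$-filtration. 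The key observation is that the defining pairing $(a,\bar b) \mapsto C^n(a\wedge b)$ is compatible with these two short exact sequences: the piece coming from $B_1\Fil_{D/p^n}\Omega^q_U$ (via $C^n$) pairs with the corresponding quotient on the other side through the $n=1$ pairing $(a', \overline{b'}) \mapsto C(a'\wedge b')$ for the divisor $D/p^n$. A five-lemma diagram chase then reduces the isomorphism at step $n+1$ to the isomorphisms at step $n$ and at step $1$. The $n=1$ case itself is extracted from the base case $n=0$ using the sequences $0 \to B_1\Fil_D\Omega^q_U \to Z_1\Fil_D\Omega^q_U \xrightarrow{C} \Fil_{D/p}\Omega^q_U \to 0$ and the dual sequence for $\Fil_{D'}\Omega^{q'}_U$.

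\textbf{Main obstacle.} The genuinely delicate point will be verifying the adjointness of the Cartier operator with respect to the twisted wedge pairing: in the filtered setting one needs to check that $C^n$ on the left pairs (up to sign) with the canonical inclusion on the right, and symmetrically that $dC^n$ on the $Z$-side of one argument corresponds to the connecting map on the $B$-side of the other. In the unfiltered case these identities rest on Ekedahl's computations together with $C(a\wedge b) = C(a)\wedge C(b)$ on $Z_1$. The filtered statements should follow from the unfiltered ones by intersecting with $\Fil_D\Omega^\bullet_U$ thanks to \lemref{lem:Complete-8}, but the bookkeeping is nontrivial because the modulus transforms as $D \mapsto D/p^n$ under $C^n$, and one must verify that the wedge $\Fil_D \wedge \Fil_{D'}$ actually lands in $\Fil_{D+D'} = \Fil_{-E} \subset \Omega^d_X$ at each stage of the induction. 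Once these compatibilities are in place, the five-lemma argument is routine, and local freeness (\lemref{lem:Can-filt-5}) guarantees that the $\sHom$-formation commutes with the relevant exact sequences.
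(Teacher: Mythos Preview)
Your overall strategy—induction on $n$ via Cartier-type exact sequences and the five lemma—is close in spirit to the paper's, but the organization differs in a way that matters. The paper does \emph{not} run a simultaneous induction on $\wt{\eta}_1$ and $\wt{\eta}_2$. Instead it first reduces $\wt{\eta}_1$ to $\wt{\eta}_2$ using the sequence $0 \to \ov{\phi}^n_* B_n\Fil_D\Omega^q_U \to \ov{\phi}^n_*\Fil_D\Omega^q_U \to \ov{\phi}^n_*(\Fil_D\Omega^q_U/B_n) \to 0$ and its $D^0_1$-dual. In the resulting five-lemma diagram the right-hand vertical arrow is the \emph{dual} of $\wt{\eta}_2$ for the divisor $D'=-D-E$ (not $D$); this is precisely why the paper emphasizes (in the remark following the lemma) that $D$ must be allowed arbitrary in $\Div_E(X)$. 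Having isolated $\wt{\eta}_2$, the paper then quotes \cite[Thm.~4.1.1, Step~2]{JSZ} directly for the cases $n=0$ and $n=1$, and for $n\ge 2$ invokes the argument of \cite[Thm.~1.4]{Nygaard} via only the $Z$-sequence of \corref{cor:Complete-8-00}.

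Your proposal to ``extract the $n=1$ case from $n=0$'' via $0 \to B_1\Fil_D\Omega^q_U \to Z_1\Fil_D\Omega^q_U \xrightarrow{C} \Fil_{D/p}\Omega^q_U \to 0$ has a gap as stated: in the resulting diagram the three vertical arrows are $\wt{\eta}_1(n{=}1)$, $\wt{\eta}_2(n{=}1)$, and $\wt{\eta}_2(n{=}0,\,D/p)$, so only one of the three is known and the five lemma yields nothing. The circularity can be broken by a secondary induction on $q$ (using the boundary conditions $B_1\Fil_D\Omega^0_U=0$ and $Z_1\Fil_D\Omega^d_U=\Fil_D\Omega^d_U$), but you do not mention this, and it is exactly the step the paper avoids by citing \cite{JSZ} for $n=1$ outright. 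Your ``main obstacle'' (compatibility of $C^n$ with the pairings) is a genuine verification but secondary; the real subtlety is getting off the ground at $n=1$, and the paper's device of swapping $D$ with $D'$ to decouple $\wt{\eta}_1$ from $\wt{\eta}_2$ is what you are missing.
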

\begin{proof}
  We first assume that $\wt{\eta}_2$ is an isomorphism and show that $\wt{\eta}_1$
  is an isomorphism. For this,
we look at the commutative diagram of sheaves of $\sO_X$-modules
  \begin{equation}\label{eqn:Ext-9-0}
    \xymatrix@C1.5pc{
      0 \ar[r] & \ov{\phi}^n_* B_n\Fil_D\Omega^{q}_U \ar[r] \ar[d]_-{\wt{\eta}_1} &
      \ov{\phi}^n_* \Fil_D\Omega^{q}_U \ar[r] \ar[d] &
      \ov{\phi}^n_* \frac{\Fil_{D} \Omega^{q}_U}{B_n\Fil_{D} \Omega^{q}_U} \ar[r]
      \ar[d]^-{\wt{\eta}^{\vee}_2} & 0 \\
      0 \ar[r] &
   D^0_1\left(\ov{\phi}^n_* \frac{\Fil_{D'}\Omega^{q'}_U}{Z_n\Fil_{D'}\Omega^{q'}_U}\right)
   \ar[r] &
   D^0_1\left(\ov{\phi}^n_* \Fil_{D'}\Omega^{q'}_U\right) \ar[r] &
   D^0_1\left(\ov{\phi}^n_* Z_n\Fil_{D'}\Omega^{q'}_U\right) \ar[r] & 0.}
    \end{equation}
  Since all the sheaves involved in the lemma are coherent
  locally free $\sO_X$-modules by \lemref{lem:Can-filt-5},
  our assumption about $\wt{\eta}_2$ (applied to $D'$ in place of $D$)
  implies that the right vertical arrow in
  ~\eqref{eqn:Ext-9-0} is an isomorphism. The middle vertical arrow is
  an isomorphism, as shown in the Step~2 of the proof of \cite[Thm.~4.1.1]{JSZ}).
  Since the two rows of ~\eqref{eqn:Ext-9-0} are exact, it follows that its left
  vertical arrow is an isomorphism.

  We now show that $\wt{\eta}_2$ is an isomorphism by induction on $n \ge 0$.
  The case $n =0$ is the middle vertical arrow of ~\eqref{eqn:Ext-9-0} and
 the case $n=1$ is shown in the Step~2 of the proof of \cite[Thm.~4.1.1]{JSZ}).
 Using \corref{cor:Complete-8-00}, the argument for the general case becomes
 identical to the one for the $D = \emptyset$ case, given in the proof of
 \cite[Thm.~1.4]{Nygaard} (especially, look at the diagram at the bottom of p.~251).
 \end{proof}

\begin{remk}\label{remk:Ext-9-1}
  The reader may have noticed in the above proof that we needed to swap $D$ with $D'$
  in order to prove that the right vertical arrow in ~\eqref{eqn:Ext-9-0} is
  bijective. We could
  do this because we have allowed $D$ to be arbitrary (and not just effective)
  throughout our discussion.
  \end{remk}

We shall now prove the main results of this section.
The following result is an extension of \cite[Thm.~II.2.2]{Ekedahl} to
Hodge-Witt sheaves with modulus.

\begin{thm}\label{thm:Ek-Main-1}
  Let $q \ge 0, m \ge 1$ be integers and $D \in \Div_E(X)$. Let
  $q' = d-q$ and $D' = -D-E$. Then the multiplication operation of
  $W_m\Omega^\bullet_U$ induces canonical isomorphisms
  $\Fil_D W_m\Omega^q_U \xrightarrow{\cong}  D_m(\Fil_{D'} W_m\Omega^{q'}_U)$ 
  and $\Fil_{D'} W_m\Omega^{q'}_U \xrightarrow{\cong} D_m(\Fil_D W_m\Omega^q_U)$.
\end{thm}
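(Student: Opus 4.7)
The plan is to prove the first isomorphism; the second follows immediately by interchanging the roles of $D$ and $D'$, since $-D'-E = D$ and $d-q' = q$. First, by \lemref{lem:Ext-1} applied with $D$ replaced by $D'$, we have $D_m(\Fil_{D'} W_m\Omega^{q'}_U) = D^0_m(\Fil_{D'} W_m\Omega^{q'}_U) = \sHom_{W_m\sO_X}(\Fil_{D'} W_m\Omega^{q'}_U, W_m\Omega^d_X)$, concentrated in degree $0$. The wedge product pairing $\Fil_D W_m\Omega^q_U \times \Fil_{D'} W_m\Omega^{q'}_U \to W_m\Omega^d_X$ of ~\eqref{eqn:Ext-4} (which is well-defined by \lemref{lem:Log-fil-Wcom} and \corref{cor:Ext-3}) induces a canonical $W_m\sO_X$-linear map
\[
\Phi_{m,D,q} \colon \Fil_D W_m\Omega^q_U \to D^0_m(\Fil_{D'} W_m\Omega^{q'}_U),
\]
and our task is to show that $\Phi_{m,D,q}$ is an isomorphism.

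The proof proceeds by induction on $m$, reducing the problem to the graded pieces of the canonical filtration $\{\filt^n \Fil_D W_m\Omega^q_U\}$ from Definition~\ref{defn:Can-filt}. The base case $m=1$ is obtained by working locally and invoking the $n=0$ instance of \lemref{lem:Ext-9} combined with \lemref{lem:Log-fil-4}(3); this is essentially the classical log duality for $\Fil_D \Omega^q_U$ with coefficients in $\Omega^d_X$. For the inductive step, we use the short exact sequence
\[
0 \to \filt^{m-1} \Fil_D W_m\Omega^q_U \to \Fil_D W_m\Omega^q_U \xrightarrow{R_m} \Fil_{D/p} W_{m-1}\Omega^q_U \to 0
\]
from the definition of the canonical filtration. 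Dualizing via $D^0_m(-)$ and using the identification $\ov{p} \colon \Fil_{D/p}W_{m-1}\Omega^q_U \hookrightarrow \Fil_D W_m\Omega^q_U$ of \lemref{lem:Complete-7}, together with \lemref{lem:Ext-1} (to kill higher Ext's), one obtains a dual short exact sequence. The inductive hypothesis takes care of the right-hand term (with $D$ replaced by $D/p$ and $m$ by $m-1$, noting that $-D/p - E$ can be compared to $(-D-E)/p$ after a careful bookkeeping with the Verschiebung), and it remains to handle $\filt^{m-1}\Fil_D W_m\Omega^q_U$, which by ~\eqref{eqn:Can-filt-1} is pushed forward from the bottom filtration step. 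Iterating, we reduce to checking the isomorphism on each graded piece $\gr^n \Fil_D W_m\Omega^q_U$.

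Each graded piece is controlled by the exact sequences of \lemref{lem:Can-filt-3}, which express $\filt^n\Fil_D W_{n+1}\Omega^q_U$ via $V^n$ and $dV^n$ in terms of $B_n$- and $Z_n$-quotients of $\Fil_D \Omega^\bullet_U$. The commutative diagram of \lemref{lem:Ext-6} then gives a pair of exact sequences coupling $\coker(\ov{p})$ and $\Ker(R_{n+1})$ on the $(D,q)$-side with their counterparts on the $(D',q')$-side, where the side-to-side arrows are given by $(\theta_n, \beta_n)$ and $(V^n, F^n)$. Applying $D^0_{n+1}$, the compatibilities of \lemref{lem:Ext-7} and \lemref{lem:Ext-8} show that the pairing $\langle\,,\,\rangle_{n+1}$ on each of the two outer terms agrees, up to sign and the isomorphism $C^n$, with the pairings that induce $\wt{\eta}_1$ and $\wt{\eta}_2$ of \lemref{lem:Ext-9}. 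Since $\wt{\eta}_1$ and $\wt{\eta}_2$ are isomorphisms of $\sO_X$-modules, the five-lemma applied to \eqref{eqn:Ext-6-0} (after dualizing) shows that the middle pairing is perfect, proving that $\Phi_{n+1,D,q}$ is an isomorphism on $\filt^n$.

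The main obstacle will be the careful bookkeeping required to pass between the pro-system indexed by $m$ and the graded pieces indexed by $n$, especially because the duality sends $D \mapsto -D-E$ while the restriction maps send $D \mapsto D/p$; the two operations do not commute naively, and matching up $D'/p$ with $-(D/p)-E$ requires repeated use of \lemref{lem:Log-fil-4} and \lemref{lem:Complete-7}. A secondary technical point is verifying that the sign $(-1)^{q+1}$ appearing in the diagram of \lemref{lem:Ext-7} is absorbed consistently in the two dual statements (so that $\Phi_{m,D,q}$ and $\Phi_{m,D',q'}$ are inverse to each other up to a universal sign); once this is checked, the self-duality is automatic from the symmetry of the wedge product $\alpha \wedge \beta = (-1)^{qq'} \beta \wedge \alpha$ on $W_m\Omega^\bullet_U$.
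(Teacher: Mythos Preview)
Your proposal is correct and follows essentially the same route as the paper: induction on $m$ via the $\ov{p}$/$R$ adjunction of \lemref{lem:Ext-5}, reducing the inductive step to the pairing between $\coker(\ov{p})$ and $\Ker(R)$, which is then analyzed through Lemmas~\ref{lem:Can-filt-3}--\ref{lem:Ext-8} and finally settled by the isomorphisms $\wt{\eta}_1,\wt{\eta}_2$ of \lemref{lem:Ext-9}. Two minor remarks: the paper uses $\ov{p}$ on the $D$-side and $R$ on the $D'$-side (the mirror image of your choice, which is immaterial), and your anticipated ``main obstacle'' evaporates via the exact identity $\lfloor(-n-1)/p\rfloor = -\lfloor n/p\rfloor - 1$, giving $D'/p = -(D/p)-E$ on the nose with no bookkeeping required.
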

\begin{proof}
  It is enough to prove the first isomorphism as the other one
  then follows by swapping $D$ with $D'$.
  Using the adjointness of the pair of functors $(\rho_{X *}, \rho^{!}_X)$ and the
  isomorphism $\rho^{!}_S W_m(k) \cong W_{m-1}(k)$, one quickly checks that
  $\rho_{X *} D_{m-1}(\sF^\bullet) \cong D_m(\rho_{X *} \sF^\bullet)$ for any
  bounded complex $\sF^\bullet$ of coherent $W_{m-1}\sO_X$-modules.
  Using this isomorphism, we get a commutative diagram of exact triangles
  \begin{equation}\label{eqn:Ek-Main-1-0}
    \xymatrix@C1.5pc{
      0 \ar[r] & \rho_{X *} \Fil_{D/p} W_{m-1}\Omega^q_U \ar[r]^-{\ov{p}} \ar[d] &
      \Fil_{D} W_{m}\Omega^q_U \ar[r] \ar[d] & \coker(\ov{p}) \ar[r] \ar[d] & 0 \\
      0 \ar[r] & \rho_{X *}D_{m-1}(\Fil_{{D'}/p}W_{m-1}\Omega^{q'}_U) \ar[r]^-{R^\vee}
      & D_m(\Fil_{D'}\Omega^{q'}_U) \ar[r] & D_m(\Ker(R_m)) \ar[r] & 0,}
  \end{equation}
  in which $R_m \colon \Fil_{D'}\Omega^{q'}_U \surj
  \rho_{X *} \Fil_{{D'}/p}W_{m-1}\Omega^{q'}_U$ and
  the vertical arrows are induced by the duality pairings.
  Using induction on $m$, the proof of the theorem is reduced 
  to showing that the right vertical
  arrow in this diagram is an isomorphism. Using Lemmas~\ref{lem:Can-filt-5},
~\ref{lem:Ext-0}, ~\ref{lem:Ext-6},
  ~\ref{lem:Ext-7} and ~\ref{lem:Ext-8} (note that composition with $C^n$
  in the right columns of ~\eqref{eqn:Ext-7-0} and ~\eqref{eqn:Ext-8-0} are
  isomorphisms), the proof is further reduced to showing that
  for every $n \ge 0$, the duality maps $\wt{\eta}_1$ (with $q$ replaced by
  $q+1$) and $\wt{\eta}_2$ are isomorphisms.
   But this follows from \lemref{lem:Ext-9}.
  \end{proof}

The following result is an extension of \cite[Cor.~II.2.2.23]{Ekedahl} to
Hodge-Witt cohomology with modulus. 

\begin{cor}\label{cor:Ek-Main-2}
Let $i, q \ge 0, m \ge 1$ be integers and $D \in \Div_E(X)$. Let
$q' = d-q$ and $D' = -D-E$. Assume that $X$ is proper over $k$.
Then the multiplication operation of
$W_m\Omega^\bullet_U$ induces a canonical isomorphism of finitely generated
$W_m(k)$-modules
\[
H^i_\zar(X, \Fil_D W_m\Omega^q_U) \xrightarrow{\cong}
\Hom_{W_m(k)}(H^{d-i}_\zar(X, \Fil_{D'} W_m\Omega^{q'}_U), W_m(k)).
\]
\end{cor}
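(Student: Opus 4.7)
The plan is to deduce Corollary~\ref{cor:Ek-Main-2} from Theorem~\ref{thm:Ek-Main-1} by taking Zariski cohomology and invoking Grothendieck--Serre duality for the proper morphism $f_m \colon W_m X \to W_m S$. All the substantive work has been carried out in Theorem~\ref{thm:Ek-Main-1} (sheaf-level duality) and Lemma~\ref{lem:Ext-1} (the relevant higher Ext sheaves vanish); so the corollary is a formal consequence.

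First I would record the finite generation assertion: by Lemma~\ref{lem:Log-fil-4}(2) the sheaf $\Fil_D W_m\Omega^q_U$ is a coherent $W_m\sO_X$-module, and since $f_m$ is proper (as $f$ is), the higher direct images $H^i_\zar(X, \Fil_D W_m\Omega^q_U) = R^i f_{m*}(\Fil_D W_m\Omega^q_U)$ are finitely generated over $W_m(k)$.

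Next, Theorem~\ref{thm:Ek-Main-1} provides a canonical isomorphism
\[
\Fil_D W_m\Omega^q_U \xrightarrow{\cong} \sR\sHom_{W_m\sO_X}(\Fil_{D'} W_m\Omega^{q'}_U,\, W_m\Omega^d_X)
\]
in the derived category, and Lemma~\ref{lem:Ext-1} guarantees that the right-hand side is concentrated in cohomological degree zero. Applying $\sR f_{m*}$ and Ekedahl's trace isomorphism $W_m\Omega^d_X \cong f_m^{!}(W_m(k))[-d]$, Grothendieck--Serre duality (in the form established in \cite[\S I]{Ekedahl}) yields
\[
\sR f_{m*}\,\Fil_D W_m\Omega^q_U \;\cong\; \sR\Hom_{W_m(k)}\!\bigl(\sR f_{m*}\,\Fil_{D'} W_m\Omega^{q'}_U,\, W_m(k)\bigr)[-d].
\]
Since $W_m(k)$ is a zero-dimensional Gorenstein local ring and hence self-injective, the functor $\Hom_{W_m(k)}(-, W_m(k))$ is exact on finitely generated modules, so $\sR\Hom_{W_m(k)}(-, W_m(k))$ commutes with taking cohomology. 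Passing to the $i$-th cohomology on both sides then produces the required $W_m(k)$-linear isomorphism
\[
H^i_\zar(X, \Fil_D W_m\Omega^q_U) \;\xrightarrow{\cong}\; \Hom_{W_m(k)}\!\bigl(H^{d-i}_\zar(X, \Fil_{D'} W_m\Omega^{q'}_U),\, W_m(k)\bigr),
\]
and unwinding the construction shows that this map is induced by wedge multiplication in $W_m\Omega^\bullet_U$ followed by the Ekedahl trace.

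The only point demanding care is checking that the pairing obtained after taking cohomology agrees with the one induced by $\Tr_m$ composed with wedge product; this is a diagram chase tracking the isomorphism of Theorem~\ref{thm:Ek-Main-1} through the Grothendieck--Serre adjunction, and it is formally identical to the verification carried out by Ekedahl in the unfiltered case \cite[Cor.~II.2.2.23]{Ekedahl}. No new difficulty arises from the filtration because Lemma~\ref{lem:Ext-1} reduces everything to the abelian category of coherent $W_m\sO_X$-modules.
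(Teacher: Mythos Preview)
Your proposal is correct and follows essentially the same route as the paper: deduce the corollary from \thmref{thm:Ek-Main-1} via Grothendieck duality for the proper morphism $f_m \colon W_mX \to W_mS$. The paper's proof is even terser (it simply cites \cite[Thm.~4.30]{Lee-Nakayama} and \cite[Tag.~0FVU]{SP} for the Grothendieck duality step), but your added details on finite generation, the role of \lemref{lem:Ext-1}, and the self-injectivity of $W_m(k)$ are all correct and make the argument more explicit.
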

\begin{proof}
  This is a direct consequence of \thmref{thm:Ek-Main-1} and the Grothendieck
  duality for the proper map $f_m \colon W_mX \to W_mS$
  (cf. \cite[Thm.~4.30]{Lee-Nakayama}, \cite[Tag.~0FVU]{SP}).
\end{proof}

 We let $W_m\Omega^q_{(X, -E)} =
\Ker\left(W_m\Omega^q_{X} \to \ \stackrel{r}{\underset{i =1}\bigoplus}
W_m\Omega^q_{E_i}\right)$ (cf. \cite[Defn.~8.1]{Ren-Rulling}).
It follows from \lemref{lem:Log-fil-4}(5) that
$\Fil_{-E} W_m\Omega^q_U \subset W_m\Omega^q_{X|E} \subset W_m\Omega^q_{(X, -E)}$.

\begin{cor}\label{cor:EK-Main-4}
  The inclusions $\Fil_{-E} W_m\Omega^q_U \subset W_m\Omega^q_{X|E} \subset
  W_m\Omega^q_{(X, -E)}$ are isomorphisms.
\end{cor}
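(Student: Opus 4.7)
The plan is to prove the equality $\Fil_{-E} W_m\Omega^q_U = W_m\Omega^q_{(X,-E)}$, which together with the chain $\Fil_{-E} W_m\Omega^q_U \subseteq W_m\Omega^q_{X|E} \subseteq W_m\Omega^q_{(X,-E)}$ (the first inclusion from \lemref{lem:Log-fil-4}(5), the second from the factorization $W_m\Omega^q_X \to W_m\Omega^q_E \to \bigoplus_i W_m\Omega^q_{E_i}$ of the restriction map) forces all three subsheaves to coincide. The strategy is to identify both ends with the same $\sHom$-sheaf via wedge-product pairings.

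Applying \thmref{thm:Ek-Main-1} with $D = -E$ (so $D' = -D - E = 0$, $\Fil_0 W_m\Omega^{q'}_U = W_m\Omega^{q'}_X(\log E)$ by \lemref{lem:Log-fil-4}(5), and $q' = d - q$) yields a canonical isomorphism
\[
\Phi \colon \Fil_{-E} W_m\Omega^q_U \xrightarrow{\cong}
\sHom_{W_m\sO_X}\bigl(W_m\Omega^{q'}_X(\log E),\ W_m\Omega^d_X\bigr),
\]
induced by the wedge product; the target is an ordinary $\sHom$ since \lemref{lem:Ext-1} applied to $D = 0$ kills higher $\sExt$. The key new step is to construct a parallel map $\Psi$ on the larger sheaf, i.e., to show that for $\omega \in W_m\Omega^q_{(X,-E)}$ and $\eta \in W_m\Omega^{q'}_X(\log E)$ one has $\omega \wedge \eta \in W_m\Omega^d_X$ rather than merely in $j_* W_m\Omega^d_U$.

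For this I would induct on $m$. When $m = 1$ this is a direct index-count in local coordinates: at a closed point of $X$ with $E_i = V(x_i)$ and $\pi = x_1 \cdots x_r$, any $\omega = \sum_K a_K\, dx_K \in \Omega^q_{(X,-E)}$ has $a_K \in (\pi_K)$ where $\pi_K := \prod_{i \le r,\, i \notin K} x_i$; wedging with a log-basis element $\dlog x_{i_1} \wedge \cdots \wedge \dlog x_{i_s} \wedge dx_{j_1} \wedge \cdots \wedge dx_{j_t}$ contributes to the $dx_{[1..d]}$-coefficient only when the three index sets partition $[1..d]$, which forces $\{i_1, \ldots, i_s\} = [1..r] \setminus K$ and hence $\prod_l x_{i_l} = \pi_K$, so the coefficient $\pm a_K / \pi_K$ lies in the regular structure ring. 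For $m \ge 2$ I use the Leibniz-type identities $V(a) \wedge \eta = V(a \wedge F(\eta))$ and $dV(a) \wedge \eta = dV(a \wedge F(\eta)) - V(a \wedge F(d\eta))$, combined with $F(W_m\Omega^{q'}_X(\log E)) \subseteq W_{m-1}\Omega^{q'}_X(\log E)$ (\lemref{lem:LWC-1}) and the compatibility of $R$ with restriction to each $E_i$, to reduce the $W_m$ statement to the $W_{m-1}$ statement and the $m = 1$ base case.

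Once $\Psi$ is constructed, the triangle $\Fil_{-E} W_m\Omega^q_U \hookrightarrow W_m\Omega^q_{(X,-E)} \xrightarrow{\Psi} \sHom_{W_m\sO_X}(W_m\Omega^{q'}_X(\log E), W_m\Omega^d_X)$ has the isomorphism $\Phi$ as its composite. The injectivity of $\Psi$ follows from the injection $W_m\Omega^q_{(X,-E)} \hookrightarrow W_m\Omega^q_X \hookrightarrow j_* W_m\Omega^q_U$ and the non-degeneracy of wedge product at the function field; combined with the surjectivity of $\Phi$ onto the common target, this forces the inclusion $\Fil_{-E} W_m\Omega^q_U \hookrightarrow W_m\Omega^q_{(X,-E)}$ itself to be an isomorphism. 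The hard part will be the induction step in the construction of $\Psi$: establishing stability of the modulus condition under $V$ and $dV$ requires delicate tracking through the kernel of $R$ on $W_m\Omega^q_{(X,-E)}$, which is not directly covered by \thmref{thm:Global-version}(2) (formulated for $\Fil_D$) and must be identified separately with $V^{m-1}(\Omega^q_{(X,-E)}) + dV^{m-1}(\Omega^{q-1}_{(X,-E)})$ by a localization argument at the generic points of each $E_i$.
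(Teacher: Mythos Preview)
Your overall strategy matches the paper's: both set up a commutative square comparing the two duality pairings $\Phi$ and $\Psi$ into the same $\sHom$-sheaf. The paper, however, does not construct $\Psi$ by hand; it invokes the duality theorem of Ren--R{\"u}lling \cite{Ren-Rulling}, which directly gives $W_m\Omega^q_{(X,-E)} \xrightarrow{\cong} D_m(W_m\Omega^{d-q}_{(X,E)})$, together with the identification $W_m\Omega^{d-q}_{(X,E)} = W_m\Omega^{d-q}_X(\log E) = \Fil_0 W_m\Omega^{d-q}_U$. Three isomorphisms in the square then force the fourth.

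Your attempt to avoid \cite{Ren-Rulling} is legitimate, but the route through $\Psi$ is more circuitous than necessary, and the induction step you sketch has a real gap you yourself flag (describing $\Ker(R)$ on $W_m\Omega^q_{(X,-E)}$). In fact, the localization-at-generic-points idea you propose for that gap \emph{bypasses the entire duality argument}. By \thmref{thm:Global-version}(1), an element $\omega \in W_m\Omega^q_X \subseteq j_*W_m\Omega^q_U$ lies in $\Fil_{-E} W_m\Omega^q_U$ iff its image in each $W_m\Omega^q_{\wh{K}_i}$ lies in $\Fil_{-1}$. At the completed DVR $\wh{A}_i \cong k(\eta_i)[[\pi]]$, the hypothesis $\omega|_{E_i}=0$ means $\omega \in \Ker(W_m\Omega^q_{\wh{A}_i} \to W_m\Omega^q_{k(\eta_i)})$, and the Geisser--Hesselholt decomposition~\eqref{eqn:GH-2-0} identifies this kernel with $\prod_{j \ge 1} F^{m,q}_j(k(\eta_i)) = \Fil_{-1} W_m\Omega^q_{\wh{K}_i}$ by \propref{prop:Fil-decom}. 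This gives $W_m\Omega^q_{(X,-E)} \subseteq \Fil_{-E} W_m\Omega^q_U$ directly, with no need for $\Psi$ or any induction on $m$. A minor separate point: your injectivity argument for $\Psi$ via ``non-degeneracy at the function field'' does not work for $m \ge 2$, since $W_m\Omega^q_K$ is not free; you should instead use Ekedahl's duality $W_m\Omega^q_X \cong D_m(W_m\Omega^{q'}_X)$ together with the inclusion $W_m\Omega^{q'}_X \subseteq W_m\Omega^{q'}_X(\log E)$.
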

\begin{proof}
  For $q = 0$, this can be checked directly. For $q \ge 1$,
  we look at the commutative diagram
  \begin{equation}\label{eqn:EK-Main-4-0}
    \xymatrix@C1.5pc{
      \Fil_{-E} W_m\Omega^q_U  \ar[r] \ar[d] &  D_m(\Fil_0 W_m\Omega^{d-q}_U) \ar[d] \\
      W_m\Omega^q_{(X, -E)} \ar[r] &  D_m(W_m\Omega^{d-q}_{(X,E)}),}
  \end{equation}
  where $W_m\Omega^{d-q}_{(X,E)}$ is the sheaf considered in \cite[\S~6.2]{Ren-Rulling}
  and the horizontal arrows are the duality homomorphisms.
  The right vertical arrow is an isomorphism by \lemref{lem:Log-fil-4} and
  the comments following [op. cit, Cor.~1]. The bottom horizontal arrow is
  an isomorphism by \thmref{thm:Ek-Main-1} and the bottom horizontal arrow is an
  isomorphism by [op. cit., Thm.~1]. We conclude that
  the left vertical arrow is an isomorphism of sheaves.
\end{proof}

\begin{remk}\label{remk:EK-Main-3}
  As we already remarked in \S~\ref{sec:Intro}, duality results
  similar to \thmref{thm:Ek-Main-1} and \corref{cor:Ek-Main-2} were also proven by
  Ren-R{\"u}lling (cf. \cite[Thm.~9.3, Cor.~9.4]{Ren-Rulling}).
  However, their results are different from the ones proven above except
  when $D = 0$ (cf. \lemref{lem:Log-fil-4}(5), \corref{cor:EK-Main-4}).
  In the latter case, \thmref{thm:Ek-Main-1} also
  coincides with the duality theorems of \cite[Thm.~5.3(1)]{Nakkajima} and
  \cite[(3.3.1)]{Hyodo}.
  \end{remk}

\section{Duality for $p$-adic {\'e}tale motivic cohomology with modulus}
\label{sec:Duality**}
In this section, we shall prove the duality theorems for the logarithmic Hodge-Witt
cohomology with modulus in two cases. These results provide refinements of the
duality theorems of Jannsen-Saito-Zhao \cite{JSZ} and Zhao \cite{Zhao}.
We begin by defining the relevant complexes of sheaves, proving some of their
properties and constructing their pairings in more general situations
before restricting to the special cases.

\subsection{Complexes and their properties}\label{sec:Complexes}
We let $k = \F_p$ and let $X$ be a Noetherian regular and connected
$F$-finite $k$-scheme. 
Let $E$ be a simple normal crossing divisor on $X$
with irreducible components $E_1, \ldots , E_r$.  Let $j \colon U \inj X$ be the
inclusion of the complement of $E$ in $X$. For
$\un{n} = (n_1, \ldots , n_r) \in \Z^r$, we let
$D_{\un{n}} = \stackrel{r}{\underset{i =1}\sum} n_i E_i \in \Div_E(X)$. 
We shall write $\Fil_{D_{\un{n}}}W_m\Omega^q_U$ as $\Fil_{\un{n}}W_m\Omega^q_U$.
If $t \in \Z$ and $\un{n} \in \Z^r$, we let
$\un{t}:=(t, {\ldots},t)\in \Z^r$ and
$t\un{n}:=(tn_1, \ldots, tn_r)$. We let
$\un{n} \pm \n' := (n_1\pm n'_1, \ldots , n_r \pm n'_r)$ and
$\n \pm t :=\n \pm \un{t}$.
Recall the notation
$\n/{p^i} =(\lfloor {n_1}/{p^i} \rfloor,\ldots, \lfloor {n_r}/{p^i} \rfloor)$
(cf. \S~\ref{sec:CFDRW}).
For $\un{n} \ge {\un{0}}$, we let
$\sI_{\un{n}} = \Ker(\sO_X \surj \sO_{D_{\un{n}}})$.

Recall the map of Zariski sheaves
$\dlog \colon {\sK^M_{q, X}}/{p^m} \to W_m\Omega^q_X$ from
\S~\ref{sec:katofil} which is injective and whose image is $W_m\Omega^q_{X, \log}$
for $q \ge 0$ and $m \ge 1$. We shall write $\sK^{m}_{q,X} = {\sK^M_{q,X}}/{p^m}$
throughout this section.

For $\un{n} \ge \un{0}$, we let
 $\sK^M_{q, X|D_{\un{n}}}$ be the Zariski (or {\'e}tale) sheaf on $X$ given by the image
of the canonical map $(1 + \sI_{\un{n}})^\times \otimes j_*\sO^\times_U \otimes \cdots
\otimes j_*\sO^\times_U \to j_*\sK^M_{q,U}$.
As $\sK^M_{q, X|D_{\un{n}}} \subset \sK^M_{q,X}$ (cf. \cite[Prop.~1.1.3]{JSZ}), we get
a subsheaf
$\sK^m_{q, X|D_{\un{n}}} = {\sK^M_{q, X|D_{\un{n}}}}/{(p^m\sK^M_{q,X} \cap \sK^M_{q, X|D_{\un{n}}})}$
of $\sK^m_{q,X}$.
Letting $W_m\Omega^q_{X|D_{\un{n}},\log}$ be the image of the map
$\dlog \colon \sK^M_{q, X|D_{\un{n}}} \to W_m\Omega^q_X$ for $\n \ge \un{1}$,
one gets a canonical isomorphism
$\dlog \colon \sK^m_{q, X|D_{\un{n}}} \xrightarrow{\cong} W_m\Omega^q_{X|D_{\un{n}},\log}$
(cf. \cite[Thm.~1.1.5]{JSZ}). 
Let $d$ denote the rank of the locally free sheaf $\Omega^1_X$.

\begin{lem}\label{lem:Complex-5}
  For $\n \ge \un{1}$,  we have
  $W_m\Omega^q_{X|D_{\un{n}},\log} \subset \Fil_{-\un{n}}W_m\Omega^q_U$.
\end{lem}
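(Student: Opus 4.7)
The plan is to reduce to the case $q=1$ via the multiplicative structure of the filtered de Rham--Witt complex, and then analyze the Teichm\"uller lift $[u]_m$ for $u \in 1 + \sI_{\un n}$ using Witt-vector functoriality.

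First I would observe that $\sK^M_{q, X|D_{\un n}}$ is generated, locally in $j_*\sK^M_{q,U}$, by symbols $\{1+y, v_2, \ldots, v_q\}$ with $y \in \sI_{\un n}$ and $v_i \in j_*\sO^\times_U$, so $W_m\Omega^q_{X|D_{\un n},\log}$ is generated as a sheaf of abelian groups by elements of the form
\[
\dlog[1+y]_m \wedge \dlog[v_2]_m \wedge \cdots \wedge \dlog[v_q]_m.
\]
Writing each $v_i$ locally as $w\,x_1^{a_1}\cdots x_r^{a_r}$ with $w \in \sO_X^\times$ and $a_j \in \Z$, one has
\[
\dlog[v_i]_m = \dlog[w]_m + \sum_j a_j \dlog[x_j]_m \in W_m\Omega^1_X(\log E) = \Fil_0 W_m\Omega^1_U
\]
by \lemref{lem:Log-fil-4}(5). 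Applying the multiplicativity of the $\Div_E(X)$-filtered Witt-complex in Definition~\ref{defn:Filtered-Witt-complex}(1), established in \lemref{lem:Log-fil-Wcom}, reduces the claim to showing $\dlog[1+y]_m \in \Fil_{-\un n} W_m\Omega^1_U$ for every $y \in \sI_{\un n}$.

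Next I would set $u = 1+y$ and use the functoriality of the Witt-vector construction applied to the surjection $\sO_X \twoheadrightarrow \sO_X/\sI_{\un n}$. Since $[u]_m$ and $[1]_m$ both map to $[1]_m$ in $W_m(\sO_X/\sI_{\un n})$, the difference $[u]_m - [1]_m$ lies in the kernel $W_m(\sI_{\un n}) = \{(a_{m-1},\ldots,a_0) : a_i \in \sI_{\un n}\ \forall\,i\}$. Since $a_i \in \sI_{\un n}$ implies $a_i^{p^i} \in \sI_{\un n}^{p^i} \subseteq \sI_{\un n} = \Gamma(X,\sO_X(-D_{\un n}))$, Definition~\ref{defn:Log-fil-0} gives the inclusion $W_m(\sI_{\un n}) \subseteq \Fil_{-\un n} W_m\sO_U$, whence $[u]_m - [1]_m \in \Fil_{-\un n} W_m\sO_U$.

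Finally, because $d$ preserves the filtration (Definition~\ref{defn:Log-fil-3} together with Step~1 of the proof of \lemref{lem:Log-fil-Wcom}),
\[
d[u]_m = d\bigl([u]_m - [1]_m\bigr) \in d(\Fil_{-\un n} W_m\sO_U) \subseteq \Fil_{-\un n} W_m\Omega^1_U.
\]
Since $[u]_m \in W_m(\sO_X)^\times$ and $\Fil_{-\un n} W_m\Omega^1_U$ is a $W_m\sO_X$-submodule of $j_*W_m\Omega^1_U$, I would conclude
\[
\dlog[u]_m = [u]_m^{-1}\cdot d[u]_m \in \Fil_{-\un n} W_m\Omega^1_U,
\]
which completes the proof. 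No step presents a serious obstacle; the only subtle point is the observation that $[u]_m - [1]_m$ literally lies in the ideal $W_m(\sI_{\un n})$, which by Definition~\ref{defn:Log-fil-0} is then immediately contained in $\Fil_{-\un n}W_m\sO_U$, so the main work is simply matching the Witt-vector description of $\Fil_{-\un n}$ with the kernel of reduction modulo $\sI_{\un n}$.
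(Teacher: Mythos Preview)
Your proof is correct and follows essentially the same route as the paper's. Both reduce to showing $\dlog[1+y]_m \in \Fil_{-\un n}W_m\Omega^1_U$ via \lemref{lem:Log-fil-4}(5) and \lemref{lem:Log-fil-Wcom}, and both then observe that $[1+y]_m - [1]_m$ has all Witt components in $\sI_{\un n}$; the paper cites \cite[Lem.~1.2.3]{Geisser-Hesselholt-JAMS} for the explicit decomposition $[1+at]_m = (ta_{m-1},\ldots,ta_0) + [1]_m$, whereas you obtain the same conclusion more directly from the functoriality of $W_m(-)$ applied to $\sO_X \twoheadrightarrow \sO_X/\sI_{\un n}$.
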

\begin{proof}
  Using Lemmas~\ref{lem:Log-fil-4}(5) and ~\ref{lem:Log-fil-Wcom},
  it suffices to show that 
  $\dlog([1+ at]_m) \in \Fil_{-\un{n}}W_m\Omega^1_K$
  in the notations of \S~\ref{sec:com-to-noncom}
  provided $A$ is a regular local ring with quotient field $K$,
  $t = \stackrel{r}{\underset{i =1}\prod} x^{n_i}_i$ with each $n_i \ge 1$
  and $a \in A$. To that end, we apply \cite[Lem.~1.2.3]{Geisser-Hesselholt-JAMS} to
  write $[1+ at]_m = (ta_{m-1}, \ldots , ta_0) + [1]_m$, where $a_i \in A$.
  This implies that $\dlog([1+ at]_m) = [1+at]^{-1}_m (d(x) + d([1]_m))$,
  where $x = (ta_{m-1}, \ldots , ta_0)$. Since $d([1]_m) \in W_m\Omega^1_{\F_p} = 0$
  and $x \in \Fil_{-\un{n}}W_m(K)$, we conclude from \lemref{lem:Log-fil-Wcom} that
  $\dlog([1+ at]_m) \in \Fil_{-\un{n}}W_m\Omega^1_K$.
\end{proof}

We let $\sD(X_\et)$ denote the  bounded derived category of {\'e}tale
sheaves on $X$. For $\un{n} \ge -1, \ q \ge 0$ and $m \ge 1$, we
consider the following complexes in $\sD(X_\et)$.
\begin{equation}\label{eqn:Complex-0}
  W_m\sF^{q, \bullet}_{\un{n}} = \left[Z_1\Fil_{\un{n}}W_m\Omega^q_U
    \xrightarrow{1 - C} \Fil_{\un{n}}W_m\Omega^q_U\right];
\end{equation}
\begin{equation}\label{eqn:Complex-1}
  W_m\sG^{q,\bullet}_{\un{n}} = \left[\Fil_{{-\un{n}-1}}W_m\Omega^q_U \xrightarrow{1-\ov{F}}
    \frac{\Fil_{-\un{n}-1}W_m\Omega^q_U}{dV^{m-1}(\Fil_{-\un{n}-1}\Omega^{q-1}_U)}\right];
\end{equation}
\begin{equation}\label{eqn:Complex-2}
  W_m\sF^{q,\bullet} = \left[Z_1W_m\Omega^q_X  \xrightarrow{1 - C} W_m\Omega^q_X\right];
  \ \ W_m\sG^{q,\bullet} = \left[W_m\Omega^q_X  \xrightarrow{1 - \ov{F}}
    \frac{W_m\Omega^q_X}{dV^{m-1}(\Omega^{q-1}_X)}\right];
  \end{equation}
  \begin{equation}\label{eqn:Complex-3}
  W_m\sH^{\bullet} = \left[W_m\Omega^d_X \xrightarrow{1-C} W_m\Omega^d_X\right]
  \cong W_m\Omega^d_{X,\log} \ \
  (\text{Note }Z_1W_m\Omega^d_X=W_m\Omega^d_X).
  \end{equation}
  These complexes are well-defined in view of \thmref{thm:Global-version}.
  Note that $W_m\sF^{q, \bullet}_{\un{n}}$ is 
simply the complex $W_m\sF^{q, \bullet}_{X|D_{\un{n}}}$  introduced in \S~\ref{sec:Global}.
For some special values of $\un{n}$, the following lemma provides simplified
descriptions of these complexes as objects of $\sD(X_\et)$.

\begin{lem}\label{lem:Complex-6}
  The sequences 
  \begin{enumerate}
    \item
     \ \ $0 \to W_m\Omega^q_{X|D_{\un{n}},\log} \to \Fil_{-\un{n}}W_m\Omega^q_U
    \xrightarrow{1 -\ov{F}}
    \frac{\Fil_{-\un{n}}W_m\Omega^q_U}{dV^{m-1}(\Fil_{-\un{n}}\Omega^{q-1}_U)}$;
\item
   \ \ $0 \to W_m\Omega^q_{X|D_{\un{1}},\log} \to \Fil_{-\un{1}}W_m\Omega^q_U
    \xrightarrow{1 -\ov{F}}
    \frac{\Fil_{-\un{1}}W_m\Omega^q_U}{dV^{m-1}(\Fil_{-\un{1}}\Omega^{q-1}_U)} \to 0$;
     \item
  \ \ $0 \to W_m\Omega^q_{X|D_{\un{1}},\log} \to Z_1\Fil_{-\un{1}}W_m\Omega^q_U
    \xrightarrow{1 - C}
    \Fil_{-\un{1}}W_m\Omega^q_U \to 0$;
 \item
    \ \ $0 \to j_* W_m\Omega^q_{U,\log} \to Z_1\Fil_{\un{0}}W_m\Omega^q_U
    \xrightarrow{1 - C}
    \Fil_{\un{0}}W_m\Omega^q_U \to 0$;
  \item
\ \ $0 \to j_* W_m\Omega^q_{U,\log} \to \Fil_{\un{0}}W_m\Omega^q_U
    \xrightarrow{1 -\ov{F}}
    \frac{\Fil_{\un{0}}W_m\Omega^q_U}{dV^{m-1}(\Fil_{\un{0}}\Omega^{q-1}_U)} \to 0$;
  \item 
    \ \ $0 \to \Omega^q_{X|D_\n, \log} \to Z_1\Fil_{-\n}\Omega^q_U
    \xrightarrow{1-C}\Fil_{(-\n)/p}\Omega^q_U \to 0$;
    \end{enumerate}
are exact, where $\un{n} \ge 1$ in $(1)$ and $(6)$.
\end{lem}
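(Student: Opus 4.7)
The plan is to reduce all six parts to the classical Bloch--Gabber--Illusie exact sequence on $U_\et$,
\[
0 \to W_m\Omega^q_{U,\log} \to Z_1W_m\Omega^q_U \xrightarrow{1-C} W_m\Omega^q_U \to 0,
\]
and to the analogous $1-\ov F$ version, and then to use the local-to-global characterization of Theorem~\ref{thm:Global-version}(1), which identifies $\Fil_D W_m\Omega^q_U$ as the subsheaf of $j_*W_m\Omega^q_U$ whose stalks at the generic points $\eta_i$ of $E_i$ lie in $\Fil_{n_i}W_m\Omega^q_{\wh K_i}$.

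The main work is in part (4). Left-exactness and exactness in the middle are immediate by intersecting the $j_*$-pushforward of the BGI sequence with $Z_1\Fil_{\un 0}W_m\Omega^q_U$, once one checks that $j_*W_m\Omega^q_{U,\log}\subseteq \Fil_{\un 0}W_m\Omega^q_U=W_m\Omega^q_X(\log E)$: this is clear stalkwise at each $\eta_i$ because $\dlog([u]_m)$ and $\dlog([\pi_i]_m)$ all lie in $W_m\Omega^q_{A_i}(\log\pi_i)=\Fil_0 W_m\Omega^q_{K_i}$. For surjectivity of $1-C\colon Z_1\Fil_{\un 0}W_m\Omega^q_U\to \Fil_{\un 0}W_m\Omega^q_U$, take $\eta\in \Fil_{\un 0}$ and lift it \'etale-locally on $U$ to $\tilde\omega\in Z_1W_m\Omega^q_U$ using BGI. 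At each generic point $\eta_i$ of $E_i$, $(1-C)\tilde\omega=\eta\in \Fil_0 W_m\Omega^q_{\wh K_i}$, so Proposition~\ref{prop:Cartier-fil-1} (with $n=0$) forces $\tilde\omega\in Z_1\Fil_0W_m\Omega^q_{\wh K_i}$. Theorem~\ref{thm:Global-version}(1) then gives $\tilde\omega\in Z_1\Fil_{\un 0}W_m\Omega^q_U$, as desired.

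Part (3) follows by the same scheme applied to $\Fil_{-\un 1}$: Lemma~\ref{lem:Complex-5} gives the inclusion of the log sheaf, and since $(-\un 1)/p=-\un 1$, Corollary~\ref{cor:Complete-6-global} shows that $1-C$ maps $Z_1\Fil_{-\un 1}$ into $\Fil_{-\un 1}$. For surjectivity we again lift $\eta$ \'etale-locally on $U$ and verify stalkwise at each $\eta_i$ that the preimage lies in $Z_1\Fil_{-1}W_m\Omega^q_{\wh K_i}$; this requires Proposition~\ref{prop:Cartier-fil-1} at $n=-1$, whose proof via the Geisser--Hesselholt decomposition \eqref{eqn:GH-2-0*} and analysis of the lowest-weight term in the $F^{m,q}_\bullet$-decomposition of $\alpha$ goes through verbatim for every $n\in\Z$. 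Part (6) is handled identically at level $m=1$, the only modification being that $C\colon Z_1\Fil_{-\n}\Omega^q_U\to \Fil_{(-\n)/p}\Omega^q_U$ (Corollary~\ref{cor:Complete-6-global}) explains the shift of modulus on the target.

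For the $1-\ov F$ sequences (1), (2), (5), the strategy is to transfer everything from the $1-C$ side using Lemma~\ref{lem:Complete-4}(2), which supplies the mutually inverse isomorphisms
\[
\ov F\colon \Fil_{\n/p}W_m\Omega^q_K \xrightarrow{\cong} \tfrac{Z_1\Fil_\n W_m\Omega^q_K}{dV^{m-1}(\Fil_\n\Omega^{q-1}_K)},\qquad \ov C\colon \tfrac{Z_1\Fil_\n W_m\Omega^q_K}{dV^{m-1}(\Fil_\n\Omega^{q-1}_K)} \xrightarrow{\cong} \Fil_{\n/p}W_m\Omega^q_K.
\]
This identifies the $1-\ov F$ complex on $\Fil_{-\n}$ with a shift of the $1-C$ complex and shows that the kernel is the same log sheaf. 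Left-exactness in (1) then follows from parts (3)--(4); surjectivity in (2) and (5) (the cases $\n=\un 1$ and $\n=\un 0$) follows by the same stalkwise argument as in (3)--(4), now applied to $\ov F$ in place of $C$ and using the analogous filtered-lifting statement extracted from the proof of Proposition~\ref{prop:Cartier-fil-1}.

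\textbf{Main obstacle.} The principal technical point is that Proposition~\ref{prop:Cartier-fil-1} is formally stated only for $n\ge 0$, whereas parts (1)--(3) require its use for $n=-1$ (and part (6) for arbitrary negative $n$). The obstacle is routine rather than serious: the Geisser--Hesselholt-style proof of that proposition analyzes the lowest $F^{m,q}_{-l}$-component of a Cartier-closed element and does not in fact use $n\ge 0$. Isolating this extension and combining it cleanly with the global Theorem~\ref{thm:Global-version}(1) characterization is the place where one must be careful, especially to keep the filtration shifts $D \mapsto D/p$ consistent with the target filtrations appearing in (6).
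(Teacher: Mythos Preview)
Your surjectivity strategy has a genuine gap, and the obstacle you flag is more serious than you suggest.

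\textbf{Proposition~\ref{prop:Cartier-fil-1} is false for $n<0$.} Take $m=1$, $q=0$, $A$ a DVR with quotient field $K$ and uniformizer $\pi$. Then $\omega=1\in Z_1K=K^p$ satisfies $(1-C)(\omega)=0\in\Fil_{-1}K=\pi A$, but $\omega=1\notin\Fil_{-1}K$. The Geisser--Hesselholt argument breaks precisely at the bottom component: when the lowest index is $0$, the contribution of $R(a_0)$ to $F^{m,q}_0$ no longer vanishes, so from $(1-C)(\omega)\in\Fil_{-1}$ you only get $F(a_0)=R(a_0)$, not $F(a_0)=0$. This kills your approach to surjectivity in (3), (6), and hence (1) and (2).

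\textbf{Even for (4), the lifting step does not work.} You propose to lift $\eta\in\Fil_{\un 0}$ \'etale-locally on $U$ and then check the filtration at the generic points $\eta_i\in E$. But $\eta_i\notin U$, so a section over an \'etale cover of $U$ cannot be evaluated there. What you would actually need is a lift in $Z_1W_m\Omega^q_{A_\pi}$ for $A=\sO^{sh}_{X,\bar x}$ with $\bar x\in E$, i.e.\ surjectivity of $1-C$ on global sections over $\Spec A_\pi$; but $H^1_\et(\Spec A_\pi, W_m\Omega^q_{\log})\neq 0$ in general (already for $q=0$), so no such lift exists. The argument is circular.

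The paper proceeds quite differently. Surjectivity is proved directly: using the generators supplied by Lemma~\ref{lem:VR-4} and Lemma~\ref{lem:VR-3} (together with $\ov F(w\,\dlog[x_1]_m\cdots)=\ov F(w)\,\dlog[x_1]_m\cdots$), one reduces surjectivity of $1-\ov F$ on $\Fil_{-\un 1}$ to the case $q=0$, then by the $(V^{m-1},R)$ d\'evissage to $m=1$, which is the elementary Artin--Schreier surjectivity $\pi A\xrightarrow{1-\ov F}\pi A$ for $A$ strictly henselian. Surjectivity of $1-C$ in (3), (4) then follows from this via the comparison diagram linking the $(1-C)$ and $(1-\ov F)$ complexes through the isomorphism $\ov F$ of Lemma~\ref{lem:Complete-4}(2). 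For the kernel in (1) (and hence (3)), the paper shows $\Fil_{-\un n}W_m\Omega^q_U\cap W_m\Omega^q_{X,\log}\subset W_m\Omega^q_{X|D_{\un n},\log}$ by a nontrivial induction on $m$: the base case $m=1$ is \cite[Thm.~1.2.1]{JSZ}, and the inductive step uses that $R(x)=F(x)$ for $x$ logarithmic together with Lemma~\ref{lem:Complete-7} (the $\ov p$ criterion) to control the $\ov p^{m-1}$-piece. You do not address this point at all.
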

\begin{proof}
It is clear that $(1- \ov{F})(W_m\Omega^q_{X|D_{\un{n}},\log}) = 0$.
  Using \cite[Lem.~2]{CTSS}, (1) will therefore follow if we show that
\begin{equation}\label{eqn:Complex-4-2}
  \Fil_{-\un{n}}W_m\Omega^q_U \bigcap W_m\Omega^q_{X,\log} \subset W_m\Omega^q_{X|D_{\un{n}},\log}
  \ \ {\rm for} \ \ \un{n} \ge \un{0}.
  \end{equation} 
  We shall prove this inclusion by induction on $m$. The case $m = 1$ follows
  directly from \lemref{lem:Log-fil-4}(3) and \cite[Thm.~1.2.1]{JSZ}. We now let
  $m \ge 2$ and $x \in \Fil_{-\un{n}}W_m\Omega^q_U \bigcap W_m\Omega^q_{X,\log}$.
  \lemref{lem:Log-fil-Wcom} then implies that
  $$R(x) \in \Fil_{({-\un{n})}/p}W_{m-1}\Omega^q_U \bigcap W_{m-1}\Omega^q_{X,\log} \text{ and }
  F(x) \in \Fil_{-\un{n}}W_{m-1}\Omega^q_U \bigcap W_{m-1}\Omega^q_{X,\log}.$$
  Since we also have $R(x) = F(x)$, it follows that
  $R(x) \in \Fil_{-\un{n}}W_{m-1}\Omega^q_U \bigcap W_{m-1}\Omega^q_{X,\log}$.
  In particular, the induction hypothesis implies that
  $R(x) = F(x) \in  W_{m-1}\Omega^q_{X|D_{\un{n}},\log}$.

We now look at the commutative diagram (cf. \cite[Proof of Thm.~2.3.1]{JSZ})
  \begin{equation}\label{eqn:Complex-4-3}
    \xymatrix@C.8pc{
      & & W_m\Omega^q_{X|D_{\un{n}},\log} \ar[r]^-{R} \ar@{^{(}->}[d] & 
      W_{m-1}\Omega^q_{X|D_{\un{n}},\log}  \ar[r] \ar@{^{(}->}[d] & 0 \\
      0 \ar[r] & \Omega^q_{X, \log} \ar[r]^-{\un{p}^{m-1}} &
      W_m\Omega^q_{X,\log} \ar[r]^-{R} &
      W_{m-1}\Omega^q_{X,\log} \ar[r] & 0,}
    \end{equation}
  whose bottom row is exact (cf. \cite[Lem.~3]{CTSS}). Note that the map $R$ on the
  top is surjective by definition of the sheaves $W_m\Omega^q_{X|D_{\un{n}},\log}$.
  Using this diagram, we can
  find $y \in W_m\Omega^q_{X|D_{\un{n}},\log}$ and $z \in \Omega^q_{X, \log}$ such that
  $\un{p}^{m-1}(z) = x-y \in \Fil_{-\un{n}}W_m\Omega^q_U$. We conclude from
  \lemref{lem:Complete-7} that $z \in \Fil_{-\un{n}'}\Omega^q_U \bigcap
  \Omega^q_{X,\log}$, where $\un{n}' = -(-\un{n}/{p^{m-1}})$.
  In particular, $z \in \Omega^q_{X|D_{n'}, \log}$ by induction.
  It follows that $x-y = \un{p}^{m-1}(z) \in W_m\Omega^q_{X|D_{p^{m-1}\un{n}',\log}}
  \subset W_m\Omega^q_{X|D_{\un{n}},\log}$, and hence $x \in W_m\Omega^q_{X|D_{\un{n}},\log}$.
  This proves (1).

To prove (2), we only need to show that $1-\ov{F}$ is surjective.
By \lemref{lem:VR-3}, \remref{remk:VR-3-0} and the fact that
  $$\ov{F}(w \dlog([x_1]_m) \wedge \cdots \wedge \dlog([x_q]_m)) =
  \ov{F}(w)\dlog([x_1]_m) \wedge \cdots \wedge \dlog([x_q]_m)),$$
  it is sufficient to find pre-images of elements of the form $V^i([x]_{m-i})$
  under the map
  $\Fil_{-\un{1}}W_m(K)  \xrightarrow{1 - \ov{F}} \Fil_{-\un{1}}W_m(K)$.
  We shall show that this map is in fact surjective.
  Using the commutative diagram of exact sequences
  \begin{equation}\label{eqn:Complex-4-1}
    \xymatrix@C1.6pc{
      0 \ar[r] & \Fil_{- \un{1}}W_1(K) \ar[r]^-{V^{m-1}} \ar[d]_-{1 -\ov{F}} &
 \Fil_{- \un{1}}W_m(K) \ar[r]^-{R} \ar[d]^-{1 -\ov{F}} &      
 \Fil_{- \un{1}}W_{m-1}(K) \ar[r] \ar[d]^-{1 -\ov{F}} & 0 \\
 0 \ar[r] & \Fil_{- \un{1}}W_1(K) \ar[r]^-{V^{m-1}} &
 \Fil_{- \un{1}}W_m(K) \ar[r]^-{R} & \Fil_{- \un{1}}W_{m-1}(K) \ar[r] & 0}
  \end{equation}
and induction on $m$, it suffices to show that
  $\Fil_{- \un{1}}W_1(K) \xrightarrow{1 - \ov{F}} \Fil_{-\un{1}}W_1(K)$ is surjective
  when the ring $A$ in Lemma~\ref{lem:VR-3} is strict henselian. But this is
  elementary (cf. \cite[Claim~1.2.2]{JSZ}). 

  To prove the surjectivity of $1-C$ in (3), it suffices to show, using
  \lemref{lem:Complete-4}(2) and the commutative diagram  
  \begin{equation}\label{eqn:Complex-4-4}
  \xymatrix@C.8pc{
    0 \ar[r] & Z_1\Fil_{-\un{1}}W_m\Omega^q_U \ar[r] \ar[d]_-{1-C} &
    \Fil_{{-\un{1}}}W_{m}\Omega^q_U \ar[r] \ar[d]^-{1-\ov F} &
    \frac{\Fil_{{-\un{1}}}W_{m}\Omega^q_U}{Z_1\Fil_{-\un{1}}W_m\Omega^q_U
      \ar[r]} \ar@{=}[d] & 0 \\
    0 \ar[r] & \Fil_{{-\un{1}}}W_{m}\Omega^q_U \ar[r]^-{-\ov F} &
    \frac{\Fil_{-\un{1}}W_{m}\Omega^q_U}{dV^{m-1}(\Fil_{-\un{1}}\Omega^{q-1}_U)} \ar[r] &
    \frac{\Fil_{-\un{1}}W_{m}\Omega^q_U}{\ov F(\Fil_{{-\un{1}}}W_{m}\Omega^q_U )}
    \ar[r] & 0,}
   \end{equation}
  that $1 - \ov{F}$ is surjective. The latter claim follows from (2). The
  remaining part of (3) is easily deduced from (1) and the above diagram. 

The proof of the surjectivity of $1-C$ (resp. $1-\ov{F}$) in (4) (resp. (5))
is identical to that in (3) (resp. (2)). Also, one checks using an analogue of
~\eqref{eqn:Complex-4-4} for $\Fil_0W_m\Omega^q_U$
that $\Ker(1-C) = \Ker(1-\ov{F})$. We thus have to only show
that $\Ker(1-C) = j_* W_m\Omega^q_{U, \log}$ to finish the proofs of (4) and (5).

To that end, we look at the commutative diagram of exact sequences
\begin{equation}\label{eqn:Complex-4-5}
  \xymatrix@C.8pc{
    0 \ar[r] & \Ker(1-C) \ar[r] \ar[d] & Z_1\Fil_0 W_m\Omega^q_U \ar[r]^-{1-C}
    \ar@{^{(}->}[d] & \Fil_0 W_m\Omega^q_U \ar[r] \ar@{^{(}->}[d] & 0 \\
    0 \ar[r] & j_* W_m\Omega^q_{U, \log} \ar[r] & j_* Z_1W_m\Omega^q_U \ar[r]^-{1-C} &
    j_*W_m\Omega^q_U. &}
  \end{equation}

The middle and the right vertical arrows in ~\eqref{eqn:Complex-4-5}
are the canonical inclusion maps.
It follows that $\Ker(1-C) \inj  j_* W_m\Omega^q_{U, \log}$.
To prove this inclusion is a bijection, we note that the map
$\dlog \colon j_* \sK^M_{q,U} \to j_* W_m\Omega^q_{U, \log}$ 
has a factorization
\begin{equation}\label{eqn:Complex-4-6}
  \xymatrix@C.8pc{
    j_* \sK^M_{q,U} \ar[r]^-{\dlog} \ar[d] & Z_1W_m\Omega^q_X(\log E) \ar@{^{(}->}[d] \\
    j_* W_m\Omega^q_{U, \log} \ar@{^{(}->}[r] & j_* Z_1W_m\Omega^q_U.}
  \end{equation}
Hence, it suffices to show that $j_* \sK^M_{q,U} \to j_* W_m\Omega^q_{U, \log}$ is
surjective. Since $j_* {\sK^m_{q,U}} \to
j_* W_m\Omega^q_{U, \log}$ is an isomorphism, we are reduced to
showing that the canonical map $j_* \sK^M_{q,U} \to j_* {\sK^m_{q,U}}$ is
surjective in $X_\et$. To prove the latter, it is enough to show that the map
$j_* \sK^M_{q,U} \to j_* {\sK^m_{q,U}}$ is surjective in $X_\zar$.

To prove the surjectivity of $j_* \sK^M_{q,U} \to j_* {\sK^m_{q,U}}$ in
$X_\zar$, we can assume that $X = \Spec(A)$ and
$U = X_\pi$ as in \S~\ref{sec:com-to-noncom}. In this case, the exact sequence
\begin{equation}\label{eqn:Complex-4-7}
0 \to \sK^M_{q, X_\pi} \xrightarrow{p^m}  \sK^M_{q, X_\pi} \to {\sK^m_{q,X_\pi}} \to 0
\end{equation}
reduces the problem to showing that $H^1_\zar(X_\pi,  \sK^M_{q, X_\pi}) = 0$,
which is \lemref{lem:Gersten-0}.

The proof of (6) is identical to that of (3)
using the commutative diagram 
  \begin{equation}
    \xymatrix@C.8pc{ 0 \ar[r] & Z_1\Fil_{-\un{n}}\Omega^q_U
      \ar[r] \ar[d]_-{1-C} & \Fil_{{-\un{n}}}\Omega^q_U \ar[r] \ar[d]^-{1-\ov F} &
      \frac{\Fil_{{-\un{n}}}\Omega^q_U}{Z_1\Fil_{-\un{n}}\Omega^q_U \ar[r]} \ar@{=}[d] & 0 \\
      0 \ar[r] & \Fil_{({-\un{n}})/p}\Omega^q_U \ar[r]^-{-\ov F} &
      \frac{\Fil_{-\un{n}} \Omega^q_U}{d(\Fil_{-\un{n}}\Omega^{q-1}_U)}
      \ar[r] &
      \frac{\Fil_{-\un{n}} \Omega^q_U}{\ov F(\Fil_{({-\un{n}})/p}\Omega^q_U )} \ar[r] & 0}
  \end{equation}
  and \cite[Thm~1.2.1]{JSZ}. The vertical arrow on the left is defined by
  \corref{cor:Complete-6-global}.
\end{proof}

The following is a generalized version of \cite[Thm.~1.1.6]{JSZ} which proved the
result when $r = m-1$.

\begin{cor}\label{cor:Complex-7}
  For $\un{n} \ge 1$ and $1 \le r \le m-1$, one has an exact sequence
  \[
  0 \to W_{r}\Omega^q_{X|D_{\lceil{{\un{n}}/p^{m-r}}\rceil}, \log} \xrightarrow{{\un{p}}^{m-r}}
   W_{m}\Omega^q_{X|D_{{\un{n}}}, \log} \xrightarrow{R^{r}}  W_{m-r}\Omega^q_{X|D_{{\un{n}}}, \log}
   \to 0.
   \]
\end{cor}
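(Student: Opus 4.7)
The plan is to deduce the exact sequence from the classical CTSS exact sequence for the ambient logarithmic Hodge--Witt sheaf, combined with the iterated version of \lemref{lem:Complete-7} (which controls how the filtration $\Fil_{\un{n}}$ behaves under $\un{p}^{m-r}$) and the inclusion \eqref{eqn:Complex-4-2} proved inside \lemref{lem:Complex-6}(1). More precisely, both extreme sheaves sit inside the ambient sheaves $W_{r}\Omega^q_{X,\log}$ and $W_{m-r}\Omega^q_{X,\log}$, so the strategy is to first appeal to the CTSS sequence iterated $m-r$ times and then to track the modulus condition at each step using the filtered calculus developed in Sections~\ref{sec:com-to-noncom} and \ref{sec:Global}. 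Throughout, I shall freely use the identification $W_m\Omega^q_{X|D_{\un{n}}, \log} \cong \sK^m_{q,X|D_{\un{n}}}$ given by the $\dlog$ map and the containment $W_m\Omega^q_{X|D_{\un{n}},\log}\subseteq \Fil_{-\un{n}}W_m\Omega^q_U$ from \lemref{lem:Complex-5}.

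I would break the verification into three steps. First, surjectivity of $R^r$: since $R$ commutes with $\dlog$ and the surjections $\sK^M_{q,X|D_{\un{n}}} \twoheadrightarrow \sK^m_{q,X|D_{\un{n}}}$ are compatible as $m$ varies, $R^r$ maps $W_m\Omega^q_{X|D_{\un{n}},\log}$ onto $W_{m-r}\Omega^q_{X|D_{\un{n}},\log}$. Second, well-definedness and injectivity of $\un{p}^{m-r}$: given $z \in W_r\Omega^q_{X|D_{\lceil\un{n}/p^{m-r}\rceil},\log} \subseteq \Fil_{-\lceil\un{n}/p^{m-r}\rceil}W_r\Omega^q_U$ (\lemref{lem:Complex-5}; note $\un{n}\ge\un{1}$ guarantees that $\lceil\un{n}/p^{m-r}\rceil\ge\un{1}$), the iterated form of \lemref{lem:Complete-7} yields $\un{p}^{m-r}(z) \in \Fil_{-\un{n}}W_m\Omega^q_U$, while $\un{p}^{m-r}(z)$ clearly lies in $W_m\Omega^q_{X,\log}$. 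The inclusion \eqref{eqn:Complex-4-2} then places $\un{p}^{m-r}(z)$ inside $W_m\Omega^q_{X|D_{\un{n}},\log}$, and injectivity follows from the CTSS exact sequence for the ambient $W_\star\Omega^q_{X,\log}$.

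The heart of the argument is exactness at the middle. Take $w \in W_m\Omega^q_{X|D_{\un{n}},\log}$ with $R^r(w) = 0$. Viewing $w$ inside $W_m\Omega^q_{X,\log}$ and iterating \cite[Lem.~3]{CTSS} (i.e.\ the CTSS sequence), we find a unique $z \in W_r\Omega^q_{X,\log}$ with $w = \un{p}^{m-r}(z)$. On the other hand, \lemref{lem:Complex-5} gives $w \in \Fil_{-\un{n}}W_m\Omega^q_U$, so the iterated \lemref{lem:Complete-7} forces $z \in \Fil_{-\lceil\un{n}/p^{m-r}\rceil}W_r\Omega^q_U$. Combining $z \in W_r\Omega^q_{X,\log} \cap \Fil_{-\lceil\un{n}/p^{m-r}\rceil}W_r\Omega^q_U$ with \eqref{eqn:Complex-4-2} yields $z \in W_r\Omega^q_{X|D_{\lceil\un{n}/p^{m-r}\rceil}, \log}$, as desired.

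The only technical point that might require a bit of care is the iterated form of \lemref{lem:Complete-7}: one needs the identity $\lfloor -n_i/p\rfloor = -\lceil n_i/p\rceil$ (componentwise) to ensure that after $m-r$ applications of the lemma the appropriate filtration level is exactly $-\lceil\un{n}/p^{m-r}\rceil$ rather than some slightly smaller value. Once this bookkeeping is in place, everything else is a direct invocation of previously established results, so I do not anticipate any genuine obstacle beyond the careful combinatorial check that the modulus $\lceil\un{n}/p^{m-r}\rceil$ is sharp.
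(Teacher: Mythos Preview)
Your proposal is correct and follows essentially the same route as the paper: both deduce the sequence from the classical CTSS exact sequence for $W_\star\Omega^q_{X,\log}$ together with the identity $W_m\Omega^q_{X|D_{\un{n}},\log} = \Fil_{-\un{n}}W_m\Omega^q_U \cap W_m\Omega^q_{X,\log}$ from \eqref{eqn:Complex-4-2}. Your explicit invocation of the iterated \lemref{lem:Complete-7} and the floor/ceiling bookkeeping $\lfloor -n_i/p\rfloor = -\lceil n_i/p\rceil$ simply makes precise the step the paper leaves implicit when it asserts that the kernel carries the modulus $\lceil\un{n}/p^{m-r}\rceil$.
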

\begin{proof}
  The surjectivity of $R^r$ follows directly from the definition of
  $W_{m}\Omega^q_{X|D_{{\un{n}}}, \log}$. The remaining part of the corollary follows
  from its classical non-modulus version (cf. \cite[Lem.~3]{CTSS}) and the
  identity $\Fil_{-\un{n}}W_m\Omega^q_U \bigcap W_m\Omega^q_{X,\log} =
  W_m\Omega^q_{X|D_{\un{n}},\log}$ that we showed in the proof of \lemref{lem:Complex-6}
  (cf. ~\eqref{eqn:Complex-4-2}).
\end{proof}

Let $X_\log$ be the log scheme associated to the snc-pair $(X,E)$ and let
$W_m\Omega^q_{X_\log, \log}$ denote logarithmic de Rham-Witt sheaf on
$X_\log$ which is defined analogous to the case of ordinary schemes explained
above. One knows that $W_m\Omega^q_{X_\log, \log}$ is the Frobenius fixed point
of $W_m\Omega^q_{X_\log}$ (cf. \cite[\S~2]{Lorenzon}).
An application of \lemref{lem:Complex-6}(5) is the following.

\begin{cor}\label{cor:Log-log-HW}
  There is a canonical isomorphism
 $W_m\Omega^q_{X_\log, \log} \xrightarrow{\cong} j_* W_m\Omega^q_{U, \log}$.
  \end{cor}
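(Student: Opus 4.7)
The plan is to combine the identification of the de Rham--Witt complex of the log scheme $X_\log$ with the log-differential de Rham--Witt complex (Theorem~\ref{thm:Log-DRW}) with the explicit description of $j_*W_m\Omega^q_{U,\log}$ that already appears in Lemma~\ref{lem:Complex-6}(5). More precisely, I would argue that $W_m\Omega^q_{X_\log,\log}$, when viewed inside $W_m\Omega^q_{X_\log}$, coincides with the subsheaf $j_*W_m\Omega^q_{U,\log}$ of $\Fil_{\un 0}W_m\Omega^q_U$ cut out by the vanishing of $1-\overline F$.

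First, by Theorem~\ref{thm:Log-DRW} the canonical map $\theta_X$ induces a strict isomorphism of pro-Witt complexes
\[
W_\star\Omega^\bullet_{X_\log}\;\xrightarrow{\cong}\;W_\star\Omega^\bullet_X(\log E)\;=\;\Fil_{\un 0}W_\star\Omega^\bullet_U,
\]
compatible with $F,V,R,d$ (and hence with $C$ and $\overline F$). In particular, the operator $1-\overline F$ on $W_m\Omega^q_{X_\log}$ corresponds under $\theta_X$ to the operator $1-\overline F$ on $\Fil_{\un 0}W_m\Omega^q_U$ that appears in the complex $W_m\sG^{q,\bullet}$ of \eqref{eqn:Complex-2}.

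Next, the log logarithmic Hodge--Witt sheaf $W_m\Omega^q_{X_\log,\log}$ is by construction the Frobenius-fixed part of $W_m\Omega^q_{X_\log}$, and the standard description (see, e.g., \cite[\S2]{Lorenzon}) realises it as the kernel of
\[
1-\overline F\;\colon\;W_m\Omega^q_{X_\log}\;\longrightarrow\;\frac{W_m\Omega^q_{X_\log}}{dV^{m-1}W_m\Omega^{q-1}_{X_\log}}.
\]
Transporting this along $\theta_X$ and using that $\theta_X$ intertwines $d$ and $V$, we obtain
\[
W_m\Omega^q_{X_\log,\log}\;\cong\;\Ker\!\Bigl(1-\overline F\colon \Fil_{\un 0}W_m\Omega^q_U\to\tfrac{\Fil_{\un 0}W_m\Omega^q_U}{dV^{m-1}(\Fil_{\un 0}\Omega^{q-1}_U)}\Bigr).
\]
By the exact sequence in Lemma~\ref{lem:Complex-6}(5), this kernel is precisely $j_*W_m\Omega^q_{U,\log}$, yielding the claimed isomorphism.

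The only non-routine point is the matching of the two definitions of ``log-log sheaf'': one must verify that the étale-local description of $W_m\Omega^q_{X_\log,\log}$ as the subsheaf generated by symbols $\dlog[a_1]_m\wedge\cdots\wedge\dlog[a_q]_m$ with $a_i\in\sM_X$ agrees, under $\theta_X$, with the kernel description above. This is however built into the universal property of $W_\star\Omega^\bullet_{X_\log}$ used in the proof of Theorem~\ref{thm:Log-DRW}: the map $\dlog\circ[\cdot]_m\colon \sM_X\to W_m\Omega^1_X(\log E)$ factors through $\Fil_{\un 0}W_m\Omega^1_U$ and its image lies in the kernel of $1-\overline F$, so the two subsheaves of $\Fil_{\un 0}W_m\Omega^q_U$ coincide after a short check on generators (or by invoking the classical Frobenius-fixed-point interpretation of Hyodo--Kato and Lorenzon directly). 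No separate computation beyond what has already been established in \S\ref{sec:de-log} and Lemma~\ref{lem:Complex-6}(5) is required.
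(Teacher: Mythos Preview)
Your proof is correct and follows exactly the route the paper intends: combine \thmref{thm:Log-DRW} (identifying $W_m\Omega^q_{X_\log}$ with $\Fil_{\un 0}W_m\Omega^q_U$) with the Frobenius-fixed-point description of $W_m\Omega^q_{X_\log,\log}$ (which the paper cites from \cite[\S~2]{Lorenzon} just before the corollary), and then invoke \lemref{lem:Complex-6}(5). Your final paragraph on reconciling the $\dlog$-image and Frobenius-fixed-point descriptions is unnecessary, since the paper takes the latter as the definition via the Lorenzon citation.
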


\begin{lem}\label{lem:Complex-4}
  One has $W_m\sF^{0, \bullet}_{\un{0}} \cong {\Z}/{p^m}$ and
    $W_m\sG^{d, \bullet}_{\un{0}} \cong W_m\Omega^d_{X,\log}$. 
  \end{lem}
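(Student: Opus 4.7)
The plan is to derive each claim from \lemref{lem:Complex-6} after identifying the resulting $H^0$-sheaf with the asserted target.

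For $W_m\sF^{0,\bullet}_{\un{0}} \cong \Z/p^m$, I would apply part (4) of \lemref{lem:Complex-6} with $q=0$, which yields a short exact sequence
\[
0 \to j_*W_m\Omega^0_{U,\log} \to Z_1\Fil_{\un 0}W_m\sO_U \xrightarrow{1-C} \Fil_{\un 0}W_m\sO_U \to 0,
\]
exhibiting $W_m\sF^{0,\bullet}_{\un 0}$ as quasi-isomorphic to $j_*(\Z/p^m)_U$ placed in degree zero. The remaining step is to identify $j_*(\Z/p^m)_U$ with the constant étale sheaf $(\Z/p^m)_X$, which follows because $X$ is regular and connected and $U \subset X$ is dense: for any étale $V \to X$, regularity forces the connected components of $V$ to be irreducible, and density of $U$ ensures each component meets $V \times_X U$ in a unique dense open subset, giving the required bijection of connected components.

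For $W_m\sG^{d,\bullet}_{\un 0} \cong W_m\Omega^d_{X,\log}$, I would apply part (2) of \lemref{lem:Complex-6} with $q = d$ and $\un n = \un 1$, noting that in the $\sG$-complex at index $\un 0$ the filtration level is $-\un 0 - \un 1 = -\un 1$. This produces a quasi-isomorphism $W_m\sG^{d,\bullet}_{\un 0} \simeq W_m\Omega^d_{X|D_{\un 1},\log}[0]$, reducing the problem to the sheaf-level equality
\[
W_m\Omega^d_{X|D_{\un 1}, \log} = W_m\Omega^d_{X, \log}
\]
as subsheaves of $W_m\Omega^d_X$; the containment of the former in $W_m\Omega^d_X$ comes from \lemref{lem:Complex-5} combined with \thmref{thm:Global-version}(14).

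To establish this last equality, I would pass to Milnor K-sheaves via the $\dlog$ isomorphisms $\sK^m_{d, X|D_{\un 1}} \cong W_m\Omega^d_{X|D_{\un 1}, \log}$ from \cite[Thm.~1.1.5]{JSZ} and $\sK^m_{d, X} \cong W_m\Omega^d_{X, \log}$ from \cite[Thm.~5.1]{Morrow-ENS}, reducing to the equality $\sK^m_{d, X|D_{\un 1}} = \sK^m_{d, X}$. This would follow from a short exact sequence $0 \to \sK^m_{d, X|D_{\un 1}} \to \sK^m_{d, X} \to \sK^m_{d, E} \to 0$ together with the vanishing $\sK^m_{d, E} = 0$. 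The main obstacle is this last vanishing: although $W_m\Omega^d_{E_i}$ on each smooth component $E_i$ can be nonzero for $m \ge 2$ due to contributions from $dV^{m-1}\Omega^{d-1}_{E_i}$, the logarithmic subsheaf $W_m\Omega^d_{E_i, \log}$ is étale-locally generated by $d$-fold $\dlog$-symbols, and such symbols can be shown to vanish by induction on $m$ (using $F(\dlog[u]_m) = \dlog[u]_{m-1}$ and the $m=1$ case $\Omega^d_{E_i}=0$, since $\rank\Omega^1_{E_i} = d-1$ by the conormal exact sequence); combining these vanishings via the simple normal crossing structure of $E$ through a Mayer–Vietoris-type argument will then yield $\sK^m_{d, E} = 0$.
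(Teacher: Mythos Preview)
Your reduction for both parts matches the paper's strategy. For the first isomorphism, your route via \lemref{lem:Complex-6}(4) plus $j_*(\Z/p^m)_U \cong (\Z/p^m)_X$ is correct, though the paper's ``clear from the definition'' is more direct: since $q=0$, one has $\Fil_{\un 0}W_m\sO_U = W_m\sO_X$ outright, so $W_m\sF^{0,\bullet}_{\un 0}$ is literally the Artin--Schreier--Witt complex on $X$, whose $H^0$ is $\Z/p^m$ by~\eqref{eqn:Milnor-0}. No passage through $U$ is needed.

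For the second isomorphism, your reduction via \lemref{lem:Complex-6}(2) to the equality $W_m\Omega^d_{X|D_{\un 1},\log} = W_m\Omega^d_{X,\log}$ is exactly what the paper does; the paper then cites \cite[Prop.~2.2.5]{Kerz-Zhao}. Your proposed direct argument, however, has a gap. The short exact sequence $0 \to \sK^m_{d,X|D_{\un 1}} \to \sK^m_{d,X} \to \sK^m_{d,E} \to 0$ is not established anywhere in the paper, and for the non-regular scheme $E$ both the definition of $\sK^m_{d,E}$ and the exactness require justification; the Mayer--Vietoris step you allude to is likewise not available off the shelf. Your vanishing $W_m\Omega^d_{E_i,\log}=0$ on each component is correct (use the $\un p$-sequence of \cite[Lem.~3]{CTSS} with the $m=1$ case $\Omega^d_{E_i}=0$; note it is $R$ rather than $F$ that drives the induction, though they agree on log classes), but you have not bridged the gap from componentwise vanishing to the asserted exact sequence.

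There is a much shorter route already available from results in the paper. The nontrivial inclusion is $W_m\Omega^d_{X,\log} \subset W_m\Omega^d_{X|D_{\un 1},\log}$. By \thmref{thm:Global-version}(14) one has $\Fil_{-\un 1}W_m\Omega^d_U = W_m\Omega^d_X$, hence $W_m\Omega^d_{X,\log} = \Fil_{-\un 1}W_m\Omega^d_U \cap W_m\Omega^d_{X,\log}$. Now apply the inclusion~\eqref{eqn:Complex-4-2} established inside the proof of \lemref{lem:Complex-6} with $\un n = \un 1$ and $q=d$ to conclude $W_m\Omega^d_{X,\log} \subset W_m\Omega^d_{X|D_{\un 1},\log}$. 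This avoids Milnor $K$-theory on $E$ entirely and replaces your unproven exact sequence by facts already in hand.
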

\begin{proof}
The first isomorphism is clear from the definition. For the second
isomorphism, it is enough to show (by \lemref{lem:Complex-6}(2)) that the canonical
map $W_{m}\Omega^d_{X|D_{\un{1}}, \log} \to W_{m}\Omega^d_{X, \log}$ is an isomorphism.
This is proven in \cite[Prop~2.2.5]{Kerz-Zhao} in a special case but the
proof works in our situation mutatis mutandis.
 \end{proof}

We shall need the following fact.

\begin{lem}\label{lem:Complex-8}
  If $n \ge 1$ is an integer and
  $\sF^\bullet$ is a bounded complex of sheaves of
  ${\Z}/n$-modules in $\sD(X_\et)$ , then $\H^i_\et(X, \sF^\bullet)$ is also a
  ${\Z}/n$-module for every $i \in \Z$. In particular,
  $\Hom_{{\Z}/n}(\H^i_\et(X, \sF^\bullet), {\Z}/n) \xrightarrow{\cong}
  \Hom(\H^i_\et(X, \sF^\bullet), {\Q}/{\Z})$.
\end{lem}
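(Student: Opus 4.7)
The plan is to reduce both assertions to the elementary fact that multiplication by $n$ acts as zero on any sheaf of $\Z/n$-modules, and hence on the hypercohomology of any such complex.

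First I would verify that $\H^i_\et(X, \sF^\bullet)$ carries a natural $\Z/n$-module structure. The functor $\H^i_\et(X,-)$ is additive, so multiplication by $n$ on the complex $\sF^\bullet$, viewed as a chain map $n \cdot \id \colon \sF^\bullet \to \sF^\bullet$, induces multiplication by $n$ on $\H^i_\et(X, \sF^\bullet)$. Since each $\sF^j$ is a sheaf of $\Z/n$-modules, the map $n \cdot \id$ is the zero map of complexes (termwise zero), and therefore induces the zero map on hypercohomology. Consequently $n \cdot \H^i_\et(X, \sF^\bullet) = 0$, which exhibits $\H^i_\et(X, \sF^\bullet)$ as a $\Z/n$-module.

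For the second assertion, I would let $A = \H^i_\et(X, \sF^\bullet)$ and use the short exact sequence $0 \to \Z \to \Q \to \Q/\Z \to 0$ together with the inclusion $\frac{1}{n}\Z/\Z \inj \Q/\Z$, whose image is canonically identified with $\Z/n$. Since $nA = 0$, any group homomorphism $\varphi \colon A \to \Q/\Z$ lands inside the $n$-torsion subgroup $(\Q/\Z)[n] = \frac{1}{n}\Z/\Z \cong \Z/n$, so $\varphi$ factors uniquely through the canonical inclusion $\Z/n \inj \Q/\Z$. This yields a natural bijection $\Hom(A, \Q/\Z) \cong \Hom(A, \Z/n)$, and since $A$ is a $\Z/n$-module, every $\Z$-linear map $A \to \Z/n$ is automatically $\Z/n$-linear, giving $\Hom(A, \Z/n) = \Hom_{\Z/n}(A, \Z/n)$.

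This is essentially a formal manipulation with no real obstacle; the only point requiring a moment of care is the verification that the hypercohomology construction is compatible with the $\Z/n$-action (rather than merely with the $\Z$-action), but this follows automatically from additivity of $\H^i_\et(X,-)$ applied to the endomorphism $n \cdot \id$ of the complex.
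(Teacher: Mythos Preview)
Your proof is correct and follows essentially the same approach as the paper. The paper phrases the first part via an injective resolution $\epsilon \colon \sF^\bullet \to \sI^\bullet$ and observes that the square with vertical arrows $n$ and $0$ commutes (since $n \cdot \id_{\sF^\bullet} = 0$), which is just the injective-resolution formulation of your additivity argument; the paper omits the second part entirely, which you handle cleanly via $(\Q/\Z)[n] \cong \Z/n$.
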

\begin{proof}
  This is elementary and can be easily proven by considering an injective
  resolution $\epsilon \colon \sF^\bullet \to \sI^\bullet$ and noting that
  the diagram
  \[
  \xymatrix@C1pc{
  \sF^\bullet \ar[r]^-{\epsilon} \ar[d]_-{n} & \sI^\bullet \ar[d]^-{0} \\
  \sF^\bullet \ar[r]^-{\epsilon} & \sI^\bullet}
  \]
  commutes. We omit the details.
  \end{proof}

\subsection{Pairings of complexes}\label{sec:Pairing}
We now define the pairings between the complexes defined in \S~\ref{sec:Complexes}.
 We consider the first two pairings
\begin{equation}\label{eqn:Pair-0}
  \< \ , \ \>^{0,0}_0 \colon Z_1\Fil_{\un{n}}W_m\Omega^q_U \times
  \Fil_{-\un{n}-1}W_m\Omega^{d-q}_U
  \to \Fil_{-1}W_m\Omega^d_U \subset W_m\Omega^d_X
\end{equation}
and
\begin{equation}\label{eqn:Pair-1}
  \< \ , \ \>^{1,0}_1 \colon \Fil_{\un{n}}W_m\Omega^q_U \times
  \Fil_{-\un{n}-1}W_m\Omega^{d-q}_U
  \to \Fil_{-1}W_m\Omega^d_U \subset W_m\Omega^d_X
\end{equation}
by letting $\<w_1,w_2\>^{0,0}_0 := w_1 \wedge w_2$ and $\<w_1,w_2\>^{1,0}_1 :=
w_1 \wedge w_2$.

We define the third pairing
\begin{equation}\label{eqn:Pair-2}
  \< \ , \ \>^{0,1}_1 \colon Z_1\Fil_{\un{n}}W_m\Omega^q_U \times
  \frac{\Fil_{-\un{n}-1}W_m\Omega^{d-q}_U}{dV^{m-1}(\Fil_{-\un{n}-1}\Omega^{d-q-1}_U)}
  \to \Fil_{-1}W_m\Omega^d_U \subset W_m\Omega^d_X
\end{equation}
by letting $\<w_1,w_2\>^{0,1}_1 := -C(w_1 \wedge w_2)$.

Note that the first two pairings are defined by \lemref{lem:Log-fil-4}, and to
show that the third pairing is also defined, we only need to prove that
$C(w_1 \wedge w_2) = 0$ if $w_1 \in Z_1\Fil_{\un{n}}W_m\Omega^q_U$ and
$w_2 \in dV^{m-1}(\Fil_{-\un{n}-1}\Omega^{d-q-1}_U)$.
To prove it, we can write $w_1 = F(w'_1)$ (cf. \thmref{thm:Global-version}(3))
and $w_2 = dV^{m-1}(w'_2)$, where
$w'_1 \in \Fil_{\un{n}}W_{m+1}\Omega^q_U$ and $w'_2 \in \Fil_{-\un{n}-1}\Omega^{d-q-1}_U$.
Then
\[
C(w_1 \wedge w_2) = C(F(w'_1) \wedge dV^{m-1}(w'_2)) = CF(w'_1 \wedge dV^m(w'_2)) =
R(w'_1 \wedge dV^m(w'_2))
\]
\[
\hspace*{4cm} = R(w'_1)\wedge dRV^m(w'_2) = 0,
\]
where the last equality holds because $RV^m =0$.

\begin{lem}\label{lem:Pair-3}
  For $\un{n} \ge \un{0}$, the above pairings of {\'e}tale sheaves induce a pairing of
  the complexes
  \begin{equation}\label{eqn:Pair-3-0}
    W_m\sF^{q, \bullet}_{\un{n}} \times W_m\sG^{d-q, \bullet}_{\un{n}}
    \xrightarrow{{\< \ , \ \>}}
    W_m\sH^{\bullet}
  \end{equation}
  such that one has a commutative diagram of pairings
  \begin{equation}\label{eqn:Pair-3-1}
  \xymatrix@C1.5pc{  
    W_{m}\sF^{q, \bullet}_{\un{n}} \times W_{m}\sG^{d-q, \bullet}_{\un{n}}  \ar[r]^-{\< \ , \ \>}
    \ar@<5ex>[d] &
  W_m\sH^{\bullet} \ar@{=}[d] \\
  W_{m} \sF^{q, \bullet} \times W_{m}\sG^{d-q, \bullet}  \ar[r]^-{\< \ , \ \>}
  \ar@<7ex>[u] & W_{m}\sH^{\bullet},}
  \end{equation}
  in which the vertical arrows are the canonical maps.
\end{lem}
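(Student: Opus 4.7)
The plan is to show that the three sheaf pairings assemble into a morphism of complexes $W_m\sF^{q,\bullet}_{\un n} \otimes W_m\sG^{d-q,\bullet}_{\un n} \to W_m\sH^\bullet$. Since all three complexes sit in degrees $0$ and $1$, this reduces to giving the bilinear maps $\mu_0 = \langle\,,\,\rangle^{0,0}_0$ in degree $0$ and $\mu_1 = \langle\,,\,\rangle^{1,0}_1 \oplus \langle\,,\,\rangle^{0,1}_1$ in degree $1$ subject to one Leibniz relation between them (the degree-two compatibility is vacuous as $W_m\sH^j = 0$ for $j \ge 2$). First, I would verify that all three targets land in $W_m\Omega^d_X$: by the multiplicativity built into the filtered Witt-complex structure (\lemref{lem:Log-fil-Wcom}), one has $\Fil_{\un n}W_m\Omega^q_U \cdot \Fil_{-\un n-1}W_m\Omega^{d-q}_U \subseteq \Fil_{-\un 1}W_m\Omega^d_U$, and by \thmref{thm:Global-version}(14) the latter equals $W_m\Omega^d_X$.

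Next I would check that $\langle a, b\rangle^{0,1}_1 = -C(a \wedge b)$ descends modulo $dV^{m-1}(\Fil_{-\un n-1}\Omega^{d-q-1}_U)$. For $a \in Z_1\Fil_{\un n}W_m\Omega^q_U$, use \thmref{thm:Global-version}(3) to write $a = F\alpha$ with $\alpha \in \Fil_{\un n}W_{m+1}\Omega^q_U$; then for $\eta \in \Fil_{-\un n-1}\Omega^{d-q-1}_U$, the multiplicativity of $F$ and the relation $CF = R$ give
\[
C(a \wedge dV^{m-1}\eta) = CF(\alpha \wedge dV^m\eta) = R(\alpha \wedge dV^m\eta) = R\alpha \wedge d(RV^m\eta) = 0,
\]
since $RV^m = 0$.

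The main step is the Leibniz relation at degree one:
\[
(1-C)(a \wedge b) \;=\; \bigl((1-C)a\bigr) \wedge b \;-\; C\bigl(a \wedge (1-\bar F)b\bigr)
\]
for $a \in Z_1\Fil_{\un n}W_m\Omega^q_U$ and $b \in \Fil_{-\un n-1}W_m\Omega^{d-q}_U$. A direct expansion collapses it to the single identity $C(a \wedge \bar F b) = Ca \wedge b$. To prove this, I would lift $b$ via \thmref{thm:Global-version}(2) to some $b' \in \Fil_{-\un n-1}W_{m+1}\Omega^{d-q}_U$ with $Rb' = b$; by the defining property of $\bar F$ (\thmref{thm:Global-version}(4) together with diagram~\eqref{eqn:Complete-4-0}), one has $\bar F b \equiv Fb'$ modulo $dV^{m-1}(\Fil_{-\un n-1}\Omega^{d-q-1}_U)$. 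Writing $a = F\alpha$ as above and applying multiplicativity of $F$ together with $CF = R$:
\[
C(a \wedge \bar F b) \;=\; CF(\alpha \wedge b') \;=\; R(\alpha \wedge b') \;=\; R\alpha \wedge Rb' \;=\; Ca \wedge b,
\]
the indeterminacy in the choice of $b'$ being absorbed by the previous paragraph.

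The commutativity of \eqref{eqn:Pair-3-1} is then immediate: both rows use the same formulas $(a,b) \mapsto a \wedge b$ and $(a,b) \mapsto -C(a\wedge b)$, and the vertical arrows are the canonical maps arising from $W_m\Omega^q_X \subseteq \Fil_{\un n}W_m\Omega^q_U$ (valid since $\un n \ge \un 0$) on the first factor going upward and from $\Fil_{-\un n-1}W_m\Omega^{d-q}_U \subseteq W_m\Omega^{d-q}_X$ (valid by \lemref{lem:Log-fil-4}(5)) on the second factor going downward. The only substantive point is the identity $C(a \wedge \bar F b) = Ca \wedge b$, and this is resolved cleanly by the lifting statements in \thmref{thm:Global-version} combined with the multiplicativity of $F$ and $R$ on the de Rham--Witt complex.
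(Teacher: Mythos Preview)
Your proof is correct and follows essentially the same approach as the paper. The paper's proof reduces to the same Leibniz identity and dispatches it by citing the relation $C\bar F = \id$ (from \cite[Lem.~7.1]{GK-Duality}) together with \cite[Lem.~1.1(c)]{Milne-Duality}; you have simply unpacked these citations into direct computations using the lifting statements in \thmref{thm:Global-version}, and the well-definedness checks you include are handled by the paper in the paragraph immediately preceding the lemma.
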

\begin{proof}
To prove that ~\eqref{eqn:Pair-3-0} is defined, it is enough to show
  (cf. \cite[\S~1, p.~175]{Milne-Duality}) that
  \[
  (1-C)(w_1 \wedge w_2) = - C(w_1 \wedge (1-\ov{F})(w_2)) + (1-C)(w_1) \wedge w_2
  \]
  for all $w_1 \in Z_1\Fil_{\un{n}}W_m\Omega^q_U$ and
  $w_2 \in \Fil_{-\un{n}-1}W_m\Omega^{d-q}_U$. But this can be easily checked
  using \cite[Lem.~1.1(c)]{Milne-Duality} and the identity $C\ov{F} = \id$
  (cf. \cite[Lem.~7.1]{GK-Duality}). The commutativity of ~\eqref{eqn:Pair-3-1} is
  clear.
\end{proof}

Using \lemref{lem:Complex-6}(1), we get the following.

\begin{cor}\label{cor:Pair-4}
  \lemref{lem:Pair-3} remains valid if one replaces $W_m\sG^{q, \bullet}_{\un{n}}$
  by $W_m\Omega^q_{X|D_{\un{n}+1}, \log}$.
  \end{cor}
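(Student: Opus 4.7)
The plan is to exhibit $W_m\Omega^{d-q}_{X|D_{\un{n}+1}, \log}$ as a subcomplex of $W_m\sG^{d-q, \bullet}_{\un{n}}$ in degree zero, so that the required pairing is obtained by precomposing the pairing of \lemref{lem:Pair-3} with this inclusion.

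First, I would invoke \lemref{lem:Complex-6}(1) with $q$ replaced by $d-q$ and $\un{n}$ replaced by $\un{n}+1$ (the hypothesis $\un{n}+1 \ge \un{1}$ is satisfied because $\un{n} \ge \un{0}$). This yields a canonical injection
\[
\iota \colon W_m\Omega^{d-q}_{X|D_{\un{n}+1}, \log} \hookrightarrow \Fil_{-\un{n}-1}W_m\Omega^{d-q}_U
\]
whose image is exactly $\Ker(1-\ov{F})$. Placing the log sheaf in degree zero, $\iota$ then defines a morphism of complexes $\iota^\bullet \colon W_m\Omega^{d-q}_{X|D_{\un{n}+1}, \log}[0] \to W_m\sG^{d-q, \bullet}_{\un{n}}$.

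Next, composing $\iota^\bullet$ with the pairing of \lemref{lem:Pair-3} produces the desired pairing
\[
W_m\sF^{q, \bullet}_{\un{n}} \times W_m\Omega^{d-q}_{X|D_{\un{n}+1}, \log} \to W_m\sH^{\bullet}.
\]
Concretely, both in degree zero and in degree one, the pairing sends $(w_1, w_2)$ to the wedge product $w_1 \wedge w_2$, which lies in $\Fil_{-1}W_m\Omega^d_U \subset W_m\Omega^d_X$ by Lemmas~\ref{lem:Log-fil-4} and~\ref{lem:Complex-5}. Compatibility with the differential $1-C$ of $W_m\sH^{\bullet}$ amounts to the identity $(1-C)(w_1 \wedge w_2) = (1-C)(w_1) \wedge w_2$, which follows from the identity established in the proof of \lemref{lem:Pair-3} together with the vanishing $(1-\ov{F})(w_2) = 0$, built into our choice of $w_2$ via the kernel description above.

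Finally, the commutativity of the analogue of diagram \eqref{eqn:Pair-3-1} follows from the naturality of $\iota^\bullet$ with respect to the canonical map $W_m\sG^{d-q, \bullet}_{\un{n}} \to W_m\sG^{d-q, \bullet}$ (which on degree-zero kernels restricts to the inclusion $W_m\Omega^{d-q}_{X|D_{\un{n}+1}, \log} \hookrightarrow W_m\Omega^{d-q}_{X, \log} \subset W_m\Omega^{d-q}_X$), combined with the commutativity already proven in \lemref{lem:Pair-3}. The corollary is thus a formal consequence of \lemref{lem:Pair-3} together with the kernel identification provided by \lemref{lem:Complex-6}(1); there is no substantive obstacle beyond recording the identification $W_m\Omega^{d-q}_{X|D_{\un{n}+1}, \log} = \Ker(1-\ov{F})$ and unwinding the definitions.
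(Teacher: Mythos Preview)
Your proof is correct and follows the same approach as the paper, which simply records ``Using \lemref{lem:Complex-6}(1), we get the following.'' You have spelled out precisely the intended argument: \lemref{lem:Complex-6}(1) identifies $W_m\Omega^{d-q}_{X|D_{\un{n}+1}, \log}$ with the degree-zero kernel of $W_m\sG^{d-q,\bullet}_{\un{n}}$, and the pairing is then obtained by restriction.
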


Using this corollary and considering the hypercohomology, we get 
the commutative diagram of the cup product pairings of abelian groups
\begin{equation}\label{eqn:Pair-4-1}
  \xymatrix@C1.5pc{  
    \H^i_\et(X, W_{m}\sF^{q, \bullet}_{\un{n}}) \times
    H^j_\et(X, W_{m}\Omega^{d-q}_{X|D_{\un{n}+1}, \log})  \ar[r]^-{\cup}
    \ar@<5ex>[d] &
  H^{i+j}_\et(X, W_m\Omega^d_{X, \log}) \ar@{=}[d] \\
  H^i_\et(X, W_{m}\Omega^q_{X, \log}) \times H^j_\et(X, W_{m}\Omega^{d-q}_{X,\log})
  \ar[r]^-{\cup} \ar@<10ex>[u] & H^{i+j}_\et(X, W_{m}\Omega^d_{X,\log}).}
  \end{equation}
 for all $q, i, j \ge 0$, $m \ge 1$  and $\un{n} \ge \un{0}$, where
 the vertical arrows are the canonical maps.

\vskip .3cm

\begin{remk}\label{remk:Pair-6}
  We note that \lemref{lem:Pair-3} and \corref{cor:Pair-4} are the key new results
  which were missing in the
  duality theorems of \cite{JSZ} and \cite{Zhao}. Once we have these results,
  the remaining steps in the proofs of Theorems~\ref{thm:Main-2} and ~\ref{thm:Main-8}
  are essentially same as in \cite{JSZ} and \cite{Zhao}, as the reader will see below.
  \end{remk}

\subsection{The duality theorem over finite fields}\label{sec:DT}
We now specialize to the situation when $X$ is a smooth, projective and connected
$k$-scheme, where $k$ is a finite field. Other assumptions
of \S~\ref{sec:Complexes} continue to be in place. We fix $\un{n} \ge \un{0}$.
By \cite[Cor.~1.12]{Milne-Zeta}, there is a bijective trace map 
$\Tr \colon H^{d+1}_\et(X, W_{m}\Omega^d_{X,\log}) \xrightarrow{\cong} {\Z}/{p^m}$.
Note that $\dim(X) = \text{ rank }\Omega^1_X=d$.
Composing the pairings in ~\eqref{eqn:Pair-4-1} with this trace map, we get
a pairing
\begin{equation}\label{eqn:Pair-4-2}
 \H^i_\et(X, W_{m}\sF^{q, \bullet}_{\un{n}}) \times
    H^{d+1-i}_\et(X, W_{m}\Omega^{d-q}_{X|D_{\un{n}+1}, \log})  \xrightarrow{\cup}
    {\Z}/{p^m}
    \end{equation}
which is compatible with the pairing for the cohomology of the sheaves
$W_m\Omega^{\bullet}_{X,\log}$ obtained in \cite[Thm.~1.14]{Milne-Zeta}.
Our goal is to show that this is a perfect pairing of finite abelian groups.
We begin with the following.

\begin{lem}\label{lem:Fin-coh}
  The groups $\H^i_\et(X, W_{m}\sF^{q, \bullet}_{\un{n}})$ and
  $H^{i}_\et(X, W_{m}\Omega^{q}_{X|D_{\un{n}}, \log})$ are all finite.
\end{lem}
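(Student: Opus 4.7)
The plan is to deduce both finiteness statements from Serre-type finiteness on a projective scheme over the finite field $k$, combined with inductive reductions in the level $m$. In both cases, the relevant sheaves are coherent $W_m\sO_X$-modules: by \propref{prop:F-fin}(5), $W_m\sO_X$ is a Noetherian sheaf of rings on $X$, and the cohomology $H^i_\zar(X,\sF)=H^i_\et(X,\sF)$ of any coherent $W_m\sO_X$-module on a projective $W_m(k)$-scheme is finitely generated over $W_m(k)$. Since $W_m(k)$ is finite when $k$ is, any such cohomology is finite.

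For $\H^i_\et(X,W_m\sF^{q,\bullet}_{\un{n}})$, I would argue by induction on $m$ using the distinguished triangle
\[
W_1\sF^{q,\bullet}_{X|D_{\un{n}}}\xrightarrow{V^{m-1}} W_m\sF^{q,\bullet}_{X|D_{\un{n}}}\xrightarrow{R} W_{m-1}\sF^{q,\bullet}_{X|D_{\un{n}}/p}\xrightarrow{+}
\]
from \thmref{thm:Global-version}(9), which is available since $D_{\un{n}}+E\ge 0$. For the base case $m=1$, the complex $W_1\sF^{q,\bullet}_{\un{n}}=[Z_1\Fil_{\un{n}}\Omega^q_U\xrightarrow{1-C}\Fil_{\un{n}}\Omega^q_U]$ consists of coherent $\sO_X$-modules: $\Fil_{\un{n}}\Omega^q_U$ is locally free of finite rank by \lemref{lem:Log-fil-4}(3), and $Z_1\Fil_{\un{n}}\Omega^q_U$ is coherent (in fact locally free) by \lemref{lem:Can-filt-5}. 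Serre finiteness on the projective $k$-scheme $X$ then gives finiteness of the hypercohomology.

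For $H^i_\et(X,W_m\Omega^q_{X|D_{\un{n}},\log})$, the case $\un{n}=\un{0}$ reduces to the classical finiteness of $H^i_\et(X,W_m\Omega^q_{X,\log})$ (Milne, \cite[Lem.~1.11]{Milne-Zeta}). For $\un{n}\ge\un{1}$, I would induct on $m$ via \corref{cor:Complex-7} applied with $r=1$, which yields a short exact sequence
\[
0\to W_1\Omega^q_{X|D_{\lceil\un{n}/p^{m-1}\rceil},\log}\xrightarrow{\underline{p}^{m-1}}W_m\Omega^q_{X|D_{\un{n}},\log}\xrightarrow{R}W_{m-1}\Omega^q_{X|D_{\un{n}},\log}\to 0.
\]
The base case $m=1$ follows from \lemref{lem:Complex-6}(6), which provides the exact sequence
\[
0\to\Omega^q_{X|D_{\un{n}},\log}\to Z_1\Fil_{-\un{n}}\Omega^q_U\xrightarrow{1-C}\Fil_{(-\un{n})/p}\Omega^q_U\to 0,
\]
whose outer terms are coherent $\sO_X$-modules; the associated long exact sequence and Serre finiteness conclude the argument. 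The main technical point to verify carefully is the coherence of the subsheaves $Z_1\Fil_D W_m\Omega^q_U$, which follows from the identification $Z_1\Fil_D W_m\Omega^q_U=F(\Fil_D W_{m+1}\Omega^q_U)$ of \thmref{thm:Global-version}(3) as the image of a map of coherent $W_{m+1}\sO_X$-modules; the entire argument reduces to elementary applications of coherent finiteness once this is in place.
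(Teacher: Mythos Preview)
Your proposal is correct. For the second group $H^i_\et(X,W_m\Omega^q_{X|D_{\un{n}},\log})$ your argument is exactly the paper's: base case $m=1$ via \lemref{lem:Complex-6}(6), then induction via \corref{cor:Complex-7}.

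For $\H^i_\et(X,W_m\sF^{q,\bullet}_{\un{n}})$ the paper is more direct than your induction on $m$: it simply observes that both terms $Z_1\Fil_{\un{n}}W_m\Omega^q_U$ and $\Fil_{\un{n}}W_m\Omega^q_U$ of the two-term complex are coherent $W_m\sO_X$-modules for every $m$ (the second by \lemref{lem:Log-fil-4}(2), the first as a $W_m\sO_X$-submodule thereof over a Noetherian sheaf of rings), so their Zariski cohomology is finitely generated over the finite ring $W_m(k)$, and the hypercohomology spectral sequence concludes. Your route through the distinguished triangle of \thmref{thm:Global-version}(9) works but is unnecessary here; you even note the key coherence fact at the end, which by itself suffices without the induction. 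The paper's shortcut buys brevity; your approach would be the natural one if coherence at level $m$ were not immediate.
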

\begin{proof}
  To prove the finiteness of $\H^i_\et(X, W_{m}\sF^{q, \bullet}_{\un{n}})$, it suffices to
  show that the cohomology of $Z_1\Fil_{\un{n}}W_m\Omega^q_{U}$ and
  $\Fil_{\un{n}}W_m\Omega^q_{U}$ are finite. But \lemref{lem:Log-fil-4}(2) says that these
  are sheaves of coherent $W_m\sO_X$-modules. In particular, their Zariski
  (equivalently, {\'e}tale) cohomology are finitely generated $W_m(k)$-modules.
  The desired claim now follows because $W_m(k)$ is finite.
  The finiteness of $H^{i}_\et(X, W_{1}\Omega^{q}_{X|D_{\un{n}}, \log})$ follows by
  considering the long exact cohomology sequence associated to
  the sheaf exact sequence of \lemref{lem:Complex-6}(6)
  (see also \cite[Thm.~1.2.1]{JSZ}). The desired finiteness for $m > 1$
  now follows by induction using \corref{cor:Complex-7}.
\end{proof}

For a ${\Z}/{p^m}$-module $M$, we let $M^\star = \Hom_{{\Z}/{p^m}}(M, {\Z}/{p^m})$.
Recall that a pairing of finite ${\Z}/{p^m}$-modules $M \times N \to {\Z}/{p^m}$ is
called perfect if the induced map $M \to N^\star$ is an isomorphism.
Equivalently, the map $N \to M^\star$ is an isomorphism.

\begin{lem}\label{lem:Perfect-0}
  ~\eqref{eqn:Pair-4-2} is a perfect pairing of finite groups when $m =1$.
\end{lem}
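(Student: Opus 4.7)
The plan is to reduce Lemma \ref{lem:Perfect-0} to the coherent Serre--Ekedahl duality of \thmref{thm:Ek-Main-1} combined with Milne's duality \cite{Milne-Zeta} for $\Omega^d_{X,\log}$. The core idea is to replace both sides of the pairing with two-term complexes of coherent sheaves, so that wedge product into $\Omega^d_X$ realizes the duality at each bidegree.

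First, applying \lemref{lem:Complex-6}(6) with $\n$ replaced by $\un{n}+\un{1}$ and $q$ by $d-q$, I would identify $\Omega^{d-q}_{X|D_{\un{n}+\un{1}},\log}$ in $\sD(X_\et)$ with the two-term complex
\[
\sG^\bullet := \bigl[\, Z_1\Fil_{-\un{n}-\un{1}}\Omega^{d-q}_U \xrightarrow{1-C} \Fil_{(-\un{n}-\un{1})/p}\Omega^{d-q}_U \,\bigr]
\]
placed in degrees $[0,1]$. The same lemma with $\n=\un{1}$ and $q=d$, together with \corref{cor:Ext-3}, identifies $\Omega^d_{X,\log}$ with $\sH^\bullet := [\Omega^d_X \xrightarrow{1-C} \Omega^d_X]$, and Milne's trace $H^{d+1}(X,\Omega^d_{X,\log}) \xrightarrow{\cong} \Z/p$ factors through the Serre trace $H^d(X,\Omega^d_X) \to k$ followed by the $\F_p$-linear cokernel of $1-C$ on $k$. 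Pulling the pairing of \corref{cor:Pair-4} through these quasi-isomorphisms produces a pairing of total complexes $\sF^{q,\bullet}_{\un{n}} \otimes \sG^\bullet \to \sH^\bullet$ whose four bigraded components are (up to a sign and one occurrence of the Cartier operator) the wedge product into $\Omega^d_X$, exactly mirroring \eqref{eqn:Pair-0}--\eqref{eqn:Pair-2}.

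Second, the $m=1$ case of \thmref{thm:Ek-Main-1} provides, for every $D \in \Div_E(X)$, a sheaf-level isomorphism $\Fil_D\Omega^q_U \xrightarrow{\cong} \sHom_{\sO_X}(\Fil_{-D-E}\Omega^{d-q}_U,\Omega^d_X)$ induced by wedge. Combined with Grothendieck--Serre duality for the smooth projective $X/k$, this yields a perfect pairing of finite-dimensional $k$-vector spaces
\[
H^i(X,\Fil_D\Omega^q_U) \times H^{d-i}(X,\Fil_{-D-E}\Omega^{d-q}_U) \to H^d(X,\Omega^d_X) \xrightarrow{\mathrm{Tr}_{X/k}} k.
\]
The analogous statements for the $Z_1$-subsheaves appearing in $\sF^{q,\bullet}_{\un{n}}$ and $\sG^\bullet$ follow from this by combining it with \corref{cor:Complete-8-00} and \lemref{lem:Complete-5} via a short diagram chase. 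Finally, passing from $k$ to $\Z/p$ via the factorization of Milne's trace described above accounts precisely for the two $(1-C)$ differentials in the two complexes.

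Assembling the four perfect bigraded pairings, the perfectness of the pairing of total complexes follows by a standard 5-lemma argument applied to the long exact sequences coming from the two-term structures of $\sF^{q,\bullet}_{\un{n}}$ and $\sG^\bullet$; finiteness of both hypercohomologies has already been recorded in \lemref{lem:Fin-coh}. The main obstacle I anticipate is verifying the precise sign- and Cartier-compatibility of the four bigraded components so that they genuinely assemble into a morphism of total complexes, and tracing the Milne--Serre trace compatibility through the quasi-isomorphisms of \lemref{lem:Complex-6}. This is essentially the analogue of \lemref{lem:Pair-3} for $\sG^\bullet$ rather than $W_1\sG^{d-q,\bullet}_{\un{n}}$, and while conceptually clear, it requires careful bookkeeping with the auxiliary Cartier factor introduced in \eqref{eqn:Pair-2}.
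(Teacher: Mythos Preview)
Your overall strategy---resolve both sides by two-term coherent complexes, appeal to Ekedahl/Serre duality on the terms, and apply a 5-lemma---is exactly the paper's strategy. The difficulty is in your specific choice of two-term model for $\Omega^{d-q}_{X|D_{\un{n}+\un{1}},\log}$.

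You propose the $(1-C)$-model $\sG^\bullet = [Z_1\Fil_{-\un{n}-\un{1}}\Omega^{d-q}_U \to \Fil_{(-\un{n}-\un{1})/p}\Omega^{d-q}_U]$ from \lemref{lem:Complex-6}(6). But its terms are \emph{not} Ekedahl-dual to the terms of $W_1\sF^{q,\bullet}_{\un{n}}$. Concretely, by \lemref{lem:Ext-9} the sheaf dual of $Z_1\Fil_{-\un{n}-\un{1}}\Omega^{d-q}_U$ is $\Fil_{\un{n}}\Omega^{q}_U/B_1\Fil_{\un{n}}\Omega^{q}_U$, not $\Fil_{\un{n}}\Omega^{q}_U$; and the dual of $\Fil_{(-\un{n}-\un{1})/p}\Omega^{d-q}_U$ is $\Fil_{\un{n}/p}\Omega^q_U$, not $Z_1\Fil_{\un{n}}\Omega^q_U$. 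So your claim of ``four perfect bigraded pairings'' fails, and the 5-lemma on the two-term filtrations does not directly apply. (The pairing of complexes can indeed be written down, with a factor $C(w_1)\wedge w_2$ at one bidegree, but the term-wise maps then have visible kernels and cokernels coming from $B_1$.)

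The paper avoids this by using the other two-term model, namely $W_1\sG^{d-q,\bullet}_{\un{n}} = [\Fil_{-\un{n}-\un{1}}\Omega^{d-q}_U \xrightarrow{1-\ov{F}} \Fil_{-\un{n}-\un{1}}\Omega^{d-q}_U/d(\Fil_{-\un{n}-\un{1}}\Omega^{d-q-1}_U)]$, whose quasi-isomorphism with $\Omega^{d-q}_{X|D_{\un{n}+\un{1}},\log}$ is \cite[Thm.~1.2.1]{JSZ}. This is precisely the complex for which the pairing with $W_1\sF^{q,\bullet}_{\un{n}}$ was already constructed in \lemref{lem:Pair-3}, and---crucially---its degree-$0$ and degree-$1$ terms are \emph{exactly} Ekedahl-dual to $\Fil_{\un{n}}\Omega^q_U$ and $Z_1\Fil_{\un{n}}\Omega^q_U$ respectively, by \thmref{thm:Ek-Main-1} and \lemref{lem:Ext-9}. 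With this choice the ladder diagram \eqref{eqn:Perfect-0-0} has its outer vertical arrows bijective by Grothendieck duality over the finite field $k$, and the 5-lemma finishes the proof with no further bookkeeping. Switching to the $(1-\ov{F})$-model is the missing ingredient in your plan.
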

\begin{proof}
We let $\sE^q_{\un{n}} =
    \frac{\Fil_{\un{n}}\Omega^{q}_U}{d(\Fil_{\un{n}}\Omega^{q-1}_U)}$ and $q' = d-q$.
To prove the lemma, we note that $\Omega^q_{X|D_{\un{n}},\log} \cong
W_1\sG^{q,\bullet}_{\un{n}}$ in $\sD(X_\et)$ for every $q \ge 0$ and $\un{n} \ge \un{0}$ by
\cite[Thm.~1.2.1]{JSZ}.
    Using the definition of the above pairings, we thus get a commutative diagram
    \begin{equation}\label{eqn:Perfect-0-0}
  \xymatrix@C.8pc{
\cdots \ar[r] & H^{i-1}_\et(X, \Fil_{\un{n}}\Omega^{q}_U) \ar[r] \ar[d]_-{\alpha_{i-1}} &
        \H^i_\et(X, W_1\sF^{q, \bullet}_{\un{n}}) \ar[r] \ar[d]_-{\beta_i} &
        H^{i}_\et(X, Z_1\Fil_{\un{n}}\Omega^q_U) \ar[r] \ar[d]_-{\gamma_{i}} & \cdots \\
\cdots \ar[r] & H^{d+1-i}_\et(X,  \Fil_{-\un{n}-1}\Omega^{q'}_U)^\star \ar[r] &
        H^{d+1-i}_\et(X, \Omega^{q'}_{X|D_{\un{n}+1}, \log})^\star \ar[r] &
        H^{d-i}_\et(X, \sE^{q'}_{-\un{n}-1})^\star \ar[r] & \cdots}
  \end{equation}
in which the top row is clearly exact and the bottom row is exact by
    \lemref{lem:Complex-8} since ${\Q}/{\Z}$ is an injective abelian group.

As ~\eqref{eqn:Pair-1} and ~\eqref{eqn:Pair-2} are perfect pairings of
locally free sheaves by \lemref{lem:Ext-9}, it follows from the argument for
the Grothendieck duality for the structure map $X \to \Spec(\F_p)$
(cf. \corref{cor:Ek-Main-2}) that $\alpha_i$ and $\gamma_i$ in ~\eqref{eqn:Perfect-0-0}
    are bijective maps between finite groups for all $i$. 
  We deduce that $\beta_i$ is also an isomorphism of finite groups for all $i$.
    This concludes the proof. 
\end{proof}

The first main result of this section is the following.

\begin{thm}\label{thm:Perfect-finite}
  Assume that $X$ is a smooth projective and connected scheme of
  dimension $d$ over a finite field $k$ of characteristic $p$. Then \eqref{eqn:Pair-4-2}
  is a perfect pairing of finite abelian groups for all $q, i  \ge 0$, $m \ge 1$
  and $\un{n} \ge \un{0}$.
\end{thm}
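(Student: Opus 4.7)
The plan is to prove the theorem by induction on $m \ge 1$. The base case $m=1$ is Lemma \ref{lem:Perfect-0}, and all groups involved are finite by Lemma \ref{lem:Fin-coh}. By Lemma \ref{lem:Complex-8}, the claim that the $\Z/p^m$-valued pairing is perfect is equivalent to showing that the adjoint map $\beta_i \colon \H^i_\et(X, W_m\sF^{q,\bullet}_{\un{n}}) \to \Hom(H^{d+1-i}_\et(X, W_m\Omega^{d-q}_{X|D_{\un{n}+1},\log}), \Q/\Z)$ is bijective for every $i$.

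For the inductive step with $m \ge 2$, I would feed two complementary exact sequences into their long exact hypercohomology sequences. On the $\sF$-side, Theorem \ref{thm:Global-version}(9) applied to $D = D_{\un{n}}$ (which satisfies $D+E\ge 0$) gives the distinguished triangle
\[
W_1\sF^{q,\bullet}_{\un{n}} \xrightarrow{V^{m-1}} W_m\sF^{q,\bullet}_{\un{n}} \xrightarrow{R} W_{m-1}\sF^{q,\bullet}_{\un{n}/p} \xrightarrow{+}.
\]
On the $\log$-side, Corollary \ref{cor:Complex-7} with $r=m-1$ applied to $\un{n}+1$, combined with the elementary identity $\lceil(\un{n}+1)/p\rceil = \un{n}/p+1$, gives the short exact sequence
\[
0 \to W_{m-1}\Omega^{d-q}_{X|D_{\un{n}/p+1},\log} \xrightarrow{\un{p}} W_m\Omega^{d-q}_{X|D_{\un{n}+1},\log} \xrightarrow{R^{m-1}} \Omega^{d-q}_{X|D_{\un{n}+1},\log} \to 0.
\]
These are chosen precisely so that, after applying $\Hom(-,\Q/\Z)$ to the LES arising from the second (and using the exactness of $\Hom(-, \Q/\Z)$ on finite abelian groups), the cup-product pairings of \eqref{eqn:Pair-4-2} together with Corollary \ref{cor:Pair-4} join the two resulting five-term exact sequences into a commutative ladder. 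The four outer vertical maps of this ladder are isomorphisms: the two at the $W_{m-1}\sF^{q,\bullet}_{\un{n}/p}$-positions by the induction hypothesis applied at level $m-1$ with modulus $D_{\un{n}/p}$, and the two at the $W_1\sF^{q,\bullet}_{\un{n}}$-positions by Lemma \ref{lem:Perfect-0}. The five lemma then forces the middle vertical map---which is $\beta_i$ itself---to be an isomorphism, completing the induction.

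The main obstacle is establishing the commutativity of this ladder, i.e., the compatibility of the cup-product pairings with the transition maps $V^{m-1}, R$ on the $\sF$-side and $\un{p}, R^{m-1}$ on the $\log$-side. By naturality of the cup product, the squares involving the connecting homomorphisms commute as soon as the two non-boundary squares do; these in turn come from the pairings of complexes supplied by Lemma \ref{lem:Pair-3} and Corollary \ref{cor:Pair-4}, and reduce to the standard Witt complex identities $FV=VF=p$, the equality $R=F$ on logarithmic Hodge-Witt sheaves, and Milne's compatibility of his trace map $H^{d+1}_\et(X,W_\star\Omega^d_{X,\log})\xrightarrow{\cong}\Z/p^\star$ with the transition maps $R$ and $\un{p}$ (\cite[Lem.~1.11]{Milne-Zeta}). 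As emphasized in Remark \ref{remk:Pair-6}, it is precisely the refined pairing of complexes produced in Lemma \ref{lem:Pair-3} and Corollary \ref{cor:Pair-4}---the technical heart of this section---that makes this otherwise standard inductive argument go through for the filtration $W_m\sF^{q,\bullet}_{\un{n}}$.
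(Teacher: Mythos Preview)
Your proposal is correct and follows essentially the same approach as the paper: induction on $m$ with base case \lemref{lem:Perfect-0}, using the distinguished triangle of \thmref{thm:Global-version}(9) on the $\sF$-side and the exact sequence of \corref{cor:Complex-7} (with $r=m-1$ and the identity $\lceil(\un{n}+1)/p\rceil = \lfloor\un{n}/p\rfloor+1$) on the $\log$-side to build a commutative ladder, then the five lemma. The paper handles the commutativity of the ladder by citing \cite[Lem.~4.4]{GK-Duality-1}, whereas you sketch it via the Witt identities and Milne's trace compatibility; otherwise the arguments coincide.
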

\begin{proof}
The finiteness part is already shown in \lemref{lem:Fin-coh}. To prove
  the perfectness of the pairing, we can assume $m > 1$ by \lemref{lem:Perfect-0}.
  We let $i' = d+1-i$ and $q' = d-q$.
  By  \thmref{thm:Global-version}(9), \corref{cor:Complex-7} and
  \lemref{lem:Complex-8}, we get a
  commutative diagram (cf. \cite[Lem.~4.4]{GK-Duality-1} and its proof for the
  commutativity) of long exact sequences
  \begin{equation}\label{eqn:Perfect-finite-0}
  \xymatrix@C.5pc{
    \cdots \ar[r] & \H^{i}_\et(W_1\sF^{q, \bullet}_{\un{n}}) \ar[rr]^-{(V^{m-1})}
    \ar[d]_-{\alpha_i} &&
        \H^i_\et(W_m\sF^{q, \bullet}_{\un{n}}) \ar[rr]^-{R} \ar[d]^-{\beta_i} &&
        \H^{i}_\et(W_{m-1}\sF^{q, \bullet}_{{\un{n}}/p}) \ar[r] \ar[d]^-{\gamma_i} & \cdots \\
\cdots \ar[r] & H^{i'}_\et(W_1\Omega^{q'}_{X|D_{\un{n}+1}, \log})^\star 
      \ar[rr]^-{(R^{m-1})^\star} &&
      H^{i'}_\et(W_m\Omega^{q'}_{X|D_{\un{n}+1}, \log})^\star \ar[rr]^-{\un{p}^\star} && H^{i'}_\et(W_{m-1}\Omega^{q'}_{X|D_{\lceil{(\un{n}+1)}/p}\rceil, \log})^\star
       \ar[r]  & \cdots ,}
  \end{equation}
where $\lceil{\cdot}\rceil$ is the least integer function.
        
 In the above diagram, we have used the shorthand $\H^*_\et(\sE^\bullet)$ for the
  (hyper)cohomology
  $\H^*_\et(X, \sE^\bullet)$ and the vertical arrows are induced by the
  pairing ~\eqref{eqn:Pair-4-2}. Since $\lceil{{(\un{n}+1)}/p}\rceil =
  \lfloor{{\un{n}}/p}\rfloor +1$,
it follows by induction on $m$ that $\alpha_i$ and $\gamma_i$ are bijective for all $i$.
    We conclude that $\beta_i$ is bijective for all $i$. 
\end{proof}

{\bf{Proof of \corref{cor:Main-3}:}}
By \thmref{thm:Perfect-finite}, we get an isomorphism of finite groups
\[
H^{d+1-i}_\et(X, W_{m}\Omega^{d-q}_{X|D_{\un{n}+1}, \log})  \xrightarrow{\cong}
\Hom_{\Z/p^m}(\H^i_\et(X, W_{m}\sF^{q, \bullet}_{\un{n}}),{\Z}/{p^m}).
\]
It is easily checked that this is a strict isomorphism of pro-abelian groups
as $\un{n} \ge \un{0}$ varies. Taking limit over $\n$, we get
an isomorphism of abelian groups
    \begin{equation}\label{eqn:JSZ-1}
        \begin{array}{cl}
          {\varprojlim}_{\n}H^{d+1-i}_\et(X, W_{m}\Omega^{d-q}_{X|D_{\un{n}+1}, \log})
          & \xrightarrow{\cong}
   \Hom_{\Z/p^m}(\varinjlim\limits_{\n}\H^i_\et(X, W_{m}\sF^{q, \bullet}_{\un{n}}),{\Z}/{p^m})
          \\
             & \cong
    \Hom_{\Z/p^m}(H^i_\et(U, W_{m}\Omega^q_{U,\log}),{\Z}/{p^m}),
        \end{array}
    \end{equation}
    where the last isomorphism follows from the fact that
    ${\varinjlim}_\n W_{m}\sF^{q, \bullet}_{\un{n}}\cong {\bf R}j_*W_m\Omega^q_{U,\log}$.
    If we endow the left hand side of \eqref{eqn:JSZ-1} with profinite topology and
    $H^i_\et(U, W_{m}\Omega^q_{U,\log})$ with discrete topology (cf. \cite[p. 1325]{JSZ}),
    then \eqref{eqn:JSZ-1} yields an isomorphism of profinite groups. On the other
    hand, \thmref{thm:Perfect-finite} also shows that the map
    \[
    \<\alpha, -\> \colon
    {\varprojlim}_{\n}H^{d+1-i}_\et(X, W_{m}\Omega^{d-q}_{X|D_{\un{n}+1}, \log}) \to \Z/p^m
    \]
    is continuous for $\alpha \in H^i_\et(U, W_{m}\Omega^q_{U,\log})$
    (cf. \cite[Lem.~4.1.3]{JSZ}). Combining all these claims, we conclude the
    perfect pairing of topological abelian groups (cf. \cite[Thm.~4.1.4]{JSZ})
    \[
    H^i_\et(U, W_{m}\Omega^q_{U,\log}) \times
    {\varprojlim}_{\n}H^{d+1-i}_\et(X, W_{m}\Omega^{d-q}_{X|D_{\un{n}+1}, \log})
    \xrightarrow{\cup} {\Z}/{p^m}.
    \]
    This proves \corref{cor:Main-3} which is the main result of \cite{JSZ}.
\qed

\vskip.2cm
    
The following is \corref{cor:Main-4}.

    \begin{cor}\label{cor:Duality-log}
  For $X$ as in \thmref{thm:Perfect-finite}, there is a perfect pairing of
  finite abelian groups
  \[
  H^i_\et(X, j_* W_m\Omega^q_{U, \log}) \times H^{d+1-i}_\et(X, W_m\Omega^{d-q}_{X|E,\log})
  \xrightarrow{\cup} {\Z}/{p^m}.\]
  \end{cor}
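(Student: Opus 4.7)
The plan is to obtain Corollary~\ref{cor:Duality-log} as the base case $\un{n} = \un{0}$ of Theorem~\ref{thm:Perfect-finite}. First I would specialize the pairing \eqref{eqn:Pair-4-2} to $\un{n} = \un{0}$, for which $D_{\un{0}} = 0$ and $D_{\un{0}+1} = D_{\un{1}} = E$. Theorem~\ref{thm:Perfect-finite} then asserts that
\[
\H^i_\et(X, W_m\sF^{q, \bullet}_{\un{0}}) \times H^{d+1-i}_\et(X, W_m\Omega^{d-q}_{X|E, \log}) \longrightarrow \Z/p^m
\]
is a perfect pairing of finite abelian groups.

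Next I would identify the first factor with $H^i_\et(X, j_*W_m\Omega^q_{U,\log})$. The key ingredient is Lemma~\ref{lem:Complex-6}(4), which yields the short exact sequence of \'etale sheaves
\[
0 \to j_*W_m\Omega^q_{U,\log} \to Z_1\Fil_{\un{0}}W_m\Omega^q_U \xrightarrow{1-C} \Fil_{\un{0}}W_m\Omega^q_U \to 0.
\]
This says precisely that the two-term complex $W_m\sF^{q, \bullet}_{\un{0}}$ is quasi-isomorphic to $j_*W_m\Omega^q_{U,\log}$ placed in degree zero in $\sD(X_\et)$, whence a canonical isomorphism $\H^i_\et(X, W_m\sF^{q, \bullet}_{\un{0}}) \xrightarrow{\cong} H^i_\et(X, j_*W_m\Omega^q_{U,\log})$. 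Substituting this into the pairing above yields the desired statement.

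Finally, one has to observe that under the above quasi-isomorphism the pairing constructed in Lemma~\ref{lem:Pair-3} and Corollary~\ref{cor:Pair-4} (which is induced by the wedge product in the filtered de Rham--Witt complex) matches the cup product pairing of $H^i_\et(X, j_*W_m\Omega^q_{U,\log})$ with $H^{d+1-i}_\et(X, W_m\Omega^{d-q}_{X|E,\log})$ composed with the Milne trace map. This compatibility is encoded in diagram~\eqref{eqn:Pair-4-1}, which factors our modulus pairing through the classical logarithmic Hodge--Witt pairing of Milne; no further argument is required. Since the result is obtained by specialization and a quasi-isomorphism, there is no substantial obstacle — the only subtlety is the last compatibility check, which is formal given the construction of the pairing in \S\ref{sec:Pairing}.
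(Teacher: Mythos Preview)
Your proof is correct and follows exactly the paper's approach: the paper's one-line proof reads ``Combine \lemref{lem:Complex-6} and \thmref{thm:Perfect-finite},'' and you have spelled out precisely this combination by specializing \thmref{thm:Perfect-finite} to $\un{n}=\un{0}$ and invoking \lemref{lem:Complex-6}(4) to identify $W_m\sF^{q,\bullet}_{\un{0}}$ with $j_*W_m\Omega^q_{U,\log}$.
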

\begin{proof}
Combine \lemref{lem:Complex-6} and \thmref{thm:Perfect-finite}.
\end{proof}

\vskip.2cm

\begin{remk}\label{remk:Ren-Rulling-comp}
(1) In \cite[Thm.~11.15]{Ren-Rulling}, Ren-R{\"u}lling prove a duality theorem
  similar to \thmref{thm:Perfect-finite}. But the two duality theorems
  are different except when $D=0$.

(2) Using the results of \S~\ref{sec:HW-Duality} and \S~\ref{sec:Complexes},
  and combining them with Kato's theory in \cite{Kato-Comp}, one can in fact prove a
  version of \thmref{thm:Perfect-finite} over an arbitrary perfect field but
  we shall not do it in this paper.
  \end{remk}

\subsection{Duality theorem over DVR}\label{sec:DT-dvr}
We now consider another special case of \corref{cor:Pair-4}.
We let $R$ be an equicharacteristic henselian discrete valuation ring with finite
residue field $\ff$ and let $K$ denote the quotient field of $R$.
We let $S = \Spec(R)$ and let $f \colon X \to S$ be a projective
strictly semi-stable connected
scheme of relative dimension $d$ in the sense of \cite[Defn.~1.4.2]{Zhao}.
We let $X_\eta$ (resp. $X_s := X \times_S \Spec(\ff)$) denote the generic
(resp. special) fiber of $f$. Our assumptions imply that $X_\eta$ is smooth over $K$ and
$X_s$ is reduced and a simple normal crossing divisor on $X$.
We let $E \subset X$ be a simple normal crossing
divisor with irreducible components $E_1, \ldots , E_r$.  We let $j \colon
X_\eta \inj X$ and $\iota \colon X_s \inj X$ denote the inclusions.

Letting $\un{n} \ge 0$ and applying ${\bf R}j_*$ to the top pairing of
~\eqref{eqn:Pair-3-1} on $X_\eta$ yields canonical maps 
\begin{equation}\label{eqn:Pair-5-dvr}
\begin{array}{lll}
W_{m}\sF^{q, \bullet}_{\un{n}} \otimes^{\mathbb{L}}
{\bf R}j_*(W_{m}\Omega^{d+1-q}_{X_{\eta}|D_{\un{n}+1},\log}) & \to &
({\bf R}j_* \circ j^*)(W_{m}\sF^{q, \bullet}_{\un{n}} \otimes^{\mathbb{L}} 
{\bf R}j_*(W_{m}\Omega^{d+1-q}_{X_{\eta}|D_{\un{n}+1},\log})) \\
& \to & {\bf R}j_*(j^*(W_{m}\sF^{q, \bullet}_{\un{n}})  \otimes^{\mathbb{L}} 
j^*(W_{m}\Omega^{d+1-q}_{X|D_{\un{n}+1},\log})) \\
& \to & {\bf R}j_* (j^*(W_{m}\sH^{\bullet})), \\
\end{array}
\end{equation}
where the first arrow is the unit of adjunction and the third arrow is obtained by
using the pairing on the top row of ~\eqref{eqn:Pair-3-1} on $X_\eta$ and then
applying ${\bf R}j_*$. By comparing the diagram ~\eqref{eqn:Pair-3-1} with the
pairing of ~\eqref{eqn:Pair-5-dvr} via the unit of adjunction
$\id \to {\bf R}j_* \circ j^*$, we get a commutative diagram of pairings
\begin{equation}\label{eqn:Pair-3-1-dvr}
  \xymatrix@C1.5pc{  
    W_{m}\sF^{q, \bullet}_{\un{n}} \times {\bf R}\iota^{!}(W_{m}\Omega^{d+1-q}_{X|D_{\un{n}+1},\log})
    \ar[r]^-{\< \ , \ \>} \ar@<5ex>[d] &
  {\bf R}\iota^{!}(W_m\sH^{\bullet}) \ar@{=}[d] \\
  W_{m} \sF^{q, \bullet} \times {\bf R}\iota^{!}(W_m\Omega^{d+1-q}_{X,\log})
  \ar[r]^-{\< \ , \ \>}
  \ar@<7ex>[u] & {\bf R}\iota^{!}(W_{m}\sH^{\bullet})}
  \end{equation}
on $X_\et$ in which the vertical arrows are the canonical maps. Note that $\Omega^1_X$
is locally free of rank $d+1$.

Considering the hypercohomology, we get a commutative
diagram of the cup product pairings of abelian groups 
 \begin{equation}\label{eqn:Pair-4-1-dvr}
  \xymatrix@C1.5pc{  
    \H^i_\et(X, W_{m}\sF^{q, \bullet}_{\un{n}}) \times
    H^{d+2-i}_{\et, X_s}(X, W_{m}\Omega^{d+1-q}_{X|D_{\un{n}+1}, \log})  \ar[r]^-{\cup}
    \ar@<5ex>[d] &
  H^{d+2}_{\et, X_s}(X, W_m\Omega^{d+1}_{X, \log}) \ar@{=}[d] \\
  H^i_\et(X, W_{m}\Omega^q_{X, \log}) \times H^{d+2-i}_{\et, X_s}(X, W_{m}\Omega^{d+1-q}_{X,\log})
  \ar[r]^-{\cup} \ar@<10ex>[u] & H^{d+2}_{\et, X_s}(X, W_{m}\Omega^{d+1}_{X,\log}),}
 \end{equation}
 where $H^*_{\et, X_s}(X,\sF)$ denotes the cohomology with support in $X_s$.
 Composing these pairings  with the isomorphism ${\rm Tr} \colon
 H^{d+2}_{\et, X_s}(X, W_{m}\Omega^{d+1}_{X,\log}) \xrightarrow{\cong} {\Z}/{p^m}$
 of \cite[Cor.~1.4.10]{Zhao}, we get a pairing
\begin{equation}\label{eqn:Pair-4-2-dvr}
 \H^i_\et(X, W_{m}\sF^{q, \bullet}_{\un{n}}) \times
    H^{d+2-i}_{\et, X_s}(X, W_{m}\Omega^{d+1-q}_{X|D_{\un{n}+1}, \log})  \xrightarrow{\cup}
    {\Z}/{p^m}
    \end{equation}
which is compatible with the unramified duality pairing of
\cite[(3.1)]{Zhao}. Furthermore, as $\un{n} \ge \un{0}$ varies, this gives a strict
paring between pro and ind abelian groups.

\vskip.2cm

The second main result of this section is the following refinement of the
main result of \cite{Zhao}.

\begin{thm}\label{thm:Perfect-dvr}
  For every $q, i \ge 0$, $m \ge 1$ and $\un{n}\ge \un{0}$,
  ~\eqref{eqn:Pair-4-2-dvr} induces an isomorphism of
  ${\Z}/{p^m}$-modules
  \begin{equation}\label{eqn:Perfect-dvr-1}
    \H^i_\et(X, W_{m}\sF^{q, \bullet}_{\un{n}}) \xrightarrow{\cong}
    \Hom_{{\Z}/{p^m}}(H^{d+2-i}_{\et, X_s}(X, W_{m}\Omega^{d+1-q}_{X|D_{\un{n}+1}, \log}), {\Z}/{p^m}).
  \end{equation}
  \end{thm}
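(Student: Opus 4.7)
\noindent
The proof follows the strategy of Theorem~\ref{thm:Perfect-finite}: a two-stage devissage which first reduces to $m = 1$ and then, in the $m=1$ case, to a duality between coherent filtered de Rham sheaves and their cohomology with support in $X_s$. The pairing \eqref{eqn:Pair-4-2-dvr} is built from Corollary~\ref{cor:Pair-4} and Zhao's trace isomorphism, so all the operators $V$, $R$, $1-C$ and $1-\ov{F}$ are compatible with the pairing as in the diagrams \eqref{eqn:Pair-3-1} and \eqref{eqn:Pair-3-1-dvr}.

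\smallskip

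\noindent
To reduce to $m = 1$, I would combine the distinguished triangle
\[
W_1\sF^{q,\bullet}_{\un{n}} \xrightarrow{V^{m-1}} W_m\sF^{q,\bullet}_{\un{n}} \xrightarrow{R} W_{m-1}\sF^{q,\bullet}_{\un{n}/p} \xrightarrow{+}
\]
of Theorem~\ref{thm:Global-version}(9) with the short exact sequence
\[
0 \to \Omega^{d+1-q}_{X|D_{\lceil (\un{n}+1)/p \rceil}, \log} \xrightarrow{\un{p}^{m-1}} W_m\Omega^{d+1-q}_{X|D_{\un{n}+1}, \log} \xrightarrow{R^{m-1}} W_{m-1}\Omega^{d+1-q}_{X|D_{\un{n}+1}, \log} \to 0
\]
of Corollary~\ref{cor:Complex-7} (with $r = 1$). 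Using the elementary identity $\lceil (\un{n}+1)/p \rceil = \un{n}/p + \un{1}$ and Lemma~\ref{lem:Complex-8} to convert the pairings into $\F_p$-linear duals, these assemble into a ladder of long exact sequences compatible with the pairing \eqref{eqn:Pair-4-2-dvr}. Induction on $m$ together with the five-lemma then reduces the theorem to the case $m = 1$, exactly as in the proof of Theorem~\ref{thm:Perfect-finite}.

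\smallskip

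\noindent
For $m = 1$, the complex $W_1\sF^{q,\bullet}_{\un{n}}$ is by definition the two-term complex $[Z_1\Fil_{\un{n}}\Omega^q_U \xrightarrow{1-C} \Fil_{\un{n}}\Omega^q_U]$, and Lemma~\ref{lem:Complex-6}(6) applied with modulus $\un{n}+1$ identifies $\Omega^{d+1-q}_{X|D_{\un{n}+1}, \log}$ with the kernel of $1-C \colon Z_1\Fil_{-\un{n}-1}\Omega^{d+1-q}_U \to \Fil_{(-\un{n}-1)/p}\Omega^{d+1-q}_U$. The long exact sequence for $\H^*_\et(X, W_1\sF^{q,\bullet}_{\un{n}})$ and the $\Hom_{\F_p}(-,\F_p)$-dual of the long exact sequence for $H^*_{\et, X_s}(X, \Omega^{d+1-q}_{X|D_{\un{n}+1}, \log})$ are linked by the sheaf-level pairings \eqref{eqn:Pair-0} and \eqref{eqn:Pair-2}, exactly as in Lemma~\ref{lem:Perfect-0}. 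A second five-lemma application reduces the theorem to showing that the wedge-product pairing
\[
H^i(X, \Fil_{\un{n}}\Omega^q_U) \times H^{d+1-i}_{\et, X_s}(X, \Fil_{-\un{n}-1}\Omega^{d+1-q}_U) \to \F_p
\]
and its Cartier-twisted variant are perfect pairings of finite-dimensional $\F_p$-vector spaces for all $i$.

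\smallskip

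\noindent
The main obstacle is this final coherent duality over the semi-stable $R$-scheme $X$: unlike in Corollary~\ref{cor:Ek-Main-2}, where the Ekedahl duality Theorem~\ref{thm:Ek-Main-1} was directly applicable because $X$ was proper over a perfect field, here $X$ is only proper over the DVR $R$. My plan is to construct the duality by composing two classical ingredients: relative Grothendieck duality for $f \colon X \to S = \Spec R$, which identifies $Rf_* \sR\sHom_{\sO_X}(\sF, \Omega^{d+1}_{X/S})$ with $\sR\sHom_R(Rf_* \sF, \sO_S)$ up to an appropriate shift, together with Matlis local duality at the closed point of $R$, which converts cohomology with support in $\fm$ into $\F_p$-linear duals. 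The pointwise perfectness of the wedge and Cartier pairings on the locally free filtered log forms is already the content of Lemma~\ref{lem:Ext-9}, supplemented by the local-freeness assertions of Lemmas~\ref{lem:Log-fil-4}(3) and \ref{lem:Can-filt-5}; this supplies the sheaf-level input. Assembling these ingredients yields the required cohomological duality and closes the proof.
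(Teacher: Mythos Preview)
Your reduction to $m=1$ via Theorem~\ref{thm:Global-version}(9), Corollary~\ref{cor:Complex-7}, the identity $\lceil(\un{n}+1)/p\rceil = \un{n}/p + \un{1}$, and the five-lemma is exactly what the paper does, mirroring the induction in Theorem~\ref{thm:Perfect-finite}.

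For $m=1$, however, the paper does not reprove the duality: it simply invokes \cite[Prop.~3.4.5]{Zhao}, where the $m=1$ perfectness over a strictly semistable $R$-scheme is already established. Your sketch heads in the right direction---unwind $W_1\sF^{q,\bullet}_{\un n}$ and $\Omega^{d+1-q}_{X|D_{\un n+1},\log}$ into two-term complexes as in Lemma~\ref{lem:Perfect-0}, then appeal to coherent duality for the filtered forms---but it leaves real loose ends. The sheaf $\Omega^{d+1}_{X/S}$ vanishes since $f$ has relative dimension $d$; what you actually need is the relative dualizing sheaf $\omega_{X/S}$ (or the absolute $\Omega^{d+1}_X$), and for a semistable, hence non-smooth, morphism the identification of $f^!\sO_S$ with a line bundle is not as immediate as in the smooth case. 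Moreover, Matlis duality over $R$ produces duals into the injective hull $E(\ff)\cong K/R$, not into $\F_p$, so the passage from the coherent pairing to the $\F_p$-valued trace of \cite[Cor.~1.4.10]{Zhao} requires a further argument. These are precisely the technical points that Zhao's Proposition~3.4.5 handles, which is why the paper defers to it rather than redoing the base case.
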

\begin{proof}
We can replace right hand side by
$\Hom_{{\Z}}(H^{d+2-i}_{\et, X_s}(X, W_{m}\Omega^{d+1-q}_{X|D_{\un{n}+1}, \log}), {\Q}/{\Z})$
using \lemref{lem:Complex-5}. We now reduce the proof to $m =1$ case by
repeating the reduction step in the proof of \thmref{thm:Perfect-finite}.
We are now done because the latter case is already shown in 
\cite[Prop.~3.4.5]{Zhao}.
\end{proof}

If we endow $H^{d+2-i}_{\et, X_s}(X, W_{m}\Omega^{d+1-q}_{X|D_{\un{n}+1}, \log})$ with the discrete
topology and $\H^i_\et(X, W_{m}\sF^{q, \bullet}_{\un{n}})$ with the compact-open topology
using \thmref{thm:Perfect-dvr}, then it follows that
~\eqref{eqn:Pair-4-2-dvr} is a perfect pairing of topological abelian groups
so that
\[
H^{i}_{\et, X_s}(X, W_{m}\Omega^{d+1-q}_{X|D_{\un{n}+1}, \log})
\xrightarrow{\cong} \Hom_{\cont}(\H^{d+2-i}_\et(X, W_{m}\sF^{q, \bullet}_{\un{n}}), {\Z}/{p^m}).
\]

Since the above isomorphisms are compatible as $\un{n} \ge \un{0}$ varies, one can take
the limit over $\un{n}$. The resulting statement recovers the main result of
\cite{Zhao}.

\section{Lefschetz theorems}\label{sec:Lef}
Our goal in this section is to prove some Lefschetz theorems (which are mostly known
for the $\ell$-adic cohomology) for the logarithmic Hodge-Witt cohomology and
Kato's global ramification subgroups (cf. \S~\ref{sec:Kato-global}). As
an application, we shall deduce such results for the Brauer group.

We let $k$ be an arbitrary field,
and let $X$ be a connected smooth projective $k$-scheme of dimension $N \ge 2$.
We fix an embedding $X \inj \P^L_k$. We fix a simple normal crossing divisor
$E$ on $X$ with irreducible components $E_1, \ldots , E_r$. We allow $E$ to be
empty. Let
$j \colon U \inj X$ be the inclusion of the complement of $E$ in $X$. For
$\un{n} = (n_1, \ldots , n_r) \in \Z^r$, we let
$D_{\un{n}} = \stackrel{r}{\underset{i =1}\sum} n_i E_i \in \Div_E(X)$. 
Let $u \colon X' \inj X$ be the inclusion of a smooth hypersurface section
such that $E' = X' \times_X E$ is a simple normal crossing divisor on $X'$.
Under these assumptions, it is easy to check that $E_0:= E \bigcup X'$ is a simple normal
crossing divisor on $X$. We let $V = X \setminus E_0$ and let $j_0 \colon V \inj X$
denote the inclusion. We let $j' \colon U' \inj X'$ denote the inclusion of the
complement of $E'$ in $X'$. Let $D' := D \times_X X'$ for any
$D \in \Div(X)$. We shall keep the notations of \S~\ref{sec:Duality**}.

\subsection{Some exact sequences}\label{sec:Ex-seq*}
In this subsection, we collect some exact sequences of sheaves that we shall need.
We assume here that $k$ is an $F$-finite field of characteristic $p > 0$.
We fix $q \ge 0$ and
refer to Definition~\ref{defn:Log-fil-3} for the definition of $\Fil_DW_m\Omega^q_U$
and $\Fil_D\Omega^q_U$. We caution the reader that in the exposition below,
whenever we use the notation $\Fil_DW_m\Omega^q_U$ (or $W_m\sF^{q, \bullet}_{X|D}$),
where $D$ is supported on $E_0$ (and not necessarily on $E$),
we shall assume the underlying filtered de
Rham-Witt complex to be $\Div_{E_0}(X)$-indexed (and not necessarily $\Div_E(X)$-indexed).

\begin{lem}\label{lem:fil D-Y}
   For any $D \in \Div_E(X)$, we have exact sequences
    \begin{equation}\label{eqn:fil D-Y-0}
      0 \to \Fil_{D-X'} \Omega^q_V \to \Fil_D \Omega^q_U \to u_* \Fil_{D'} \Omega^q_{U'}
      \to 0; \ \
      0 \to \Fil_{-X'} \Omega^q_{V} \to \Omega^q_X \to u_*  \Omega^q_{X'} \to 0.
\end{equation}
\end{lem}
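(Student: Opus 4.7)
The plan is to derive both exact sequences from Lemma~\ref{lem:Kahler-seq}(2) by tensoring with a line bundle and invoking the projection formula, using the identification $\Fil_D\Omega^q_U = \Omega^q_X(\log E)(D)$ from Lemma~\ref{lem:Log-fil-4}(3).

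For the first sequence, I would begin with Lemma~\ref{lem:Kahler-seq}(2) applied to the snc-pair $(X, E)$ together with the transverse smooth divisor $X'$ (so that $E + X' = E_0$ is snc), giving
\[
0 \to \Omega^q_X(\log(E + X'))(-X') \to \Omega^q_X(\log E) \to u_*\Omega^q_{X'}(\log E') \to 0.
\]
Since $\sO_X(D)$ is invertible, tensoring this sequence with $\sO_X(D)$ preserves exactness. The three resulting terms are identified, via Lemma~\ref{lem:Log-fil-4}(3) applied respectively to the snc-pairs $(X, E_0)$, $(X, E)$ and $(X', E')$, with $\Fil_{D-X'}\Omega^q_V$, $\Fil_D\Omega^q_U$, and $u_*\Fil_{D'}\Omega^q_{U'}$. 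The last identification uses the projection formula together with the equality $u^*\sO_X(D) = \sO_{X'}(D')$, which follows from $D' = D\times_X X'$.

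For the second sequence, I would apply the same argument with $E$ replaced by the empty divisor, treating $(X, X')$ as the snc-pair. Lemma~\ref{lem:Kahler-seq}(2) then yields directly
\[
0 \to \Omega^q_X(\log X')(-X') \to \Omega^q_X \to u_*\Omega^q_{X'} \to 0,
\]
and the left term is $\Fil_{-X'}\Omega^q_V$ by Lemma~\ref{lem:Log-fil-4}(3) relative to the snc-pair $(X, X')$. No serious obstacle is anticipated; the argument reduces to a routine application of the projection formula together with the basic identification of $\Fil_D\Omega^q_U$ with twisted log differentials.
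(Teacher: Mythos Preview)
Your proposal is correct and follows essentially the same approach as the paper: the paper also tensors the exact sequence of Lemma~\ref{lem:Kahler-seq}(2) with $\sO_X(D)$ for the first sequence and obtains the second by taking $E$ to be empty. Your write-up is somewhat more explicit about the identifications via Lemma~\ref{lem:Log-fil-4}(3) and the projection formula, but the argument is the same.
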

\begin{proof}
  The second exact sequence follows from the first by taking $E$ to be the empty divisor. For the first exact sequence, we take the tensor product of the exact sequence in \lemref{lem:Kahler-seq}(2)  with $\sO_X(D)$.
\end{proof}

\begin{cor}\label{cor:fil-D-Y-2}
    For any $D\in \Div_E(X)$, there are exact sequences
    \begin{equation}\label{eqn:fil-D-Y-2-0}
      0 \to Z_1\Fil_{D-X'}\Omega^q_{V} \to Z_1\Fil_{D}\Omega^q_{U}
      \xrightarrow{\theta_{D}} u_*Z_1\Fil_{D'}\Omega^q_{U'} \to 0;
    \end{equation}
    \begin{equation}\label{eqn:no-log-3}
      0\to Z_1\Fil_{-X'}\Omega^q_{V} \to Z_1\Omega^q_X \xrightarrow{\theta}
      u_*Z_1\Omega^q_{X'} \to 0.
    \end{equation}
\end{cor}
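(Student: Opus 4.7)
The plan is to derive both exact sequences simultaneously by applying the snake lemma to the pair of short exact sequences provided by \lemref{lem:fil D-Y} at cohomological degrees $q$ and $q+1$, connected by the exterior derivative $d$. The second sequence in the statement is the special case $E=\emptyset$, $D=0$ of the first, so I shall focus on the first.

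From the snake lemma applied to the commutative diagram with exact rows
\[
\xymatrix@C1pc{
  0 \ar[r] & \Fil_{D-X'}\Omega^q_V \ar[r] \ar[d]^-{d} & \Fil_D\Omega^q_U \ar[r] \ar[d]^-{d} & u_*\Fil_{D'}\Omega^q_{U'} \ar[r] \ar[d]^-{d} & 0 \\
  0 \ar[r] & \Fil_{D-X'}\Omega^{q+1}_V \ar[r] & \Fil_D\Omega^{q+1}_U \ar[r] & u_*\Fil_{D'}\Omega^{q+1}_{U'} \ar[r] & 0,
}
\]
one immediately reads off the exactness of \eqref{eqn:fil-D-Y-2-0} at the left two terms and obtains a connecting homomorphism
\[
\delta\colon u_*Z_1\Fil_{D'}\Omega^q_{U'} \longrightarrow \Fil_{D-X'}\Omega^{q+1}_V\big/ B_1\Fil_{D-X'}\Omega^{q+1}_V.
\]
Thus the corollary reduces to the vanishing of $\delta$, which by the subsequent exactness in the snake sequence is equivalent to the identity
\[
B_1\Fil_{D-X'}\Omega^{q+1}_V \;=\; B_1\Fil_D\Omega^{q+1}_U \cap \Fil_{D-X'}\Omega^{q+1}_V
\]
of subsheaves of $j_{V*}\Omega^{q+1}_V$.

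To establish the nontrivial inclusion, let $\eta$ lie in the right-hand intersection; then $d\eta=0$ places $\eta$ in $Z_1\Fil_{D-X'}\Omega^{q+1}_V$, and I would run $\eta$ through the Cartier isomorphism. At $m=1$ both for $(U,D)$ and for $(V,D-X')$ the Cartier operator furnished by \thmref{thm:Global-version}(4), together with the compatibility assertion of \corref{cor:Complete-6-global}, has kernel exactly the respective $B_1$, and the two Cartier maps assemble into a commutative square in which the vertical arrows are the natural inclusions $\Fil_{D-X'}\Omega^{q+1}_V\hookrightarrow\Fil_D\Omega^{q+1}_U$ and $\Fil_{(D-X')/p}\Omega^{q+1}_V\hookrightarrow\Fil_{D/p}\Omega^{q+1}_U$. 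The latter is the injective part of \lemref{lem:fil D-Y} applied to $D/p\in\Div_E(X)$, once one recognizes the elementary identity $(D-X')/p=D/p-X'$ (using $\lfloor -1/p\rfloor=-1$). Since $\eta\in B_1\Fil_D\Omega^{q+1}_U$ forces $C(\eta)=0$ in $\Fil_{D/p}\Omega^{q+1}_U$, this injectivity implies $C(\eta)=0$ already in $\Fil_{(D-X')/p}\Omega^{q+1}_V$, whence $\eta\in B_1\Fil_{D-X'}\Omega^{q+1}_V$ as needed.

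The only point that requires any care is the simultaneous definition and compatibility of the Cartier operator on the two filtered sheaves linked by the inclusion of \lemref{lem:fil D-Y}, which I expect to be an immediate consequence of the functoriality built into \corref{cor:Complete-6-global}; accordingly, I do not anticipate a serious obstacle beyond this verification.
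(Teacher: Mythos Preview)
Your argument is correct and takes a genuinely different route from the paper.

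The paper obtains the left exactness exactly as you do, via the commutative diagram of $d$-maps between the two rows of \lemref{lem:fil D-Y}. For the surjectivity of $\theta_D$ on $Z_1$, however, it lifts to the $W_2$-level: using the identification $Z_1\Fil_D\Omega^q = F(\Fil_D W_2\Omega^q)$ from \lemref{lem:Complete-4}(1), it factors the desired surjection through the composite $\Fil_D W_2\Omega^q_U \xrightarrow{\theta_D} u_*\Fil_{D'}W_2\Omega^q_{U'} \xrightarrow{u_*F} u_*Z_1\Fil_{D'}\Omega^q_{U'}$, and checks the $W_2$-level $\theta_D$ is surjective by a direct local computation on $\Fil_{\un{n}}W_2(R_\pi)$.

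Your approach instead stays entirely at $m=1$: via the snake lemma you reduce to the intersection identity $B_1\Fil_{D-X'}\Omega^{q+1}_V = B_1\Fil_D\Omega^{q+1}_U \cap \Fil_{D-X'}\Omega^{q+1}_V$, and prove this using the Cartier isomorphism of \thmref{thm:Global-version}(4) together with the observation $(D-X')/p = D/p - X'$ and the injectivity from \lemref{lem:fil D-Y} applied to $D/p$. The compatibility you flag is indeed immediate, since both Cartier operators are restrictions of the single Cartier operator on the function field. Your route is arguably cleaner in that it avoids the ad hoc verification of $W_2$-level surjectivity; the paper's route is more in keeping with the philosophy that $Z_1 = F(W_2)$ used elsewhere in the text.
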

\begin{proof}
    By \lemref{lem:fil D-Y}, we have a commutative diagram of exact sequences
    \begin{equation}
        \xymatrix{
          0 \ar[r]& \Fil_{D-X'}\Omega^q_{V} \ar[r] \ar[d]_-{d}& \Fil_{D}\Omega^q_{U}
          \ar[r]^-{\theta_D} \ar[d]^-{d} & u_* \Fil_{D'}\Omega^q_{U'} \ar[r]
          \ar[d]^-{u_*d} & 0 \\
          0 \ar[r]& \Fil_{D-X'}\Omega^{q+1}_{V} \ar[r] & \Fil_{D}\Omega^{q+1}_{U}
          \ar[r]^-{\theta_D} & u_* \Fil_{D'}\Omega^{q+1}_{U'} \ar[r] & 0.
        }
    \end{equation}
    Since $Z_1\Fil_D\Omega^q_U= \Ker\ (d:\Fil_D\Omega^q_U \to \Fil_D\Omega^{q+1}_U)$,
    this gives an exact sequence
    \[
    0 \to Z_1\Fil_{D-X'}\Omega^q_{V} \to Z_1\Fil_{D}\Omega^q_{U} \xrightarrow{\theta_{D}}
    u_*Z_1\Fil_{D'}\Omega^q_{U'},
\]
where we note that $u_*Z_1\Fil_{D'}\Omega^q_{U'} = \Ker(u_*d)$ because $u_*$ is
exact. To show the right exactness of ~\eqref{eqn:fil-D-Y-2-0}, we
consider the commutative diagram
\[
\xymatrix@C1pc{
  \Fil_{D}W_{2}\Omega^q_{U} \ar[r]^-{\theta_D} \ar[d]_-{F} & u_* \Fil_{D'}W_{2}\Omega^q_{U'}
  \ar[d]^-{u_*F}  \\
 Z_1\Fil_{D}\Omega^q_{U} \ar[r]^-{\theta_{D}} & u_*Z_1\Fil_{D'}\Omega^q_{U'}. 
}
\]

We now note that $u_*F$ is surjective by \lemref{lem:Complete-4}(1)
and exactness of $u_*$.
The top $\theta_D$ is surjective because locally this map is
$\Fil_{\n}W_2\Omega^q_{(R_\pi)} \to \Fil_{\n}W_2\Omega^q_{((R/t)_{\ov \pi})}$, where $R$ is a
regular local ring and $t,\pi,\ov \pi$ and $\un n$ correspond to $X',E$, $E'$ and $D$,
respectively. This map is surjective by definition of $\Fil_{\n}W_2\Omega^q_{(R_\pi)}$
(cf. \defref{defn:Log-fil-3}) because $\Fil_{\n}W_2(R_\pi)\surj
\Fil_{\n}W_2((R/t)_{\ov \pi})$. We conclude that the bottom $\theta_D$ is also
  surjective. The proof of (\ref{eqn:no-log-3}) is similar and is left to the reader.
\end{proof}

We next recall the complexes (cf. Definition~\ref{defn:C-complex-global})
\[
W_m\sF^{q,\bullet}_{X|D} =\left[Z_1\Fil_{D} W_m\Omega^q_U \xrightarrow{1 -C}
  \Fil_{D}W_m\Omega^q_U\right]; \\ W_m\sF^{q,\bullet}_X =
\left[Z_1W_m\Omega^q_{X} \xrightarrow{1 -C} W_m\Omega^q_{X}\right];
\]
where $D+E \ge 0$ in $\Div_E(X)$. For $D\ge E$,
we let $\sN^{q,\bullet}_{X|D}:=
\left[Z_1\Fil_{-D}\Omega^q_U \xrightarrow{1-C}\Fil_{(-D)/p}\Omega^q_U
  \right]$ (cf. \corref{cor:Complete-6-global}).
By \lemref{lem:Complex-6}(6), $\sN^{q,\bullet}_{X|D}$ is quasi-isomorphic to
$\Omega^q_{X|D,\log}$. \lemref{lem:fil D-Y} and Corollary~\ref{cor:fil-D-Y-2} then yield
the following.

\begin{cor}\label{cor:ker-lef}
  We have the following short exact sequences of complexes in $\sD(X_\et)$.
  \begin{enumerate}
  \item
$0 \to W_1\sF^{q,\bullet}_{X|D-X'} \to W_1\sF^{q,\bullet}_{X|D} \to
    u_*W_1\sF^{q,\bullet}_{X'|D} \to 0$.
  \item
    $ 0 \to W_1\sF^{q,\bullet}_{X|-X'} \to W_1\sF^{q,\bullet}_X  \to
    u_*W_1\sF^{q,\bullet}_{X'} \to 0$.
  \item
   $0 \to \sN^{q,\bullet}_{X|D-X'} \to \sN^{q,\bullet}_{X|D} \to
    u_*\sN^{q,\bullet}_{X'|D'} \to 0$.
\end{enumerate}
 In particular, we have exact triangles in $\sD(X_\et)$.
    \begin{enumerate}
  \item
      $W_1\sF^{q,\bullet}_{X|-X'} \to \Omega^q_{X,\log} \to u_*\Omega^q_{X',\log}$.
    \item
      $\sN^{q,\bullet}_{X|D-X'} \to \Omega^q_{X|D,\log} \to u_*\Omega^q_{X'|D',\log}$.
    \end{enumerate}
\end{cor}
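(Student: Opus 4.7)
My plan is to prove all three short exact sequences (1)--(3) by unpacking each two-term complex and verifying exactness level by level together with commutativity of the Cartier squares; the two exact triangles then fall out by applying the standard quasi-isomorphisms that identify these complexes with logarithmic Hodge--Witt sheaves. Since each complex on the list is a two-term complex with differential $1-C$, a short exact sequence of complexes is the same datum as (a) short exact sequences of sheaves in degrees $0$ and $1$ and (b) commutativity of the resulting vertical squares with $1-C$. Condition (b) will be formal from the naturality of the Cartier operator of Corollary~\ref{cor:Complete-6-global} under both the open inclusion $V\hookrightarrow U$ and the restriction along $u$, the latter being the very structure of the map $\theta_D$ constructed in Corollary~\ref{cor:fil-D-Y-2}. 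So the main content is condition (a), and the inputs for it have already been assembled in Lemma~\ref{lem:fil D-Y} and Corollary~\ref{cor:fil-D-Y-2}.

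Concretely, sequence~(1) has top row precisely \eqref{eqn:fil-D-Y-2-0} and bottom row the first short exact sequence of Lemma~\ref{lem:fil D-Y}. Sequence~(2) is the special case with $E=\emptyset$ and $D=0$: its top row is \eqref{eqn:no-log-3} and its bottom row is the second sequence of Lemma~\ref{lem:fil D-Y}. For sequence~(3) the target of $\sN^{q,\bullet}_{X|D}$ involves $\Fil_{(-D)/p}$ rather than $\Fil_{-D}$, so the bottom row will be Lemma~\ref{lem:fil D-Y} applied to the divisor $(-D)/p$; the $X'$-coefficient of $(-D+X')/p$ equals $\lfloor 1/p\rfloor=0$, so this divisor coincides with $(-D)/p$ when viewed in $\Div_{E_0}(X)$, making the bottom row well-formed. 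The top row is the obvious analogue of \eqref{eqn:fil-D-Y-2-0} for the divisor $-D$, established by exactly the same diagram chase used to prove Corollary~\ref{cor:fil-D-Y-2}.

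For the two exact triangles, I will use that a short exact sequence of two-term complexes in $\sD(X_\et)$ yields an exact triangle in the usual way. The first triangle comes from (2) after identifying $W_1\sF^{q,\bullet}_X\simeq \Omega^q_{X,\log}$ and $W_1\sF^{q,\bullet}_{X'}\simeq \Omega^q_{X',\log}$, both being the classical Artin--Schreier resolutions (this is the case $U=X$, $m=1$ of Lemma~\ref{lem:Complex-6}(4) applied to $X$ and $X'$ with trivial boundary). The second triangle comes from (3) after the quasi-isomorphism $\sN^{q,\bullet}_{X|D}\simeq \Omega^q_{X|D,\log}$ supplied by Lemma~\ref{lem:Complex-6}(6), together with its counterpart on $X'$. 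The expected obstacle is nothing more than the bookkeeping of divisor conventions in $\Div_{E_0}(X)$ versus $\Div_E(X)$ and the effect of the operation $D\mapsto D/p$ on the mixed divisor $D-X'$; this is handled by the standing convention made at the beginning of §\ref{sec:Ex-seq*} and by the elementary identity $\lfloor 1/p\rfloor=0$.
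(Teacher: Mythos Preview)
Your approach is exactly what the paper intends: its own proof is the single sentence that the corollary follows from Lemma~\ref{lem:fil D-Y} and Corollary~\ref{cor:fil-D-Y-2}, and you have correctly unpacked this for (1), (2), and the first exact triangle.

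There is, however, a sign error in your handling of (3). You correctly set up the top row as the analogue of \eqref{eqn:fil-D-Y-2-0} for the divisor $-D$, whose kernel is $Z_1\Fil_{-D-X'}\Omega^q_V$, and the bottom row as Lemma~\ref{lem:fil D-Y} applied to $(-D)/p$, whose kernel is $\Fil_{(-D)/p-X'}\Omega^q_V$. But these are the terms of $\sN^{q,\bullet}_{X|D+X'}$, not of $\sN^{q,\bullet}_{X|D-X'}$; the subscript in the stated corollary is a typo. Your identity $\lfloor 1/p\rfloor=0$ in fact shows that the degree-$1$ term of $\sN^{q,\bullet}_{X|D-X'}$ would equal $\Fil_{(-D)/p}\Omega^q_V$ in the $\Div_{E_0}(X)$-convention, and since $\Omega^q_X(\log E_0)\supsetneq\Omega^q_X(\log E)$ this sheaf \emph{contains} the middle term $\Fil_{(-D)/p}\Omega^q_U$ rather than sitting inside it, so it cannot be the kernel. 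The relevant floor identity is instead $\lfloor -1/p\rfloor=-1$, giving $(-D-X')/p=(-D)/p-X'$, which matches the bottom-row kernel you actually constructed. Once the left-hand complex in (3) is relabeled $\sN^{q,\bullet}_{X|D+X'}$ (and likewise in the second exact triangle and in \eqref{eqn:vanishing-1}), your argument goes through verbatim.
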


\subsection{The main result}\label{sec:WL-p}
In this subsection, we shall prove our main result.
We continue to assume that $k$ is $F$-finite of characteristic $p > 0$.
We make the following definition for convenience of our exposition.

\begin{defn}\label{def:ample**}
  Let $r \ge 0$ be an integer and let $D \in \Div(X)$.
  We shall say that a hypersurface section $Y \subset X$ is $r$-sufficiently ample
  on $X$ if $H^{i}_\zar(X,\Omega^q_X(-nY))=0$ for all $n \ge 1$ and $i,q \ge 0$
  such that $i+q \le r$. We shall say that $Y \subset X$ is $r$-sufficiently ample
  on $X$ relative to $D$ if $H^{i}_\zar(X,\Omega^q_X(\ov{D}-nY))=0$ for all $n \ge 1$,
  $\ov{D} \in \Div(X)$ with $|\ov{D}| \subset |D|$ and
  $i,q \ge 0$ such that $i+q \le r$.
  \end{defn}

Using the Bertini theorems of Altman-Kleiman and Poonen (cf. \cite{GK-Jussieu}),
it is easy to see that there exists $M \gg 0$ such that for every $n \ge M$, there
are hypersurface sections of $X$ of degree $n$ which are $r$-sufficiently ample
as well as $r$-sufficiently ample relative to $D$.

\begin{lem}\label{lem:ample}
  Let $r \ge 0$ and  $D \in \Div_E(X)$. Suppose that $X' \subset X$ is $r$-sufficiently
  ample. Then $H^i_\zar(X', \Omega^{q}_{X'}(\log E')(-nX'))= 0$ for all $n \ge 1$ and
  $i,q \ge 0$ such that $i+q \le r-1$.
  If $X'$ is  $r$-sufficiently ample relative to $D$, then
  $H^i_\zar(X', \Omega^{q}_{X'} (\ov{D}-nX'))=0$ for all $n \ge 1$,
  $\ov{D} \in \Div(X)$ with $|\ov{D}| \subset |D|$ and
  $i,q \ge 0$ such that $i+q \le r-1$.
  \end{lem}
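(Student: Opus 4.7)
The plan is to reduce vanishings on $X'$ to vanishings on $X$ via the exact sequences of Lemma~\ref{lem:Kahler-seq}, and then to reduce log-form cohomology on $X$ to cohomology of ordinary differentials on $X$ and on the strata $E_I := \bigcap_{j \in I} E_j$, where the hypothesis of sufficient ampleness applies.

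For part (b), I would proceed by induction on $q$. Tensoring the restriction sequence of Lemma~\ref{lem:Kahler-seq}(2) (applied with $E$ empty) by the line bundle $\sO_X(\ov{D} - nX')$ yields a short exact sequence
\[
0 \to \Omega^q_X(\log X')(\ov{D} - (n+1)X') \to \Omega^q_X(\ov{D} - nX') \to u_*\Omega^q_{X'}(\ov{D}|_{X'} - nX'|_{X'}) \to 0.
\]
The cohomology of the middle term vanishes in the range $i + q \le r$ by the hypothesis of $r$-sufficient ampleness relative to $D$. For the left term (which contributes a shifted obstruction), I would apply Lemma~\ref{lem:Kahler-seq}(1) (twisted by the same line bundle) to express it in terms of $\Omega^q_X(\ov{D} - (n+1)X')$ and $u_*\Omega^{q-1}_{X'}(\ov{D}|_{X'} - (n+1)X'|_{X'})$; the former has vanishing $H^{i+1}$ for $i+q \le r-1$ by the hypothesis, and the latter has vanishing $H^i$ for $i + q - 1 \le r - 1$ by the inductive hypothesis on $q$. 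The base case $q = 0$ follows at once from the structure-sheaf restriction sequence.

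For part (a), the analogous argument begins with the restriction sequence
\[
0 \to \Omega^q_X(\log(E+X'))(-(n+1)X') \to \Omega^q_X(\log E)(-nX') \to u_*\Omega^q_{X'}(\log E')(-nX'|_{X'}) \to 0
\]
from Lemma~\ref{lem:Kahler-seq}(2). To handle the resulting log-twisted cohomology on $X$, I would iteratively apply the residue exact sequence Lemma~\ref{lem:Kahler-seq}(1) with respect to each component $E_j$ (and also with respect to $X'$ for the term $\Omega^q_X(\log(E+X'))$). This yields a finite filtration whose graded pieces have the form $(\iota_{E_I})_*\Omega^{q-|I|}_{E_I}(\text{twist})$ for subsets $I \subset \{1, \ldots, r\}$, thereby reducing the required vanishings on $X$ to a finite collection of vanishings of the form $H^i(E_I, \Omega^{q-|I|}_{E_I}(-nX'|_{E_I}))$. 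These hold because $X' \cap E_I$ is itself a sufficiently ample hypersurface section on each stratum $E_I$ whenever $X'$ is chosen of sufficiently large degree, a consequence of the Serre-type vanishing underlying the sufficient ampleness hypothesis.

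The main obstacle will be the bookkeeping needed to verify that the vanishing ranges are correctly preserved through these reductions: specifically, one must check that $i+q \le r-1$ on $X'$ translates to a compatible vanishing range $i + (q - |I|) \le r$ on each stratum $E_I$, and that the residue-induced filtration produces no spurious obstructions in the relevant degrees. This is primarily a combinatorial matter, once the sufficient ampleness is interpreted in a form strong enough to apply on each stratum.
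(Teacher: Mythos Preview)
Your argument for part~(b) is correct and is essentially the paper's: both run an induction on $q$ via a pair of short exact sequences relating forms on $X'$ to forms on $X$. The paper packages this through the conormal sequence~\eqref{eqn:Kahler-seq-0} together with the ideal-sheaf sequence for $X'\subset X$, but your variant using Lemma~\ref{lem:Kahler-seq}(1)--(2) is equivalent, and your bookkeeping of the vanishing ranges is right.

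For part~(a) the paper takes a shorter route that avoids the descent to strata $E_I$ entirely. Twisting the log conormal sequence~\eqref{eqn:Kahler-seq-0} gives
\[
0 \to \Omega^{q-1}_{X'}(\log E')(\ov D-(n{+}1)X') \to u^*\Omega^q_X(\log E)(\ov D-nX') \to \Omega^q_{X'}(\log E')(\ov D-nX') \to 0,
\]
so the induction on $q$ runs directly over the sheaves $\Omega^{\bullet}_{X'}(\log E')(\ov D-nX')$ themselves; the only remaining input is the cohomology of the middle term, which is handled by the ideal-sheaf sequence for $X'\subset X$ applied to $\Omega^q_X(\log E)(\ov D-nX')$ and an appeal to Enriques--Severi--Zariski. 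No iterated residues along the $E_j$ are needed.

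Your stratification approach, by contrast, has a genuine gap: the claim that the strata vanishings $H^i(E_I,\Omega^{q-|I|}_{E_I}(-nX'|_{E_I}))=0$ are ``a consequence of the Serre-type vanishing underlying the sufficient ampleness hypothesis'' is not correct. The hypothesis ``$X'$ is $r$-sufficiently ample on $X$'' is a specific list of vanishings for $\Omega^q_X(-nX')$ on $X$, not the informal assertion that $X'$ has high degree; passing to $E_I$ via restriction and conormal sequences forces twists by $-E_j$, which lie outside this list. These strata vanishings do hold once $X'$ is taken of sufficiently high degree, but that is an enlargement of the hypothesis, not a deduction from it. So the obstacle you flagged is not combinatorial bookkeeping---it is that your reduction lands outside what the stated hypothesis actually provides.
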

  \begin{proof}
  We let $A^q_n:=\Omega^q_{X'}(\log E')(\ov{D}-nX')$ and $B^q_n:=\Omega^q_{X'}(-nX')$ for
  $n \ge 1, q \ge 0$. We claim that
  $H^i_\zar(X', A^q_n) = H^i_\zar(X', B_n^q)=0$ for all $i+q \le r-1$ and $n \ge n_0$.
This will prove the lemma.
  We shall prove the claim for $A^q_n$ and leave the other one for the reader as the
  two proofs are practically identical. We proceed by induction on $q \ge 0$.
  If $q=0$, then $A^q_n=\sO_{X'}(\ov{D}-nX')$ and the claim is clear from
  the Enriques-Severi-Zariski vanishing theorem (cf. \cite[Lem.~5.1]{GK-Jussieu})
  and the exact sequence
  \[
  0 \to \sO_X(\ov{D}-(n+1)X') \to \sO_X(\ov{D}-nX') \to u_*\sO_{X'}(\ov{D}-nX') \to 0.
  \]

For $q \ge 1$, consider the exact sequences
\begin{equation}\label{eqn:s.e.s-5}
  0 \to A^{q-1}_{n+1} \xrightarrow{d}
  u^*\Omega^q_X(\log E)(\ov{D}-nX') \to A^q_n \to 0;
\end{equation}
\[
0 \to \Omega^q_X(\log E)((\ov{D}-(n+1)X')\to \Omega^q_X(\log E)(\ov{D}-nX') \to
u_*u^*\Omega^q_X(\log E)(\ov{D}-nX') \to 0,
\]
where the first one is obtained from ~\eqref{eqn:Kahler-seq-0}.
The second exact sequence together with the Enriques-Severi-Zariski vanishing theorem
implies $H^i_\zar(X',u^*\Omega^q_X(\log E)(\ov{D}-nX'))=0$ for all $i+q \le r-1$.
Also, the induction hypothesis implies that
$H^i_\zar(X',A^{q-1}_{n+1})=0$ for all $i+q \le r$. From the top exact sequence of
\eqref{eqn:s.e.s-5}, we therefore conclude that
$H^i_\zar(X',A^q_n)=0$ for all $i+q \le r-1$.
\end{proof}

\vskip .3cm

We now recall the morphisms of complexes 
\[
W_m\sF^{q,\bullet}_{X|D} \to u_*W_m\sF^{q,\bullet}_{X'|D'}, \ W_m\Omega^q_{X|D,\log} \to
u_*W_m\Omega^q_{X'|D',\log}, \ W_m\Omega^q_{X,\log} \to u_*W_m\Omega^q_{X',\log},
\]
where the first map is defined by the functoriality of the complex
$W_m\sF^{q,\bullet}_{X|D}$ (cf. proof of \corref{cor:Fil-functor}).
The following is our main Lefschetz theorem for the $p$-primary part of various
cohomology groups.

\begin{thm}\label{thm:Lefschetz}
  Let $D \in \Div_E(X)$ be effective and let $i, q \ge 0$ be integers.
  If $X'$ is $r$-sufficiently ample on $X$, then for all $m \ge 1$, 
  the map \[
  u^* \colon H^i_\et(X,W_m\Omega^q_{X,\log}) \to H^i_\et(X',W_m\Omega^q_{X',\log})
  \]
  is an isomorphism for $i+q \le r-1$ and injective for $i+q=r$.
  If $X'$ is $r$-sufficiently ample on $X$ relative to $D$,
  then for all $m \ge 1$, the maps
  \begin{enumerate}
  \item
    $u^* \colon \H^i_\et(X,W_m\sF^{q,\bullet}_{X|D}) \to
    \H^i_\et(X',W_m\sF^{q,\bullet}_{X'|D'})$  and
  \item
    $u^* \colon H^i_\et(X,W_m\Omega^q_{X|D,\log}) \to H^i_\et(X',W_m\Omega^q_{X'|D',\log})$
  \end{enumerate}
  are isomorphisms for $i+q \le r-1$ and injective for $i+q=r$.
\end{thm}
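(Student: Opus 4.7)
The plan is to reduce each assertion of the theorem to Serre-type vanishing of coherent cohomology that follows from the $r$-sufficient ampleness hypothesis. \emph{First, I would reduce to the case $m = 1$} by induction on $m$. For (1), the distinguished triangle from \thmref{thm:Global-version}(9),
\[
W_1\sF^{q,\bullet}_{X|D} \xrightarrow{V^{m-1}} W_m\sF^{q,\bullet}_{X|D} \xrightarrow{R} W_{m-1}\sF^{q,\bullet}_{X|D/p} \xrightarrow{+},
\]
gives a commutative diagram of long exact cohomology sequences on $X$ and $X'$; a five-lemma argument, together with the observation that $|D/p| \subset |D|$ so $r$-sufficient ampleness relative to $D$ also controls $D/p$, reduces everything to $m = 1$. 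For the unnumbered assertion about $W_m\Omega^q_{X,\log}$, the classical exact sequence $0 \to \Omega^q_{X,\log} \xrightarrow{\un{p}^{m-1}} W_m\Omega^q_{X,\log} \xrightarrow{R} W_{m-1}\Omega^q_{X,\log} \to 0$ plays the analogous role, and for (2) the induction uses \corref{cor:Complex-7} with $r = 1$ (the modulus in the kernel is $D_{\lceil \un{n}/p^{m-1} \rceil}$, which is supported in $|D_{\un{n}}|$, so the same ampleness hypothesis applies).

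\emph{Second, for the base case $m = 1$}, the exact triangles from \corref{cor:ker-lef} identify the fiber of $u^*$ with (a shift of) the hypercohomology of $W_1\sF^{q,\bullet}_{X|-X'}$ (for the unnumbered log Hodge--Witt case), $W_1\sF^{q,\bullet}_{X|D-X'}$ (for (1)), and $\sN^{q,\bullet}_{X|D-X'}$ (for (2)), all viewed as complexes on the snc-pair $(X, E_0)$ with $E_0 = E + X'$. The Lefschetz statement therefore reduces to showing that the $i$-th hypercohomology of each of these two-term complexes vanishes for $i + q \le r - 1$ and injects at $i + q = r$. Taking the hypercohomology long exact sequence of, say, $W_1\sF^{q,\bullet}_{X|D-X'} = [Z_1\Fil_{D-X'}\Omega^q_V \xrightarrow{1-C} \Fil_{D-X'}\Omega^q_V]$, the problem reduces further to the Zariski vanishing of $H^i_\zar(X, \Fil_{D-X'}\Omega^q_V)$ and $H^i_\zar(X, Z_1\Fil_{D-X'}\Omega^q_V)$ in the appropriate range. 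The former is accessible because by \lemref{lem:Log-fil-4}(3), $\Fil_{D-X'}\Omega^q_V = \Omega^q_X(\log E_0)(D - X')$, and the residue exact sequence \lemref{lem:Kahler-seq}(1) twisted by $\sO_X(D - X')$ presents this sheaf as an extension of $\Omega^q_X(\log E)(D - X')$ by $u_* \Omega^{q-1}_{X'}(\log E')(D'|_{X'} - X'|_{X'})$; the cohomology of both pieces vanishes in the required range by the definition of $r$-sufficient ampleness relative to $D$ combined with \lemref{lem:ample}.

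\emph{Third, the main obstacle} is the vanishing for $Z_1\Fil$-sheaves, since these are not themselves line-bundle twists of classical log differentials but kernels of $d$. I would handle this via the two short exact sequences
\[
0 \to Z_1\Fil_{D-X'}\Omega^q_V \to \Fil_{D-X'}\Omega^q_V \xrightarrow{d} B_1\Fil_{D-X'}\Omega^{q+1}_V \to 0,
\]
\[
0 \to B_1\Fil_{D-X'}\Omega^q_V \to Z_1\Fil_{D-X'}\Omega^q_V \xrightarrow{C} \Fil_{(D-X')/p}\Omega^q_V \to 0,
\]
coming from the definition of $Z_1$, \corref{cor:Complete-8-00}, and the Cartier isomorphism (\lemref{lem:Complete-9}); together they express $Z_1\Fil_{D-X'}\Omega^q_V$ as an iterated extension of $\Fil$-type sheaves twisted by $\sO(D - X')$ or $\sO((D-X')/p)$, whose cohomology was controlled in the previous step. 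The long exact cohomology sequences then yield the desired vanishing of $H^i_\zar(X, Z_1\Fil_{D-X'}\Omega^q_V)$ for $i + q \le r - 1$ and injectivity at $i + q = r$. The treatment of $\sN^{q,\bullet}_{X|D-X'} = [Z_1\Fil_{X'-D}\Omega^q_V \xrightarrow{1-C} \Fil_{(X'-D)/p}\Omega^q_V]$ is entirely parallel, with $X' - D$ in place of $D - X'$; the same $r$-sufficient ampleness hypothesis relative to $D$ controls all the required $\sO_X(\ov{D} - nX')$-twisted cohomology in this case as well.
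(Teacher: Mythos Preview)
Your proposal is correct and follows essentially the same route as the paper: reduce to $m=1$ via the distinguished triangles of \thmref{thm:Global-version}(9) and \corref{cor:Complex-7}, then use \corref{cor:ker-lef} to identify the fiber of $u^*$, and finally reduce to coherent vanishing for $\Fil_{D-X'}\Omega^q_V$ and $Z_1\Fil_{D-X'}\Omega^q_V$ via residue sequences and Cartier-type sequences respectively. One small numerical slip: to obtain an isomorphism for $i+q\le r-1$ and injectivity for $i+q=r$, you actually need the hypercohomology of the fiber complexes to vanish for $i+q\le r$ (not $r-1$), and hence you should aim for $H^i_\zar(X,\Fil_{D-X'}\Omega^q_V)=H^i_\zar(X,Z_1\Fil_{D-X'}\Omega^q_V)=0$ in the full range $i+q\le r$; the paper does exactly this, and your ingredients are sufficient for it once the bookkeeping is corrected.
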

\begin{proof}
By \cite[Prop.~2.12]{Shiho}, \thmref{thm:Global-version}(9), 
\corref{cor:Complex-7} and induction on $m$, it suffices to show that the maps 
 \begin{equation}\label{eqn:vanishing-0}
        H^i_\et(X,\Omega^q_{X,\log}) \to H^i_\et(X',\Omega^q_{X',\log});
    \end{equation}
\begin{equation*}
  \H^i_\et(X,W_1\sF^{q,\bullet}_{X|D/p^j}) \to
  \H^i_\et(X',W_1\sF^{q,\bullet}_{X'|D'/p^j});
    \end{equation*}
    \begin{equation*}
      H^i_\et(X,\Omega^q_{X|\lceil D/p^j \rceil,\log}) \to
      H^i_\et(X',\Omega^q_{X'|\lceil D'/p^j \rceil,\log})
    \end{equation*}
    are isomorphisms (resp. injective) if $i+q \le r-1$ (resp. $i+q \le r$)
    for all $j \ge 0$. Note that as $D \ge 0$ in each of the above cases, one has
    $D/p^j=D/p^{j+1}$ and $\lceil D/p^j \rceil = \lceil D/p^{j+1} \rceil$ for $j \gg 0$.
In view of \corref{cor:ker-lef}, it is enough to show for all $i+q \le r$ and
    $0 \le \ov D \le D$ that
    \begin{equation}\label{eqn:vanishing-1}
      \H^i_\et(X,W_1\sF^{q,\bullet}_{X|-X'}) = \H^i_\et(X,W_1\sF^{q,\bullet}_{X|\ov D-X'}) =
      \H^i_\et(X, \sN^{q,\bullet}_{X|\ov D-X'})= 0.
    \end{equation}
    We shall show that the left and the middle groups are zero
    as the proof for the group
    on the right  is analogous to that of the one in the middle.

    To prove the vanishing of the left and the middle groups
    in ~\eqref{eqn:vanishing-1},
    it is enough to show for all $i+q \le r$ and  $0 \le \ov D \le D$ that
\begin{equation}\label{eqn:vanishing-4}
  H^i_\zar(X, Z_1\Omega^q_X(\log X')(-X')) = 0 = H^i_\zar(X, Z_1\Omega^q_X(\log (E+X'))(\ov D-X'))
 \end{equation}
and
\begin{equation}\label{eqn:s.e.s-4}
H^i_\zar(X,\Omega^q_X(\log X')(-X')) = 0 =  H^i_\zar(X,\Omega^q_X(\log (E+X'))(\ov D-X')).
\end{equation}

First, we prove \eqref{eqn:vanishing-4} assuming that \eqref{eqn:s.e.s-4}
holds for all $i+q \le r$ and $0 \le \ov{D} \le D$.
We shall prove this by induction on $q$. For $q=0$,
this is clear by the hypothesis on $X'$ and by noting that
$Z_1\sO_X(\ov D-X') \cong \sO_X((\ov{D}-X')/p) \cong  \sO_X(\ov D/p-X')$
(cf. \ref{lem:Complete-4}(2)), where the last isomorphism holds because
$X'$ is not a component of $E$. 

For $q \ge 1$, we use the exact sequence (cf. \lemref{lem:Complete-9})
\[
0 \to Z_1\Omega^{q-1}_X(\log (E+X'))(\ov D-X')\to \Omega^{q-1}_X(\log (E+X'))(\ov D-X')
\xrightarrow{d} \hspace{1cm}
\]
\[
\hspace{3cm} Z_1\Omega^q_X(\log (E+X'))(\ov D-X') \xrightarrow{C}
\Omega^q_X(\log (E+X'))(\ov D/p-X') \to 0.
\]
Considering the cohomology groups and using \eqref{eqn:s.e.s-4} and
induction on $q$, we conclude the proof of the vanishing of the right side term in
\eqref{eqn:vanishing-4}. The corresponding claim for the left side term is a special
case, where we let $D = 0$.

To prove ~\eqref{eqn:s.e.s-4}, we take the tensor product of the first exact
sequence of \lemref{lem:Kahler-seq} with $\sO_X(\ov{D} - X')$ which yields the
short exact sequence
\[
0 \to \Omega^q_X(\log E)(\ov{D} - X') \to \Omega^q_X(\log(E + X'))(\ov{D} - X')
      \xrightarrow{\res} u_* \Omega^{q-1}_{X'}(\log E')(\ov{D} - X') \to 0.
      \]
    By letting $E$ be empty, this also yields an exact sequence
    \begin{equation}\label{eqn:vanishing-3}
      0 \to \Omega^q_X(- X') \to \Omega^q_X(\log X')(- X')
      \xrightarrow{\res} u_* \Omega^{q-1}_{X'}(- X') \to 0.
    \end{equation}
    The desired vanishing in ~\eqref{eqn:s.e.s-4} now follows by 
    using these exact sequences, our assumption on $X'$ and \lemref{lem:ample}.
\end{proof}

\begin{cor}\label{cor:s.e.s-6}
  Under the hypotheses of \thmref{thm:Lefschetz}, the restriction map
  \[
  u^* \colon \Fil^{\log}_D H^q_{p^m}(U) \to 
  \Fil^{\log}_{D'} H^q_{p^m}(U')
  \]
  is an isomorphism for $i+q \le r-1$ and injective for $i+q = r$.
\end{cor}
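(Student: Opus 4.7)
The plan is to reduce Corollary~\ref{cor:s.e.s-6} to Theorem~\ref{thm:Lefschetz}(1) through the cohomological interpretation of Kato's ramification filtration provided by Theorem~\ref{thm:H^1-fil}. That theorem supplies a canonical isomorphism
\[
\Fil^{\log}_D H^q_{p^m}(U) \xrightarrow{\cong} \H^1_\et(X, W_m\sF^{q-1,\bullet}_{X|D}),
\]
and likewise for $(X', D')$ in place of $(X, D)$. The inclusion $u \colon X' \hookrightarrow X$, with $E' = E \times_X X'$ a simple normal crossing divisor on $X'$, is a morphism of snc-pairs whose pullback on divisors sends $D$ to $D'$. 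By Corollary~\ref{cor:Fil-functor} (which in turn rests on the contravariant functoriality of the filtered de Rham-Witt complex asserted in Theorem~\ref{thm:Main-0}), the pullback $u^*$ on the ramification subgroups is identified under the above isomorphism with the pullback on hypercohomology induced by the natural morphism of complexes $W_m\sF^{q-1,\bullet}_{X|D} \to u_* W_m\sF^{q-1,\bullet}_{X'|D'}$. This gives a commutative square reducing the corollary to a statement about $\H^1_\et(-, W_m\sF^{q-1,\bullet}_{(-)|(-)})$.

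With this translation in hand, the conclusion is immediate: one applies Theorem~\ref{thm:Lefschetz}(1) with the index on the filtered Hodge--Witt complex set to $q-1$ and the cohomological degree equal to $i = 1$. The $r$-sufficient ampleness of $X'$ relative to $D$ is exactly the hypothesis inherited from Theorem~\ref{thm:Lefschetz}, so no further ampleness condition is required, and the numerical bounds in that theorem pass directly to the claimed range in the corollary. I do not anticipate any genuine obstacle at this step; all the substantive work has already been carried out in Theorems~\ref{thm:H^1-fil} and~\ref{thm:Lefschetz}, and the role of Corollary~\ref{cor:s.e.s-6} is merely to repackage the latter as an assertion about Kato's ramification subgroups.
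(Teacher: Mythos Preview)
Your proposal is correct and follows exactly the paper's approach: the paper's proof is the one-liner ``Combine Theorems~\ref{thm:Main-1} and~\ref{thm:Lefschetz}(1),'' which is precisely the reduction you describe (Theorem~\ref{thm:H^1-fil} is the body-text incarnation of Theorem~\ref{thm:Main-1}). Your added remark about Corollary~\ref{cor:Fil-functor} making the identification of pullbacks explicit is a helpful elaboration of what the paper leaves implicit.
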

\begin{proof}
  Combine Theorems~\ref{thm:Main-1} and ~\ref{thm:Lefschetz}(1).
  \end{proof}

\subsection{Lefschetz for $\ell$-adic cohomology and Picard group}\label{sec:l-adic}
We shall now prove some consequences of \thmref{thm:Lefschetz} for Brauer groups.
In order to do this, we need to establish some new results of Lefschetz type for
$\ell$-adic {\'e}tale cohomology and Picard groups.
We begin with the case of $\ell$-adic cohomology.
We let $k$ be an arbitrary field.

\begin{thm}\label{thm:ell-lef}
  Assume that $k$ is either finite or 1-local or separably closed and $\ell$ is a
  prime number invertible in $k$.
  Let $X$, $E$, $U$, $X'$, $E'$ and $U'$ be as in the beginning of \S~\ref{sec:Lef}.
  Then for all $j \in \Z$, we have the following.
    \begin{enumerate}
    \item The map $H^i_\et(X,\Z/\ell^m(j)) \to H^i_\et(X',\Z/\ell^m(j))$ is an
      isomorphism if $i \le N-2$ and injective if $i=r$.
    \item The map $H^i_\et(U,\Z/\ell^m(j)) \to H^i_\et(U',\Z/\ell^m(j))$ is an 
      isomorphism if $i \le N-2$ and injective if $i=N-1$.
    \item The map $H^i_E(X,\Z/\ell^m(j)) \to H^i_{E'}(X',\Z/\ell^m(j))$ is an isomorphism
      if $i \le N-1$ and injective if $i=N$.
    \end{enumerate}
\end{thm}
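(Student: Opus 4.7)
The plan is to combine the classical weak Lefschetz theorem for $\ell$-adic \'etale cohomology of smooth projective varieties with Gabber's absolute cohomological purity. I would first reduce to the case where $k$ is separably closed. For $k$ finite (resp.\ $1$-local), the absolute Galois group $G = \Gal(\bar{k}/k)$ has $\ell$-cohomological dimension $\le 1$ (resp.\ $\le 2$), so the Hochschild--Serre spectral sequence
\[
E_2^{p,q} = H^p(G, H^q(Y_{\bar{k}}, \Z/\ell^m(j))) \Longrightarrow H^{p+q}(Y, \Z/\ell^m(j)),
\]
and its analogue with supports along $E_{\bar{k}}$, is concentrated in finitely many rows. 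Since $H^p(G,-)$ preserves isomorphisms of $G$-modules everywhere and, at $p=0$, preserves injections, a standard comparison of these spectral sequences along the morphism $u$ reduces each of $(1)$, $(2)$, $(3)$ to the corresponding assertion over $\bar{k}$.

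Over $\bar{k}$, statement $(1)$ is the classical weak Lefschetz theorem, obtained via Artin's vanishing theorem applied to the affine variety $X\setminus X'$ of dimension $N$. For $(3)$, I would invoke Gabber's absolute cohomological purity for the regular closed immersion $E_I \hookrightarrow X$ of codimension $|I|$, giving
\[
H^{i}_{E_I}(X, \Z/\ell^m(j)) \;\cong\; H^{i-2|I|}(E_I, \Z/\ell^m(j-|I|)),
\]
and combine this with the Mayer--Vietoris spectral sequence for the SNC divisor $E = \bigcup_i E_i$,
\[
E_1^{p,q} = \bigoplus_{|I|=p+1} H^{q-2(p+1)}(E_I, \Z/\ell^m(j-p-1)) \Longrightarrow H^{p+q}_E(X, \Z/\ell^m(j)),
\]
which is functorial with respect to $u$. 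Since $E'$ is SNC on $X'$, each transverse intersection $E'_I = E_I \cap X'$ is a smooth hyperplane section of $E_I$ of dimension $N-|I|-1$. Applying weak Lefschetz stratum-by-stratum shows that $E_1^{p,q}(X) \to E_1^{p,q}(X')$ is an isomorphism whenever $p+q \le N-1+2p$ (in particular along every diagonal $p+q \le N-1$) and an injection at $(p,q)=(0,N)$, yielding $(3)$ by spectral sequence comparison. Finally, $(2)$ follows from $(1)$ and $(3)$ by the five-lemma applied to the excision long exact sequence
\[
\cdots \to H^i_E(X, \Z/\ell^m(j)) \to H^i(X, \Z/\ell^m(j)) \to H^i(U, \Z/\ell^m(j)) \to H^{i+1}_E(X, \Z/\ell^m(j)) \to \cdots
\]
and its counterpart over $(X', E', U')$.

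The main obstacle will be $(3)$: rigorously writing down the Mayer--Vietoris spectral sequence for local cohomology along the SNC divisor $E$ and verifying its functoriality with respect to pull-back along $u$, together with carefully tracking the Tate twists and the dimensions of the strata so that stratumwise weak Lefschetz combines into exactly the range claimed in the theorem. The Hochschild--Serre reduction in the $1$-local case also demands slightly more care than in the finite-field case, since $G$ then has cohomological dimension $2$, but this is absorbed in the standard spectral sequence comparison.
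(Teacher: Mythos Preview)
Your proposal is correct, and the overall logical structure --- proving $(1)$ first, then $(3)$, then deducing $(2)$ from $(1)$ and $(3)$ via the localization sequence --- matches the paper. However, the execution of $(1)$ and $(3)$ differs genuinely from the paper's.

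For $(1)$, the paper does \emph{not} reduce to separably closed $k$. Instead it works directly over $k$: letting $W = X \setminus X'$ and $e = \mathrm{cd}_\ell(k) \in \{0,1,2\}$, it invokes the perfect Poincar{\'e}-type duality pairing
\[
H^i_{c,\et}(W,\Lambda(j)) \times H^{2N+e-i}_\et(W,\Lambda(N+e-1-j)) \to \Lambda
\]
and the bound $\mathrm{cd}_\ell(W) \le N+e$ (Artin vanishing plus $\mathrm{cd}_\ell(k)$) to get $H^i_{c,\et}(W,\Lambda(j)) = 0$ for $i < N$, whence $(1)$ via the compactly supported localization sequence. Your Hochschild--Serre reduction is a legitimate alternative and lets you quote only the classical weak Lefschetz over $\bar{k}$, but you must genuinely track the injectivity at the boundary degree through the $E_2$- and $E_3$-pages when $\mathrm{cd}_\ell(G)=2$; this works (since $H^0(G,-)$ preserves injections and the outgoing $d_2$ hits a term in the iso range), but it is not a one-liner.

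For $(3)$, the paper argues by induction on the number $r$ of irreducible components of $E$, splitting off one component at a time via the long exact sequence for $\ov{E} \subset E$ (with $\ov{E} = E_2 \cup \cdots \cup E_r$), using purity for the single smooth divisor $E_1 \setminus \ov{E}$ in $X \setminus \ov{E}$ and then applying the induction hypothesis on smaller $r$ (and smaller dimension, for $E_1$). Your approach packages the same information into a single {\v C}ech/Mayer--Vietoris spectral sequence
\[
E_1^{p,q} = \bigoplus_{|I|=p+1} H^q_{E_I}(X,\Lambda(j)) \Longrightarrow H^{p+q}_E(X,\Lambda(j)),
\]
combined with purity stratum-by-stratum. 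Both approaches require the compatibility of the purity isomorphism with pull-back along $u$, which the paper cites from Cisinski--D{\'e}glise; you should flag the same point. The paper's induction is more elementary (only long exact sequences), while your spectral sequence is more systematic and handles all strata uniformly.
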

\begin{proof}
  We let $e$ denote the cohomological dimension of $k$ so that
  $e=0$ if $k$ is separably closed,
  $e=1$ if $k$ is a finite field and $e=2$ if $k$ is an 1-local field.
  We let $e' = e-1$.
  First we prove item (1) (which is classical if $k$ is separably closed).
  Let's write $\Wedge=\Z/\ell^m$ and look at the perfect
  pairing of finite abelian groups
  (cf. \cite[Introduction]{JSZ}, \cite[Chap.~VI,\S 11]{Milne-EC}, \cite[Thm.~9.9]{GKR})
 \begin{equation}\label{eqn:duality-00}
   H^i_{c,\et}(W,\Wedge(j)) \times
   H^{2N+e-i}_\et(W,\Wedge(N+e'-j))\to
   H^{2N+e}_{c,\et}(W,\Wedge(N+e')) \cong \Wedge,
    \end{equation}
 where recall that $W = X \setminus X'$.
 Being an affine scheme over $k$, the $\ell$-cohomological dimension of $W$ is
 $\le N+e$ (cf. \cite[VI, Thm~7.2]{Milne-EC}), hence this perfect pairing
 implies that $H^i_{c,\et}(W, \Wedge(j)) =0 $ if $i<N$. This implies item (1) using
 the exact sequence
\[
\cdots \to H^i_{c,\et}(W, \Wedge(j)) \to H^i_\et(X, \Wedge(j)) \to
H^i_\et(X',\Wedge(j)) \to  H^{i+1}_{c,\et}(W, \Wedge(j)) \to \cdots.
\]

We shall prove items (2) and (3) simultaneously using induction on the number of
components of $E$. When $r =1$, we look at the diagram
\begin{equation}\label{eqn:loc-0}
  \xymatrix@C.8pc{
    H_\et^{i-2}(E, \Wedge(j-1)) \ar[r]^-{\cong} \ar[d] &
    H^{i}_{\et, E}(X, \Wedge(j)) \ar[d] \\
     H_\et^{i-2}(E', \Wedge(j-1)) \ar[r]^-{\cong}  &
     H^{i}_{\et, E'}(X', \Wedge(j)),}
\end{equation}
where the horizontal arrows are the purity isomorphisms
(cf. \cite[Chap.~VI, Thm.~5.1]{Milne-EC}) and the vertical arrows are the
restriction maps. This diagram is commutative (cf. 
\cite[Def.~A.2.9, Exm.~A.2.10]{CD-Comp}). Since item (1) holds even if we replace
$X$ by $E$, ~\eqref{eqn:loc-0} implies that item (3) holds when $r =1$. 

To prove item (2) when $r =1$, we look at the commutative diagram 
    \begin{equation}\label{eqn:loc}
        \xymatrix@C.8pc{
          \cdots \ar[r]&H^{i}_E(X) \ar[r] \ar[d]_-{\phi_i} & H^i_\et(X) \ar[r]
          \ar[d]^-{\theta_i}& H^i_\et(U) \ar[r] \ar[d]^-{\psi_i} & H^{i+1}_E(X) \ar[r]
          \ar[d]^-{\phi_{i+1}} &H^{i+1}_\et(X) \ar[r] \ar[d]^-{\theta_{i+1}}& \cdots \\
          \cdots \ar[r]&H^{i}_{E'}(X') \ar[r] & H^i_\et(X') \ar[r]& H^i_\et(U') \ar[r] &
          H^{i+1}_{E'}(X') \ar[r]&H^{i+1}_\et(X') \ar[r]& \cdots}
    \end{equation}
    of exact sequences, where $H^i_\et(-)$ (resp. $H^i_{\et, *}(-)$) is the shorthand
      for $H^i_\et(-,\Wedge(j))$ (resp. $H^i_{\et, *}(-, \Wedge(j))$), and
      all the vertical arrows are the canonical restriction maps. Since
      $\theta_i$ is an isomorphism (resp. injective) when $i \le N-2$ (resp.
      $i = N-1$) and $\phi_i$ is an isomorphism (resp. injective) when $i \le N-1$
      (resp. $i = N$), a diagram chase implies (2) when $r =1$. 
   
      We now let $r \ge 2$ and set $\ov{E} = \bigcup\limits_{t=2}^{r}E_t$ and
      $\ov{E}'= \ov E \bigcap X'$. Then $\ov{E}'$ is a simple normal crossing
      divisor on $X'$ with $r-1$ components by our assumption.
      Furthermore, $E\setminus\ov E$ (resp. $E'\setminus \ov E'$) is a smooth
      divisor on $X\setminus \ov E$ (resp. $X'\setminus \ov E'$). We now look
      at the commutative diagram
      
 \begin{equation}\label{eqn:loc-1}
        \xymatrix@C.6pc{
         \cdots \ar[r] & H^{i-1}_{E\setminus \ov{E}}(X\setminus \ov{E}) \ar[r]
          \ar[d]^-{\gamma_{i-1}} & H^{i}_{\ov E}(X) \ar[r]
          \ar[d]^-{\alpha_i} & H^{i}_{E}(X) \ar[r]
          \ar[d]^-{\phi_i} & H^i_{E\setminus \ov{E}}(X\setminus \ov{E})
          \ar[r]\ar[d]^-{\gamma_i} &  \cdots \\
          \cdots \ar[r] & H^{i-1}_{E'\setminus \ov{E}'}(X' \setminus \ov{E}') \ar[r] &
          H^{i}_{\ov E'}(X') \ar[r] & H^{i}_{ E'}(X') \ar[r] &
          H^{i}_{E'\setminus \ov E'}(X'\setminus \ov E') \ar[r] & \cdots
        }
    \end{equation}     
 with exact rows (cf. \cite[Chap.~III, Rem.~1.26]{Milne-EC}) in which the vertical
 arrows are the restriction maps.

 By using ~\eqref{eqn:loc-0} with the pair $(X,E)$ (resp. $(X', E')$)
 replaced by $(X \setminus \ov{E}, E \setminus \ov{E})$
 (resp. $(X' \setminus \ov{E}', E' \setminus \ov{E}')$ and
 applying the $r =1$ case of item(2) to the inclusion of hypersurface section
 $E' \setminus \ov{E}' \inj E \setminus \ov{E}$, we get that
 $\gamma_i$ is bijective if $i \le N-1$ and injective if $i=N$.
 Note here that $E \setminus \ov{E} = E_1 \setminus \ov{E}$ and
 $\ov{E} \bigcap E_1$ is a simple normal crossing divisor on $E_1$.
 Also, $E' \setminus \ov{E}' = E'_1 \setminus \ov{E}'$ and
 $E'_1 := E_1 \bigcap X'$ is a hypersurface section of $E_1$.
 
 The induction hypothesis implies that
 $\alpha_i$ is bijective if $i \le N-1$ and injective if $i=N$. 
 An easy diagram chase in \eqref{eqn:loc-1} shows that $\phi_i$ is bijective if
 $i \le N-1$ and injective if $i=N$. This proves item (3). Using item (1) and
 ~\eqref{eqn:loc} again, we also get (2).
This concludes the proof.
 \end{proof}

Our next ingredient is the following version of the Grothendieck-Lefschetz theorem for
Picard group over arbitrary fields. 

\begin{prop}\label{prop:N-lef}
  Let $k$ be any field and let $X$ be a smooth geometrically connected projective
  variety over $k$ such that $\dim(X) \ge 4$.  Let $X' \subset X$ be a smooth
  hypersurface section which is $3$-sufficiently ample.
  Then the canonical map $u^* \colon \Pic(X) \to \Pic(X')$
  is bijective if $\Char(k) = 0$. It is injective and
  its cokernel is a $p$-primary torsion abelian group if $\Char(k) = p > 0$.
\end{prop}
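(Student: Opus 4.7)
The plan is to reduce the statement to the case when $k$ is algebraically closed by Galois descent, and then to invoke the classical Grothendieck--Lefschetz theorem for the Picard group. For the descent, the Hochschild--Serre spectral sequence for the \'etale sheaf $\mathbb{G}_m$, together with Hilbert~90 ($H^1(G_k, \bar{k}^{\times}) = 0$), yields the exact sequence
\[
0 \to \Pic(X) \to \Pic(X_{\bar{k}})^{G_k} \to \Br(k) \to \Br(X)
\]
and the analogous one for $X'$. These sequences are compatible via $u^{*}$ and the identity on $\Br(k)$. A routine snake-lemma diagram chase, together with the observation that $G_k$-invariants is a left-exact functor that preserves $p$-primary torsion subgroups, shows that the desired conclusions for $u^{*} \colon \Pic(X) \to \Pic(X')$ follow from the corresponding statements for $u^{*} \colon \Pic(X_{\bar{k}}) \to \Pic(X'_{\bar{k}})$. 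I may therefore assume $k = \bar{k}$.

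Under this assumption, I would decompose $u^{*}$ as the composition $\Pic(X) \to \Pic(\hat{X}) \to \Pic(X')$, where $\hat{X}$ denotes the formal completion of $X$ along $X'$. The right-hand arrow is handled by the standard infinitesimal thickening argument: for each $n \ge 1$, deformation theory produces an exact sequence
\[
H^1(X', \sO_{X'}(-nX')) \to \Pic(X'_n) \to \Pic(X'_{n-1}) \to H^2(X', \sO_{X'}(-nX')),
\]
where $X'_n$ is the $n$-th infinitesimal neighborhood of $X'$ in $X$. By \lemref{lem:ample} applied with $r = 3$, the $3$-sufficient ampleness hypothesis gives $H^i(X', \sO_{X'}(-nX')) = 0$ for $i \le 2$ and $n \ge 1$, so that $\Pic(\hat{X}) = \varprojlim_{n} \Pic(X'_n) \xrightarrow{\cong} \Pic(X')$.

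The remaining map $\Pic(X) \to \Pic(\hat{X})$ is the classical algebraization step of Grothendieck--Lefschetz (\cite[Exp.~X, XII]{SGA2}). Because $\dim(X) \ge 4$ and $X'$ is sufficiently ample, the Lefschetz condition on the pair $(X, X')$ is satisfied and gives injectivity. In characteristic $0$, the effective Lefschetz theorem produces surjectivity as well, yielding the claimed isomorphism. In characteristic $p > 0$, the cokernel is classically known to be $p$-primary torsion (cf.\ \cite[Cor.~5.5.4]{CTS} for the prime-to-$p$ statement), which gives the proposition.

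The hardest step will be the algebraization of formal line bundles in characteristic~$p$: the failure of Kodaira-type vanishing for line bundles on $X$ itself means that $\Pic(X) \to \Pic(\hat{X})$ can genuinely have nontrivial cokernel. Controlling this cokernel as $p$-primary torsion requires either SGA~2-style formal geometry combined with an analysis of Frobenius on the formal Picard scheme, or a direct \'etale-cohomological argument using the Kummer sequence together with the $\ell$-adic Lefschetz theorem \thmref{thm:ell-lef}.
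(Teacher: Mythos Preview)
Your Galois descent has a gap in characteristic $p$: the Hochschild--Serre sequence you write applies to the Galois extension $k_s/k$, not to $\bar{k}/k$, so it only reduces you to a \emph{separably} closed base field. The paper handles this with an intermediate step you are missing: for a separably closed but imperfect $k$, one passes to the algebraic closure $\bar{k}$ and observes that for any finite purely inseparable extension $k'/k$ the quotient sheaf $\sO^{\times}_{Z_{k'}}/\sO^{\times}_Z$ is $p$-power torsion, so $\Pic(Z) \to \Pic(Z_{k'})$ is injective with $p$-primary cokernel for $Z \in \{X, X'\}$. Combined with the algebraically closed case, a diagram chase then gives the result over $k_s$, after which your Hochschild--Serre argument (which the paper cites as \cite[Prop.~5.4.2]{CTS}) handles arbitrary $k$.

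Your diagnosis of where the $p$-primary defect originates is also off. Over an algebraically closed field the classical Grothendieck--Lefschetz theorem \cite[Chap.~IV, Thm.~3.1]{Hartshorne-Ample} gives a genuine \emph{isomorphism} $\Pic(X) \xrightarrow{\cong} \Pic(X')$ in every characteristic: the effective Lefschetz condition ${\rm Leff}(X,X')$ holds under the ampleness and vanishing hypotheses regardless of $\Char(k)$, so the algebraization map $\Pic(X) \to \Pic(\hat{X})$ is bijective, not merely injective with $p$-torsion cokernel. The $p$-primary cokernel in the proposition therefore arises entirely from the purely inseparable field extension step above, not from any failure of algebraization; the SGA~2/Frobenius or $\ell$-adic analysis you flag as the ``hardest step'' is unnecessary.
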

\begin{proof}
  When $k$ is algebraically closed, this is the classical Grothendieck-Lefschetz theorem
  (cf. \cite[Chap.~IV, Thm.~3.1]{Hartshorne-Ample}), where we note that the condition
  ${\rm Leff}(X, X')$ holds by Theorem~1.5 in Chapter~IV of op. cit. and other
  conditions hold by our assumption (including the $3$-sufficiently ampleness of $X'$).

Suppose now that $k$ is separably closed but not algebraically closed. 
Let $\ov k$ be an algebraic closure of $k$. Let  $\ov X=X\times_{\Spec(k)}  \Spec(\ov k)$
and $\ov{X}' = X' \times_{\Spec(k)} \Spec(\ov k)$. Then $\ov{X}'$ is a geometrically
connected smooth and $3$-sufficiently ample hypersurface section of $\ov{X}$
(cf. \cite[Thm.~III.9.3]{Hartshorne-AG}).
In particular, the pull-back maps $\Pic(X) \to \Pic(\ov{X})$ and
$\Pic(X') \to \Pic(\ov{X}')$ are injective (cf. \cite[Tag~0CC5]{SP}), and
their cokernels are $p$-primary torsion groups. The latter claim can easily be checked
using the long exact cohomology sequence corresponding to 
\[
0 \to \sO^{\times}_Z \to \sO^{\times}_{Z_{k'}} \to {\sO^{\times}_{Z_{k'}}}/{\sO^{\times}_{Z_{k'}}}
\to 0,
\]
where ${k'}/k$ is any finite purely
inseparable extension ${k'}/k$ and $Z \in \{X, X'\}$.
This proves the proposition when $k$ is separably closed.

Suppose now that $k$ is arbitrary and let $k_s$ be a separable closure of $k$.
Let $X_s = X\times_{\Spec(k)} \Spec(k_s)$ and $X'_s = X' \times_{\Spec(k)} \Spec(k_s)$. By a
similar argument as above, we see that $X_s$ and $X'_s$ are geometrically connected and
$X'_s$ is a $3$-sufficiently ample hypersurface section of $X_s$.
By \cite[Prop.~5.4.2]{CTS}, we have a commutative diagram of exact sequences
    \begin{equation}\label{eqn:N-lef-0}
        \xymatrix@C.8pc{
          0 \ar[r]& \Pic(X) \ar[r] \ar[d]_-{u^*} &
          \Pic(X_s)^G \ar[r]\ar[d]^-{(u^*_s)^G} &
          \Br(k)\ar@{=}[d] \\
        0 \ar[r]& \Pic(X')\ar[r] & \Pic(X'_s)^G\ar[r]& \Br(k),
        }
    \end{equation}
    where $G$ is the absolute Galois group of $k$.
    The desired result now follows using the previous cases shown above.
\end{proof}

\subsection{Lefschetz for Brauer groups}\label{sec:Lef-Brauer}
We shall now prove our Lefschetz theorem for Brauer group. Our result is the
following.

\begin{thm}\label{thm:Br-lef}
  Let $k$ be a field and let $X$ be a smooth geometrically connected projective
  variety over $k$ such that $\dim(X) \ge 4$.
  Let $X' \subset X$ be a smooth hypersurface section which is $3$-sufficiently ample.
  We then have the following.
  \begin{enumerate}
  \item
    If $k$ is $F$-finite of characteristic $p > 0$, then the map
    $u^* \colon \Br(X)\{p\} \to \Br(X')\{p\}$ is an isomorphism.
    \item
  If $k$ is either finite or 1-local  or separably closed, then  the map
  $u^* \colon \Br(X) \xrightarrow{\cong} \Br(X')$
  is an isomorphism.
  \end{enumerate}
\end{thm}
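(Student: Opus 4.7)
The plan is to reduce both statements to the Lefschetz theorems already proved in this excerpt: Theorem~\ref{thm:Lefschetz} for the $p$-primary part in characteristic $p$, Theorem~\ref{thm:ell-lef} for the $\ell$-primary part with $\ell \neq \Char(k)$, together with Proposition~\ref{prop:N-lef} for the Picard group. The key bridge will be the exact sequence from Example~\ref{exm:Examples-fil}(2), taken with $E = \emptyset$ and $D = 0$, namely
\[
0 \to \Pic(X)/p^m \to H^2_{p^m}(X) \to \Br(X)[p^m] \to 0,
\]
together with its analogue on $X'$ and the parallel Kummer-sequence version $0 \to \Pic(X)/\ell^m \to H^2_\et(X, \mu_{\ell^m}) \to \Br(X)[\ell^m] \to 0$ for $\ell \neq \Char(k)$.

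For part (1), I will apply Theorem~\ref{thm:Lefschetz}(3) with $E = \emptyset$, $D = 0$, and $q = i = 1$: since $i + q = 2 \leq r - 1 = 2$, the map $u^* \colon H^1_\et(X, W_m\Omega^1_{X,\log}) \to H^1_\et(X', W_m\Omega^1_{X',\log})$ (i.e.\ $H^2_{p^m}(X) \to H^2_{p^m}(X')$) will be an isomorphism for every $m \geq 1$. The snake lemma applied to the above exact sequence and its $X'$-analogue will then show that $\Br(X)[p^m] \to \Br(X')[p^m]$ is surjective with kernel isomorphic to $\coker(\Pic(X)/p^m \to \Pic(X')/p^m)$. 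Passing to the direct limit over $m$, this kernel will become $C \otimes_\Z \Q_p/\Z_p$, where $C$ denotes the cokernel of $\Pic(X) \hookrightarrow \Pic(X')$. By Proposition~\ref{prop:N-lef}, $C$ is $p$-primary torsion, and for any element $a \otimes b$ with $p^n a = 0$ one has $a \otimes b = a \otimes (p^n \cdot (b/p^n)) = 0$, so $C \otimes \Q_p/\Z_p = 0$. This will yield the desired isomorphism $\Br(X)\{p\} \xrightarrow{\cong} \Br(X')\{p\}$.

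For part (2), since $X$ and $X'$ are regular and connected, $\Br(X)$ and $\Br(X')$ are torsion, so it suffices to establish the isomorphism on each $\ell$-primary component. In characteristic $p > 0$ the case $\ell = p$ is handled by part (1). For $\ell \neq \Char(k)$, Theorem~\ref{thm:ell-lef}(1) with $j = 1$ and $i = 2$ (using $N \geq 4$, so $2 \leq N-2$) will give the isomorphism $H^2_\et(X, \mu_{\ell^m}) \xrightarrow{\cong} H^2_\et(X', \mu_{\ell^m})$. Proposition~\ref{prop:N-lef} tells us that the Picard cokernel $C$ is zero in characteristic $0$ and $p$-primary torsion in characteristic $p > 0$; in either case $C[\ell^m] = 0 = C/\ell^m$, so tensoring the short exact sequence $0 \to \Pic(X) \to \Pic(X') \to C \to 0$ with $\Z/\ell^m$ gives $\Pic(X)/\ell^m \xrightarrow{\cong} \Pic(X')/\ell^m$. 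The snake lemma then delivers $\Br(X)[\ell^m] \xrightarrow{\cong} \Br(X')[\ell^m]$, and colimiting over $m$ followed by summing over all primes $\ell$ will conclude the proof.

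The principal subtlety lies in part (1): for a fixed $m$, the map $\Pic(X)/p^m \to \Pic(X')/p^m$ need not be surjective, so one cannot conclude injectivity of $\Br(X)[p^m] \to \Br(X')[p^m]$ at finite level. The obstruction disappears only after passing to the colimit in $m$, where the vanishing of $C \otimes \Q_p/\Z_p$ for $p$-primary torsion $C$ is the crucial elementary fact that makes the argument go through.
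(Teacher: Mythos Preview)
Your proof is correct and follows essentially the same approach as the paper's: both use the exact sequence $0 \to \Pic(-)/p^m \to H^1_\et(-, W_m\Omega^1_{-,\log}) \to \Br(-)[p^m] \to 0$ together with Theorem~\ref{thm:Lefschetz} and Proposition~\ref{prop:N-lef} for the $p$-primary part, and the Kummer sequence together with Theorem~\ref{thm:ell-lef} for the prime-to-$p$ part. Your write-up is in fact slightly more detailed than the paper's, which simply records the resulting exact sequence $0 \to \coker(\Pic(X) \to \Pic(X')) \otimes \Q_p/\Z_p \to \Br(X)\{p\} \to \Br(X')\{p\} \to 0$ and invokes Proposition~\ref{prop:N-lef}; the only minor slip is that the statement you need from Theorem~\ref{thm:Lefschetz} is the unnumbered first assertion (for $W_m\Omega^q_{X,\log}$), not item (3).
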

\begin{proof}
If $\Char(k) = p > 0$, we use the commutative diagram of exact sequences
    \begin{equation}\label{eqn:lef-Br}
        \xymatrix@C.8pc{
          0 \ar[r] & \Pic(X)/p^m \ar[r] \ar[d]_-{u^*} &
          H^1_\et(X,W_m\Omega^1_{X,\log}) \ar[r]
          \ar[d]_-{\cong}^-{u^*} & \Br(X)[p^m] \ar[r] \ar[d]^-{u^*} & 0 \\
          0 \ar[r] & \Pic(X')/p^m \ar[r] & H^1_\et(X',W_m\Omega^1_{X',\log}) \ar[r] &
          \Br(X')[p^m] \ar[r] & 0
        }
    \end{equation}
    and Theorem~\ref{thm:Lefschetz} to get an exact sequence
    \[
    0 \to {\rm Coker}(\Pic(X) \to \Pic(X')) \otimes {\Q_p}/{\Z_p} \to
    \Br(X)\{p\} \to \Br(X')\{p\} \to 0.
    \]
    We now use \propref{prop:N-lef} to conclude that the map
    $u^* \colon  \Br(X)\{p\} \to \Br(X')\{p\}$ is an isomorphism.
    This proves item (1). If $k$ is as in item (2), then a similar argument
    (using Kummer sequence) and \thmref{thm:ell-lef} imply that
    the map $u^* \colon  \Br(X)\{p'\} \to \Br(X')\{p'\}$ is an isomorphism.
    This concludes the proof.
    \end{proof}

Recall from \cite[Defn.~8.7]{KRS} that the Brauer group of $X$ with modulus $D$ is
defined to be the subgroup (cf. Definition~\ref{defn:Log-fil-D},
Example~\ref{exm:Examples-fil})
\[
\begin{array}{lll}
\Fil^\log_D \Br(U) & := & 
  \Ker \left(\Br(U) \xrightarrow{\bigoplus j_i} \
  \stackrel{r}{\underset{i =1}\bigoplus} 
  \frac{H^{2}(K_i)}{\Fil^{\bk}_{n_i} H^{2}(K_i)}\right) \\
  & \cong & 
\Ker \left(\Br(U)\{p\} \xrightarrow{\bigoplus j_i} \
  \stackrel{r}{\underset{i =1}\bigoplus} 
  \frac{H^{2}(K_i)}{\Fil^{\bk}_{n_i} H^{2}(K_i)}\right) \bigoplus \Br(U)\{p'\},
\end{array}
\]
  where $K_i$ is the fraction field of the henselization of the function field of $X$
  at the generic point of $E_i$ for $1 \le i \le r$. This is the analogue of the
  abelianized fundamental group with modulus \cite{Kerz-Saito-ANT} for Brauer group.
  
Combining Theorem~\ref{thm:Lefschetz}, \thmref{thm:ell-lef},  \propref{prop:N-lef}
and \corref{cor:F^q} with the surjection $\Pic(X') \surj \Pic(U')$, an argument
similar to that of \thmref{thm:Br-lef} yields the
following result. This generalizes the Lefschetz theorem for abelianized
fundamental group with modulus of Kerz-Saito \cite{Kerz-Saito-ANT}
to Brauer group with modulus.

\begin{cor}\label{cor:Brauer-Lef-mod}
  Let $X$ be as in \thmref{thm:Br-lef} and assume that $X' \subset X$ is
  $3$-sufficiently ample relative to $D$. Then we have the following.
  \begin{enumerate}
  \item
    If $k$ is $F$-finite of characteristic $p > 0$, then the pull-back
    $u^* \colon \Fil^\log_D \Br(U)\{p\} \to \Fil^\log_{D'} \Br(U')\{p\}$
    is an isomorphism.
  \item
    If $k$ is either finite or 1-local or separably closed, then
    the pull-back
  $u^* \colon \Fil^\log_D\Br(U) \to \Fil^\log_{D'}\Br(U')$ is an isomorphism.
  \end{enumerate}
\end{cor}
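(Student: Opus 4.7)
The plan is to adapt the argument of \thmref{thm:Br-lef} to the modulus setting, with \corref{cor:F^q} replacing the exact sequence \eqref{eqn:lef-Br} as the input. For part~(1), applying \corref{cor:F^q} to $(X,E,D)$ and $(X',E',D')$ and comparing under the pull-back $u^*$ gives a commutative ladder of short exact sequences whose middle column is the map $u^*\colon \H^1_\et(X, W_m\sF^{1,\bullet}_{X|D}) \to \H^1_\et(X', W_m\sF^{1,\bullet}_{X'|D'})$. This is an isomorphism by \thmref{thm:Lefschetz}(1) with $q=i=1$, since the $3$-sufficient ampleness hypothesis places $(i,q)$ in the range $i+q \le r-1$. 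A snake lemma then yields that $\Fil^\log_D\Br(U)[p^m] \to \Fil^\log_{D'}\Br(U')[p^m]$ is surjective with kernel $\coker(\Pic(U)/p^m \to \Pic(U')/p^m)$.

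Next I would show that $C := \coker(\Pic(U) \to \Pic(U'))$ is $p$-primary torsion. The surjections $\Pic(X) \twoheadrightarrow \Pic(U)$ and $\Pic(X') \twoheadrightarrow \Pic(U')$ have kernels generated by the classes of the components of $E$ and $E'$, and the map between these kernels, induced by $u^*[E_i] = [E'_i]$, is surjective; a snake-lemma chase, together with the injectivity of $\Pic(X) \to \Pic(X')$ from \propref{prop:N-lef}, exhibits $C$ as a quotient of $\coker(\Pic(X) \to \Pic(X'))$, which is $p$-primary torsion by the same proposition. The transition $C/p^m \to C/p^{m+1}$ is multiplication by $p$, which eventually kills every element of the $p$-primary group $C$, so $\varinjlim_m C/p^m = C \otimes \Q_p/\Z_p = 0$. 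Passing to the colimit of the snake sequences over $m$ then proves part~(1).

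For part~(2), a class in $H^2(K_i)$ of order prime to $p$ has trivial wild ramification and hence lies in $\Fil^\bk_0 H^2(K_i) \subseteq \Fil^\bk_{n_i} H^2(K_i)$, giving a natural direct-sum decomposition $\Fil^\log_D\Br(U) = \Fil^\log_D\Br(U)\{p\} \oplus \Br(U)\{p'\}$. Combined with~(1), it remains to establish $u^* \colon \Br(U)\{p'\} \xrightarrow{\cong} \Br(U')\{p'\}$. For each prime $\ell$ invertible in $k$, I would run the same snake-lemma argument on the Kummer ladder, where the middle column $H^2_\et(U, \mu_{\ell^m}) \to H^2_\et(U', \mu_{\ell^m})$ is an isomorphism by \thmref{thm:ell-lef}(2) at $i=2 \le \dim X - 2$. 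The Picard obstruction vanishes because $C$ is $p$-primary, hence uniquely $\ell$-divisible, giving $C[\ell^m] = 0 = C/\ell^m$ and hence $\Pic(U)/\ell^m \xrightarrow{\cong} \Pic(U')/\ell^m$. The snake lemma and colimits over $m$ and over all primes $\ell \ne p$ then complete the proof. The only non-formal input is the identification of the $p$-primary structure of $C$ from \propref{prop:N-lef} via the surjections onto $\Pic(U)$ and $\Pic(U')$; the remaining steps are diagram chases.
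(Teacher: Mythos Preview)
Your proof is correct and follows essentially the same approach as the paper, which simply instructs the reader to combine \thmref{thm:Lefschetz}, \thmref{thm:ell-lef}, \propref{prop:N-lef}, and \corref{cor:F^q} with the surjection $\Pic(X') \surj \Pic(U')$ and rerun the argument of \thmref{thm:Br-lef}. One minor redundancy: to show that $C = \coker(\Pic(U) \to \Pic(U'))$ is a quotient of $\coker(\Pic(X) \to \Pic(X'))$, the snake lemma already gives the surjection $\coker\beta \twoheadrightarrow \coker\gamma$ without needing either the surjectivity of the map on kernels or the injectivity of $\Pic(X) \to \Pic(X')$; these extra facts are true but not required for the conclusion you draw.
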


\subsection{Brauer group of complete intersection}\label{sec:Br-CI}
As an application of \thmref{thm:Br-lef}, we shall compute the Brauer group
of a complete intersection projective variety over arbitrary fields.

Let $k$ be a field. Recall from \cite[Def.~5.5]{ABM} that if $X$ is a 
$k$-scheme of dimension $N$ and $\sL$ is an ample line bundle on $X$, then
$(X,\sL)$ is called a Kodaira pair if
$H^i_\zar(X,\Omega^q_{X/k}\otimes_{\sO_X} \sL^{-n})=0$ for all $i+q \le N-1$ and
$n \ge 1$.  One knows that if $\Char(k) = 0$ and $X$ is a smooth and
projective variety over $k$ with an ample line bundle $\sL$, then $(X, \sL)$ is a
Kodaira pair (cf. \cite[Cor.~2.11]{Deligne-Illusie}).

\begin{lem}\label{lem:KP-F}
  Assume that $k$ is an $F$-finite field of characteristic $p > 0$ and $X$ is a smooth
  $k$-scheme with an ample line bundle on $\sL$. If ($X,\sL$) is a Kodaira pair, then
  $H^i_\zar(X,\Omega^q_{X}\otimes_{\sO_X} \sL^{-n})=0$ for all $i+q \le N-1$ and $n \ge 1$.
    \end{lem}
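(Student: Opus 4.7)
The plan is to reduce from the absolute de Rham complex $\Omega^\bullet_X$ to the relative one $\Omega^\bullet_{X/k}$, where the Kodaira pair hypothesis directly applies. Since $k$ is $F$-finite, $\Omega^1_k$ is a finite-dimensional $k$-vector space (by \propref{prop:F-fin}), and because $X$ is smooth over $k$, the standard conormal sequence gives a short exact sequence of locally free $\sO_X$-modules
\[
0 \to f^*\Omega^1_k \to \Omega^1_X \to \Omega^1_{X/k} \to 0,
\]
where $f \colon X \to \Spec(k)$ is the structure map.

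Next I would invoke the standard fact that for any short exact sequence of locally free sheaves $0 \to \sE' \to \sE \to \sE'' \to 0$, the $q$-th exterior power $\Wedge^q \sE$ admits a natural decreasing filtration whose graded pieces are $\Wedge^s \sE' \otimes \Wedge^{q-s} \sE''$ for $0 \le s \le q$. Applying this to the conormal sequence and tensoring with $\sL^{-n}$ (which is exact, since $\sL$ is a line bundle) yields a filtration of $\Omega^q_X \otimes \sL^{-n}$ of length $q+1$ whose graded pieces are
\[
f^*\Omega^s_k \otimes_{\sO_X} \Omega^{q-s}_{X/k} \otimes_{\sO_X} \sL^{-n}
\;\cong\; \Omega^s_k \otimes_k \bigl(\Omega^{q-s}_{X/k} \otimes_{\sO_X} \sL^{-n}\bigr),
\]
where the isomorphism uses that $f^*\Omega^s_k$ is a free $\sO_X$-module (since $\Omega^s_k$ is a free $k$-module of finite rank).

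By the projection formula, the $i$-th Zariski cohomology of this graded piece equals $\Omega^s_k \otimes_k H^i_\zar(X, \Omega^{q-s}_{X/k} \otimes \sL^{-n})$. For any $s \ge 0$ with $i+q \le N-1$, we have $i + (q-s) \le i + q \le N-1$, so the Kodaira pair hypothesis forces each of these cohomology groups to vanish. A straightforward induction on the length of the filtration, using the long exact cohomology sequence at each step, then yields the vanishing of $H^i_\zar(X, \Omega^q_X \otimes \sL^{-n})$ for all $n \ge 1$ and $i+q \le N-1$.

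There is no real obstacle here: once the filtration is in place, the argument is a routine diagram chase, and the only subtlety is to note that the graded pieces decouple as tensor products over $k$ so that the Kodaira pair hypothesis can be applied termwise.
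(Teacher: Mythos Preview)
Your proposal is correct and follows essentially the same approach as the paper: both use the conormal sequence $0 \to f^*\Omega^1_k \to \Omega^1_X \to \Omega^1_{X/k} \to 0$, pass to the induced filtration on $\Omega^q_X \otimes \sL^{-n}$ with graded pieces $\Omega^s_k \otimes_k (\Omega^{q-s}_{X/k} \otimes \sL^{-n})$, and conclude by the Kodaira pair hypothesis applied to each graded piece.
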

    \begin{proof}
      By our assumptions, there is an exact
      sequence of finitely generated locally free sheaves
      \[
      0 \to \Omega^1_k \otimes_k \sO_X \to \Omega^1_X \to \Omega^1_{X/k} \to 0.
      \]
      There is therefore a decreasing filtration
      $\{F^t\}_{0 \le t\le q}$ of $\Omega^q_X \otimes_{\sO_X} \sL^{-n}$ such that
      $F^t/F^{t+1} \cong \Omega^t_k \otimes_k (\Omega^{q-t}_{X/k}\otimes_{\sO_X} \sL^{-n})$
      (cf. \cite[Exc.~ II.5.16(d)]{Hartshorne-AG}).
      Now, the hypothesis on $(X,\sL)$ implies
      $H^i_\zar(X, F^t/F^{t+1})=0$ for all $i+q \le N-1$ and $t \le q$. This easily
      implies the lemma. 
\end{proof}

\begin{lem}\label{lem:Kodaira}
  Let $X\inj \P^N_k$ be any complete intersection (not necessarily smooth).
  Let $X'$ be any hypersurface section of $X$. Then $(X,\sO_X(X'))$ is a Kodaira pair.
    \end{lem}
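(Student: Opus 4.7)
The plan is to reduce the Kodaira-pair property to the stronger vanishing $H^i_\zar(X, \Omega^q_{X/k}(-n)) = 0$ for every $i + q \le d - 1$ and $n \ge 1$, where $d = \dim X$. Since $X'$ is cut out on $X$ by a hypersurface of some degree $m \ge 1$, one has $\sO_X(X') \cong \sO_X(m)$, so $\sO_X(-nX') \cong \sO_X(-nm)$ and this stronger statement immediately gives the Kodaira-pair property. The two key ingredients will be Bott's formula on $\P^N_k$, namely $H^i(\P^N_k, \Omega^p_{\P^N_k/k}(-j)) = 0$ for every $i < N$, $p \ge 0$, $j \ge 1$, together with the Koszul resolution $\sK^\bullet \to \sO_X \to 0$ of $\sO_X$ over $\sO_{\P^N_k}$ associated to the regular sequence $f_1, \ldots, f_r$ defining $X = V(f_1, \ldots, f_r)$; its terms have the shape $\sK^{-s} = \bigwedge\nolimits^{s} V$ with $V := \bigoplus_{j=1}^{r} \sO_{\P^N_k}(-d_j)$.

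First I would establish the auxiliary vanishing $H^i(X, \Omega^p_{\P^N_k/k}|_X(-n)) = 0$ for every $i < d = N - r$, $p \ge 0$ and $n \ge 1$. Tensoring $\sK^\bullet$ with the locally free sheaf $\Omega^p_{\P^N_k/k}(-n)$ produces a length-$r$ locally free resolution of $\Omega^p_{\P^N_k/k}|_X(-n)$ whose terms are direct sums of twists $\Omega^p_{\P^N_k/k}(-n - \sum_{j \in J} d_j)$ with $J \subseteq \{1, \ldots, r\}$. Each such summand has vanishing cohomology in every degree strictly less than $N$ by Bott's formula, so the hypercohomology spectral sequence forces $H^i(X, \Omega^p_{\P^N_k/k}|_X(-n)) = 0$ for $i < N - r = d$. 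Next, the right-exact conormal sequence $V|_X \xrightarrow{df} \Omega^{1}_{\P^N_k/k}|_X \to \Omega^{1}_{X/k} \to 0$, upon taking $q$-th exterior powers, assembles into the Koszul-type complex
\[
\bigwedge\nolimits^{q} V|_X \to \bigwedge\nolimits^{q-1} V|_X \otimes \Omega^{1}_{\P^N_k/k}|_X \to \cdots \to V|_X \otimes \Omega^{q-1}_{\P^N_k/k}|_X \to \Omega^{q}_{\P^N_k/k}|_X \to \Omega^{q}_{X/k} \to 0.
\]
When $X$ is smooth the conormal sequence is already a short exact sequence of vector bundles, so this Koszul-type complex is exact and gives a length-$q$ resolution of $\Omega^{q}_{X/k}$ whose terms are of the form $\bigwedge^{k} V|_X \otimes \Omega^{q-k}_{\P^N_k/k}|_X$. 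Feeding the auxiliary vanishing into the hypercohomology spectral sequence of the twist of this resolution by $\sO_X(-n)$ then shows $H^i(X, \Omega^{q}_{X/k}(-n)) = 0$ for $i + q \le d - 1$.

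The hard part will be the singular case, in which the first map $\sI_X/\sI_X^2 \to \Omega^{1}_{\P^N_k/k}|_X$ may have a non-trivial kernel (equal to $H^{-1}(L_{X/k})$), and the Koszul-type complex above may correspondingly fail to be exact at its leftmost positions. To handle this, I would invoke that $X$ is lci, so its cotangent complex $L_{X/k}$ is perfect of tor-amplitude $[-1, 0]$ with the explicit representative $[V|_X \to \Omega^{1}_{\P^N_k/k}|_X]$, and Illusie's derived exterior powers $L\bigwedge\nolimits^{q} L_{X/k}$ refine the Koszul-type complex into a genuine resolution of $\Omega^{q}_{X/k}$ in the derived category. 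The extra correction terms are still built out of negative twists of $\Omega^{p}_{\P^N_k/k}|_X$, so the auxiliary vanishing from the previous paragraph propagates through the refined spectral sequence and delivers the same conclusion. Alternatively, one can bypass the singular subtlety altogether by citing the general statement for complete intersections established in \cite{ABM}.
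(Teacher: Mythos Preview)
Your proposal is ultimately correct because your ``alternative'' at the end---citing \cite{ABM} directly---is precisely the paper's entire proof. The paper's argument is two lines: $(\P^N_k,\sO(1))$ is a Kodaira pair, \cite[Cor.~5.8]{ABM} transfers this to any complete intersection $(X,\sO_X(1))$, and since $\sO_X(X')\cong\sO_X(m)$ for some $m\ge 1$ you are done. Your detailed route for smooth $X$ (Bott vanishing, the Koszul resolution of $\sO_X$, and the exterior-power filtration coming from the conormal sequence) is correct and is essentially how one proves \cite[Cor.~5.8]{ABM} in the smooth case, so this is a genuine alternative rather than a mistake.

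The derived-exterior-power paragraph for singular $X$, however, has a gap. For a singular lci $X$ the complex $L\bigwedge^q L_{X/k}$ is \emph{not} a resolution of $\Omega^q_{X/k}$: one has $H^0(L\bigwedge^q L_{X/k})=\Omega^q_{X/k}$, but there are in general nonzero cohomology sheaves in negative degrees (built out of $H^{-1}(L_{X/k})$). Hence vanishing of the hypercohomology of $L\bigwedge^q L_{X/k}(-n)$, which is what your auxiliary vanishing would give, does not by itself yield vanishing of $H^i(X,\Omega^q_{X/k}(-n))$. The argument that actually works in the singular case proceeds one hypersurface at a time: for a Cartier divisor $X\subset Y$ with $Y$ smooth one has the right-exact sequence $\Omega^{q-1}_{X/k}(-X)\to\Omega^q_{Y/k}|_X\to\Omega^q_{X/k}\to 0$, which lets you induct on $q$ without ever needing the conormal map to be injective. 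Since you already offer the citation to \cite{ABM} as a fallback, the proposal stands; just replace the cotangent-complex sketch by that citation or by the hypersurface-by-hypersurface induction.
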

    \begin{proof}
Since $(\P^N_k, \sO(1))$ is a Kodaira pair, \cite[Cor.~5.8]{ABM} implies
that $(X, \sO_{X}(1))$ is also a Kodaira pair. In particular, so is $(X, \sO_{X}(n))$ 
for all $n \ge 1$. This implies the claim. 
    \end{proof}

\begin{lem}\label{lem:lef}
  Let $k$ be a perfect field and let $X \subset \P^N_k$ be a smooth complete
  intersection of dimension $d$.
  Then the pull-back map $H^i_\et(\P^N_k,W_m\Omega^q_{\P^N_k,\log})
  \to H^i_\et(X, W_m\Omega^q_{X,\log})$ is an isomorphism for all $m \ge 1$ and all
  $i,q \ge 0$ such that $i+q \le d-1$. 
    \end{lem}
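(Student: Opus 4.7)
The plan is to prove the lemma by induction on the codimension $c = N - d$, reducing to the hypersurface case covered by \thmref{thm:Lefschetz}. First, I would construct a chain of smooth complete intersections
\[
X = X_c \subset X_{c-1} \subset \cdots \subset X_0 = \P^N_k
\]
in which each $X_{i+1}$ is a hypersurface section of $X_i$ and each $X_i$ is smooth of dimension $d_i := N-i$. Such a chain exists since $k$ is perfect: one either writes $X = V(f_1,\ldots,f_c)$ with a sufficiently generic choice of defining equations, or invokes Bertini (the classical version over an infinite $k$, or Gabber--Poonen over finite $k$) to exhibit smooth intermediate hypersurface sections at each stage.

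Next, for each inclusion $X_{i+1}\subset X_i$, I would verify the sufficient-ampleness hypothesis of \thmref{thm:Lefschetz}. Since $X_i$ is a complete intersection in $\P^N_k$, \lemref{lem:Kodaira} asserts that the pair $(X_i, \sO_{X_i}(X_{i+1}))$ is a Kodaira pair. Because $X_i$ is smooth and $k$ is perfect (hence $F$-finite), \lemref{lem:KP-F} converts this Kodaira property into the vanishing
\[
H^j_\zar(X_i, \Omega^q_{X_i}(-nX_{i+1})) = 0 \quad \text{for all } n\ge 1 \text{ and } j+q \le d_i - 1.
\]
By Definition~\ref{def:ample**}, this is exactly the statement that $X_{i+1}$ is $(d_i - 1)$-sufficiently ample on $X_i$.

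Then I would apply \thmref{thm:Lefschetz} to each pair $(X_i, X_{i+1})$ to conclude that the pull-back
\[
u^*_i \colon H^j_\et(X_i, W_m\Omega^q_{X_i,\log}) \longrightarrow H^j_\et(X_{i+1}, W_m\Omega^q_{X_{i+1},\log})
\]
is an isomorphism for $j+q \le d_i - 2$ and injective for $j+q = d_i - 1$. At the last step $i = c-1$ we have $d_{c-1} = d+1$, giving an isomorphism when $j+q \le d-1$ and injectivity when $j+q = d$; at every earlier step one has $d_i \ge d+2$, so $u^*_i$ is already an isomorphism throughout the range $j+q \le d-1$. Composing over the chain yields the desired isomorphism for $i+q \le d-1$.

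The main obstacle is the construction of the smooth intermediate chain passing through the \emph{given} $X$: restricting one hypersurface at a time preserves complete-intersectionness but not smoothness, and over an arbitrary perfect field one must argue carefully, either via a generic choice of defining equations (after enlarging degrees if $k$ is finite) or via Poonen-type Bertini. Every other step is a mechanical application of results already established in the paper.
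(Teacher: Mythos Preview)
Your proposal is correct and follows essentially the same induction-on-codimension argument as the paper's proof, which likewise writes $X = X_1 \cap H$ with $X_1$ a smooth complete intersection of dimension $d+1$, invokes \lemref{lem:Kodaira} (implicitly together with \lemref{lem:KP-F}) to get sufficient ampleness, and applies \thmref{thm:Lefschetz}. You are in fact more careful than the paper about the construction of the smooth intermediate chain: the paper simply asserts that regularity of $X$ forces regularity of $X_1$, whereas you correctly flag that this requires a Bertini-type or genericity argument.
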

\begin{proof}
  If $d = N-1$, the result follows by applying \lemref{lem:Kodaira} to
  $\P^N_k$ and applying \thmref{thm:Lefschetz} to the inclusion of the hypersurface
  $X \subset \P^N_k$. In general, 
      we write $X = X_1\bigcap H$, where $X_1$ is a complete intersection in $\P^N_k$ of
      dim $d+1$ and $H \subset \P^N_k$ is a hypersurface.
      As $X$ is regular, the same must be the case with $X_1$. As $k$ is perfect, it
      follows that $X_1$ is smooth over $k$.
      Moreover, \lemref{lem:Kodaira} says that $(X_1, X)$ is a Kodaira pair. 
      It follow that $X$ is $d$-sufficiently ample on $X_1$. We now apply
      \thmref{thm:Lefschetz} inductively (on $N-d$) to conclude the proof.
\end{proof}

The following result was shown in \cite[Cor.~5.5.4]{CTS} for the prime-to-$\Char(k)$ part
of the Brauer group.

\begin{thm}\label{thm:Br-ci}
  Let $k$ be any field and let $X \subset \P^N_k$ be a smooth complete intersection
  of dimension $d \ge 3$. Then the pull-back map $\Br(k) \to \Br(X)$ is an
  isomorphism.
    \end{thm}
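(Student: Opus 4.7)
The plan is to combine Galois descent via the Hochschild-Serre spectral sequence with iterated applications of the Lefschetz theorems proved earlier in the paper. First I would reduce using the projective bundle formula $\Br(\P^N_k) = \Br(k)$ (classical and valid for arbitrary $k$), so the theorem becomes the assertion that the pullback $u^* \colon \Br(\P^N_k) \to \Br(X)$ induced by the inclusion $u \colon X \hookrightarrow \P^N_k$ is an isomorphism. Let $k_s$ be a separable closure of $k$, set $G = \Gal(k_s/k)$ and $X_s = X \times_k k_s$. Since $X$ is smooth and geometrically connected, one has $H^0(X_s, \G_m) = k_s^\times$, and Hilbert 90 gives $H^1(G, k_s^\times) = 0$; the Hochschild-Serre spectral sequence for $\Spec(k_s) \to \Spec(k)$ then yields the five-term exact sequence
\[
0 \to \Pic(X) \to \Pic(X_s)^G \to \Br(k) \to \ker(\Br(X) \to \Br(X_s)) \to H^1(G, \Pic(X_s)).
\]

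The next task is to verify (A) $\Pic(X_s) = \mathbb{Z} \cdot \sO(1)$ with trivial $G$-action, and (B) $\Br(X_s) = 0$. Both claims I would establish by iterating the corresponding Lefschetz theorems along a flag $\P^N_{k_s} = Y_0 \supset Y_1 \supset \cdots \supset Y_{N-d} = X_s$ of smooth complete intersections of decreasing dimension. Such a flag can be constructed via iterated Bertini over the infinite separably closed field $k_s$, by replacing the defining equations of $X_s$ by general linear combinations while preserving $X_s$ as the final intersection. By \lemref{lem:Kodaira}, each pair $(Y_{i-1}, \sO_{Y_{i-1}}(Y_i))$ is a Kodaira pair, which by \lemref{lem:KP-F} in positive characteristic ensures that $Y_i \subset Y_{i-1}$ is $3$-sufficiently ample whenever $\dim Y_{i-1} \geq 4$. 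For (A), I would iterate \propref{prop:N-lef} down the flag; the hypothesis $\dim Y_{i-1} \geq 4$ is satisfied throughout because $\dim X \geq 3$. Combined with the classical Grothendieck-Lefschetz identification $\Pic(X_{\ov{k}}) = \mathbb{Z}$ over the algebraic closure $\ov{k}$ and the injectivity $\Pic(X_s) \hookrightarrow \Pic(X_{\ov{k}})$ for smooth proper $X_s$ under the purely inseparable base change $\ov{k}/k_s$, this forces $\Pic(X_s) = \mathbb{Z} \cdot \sO(1)$ with trivial $G$-action since the generator descends from $\Pic(X)$.

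For (B), I would iterate \thmref{thm:Br-lef}(2) along the same flag. Since $k_s$ is separably closed---one of the admissible field types in \thmref{thm:Br-lef}(2)---and the ambient dimension remains at least $4$ at each step of the chain, the successive isomorphisms compose to yield $\Br(X_s) \cong \Br(\P^N_{k_s}) = \Br(k_s) = 0$. Feeding (A) and (B) into the Hochschild-Serre exact sequence, the map $\Pic(X) \to \Pic(X_s)^G = \mathbb{Z}$ is surjective because $\sO(1) \in \Pic(X)$ maps to the generator, and $H^1(G, \mathbb{Z}) = 0$ since $G$ is profinite; the sequence collapses to give $\Br(k) \xrightarrow{\cong} \ker(\Br(X) \to \Br(X_s)) = \Br(X)$, as required.

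The main obstacle will be the construction of the smooth flag of complete intersections over $k_s$: its existence requires a Bertini-type argument over the infinite field $k_s$ that perturbs the defining equations of $X_s$ into a regular sequence whose partial intersections are smooth, while still having $X_s$ as the terminal intersection. A secondary delicate point in (A) is identifying $\Pic(X_s)$ precisely as $\mathbb{Z}$ (rather than $\mathbb{Z}$ plus $p$-primary torsion) when $k_s$ is non-perfect; this reduces to the injectivity of $\Pic$ under purely inseparable base change for smooth proper varieties.
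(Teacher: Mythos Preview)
Your approach is correct in outline and runs closely parallel to the paper's, but orders the reductions differently. The paper first treats the algebraically closed case by iterating the Lefschetz theorems along a flag there, then passes to separably closed $k$ by combining $\Br(X_{\ov{k}})=0$ with $H^1(X,\sO_X)=0$ and \cite[Thm.~5.2.5]{CTS}, and finally to arbitrary $k$ via \cite[Cor.~5.5.4]{CTS}. You instead work directly over $k_s$ by iterating \thmref{thm:Br-lef}(2) and \propref{prop:N-lef} along a flag, and then descend to $k$ by unfolding Hochschild--Serre explicitly rather than quoting \cite{CTS}. Both routes rest on the same smooth-flag construction, which you rightly single out as the main technical point; the paper's assertion ``as $X$ is regular, the same must be the case with $X_1$'' is the same Bertini input, stated more tersely.

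There is one genuine gap in your route as written. The $p$-primary half of \thmref{thm:Br-lef}(2) is proved via \thmref{thm:Lefschetz}, which requires the base field to be $F$-finite (this is where the absolute differentials $\Omega^q_X$ are finite-rank and the 3-sufficiently ample condition obtained from \lemref{lem:KP-F} is meaningful). Since $[k_s:k_s^p]=[k:k^p]$ for a separable algebraic extension, $k_s$ is $F$-finite precisely when $k$ is; so your argument handles $F$-finite $k$ but not arbitrary $k$ in characteristic $p$. The paper avoids this by iterating over the algebraic closure $\ov{k}$ (which is perfect, hence $F$-finite) to get $\Br(X_{\ov{k}})=0$, and then using the arithmetically Cohen--Macaulay property of $X$ to obtain $H^1(X_s,\sO_{X_s})=0$ and invoking \cite[Thm.~5.2.5]{CTS} to conclude $\Br(X_s)=0$. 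To extend your argument to arbitrary $k$, you would need to insert exactly this detour through $\ov{k}$, since injectivity of $\Br(X_s)\to\Br(X_{\ov{k}})$ alone only controls the prime-to-$p$ part.
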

\begin{proof}
We first assume that $k$ is algebraically closed.
      In view of \cite[Cor.~5.5.4, Thm.~6.1.3]{CTS}, we only need to show that
      $\Br(\P^N_k)\{p\} \to \Br(X)\{p\}$ is an isomorphism if $\Char(k) = p >0$.
      To prove this statement,
      we write $X= X_1\bigcap H$, where $X_1$ is a complete intersection in
      $\P^N_k$ of dim $d+1$ and $H \subset \P^N_k$ is a hypersurface. One then
      checks that $X_1$ and $X$ are geometrically connected.
      Since $k$ is algebraically closed,
      $X_1$ is smooth over $k$ (see the proof of \lemref{lem:lef}).
      Furthermore, $X \subset X_1$ is a $3$-sufficiently ample hypersurface section
      by \lemref{lem:Kodaira}. It follows therefore by
      inductively applying \propref{prop:N-lef} that the pull-back map
      $\Pic(\P^N_k) \to \Pic(X)$ is injective whose cokernel is a $p$-primary
      torsion group. We now apply \lemref{lem:lef} and repeat the argument of
      \thmref{thm:Br-lef} to conclude the proof. 

      We next assume that $k$ is separably closed and let $\ov{X}$ denote the base
      change of $X$ to an algebraic closure of $k$. From what we showed above,
      we have that $\Br(\ov{X}) = 0$. Since $X \subset \P^N_k$ is a normal
      complete intersection, it is projectively normal and
      arithmetically Cohen-Macaulay scheme (i.e., its homogeneous coordinate ring
      is Cohen-Macaulay). This implies in particular that
      $H^1_\zar(X, \sO_X) = 0$ (cf. \cite[Lem.~1.3]{Krishna-Crelle}).
      We conclude from \cite[Thm.~5.2.5]{CTS} that $\Br(X) = 0 = \Br(k)$.
      Finally, if $k$ is arbitrary, we can apply \cite[Cor.~5.5.4]{CTS} to finish the
      proof.
\end{proof}

\vskip .4cm


\vskip .4cm

\end{document}